\Crefname{paragraph}{\S}{Paragraphs}
\numberwithin{equation}{section}
\newtheoremstyle{special}
    {\topsep}
    {\topsep}
    {\itshape}
    {}
    {\bfseries}
    {}
    {0.5em}
    {{\thmname{#1$^{\bm*}\!$}\thmnumber{ #2.}\thmnote{\ \textmd{(#3)}}}}
\newtheorem{thm}{Theorem}[subsection]
\newtheorem{cor}[thm]{Corollary}
\newtheorem{lemma}[thm]{Lemma}
\newtheorem{prop}[thm]{Proposition}
\newtheorem{conj}[thm]{Conjecture}
\newtheorem{prob}{Question}
\newtheorem{axiom}{Axiom}
\newtheorem{introthm}{Theorem}
\newtheorem{introconj}[introthm]{Conjecture}
\theoremstyle{remark}
\newtheorem{ex}[thm]{Example}
\newtheorem{exs}[thm]{Examples}
\newtheorem{rmk}[thm]{Remark}
\theoremstyle{definition}
\newtheorem{defi}[thm]{Definition}
\newtheorem{notation}[thm]{Notation}
\theoremstyle{special}
\newtheorem{spprop}[thm]{Proposition}
\newtheorem{spthm}[thm]{Theorem}
\newtheorem{splemma}[thm]{Lemma}
\newtheorem{spintrothm}[introthm]{Theorem}
\newcommand{\sym}{\operatorname{Sym}}
\newcommand{\spin}{\operatorname{Spin}}
\newcommand{\alg}{\mathrm{alg}}
\newcommand{\tp}{\hspace{6.3mm}}
\newcommand{\Hom}{\mathrm{Hom}}
\newcommand{\G}{\mathscr{G}}
\newcommand{\Z}{\mathbb{Z}}
\newcommand{\C}{\mathbb{C}}
\newcommand{\gal}{\mathrm{Gal}}
\newcommand{\Q}{\mathbb{Q}}
\newcommand{\oct}{\mathbb{O}}
\newcommand{\R}{\mathbb{R}}
\newcommand{\GL}{\mathrm{GL}}
\newcommand{\SL}{\mathrm{SL}}
\newcommand{\A}{\mathbb{A}}
\newcommand{\tr}{\mathrm{Tr}}
\newcommand{\frob}{\mathrm{Frob}}
\newcommand{\aut}{\mathrm{Aut}}
\newcommand{\orth}{\operatorname{O}}
\newcommand{\sorth}{\operatorname{SO}}
\newcommand{\symp}{\operatorname{Sp}}
\newcommand{\su}{\operatorname{SU}}
\newcommand{\psu}{\operatorname{PSU}}
\newcommand{\st}{\mathrm{St}}
\newcommand{\modulo}{\operatorname{mod}}
\newcommand{\pgl}{\operatorname{PGL}}
\newcommand{\cusp}{\mathrm{cusp}}
\newcommand{\disc}{\mathrm{disc}}
\newcommand{\weights}{\mathrm{Weights}}
\newcommand{\lietype}[2]{\mathrm{#1}_{#2}}
\newcommand{\reg}{\mathrm{reg}}
\newcommand{\grpF}{\mathbf{F}_{4}}
\newcommand{\triv}{\mathbf{1}}
\newcommand{\jord}{\mathrm{J}}
\newcommand{\midline}{\,\middle\vert\,}
\newcommand{\set}[2]{\left\{#1\midline #2\right\}}
\newcommand{\vrep}[1]{\mathrm{V}_{#1}}
\newcommand\scalemath[2]{\scalebox{#1}{\mbox{\ensuremath{\displaystyle #2}}}}
\begin{document}
\title{Level one automorphic representations of an anisotropic exceptional group over $\Q$ of type \texorpdfstring{$\mathrm{F}_{4}$}{}}
\author{Yi Shan}
\date{\today}
\maketitle
\begin{abstract}
    Up to isomorphism, there is a unique connected semisimple algebraic group over $\Q$ of Lie type $\lietype{F}{4}$, 
    with compact real points and split over $\Q_{p}$ for all primes $p$. 
    Let $\grpF$ be such a group.
    In this paper, we study the level one automorphic representations of $\grpF$ in the spirit of the work of Chenevier, Renard and Ta\"ibi \cites{ChenevierRenard}{TaibiDimension}{Chenevier2020DiscreteSM}.
    
    First, we give an explicit formula for the number of these representations having any given archimedean component. 
    For this, we study the automorphism group of the two definite exceptional Jordan algebras of rank $27$ over $\Z$ studied by Gross in \cite{G96}, 
    as well as the dimension of the invariants of these groups in all irreducible representations of $\grpF(\R)$.

    Then, assuming standard conjectures by Arthur and Langlands for $\grpF$ \cites{ArthurUnipRep}{ChenevierLannes}, 
    we refine this counting by studying the contribution of the representations whose global Arthur parameter has any possible image (or ``Sato-Tate group"). 
    This includes a detailed description of all those images, as well as precise statements for the Arthur's multiplicity formula in each case. 
    As a consequence, we obtain a conjectural but explicit formula for the number of algebraic, cuspidal, level one automorphic representation of $\GL_{26}$ over $\Q$ with Sato-Tate group $\grpF(\R)$ of any given weight (assumed ``$\lietype{F}{4}$-regular").
    The first example of such representations occurs in motivic weight $36$.
\end{abstract}
\setcounter{tocdepth}{1}
\tableofcontents
\section{Introduction}\label{section introduction}
\subsection{Galois representations with given image}\label{section Galois representations with given image}
\tp The absolute Galois group $\mathrm{Gal}(\overline{\Q}/\Q)$ encodes a lot of arithmetic information about number fields,
and a natural way to study $\mathrm{Gal}(\overline{\Q}/\Q)$ is to consider its representations,
especially those arising from algebraic geometry.
Motivated by the inverse Galois problem,
the following question has been studied by a lot of mathematicians:
\begin{prob}\label{prob Galois representation with given image}
    Let $\ell$ be a prime number and $H$ a connected reductive algebraic group over $\overline{\Q_{\ell}}$.
    Is there an $\ell$-adic Galois representation $\rho:\gal(\overline{\Q}/\Q)\rightarrow H(\overline{\Q}_{\ell})$ 
    such that it is semisimple and geometric (in the sense of Fontaine-Mazur \cite[Conjecture 1.1]{TaylorICM}),
    and its image is Zariski dense in $H(\overline{\Q_{\ell}})$?
\end{prob}
In the case $H=\GL_{2}\simeq \mathrm{GSp}_{2}$ or $\mathrm{GSp}_{4}$,
or more generally,
a (similitude) classical group,
there are many well-known constructions and examples.
For instance, one can use the Poincar\'e pairing on $\ell$-adic cohomologies of algebraic varieties to construct Galois representations with images in classical groups.
The case of exceptional groups,
i.e. groups with Lie types $\lietype{G}{2},\,\lietype{F}{4},\,\lietype{E}{6},\,\lietype{E}{7}$ and $\lietype{E}{8}$,
is harder,
but we still have some examples 
in \cites{Dettweiler_Reiter_2010}{GrossSavin}{YunMotive}{PatrikisDeformation}{BOXER_CALEGARI_EMERTON_LEVIN_MADAPUSI_PERA_PATRIKIS_2019}.
Notice that when $H$ has Lie type $\lietype{G}{2}$ or $\lietype{E}{8}$,
this question is related to Serre's question on motives \cite[Question 8.8, \S 1]{SerreMotive}.

Composing $\gal(\overline{\Q}/\Q)\rightarrow H(\overline{\Q_{\ell}})$ with an irreducible faithful algebraic representation $H\hookrightarrow \GL_{n}$,
we obtain an $n$-dimensional geometric $\ell$-adic representation.
One can associate two invariants with a geometric $\ell$-adic Galois representation $\rho:\gal(\overline{\Q}/\Q)\rightarrow\GL_{n}(\overline{\Q}_{\ell})$: 
the \emph{(Artin) conductor} $\mathrm{N}(\rho)\in\mathbb{N}$,
and the \emph{Hodge-Tate weights} $\mathrm{HT}(\rho)$, a multiset of $n$ integers (see, for example, \cite{TaylorICM}). 
In the aforementioned works,
the conductors of the geometric $\ell$-adic representations
that they construct are usually not controlled.
One may refine \Cref{prob Galois representation with given image} naturally by fixing these two invariants:
\begin{prob}\label{prob refinement of Galois representations counting}
    Let $\ell$ be a prime number, $n\geq 1$ and $H$ a connected reductive subgroup of $\GL_{n}$ over $\overline{\Q_{\ell}}$.
    What is the number (up to equivalence) of geometric $\ell$-adic Galois representations $\rho:\gal(\overline{\Q}/\Q)\rightarrow \GL_{n}(\overline{\Q_{\ell}})$
    of given conductor and Hodge-Tate weights such that the Zariski closure of $\mathrm{Im}(\rho)$ is $H(\overline{\Q}_{\ell})$?
\end{prob}
For $(H,n)=(\GL_{2},2)$ or $(\mathrm{SO}_{2g+1},2g+1)$,
this question is for instance related to the dimension of spaces of classical or Siegel modular forms.
We have less knowledge of the cases of other groups $H$.
When the conductor $N=1$,
\Cref{prob refinement of Galois representations counting} is solved \emph{conjecturally} by Chenevier and Renard in \cite{ChenevierRenard} 
for the following groups 
($n$ is chosen to be the dimension of the standard representation when $H$ is a (similitude) classical group, and to be $7$ when $H$ has type $\lietype{G}{2}$):
\[\GL_{2}\simeq \mathrm{GSp}_{2},\,\mathrm{GSp}_{4},\,\sorth_{4},\,\sorth_{5},\,\mathrm{GSp}_{6},\,\mathrm{GSp}_{8},\,\sorth_{8},\,\lietype{G}{2},\]
via the conjectural connection between $n$-dimensional geometric $\ell$-adic representations and cuspidal automorphic representations of $\GL_{n}$.
See also \cites{TaibiDimension}{Chenevier2020DiscreteSM} for higher dimensions.
In \cite{lachausseeThesis},
Lachauss\'ee extends the results for $\mathrm{GSp}_{2g},\,1\leq g\leq 4$ to the case of Artin conductor $N=2$. 
Now we concentrate on the case of conductor one (see \Cref{intrormk reason of conductor 1} for more explanations about this conductor one assumption). 

In this paper,
following \cite{ChenevierRenard}, 
we give a \emph{conjectural} solution to \Cref{prob refinement of Galois representations counting} 
when $N=1$,
$H$ has Lie type $\lietype{F}{4}$,
and $n=26$.
For a $26$-dimensional geometric $\ell$-adic Galois representation $\rho$
such that $\overline{\mathrm{Im}(\rho)}$ has type $\lietype{F}{4}$,
its multiset of Hodge-Tate weights only depends on $4$ variables $a,b,c,d\in \mathbb{N}$,
and has the form
{\scriptsize
\begin{align*}
    \mathrm{HT}(a,b,c,d):=\left\{
        \begin{array}{c}
            0,0,\pm a,\pm b,\pm(a+b),\pm(b+c),\pm(a+b+c),\pm(b+c+d),\pm(a+b+c+d),\pm(a+2b+c),\\    
            \pm(a+2b+c+d),\pm(a+2b+2c+d),\pm(a+3b+2c+d),\pm(2a+3b+2c+d).
        \end{array} 
        \right\}
\end{align*}}
As a conjectural corollary of our results in this paper,
we propose the following conjecture
on $\lietype{F}{4}$-type geometric $\ell$-adic representations:
\begin{introconj}\label{introconj F4 Galois representation}
    The number of equivalence classes of $26$-dimensional conductor one geometric $\ell$-adic Galois representations $\rho$ such that 
    \begin{itemize}
        \item the Zariski closure of $\mathrm{Im}(\rho)$ is a connected reductive group of type $\lietype{F}{4}$,
        \item and $\mathrm{HT}(\rho)=\mathrm{HT}(a,b,c,d)$, $a,b,c,d\geq 1$,
    \end{itemize}
    is $\lietype{F}{4}(a-1,b-1,c-1,d-1)$,
    where $\lietype{F}{4}(\lambda)$ is the computable function on $\mathbb{N}^{4}$ given by \Cref{prop conjectural number of F4 image representations}.     
\end{introconj}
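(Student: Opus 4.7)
The plan is to derive the formula from the two main conjectural inputs of this paper. First, under the Fontaine--Mazur and Langlands conjectures, a $26$-dimensional geometric $\ell$-adic Galois representation $\rho\colon\gal(\overline{\Q}/\Q)\to \GL_{26}(\overline{\Q}_{\ell})$ of conductor one with $\mathrm{HT}(\rho)=\mathrm{HT}(a,b,c,d)$ whose Zariski closure is of type $\lietype{F}{4}$ should correspond, up to isomorphism, to a self-dual, algebraic, everywhere unramified, cuspidal automorphic representation $\Pi$ of $\GL_{26}/\Q$ whose infinitesimal character is determined by $\mathrm{HT}(a,b,c,d)$ and whose Sato--Tate group is $\grpF(\R)$. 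The first step is therefore to translate the problem into counting such $\Pi$'s.

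Next, assuming Arthur's conjectures for $\grpF$, every such $\Pi$ is the functorial transfer, via the standard $26$-dimensional representation $\grpF\hookrightarrow \GL_{26}$, of a discrete, level one automorphic representation $\pi$ of $\grpF/\Q$ whose archimedean component is the irreducible representation of the compact group $\grpF(\R)$ with highest weight $(a-1,b-1,c-1,d-1)$. The shift by one reflects the $\rho$-shift in the infinitesimal character, so that the assumption $a,b,c,d\geq 1$ is precisely the $\lietype{F}{4}$-regularity of $\mathrm{HT}(a,b,c,d)$.

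The second step is to invoke the explicit formula proved in the first part of the paper --- obtained from the mass formulae for the automorphism groups of the two definite exceptional Jordan algebras of rank $27$ over $\Z$ studied by Gross, together with the dimensions of their invariants in the irreducible representations of $\grpF(\R)$ --- that computes the total number of such level one $\pi$'s. This number overcounts the $\Pi$'s of interest, because it includes every $\pi$ whose global Arthur parameter $\psi_{\pi}$ factors through a proper subgroup of $\grpF$, and also those for which the $\mathrm{SL}_{2}$-part of $\psi_{\pi}$ is non-trivial (so that $\Pi$ fails to be tempered, hence cuspidal). To extract the Galois representations whose image is Zariski dense in $\grpF$, we must subtract, for each possible proper ``Sato--Tate subgroup'' $H\subsetneq \grpF(\R)$ that can arise as the image of some such $\psi_{\pi}$, the number of $\pi$'s with Sato--Tate group $H$ and archimedean component $\vrep{(a-1,b-1,c-1,d-1)}$.

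The main obstacle is therefore the third step: producing a complete and explicit list of the possible subgroups $H$ and, in each case, writing Arthur's multiplicity formula concretely enough that its contribution to the count becomes an explicit function of $(a,b,c,d)$. This enumeration and case-by-case analysis constitutes the bulk of the second part of the paper; combining it with the unrefined count of the first part and subtracting the contributions of all proper $H$'s from the total yields the function $\lietype{F}{4}(a-1,b-1,c-1,d-1)$ of \Cref{prop conjectural number of F4 image representations}, establishing the conjecture.
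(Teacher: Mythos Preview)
Your outline is correct and matches the paper's own derivation: the conjecture is deduced from Fontaine--Mazur/Langlands (to pass from Galois representations to cuspidal $\Pi$'s on $\GL_{26}$), from Arthur's conjectures for $\grpF$ (to relate such $\Pi$'s to level one automorphic $\pi$'s on $\grpF$ with the right archimedean component), and from the explicit subtraction of the endoscopic/non-tempered contributions classified in the second half of the paper. One point the paper makes explicit that you leave implicit is the ``acceptability'' of $\grpF(\R)$ (\Cref{prop compact Lie group F4 is acceptable}): element-conjugacy implies conjugacy for connected subgroups, so a level one $\pi$ determines a \emph{unique} global Arthur parameter, and hence the Sato--Tate group is well defined and the passage between $\Pi$'s with Sato--Tate group $\grpF(\R)$ and stable tempered $\pi$'s is a genuine bijection rather than merely a surjection.
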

\begin{rmk}\label{rmk formulas under conditions}
   The formula for $\lietype{F}{4}(\lambda)$ has so many terms that 
   we will not write down the full formula in this paper.
   However, under some hypothesis
   this formula becomes much simpler.
   For instance, 
   when $a>b+c+d+3$, $b,c,d>0$ and $c,d$ are both odd,
   a short formula for $\lietype{F}{4}(a,b,c,d)$ is given in \Cref{rmk simpler version of formula}.
\end{rmk}
\begin{ex}\label{introex tempered representation}
    Among quadruples $(a,b,c,d)$ with nonzero $\lietype{F}{4}(a,b,c,d)$,
    there exists a unique one $(1,2,0,2)$ that has the smallest $2a+3b+2c+d$
    (the largest Hodge-Tate weight).
    Moreover, $\lietype{F}{4}(1,2,0,2)=1$,
    so according to \Cref{introconj F4 Galois representation}
    there should be a unique $26$-dimensional conductor one geometric $\ell$-adic representation $\rho$
    such that 
    \begin{itemize}
        \item $\overline{\mathrm{Im}(\rho)}$ has type $\lietype{F}{4}$,
        \item and its multiset of Hodge-Tate weights $\mathrm{HT}(\rho)$ is:
    \[\mathrm{HT}(2,3,1,3)=\{0,0,\pm 2,\pm 3,\pm 4,\pm 5,\pm 6,\pm 7,\pm 9,\pm 9,\pm 12,\pm 13,\pm 16,\pm 18\}.\]
    \end{itemize}
    For people preferring non-negative Hodge-Tate weights,
    one can twist $\rho$ by $\omega_{\ell}^{-18}$, where $\omega_{\ell}$ denotes the $\ell$-adic cyclotomic character of $\gal(\overline{\Q}/\Q)$,
    and obtain a representation with motivic weight $36$.
    Hence we expect a $26$-dimensional geometric $\ell$-adic representation whose Zariski image is the product of an $\lietype{F}{4}$-type group with $\overline{\Q_{\ell}}^{\times}$
    to appear in the $36$th degree $\ell$-adic cohomology of some algebraic variety.
    A very interesting open problem is to find such a variety!
\end{ex}

\subsection{An automorphic variant of \Cref{prob refinement of Galois representations counting}}\label{section automorphic variant of question}
\tp
Now we send \Cref{prob refinement of Galois representations counting} to the automorphic side.
Let $G$ be a connected reductive group over $\Q$ with a reductive $\Z$-model (see \Cref{section reductive integral models of Q groups}).
As we will talk about Galois representations,
it will be convenient to assume that $\widehat{G}$ is defined over $\overline{\Q}$,
and we fix two embeddings:
$\iota_{\infty}:\overline{\Q}\rightarrow \C$ and $\iota_{\ell}:\overline{\Q}\hookrightarrow \overline{\Q_{\ell}}$.
We also fix a maximal compact subgroup $G_{c}$ of $\widehat{G}(\C)$.

Let $\pi$ be an \emph{$L$-algebraic}
\footnote{For the definition of $L$-algebraicity, see \cite[Definition 2.3.1]{BuzzardGeeConj}.
For a representation which is algebraic in the sense of \Cref{def algebraic and regular representations} but not $L$-algebraic,
one should replace $\widehat{G}$ by some ``similitude" group.
}
level one automorphic representation of $G$.
By a conjecture of Buzzard and Gee \cite[Conjecture 3.2.1]{BuzzardGeeConj},
one should be able to associate with $\pi$ a compatible conductor one geometric $\ell$-adic representation
$\rho_{\pi,\iota}:\gal(\overline{\Q}/\Q)\rightarrow \widehat{G}(\overline{\Q_{\ell}})$,
which depends on the choice of embeddings $\iota=(\iota_{\infty},\iota_{\ell})$.
By the standard conjectures of Fontaine-Mazur and Langlands,
every conductor one geometric $\ell$-adic representation into $\widehat{G}(\overline{\Q}_{\ell})$
should arise in this way.
If any two element-conjugate homomorphisms from a connected compact Lie group into $G_{c}$ are conjugate 
(see \Cref{section local global conjugacy} for a detailed explanation),
the following question gives an automorphic variant of \Cref{prob refinement of Galois representations counting} for $H=\widehat{G}\times_{\iota_{\ell}}\overline{\Q_{\ell}}$:
\begin{prob}\label{prob automorphic dimension for reductive groups}
    Let $G$ be a connected reductive group over $\Q$
    admitting a reductive $\Z$-model.
    \begin{enumerate}[label=(\arabic*)]
        \item (\emph{Counting})
        Count the number (up to equivalence) of level one algebraic
        \footnote{One can remove this algebraicity condition by restricting to semisimple $\Q$-groups.} 
        discrete automorphic representations for $G$ with an arbitrary given archimedean component.
        \item (\emph{Refinement}) Refine this counting by \emph{``Sato-Tate groups"} of automorphic representations.
    \end{enumerate}
\end{prob}
\begin{rmk}[``Sato-Tate groups'']\label{rmk Sato Tate groups}
    In the above question,
    the ``Sato-Tate group'' $\mathrm{H}(\pi)$ of a level one automorphic representation $\pi$ for $G$
    is a certain conjugacy class of subgroups of $G_{c}$
    that we will explain carefully in \Cref{section Sato Tate groups of Arthur parameters},
    and we can briefly introduce it as follows.
    Based on Arthur's parametrization of automorphic representations,
    one can \emph{conjecturally} associate with $\pi$ a group homomorphism 
    \[\psi_{\pi}:\mathcal{L}_{\Z}\times\su(2)\rightarrow G_{c},\]
    where $\mathcal{L}_{\Z}$ is the \emph{hypothetical Langlands group}, 
    which is connected and compact (see \Cref{section global parametrization and Langlands group}).
    We define $\mathrm{H}(\pi)$ to be the conjugacy class of the image of $\psi_{\pi}$ in $G_{c}$.  
    When the restriction of $\psi_{\pi}$ to $1\times\su(2)\subset \mathcal{L}_{\Z}\times\su(2)$ is trivial,
    this notion $\mathrm{H}(\pi)$ coincides with the usual notion of Sato-Tate groups.
    In general, we decided to include the $\su(2)$ factor in the definition
    as it provides convenience for stating some of our results.
\end{rmk}
The point of the refinement part in \Cref{prob automorphic dimension for reductive groups} is that
in general many level one discrete automorphic representations $\pi$ for $G$,
for example the \emph{endoscopic} ones,
will have a Sato-Tate group strictly smaller than $G_{c}$.
For these $\pi$,
$\overline{\mathrm{Im}(\rho_{\pi,\iota})}$ should be a proper subgroup of $\widehat{G}(\overline{\Q_{\ell}})$.
Hence we have to find a way to exclude these representations to obtain the desired number in \Cref{prob refinement of Galois representations counting}. 

In \cite{ChenevierRenard},
Chenevier and Renard solve the part (1) of \Cref{prob automorphic dimension for reductive groups} for a number of classical groups of small ranks,
namely, $G$ is one of the following groups:
\[\SL_{2}=\symp_{2},\,\symp_{4},\,\sorth_{2,2},\,\sorth_{3,2},\,\sorth_{7},\,\sorth_{8}\text{ and }\sorth_{9},\]
and also for a connected semisimple $\Q$-group of type $\lietype{G}{2}$ with compact real points.
For the part (2) of \Cref{prob automorphic dimension for reductive groups},
their method relies in an important way on Arthur's classification of automorphic representations \cites{ArthurUnipRep}{arthur2013endoscopic}.
Their results for $\sorth_{7},\sorth_{8},\sorth_{9}$ and $\lietype{G}{2}$ are conditional to Arthur's conjectures for these groups,
since $\sorth_{7},\sorth_{8}$ and $\sorth_{9}$ are not quasi-split,
and $\lietype{G}{2}$ is not covered by Arthur's results. 
In \cites{TaibiDimension},
Ta\"ibi uses Arthur's $\mathrm{L}^{2}$-Lefschetz formula to make these results unconditional (except for $\lietype{G}{2}$) 
and he also extends them to the following split classical groups:
\[\symp_{2g}\text{ with }g\leq 7,\,\sorth_{n+1,n}\text{ with }n\leq 8\text{ and }\sorth_{2m,2m}\text{ with }m\leq 4.\]
In particular,
Ta\"ibi's solution to \Cref{prob automorphic dimension for reductive groups} for $\symp_{8}$ will be important in our work.

In this paper,
we apply the method of \cite{ChenevierRenard} to $\grpF$,
the unique (up to isomorphism) connected semisimple algebraic group over $\Q$ of type $\lietype{F}{4}$,
with compact real points and split over $\Q_{p}$ for every prime $p$.
The construction of $\grpF$ is explicitly given in \Cref{section compact Lie group F4}.
For this group,
automorphic representations are automatically $L$-algebraic.
Moreover, it turns our that there is no local-global conjugacy problems for connected subgroups of $(\grpF)_{c}=\grpF(\R)$ (see \Cref{prop compact Lie group F4 is acceptable}).
As a consequence,
\Cref{introconj F4 Galois representation} follows from standard conjectures and our result on automorphic representations (\Cref{introthm formula for cuspidal representations with Sato-Tate group F4}).

\begin{rmk}\label{intrormk related works}
    The automorphic representations for $\grpF$ (and their local components) have been studied in \cites{Savin1994}{SavinMagaardExceptionalTheta}{GWTExceptionalSiegelWeil}{pollack2023exceptional}{karasiewicz2023dual} via exceptional theta correspondences,
    and we will explain some links between these correspondences with our work in \Cref{section explain the table of lower motivic weights}.   
    Let us mention also that automorphic representations for $\grpF$ have also been studied in the past by Seth Padowitz in \cite[\S 9]{PadowitzThesis}.
    Padowitz rather considers the automorphic representations which are Steinberg at a fixed \emph{non-empty} set of primes and unramified elsewhere,
    and tries to enumerate them using the stable trace formula,
    in the spirit of works of Gross-Pollack \cite{GrossPollack}.
    The results are only partial,
    as several stable local orbital integrals there are not determined
    \footnote{Another minor problem is that the author asserts on \cite[P.42]{PadowitzThesis} that the $26$-dimensional irreducible representation of $\grpF$ is ``excellent'' in his sense,
    which is not correct.
    See \Cref{rmk char poly not enough for conjugacy} for a conterexample.},
    and we hope to go back to this question in the future.
\end{rmk}

\subsection{Counting level one automorphic representations}\label{section counting automorphic representations}
\tp In \cite{G96},
Gross proves the following result for $\grpF$, 
which is important in our solution to the part (1) of \Cref{prob automorphic dimension for reductive groups} for $\grpF$:
\begin{introthm}\label{introthm two reductive integral models of F4}(\Cref{prop exactly two reductive models})
    Up to $\Z$-isomorphism,
    there are two smooth affine group schemes over $\Z$
    with generic fiber isomorphic to $\grpF$,
    whose special fiber over $\Z/p\Z$ is reductive for all primes $p$.    
\end{introthm}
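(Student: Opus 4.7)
The plan is to follow Gross's mass-formula strategy from \cite{G96}, reducing the classification to a numerical verification after a local uniqueness step and an fppf descent. For every prime $p$, the group $\grpF\otimes\Q_{p}$ is split of type $\lietype{F}{4}$ by hypothesis; since the root system $\lietype{F}{4}$ is simultaneously simply connected and adjoint, Bruhat-Tits theory yields a unique conjugacy class of hyperspecial maximal compact subgroups in $\grpF(\Q_{p})$. Equivalently, up to $\Z_{p}$-isomorphism there is a unique smooth affine $\Z_{p}$-model of $\grpF\otimes\Q_{p}$ with reductive special fiber. Fixing one reductive $\Z$-model $\G_{0}$ of $\grpF$ (say, the automorphism group scheme of Coxeter's integral exceptional Jordan order) and using that $\aut(\grpF)=\grpF$ for type $\lietype{F}{4}$ and that $\grpF(\R)$ is compact and connected, fppf descent identifies the set of isomorphism classes of reductive $\Z$-models of $\grpF$ with the double coset space
$$C(\G_{0})\;=\;\G_{0}(\Q)\,\backslash\,\G_{0}(\A_{f})\,/\,\G_{0}(\widehat{\Z}).$$
The anisotropy of $\grpF$ over $\Q$ gives finiteness of $C(\G_{0})$ by the classical finiteness of class numbers.

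It remains to show $|C(\G_{0})|=2$. For this, I would invoke Gross's mass formula, which expresses
$$\mathrm{mass}(\G_{0})\;=\;\sum_{\G\in C(\G_{0})}\frac{1}{|\G(\Z)|}$$
as an explicit rational number computed via Prasad's volume formula and special values of $\zeta$. One then produces two explicit reductive $\Z$-models $\G_{1},\G_{2}$ of $\grpF$, namely the automorphism group schemes of the two integral exceptional Jordan orders $\jord_{1},\jord_{2}$ of \cite{G96}, computes the finite orders $|\G_{i}(\Z)|$ directly (these are finite because $\grpF(\R)$ is compact), and verifies that the two summands $1/|\G_{1}(\Z)|+1/|\G_{2}(\Z)|$ already equal $\mathrm{mass}(\G_{0})$. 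Since every term in the mass formula is strictly positive, this equality forces $|C(\G_{0})|=2$ and at the same time shows $\G_{1}\not\simeq\G_{2}$.

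The hard part is the numerical verification: both the closed-form value of $\mathrm{mass}(\G_{0})$ (requiring a careful application of Prasad's formula to an anisotropic exceptional group) and the explicit orders $|\G_{i}(\Z)|$ of the two finite exceptional automorphism groups. These are the substantive contributions of \cite{G96}. By contrast, the local Bruhat-Tits step and the fppf-descent reduction to the class set $C(\G_{0})$ are essentially formal, relying only on the coincidence of simply connected and adjoint forms for type $\lietype{F}{4}$, the absence of outer automorphisms, and the compactness of $\grpF(\R)$.
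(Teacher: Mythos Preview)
Your proposal is correct and shares its overall architecture with the paper's proof: both reduce to the single genus via uniqueness of hyperspecial compacts for a split simply-connected adjoint group, identify $\Z$-isomorphism classes with the class set (using that $\grpF$ has no outer automorphisms), and then appeal to Gross's mass formula. The divergence is precisely at the step you label ``hard'' and defer to \cite{G96}. Gross's determination of the orders $|\G_{i}(\Z)|$ relies on \cite{ATLAS}, and the paper's stated purpose is to remove that dependence. Rather than computing the orders outright, the paper proves the upper bounds $|\mathcal{F}_{4,\mathrm{E}}(\Z)|\leq 2^{12}\cdot 3^{5}\cdot 7^{2}\cdot 13$ and $|\mathcal{F}_{4,\mathrm{I}}(\Z)|\leq 2^{15}\cdot 3^{6}\cdot 5^{2}\cdot 7$ by elementary means: for $\mathcal{F}_{4,\mathrm{E}}(\Z)$, by embedding it in the isometry group of an explicit rank-$27$ lattice whose order is computed via the Plesken--Souvignier algorithm; for $\mathcal{F}_{4,\mathrm{I}}(\Z)$, by exhibiting a split extension $1\to\mathscr{D}\to\mathcal{F}_{4,\mathrm{I}}(\Z)\to\mathrm{S}_{3}\to 1$ and identifying $\mathscr{D}$ with a double cover of $\sorth(\oct_{\Z})$, hence of order at most $|\mathrm{W}(\lietype{E}{8})|$. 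Non-isomorphism of the two models is then read off from a divisibility obstruction ($13$ divides one order but not the bound on the other), and the mass identity forces both upper bounds to be equalities. So your skeleton is right, but you have outsourced the one step the paper sets out to redo; what your approach buys is brevity, while the paper's buys independence from \cite{ATLAS} together with explicit generators of the two finite groups, which are then reused in the dimension computations of \Cref{section dimension of invariant subspaces of F4 representations}.
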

The $\Z$-group schemes in \Cref{introthm two reductive integral models of F4}
are reductive $\Z$-models of $\grpF$.
Their constructions are related to integral structures of the $27$-dimensional definite exceptional Jordan algebra over $\Q$.
Gross proves this result via the mass formula for $\grpF$ and some results in \cite{ATLAS},
and we will give a new proof in \Cref{section reductive integral models of F4} without using \cite{ATLAS}.

In our proof of \Cref{introthm two reductive integral models of F4}, 
we study the $\Z$-points of two reductive $\Z$-models in \Cref{introthm two reductive integral models of F4},
which are finite groups inside the compact Lie group $\grpF(\R)$.
With the help of \cite{PARI2} and \cite{GAP4},
for each of these finite groups,
we give an explicit set of generators in \Cref{section generators of integral points}
and enumerate its conjugacy classes in \Cref{section enumeration conj classes}.

Since the method of counting in \cite{ChenevierRenard} can be applied to any algebraic $\Q$-group that has compact real points and admits a reductive $\Z$-model,
we recall and apply this method to $\grpF$ in \Cref{section ideas and obstructions of the computation}, \Cref{section Kac coordinates} and \Cref{section comparison of conj classes}.
This formula leads to the answer for the part (1) of \Cref{prob automorphic dimension for reductive groups} in the case of $\grpF$, 
which is also the main computational result in this paper:
\begin{introthm}\label{introthm dimension formula for automorphic representations}
    (\Cref{thm main computational result} and \Cref{cor dimension formula for the multiplicity space})
    (1) For an irreducible representation $\vrep{\lambda}$ of $\grpF(\R)$ with highest weight $\lambda$,
    we have an explicit and computable formula for 
    the number $d(\lambda)$ of equivalence classes of level one automorphic representations $\pi$ with $\pi_{\infty}\simeq \vrep{\lambda}$.

    (2) For dominant weights $\lambda=\sum_{i=1}^{4}\lambda_{i}\varpi_{i}$\footnote{Here we follow the notations in \cite[\S IV.4.9]{Lie}.}
    satisfying $2\lambda_{1}+3\lambda_{2}+2\lambda_{3}+\lambda_{4}\leq 13$,
    we list the numbers $d(\lambda)$ in \Cref{nonzeroAMF}, \Cref{TableAppendix}.
\end{introthm}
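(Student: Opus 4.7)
The plan is to apply the Chenevier--Renard framework \cite{ChenevierRenard} for counting level one automorphic representations of a compact $\Q$-group admitting a reductive $\Z$-model, recalled and adapted to $\grpF$ in \Cref{section ideas and obstructions of the computation}--\Cref{section comparison of conj classes}. Set $K_f = \grpF(\widehat{\Z})$. Since $\grpF(\R)$ is compact, any discrete automorphic representation $\pi$ of $\grpF$ is level one exactly when $\pi_f$ carries a nonzero $K_f$-fixed vector, and such a vector is unique up to scalar by Satake's theorem. Hence $d(\lambda)$ equals the multiplicity of $\vrep{\lambda}$ in the $\grpF(\R)$-representation
\[
\mathcal{A} \;:=\; L^2\!\left(\grpF(\Q)\backslash \grpF(\A)/K_f\right).
\]
Finiteness of the class number provides representatives $t_1,\dots,t_h\in \grpF(\A_f)$ for $\grpF(\Q)\backslash \grpF(\A_f)/K_f$, and writing $\grpF(\A)=\grpF(\R)\times \grpF(\A_f)$ gives, as $\grpF(\R)$-representations,
\[
\mathcal{A} \;\simeq\; \bigoplus_{i=1}^{h}L^2\!\left(\Gamma_i\backslash \grpF(\R)\right), \qquad \Gamma_i \;:=\; \grpF(\Q)\cap t_i K_f t_i^{-1}.
\]

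By \Cref{introthm two reductive integral models of F4}, $\grpF$ has exactly two reductive $\Z$-models $\mathcal{G}_1, \mathcal{G}_2$ up to $\Z$-isomorphism, so the class set has two elements and each $\Gamma_i = \mathcal{G}_i(\Z)$ is the finite group of $\Z$-points of one of these models. Explicit generators and an enumeration of conjugacy classes inside $\grpF(\R)$ are provided by \Cref{section generators of integral points} and \Cref{section enumeration conj classes}. Every irreducible representation of the compact group $\grpF(\R)$ is self-dual, since the longest element of the Weyl group of $\lietype{F}{4}$ acts as $-1$ on the character lattice; thus Peter--Weyl, applied to the decomposition above, produces the basic identity
\[
d(\lambda) \;=\; \sum_{i=1}^{2}\dim \vrep{\lambda}^{\Gamma_i}
\;=\; \sum_{i=1}^{2}\;\sum_{[g]\in\mathrm{Conj}(\Gamma_i)}\frac{\chi_\lambda(g)}{|C_{\Gamma_i}(g)|},
\]
where $\chi_\lambda$ denotes the character of $\vrep{\lambda}$.

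To make this formula explicit and computable, the key point is that $\chi_\lambda$ is a class function on the ambient Lie group $\grpF(\R)$, so $\chi_\lambda(g)$ depends only on the $\grpF(\R)$-conjugacy class of $g$, not on its $\Gamma_i$-conjugacy class. Combining the enumeration of $\mathrm{Conj}(\Gamma_i)$ with the Kac-coordinate calculus of \Cref{section Kac coordinates} and the conjugacy comparison performed in \Cref{section comparison of conj classes}, one attaches to each class $[g]$ an explicit conjugacy datum in $\grpF(\R)$, and evaluates $\chi_\lambda(g)$ via the Weyl character formula (or, equivalently, from the eigenvalues of $g$ on the $26$-dimensional fundamental representation). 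This yields the explicit formula claimed in part (1). Part (2) is then a finite evaluation: for each dominant weight $\lambda=\sum_i\lambda_i\varpi_i$ with $2\lambda_1+3\lambda_2+2\lambda_3+\lambda_4\le 13$, the formula is computed using \cite{PARI2} and \cite{GAP4}, producing the table.

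The main obstacle is not the automorphic decomposition, which is essentially standard once \Cref{introthm two reductive integral models of F4} is available, but the \emph{fusion control}: two distinct $\Gamma_i$-classes may collapse to a single $\grpF(\R)$-class, or conversely a single $\grpF(\R)$-class may meet $\Gamma_i$ in several $\Gamma_i$-classes, and using incorrect fusion would give an incorrect $d(\lambda)$. Matching each torsion element of $\Gamma_i$ with its Kac coordinates in $\grpF(\R)$, and certifying that no class has been double-counted or missed, is the technical heart of the argument; this is what the preceding sections accomplish, after which the theorem follows from the character-sum formula above.
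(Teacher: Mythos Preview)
Your proposal is correct and follows essentially the same route as the paper: reduce $d(\lambda)$ to $\dim\vrep{\lambda}^{\Gamma_1}+\dim\vrep{\lambda}^{\Gamma_2}$ via compactness of $\grpF(\R)$ and the two-element genus (\Cref{prop isomorphism between isotypic spaces and algebraic modular forms}), express each summand as a character average over $\Gamma_i$, identify the $\lietype{F}{4}$-conjugacy class of every torsion element through Kac coordinates, and evaluate $\chi_\lambda$ there by the degenerate Weyl character formula. One small correction to your parenthetical: the eigenvalues of $g$ on the $26$-dimensional representation do \emph{not} suffice to determine $\chi_\lambda(g)$ in general---\Cref{rmk char poly not enough for conjugacy} exhibits two distinct rational torsion classes in $\lietype{F}{4}$ with identical characteristic polynomial on $\jord_\C$, and the paper separates them using the extra invariant $\tr(\mathrm{Ad}(g)|_{\mathfrak{f}_4})$ (\Cref{cor criterion two conjugacy classes}).
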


\subsection{Candidates for Sato-Tate groups}\label{section classification Sato-Tate}
\tp The part (2) of \Cref{prob automorphic dimension for reductive groups}
involves a classification of all possible Sato-Tate groups for level one automorphic representations of $\grpF$.
For this $\Q$-group,
its Langlands dual group $\widehat{\grpF}$ is isomorphic to $\grpF\times_{\Q}\C$,
and as mentioned in \Cref{rmk Sato Tate groups},
Sato-Tate groups in this case are conjugacy classes of subgroups of the compact Lie group $\grpF(\R)$.
The following result gives us $13$ candidates for Sato-Tate groups strictly smaller than $\grpF(\R)$: 
\begin{introthm}\label{introthm classification of subgroups of F4}
    (\Cref{thm classification result of subgroups satisfying three conditions})
    There are $13$ conjugacy classes of proper connected subgroups $H$ of $\grpF(\R)$ such that:
    \begin{itemize}
        \item the centralizer of $H$ in $\grpF(\R)$ is isomorphic to the product of finitely many copies of $\Z/2\Z$;
        \item the zero weight appears twice in the restriction of the $26$-dimensional irreducible representation of $\grpF(\R)$ to $H$.
    \end{itemize}
\end{introthm}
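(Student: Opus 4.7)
The plan is to classify, up to $\grpF(\R)$-conjugacy, all proper connected closed subgroups $H$ satisfying the two conditions, and to check that exactly $13$ classes appear. I organize the argument into a Lie-algebra reformulation of the conditions, an enumeration of candidate conjugacy classes, and a case-by-case verification.

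First, I reformulate the conditions. The centralizer $Z_{\grpF(\R)}(H)$ is a compact Lie subgroup whose Lie algebra is the space $\mathfrak{f}_4^H$ of $H$-invariants in the adjoint representation. The first condition forces this centralizer to be finite (equivalently $\mathfrak{f}_4^H = 0$, equivalently $H$ is not contained in any proper Levi subgroup of $\grpF(\R)$), and moreover forces the finite group to be elementary abelian of exponent $2$. For the second condition, picking a maximal torus $T_H$ of $H$, we require $\dim \vrep{26}^{T_H} = 2$. Since a maximal torus $T$ of $\grpF(\R)$ already satisfies $\dim \vrep{26}^T = 2$, this is automatic when $H$ has rank $4$, and otherwise says that no short root of $\lietype{F}{4}$ (these being the nonzero $T$-weights of $\vrep{26}$) vanishes on $T_H$.

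Second, I enumerate candidates. The condition $\mathfrak{f}_4^H = 0$ forces $H$ to have finite center, hence to be semisimple. For candidates of rank $4$ one may conjugate so that $H$ contains a fixed maximal torus $T$, and then $H$ is determined by its root system $\Phi_H \subset \Phi(\lietype{F}{4})$, a closed symmetric full-rank subsystem; these are classified up to the Weyl group action by iterating the Borel--de Siebenthal procedure on the extended Dynkin diagram, yielding classes such as $\spin(9)$, the subgroup of type $\lietype{C}{3}\cdot\lietype{A}{1}$, that of type $\lietype{A}{2}\cdot\lietype{A}{2}$, and their further closed sub-root-systems. For candidates of rank less than $4$, Dynkin's classification of semisimple subalgebras of $\mathfrak{f}_4$ provides a finite explicit list of conjugacy classes with $\mathfrak{f}_4^H = 0$, including the principal $\su(2)$, the irreducible $\lietype{G}{2}$, and products like $\su(2) \times \lietype{G}{2}$.

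For each candidate I then compute the finite centralizer (as the common kernel of the $H$-roots in $T$ for rank-$4$ cases, and directly from the $H$-action on $\mathfrak{f}_4$ in lower rank) and check whether it is elementary abelian of exponent $2$; and I compute the zero-weight multiplicity in $\vrep{26}|_H$ by branching, using standard tables or a computer algebra package. The candidates surviving both filters are exactly $13$ up to conjugacy. The main obstacle is assembling a complete list of candidates --- especially for lower rank, where Dynkin's classification must be consulted with care --- and ensuring that no conjugacy class is missed or double-counted; the verification itself is mechanical once the list is in hand.
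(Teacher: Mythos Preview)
Your outline is broadly correct and shares the paper's overall shape---enumerate semisimple candidates with finite centralizer, then filter by the two conditions---but the execution differs in two substantive ways, and there is one factual slip.

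First, the paper does not treat Dynkin's classification as a black box. Because \cite{Dynkin1952} gives only sketches for the exceptional cases, the paper redevelops what it needs: it proves (via Losev's theorem) that element-conjugacy implies conjugacy for connected subgroups of $\lietype{F}{4}$, and then establishes the key criterion that a homomorphism from a connected compact group into $\lietype{F}{4}$ is determined up to conjugacy by the restriction of the $26$-dimensional representation $\jord_{0}$. This lets the paper classify simple subgroups by enumerating the possible branchings of $\jord_{0}$ through the five maximal proper connected subgroups, rather than by quoting Dynkin's tables. Your appeal to Dynkin for rank $<4$ is legitimate in principle, but it is exactly the step the paper wants to make self-contained and verifiable.

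Second, the paper organizes the enumeration by simple pieces rather than by rank: it first classifies all conjugacy classes of \emph{simple} subgroups (the $\lietype{A}{1}$'s in one pass, then rank $\geq 2$), computes each centralizer explicitly, and only then assembles semisimple $H$ as products of commuting simple factors whose joint centralizer is an elementary $2$-group. Your rank-based split (Borel--de Siebenthal for rank $4$, Dynkin for rank $<4$) also works, but it requires a reliable table of \emph{semisimple} (not merely simple) subalgebras of $\mathfrak{f}_{4}$ with $\mathfrak{f}_{4}^{H}=0$, which is harder to read off from Dynkin directly and is where errors are most likely to creep in.

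One correction: the $\lietype{G}{2}$ subgroup of $\lietype{F}{4}$ is \emph{not} an example with $\mathfrak{f}_{4}^{H}=0$; its centralizer is the $\sorth(3)$ factor of $\lietype{G}{2}\times\sorth(3)$, so $\mathfrak{f}_{4}^{\lietype{G}{2}}$ is $3$-dimensional. Only the full product $\lietype{G}{2}\times\sorth(3)$ has finite (in fact trivial) centralizer.
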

We prove this classification result step by step in \Cref{section maximal subgroups of F4}, \Cref{section A1 subgroups classification}, \Cref{section classification simple subgroups} and \Cref{section connected subgroups satisfying our conditions},
following Dynkin's strategy in \cite{Dynkin1952}.
It is worth mentioning two important ingredients in the proof:
\begin{itemize}
    \item A local-global conjugacy result (\Cref{prop compact Lie group F4 is acceptable}) for $\grpF(\R)$,
    which we have already mentioned in the end of \Cref{section automorphic variant of question}.  
    This relies on a result about Lie algebras (\Cref{thm Lie algebra F4 is acceptable}) proved by Losev in \cite{Losev2005OnIO}.
    \item A useful criterion (\Cref{prop just one rep is enough for conjugacy}) given in \Cref{section one representation is enough} for the conjugacy of two homomorphisms from a connected compact Lie group into $\grpF(\R)$.
\end{itemize}
\begin{ex}\label{introex subgroups of F4}
    Among the conjugacy classes of subgroups classified in \Cref{introthm classification of subgroups of F4},
    we have 
    \[\spin(9),\,\spin(8),\,\lietype{G}{2}\times\sorth(3),\,\left(\symp(3)\times\su(2)\right)/\mu_{2}^{\Delta},\,\left(\symp(2)\times\su(2)\times\su(2)\right)/\mu_{2}^{\Delta},\]
    where the notations will be explained in \Cref{notation compact Lie groups} and \Cref{notation quotient by diagonal}.
    The remaining subgroups are all centrally isogenous to products of $n$ copies of $\su(2),\,n\leq 4$.
    Note that among the subgroups listed above,
    only $\spin(9)$ and $\left(\symp(3)\times\su(2)\right)/\mu_{2}^{\Delta}$ are maximal proper connected regular subgroups of $\grpF(\R)$.
\end{ex}

\subsection{Arthur's conjectures}\label{section Arthur conjectures in the intro}
\tp As in \cite{ChenevierRenard},
for the part (2) of \Cref{prob automorphic dimension for reductive groups}, 
we need some conjectures on automorphic representations.
For a connected reductive algebraic group $G$ over $\Q$,
Arthur introduces in \cite{ArthurUnipRep} a conjectural parametrization of discrete automorphic representations,
via \emph{discrete global Arthur parameters} for $G$.
In the level one case,
these parameters are $\widehat{G}(\C)$-conjugacy classes of admissible morphisms 
\[\psi:\mathcal{L}_{\Z}\times\SL_{2}(\C)\rightarrow\widehat{G}(\C),\]
where $\mathcal{L}_{\Z}$ is the hypothetical Langlands group of $\Z$ (see \Cref{section global parametrization and Langlands group} for more details),
and $\widehat{G}$ is the Langlands dual group of $G$.
Arthur proposes a conjectural formula for the multiplicity of an irreducible $G(\A)$-representation 
in the discrete automorphic spectrum of $G$,
in terms of the associated global Arthur parameters.

In \cite{arthur2013endoscopic},
Arthur reformulates his conjectures 
for any quasi-split classical group $G$,
avoiding the appearance of the hypothetical Langlands group $\mathcal{L}_{\Z}$.
In this case,
he relates the global Arthur parameters for $G$
to cuspidal automorphic representations of linear groups,
and proves the endoscopic classifications,
relying in particular on the works of M\oe glin-Waldspurger \cite{moeglin2014stabilisation}, Ng\^o \cite{NgoAFL} and many others.
We refer to \cite[\S 8]{ChenevierLannes}
for precise statements of Arthur's results in \cite{arthur2013endoscopic}
in the case of level one cohomological automorphic representations of classical groups. 

Of course $\grpF$ is not a classical group,
and Arthur's general conjectures \cite{ArthurUnipRep} are still open in this case.
Nevertheless, they can still be formulated quite precisely if we admit the existence of $\mathcal{L}_{\Z}$.
See also \cite[\S 6.4]{ChenevierLannes} for some generalities of Arthur's conjectures in the level one case.
\begin{notation}
    In the rest of this paper,
we will mark any result conditional to  
the existence of $\mathcal{L}_{\Z}$ and Arthur's multiplicity formula (\Cref{conj Arthur multiplicity formula})
with a star $*$.
\end{notation}
Now we briefly explain Arthur's conjectures for $\grpF$.
For a level one automorphic representation $\pi$ of $\grpF$
with global Arthur parameter $\psi:\mathcal{L}_{\Z}\times\SL_{2}(\C)\rightarrow \grpF(\C)$,
we may compose $\psi$ with the $26$-dimensional irreducible representation $\mathrm{r}:\grpF(\C)\rightarrow \GL_{26}(\C)$
\footnote{The image of $\mathrm{r}$ is even inside $\sorth_{26}(\C)\subset\SL_{26}(\C)\subset\GL_{26}(\C)$.},
and thus obtain a representation of $\mathcal{L}_{\Z}\times\SL_{2}(\C)$.
This representation is decomposed as:
\begin{align}
    \mathrm{r}\circ\psi\simeq \pi_{1}[d_{1}]\oplus\cdots\oplus\pi_{k}[d_{k}],\tag{$\star$}\label{eqn parameter for F4}
\end{align}
where $\pi_{i}$ is an $n_{i}$-dimensional irreducible representation of $\mathcal{L}_{\Z}$ and $[d_{i}]$ stands for the irreducible $d_{i}$-dimensional representation of $\SL_{2}(\C)$,
and $\sum_{i=1}^{k}n_{i}d_{i}=26$.
We identify $\pi_{i}$ as a level one cuspidal representations of $\pgl_{n_{i}}$,
and observe that it is always self-dual and algebraic in this case (see \Cref{section cuspidal representations of GL(n)}).
In a similar way as in \cite{arthur2013endoscopic},
we view the global Arthur parameter $\psi$ as a linear combination of $\pi_{i}[d_{i}]$'s.

In \Cref{section Arthur parameters of F4},
we derive from \Cref{introthm classification of subgroups of F4} that 
the Sato-Tate group of any $\pi_{i}$ appearing in the decomposition \eqref{eqn parameter for F4}
is one of the following compact Lie groups:
\begin{align*}
    \su(2),\,\symp(2),\,\symp(3),\,\sorth(8),\,\sorth(9),\,\lietype{G}{2},\grpF(\R).    \tag{$\star\star$}\label{eqn possible Sato Tate groups}
\end{align*}
Cuspidal representations with Sato-Tate group $\grpF(\R)$
conjecturally correspond to the desired $\ell$-adic representations in \Cref{prob refinement of Galois representations counting},
and those with other Sato-Tate groups in \eqref{eqn possible Sato Tate groups}
are related to level one automorphic representations for the following $\Q$-groups:
\[\pgl_{2},\,\sorth_{3,2},\,\sorth_{7},\,\sorth_{8},\,\symp_{8},\lietype{G}{2},\]
which have already been studied in \cites{ChenevierRenard}{TaibiDimension}{Chenevier2020DiscreteSM}.

Conversely, for a global Arthur parameter $\psi:\mathcal{L}_{\Z}\times\SL_{2}(\C)\rightarrow\grpF(\C)$ whose ``archimedean component" is an Adams-Johnson parameter (see \Cref{def Adams Johnson parameters} and \Cref{rmk relation with Adams Johnson parameter}),
the multiplicity of its corresponding irreducible $\grpF(\A)$-representation $\pi$ in the automorphic spectrum 
can be calculated via Arthur's formula in \cite{ArthurUnipRep}.
In the level one case,
this formula involves two characters on
the centralizer $\mathrm{C}_{\psi}$ of $\mathrm{Im}(\psi)$ in $\grpF(\C)$,
which is an elementary abelian $2$-group.
The first character is Arthur's character $\varepsilon_{\psi}$,
and we will recall its definition in \Cref{section Arthur epsilon character}.
The second character is a local character at the archimedean place,
an explicit formula for which will be given in \Cref{section multiplicity formula for F4}.

\subsection{Refinement of the counting}\label{section refinement of counting}
\tp With all these preparations, 
we are ready to refine the counting in \Cref{introthm dimension formula for automorphic representations}.
For a global Arthur parameter $\psi:\mathcal{L}_{\Z}\times\SL_{2}(\C)\rightarrow\grpF(\C)$,
one can associate two invariants:
\begin{itemize}
    \item its Sato-Tate group $\mathrm{H}(\psi):=\psi(\mathcal{L}_{\Z}\times \su(2))$, viewed as a conjugacy of subgroups in the compact group $\grpF(\R)$;
    \item its ``weights'', i.e. eigenvalues of its infinitesimal character under the $26$-dimensional irreducible representation $\mathrm{r}:\grpF\rightarrow\SL_{26}$. 
\end{itemize}
Given any conjugacy class of proper subgroups $H$ of $\grpF(\R)$ classified in \Cref{introthm classification of subgroups of F4},
in \Cref{section classification of Arthur parameters of F4} we classify all the possible decompositions \eqref{eqn parameter for F4} of $\mathrm{r}\circ\psi$ for 
global Arthur parameters $\psi$ with $\mathrm{H}(\psi)=H$. 
If $\psi$ corresponds to an irreducible level one $\grpF(\A)$-representation $\pi$,
an important part of our work is to give an exact formula for the multiplicity of $\pi$,
for each case of Sato-Tate groups.
Roughly speaking,
the multiplicity depends on how the weights of $\psi$ 
are distributed in the summands $\pi_{i}[d_{i}]$'s of \eqref{eqn parameter for F4}.
In conclusion, we have the following result:
\begin{spintrothm}\label{introthm multiplicity formula for F4}
    (\Cref{thm arthur parameter with nonzero multiplicity})
    \begin{enumerate}[label=(\alph*)]
        \item The Sato-Tate group of a level one automorphic representation for $\grpF$ is either
        $\grpF(\R)$ or one of the proper subgroups of $\grpF(\R)$ classified in \Cref{introthm classification of subgroups of F4} except $\spin(8)$.
        \item For global Arthur parameters of $\grpF$ with a given Sato-Tate group,
        the multiplicity of its corresponding irreducible level one $\grpF(\A)$-representation ($0$ or $1$)
        is given explicitly by the formulas in \Cref{prop multiplicity principal A1} to \Cref{prop multiplicity F4 tempered}.
    \end{enumerate}
\end{spintrothm}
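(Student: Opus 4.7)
My plan is to run a systematic case analysis over the $14$ candidate Sato--Tate groups---the $13$ proper connected subgroups classified in \Cref{introthm classification of subgroups of F4} together with the full group $\grpF(\R)$---treating each potential image $H$ separately. For each $H$, I first enumerate all global Arthur parameters $\psi$ with $\mathrm{H}(\psi)=H$, and then apply Arthur's multiplicity formula to decide whether the associated irreducible $\grpF(\A)$-representation actually occurs.

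For the enumeration step, I would start from the restrictions $\mathrm{r}|_H$ computed in \Cref{section classification Sato-Tate}. Any parameter $\psi:\mathcal{L}_{\Z}\times\SL_{2}(\C)\to H\subset\grpF(\C)$ with this image refines the branching of $\mathrm{r}|_H$ into a decomposition of the form \eqref{eqn parameter for F4}, whose building blocks $\pi_i$ must, by \eqref{eqn possible Sato Tate groups}, be level one self-dual cuspidal representations of $\pgl_{n_i}$ whose own Sato--Tate groups belong to the short list $\{\su(2),\symp(2),\symp(3),\sorth(8),\sorth(9),\lietype{G}{2},\grpF(\R)\}$. These building blocks are enumerable in any bounded weight range from the tables of \cite{ChenevierRenard}, \cite{TaibiDimension} and \cite{Chenevier2020DiscreteSM}. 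Matching dimensions, self-duality, and the requirement that $\mathrm{Im}(\psi)$ be exactly $H$ (and not a strictly larger or smaller subgroup from the list) then becomes a finite combinatorial problem that I resolve case by case.

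For the multiplicity step, once a candidate $\psi$ is in hand I identify its centralizer $\mathrm{C}_\psi \subset \grpF(\C)$, which is the centralizer of $H$ and, by our classification, an elementary abelian $2$-group of order between $1$ and $8$; I compute Arthur's global character $\varepsilon_\psi$ on $\mathrm{C}_\psi$ following \Cref{section Arthur epsilon character}, and the archimedean Adams--Johnson character $\chi_\infty$ following the formula assembled in \Cref{section multiplicity formula for F4}. The multiplicity is the average
\[
m(\psi)\;=\;\frac{1}{|\mathrm{C}_\psi|}\sum_{s\in \mathrm{C}_\psi}\varepsilon_\psi(s)\,\chi_\infty(s),
\]
which should come out to $0$ or $1$ in every case. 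The resulting tables are the content of \Cref{prop multiplicity principal A1}--\Cref{prop multiplicity F4 tempered}. The case $H=\grpF(\R)$ is trivial in this framework since $\mathrm{C}_\psi$ is then trivial.

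The exclusion of $H=\spin(8)$ in part (a) needs a separate argument. Computing the branching through $\spin(9)$ gives $\mathrm{r}|_{\spin(8)} = \mathbf{8}_v \oplus \mathbf{8}_s \oplus \mathbf{8}_c \oplus 2\cdot\mathbf{1}$, and a parameter with image exactly $\spin(8)$ would correspond to three pairwise non-isomorphic level one self-dual cuspidal representations of $\pgl_{8}$ whose infinitesimal characters are matched by triality; I would rule this out by showing that the $\grpF$-regularity constraint together with the archimedean parameter at $\spin(8)$ forces two of the three Satake parameters to coincide, which collapses the image into one of the larger subgroups already on our list. The principal obstacle I foresee is the archimedean character computation itself: extracting explicit Adams--Johnson signs through every decomposition requires a concrete coordinate system on a compact Cartan of $\grpF(\R)$ and careful sign tracking via the chosen Borel in $\widehat{\grpF}$, and the bookkeeping is most intricate when $|\mathrm{C}_\psi|\in\{4,8\}$, which occurs precisely for the smaller Sato--Tate groups that are centrally isogenous to products of copies of $\su(2)$. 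Once the pair $(\varepsilon_\psi,\chi_\infty)$ is uniformly in hand, \Cref{introthm multiplicity formula for F4} reduces to a finite compilation of these inner products.
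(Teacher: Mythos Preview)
Your overall strategy matches the paper's: a case analysis over the $14$ candidate subgroups, enumerating the possible endoscopic types from the branching $\mathrm{r}|_H$, then evaluating Arthur's multiplicity formula via the two characters of $\mathrm{C}_\psi$. Your averaged form of the multiplicity is equivalent to the paper's criterion $m_\psi=1\iff \varepsilon_\psi=\rho^\vee_\psi$ (since $\mathrm{C}_\psi$ is an elementary abelian $2$-group), and the paper computes $\rho^\vee_\psi$ not via Adams--Johnson packet combinatorics but by the concrete recipe of \Cref{prop calculation of rho character for F4}: locate the first and fourth largest eigenvalues $\mu_1,\mu_4$ of $\mathrm{c}_\infty(\psi)$ among the summands $\pi_i[d_i]$ and multiply the corresponding $\mathrm{C}_\psi$-characters. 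This is computationally much lighter than the Adams--Johnson bookkeeping you anticipate.

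Your treatment of $\spin(8)$ is a genuine gap. You propose to show that such parameters cannot exist---that the $\grpF$-regularity constraint forces two of the three triality-related $\pgl_8$-representations to coincide, collapsing the image. This is not what happens, and the paper does not argue this way. Parameters $\psi=\pi\oplus\spin^+\pi\oplus\spin^-\pi\oplus[1]\oplus[1]$ with $\pi\in\Pi_{\alg}^{\sorth_8}(\pgl_8)$ and $\mathrm{H}(\psi)=\spin(8)$ are not obstructed at the level of infinitesimal characters; there is no reason the triality twists should coincide. Instead, the paper (\Cref{prop multiplicity Spin(8)}) simply runs the multiplicity formula: since $\psi$ is tempered, $\varepsilon_\psi$ is trivial, and a direct eigenvalue-ordering argument shows that $\mu_1$ and $\mu_4$ always land in different irreducible summands among $\{\pi,\spin^+\pi,\spin^-\pi\}$, so $\rho^\vee_\psi\neq 1$ and hence $m_\psi=0$ in every case. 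That is how $\spin(8)$ gets excluded in part~(a): not by non-existence of parameters, but by the multiplicity formula killing them all.
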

\begin{rmk}\label{rmk not endoscopic group}
    We observe that not all subgroups in \Cref{introthm classification of subgroups of F4}
    come from \emph{endoscopic groups} of $\grpF$,
    in the sense of \cite{arthur2013endoscopic}.
    For example, the subgroup $\lietype{G}{2}\times\sorth(3)$ has trivial centralizer in $\grpF(\R)$,
    thus it can not be the centralizer of any element in $\grpF(\R)$.
    As a result,
    our conjectural refinement is finer than Arthur's endoscopic classification in \cite{arthur2013endoscopic}. 
\end{rmk}
Given an irreducible representation $\vrep{\lambda}$ of $\grpF(\R)$,
from \Cref{introthm dimension formula for automorphic representations}
we know the number of equivalence classes of level one automorphic representations $\pi$ for $\grpF$ with $\pi_{\infty}\simeq\vrep{\lambda}$.
The weights of the global Arthur parameter $\psi_{\pi}$ of $\pi$ are determined by $\vrep{\lambda}$.
We can enumerate all the possible global Arthur parameters with these weights,
and then use the multiplicity formulas in \Cref{introthm multiplicity formula for F4} to determine their multiplicities.
In this way,
we obtain a \emph{conjectural} refinement of the counting in \Cref{introthm dimension formula for automorphic representations}.
\begin{ex}
    In \Cref{table packet 34} and \Cref{table packet 36}, 
    we list some parameters with ``small'' archimedean components.
    For example,
    there are two different level one automorphic representations of $\grpF$ with trivial archimedean components,
    whose Arthur parameters are:
    \[[9]\oplus [17]\text{ and }\Delta_{11}[6]\oplus [5]\oplus [9].\]
    The first parameter corresponds to the trivial representation,
    and its Sato-Tate group
    \footnote{As we mentioned in \Cref{rmk Sato Tate groups},
    the notion of Sato-Tate groups in the introduction coincides with the usual notion if and only if the restriction of the global Arthur parameter to $\SL_{2}(\C)$ is trivial.
    Here these two Arthur parameters fail to satisfy this condition.
    }
    is the principal $\su(2)$ in $\grpF(\R)$.
    The Sato-Tate group of the second parameter 
    is isomorphic to $\left(\su(2)\times\su(2)\right)/\mu_{2}^{\Delta}$
    \footnote{Beware that there are many distinct conjugacy classes of subgroups of $\grpF(\R)$ isomorphic to $\su(2)$.},
    the information about which can be found in \Cref{section Arthur principal A1 in Sp(3) with Sp(1)}.
    The Hecke eigenvalues of its corresponding automorphic representation for $\grpF$
    are thus related to the Fourier coefficients of Ramanujan's $\Delta$ function,
    i.e. the unique level one classical cuspidal modular form with weight $11$.
\end{ex}
As a consequence of \Cref{introthm multiplicity formula for F4},
we obtain a conjectural solution to \Cref{prob refinement of Galois representations counting},
stated in terms of automorphic representations:
\begin{spintrothm}\label{introthm formula for cuspidal representations with Sato-Tate group F4}(\Cref{prop conjectural number of F4 image representations} and \Cref{prop smallest motivic weight 36 tempered stable representation})
    The number of algebraic, cuspidal, level one automorphic representations of $\GL_{26}$ over $\Q$
    satisfying:
    \begin{itemize}
        \item the Sato-Tate group is $\grpF(\R)$,
        \item and the multiset of weights
        \footnote{See \Cref{section cuspidal representations of GL(n)} for the precise definition of weights for an algebraic cuspidal level one automorphic representation of $\GL_{n}$.}  
        is $\mathrm{HT}(a,b,c,d)$ for $a,b,c,d\geq 1$,
    \end{itemize} 
    is $\lietype{F}{4}(a-1,b-1,c-1,d-1)$,
    where $\lietype{F}{4}(\lambda)$ is an explicit function on $\mathbb{N}^{4}$ given by \Cref{prop conjectural number of F4 image representations}.
\end{spintrothm}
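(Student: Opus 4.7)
The plan is to identify $\lietype{F}{4}(a-1,b-1,c-1,d-1)$ with the part of the count $d(\lambda)$ of \Cref{introthm dimension formula for automorphic representations} arising from those level one automorphic representations $\pi$ of $\grpF$ whose Sato-Tate group is the full group $\grpF(\R)$, where $\lambda=(a-1)\varpi_{1}+(b-1)\varpi_{2}+(c-1)\varpi_{3}+(d-1)\varpi_{4}$. The choice of $\lambda$ is pinned down by requiring that the infinitesimal character of $V_{\lambda}$ push forward under $\mathrm{r}:\grpF(\C)\to\SL_{26}(\C)$ to the multiset $\mathrm{HT}(a,b,c,d)$: the shift $(a,b,c,d)\mapsto(a-1,b-1,c-1,d-1)$ accounts for the $\rho$-shift in the Harish-Chandra parameter, and the hypothesis $a,b,c,d\geq 1$ is exactly the $\lietype{F}{4}$-regular dominance condition.

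First I would set up the Langlands--Arthur dictionary. Under the conjectural global Langlands correspondence, an algebraic, cuspidal, level one automorphic representation $\Pi$ of $\GL_{26}$ with Sato-Tate group $\grpF(\R)$ corresponds to an irreducible $26$-dimensional self-dual representation of $\mathcal{L}_{\Z}$ that factors through $\mathrm{r}$ with Zariski dense image in $\grpF(\C)$. Since the centralizer of such a subgroup in $\grpF(\C)$ is trivial, this is equivalent to a discrete global Arthur parameter $\psi:\mathcal{L}_{\Z}\times\SL_{2}(\C)\to\grpF(\C)$ that is trivial on $\SL_{2}(\C)$ and has $\mathrm{H}(\psi)=\grpF(\R)$. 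By Arthur's multiplicity formula each such $\psi$ contributes multiplicity exactly one to a level one automorphic representation of $\grpF$ with archimedean component $V_{\lambda}$, so the desired number is exactly the number of level one automorphic representations $\pi$ of $\grpF$ with $\pi_{\infty}\simeq V_{\lambda}$ and $\mathrm{H}(\pi)=\grpF(\R)$.

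The body of the proof evaluates this by subtracting from $d(\lambda)$ the contribution of all representations with strictly smaller Sato-Tate group. By \Cref{introthm multiplicity formula for F4}(a), only the $12$ proper subgroups $H$ of \Cref{introthm classification of subgroups of F4} other than $\spin(8)$ can occur. For each such $H$ I would enumerate, following \Cref{section classification of Arthur parameters of F4}, the admissible decompositions $\mathrm{r}\circ\psi\simeq\bigoplus_{i}\pi_{i}[d_{i}]$ whose eigenvalues on $\mathcal{L}_{\Z}\times\SL_{2}(\C)$ match the multiset $\mathrm{HT}(a,b,c,d)$; count the factors $\pi_{i}$ using the dimension formulas of \cites{ChenevierRenard}{TaibiDimension}{Chenevier2020DiscreteSM} for level one self-dual algebraic cuspidal representations of $\pgl_{n_{i}}$ with the required infinitesimal character and Sato-Tate group; and apply the multiplicity formulas of \Cref{introthm multiplicity formula for F4}(b) to determine which decompositions actually contribute. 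Packaging the resulting alternating sum as a single closed-form expression produces the function $\lietype{F}{4}(a-1,b-1,c-1,d-1)$.

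For the second assertion, once normalized to non-negative Hodge--Tate weights the motivic weight equals $2(2a+3b+2c+d)$, so for $a,b,c,d\geq 1$ the quadruples of motivic weight at most $36$ form a short, explicit list. I would tabulate $\lietype{F}{4}(a-1,b-1,c-1,d-1)$ on this list, verifying that it vanishes for every quadruple of motivic weight strictly less than $36$ and equals $1$ at $(a,b,c,d)=(2,3,1,3)$, in agreement with \Cref{introex tempered representation}. The main obstacle throughout is the case-by-case bookkeeping for the $12$ proper Sato-Tate subgroups: each requires correctly matching Arthur's character $\varepsilon_{\psi}$ against the archimedean local character on the centralizer $\mathrm{C}_{\psi}$, and accounting for all admissible distributions of the $26$ weights among the summands $\pi_{i}[d_{i}]$. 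Assembling these case-by-case subtractions into a uniform formula, rather than leaving them as a branching case analysis, is the computational heart of the argument.
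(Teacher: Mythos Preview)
Your proposal is correct and follows essentially the same approach as the paper: identify cuspidal representations of $\GL_{26}$ with Sato-Tate group $\grpF(\R)$ with tempered stable Arthur parameters for $\grpF$, use the triviality of $\mathrm{C}_{\psi}$ to deduce multiplicity one for these, and compute $\lietype{F}{4}(\lambda)$ by subtracting from $d(\lambda)$ the contributions of the twelve proper subgroups via the case-by-case multiplicity formulas. The only minor difference is in the treatment of the motivic weight $36$ assertion: you propose tabulating $\lietype{F}{4}$ over all small quadruples, whereas the paper (in \Cref{prop smallest motivic weight 36 tempered stable representation}) gives a direct, self-contained argument for the single weight $\lambda=(1,2,0,2)$, excluding each endoscopic type by hand, so that readers who skipped the full case analysis of \Cref{thm arthur parameter with nonzero multiplicity} can see Arthur's conjectures applied concretely.
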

\begin{ex}\label{introex number of F4 representations}
    The quadruples $(a,b,c,d)\in \mathbb{N}^{4}$ such that 
    \begin{itemize}
        \item the largest weight $2a+3b+2c+d+8$ in the multiset $\mathrm{HT}(a+1,b+1,c+1,d+1)$ is not larger than $22$,
        \item and $\lietype{F}{4}(a,b,c,d)\neq 0$,
    \end{itemize}
    are listed in \Cref{TableMult}, \Cref{TableAppendix}.
    We also list the values of $\lietype{F}{4}(a,b,c,d)$ for these quadruples.
    As a direct consequence,
    we predict the existence of the geometric $\ell$-adic representation in \Cref{introex tempered representation}.
\end{ex}
\begin{rmk}\label{intrormk reason of conductor 1}
    One may want to remove the level one condition,
    like in \cite{lachausseeThesis}.
    For the part (1) of \Cref{prob automorphic dimension for reductive groups} for $\grpF$,
    one can calculate the dimension of invariants under other congruence subgroups,
    and obtain results similar to \Cref{introthm dimension formula for automorphic representations} for higher levels.
    However, for the part (2) of \Cref{prob automorphic dimension for reductive groups} for $\grpF$, 
    what we use is a simplified version of Arthur's recipe in \cite{ArthurUnipRep}.
    When allowing ramifications at some finite place $p$,
    one needs some properties of \emph{local Arthur packets} for $\grpF(\Q_{p})$,
    which are still unknown to us. 
\end{rmk}
Let us end the introduction with a short summary of the contents of this paper.
In \Cref{section groups of type F4},
we recall the definition of $\grpF$
and some results of Gross \cite{G96} on reductive $\Z$-models of $\grpF$.
We also give a new proof for \Cref{introthm two reductive integral models of F4}.
We prove \Cref{introthm dimension formula for automorphic representations} 
in \Cref{section dimension of invariant subspaces of F4 representations}.
In \Cref{section subgroups of F4},
we study the subgroups of the compact Lie group $\grpF(\R)$ 
and prove \Cref{introthm classification of subgroups of F4}.
In \Cref{section automorphic representations and Arthur's conjectures}, we recall the theory of level one automorphic representations 
and the conjectures by Arthur and Langlands,
mainly following \cites{ChenevierRenard}{ChenevierLannes}. 
Then we apply these conjectures to $\grpF$ and prove \Cref{introthm multiplicity formula for F4} and \Cref{introthm formula for cuspidal representations with Sato-Tate group F4} in \Cref{section classification of Arthur parameters of F4}.
In \Cref{TableAppendix},
some figures and tables used in this article are provided.

\subsection*{Acknowledgements} 
\tp I would like to thank my thesis advisor, Ga\"etan Chenevier, for introducing me to this subject.
His advice and inspiring discussions have greatly contributed to this work.

\section{The compact Lie group \texorpdfstring{$\lietype{F}{4}$}{} and its reductive integral models}\label{section groups of type F4}
\tp In this section we introduce the compact Lie group of type $\lietype{F}{4}$ that we will discuss in this paper,
and give a classification of its reductive $\Z$-models.
\subsection{The compact group \texorpdfstring{$\lietype{F}{4}$}{PDFstring} and its rational structure}\label{section compact Lie group F4}
\tp To construct Lie groups of exceptional types, 
we need to recall the notion of octonions,
and our main reference is \cite[\S 5]{NonRed}.
\begin{defi}\label{def general definition of octonion algebra}
    An \emph{octonion algebra} $C$ over a field $k$ is a non-associative $k$-algebra of $k$-dimension $8$ with $2$-sided identity element $e$ 
    such that there exists a non-degenerate quadratic form $N$ on $C$ satisfying 
    $N(xy)=N(x)N(y), x,y\in C$.
    The quadratic form $N$ is referred as the \emph{norm} on $C$.
\end{defi}
When considering octonion algebras over $\R$, we have the following classification result:
\begin{prop}\label{prop unique real definite octonion}
    \cite[Theorem 15.1]{AdamsExceptionalGrp}
    Up to $\R$-algebra isomorphism,
    there is a unique octonion algebra $\oct_{\R}$ over $\R$ 
    whose norm $\mathrm{N}$ is positive definite,
    which is named as the \emph{real octonion division algebra}.
\end{prop}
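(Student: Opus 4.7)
The plan is to use the Cayley--Dickson doubling construction. Given a composition algebra $(A, N_A)$ over $\R$ with standard involution $a \mapsto \bar a$ and any nonzero $\alpha \in \R$, the double $A \oplus A$ is a unital $\R$-algebra of twice the dimension carrying the quadratic form $N((a, b)) = N_A(a) - \alpha N_A(b)$; one checks that $N$ is multiplicative, so the double is again a composition algebra. Positive definiteness of $N$ holds if and only if $N_A$ is positive definite and $\alpha < 0$. Moreover, in that case the resulting algebra is independent of $\alpha$ up to $\R$-algebra isomorphism, because rescaling the second summand by $\sqrt{-\alpha}$ reduces to the normalization $\alpha = -1$.

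For existence, start from $\R$ with $N(x) = x^2$ and iterate the doubling three times with $\alpha = -1$. This produces successively $\C$ with norm $|z|^2$, the Hamilton quaternions $\mathbb{H}$, and then an $8$-dimensional composition algebra over $\R$ with positive definite norm, which is the sought algebra $\oct_\R$.

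For uniqueness, I would invoke the generalized Hurwitz theorem recalled in \cite[\S 5]{NonRed}: every composition algebra over $\R$ of dimension $2^k$ is obtained from $\R$ by $k$ successive Cayley--Dickson doublings. Hence any octonion algebra $C$ over $\R$ with positive definite norm arises from three such doublings, and at each stage positive definiteness of the induced norm forces the corresponding doubling parameter $\alpha_i$ to be negative. By the normalization remark above, the $2$-, $4$-, and $8$-dimensional algebras obtained at each step are each uniquely determined up to $\R$-algebra isomorphism, which gives $C \simeq \oct_\R$.

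The main potential obstacle is the structural input from the generalized Hurwitz theorem, namely that every finite-dimensional composition algebra is an iterated Cayley--Dickson double of the ground field. This is nontrivial (the proof extracts a proper composition subalgebra via an orthogonal-complement argument and iterates), but since the paper already references \cite{NonRed} for octonion algebras, I would use it as a black box rather than reprove it here.
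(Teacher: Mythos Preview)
The paper does not supply its own proof of this proposition; it simply cites \cite[Theorem 15.1]{AdamsExceptionalGrp} and then moves on to describe the multiplication table of $\oct_{\R}$ explicitly. So there is no in-paper argument to compare against.

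Your proposed argument via iterated Cayley--Dickson doubling together with the generalized Hurwitz theorem is correct and is indeed the standard route (and essentially the approach in the cited reference). The only point worth tightening is the normalization step: when you say rescaling the second summand by $\sqrt{-\alpha}$ reduces to $\alpha=-1$, you are implicitly using that this rescaling is an $\R$-algebra isomorphism between the two doubled algebras, not merely an isometry of the norm forms; this is true and easy to check from the doubling multiplication, but it should be stated as such. With that, your proof is complete.
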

The multiplication law $\oct_{\R}\times\oct_{\R}\rightarrow \oct_{\R}$ can be given as follows:
as a vector space $\oct_{\R}$ admits a basis $\{\mathrm{e},\mathrm{e}_{1},\ldots,\mathrm{e}_{7}\}$ such that $\mathrm{e}$ is the identity element and as an $\R$-algebra $\oct_{\R}$ 
is generated by $\{\mathrm{e}_{1},\ldots,\mathrm{e}_{7}\}$ subject to the relations 
\begin{itemize}
    \item for all $i$, $\mathrm{e}_{i}^{2}=-\mathrm{e}$;
    \item considering the subscripts as elements in $\Z/7\Z$,
        the subspace of $\oct_{\R}$ generated by $\{\mathrm{e},\mathrm{e}_{i},\mathrm{e}_{i+1},\mathrm{e}_{i+3}\}$ is an associative algebra with relations 
        \[\mathrm{e}_{i}^{2}=\mathrm{e}_{i+1}^{2}=\mathrm{e}_{i+3}^{2}=-\mathrm{e}, \mathrm{e}_{i}\mathrm{e}_{i+1}=-\mathrm{e}_{i+1}\mathrm{e}_{i}=\mathrm{e}_{i+3}.\] 
\end{itemize}
We identify the real numbers $\R$ with the subalgebra $\R \mathrm{e}$ of $\oct_{\R}$
and the identity element of $\oct_{\R}$ will be denoted as $1$. 
Now we recall some basic properties of $\oct_{\R}$, for which we refer to \cite[\S 5]{NonRed}.
There is an anti-involution of algebra $x\mapsto \overline{x}$ called the \emph{conjugation} on $\oct_{\R}$, 
defined by $\overline{1}=1$ and $\overline{\mathrm{e}_{i}}=-\mathrm{e}_{i}$ for each $i$. 
The \emph{trace} and \emph{norm} on $\oct_{\R}$ are defined as: 
\[\mathrm{Tr}(x)=x+\overline{x},\ \mathrm{N}(x)=x\cdot\overline{x}=\overline{x}\cdot x.\]
The multiplication law on $\oct_{\R}$ implies that 
\begin{align}\label{eqn trace-commutativity of octonion}
    \tr(xy)=\tr(yx)=\tr(\overline{x}\cdot\overline{y})\text{ for all }x,y\in\oct_{\R}.
\end{align}
For an element $x=x_{0}+\sum\limits_{i=1}^{7}x_{i}\mathrm{e}_{i}\in\oct_{\R}$,
its norm $\mathrm{N}(x)$ equals
$\sum_{i=0}^{7}x_{i}^{2}$,
from which we can see that $\mathrm{N}$ is a positive definite quadratic form.
Its associated symmetric bilinear form is 
$\langle x,y\rangle:=\mathrm{N}(x+y)-\mathrm{N}(x)-\mathrm{N}(y)=x\cdot \overline{y}+y\cdot \overline{x}=\mathrm{Tr}(x\cdot \overline{y})$.

Although the multiplication law of $\oct_{\R}$ is not associative,
it is still trace-associative in the sense that 
\[\tr((x\cdot y)\cdot z)=\tr(x\cdot(y\cdot z))\text{ for all }x,y,z\in\oct_{\R},\]
and we can define $\tr(xyz):=\tr((x\cdot y)\cdot z)=\tr(x\cdot (y\cdot z))$.

For our construction, we still have to recall the exceptional Jordan algebra,
following \cite[\S 6]{NonRed}:
\begin{defi}\label{def real exceptional Jordan algebra}
    The \emph{(positive definite) real exceptional Jordan algebra}, denoted by $\mathrm{J}_{\R}$, 
    is the $27$-dimensional $\R$-vector space consisting of ``Hermitian'' matrices in $\mathrm{M}_{3}(\oct_\R)$,
    i.e. matrices of the form 
    \begin{align*}
        \left(\begin{matrix}
        a&z&\overline{y}\\
        \overline{z}&b&x\\
        y&\overline{x}&c
        \end{matrix}\right),\ a,b,c\in\R,\ x,y,z\in\oct_{\R},
    \end{align*}
    equipped with the $\R$-bilinear multiplication law 
    \[\mathrm{J}_{\R}\times \mathrm{J}_{\R}\rightarrow \mathrm{J}_{\R}, A\circ B:=\frac{1}{2}(AB+BA),\]
    where $AB$ and $BA$ denote the usual product of octonionic matrices,
    and with $2$-sided identity element $\mathrm{I}$ given by the standard matrix identity element $\mathrm{diag}(1,1,1)$.
\end{defi}
As an $\R$-algebra, $\mathrm{J}_{\R}$ is commutative but not associative.
\begin{notation}
    To compress the space,
    when we do not need to emphasize the matrix structure of elements in $\mathrm{J}_{\R}$,
    we denote the element 
    \[\left(\begin{matrix}
        a&z&\overline{y}\\
        \overline{z}&b&x\\
        y&\overline{x}&c
        \end{matrix}\right),\ a,b,c\in\R,\ x,y,z\in\oct_{\R}\]
    by $[a,b,c\,;x,y,z]$ for short.    
\end{notation}

The \emph{trace} of $A=[a,b,c\,;x,y,z]\in \mathrm{J}_{\R}$ is defined as $\mathrm{Tr}(A):=a+b+c$.
The underlying vector space of $\mathrm{J}_{\R}$ is equipped with the non-degenerate positive definite quadratic form:
\begin{align}\label{eqn quadratic form on exceptional Jordan algebra}
\mathrm{Q}(A):=\mathrm{Tr}(A\circ A)/2=\frac{1}{2}(a^{2}+b^{2}+c^{2})+\mathrm{N}(x)+\mathrm{N}(y)+\mathrm{N}(z).
\end{align}
Its associated bilinear form is $\mathrm{B}_{\mathrm{Q}}(A,B):=\mathrm{Q}(A+B)-\mathrm{Q}(A)-\mathrm{Q}(B)=\mathrm{Tr}(A\circ B)$.
The \emph{determinant} of the matrix $A$ is defined by
\begin{align}\label{eqn determinant on Jordan algebra}
\det(A):=abc+\mathrm{Tr}(xyz)-a\mathrm{N}(x)-b\mathrm{N}(y)-c\mathrm{N}(z).
\end{align}
It defines a cubic form on $\mathrm{J}_{\R}$. 

We denote by $\lietype{F}{4}$ the subgroup $\aut(\mathrm{J}_{\R},\circ)$ of $\GL(\mathrm{J}_{\R})$ consisting of elements 
$g\in\GL(\mathrm{J}_{\R})$ such that for all $A,B\in \mathrm{J}_{\R}$,
$g(A\circ B)=g(A)\circ g(B)$.
It is a compact Lie group of type $\lietype{F}{4}$ \cite[Theorem 16.7]{AdamsExceptionalGrp}.

In this paper, we deal with automorphic forms so we want a reductive group over $\Q$ whose real points is isomorphic to $\lietype{F}{4}$.
For this purpose, we first define the following $\Q$-algebras:
\begin{defi}\label{def rational octonion and exceptional Jordan algebra}
    \emph{Cayley's definite octonion algebra} $\oct_{\Q}$ is the sub-$\Q$-algebra of $\oct_{\R}$
    generated by $\{\mathrm{e}_{1},\ldots,\mathrm{e}_{7}\}$.
    The \emph{(positive definite) rational exceptional Jordan algebra} $\mathrm{J}_{\Q}$ is 
    the sub-$\Q$-space of $\mathrm{J}_{\R}$ consisting of $[a,b,c\,;x,y,z],a,b,c\in\Q,x,y,z\in \oct_{\Q}$
    equipped with the multiplication $\circ$.  
\end{defi}
The main object considered in this paper is the following algebraic group:
\begin{defi}\label{def algebraic group F4}
    We define $\mathbf{F}_{4}$ to be the closed subgroup of the algebraic $\Q$-group $\GL_{\mathrm{J}_{\Q}}$,  
    which as a functor sends a commutative unital $\Q$-algebra $R$ to the group 
    \[\mathbf{F}_{4}(R):=\aut(\mathrm{J}_{\Q}\otimes_{\Q}R,\circ)=\{g\in\GL(\mathrm{J}_{\Q}\otimes_{\Q}R)\,|\,g(A\circ B)=g(A)\circ g(B),\forall A,B\in \mathrm{J}_{\Q}\otimes_{\Q}R\}.\]
\end{defi}
From the definition we have $\mathbf{F}_{4}(\R)=\lietype{F}{4}$.
By \cite[Theorem 7.2.1]{OctSpr},
$\mathbf{F}_{4}$ is a semisimple and simply-connected group over $\Q$.
\begin{rmk}\label{rmk alternative description F4}
    We have an alternative description of $\mathbf{F}_{4}$ that we will use later:
    the closed subgroup $\aut_{(\mathrm{J}_{\Q},\det,\mathrm{I})/\Q}$ of $\GL_{\mathrm{J}_{\Q}}$ consisting of linear automorphisms that preserve both the cubic form $\det$ and the identity element $\mathrm{I}$.
    The closed subgroups $\mathbf{F}_{4}=\aut_{(\mathrm{J}_{\Q},\circ)/\Q}$ and $\aut_{(\mathrm{J}_{\Q},\det,\mathrm{I})/\Q}$ inside $\GL_{\mathrm{J}_{\Q}}$ are both smooth and they have the same geometric points according to \cite[Proposition 5.9.4]{OctSpr},
    so they coincide.
\end{rmk}

\subsection{Reductive \texorpdfstring{$\Z$}{}-models of reductive \texorpdfstring{$\Q$}{}-groups}\label{section reductive integral models of Q groups}
\tp Now we recall some results in \cites{G96}{GrossZmodelAMS}.
In this subsection, let $G$ be a connected reductive algebraic group over $\Q$.
Denote the product $\prod_{p} \Z_{p}$ by $\widehat{\Z}$ and 
let $\A_{f}=\widehat{\Z}\otimes_{\Z}\Q$ be the ring of finite ad\`eles,
and $\A=\R\times \A_{f}$.
\begin{defi}\label{def reductive integral models}
    A \emph{reductive $\Z$-model} of $G$ is a pair $(\G,\iota)$ consisting of:
    \begin{itemize}
        \item an affine smooth group scheme $\G$ of finite type over $\Z$ such that $\G\otimes_{\Z}\Z/p\Z$ is reductive over $\Z/p\Z$ for each prime number $p$,
        \item an isomorphism $\iota:\G\otimes_{\Z}\Q\simeq G$ of algebraic groups over $\Q$.
    \end{itemize}
    Two reductive $\Z$-models $(\G_{1},\iota_{1})$ and $(\G_{2},\iota_{2})$ are said to be isomorphic if there exists an isomorphism $f:\G_{1}\rightarrow \G_{2}$ over $\Z$ such that the following diagram commutes:
    \[\begin{tikzcd}
        \G_{1}\otimes_{\Z}\Q\ar[rr,"f_{\Q}"]\ar[dr,"\iota_{1}"']& & \G_{2}\otimes_{\Z}\Q\ar[dl,"\iota_{2}"]\\
        & G &
    \end{tikzcd}\]
\end{defi}
\begin{rmk}\label{rmk omit the isomorphism in the def of reductive model}
    When there is no confusion about $\iota$, we simply say that $\G$ is a reductive $\Z$-model of $G$.
\end{rmk}
From the theory of \emph{Chevalley groups} in \cite[\S XXV]{SGA3}, 
every group $G$ split over $\Q$ admits a reductive $\Z$-model.
Indeed, we can take the Chevalley group with the same root datum of $G$ to be its reductive $\Z$-model.

When $G$ is not split, 
in general the existence of reductive $\Z$-models of $G$ is no longer ensured.
Now we consider the case when $G$ is \emph{anisotropic},
i.e. $G$ does not contain any non-trivial split $\Q$-torus.
When $G$ has a reductive $\Z$-model, being anisotropic is equivalent to that $G(\R)$ is compact,
which is due to \cite[Theorem 5.5(1)]{AlgGrpandNumberTheory} and \cite[Proposition 2.1]{G96}.
In \cite[\S 1]{G96}, Gross proves the following result:
\begin{thm}\label{thm classification compact group admitting integral models}
    Let $G$ be an anisotropic semisimple simply-connected $\Q$-group such that the root system of $G_{\C}$ is irreducible,
    then $G$ admits a reductive $\Z$-model if and only if the Lie type of $G$ is among:
    \[\lietype{B}{(d-1)/2}\,(d\equiv \pm 0\modulo 8),\lietype{D}{d/2}\,(d\equiv 1\modulo 8),\lietype{G}{2},\lietype{F}{4},\lietype{E}{8}.\]
\end{thm}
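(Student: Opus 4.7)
My plan is to prove the two implications separately.

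For the ``if'' direction, for each Lie type in the list I would exhibit an explicit reductive $\Z$-model. For $\lietype{B}{(d-1)/2}$ and $\lietype{D}{d/2}$, I would take the spin group scheme of a positive-definite self-dual $\Z$-lattice $L$ of rank $d$: such lattices (odd unimodular in the $\lietype{B}{}$ case, even unimodular in the $\lietype{D}{}$ case) exist precisely in the stated residues of $d \bmod 8$, the generic fibre is $\Q$-anisotropic by the Hasse--Minkowski principle together with positive-definiteness, and the modular conditions on $d$ are exactly those needed for the quadratic form to remain non-degenerate modulo $2$, ensuring reductive special fibre at $p = 2$. The three exceptional cases are treated concretely: for $\lietype{G}{2}$, the automorphism scheme of Coxeter's $\Z$-order inside Cayley's octonion algebra $\oct_{\Q}$; for $\lietype{F}{4}$, the automorphism scheme of an integral $\Z$-order in $\mathrm{J}_{\Q}$, as discussed in the next subsection; and for $\lietype{E}{8}$, the $\Z$-form of the Chevalley group twisted into its compact inner form via the $\lietype{E}{8}$-lattice.

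For the converse direction, the main tool is Siegel's mass formula combined with Kottwitz's theorem $\tau(G) = 1$ for simply-connected semisimple $G$. When $G(\R)$ is compact and $\G$ is a reductive $\Z$-model, one obtains
\[
\mathrm{mass}(\G) \;=\; \mathrm{vol}(G(\R))^{-1} \cdot \prod_{p} \frac{p^{\dim G}}{|\G(\F_{p})|}.
\]
Each local factor expands as $\prod_{i}(1 - \varepsilon_{i,p}\, p^{-d_{i}})^{-1}$ in the fundamental degrees $d_{1}, \ldots, d_{r}$ of the Lie type, with signs determined by whether $\G_{\F_{p}}$ is split or quasi-split; the Euler product collapses into values of (possibly twisted) $\zeta$-functions at the $d_{i}$. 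Combining this with the standard product formula expressing $\mathrm{vol}(G(\R))$ via powers of $\pi$ and factors $(d_{i}-1)!$, and applying $\zeta(2k) = (-1)^{k+1}B_{2k}(2\pi)^{2k}/(2\,(2k)!)$, one arrives at an explicit rational expression in Bernoulli numbers whenever every $d_{i}$ is even; an uncancelled odd power of $\pi$ survives otherwise. Since the mass must be a positive rational, every $d_{i}$ must be even, and inspection of the exponent lists rules out $\lietype{A}{n}$ for $n \geq 2$, $\lietype{D}{n}$ for $n$ odd, and $\lietype{E}{6}$. The remaining unwanted types $\lietype{A}{1}$, $\lietype{C}{n}$, and $\lietype{E}{7}$ are eliminated by a finer local--global obstruction: any compact-at-$\infty$ inner form of $\symp_{2n}$ arises from a definite quaternion $\Q$-algebra $D$, which by global class field theory must ramify at an even number of places and so cannot be unramified at every finite prime; an analogous cohomological argument handles $\lietype{E}{7}$.

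The step I expect to be most delicate is the elimination of $\lietype{C}{n}$ and $\lietype{E}{7}$, since the mass formula alone is inconclusive there (all fundamental degrees being even) and one has to extract an obstruction directly from the ``good reduction at every finite prime'' hypothesis rather than from its numerical shadow on the mass. Pinning down the exact congruences $d \bmod 8$ in the $\lietype{B}{}$ and $\lietype{D}{}$ cases is also subtle, requiring careful bookkeeping of the $2$-adic valuations of the Bernoulli numbers appearing in the mass expression together with the local classification of self-dual $\Z_{2}$-lattices.
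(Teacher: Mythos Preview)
The paper does not give its own proof of this statement; it is quoted from Gross \cite{G96}, \S 1. Gross's argument there is cohomological, not mass-theoretic: a reductive $\Z$-model forces $G_{\Q_p}$ to be unramified (hence quasi-split) at every finite $p$, and the Hasse principle for simply-connected groups then reduces the existence of such a $G$ to an obstruction computation in $H^2(\Q,Z)$, with $Z$ the center of the simply-connected form, carried out type by type. Your quaternion-algebra elimination of $\lietype{A}{1}$, $\lietype{C}{n}$ and $\lietype{E}{7}$ is exactly this computation when $Z=\mu_2$, and is correct; the congruence conditions on $d$ for $\lietype{B}{}$ and $\lietype{D}{}$ come out of the same mechanism (via the Hasse--Witt invariant of the underlying quadratic form), not from $2$-adic Bernoulli valuations.

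Your mass-formula attack on $\lietype{A}{n}$ ($n\ge 2$), $\lietype{D}{n}$ ($n$ odd), and $\lietype{E}{6}$, however, has a real gap. The mass is \emph{always} a positive rational number---it is by definition a finite sum of reciprocals of finite group orders---so ``an uncancelled odd power of $\pi$ survives'' cannot be the mechanism. Writing the mass as $\mathrm{vol}(G(\R))^{-1}\prod_i\zeta(d_i)$ merely expresses one rational as a ratio of two transcendentals; when $d_i$ is even the functional equation lets you rewrite the $i$-th factor as a rational multiple of $\zeta(1-d_i)$, but when $d_i$ is odd the functional equation degenerates (both $\cos(\pi d_i/2)$ and $\zeta(1-d_i)$ vanish) and yields no information. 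Appealing directly to the irrationality of $\zeta(d_i)/\pi^{d_i}$ for odd $d_i$ is not available either: this is wide open even for $d_i=3$. These types must instead be excluded by the same cohomological reciprocity argument you already use for $\lietype{C}{n}$, now with center $\mu_{n+1}$, $\mu_4$, or $\mu_3$.
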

The next question is to classify reductive $\Z$-models of a given anisotropic group $G$ up to some equivalence relation.
\begin{defi}\label{def genus of reductive integral models}
    Let $(\G,\mathrm{id})$ be a reductive $\Z$-model of its generic fiber $G:=\G\otimes_{\Z} \Q$.
    A reductive $\Z$-model $(\G^{\prime},\iota^{\prime})$ of $G$ is said to be in the same \emph{genus} as $\G$,
    if $\iota^{\prime}(\G^{\prime}(\widehat{\Z}))$ and $\G(\widehat{\Z})$ are conjugate in $G(\A_{f})$.
\end{defi}
\begin{rmk}\label{rmk equivalent condition for genus of trductive integral models}
    This condition is equivalent to that for each prime $p$,
    $\iota^{\prime}(\G^{\prime}(\Z_{p}))$ is conjugate to $\G(\Z_{p})$ in $G(\Q_{p})$,
    and $\iota^{\prime}(\G^{\prime}(\Z_{p}))=\G(\Z_{p})$ for almost all $p$.
\end{rmk}
By \cite[Proposition 1.4]{GrossZmodelAMS},
the equivalence classes of reductive $\Z$-models in the genus of $\G$ can be identified with the coset space $G(\A_{f})/\G(\widehat{\Z})$.

The group $G(\Q)$ acts on reductive $\Z$-models in the genus of $\G$ by the formula:
\[g(\G^{\prime},\iota^{\prime})=(\G^{\prime},\mathrm{ad}(g)\circ \iota^{\prime}),\]
where $\mathrm{ad}(g)$ is the conjugation by $g$.
This induces an action of $G(\Q)$ on the equivalence classes of reductive $\Z$-models in the genus of $\G$.
We say two reductive $\Z$-models in the genus of $\G$ are \emph{$G(\Q)$-conjugate} if their equivalence classes are in the same $G(\Q)$-orbit.

Now the set of $G(\Q)$-orbits on the equivalence classes of reductive $\Z$-models in the genus of $\G$ can be identified
with the double coset space $G(\Q)\backslash G(\A_{f})/\G(\widehat{\Z})$,
which is finite by Borel's famous result \cite{Borel63}.

\subsection{Reductive \texorpdfstring{$\Z$}{}-models of \texorpdfstring{$\lietype{F}{4}$}{}}\label{section reductive integral models of F4}
\tp For our $\Q$-group $\mathbf{F}_{4}$,
the $\mathbf{F}_{4}(\Q)$-orbits of equivalence classes of reductive $\Z$-models of $\mathbf{F}_{4}$ in some genus
is determined by Gross in \cite[Proposition 5.3]{G96},
using the mass formula \cite[Proposition 2.2]{G96}.
In this subsection we provide an alternative proof for his result,
which will be helpful for our computations in \Cref{section dimension of invariant subspaces of F4 representations}.

\subsubsection{Integral structures of \texorpdfstring{$\oct_{\Q}$}{PDFstring} and \texorpdfstring{$\mathrm{J}_{\Q}$}{PDFstring}}\label{section integral structure of octonion and Jordan algebra}
\tp Parallel to the construction of $\mathbf{F}_{4}$ in \Cref{section compact Lie group F4},
we want to define integral structures of $\oct_{\Q}$ and $\mathrm{J}_{\Q}$ and then use them to construct reductive $\Z$-models of $\grpF$.
\begin{defi}\label{def coxeter integral octonion}
    \emph{Coxeter's integral order} $\oct_{\Z}$ is the $\Z$-lattice of rank $8$ inside $\oct_{\Q}$ spanned by the lattice $\Z\oplus\Z \mathrm{e}_{1}\oplus\cdots\oplus\Z \mathrm{e}_{7}$ and the four elements
    \begin{align*}
    \mathrm{h}_{1}&=(1+\mathrm{e}_{1}+\mathrm{e}_{2}+\mathrm{e}_{4})/2,\mathrm{h}_{2}=(1+\mathrm{e}_{1}+\mathrm{e}_{3}+\mathrm{e}_{7})/2,\\
    \mathrm{h}_{3}&=(1+\mathrm{e}_{1}+\mathrm{e}_{5}+\mathrm{e}_{6})/2,\mathrm{h}_{4}=(\mathrm{e}_{1}+\mathrm{e}_{2}+\mathrm{e}_{3}+\mathrm{e}_{5})/2,
\end{align*}
equipped with the multiplication of $\oct_{\Q}$.
This lattice contains the identity element of $\oct_{\Q}$ and is stable under the multiplication,
i.e. is an \emph{order} in $\oct_{\Q}$.
\end{defi}
\begin{rmk}\label{rmk quadratic structure of coxeter octonion}
    The underlying lattice of $\oct_{\Z}$ equipped with the quadratic form $\mathrm{N}|_{\oct_{\Z}}$
    is isometric to the even unimodular lattice
    \[\mathrm{E}_{8}=\left\{(x_{i})\in\Z^{8}\cup(\Z+\frac{1}{2})^{8}\,\middle\vert\,\sum_{i}x_{i}\equiv 0 \modulo 2\right\}.\]
\end{rmk}
Let $\mathrm{J}_{\Z}$ be the lattice
\[\left\{[a,b,c\,;x,y,z]\in \mathrm{J}_{\Q}\,\middle\vert\,a,b,c\in\Z,x,y,z\in \oct_{\Z}\right\}\]
of rank $27$ inside the $\Q$-vector space $\mathrm{J}_{\Q}$.
This lattice is stable under the Jordan multiplication $\circ$ on $\mathrm{J}_{\Q}$,
thus $\mathrm{J}_{\Z}$ is an order in $\mathrm{J}_{\Q}$. 

As in \Cref{rmk alternative description F4},
the $\Q$-group $\grpF$ coincides with the group $\aut_{(\mathrm{J}_{\Q},\det,\mathrm{I})/\Q}$.
The triple $(\mathrm{J}_{\Q},\det,\mathrm{I})$ has a natural integral structure $(\mathrm{J}_{\Z},\det,\mathrm{I})$.
The $\Z$-group scheme $\aut_{(\mathrm{J}_{\Z},\det,\mathrm{I})/\Z}$,
sending any commutative $\Z$-algebra $R$ to 
the subgroup of $\GL(\mathrm{J}_{\Z}\otimes_{\Z}R)$ consisting of elements preserve the cubic form $\det$ and the identity element $\mathrm{I}$,
is expected to be a reductive $\Z$-model of $\grpF$,
but we are going to consider the $\Z$-group scheme $\aut_{(\mathrm{J}_\Z,\det,e)/\Z}$ 
for any $e\in \mathrm{J}_{\Z}$ satisfying certain conditions,
in order to produce several reductive $\Z$-models of $\grpF$ uniformly.
\begin{defi}\label{def positive definite jordan matrix}
    An element
    \[A=\left(\begin{matrix}
        a&z&\overline{y}\\
        \overline{z}&b&x\\
        y&\overline{x}&c
    \end{matrix}\right)\in \mathrm{J}_{\R}\]
    is said to be \emph{positive definite} if its seven ``minor determinants''
    \[a,b,c,ab-\mathrm{N}(z),bc-\mathrm{N}(x),ca-\mathrm{N}(y),\det(A)\in \R\]
    are all positive.
    A positive definite element $e$ in $\mathrm{J}_{\R}$ with $\det e=1$ is called a \emph{polarization}.
\end{defi}
Given a polarization $e$ contained in the lattice $\mathrm{J}_{\Z}$,
one constructs a $\Z$-group scheme $\mathcal{F}_{4,e}:=\aut_{(\mathrm{J}_{\Z},\det,e)/\Z}$
in the same way as $\aut_{(\mathrm{J}_{\Z},\det,\mathrm{I})/\Z}$.
The following result shows that this group scheme is a reductive $\Z$-model of $\grpF$.
\begin{prop}\label{prop automorphism of integral jordan algebra is reductive}
    \cite[Proposition 6.6, Example 6.7]{NonRed}
    For any choice of polarization $e\in \mathrm{J}_{\Z}$, 
    the fiber $\mathcal{F}_{4,e}\otimes_{\Z}\Z/p\Z$ is semisimple for every prime number $p$,
    and $\mathcal{F}_{4,e}(\R)$ is a compact Lie group of type $\lietype{F}{4}$.    
\end{prop}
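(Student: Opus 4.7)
The plan is to establish the two claims by (i) reducing the real statement to the case of the standard polarization $e = \mathrm{I}$, where the group recovers the compact $\grpF(\R)$ of \Cref{def algebraic group F4}, and (ii) proving smoothness and fiberwise semisimplicity of $\mathcal{F}_{4,e}$ over $\Z$ by a Lie-algebra dimension count combined with descent to algebraic closures.

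For the real fiber, I would exploit the structure group $\mathrm{Str}(\mathrm{J}_\R)$, consisting of $g \in \GL(\mathrm{J}_\R)$ with $\det \circ g = \chi(g) \det$ for some scalar $\chi(g) \in \R^\times$; this is a real form of $GE_6$ whose identity component acts transitively on the open cone of positive definite elements of $\mathrm{J}_\R$. Intersected with the hypersurface $\{\det = 1\}$, the orbit of $\mathrm{I}$ is exactly the set of polarizations, so any polarization $e$ is of the form $g \cdot \mathrm{I}$ with $g \in \mathrm{Str}(\mathrm{J}_\R)(\R)$, giving $\mathcal{F}_{4,e}(\R) = g \cdot \grpF(\R) \cdot g^{-1}$, which is compact of type $\lietype{F}{4}$. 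More constructively, polarizing $\det$ to a symmetric trilinear form $T$ yields for each $e$ a linear form $\mathrm{T}_e(A) := T(e,e,A)$ and a quadratic form $\sigma_e(A) := T(e,A,A)$ on $\mathrm{J}_\R$ that are automatically $\mathcal{F}_{4,e}$-invariant; at $e = \mathrm{I}$ one checks that $\tfrac{1}{2}(\mathrm{T}_e^2 - 2\sigma_e)$ coincides with the positive definite form $\mathrm{Q}$ of \eqref{eqn quadratic form on exceptional Jordan algebra}, and for general $e$ this combination remains positive definite by continuity and the connectedness of the cone of polarizations, embedding $\mathcal{F}_{4,e}(\R)$ into an orthogonal group of a positive definite form.

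For the integral statement, I would first verify smoothness of $\mathcal{F}_{4,e}$ over $\Z$ of relative dimension $52$ by computing its relative Lie algebra, namely the endomorphisms $D$ of $\mathrm{J}_\Z$ annihilating $e$ and satisfying $T(DA, A, A) = 0$ for all $A$. A direct computation shows this is a $\Z$-module of constant rank $52$ on every fiber, from which smoothness follows. To establish semisimplicity of each geometric fiber, I would base change to an algebraic closure $k$ of $\Z/p\Z$ and argue that the reduction of $e$ still lies in the generic orbit of $\mathrm{Str}(\mathrm{J}_k)$ through $\mathrm{I}$; this reduces the claim to semisimplicity of the split Chevalley model $\mathcal{F}_{4,\mathrm{I}} \otimes k$, which is classical. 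The main obstacle will be the small primes $p = 2$ and $p = 3$: at $p = 2$ the form $\mathrm{Q}$ itself is not integral and must be replaced by a primitive lattice-theoretic substitute, while at $p = 3$ the cubic form $\det$ acquires degenerate behavior. In both cases the positivity of $e$ over $\R$ together with the explicit lattice structure of \Cref{def coxeter integral octonion} sharply constrain the mod-$p$ reduction, and one must verify by hand that it still lies in the generic $\mathrm{Str}$-orbit; I expect the bulk of the technical effort to concentrate here.
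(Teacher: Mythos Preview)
The paper does not prove this proposition: it is quoted verbatim from \cite[Proposition 6.6, Example 6.7]{NonRed}, and no argument is supplied beyond the citation. So there is no ``paper's own proof'' to compare against; what you have written is a sketch of how one might reprove Conrad's result.

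Your outline for the real fiber is essentially correct and is the standard argument: transitivity of the (identity component of the) structure group on polarizations conjugates $\mathcal{F}_{4,e}(\R)$ to $\grpF(\R)$.

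For the integral part, your strategy is in the right spirit, but the two steps you label as routine are precisely where the content lies. First, ``a direct computation shows [the Lie algebra] is a $\Z$-module of constant rank $52$ on every fiber'' is not a computation one can do uniformly in $e$; it is equivalent to smoothness of $\mathcal{F}_{4,e}$, which is part of what must be proved. Second, the assertion that the reduction of $e$ modulo every prime lies in the generic $\mathrm{Str}$-orbit is the heart of the matter and is not a consequence of real positivity or continuity: positivity over $\R$ gives no direct control on the mod-$p$ situation. What is actually needed is that $(\mathrm{J}_\Z,\det,e)$ defines a cubic norm structure (Albert algebra) over $\Z$ in the sense of Springer, and that over any algebraically closed field all such structures are isomorphic; this is what \cite{NonRed} establishes, and it handles all primes uniformly rather than requiring separate ad hoc verifications at $p=2,3$. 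Your proposal correctly locates where the difficulty is but does not supply the mechanism that resolves it.
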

Taking $e$ to be the identity element $\mathrm{I}$,
the generic fiber of $\mathcal{F}_{4,\mathrm{I}}$ is $\aut_{(\mathrm{J}_\Q,\det,\mathrm{I})/\Q}=\grpF$,
thus $\mathcal{F}_{4,\mathrm{I}}$ is a reductive $\Z$-model of $\grpF$.

If we take $e$ to be 
\begin{align*}
    \mathrm{E}:=
    [2,2,2\,;\beta,\beta,\beta],
    \beta=\frac{1}{2}(-1+\mathrm{e}_{1}+\mathrm{e}_{2}+\cdots+\mathrm{e}_{7})\in \mathrm{J}_{\Z},
\end{align*}
as in \cite[(5.4)]{Leech},
by \cite[Example 6.7]{NonRed} the generic fiber of $\mathcal{F}_{4,\mathrm{E}}$ is isomorphic to $\grpF$.
We denote the natural isomorphism $\mathcal{F}_{4,\mathrm{E}}\otimes_{\Z}\Q\rightarrow \grpF$ by $\iota$.
Actually $\iota$ can be given as the conjugation by an element in the $\Q$-points of the $\Q$-group $\aut_{(\mathrm{J}_{\Q},\det)/\Q}$ which sends $\mathrm{E}$ to $\mathrm{I}$.

In \cite[Proposition 5.3]{G96},
Gross proves the following result:
\begin{prop}\label{prop exactly two reductive models}
    There are two $\grpF(\Q)$-orbits on the equivalence classes of reductive $\Z$-models of $\grpF$ in the genus of $\mathcal{F}_{4,\mathrm{I}}$,
    whose representatives are given by $(\mathcal{F}_{4,\mathrm{I}},\mathrm{id})$ and $(\mathcal{F}_{4,\mathrm{E}},\iota)$ respectively.
\end{prop}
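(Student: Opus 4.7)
The plan is to use the identification, recalled in \Cref{section reductive integral models of Q groups}, between the set of $\grpF(\Q)$-orbits of equivalence classes of reductive $\Z$-models of $\grpF$ in the genus of $\mathcal{F}_{4,\mathrm{I}}$ and the finite double coset space $\grpF(\Q)\backslash\grpF(\A_{f})/\mathcal{F}_{4,\mathrm{I}}(\widehat{\Z})$. I want to show this set has exactly two elements, with representatives $(\mathcal{F}_{4,\mathrm{I}},\mathrm{id})$ and $(\mathcal{F}_{4,\mathrm{E}},\iota)$. The strategy is to establish both a lower bound (at least two orbits) and a matching upper bound (at most two) coming from Gross's mass formula for $\grpF$, while avoiding the appeal to \cite{ATLAS} present in Gross's original argument.

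First, I would check that both $\mathcal{F}_{4,\mathrm{I}}$ and $\mathcal{F}_{4,\mathrm{E}}$ actually lie in the same genus. Since $\grpF$ is split at every prime $p$, every reductive $\Z$-model gives a hyperspecial maximal compact subgroup at each local place, and these are all $\grpF(\Q_p)$-conjugate by Bruhat-Tits theory; it remains only to note, or verify locally using the explicit forms of $\mathrm{I}$ and $\mathrm{E}$, that the two integral structures $\mathcal{F}_{4,\mathrm{I}}(\Z_p)$ and $\mathcal{F}_{4,\mathrm{E}}(\Z_p)$ agree for almost all $p$. This places both models in the same genus so that the double coset description of \Cref{section reductive integral models of Q groups} applies.

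Next, I would show the two candidates represent distinct $\grpF(\Q)$-orbits. Since a $\grpF(\Q)$-conjugation of reductive $\Z$-models preserves the $\Z$-isomorphism type of the group scheme, and in particular the order of the finite group of $\Z$-points, it suffices to verify that $|\mathcal{F}_{4,\mathrm{I}}(\Z)|\neq |\mathcal{F}_{4,\mathrm{E}}(\Z)|$. Both groups are finite subgroups of the compact Lie group $\grpF(\R)=\aut(\mathrm{J}_{\R},\circ)$, and they can be computed explicitly: for each polarization $e\in\{\mathrm{I},\mathrm{E}\}$, I would exhibit a finite generating set inside $\grpF(\R)$ (by producing enough elements preserving $\mathrm{J}_{\Z}$, $\det$, and $e$, for instance signed permutations of coordinates and automorphisms coming from $\aut(\oct_{\Z})$), and then enumerate the resulting finite group with \cite{PARI2} and \cite{GAP4}, as promised later in the paper. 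This is exactly the computational content that \Cref{section integral structure of octonion and Jordan algebra} is announcing.

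Finally, to show these two orbits exhaust the genus, I would appeal to Gross's mass formula \cite[Proposition 2.2]{G96}, which asserts
\[
\sum_{i}\frac{1}{|\mathcal{G}_{i}(\Z)|}\;=\;m(\grpF),
\]
the sum ranging over representatives $\mathcal{G}_{i}$ of the $\grpF(\Q)$-orbits in the genus of $\mathcal{F}_{4,\mathrm{I}}$, and $m(\grpF)$ being the explicit rational number computed by Gross. Once the two integers $|\mathcal{F}_{4,\mathrm{I}}(\Z)|$ and $|\mathcal{F}_{4,\mathrm{E}}(\Z)|$ are known, a direct arithmetic check $|\mathcal{F}_{4,\mathrm{I}}(\Z)|^{-1}+|\mathcal{F}_{4,\mathrm{E}}(\Z)|^{-1}=m(\grpF)$ leaves no room for additional orbits, since every $|\mathcal{G}_{i}(\Z)|^{-1}$ is strictly positive. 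The main obstacle is the finite group computation in the previous step: producing a provably complete set of generators of $\mathcal{F}_{4,e}(\Z)$ inside $\grpF(\R)$ requires justifying that no further integral automorphisms exist, which I expect to handle either by comparing with an \emph{a priori} upper bound on $|\mathcal{F}_{4,e}(\Z)|$ from the mass formula itself, or by a direct stabilizer argument on small-norm vectors of the associated lattice in $\mathrm{J}_{\Z}$.
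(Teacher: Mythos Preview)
Your overall strategy---use the mass formula together with a separation of the two candidate models---is exactly the paper's approach, but the execution differs in a way worth noting. The paper does not compute both group orders in full and then check the mass identity; instead it establishes only the \emph{upper bounds} $|\mathcal{F}_{4,\mathrm{I}}(\Z)|\leq 2^{15}\cdot 3^{6}\cdot 5^{2}\cdot 7$ and $|\mathcal{F}_{4,\mathrm{E}}(\Z)|\leq 2^{12}\cdot 3^{5}\cdot 7^{2}\cdot 13$, whose reciprocals already sum to the mass, so no further orbits can exist and both bounds are forced to be equalities. For $\mathcal{F}_{4,\mathrm{E}}$ the bound comes, as you suggest, from the isometry group of the lattice $(\mathrm{J}_{\Z},\langle\,,\,\rangle_{\mathrm{E}})$ via \texttt{qfauto}. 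For $\mathcal{F}_{4,\mathrm{I}}$, however, the paper avoids any lattice computation: it produces a split exact sequence $1\to\mathscr{D}\to\mathcal{F}_{4,\mathrm{I}}(\Z)\to\mathrm{S}_{3}\to 1$, identifies $\mathscr{D}$ with a double cover of $\sorth(\oct_{\Z})$, and bounds its order by $|\mathrm{W}(\lietype{E}{8})|$. Non-conjugacy then follows because $13$ divides $|\mathcal{F}_{4,\mathrm{E}}(\Z)|$ but not the bound for $|\mathcal{F}_{4,\mathrm{I}}(\Z)|$. One caution on your obstacle discussion: the mass formula by itself only gives \emph{lower} bounds on the individual orders $|\mathcal{G}_{i}(\Z)|$, not upper bounds, so your first proposed fix would not work; it is your second suggestion (stabilizer of short lattice vectors) that carries the argument, and this is precisely what the paper does for one of the two groups.
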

Applying the mass formula \cite[Proposition 2.2]{G96} to $\grpF$,
we have 
\begin{align}\label{eqn mass formula for F4}
    \sum_{(\G,\iota)}\frac{1}{|\G(\Z)|}=\frac{1}{2^{4}}\zeta(-1)\zeta(-5)\zeta(-7)\zeta(-11)=\frac{691}{2^{15}\cdot 3^{6}\cdot 5^{2}\cdot 7^{2}\cdot 13},
\end{align}
where $(\G,\iota)$ varies over the $\grpF(\Q)$-conjugacy classes of reductive $\Z$-models of $\grpF$ in the genus of $\mathcal{F}_{4,\mathrm{I}}$.
As 
\begin{align}\label{eqn decompose the mass into sum}
    \frac{691}{2^{15}\cdot 3^{6}\cdot 5^{2}\cdot 7^{2}\cdot 13}=\frac{1}{2^{15}\cdot 3^{6}\cdot 5^{2}\cdot 7}+\frac{1}{2^{12}\cdot 3^{5}\cdot 7^{2}\cdot 13},
\end{align}
in order to prove \Cref{prop exactly two reductive models} it suffices to prove the following two things:
\begin{itemize}
    \item $\mathcal{F}_{4,\mathrm{I}}$ and $\mathcal{F}_{4,\mathrm{E}}$ are not $\grpF(\Q)$-conjugate.
    \item $|\mathcal{F}_{4,\mathrm{I}}(\Z)|\leq 2^{15}\cdot 3^{6}\cdot 5^{2}\cdot 7$ and $|\mathcal{F}_{4,\mathrm{E}}(\Z)|\leq 2^{12}\cdot 3^{5}\cdot 7^{2}\cdot 13$.
\end{itemize}
In his proof, Gross cites some results from \cite{ATLAS},
We are going to give another proof of \Cref{prop exactly two reductive models},
which avoids using results in \cite{ATLAS}.
\subsubsection{\texorpdfstring{$\mathcal{F}_{4,\mathrm{E}}(\Z)$}{}}\label{section second integral model of F4}
\tp Now we deal with the finite group $\mathcal{F}_{4,\mathrm{E}}(\Z)$.
Our goal is to prove:
\begin{prop}\label{prop upper bound order second model}
    $|\mathcal{F}_{4,\mathrm{E}}(\Z)|\leq 2^{12}\cdot 3^{5}\cdot 7^{2}\cdot 13$.
\end{prop}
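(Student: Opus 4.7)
The plan is to bound $|\mathcal{F}_{4,\mathrm{E}}(\Z)|$ via a chain of orbit--stabilizer arguments, exploiting that this finite group acts faithfully on the lattice $\mathrm{J}_{\Z}$ while preserving a number of natural invariants.

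First I would observe that $G := \mathcal{F}_{4,\mathrm{E}}(\Z)$ is finite. Since $G$ preserves both $\mathrm{E}$ and the cubic form $\det$ on $\mathrm{J}_{\Z}$, the same reasoning as in \Cref{rmk alternative description F4} (applied to $\mathrm{E}$ in place of $\mathrm{I}$) shows that $G$ preserves the Jordan product $\circ_{\mathrm{E}}$ with identity $\mathrm{E}$, together with an associated trace bilinear form $B_{\mathrm{E}}$ on $\mathrm{J}_{\Z}$. Because $\mathrm{E}$ is a polarization, $(\mathrm{J}_{\R}, \circ_{\mathrm{E}})$ is a positive definite real exceptional Jordan algebra, so $B_{\mathrm{E}}$ is a positive definite integral quadratic form on $\mathrm{J}_{\Z}$. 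Thus $G$ embeds into the finite orthogonal group $\orth(\mathrm{J}_{\Z}, B_{\mathrm{E}})$, yielding an a priori finite (though far too crude) upper bound.

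Next I would select a small $G$-invariant subset $S \subset \mathrm{J}_{\Z}$ on which to apply orbit--stabilizer. Natural candidates are the short vectors of a well-chosen $G$-stable sublattice (for instance the orthogonal complement of $\mathrm{E}$ under $B_{\mathrm{E}}$, which is the $26$-dimensional traceless part for $\circ_{\mathrm{E}}$), or the set of primitive idempotents of $(\mathrm{J}_{\Z}, \circ_{\mathrm{E}})$ visible over $\Z$. In either case, enumerating $S$ reduces to a finite search for integer vectors in $\mathrm{J}_{\Z}$ satisfying explicit polynomial conditions, which can be carried out with \textsf{PARI}. For suitably chosen $s \in S$ one expects the stabilizer $\mathrm{Stab}_G(s)$ to act on a residual sublattice (coming from a Peirce-type decomposition of $\mathrm{J}_{\Z}$ along $s$) and to embed into the integral orthogonal group of a rank $9$ positive definite lattice—a group of type $\spin(9)$ over $\Z$—whose order is already controlled. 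Combining the bounds $|G| \leq |S| \cdot |\mathrm{Stab}_G(s)|$ then gives the desired inequality $|G| \leq 2^{12} \cdot 3^{5} \cdot 7^{2} \cdot 13$.

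The main obstacle is that, unlike for $\mathcal{F}_{4,\mathrm{I}}$, the geometric decompositions of $\mathrm{J}_{\Z}$ adapted to the polarization $\mathrm{E}$ are not visible from the Hermitian matrix model: idempotents for $\circ_{\mathrm{E}}$ that are defined over $\Z$ need not exist in useful quantity, and the enumeration of short vectors is sensitive to the precise structure of $\mathrm{J}_{\Z}$ with respect to $B_{\mathrm{E}}$. This is what forces the explicit, computer-assisted treatment announced in the introduction (using the generators given in \S\ref{section compact Lie group F4}.X and the computations referenced in \Cref{intrormk related works}), rather than the abstract identification of the resulting finite group via \cite{ATLAS} employed in Gross's original proof.
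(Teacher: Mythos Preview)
Your first paragraph is essentially the paper's entire proof, and you abandon it too quickly. The paper embeds $G=\mathcal{F}_{4,\mathrm{E}}(\Z)$ into $\orth(\mathrm{J}_{\Z},\langle\,,\,\rangle_{\mathrm{E}})$ for an explicit positive definite integral form $\langle\,,\,\rangle_{\mathrm{E}}$ built from $\det$ and $\mathrm{E}$ (essentially your $B_{\mathrm{E}}$), and then simply computes the order of this full orthogonal group via the Plesken--Souvignier algorithm (\texttt{qfauto} in \textsf{PARI}) applied to the Gram matrix: the answer is $2^{13}\cdot 3^{5}\cdot 7^{2}\cdot 13$. So the ``far too crude'' bound you dismiss is only off by a factor of $2$, and that factor is removed by observing that $-\mathrm{id}$ lies in the isometry group but not in $G$ (it sends $\mathrm{E}$ to $-\mathrm{E}$). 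That is the whole argument.

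Your proposed second step---an orbit--stabilizer chain through a Peirce-type decomposition landing in an integral $\spin(9)$---is not needed, and as written it is not a proof: you do not identify the set $S$, do not verify the stabilizer embedding, and do not compute the orders that would have to multiply out to $2^{12}\cdot 3^{5}\cdot 7^{2}\cdot 13$. Nothing guarantees that idempotents for $\circ_{\mathrm{E}}$ exist over $\Z$, and even if they do, the resulting rank-$9$ lattice is not obviously one whose isometry group you already know. By contrast, the paper's route sidesteps all of this structure: one computer call on a $27\times 27$ Gram matrix, plus the trivial observation about $-\mathrm{id}$.
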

With the choice of polarization $\mathrm{E}$, we can define a new bilinear form on $\mathrm{J}_{\Q}$:
\[\langle A,B\rangle_{\mathrm{E}}=(A,\mathrm{E},\mathrm{E})(B,\mathrm{E},\mathrm{E})-2(A,B,\mathrm{E}),\]
where the trilinear form $(~,~,~):\mathrm{J}_{\Q}^{3}\rightarrow \Q$ is defined by 
\begin{align*}
    (A,B,C)=&\frac{1}{2}[\det(A+B+C)-\det(A+B)-\det(B+C)-\det(C+A)\\
    &+\det(A)+\det(B)+\det(C)].
\end{align*} 
This bilinear form is positive definite and integral on $\mathrm{J}_{\Z}$ by \cite[Proposition 7.2]{Leech}.
\begin{notation}\label{notation elements of Jordan algebra}
    Here we give some notations for elements in $\mathrm{J}_{\R}$: 
    we write 
    \[\mathrm{E}_{1}:=[1,0,0\,;0,0,0],\mathrm{E}_{2}:=[0,1,0\,;0,0,0],\mathrm{E}_{3}:=[0,0,1\,;0,0,0]\] 
    and for any $x\in\oct_{\R}$,
    \[\mathrm{F}_{1}(x):=[0,0,0\,;x,0,0],\mathrm{F}_{2}(x):=[0,0,0\,;0,x,0],\mathrm{F}_{3}(x):=[0,0,0\,;0,0,x].\]
\end{notation}
Note that $1,\mathrm{e}_{1},\mathrm{e}_{2},\mathrm{e}_{3},\mathrm{h}_{1},\mathrm{h}_{2},\mathrm{h}_{3},\mathrm{h}_{4}$ is a basis of the lattice $\oct_{\Z}$,
thus we have the following basis of $\mathrm{J}_{\Z}$:
{\scriptsize\begin{align}\label{eqn basis of second Jordan algebra}
    \mathcal{B}:=\left(
        \begin{array}{c}
            \mathrm{E}_{1},\mathrm{E}_{2},\mathrm{E}_{3},\mathrm{F}_{1}(1),\mathrm{F}_{1}(\mathrm{e}_{1}),\mathrm{F}_{1}(\mathrm{e}_{2}),\mathrm{F}_{1}(\mathrm{e}_{3}),\mathrm{F}_{1}(\mathrm{h}_{1}),\mathrm{F}_{1}(\mathrm{h}_{2}),\mathrm{F}_{1}(\mathrm{h}_{3}),\mathrm{F}_{1}(\mathrm{h}_{4}),\mathrm{F}_{2}(1),\mathrm{F}_{2}(\mathrm{e}_{1}),\mathrm{F}_{2}(\mathrm{e}_{2}),\\
            \mathrm{F}_{2}(\mathrm{e}_{3}),\mathrm{F}_{2}(\mathrm{h}_{1}),\mathrm{F}_{2}(\mathrm{h}_{2}),\mathrm{F}_{2}(\mathrm{h}_{3}),\mathrm{F}_{2}(\mathrm{h}_{4}),\mathrm{F}_{3}(1),\mathrm{F}_{3}(\mathrm{e}_{1}),\mathrm{F}_{3}(\mathrm{e}_{2}),\mathrm{F}_{3}(\mathrm{e}_{3}),\mathrm{F}_{3}(\mathrm{h}_{1}),\mathrm{F}_{3}(\mathrm{h}_{2}),\mathrm{F}_{3}(\mathrm{h}_{3}),\mathrm{F}_{3}(\mathrm{h}_{4})
        \end{array}     
        \right).
\end{align}}
In the basis $\mathcal{B}$,
we give the Gram matrix of the quadratic lattice $(\mathrm{J}_{\Z},\langle~,~\rangle_{\mathrm{E}})$ in \Cref{Gram2ndModel}, \Cref{TableAppendix}.
\begin{proof}[Proof of \Cref{prop upper bound order second model}]
Each element in $\mathcal{F}_{4,\mathrm{E}}(\Z)=\aut(\mathrm{J}_{\Z},\det,\mathrm{E})$ preserves the bilinear form $\langle~,~\rangle_{\mathrm{E}}$ by the definition,
thus this finite group is a subgroup of the isometry group $\mathrm{O}(\mathrm{J}_{\Z},\langle~,~\rangle_{\mathrm{E}})$ of the quadratic lattice $(\mathrm{J}_{\Z},\langle~,~\rangle_{\mathrm{E}})$. 

The order of $\mathrm{O}(\mathrm{J}_{\Z},\langle~,~\rangle_{\mathrm{E}})$ can be determined with the help of the Plesken-Souvignier algorithm. 
Concretely, we can apply the \texttt{qfauto} function in \cite{PARI2} to the Gram matrix \Cref{Gram2ndModel} of $(\mathrm{J}_{\Z},\langle~,~\rangle_{\mathrm{E}})$,
and we find
\[|\mathrm{O}(\mathrm{J}_{\Z},\langle~,~\rangle_{\mathrm{E}})|=2^{13}\cdot 3^{5}\cdot 7^{2}\cdot 13.\]

Notice that the isometry group contains an involution $-\mathrm{id}$, 
which does not fix $\mathrm{E}$, 
thus we have
\[|\mathcal{F}_{4,\mathrm{E}}(\Z)|\leq \frac{1}{2}\left|\mathrm{O}(\mathrm{J}_{\Z},\langle~,~\rangle_{\mathrm{E}})\right|=2^{12}\cdot 3^{5}\cdot 7^{2}\cdot 13.\]
\end{proof}
\begin{rmk}\label{rmk lattice structure of second model}
    The orthogonal complement of $\mathrm{E}$ in $(\mathrm{J}_{\Z},\langle~,~\rangle_{\mathrm{E}})$
    is a $26$-dimensional even lattice of determinant $3$ and with no roots \cite[Proposition 7.2]{Leech}.
    In Borcherds' thesis \cite[\S 5.7]{BorcherdsLattice}, he proves that a lattice satisfying these conditions is unique up to isomorphism 
    and calculates the order of its isometry group,
    giving another proof of \Cref{prop upper bound order second model}.
\end{rmk}
Furthermore,
the \texttt{qfauto} function
also give us a set of generators $\left\{-\mathrm{id},-\sigma_{1},\sigma_{2}\right\}$ of $\mathrm{O}(\mathrm{J}_{\Z},\langle~,~\rangle_{\mathrm{E}})$,
where the matrices of $\sigma_{1},\sigma_{2}$ in the basis $\mathcal{B}$ \eqref{eqn basis of second Jordan algebra}
are given in \Cref{Generators2ndModel}, \Cref{TableAppendix}.
Here we write $-\sigma_{1}$ instead of $\sigma_{1}$ 
because the second element in the result given by \cite{PARI2} sends $\mathrm{E}$ to $-\mathrm{E}$.
The isometry group $\mathrm{O}(\mathrm{J}_{\Z},\langle~,~\rangle_{\mathrm{E}})$ is the direct product of the subgroup generated by $\sigma_{1},\sigma_{2}$ and the order $2$ central subgroup $\pm \mathrm{id}$.
In the proof of \Cref{prop upper bound order second model},
we find that $\mathcal{F}_{4,\mathrm{E}}(\Z)$ is a subgroup of the group $\langle \sigma_{1},\sigma_{2}\rangle$.

In the basis $\mathcal{B}$,
the cubic form $\det$ on $\mathrm{J}_{\R}$ can be written down as a $27$-variable polynomial of degree $3$,
and we give this polynomial function as \texttt{MatDet} in our \cite{PARI2} program \cite{Codes}.
Using \cite{PARI2}, we verify that $\sigma_{1}$ and $\sigma_{2}$ both preserve the cubic form $\det$ and the element $\mathrm{E}$,
thus $\mathcal{F}_{4,\mathrm{E}}(\Z)$ and the group $\langle \sigma_{1},\sigma_{2}\rangle$ coincide and $|\mathcal{F}_{4,\mathrm{E}}(\Z)|=2^{12}\cdot 3^{5}\cdot 7^{2}\cdot 13$.

\subsubsection{\texorpdfstring{$\mathcal{F}_{4,\mathrm{I}}(\Z)$}{}}\label{section first integral model of F4}
\tp Now we look at the finite group $\mathcal{F}_{4,\mathrm{I}}(\Z)=\aut(\mathrm{J}_{\Z},\det,\mathrm{I})=\aut(\mathrm{J}_{\Z},\circ)$,
and we want to prove the following proposition:
\begin{prop}\label{prop bound of the first model}
    The reductive $\Z$-model $\mathcal{F}_{4,\mathrm{I}}$ of $\grpF$ is not $\grpF(\Q)$-conjugate to $\mathcal{F}_{4,\mathrm{E}}$,
    and $|\mathcal{F}_{4,\mathrm{I}}(\Z)|\leq 2^{15}\cdot 3^{6}\cdot 5^{2}\cdot 7$.
\end{prop}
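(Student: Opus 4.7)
The plan is to repeat, with $\mathrm{E}$ replaced by $\mathrm{I}$, the argument of \Cref{prop upper bound order second model} and the generator-level analysis that followed it. I will introduce the symmetric bilinear form
\[\langle A,B\rangle_{\mathrm{I}} := (A,\mathrm{I},\mathrm{I})(B,\mathrm{I},\mathrm{I}) - 2(A,B,\mathrm{I})\]
on $\mathrm{J}_{\Q}$, where $(-,-,-)$ is the trilinear polarization of $\det$. Exactly as in \cite[Proposition 7.2]{Leech}, this form is positive definite and integral on $\mathrm{J}_{\Z}$, and it is tautologically preserved by every element of $\mathcal{F}_{4,\mathrm{I}}(\Z)=\aut(\mathrm{J}_{\Z},\det,\mathrm{I})$. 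I will compute the Gram matrix of $(\mathrm{J}_{\Z},\langle\cdot,\cdot\rangle_{\mathrm{I}})$ in the basis $\mathcal{B}$ of \eqref{eqn basis of second Jordan algebra}, apply \texttt{qfauto} from \cite{PARI2} to obtain the order of $\mathrm{O}(\mathrm{J}_{\Z},\langle\cdot,\cdot\rangle_{\mathrm{I}})$ together with an explicit set of generators, and then retain only those generators that fix $\mathrm{I}$ and preserve the cubic form $\det$, the latter being checked against the $27$-variable polynomial \texttt{MatDet} already coded in \cite{Codes}. Feeding the resulting list into \cite{GAP4} yields a finite supergroup of $\mathcal{F}_{4,\mathrm{I}}(\Z)$ whose order should equal $2^{15}\cdot 3^{6}\cdot 5^{2}\cdot 7$; matching this upper bound against a lower bound from concrete elements of $\mathcal{F}_{4,\mathrm{I}}(\Z)$ (the permutations of the diagonal idempotents $\mathrm{E}_{1},\mathrm{E}_{2},\mathrm{E}_{3}$, the sign-and-conjugation involutions on the off-diagonal entries $\mathrm{F}_{i}(x)$, and the lifts of the automorphism group of the Coxeter order $\oct_{\Z}$) will promote it to an equality.

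The non-conjugacy assertion then falls out of the prime factorization. A $\grpF(\Q)$-conjugation between $\mathcal{F}_{4,\mathrm{I}}$ and $\mathcal{F}_{4,\mathrm{E}}$ induces an abstract group isomorphism $\mathcal{F}_{4,\mathrm{I}}(\Z)\simeq\mathcal{F}_{4,\mathrm{E}}(\Z)$, so the two orders must agree. But \Cref{prop upper bound order second model} and the subsequent generator-level argument give $|\mathcal{F}_{4,\mathrm{E}}(\Z)|=2^{12}\cdot 3^{5}\cdot 7^{2}\cdot 13$, whereas the computation above produces $|\mathcal{F}_{4,\mathrm{I}}(\Z)|=2^{15}\cdot 3^{6}\cdot 5^{2}\cdot 7$, a number coprime to $13$. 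The two orders therefore differ, so the reductive $\Z$-models are not $\grpF(\Q)$-conjugate. Combined with the mass formula \eqref{eqn mass formula for F4} and the decomposition \eqref{eqn decompose the mass into sum}, these two orbits saturate the total mass, closing the proof of \Cref{prop exactly two reductive models}.

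The hard part will be that, unlike the $\mathrm{E}$ case where the two generators returned by \texttt{qfauto} luckily already preserve both $\det$ and the polarization, for $\mathrm{I}$ I should not expect the same coincidence: the lattice $(\mathrm{J}_{\Z},\langle\cdot,\cdot\rangle_{\mathrm{I}})$ is (up to scaling) the standard trace lattice $(\mathrm{J}_{\Z},\mathrm{B}_{\mathrm{Q}})$ from \eqref{eqn quadratic form on exceptional Jordan algebra}, whose isometry group is strictly larger than the subgroup of elements preserving $\det$ and fixing $\mathrm{I}$. Carefully intersecting these two stabilizers inside \cite{GAP4} via a Schreier--Sims procedure, and then matching the result against the explicit lower bound above, is the main piece of work. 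Once the exact order is in hand, both halves of the proposition follow immediately.
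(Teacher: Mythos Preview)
Your approach diverges substantially from the paper's, and as written it contains a real gap. You correctly observe that $\langle\cdot,\cdot\rangle_{\mathrm{I}}$ coincides with the trace form $\mathrm{B}_{\mathrm{Q}}$ of \eqref{eqn quadratic form on exceptional Jordan algebra}, but this is exactly why the $\mathrm{E}$-case argument does not transfer: the paper itself records $\mathrm{O}(\mathrm{J}_{\Z},\mathrm{B}_{\mathrm{Q}})\simeq \mathrm{O}(\mathrm{I}_{3})\times\bigl(\mathrm{O}(\oct_{\Z})\wr S_{3}\bigr)$, of order $2^{47}\cdot 3^{17}\cdot 5^{6}\cdot 7^{3}$, larger than the target by a factor of roughly $10^{19}$. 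No crude halving of this order gets anywhere near $2^{15}\cdot 3^{6}\cdot 5^{2}\cdot 7$. Two further problems compound this: (i) ``retaining only those generators that fix $\mathrm{I}$ and preserve $\det$'' does not compute any stabilizer, only the group generated by whichever \texttt{qfauto} generators happen to satisfy the constraint, and that is a \emph{subgroup} of $\mathcal{F}_{4,\mathrm{I}}(\Z)$, contradicting your ``supergroup'' claim; (ii) the lower-bound witnesses you list (permutations of the $\mathrm{E}_{i}$, sign/conjugation maps, lifts of $\aut(\oct_{\Z})$) generate far too little, since $|\aut(\oct_{\Z})|=|\lietype{G}{2}(\mathbb{F}_{2})|=12096$, whereas the kernel $\mathscr{D}$ alone already has order $|\mathrm{W}(\lietype{E}{8})|=2^{14}\cdot 3^{5}\cdot 5^{2}\cdot 7$. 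The honest version of your plan would be to compute the stabilizer of the vector $\det\in\sym^{3}\mathrm{J}_{\Z}^{*}$ inside this $10^{32}$-order group, which is not a routine Schreier--Sims point-stabilizer problem.

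The paper proceeds structurally instead and needs no machine computation for this proposition. Using the restriction $p:\mathcal{F}_{4,\mathrm{I}}(\Z)\to\mathrm{O}(\mathrm{D};\mathrm{I})\simeq S_{3}$ and the splitting $\kappa$ of \Cref{lemma splitting of the exact sequence for the first model}, one has $\mathcal{F}_{4,\mathrm{I}}(\Z)=\mathscr{D}\rtimes S_{3}$ with $\mathscr{D}\simeq\widetilde{\sorth(\oct_{\Z})}$ the triality group of \Cref{lemma structure of group D}. A short Moufang-identity computation shows the projection $\varphi:\widetilde{\sorth(\oct_{\Z})}\to\sorth(\oct_{\Z})$ has kernel $\{(\pm\mathrm{id},\pm\mathrm{id},\mathrm{id})\}$, whence $|\mathscr{D}|\le 2\,|\sorth(\oct_{\Z})|=|\mathrm{W}(\lietype{E}{8})|$ and $|\mathcal{F}_{4,\mathrm{I}}(\Z)|\le 3!\cdot|\mathrm{W}(\lietype{E}{8})|=2^{15}\cdot 3^{6}\cdot 5^{2}\cdot 7$. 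Non-conjugacy then follows, as you say, from the prime $13$: it divides $|\mathcal{F}_{4,\mathrm{E}}(\Z)|$ but not the bound just obtained.
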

Denote the subset of $\mathrm{J}_{\Z}$ consisting of diagonal matrices by $\mathrm{D}$,
and the subset of elements whose diagonal entries are zero by $\mathrm{D}_{0}$.
The formula \eqref{eqn quadratic form on exceptional Jordan algebra} for the quadratic form $\mathrm{Q}$ on $\mathrm{J}_{\Z}$
shows that equipped with $\mathrm{Q}$ we have $\mathrm{J}_{\Z}=\mathrm{D}_{0}\oplus \mathrm{D}$ as quadratic lattices.
By \Cref{rmk quadratic structure of coxeter octonion},
the quadratic lattice $(\oct_{\Z},\mathrm{N})$ is isometric to $\lietype{E}{8}$,
thus $\mathrm{D}_{0}$ is isometric to $\lietype{E}{8}\oplus \lietype{E}{8}\oplus \lietype{E}{8}$.
On the other hand, the lattice $\mathrm{D}$ is isometric to 
\[\mathrm{I}_{3}=\Z^{3}, \mathrm{q}:(x_{1},x_{2},x_{3})\mapsto \frac{1}{2}\left(x_{1}^{2}+x_{2}^{2}+x_{3}^{2}\right).\]

Any element of $\mathcal{F}_{4,\mathrm{I}}(\Z)$ preserves the quadratic form $\mathrm{Q}$ on $\mathrm{J}_{\Z}$,
so $\mathcal{F}_{4,\mathrm{I}}(\Z)$ is a subgroup of the isometry group $\orth(\mathrm{J}_{\Z})$ of the quadratic lattice $\mathrm{J}_{\Z}$.
By the theory of root lattices,
we have 
\[\orth(\mathrm{J}_{\Z})\simeq \orth(\lietype{I}{3})\times\left(\orth(\oct_{\Z})\wr \mathrm{S}_{3}\right),\]
where $\mathrm{S}_{3}$ is the permutation group of three elements and $\wr$ stands for the wreath product.
Let $p$ be the restriction map $\mathcal{F}_{4,\mathrm{I}}(\Z)\hookrightarrow \orth(\mathrm{J}_{\Z})\twoheadrightarrow \orth(\mathrm{D}),g\mapsto g|_{\mathrm{D}}$,
where $\orth(\mathrm{D})\simeq\orth(\mathrm{I}_{3})$ is isomorphic to $\{\pm 1\}^{3}\rtimes \mathrm{S}_{3}$.

Let $\mathrm{O}(\mathrm{D}\,;\mathrm{I})$ be the group $\{\sigma\in \orth(\mathrm{D})\,|\,\sigma(\mathrm{I})=\mathrm{I}\}$,
which is isomorphic to the permutation group $\mathrm{S}_{3}$.
Since elements in $\mathcal{F}_{4,\mathrm{I}}(\Z)$ fix $\mathrm{I}$,
the image of $p$ is contained in $\orth(\mathrm{D}\,;\mathrm{I})$.
\begin{lemma}\label{lemma embed permutation group into F4}
    The image of $p$ is $\orth(\mathrm{D}\,;\mathrm{I})\simeq \mathrm{S}_{3}$.
\end{lemma}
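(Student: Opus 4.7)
My plan is to exhibit, for each $\sigma \in \mathrm{S}_3 \simeq \orth(\mathrm{D}\,;\mathrm{I})$, an explicit element $\tau_\sigma \in \mathcal{F}_{4,\mathrm{I}}(\Z)$ whose restriction to $\mathrm{D}$ realizes the permutation $\sigma$ of $(\mathrm{E}_1,\mathrm{E}_2,\mathrm{E}_3)$. Since $\mathrm{S}_3$ is generated by its transpositions, this will suffice to force $p$ onto $\orth(\mathrm{D}\,;\mathrm{I})$.

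The natural candidate is conjugation by the $3\times 3$ permutation matrix $P_\sigma \in \mathrm{M}_3(\Z)$ attached to $\sigma$. Because $\Z$ lies in the centre of $\oct_\Q$, the expression $\tau_\sigma(A) := P_\sigma A P_\sigma^{-1}$ is unambiguously defined on $\mathrm{M}_3(\oct_\Q)$ and simply permutes the rows and columns of $A$ according to $\sigma$. A direct calculation — e.g.\ for $\sigma = (1\,2)$ one obtains
\[
\tau_{(12)}([a,b,c\,;x,y,z]) = [b,a,c\,;\overline{y},\overline{x},\overline{z}],
\]
and analogous formulas for $(2\,3)$ and $(1\,3)$ — shows that $\tau_\sigma$ preserves the Hermitian condition, so maps $\mathrm{J}_\Q$ into itself. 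The only point to check for the integral structure is that conjugation on $\oct_\R$ stabilises $\oct_\Z$, which follows from $\overline{\mathrm{e}_i}=-\mathrm{e}_i$ and $\overline{\mathrm{h}_j}=1-\mathrm{h}_j$; together with the permutation of the $\R$-diagonal, this gives $\tau_\sigma(\mathrm{J}_\Z) \subset \mathrm{J}_\Z$.

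It remains to verify that $\tau_\sigma$ is an automorphism of $(\mathrm{J}_\Z,\circ)$. Associativity of matrix multiplication with central scalar coefficients yields $(P_\sigma A P_\sigma^{-1})(P_\sigma B P_\sigma^{-1}) = P_\sigma(AB)P_\sigma^{-1}$ for any $A,B \in \mathrm{M}_3(\oct_\Q)$, and symmetrising in $A$ and $B$ gives $\tau_\sigma(A \circ B) = \tau_\sigma(A) \circ \tau_\sigma(B)$. Invoking the identification $\mathcal{F}_{4,\mathrm{I}}(\Z) = \aut(\mathrm{J}_\Z,\circ)$ recalled at the start of this subsection, this places $\tau_\sigma$ in $\mathcal{F}_{4,\mathrm{I}}(\Z)$, and $p(\tau_\sigma) = \sigma$ by construction. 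I do not anticipate any genuine difficulty: the one subtle point is the well-definedness of $P_\sigma$-conjugation in the presence of non-associativity of $\oct$, which is settled entirely by the centrality of the integer entries of $P_\sigma$.
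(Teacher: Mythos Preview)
Your construction is correct and coincides with the paper's: the maps $\tau_\sigma(A)=P_\sigma A P_\sigma^{-1}$ are exactly the elements $g_\sigma$ the paper writes down explicitly (permute the diagonal entries by $\sigma$, permute the off-diagonal entries by $\sigma$, and apply octonion conjugation when $\sigma$ is odd). Where you differ is in the verification: you check that $\tau_\sigma$ preserves the Jordan product $\circ$ via the clean observation that conjugation by a matrix with entries in the centre $\R\subset\oct_\R$ is automatically a ring automorphism of $\mathrm{M}_3(\oct_\R)$, hence of $(\mathrm{J}_\R,\circ)$; the paper instead checks that $g_\sigma$ preserves the cubic form $\det$ and the identity $\mathrm{I}$, invoking the trace identities $\tr(xyz)=\tr(yzx)=\tr(\overline{x}\,\overline{z}\,\overline{y})$ and the alternative description $\mathcal{F}_{4,\mathrm{I}}(\Z)=\aut(\mathrm{J}_\Z,\det,\mathrm{I})$. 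Your route is a touch slicker; the paper's has the small advantage that it makes the later use of these same $g_\sigma$ (in the splitting of \eqref{eqn short exact sequence for the first model}) immediate.

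One small correction: your claim $\overline{\mathrm{h}_j}=1-\mathrm{h}_j$ fails for $j=4$, since $\mathrm{h}_4=(\mathrm{e}_1+\mathrm{e}_2+\mathrm{e}_3+\mathrm{e}_5)/2$ has zero real part and hence $\overline{\mathrm{h}_4}=-\mathrm{h}_4$. The conclusion that conjugation stabilises $\oct_\Z$ is unaffected (indeed $\overline{x}=\tr(x)-x$ and $\tr$ takes integer values on $\oct_\Z$), but the stated formula should be adjusted.
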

\begin{proof}
    For an element $\sigma\in \mathrm{S}_{3}$,
    we denote by $g_{\sigma}$ the element
    \begin{align}\label{eqn realization of S3 in F4}
    [a_{1},a_{2},a_{3}\,;x_{1},x_{2},x_{3}]\mapsto [a_{\sigma^{-1}(1)},a_{\sigma^{-1}(2)},a_{\sigma^{-1}(3)}\,;\epsilon(\sigma)(x_{\sigma^{-1}(1)}),\epsilon(\sigma)(x_{\sigma^{-1}(2)}),\epsilon(\sigma)(x_{\sigma^{-1}(3)})]
    \end{align}
    in $\GL(\mathrm{J}_{\Z})$,
    where the map $\epsilon(\sigma):\oct_{\Z}\rightarrow\oct_{\Z}$ 
    is defined as identity when $\sigma$ is even, and as the conjugation when $\sigma$ is odd. 
    In this proof, we write $x^{*}:=\epsilon(\sigma)(x)$ for short.

    For any $A=[a_{1},a_{2},a_{3}\,;x_{1},x_{2},x_{3}]\in \mathrm{J}_{\Z}$,
    by the formula \eqref{eqn determinant on Jordan algebra} for the cubic form $\det$,
    we have 
    \begin{align*}
        \det\left(g_{\sigma}(A)\right)=&\prod_{i=1}^{3}a_{\sigma^{-1}(i)}+\mathrm{Tr}(x_{\sigma^{-1}(1)}^{*}x_{\sigma^{-1}(2)}^{*}x_{\sigma^{-1}(3)}^{*})-\sum_{i=1}^{3}a_{\sigma^{-1}(i)}\mathrm{N}(x_{\sigma^{-1}(i)}^{*})\\
        =&a_{1}a_{2}a_{3}+\mathrm{Tr}(x_{\sigma^{-1}(1)}^{*}x_{\sigma^{-1}(2)}^{*}x_{\sigma^{-1}(3)}^{*})-\sum_{i=1}^{3}a_{i}\mathrm{N}(x_{i}).
    \end{align*}
    The property \eqref{eqn trace-commutativity of octonion} of $\tr$ 
    implies that for any $x,y,z\in\oct_{\Z}$,
    \[\tr(xyz)=\tr(yzx)=\tr(zxy)=\tr(\overline{x}\cdot \overline{z}\cdot \overline{y})=\tr(\overline{z}\cdot\overline{y}\cdot\overline{x})=\tr(\overline{y}\cdot\overline{x}\cdot\overline{z}),\]
    which can also be stated as 
    $\mathrm{Tr}(x_{\sigma^{-1}(1)}^{*}x_{\sigma^{-1}(2)}^{*}x_{\sigma^{-1}(3)}^{*})=\mathrm{Tr}(x_{1}x_{2}x_{3})$ for any $\sigma\in \mathrm{S}_{3}$.
    Hence $\det(g_{\sigma}(A))=\det(A)$.
    Since $g_{\sigma}$ also fixes $\mathrm{I}$,
    it is an element in $\mathcal{F}_{4,\mathrm{I}}(\Z)$
    and its restriction $p(g_{\sigma})\in \orth(\mathrm{D}\,;\mathrm{I})\simeq \mathrm{S}_{3}$ is $\sigma$,
    thus $\mathrm{Im}(p)=\orth(\mathrm{D}\,;\mathrm{I})$.
\end{proof}
Let $\mathscr{D}$ be the kernel of $p$,
then we have a short exact sequence of finite groups:
\begin{align}\label{eqn short exact sequence for the first model}
    1\rightarrow \mathscr{D}\rightarrow \mathcal{F}_{4,\mathrm{I}}(\Z)\rightarrow \orth(\mathrm{D}\,;\mathrm{I})\simeq \mathrm{S}_{3}\rightarrow 1.
\end{align}
\begin{lemma}\label{lemma splitting of the exact sequence for the first model}
    The map $\kappa:\mathrm{S}_{3}\rightarrow \mathcal{F}_{4,\mathrm{I}}(\Z),\sigma\mapsto g_{\sigma}$ defined in \eqref{eqn realization of S3 in F4}
    gives a splitting of the short exact sequence \eqref{eqn short exact sequence for the first model}.
\end{lemma}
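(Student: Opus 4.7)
The plan is entirely computational. From the proof of the previous lemma we already know that for every $\sigma\in \mathrm{S}_{3}$ the linear map $g_{\sigma}$ lies in $\mathcal{F}_{4,\mathrm{I}}(\Z)$ and satisfies $p(g_{\sigma})=\sigma$. Hence $p\circ\kappa=\mathrm{id}_{\mathrm{S}_{3}}$, and the only thing left to verify is that $\kappa$ is a group homomorphism, i.e.\ that $g_{\sigma}\circ g_{\tau}=g_{\sigma\tau}$ for all $\sigma,\tau\in \mathrm{S}_{3}$.

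The main observation is that the map $\epsilon\colon \mathrm{S}_{3}\to \aut_{\Z\text{-mod}}(\oct_{\Z})$ sending even permutations to $\mathrm{id}_{\oct_{\Z}}$ and odd permutations to the conjugation $x\mapsto \overline{x}$ is itself a group homomorphism. Indeed, it factors through the sign character $\mathrm{S}_{3}\twoheadrightarrow\{\pm 1\}$, and since the conjugation on $\oct_{\Z}$ is an involution, we get $\epsilon(\sigma\tau)=\epsilon(\sigma)\circ\epsilon(\tau)$ in all four sign combinations.

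Granted this, the check is direct. Writing an arbitrary element of $\mathrm{J}_{\Z}$ as $A=[a_{1},a_{2},a_{3}\,;x_{1},x_{2},x_{3}]$, one first computes $g_{\tau}(A)=[a_{\tau^{-1}(i)}\,;\epsilon(\tau)(x_{\tau^{-1}(i)})]_{i=1,2,3}$, then applies $g_{\sigma}$ to obtain a matrix whose $i$-th diagonal coefficient equals $a_{\tau^{-1}(\sigma^{-1}(i))}=a_{(\sigma\tau)^{-1}(i)}$ and whose $i$-th off-diagonal coefficient equals $\epsilon(\sigma)\bigl(\epsilon(\tau)(x_{(\sigma\tau)^{-1}(i)})\bigr)=\epsilon(\sigma\tau)(x_{(\sigma\tau)^{-1}(i)})$ by the multiplicativity of $\epsilon$. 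Comparing with the formula \eqref{eqn realization of S3 in F4} defining $g_{\sigma\tau}$, this gives $g_{\sigma}\circ g_{\tau}=g_{\sigma\tau}$, so $\kappa$ is a homomorphism and splits \eqref{eqn short exact sequence for the first model}.

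There is essentially no obstacle here: everything reduces to the multiplicativity of $\epsilon$, which is immediate once one notices that the conjugation is an involution. The only mild subtlety worth flagging is that the conjugation is an \emph{anti}-automorphism of $\oct_{\Z}$, but since $\kappa$ is defined componentwise on the entries of the Jordan matrix (rather than via multiplication of octonions), this anti-multiplicativity plays no role in the verification; it is only its additivity and involutive nature that matter.
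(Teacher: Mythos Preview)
Your proof is correct and follows essentially the same approach as the paper's own argument: reduce to showing that $\kappa$ is a group homomorphism, compute $g_{\sigma}\circ g_{\tau}$ directly on a generic element of $\mathrm{J}_{\Z}$, and conclude using the fact that $\epsilon:\mathrm{S}_{3}\to\GL(\oct_{\Z})$ is a homomorphism (which the paper also singles out as the key point). The additional remark about conjugation being an anti-automorphism is a nice clarification but not present in the paper.
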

\begin{proof}
    It suffices to show that $\sigma\mapsto g_{\sigma}$ is a group homomorphism.
    For $\sigma,\tau\in \mathrm{S}_{3}$,
    we have 
    \begin{gather*}
    \begin{aligned}
        &g_{\tau}\circ g_{\sigma}\left([a_{1},a_{2},a_{3}\,;x_{1},x_{2},x_{3}]\right)\\
        =&g_{\tau}\left([a_{\sigma^{-1}(1)},a_{\sigma^{-1}(2)},a_{\sigma^{-1}(3)}\,;\epsilon(\sigma)(x_{\sigma^{-1}(1)}),\epsilon(\sigma)(x_{\sigma^{-1}(2)}),\epsilon(\sigma)(x_{\sigma^{-1}(3)})]\right)\\
        =&\left[\begin{array}{c}
            a_{(\tau\sigma)^{-1}(1)},a_{(\tau\sigma)^{-1}(2)},a_{(\tau\sigma)^{-1}(3)}\,;\\
            \epsilon(\tau)\circ\epsilon(\sigma)(x_{(\tau\sigma)^{-1}(1)}),\epsilon(\tau)\circ\epsilon(\sigma)(x_{(\tau\sigma)^{-1}(2)}),\epsilon(\tau)\circ\epsilon(\sigma)(x_{(\tau\sigma)^{-1}(3)})
        \end{array}\right].
    \end{aligned}
\end{gather*}
It can be easily seen that the map $\epsilon:\mathrm{S}_{3}\rightarrow \GL(\oct_{\Z})$ is a group homomorphism,
thus $g_{\tau}\circ g_{\sigma}=g_{\tau\sigma}$ and $\sigma\mapsto g_{\sigma}$ is also a group homomorphism.
\end{proof}
This lemma tells us $\mathcal{F}_{4,\mathrm{I}}(\Z)=\mathscr{D}\rtimes \kappa(\mathrm{S}_{3})$
and $|\mathcal{F}_{4,\mathrm{I}}(\Z)|=3!\cdot |\mathscr{D}|$.
Now we study the structure of $\mathscr{D}$.
\begin{lemma}\label{lemma structure of group D}
    The group $\mathscr{D}$ is isomorphic to the group 
    \[\widetilde{\mathrm{SO}(\oct_{\Z})}:=\left\{(\alpha,\beta,\gamma)\in \sorth(\oct_{\Z})^{3}\,\middle\vert\,\overline{\alpha(x)\beta(y)}=\gamma(\overline{xy}),\forall x,y\in\oct_{\Z}\right\}.\]
\end{lemma}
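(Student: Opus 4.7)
The plan is to exploit that every $g\in\mathscr{D}$ fixes the three mutually orthogonal idempotents $\mathrm{E}_1,\mathrm{E}_2,\mathrm{E}_3$, so that it commutes with Jordan multiplication by each $\mathrm{E}_i$ and therefore preserves the eigenspace decomposition of these operators. A direct computation in $\mathrm{M}_{3}(\oct_{\R})$ yields
\[ \mathrm{E}_i\circ\mathrm{F}_j(x)=\begin{cases}0 & \text{if }i=j,\\ \tfrac{1}{2}\mathrm{F}_j(x) & \text{if }i\neq j,\end{cases}\]
so the $0$-eigenspace of ``multiplication by $\mathrm{E}_i$'' inside $\mathrm{D}_{0}$ is exactly $\mathrm{F}_i(\oct_{\Z})$. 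Consequently $g$ preserves each summand $\mathrm{F}_i(\oct_{\Z})$, and it induces $\Z$-linear automorphisms $\phi_i\in\GL(\oct_{\Z})$ characterised by $g(\mathrm{F}_i(x))=\mathrm{F}_i(\phi_i(x))$.

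Next, I would verify the relations appearing in the definition of $\widetilde{\sorth(\oct_{\Z})}$. Short matrix computations give the Jordan identities
\[ \mathrm{F}_i(x)\circ\mathrm{F}_i(y)=\tfrac{1}{2}\langle x,y\rangle\,(\mathrm{E}_{i+1}+\mathrm{E}_{i+2}),\qquad 2\,\mathrm{F}_i(x)\circ \mathrm{F}_j(y)=\mathrm{F}_k(\overline{xy}),\]
whenever $(i,j,k)$ is a cyclic permutation of $(1,2,3)$. Applying $g$ to the first identity and using that it fixes $\mathrm{E}_{i+1}+\mathrm{E}_{i+2}$ yields $\mathrm{N}(\phi_i(x))=\mathrm{N}(x)$, whence $\phi_i\in\orth(\oct_{\Z})$; applying $g$ to the second identity produces exactly the compatibility relation $\overline{\phi_i(x)\phi_j(y)}=\phi_k(\overline{xy})$. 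To upgrade each $\phi_i$ from $\orth$ to $\sorth$, I would observe that $\mathscr{D}$ embeds into the stabiliser of the ordered triple $(\mathrm{E}_1,\mathrm{E}_2,\mathrm{E}_3)$ inside the compact Lie group $\grpF(\R)$, which is the connected group $\spin(8)$; since the map $g\mapsto(\phi_1,\phi_2,\phi_3)$ is continuous and its source is connected, its image lies in the identity component $\sorth(\oct_{\R})^{3}$ of $\orth(\oct_{\R})^{3}$, and intersecting with $\orth(\oct_{\Z})^{3}$ gives $\sorth(\oct_{\Z})^{3}$.

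Finally, to prove that $\Phi:\mathscr{D}\to\widetilde{\sorth(\oct_{\Z})}$, $g\mapsto(\phi_1,\phi_2,\phi_3)$, is a group isomorphism, observe that the triple attached to $gg'$ is componentwise $(\phi_i\phi_i')$, so $\Phi$ is a homomorphism; it is injective because the $\mathrm{E}_i$ and $\mathrm{F}_i(x)$ span $\mathrm{J}_{\Z}$. For surjectivity, given $(\alpha,\beta,\gamma)\in\widetilde{\sorth(\oct_{\Z})}$ I would define a $\Z$-linear endomorphism $g$ of $\mathrm{J}_{\Z}$ by $g(\mathrm{E}_i)=\mathrm{E}_i$, $g(\mathrm{F}_1(x))=\mathrm{F}_1(\alpha(x))$, $g(\mathrm{F}_2(y))=\mathrm{F}_2(\beta(y))$, $g(\mathrm{F}_3(z))=\mathrm{F}_3(\gamma(z))$, and verify $g\in\aut(\mathrm{J}_{\Z},\circ)$ by checking the Jordan products on pairs of basis vectors: the ``same-index'' identity is preserved because each $\phi_i$ is an isometry of $\mathrm{N}$, while the ``mixed-index'' identity is preserved by design thanks to the compatibility relation. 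The main obstacle I expect is precisely the $\orth$-to-$\sorth$ upgrade in the middle step: the compatibility relation alone does not obviously force $\phi_i\in\sorth$ one triple at a time, so the argument genuinely relies on embedding $\mathscr{D}$ into the connected real group $\spin(8)\subset\grpF(\R)$ and using a connectedness argument rather than a purely algebraic manipulation.
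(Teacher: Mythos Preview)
Your approach parallels the paper's but differs in two places and has one gap in the surjectivity step.

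For the compatibility relation in the forward direction, the paper uses the determinant rather than the Jordan product $\mathrm{F}_i\circ\mathrm{F}_j$: since $\det[0,0,0;x,y,z]=\tr(xyz)=\langle\overline{xy},z\rangle$, preservation of $\det$ together with non-degeneracy of $\langle\,,\,\rangle$ yields $\overline{\alpha_g(x)\beta_g(y)}=\gamma_g(\overline{xy})$ directly. For the $\orth$-to-$\sorth$ upgrade, the paper cites an algebraic lemma of Yokota instead of your connectedness argument through $\spin(8)$; your route is valid, though you are taking for granted that the stabiliser of $(\mathrm{E}_1,\mathrm{E}_2,\mathrm{E}_3)$ in $\lietype{F}{4}$ is connected.

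The gap: the definition of $\widetilde{\sorth(\oct_{\Z})}$ imposes only the \emph{single} relation $\overline{\alpha(x)\beta(y)}=\gamma(\overline{xy})$, yet checking $g\in\aut(\mathrm{J}_{\Z},\circ)$ via your Jordan-product identities requires all three cyclic variants, one for each pair $\mathrm{F}_i\circ\mathrm{F}_j$ with $i\neq j$. You must show the other two follow. They do: the given relation yields $\tr(\alpha(x)\beta(y)\gamma(z))=\tr(xyz)$, and then cyclicity of $\tr$ together with orthogonality of $\alpha$ gives $\langle\overline{\beta(y)\gamma(z)},\alpha(x)\rangle=\tr(yzx)=\langle\alpha(\overline{yz}),\alpha(x)\rangle$ for all $x$, hence $\overline{\beta(y)\gamma(z)}=\alpha(\overline{yz})$. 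The paper sidesteps this issue entirely by invoking the characterisation $\mathcal{F}_{4,\mathrm{I}}(\Z)=\aut(\mathrm{J}_{\Z},\det,\mathrm{I})$, under which preservation of $\det$ (equivalently of $\tr(xyz)$) requires only the single relation.
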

\begin{proof}
    Fix $g\in \mathscr{D}$ and $x\in \oct_{\Z}$,
    we define $y,z,w\in \oct_{\Z}$ by the formula 
    \[g.\left(\begin{matrix}
        0&0&0\\
        0&0&x\\
        0&\overline{x}&0
    \end{matrix}\right)=
    \left(\begin{matrix}
        0&w&\overline{z}\\
        \overline{w}&0&y\\
        z&\overline{y}&0
    \end{matrix}\right).\]
    Since $g$ preserves the Jordan multiplication $\circ$, 
    we have 
    \begin{align*}
        \mathrm{N}(x)\left(\begin{matrix}
            0&0&0\\
            0&1&0\\
            0&0&1
        \end{matrix}\right)&=g.\left(\left(\begin{matrix}
            0&0&0\\
            0&0&x\\
            0&\overline{x}&0
        \end{matrix}\right)\circ\left(\begin{matrix}
            0&0&0\\
            0&0&x\\
            0&\overline{x}&0
        \end{matrix}\right)\right)\\
        &=\left(\begin{matrix}
            0&w&\overline{z}\\
            \overline{w}&0&y\\
            z&\overline{y}&0
        \end{matrix}\right)\circ \left(\begin{matrix}
            0&w&\overline{z}\\
            \overline{w}&0&y\\
            z&\overline{y}&0
        \end{matrix}\right)\\
        &=\left(\begin{matrix}
            \mathrm{N}(z)+\mathrm{N}(w)& \overline{yz}& wy\\
            yz & \mathrm{N}(w)+\mathrm{N}(y)&\overline{zw}\\
            \overline{wy}& zw & \mathrm{N}(y)+\mathrm{N}(z)
        \end{matrix}\right),
    \end{align*}
    which implies that $z=w=0$ and $\mathrm{y}=\mathrm{N}(x)$.
    This gives us a homomorphism $g\mapsto \alpha_{g}$
    from $\mathscr{D}$ to $\orth(\oct_{\Z})$
    such that $g[0,0,0\,;x,0,0]=[0,0,0\,;\alpha_{g}(x),0,0]$ for $\in\oct_{\Z}$.
    
    Symmetrically, we also get $\beta_{g},\gamma_{g}\in\orth(\oct_{\Z})$
    such that 
    \[g[0,0,0\,;x,y,z]=[0,0,0\,;\alpha_{g}(x),\beta_{g}(x),\gamma_{g}(x)] \text{ for all }x,y,z\in \oct_{\Z}.\]
    Taking determinants of both sides,
    we get
    \[\tr(xyz)=\tr(\alpha_{g}(x)\beta_{g}(y)\gamma_{g}(z))\text{ for all }x,y,z\in\oct_{\Z}.\]
    This is equivalent to 
    $\langle \overline{\alpha_{g}(x)\beta_{g}(y)},\gamma_{g}(z)\rangle=\langle \overline{xy},z\rangle$.
    Since $\langle \overline{xy},z\rangle=\langle \gamma_{g}(\overline{xy}),\gamma_{g}(z)\rangle$,
    we have 
    \[\langle \overline{\alpha_{g}(x)\beta_{g}(y)}-\overline{xy},\gamma_{g}(z)\rangle=0\]
    for any $z\in \oct_{\Z}$.
    The bilinear form $\langle~,~\rangle$ is non-degenerate,
    so $\overline{\alpha_{g}(x)\beta_{g}(y)}=\gamma_{g}(\overline{xy})$ holds
    for any $x,y\in\oct_{\Z}$.
    By \cite[Lemma 1.14.4]{Yokota2009ExceptionalLG},
    we have $\alpha_{g},\beta_{g},\gamma_{g}\in \sorth(\oct_{\Z})$.
    
    Now we have obtained an injective homomorphism $\mathscr{D}\rightarrow \widetilde{\sorth(\oct_{\Z})}$.
    Conversely, 
    by the definition of the multiplication $\circ$ and the condition on $(\alpha,\beta,\gamma)\in\widetilde{\sorth(\oct_{\Z})}$,
    the morphism 
    \[[a,b,c\,;x,y,z]\mapsto [a,b,c\,;\alpha(x),\beta(y),\gamma(z)]\]
    lies in $\mathscr{D}$,
    thus $\mathscr{D}\simeq \widetilde{\sorth(\oct_{\Z})}$.
\end{proof}
Let $\varphi:\widetilde{\sorth(\oct_{\Z})}\rightarrow \sorth(\oct_{\Z})$ be the homomorphism sending a triple $(\alpha,\beta,\gamma)\in\widetilde{\sorth(\oct_{\Z})}$
to its third entry $\gamma\in \sorth(\oct_{\Z})$.
\begin{proof}[Proof of \Cref{prop bound of the first model}]
    For the bound on $|\mathcal{F}_{4,\mathrm{I}}(\Z)|$,
    it suffices to prove \[|\widetilde{\sorth(\oct_{\Z})}|\leq 2^{14}\cdot 3^{5}\cdot 5^{2}\cdot 7.\]
    
    Let $(\alpha,\beta,\mathrm{id})$ be an element in $\ker\varphi$,
    so $\alpha(x)\beta(y)=xy$ for all $x,y\in\oct_{\Z}$.
    Set $r=\beta(1)$ and we have $\alpha(x)=xr^{-1}$ and $\beta(y)=ry$.
    Setting $z=xr^{-1}$, the relation satisfied by $(\alpha,\beta,\mathrm{id})$ becomes:
    \[z(ry)=(zr)y,\text{ for all }y,z\in\oct_{\Z}.\]
    According to \cite[\S 8, Theorem 1]{ConwayOct},
    the octonion $r$ of norm $1$ is real, thus $r=\pm 1$
    and $\ker \varphi=\{(\mathrm{id},\mathrm{id},\mathrm{id}),(-\mathrm{id},-\mathrm{id},\mathrm{id})\}$.
    As a consequence, we have 
    \[|\widetilde{\sorth(\oct_{\Z})}|\leq 2\cdot|\sorth(\oct_{\Z})|=|\orth(\oct_{\Z})|=|\mathrm{W}(\lietype{E}{8})|=2^{14}\cdot 3^{5}\cdot 5^{2}\cdot 7,\]
    which gives us the desired upper bound for $|\mathcal{F}_{4,\mathrm{I}}(\Z)|$.

    Suppose that the reductive $\Z$-model $\mathcal{F}_{4,\mathrm{I}}$ of $\grpF$ is $\grpF(\Q)$-conjugate to $\mathcal{F}_{4,\mathrm{E}}$,
    then their $\Z$-points have the same order as finite groups.
    In the end of \Cref{section second integral model of F4},
    we prove that $|\mathcal{F}_{4,\mathrm{E}}(\Z)|=2^{12}\cdot 3^{5}\cdot 7^{2}\cdot 13$,
    thus with the same order,
    the group $\mathcal{F}_{4,\mathrm{I}}(\Z)$ contains an element of order $13$.
    However, $\mathcal{F}_{4,\mathrm{I}}(\Z)$ is isomorphic to $\widetilde{\sorth(\oct_{\Z})}\rtimes \mathrm{S}_{3}$,
    whose order is not divided by $13$.
    This leads to a contradiction.
\end{proof}
Now \Cref{prop upper bound order second model} and \Cref{prop bound of the first model} together imply \Cref{prop exactly two reductive models},
and as a corollary the equality in the upper bound in \Cref{prop bound of the first model} holds:
\begin{cor}\label{cor structure integral points of models}
        The finite group $\mathcal{F}_{4,\mathrm{I}}(\Z)$ has order $2^{15}\cdot 3^{6}\cdot 5^{2}\cdot 7$,
        and $\varphi$ is surjective.
\end{cor}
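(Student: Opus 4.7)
The plan is to combine the two upper bounds just established with Gross's mass formula \eqref{eqn mass formula for F4} to force equality in both inequalities, and then read off the surjectivity of $\varphi$ by a counting argument inside $\mathscr{D}=\widetilde{\sorth(\oct_{\Z})}$.

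First, \Cref{prop bound of the first model} proves that $\mathcal{F}_{4,\mathrm{I}}$ and $\mathcal{F}_{4,\mathrm{E}}$ are not $\grpF(\Q)$-conjugate, so they contribute two distinct terms to the left-hand side of \eqref{eqn mass formula for F4}. If one could show that these are the \emph{only} $\grpF(\Q)$-conjugacy classes in the genus, the mass formula would read
\[
\frac{1}{|\mathcal{F}_{4,\mathrm{I}}(\Z)|}+\frac{1}{|\mathcal{F}_{4,\mathrm{E}}(\Z)|}=\frac{691}{2^{15}\cdot 3^{6}\cdot 5^{2}\cdot 7^{2}\cdot 13}.
\]
Combining \Cref{prop upper bound order second model} and \Cref{prop bound of the first model} with the decomposition \eqref{eqn decompose the mass into sum} gives
\[
\frac{1}{|\mathcal{F}_{4,\mathrm{I}}(\Z)|}+\frac{1}{|\mathcal{F}_{4,\mathrm{E}}(\Z)|}\;\geq\;\frac{1}{2^{15}\cdot 3^{6}\cdot 5^{2}\cdot 7}+\frac{1}{2^{12}\cdot 3^{5}\cdot 7^{2}\cdot 13}\;=\;\frac{691}{2^{15}\cdot 3^{6}\cdot 5^{2}\cdot 7^{2}\cdot 13},
\]
and the mass formula forces the reverse inequality; so both of the upper bounds must be attained. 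This will simultaneously prove \Cref{prop exactly two reductive models} (no room for additional conjugacy classes) and give $|\mathcal{F}_{4,\mathrm{I}}(\Z)|=2^{15}\cdot 3^{6}\cdot 5^{2}\cdot 7$.

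For the surjectivity of $\varphi:\widetilde{\sorth(\oct_{\Z})}\rightarrow \sorth(\oct_{\Z})$, I would combine this equality with the structural information built up in \Cref{section first integral model of F4}. \Cref{lemma splitting of the exact sequence for the first model} gives $|\mathcal{F}_{4,\mathrm{I}}(\Z)|=6\cdot |\mathscr{D}|$, and \Cref{lemma structure of group D} identifies $\mathscr{D}$ with $\widetilde{\sorth(\oct_{\Z})}$. Hence
\[
|\widetilde{\sorth(\oct_{\Z})}|=\frac{2^{15}\cdot 3^{6}\cdot 5^{2}\cdot 7}{6}=2^{14}\cdot 3^{5}\cdot 5^{2}\cdot 7.
\]
In the proof of \Cref{prop bound of the first model} it is shown that $|\ker\varphi|=2$, so $|\mathrm{Im}(\varphi)|=2^{13}\cdot 3^{5}\cdot 5^{2}\cdot 7=|\sorth(\oct_{\Z})|$ (half the order of the Weyl group $\mathrm{W}(\lietype{E}{8})$), forcing $\varphi$ to be surjective.

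There is no genuine obstacle beyond checking that one is allowed to apply the mass formula to exactly the two classes $\mathcal{F}_{4,\mathrm{I}}$ and $\mathcal{F}_{4,\mathrm{E}}$: a priori other classes in the genus could contribute positive terms and spoil the saturation of the inequalities. The main subtlety, then, is the simultaneous deduction — the mass formula, together with the matching arithmetic identity \eqref{eqn decompose the mass into sum}, forces \emph{both} that there are exactly two classes and that each of the two upper bounds on the orders is an equality. Once that is in place, the orders of $\mathcal{F}_{4,\mathrm{I}}(\Z)$, $\mathscr{D}$ and $\mathrm{Im}(\varphi)$ are determined by pure arithmetic and the corollary follows immediately.
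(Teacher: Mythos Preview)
Your proof is correct and follows exactly the approach the paper intends: the text immediately before the corollary says that \Cref{prop upper bound order second model} and \Cref{prop bound of the first model} together imply \Cref{prop exactly two reductive models} via the mass formula \eqref{eqn mass formula for F4} and the identity \eqref{eqn decompose the mass into sum}, and the corollary is then stated as the resulting equality case. Your deduction of the surjectivity of $\varphi$ from $|\mathscr{D}|=|\mathcal{F}_{4,\mathrm{I}}(\Z)|/6$ and $|\ker\varphi|=2$ is precisely the counting argument implicit in the paper.
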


\section{Dimensions of spaces of invariants for \texorpdfstring{$\lietype{F}{4}$}{}}\label{section dimension of invariant subspaces of F4 representations}
\tp For a finite subgroup $\Gamma$ and an irreducible representation $U$ of the compact Lie group $\lietype{F}{4}$,
an interesting problem is to compute the dimension of the space of invariants $U^{\Gamma}$.
In this section we will give an algorithm to compute $\dim U^{\Gamma}$ 
for $\Gamma=\mathcal{F}_{4,\mathrm{I}}(\Z)$ or $\mathcal{F}_{4,\mathrm{E}}(\Z)$.
These dimensions will play an important role in our computation of spaces of automorphic forms in \Cref{section automorphic representations of anisotropic groups}.
The code of the computations in this section can be found in \cite{Codes}.

\subsection{Ideas and obstructions}\label{section ideas and obstructions of the computation}
\tp By the highest weight theory, 
the isomorphism classes of irreducible $\C$-representations of the compact Lie group $\lietype{F}{4}$ 
are in natural bijection with dominant weights of the irreducible root system $\mathrm{F}_{4}$.
Using notations in \cite[\S IV.4.9]{Lie}, we denote the weight $\lambda_{1}\varpi _{1}+\lambda_{2}\varpi_{2}+\lambda_{3}\varpi_{3}+\lambda_{4}\varpi_{4}$ by $\lambda=(\lambda_{1},\lambda_{2},\lambda_{3},\lambda_{4})$, 
where $\varpi_{1},\varpi_{2},\varpi_{3},\varpi_{4}$ are the four fundamental weights of $\mathrm{F}_{4}$.
Let $\mathrm{V}_{\lambda}$ be a representative of the isomorphism class of irreducible representations of $\lietype{F}{4}$ with highest weight $\lambda$. 
From now on we call $\mathrm{V}_{\lambda}$ \emph{the} irreducible representation of $\lietype{F}{4}$ with highest weight $\lambda$ for short.

The starting point of the computation of $\dim \mathrm{V}_{\lambda}^{\Gamma}$ for some finite subgroup $\Gamma$ of $\lietype{F}{4}$ is the following classic lemma:
\begin{lemma}\label{lemma dimension of invariant space in terms of conj classes}
    For a finite subgroup $\Gamma\subset \lietype{F}{4}$, we have
\[\dim \mathrm{V}_{\lambda}^{\Gamma}=\frac{1}{|\Gamma|}\sum_{\gamma\in\Gamma}\mathrm{Tr}|_{\mathrm{V}_{\lambda}}(\gamma)=\frac{1}{|\Gamma|}\sum_{c\in\mathrm{Conj}(\Gamma)}\mathrm{Tr}|_{\mathrm{V}_{\lambda}}(c)\cdot |c|,\]
where $\mathrm{Conj}(\Gamma)$ is the set of conjugacy classes of $\Gamma$ and $|c|$ denotes the cardinality of $c$.
\end{lemma}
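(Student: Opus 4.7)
The plan is to use the standard averaging (Reynolds) operator argument from representation theory of finite groups, which applies verbatim here because $\Gamma$ is finite and $\mathrm{V}_{\lambda}$ is a finite-dimensional complex representation.

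First I would define the endomorphism
\[P:=\frac{1}{|\Gamma|}\sum_{\gamma\in\Gamma}\gamma\in\mathrm{End}_{\C}(\mathrm{V}_{\lambda}),\]
where each $\gamma$ acts via the restriction of the $\lietype{F}{4}$-representation on $\mathrm{V}_{\lambda}$. A direct computation using the group law shows that $\gamma_{0}\cdot P=P$ for every $\gamma_{0}\in\Gamma$ (by reindexing the sum), and hence $P^{2}=P$. Thus $P$ is an idempotent whose image is contained in $\mathrm{V}_{\lambda}^{\Gamma}$. Conversely, if $v\in\mathrm{V}_{\lambda}^{\Gamma}$, then $Pv=v$, so the image of $P$ equals $\mathrm{V}_{\lambda}^{\Gamma}$ and $P$ is the projector onto the invariant subspace.

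Next, because an idempotent endomorphism of a finite-dimensional vector space has trace equal to the dimension of its image, we obtain
\[\dim\mathrm{V}_{\lambda}^{\Gamma}=\mathrm{Tr}|_{\mathrm{V}_{\lambda}}(P)=\frac{1}{|\Gamma|}\sum_{\gamma\in\Gamma}\mathrm{Tr}|_{\mathrm{V}_{\lambda}}(\gamma),\]
using linearity of the trace. This is the first equality.

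Finally, I would invoke the fact that $\mathrm{Tr}|_{\mathrm{V}_{\lambda}}$ is a class function: for $\gamma,\delta\in\Gamma$, $\mathrm{Tr}|_{\mathrm{V}_{\lambda}}(\delta\gamma\delta^{-1})=\mathrm{Tr}|_{\mathrm{V}_{\lambda}}(\gamma)$ by cyclicity of the trace. Grouping the sum in the first equality according to conjugacy classes $c\in\mathrm{Conj}(\Gamma)$ yields the second equality. There is no substantive obstacle: the result is purely formal, and the only thing to check carefully is the idempotence of $P$, which follows from a one-line reindexing.
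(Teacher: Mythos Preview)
Your proposal is correct and is the standard argument. The paper itself does not give a proof of this lemma: it is stated there as a ``classic lemma'' without justification, so your Reynolds-operator argument is exactly what the paper implicitly has in mind.
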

%
Because of this lemma, it is enough to solve the following two problems to compute $\dim \mathrm{V}_{\lambda}^{\Gamma}$:
\begin{enumerate}[label= (\roman*)]
    \item Find all conjugacy classes of $\Gamma$, and choose a representative in a fixed maximal torus $T\subset \lietype{F}{4}$ for each conjugacy class;
    \item For an element $t\in T$, compute its trace $\mathrm{Tr}|_{\mathrm{V}_{\lambda}}(t)$.
\end{enumerate}
Problem (ii) can be dealt with the following \emph{degenerate Weyl character formula}:
\begin{prop}\label{prop degenerate Weyl character formula}\cite[Proposition 2.1]{ChenevierRenard}
    Let $G$ be a connected compact Lie group, $T$ a maximal torus, $X=\mathrm{X}^{*}(T)$ the character group of $T$,
    and $\Phi$ the root system of $(G,T)$ with Weyl group $W$.
    Choose a system of positive roots $\Phi^{+}\subset \Phi$ with base $\Delta$ and also fix a $W$-invariant inner product $(~,~)$ on $X\otimes_{\Z} \R$.
    Let $\lambda$ be a dominant weight in $X$ and $t$ an element in $T$.
    Denote the connected component $\mathrm{C}_{G}(t)^{\circ}$ of the centralizer of $t$ by $M$.
    Set $\Phi_{M}^{+}=\Phi(M,T)\cap \Phi^{+}$ and $W^{M}=\{w\in W: w^{-1}\Phi_{M}^{+}\subset \Phi^{+}\}$.
    Let $\rho$ and $\rho_{M}$ be the half-sum of the elements of $\Phi^{+}$ and $\Phi_{M}^{+}$ respectively.
    We have:
    \begin{equation}\label{eqn degenerate Weyl}
        \mathrm{Tr}|_{\mathrm{V}_{\lambda}}(t)=\frac{\sum_{w\in W^{M}}\varepsilon(w)t^{w(\lambda+\rho)-\rho}\cdot \prod_{\alpha\in\Phi_{M}^{+}}\frac{(\alpha,w(\lambda+\rho))}{(\alpha,\rho_{M})}}{\prod_{\alpha\in \Phi^{+}\backslash \Phi_{M}^{+}}(1-t^{-\alpha})},
    \end{equation}
    where $\varepsilon:W\rightarrow\{\pm 1\}$ is the signature and $t^{x}$ denotes $x(t)$ for convenience.
\end{prop}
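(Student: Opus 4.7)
The plan is to derive the formula from the ordinary Weyl character formula (WCF) by a careful limiting analysis at the singular element $t$. At a regular $s \in T$, WCF reads
\[
\mathrm{Tr}|_{\mathrm{V}_{\lambda}}(s) = \frac{\sum_{w \in W}\varepsilon(w) s^{w(\lambda+\rho)-\rho}}{\prod_{\alpha \in \Phi^+}(1 - s^{-\alpha})}.
\]
For our $t$ the factors $(1 - t^{-\alpha})$ with $\alpha \in \Phi_M^+$ vanish (this is precisely how $\Phi_M$ is characterized as the set of roots killed by $t$), so the formula is indeterminate; I would resolve this by perturbing to $t_\epsilon := t \cdot \exp(\epsilon H)$ for a generic $H$ in the Lie algebra of $T$ and letting $\epsilon \to 0$. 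Continuity of the character on $T$ identifies the limit with $\mathrm{Tr}|_{\mathrm{V}_\lambda}(t)$.

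Next I would exploit the decomposition $W = \bigsqcup_{u \in W^M} W_M \cdot u$, the standard transversal for $W_M \backslash W$ by minimal-length representatives; this is where the definition of $W^M$ comes in. Because $W_M = N_M(T)/T$ and $t \in Z(M)$, every $v \in W_M$ admits a lift in $M$ that commutes with $t$, so $t^{v\mu} = t^\mu$ for all $\mu \in X^*(T)$. Grouping the WCF numerator at $t_\epsilon$ by these cosets then factors it as
\[
e^{-\epsilon\rho(H)}\sum_{u \in W^M}\varepsilon(u)\,t^{u(\lambda+\rho)-\rho}\left(\sum_{v \in W_M}\varepsilon(v)\,e^{\epsilon\,v(u(\lambda+\rho))(H)}\right),
\]
while the denominator splits as $\prod_{\alpha \in \Phi_M^+}(1 - t_\epsilon^{-\alpha}) \cdot \prod_{\alpha \in \Phi^+\setminus\Phi_M^+}(1 - t_\epsilon^{-\alpha})$, whose second factor has nonvanishing limit $\prod_{\alpha \in \Phi^+\setminus\Phi_M^+}(1 - t^{-\alpha})$, already matching the denominator of the target formula.

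The crucial structural point, immediate from the definition of $W^M$, is that $\mu := u(\lambda+\rho)$ is \emph{strictly $M$-dominant}: for $\alpha \in \Phi_M^+$, $(\alpha,\mu) = (u^{-1}\alpha,\lambda+\rho) > 0$ since $u^{-1}\alpha \in \Phi^+$. Applying the WCF for the compact group $M$ to the inner alternating sum then yields
\[
\sum_{v \in W_M}\varepsilon(v)e^{\epsilon v\mu(H)} = \chi^M_{\mu - \rho_M}(\exp(\epsilon H))\cdot\prod_{\alpha\in\Phi_M^+}\bigl(e^{\epsilon\alpha(H)/2} - e^{-\epsilon\alpha(H)/2}\bigr),
\]
and the ratio of this expression to $\prod_{\alpha \in \Phi_M^+}(1 - t_\epsilon^{-\alpha}) = \prod_{\alpha \in \Phi_M^+}(1 - e^{-\epsilon\alpha(H)})$ has the clean limit $\dim \mathrm{V}^M_{\mu-\rho_M}$, which by Weyl's dimension formula equals $\prod_{\alpha\in\Phi_M^+}(\alpha,\mu)/(\alpha,\rho_M)$. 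Assembling everything recovers the claimed expression.

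The main obstacle is the combinatorial and analytic bookkeeping: one must check that $W^M$ is genuinely a transversal for $W_M \backslash W$ with the stated positivity property (so that each $u(\lambda+\rho)$ is strictly $M$-dominant, enabling WCF for $M$), and that the $\epsilon$-expansions of the numerator and denominator share the same leading order $\epsilon^{|\Phi_M^+|}\prod_{\alpha \in \Phi_M^+}\alpha(H)$, so the limit is well defined and in particular independent of the generic $H$ chosen. Both points are standard once set up properly; the essential content is the recognition of the inner $W_M$-alternating sum as a Weyl dimension for $M$, which makes all the ``dimension-formula'' factors in the statement appear naturally.
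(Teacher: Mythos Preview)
The paper does not actually prove this proposition; it is stated with a citation to \cite[Proposition 2.1]{ChenevierRenard} and used as a black box. Your proposal supplies a correct and standard derivation: perturb $t$ to a regular element $t_\epsilon = t\exp(\epsilon H)$, apply the ordinary Weyl character formula, decompose the numerator over the cosets $W_M\backslash W$ via the transversal $W^M$, recognise each inner $W_M$-alternating sum as a Weyl numerator for $M$ evaluated at $\exp(\epsilon H)$, and pass to the limit using the Weyl dimension formula for $M$. The two structural points you flag (that $W^M$ really is a transversal with $u(\lambda+\rho)$ strictly $M$-dominant for each $u\in W^M$, and that the vanishing orders in $\epsilon$ match so the limit is finite and independent of $H$) are exactly what needs checking, and both are straightforward. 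This is in fact the argument given in the cited reference, so your approach matches what the paper implicitly relies on.
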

Using this approach, 
problem (i) is thus the main difficulty for our computation, 
and we will solve it in the following subsections.

\subsection{Generators of \texorpdfstring{$\mathcal{F}_{4,\mathrm{I}}(\Z)$}{} and \texorpdfstring{$\mathcal{F}_{4,\mathrm{E}}(\Z)$}{}}\label{section generators of integral points} 
The finite groups $\Gamma$ we are interested in 
are $\mathcal{F}_{4,\mathrm{I}}(\Z)$ and $\mathcal{F}_{4,\mathrm{E}}(\Z)$.
To find all their conjugacy classes,   
we first determine generators of these groups in this subsection.

In the end of \Cref{section second integral model of F4},
we have already showed that the group $\mathcal{F}_{4,\mathrm{E}}(\Z)$ is generated by two elements $\sigma_{1},\sigma_{2}$.
Their matrices in the basis $\mathcal{B}$,
given in \eqref{eqn basis of second Jordan algebra},
are written down in \Cref{Generators2ndModel}, \Cref{TableAppendix}.

Based on \Cref{cor structure integral points of models}, 
we have $\mathcal{F}_{4,\mathrm{I}}(\Z)=\mathscr{D}\rtimes \kappa(\mathrm{S}_{3})$, 
where $\kappa:\mathrm{S}_{3}\rightarrow \mathcal{F}_{4,\mathrm{I}}(\Z)$ is the morphism defined in \eqref{eqn realization of S3 in F4}. 
The group $\mathscr{D}$ is isomorphic to the group $\widetilde{\sorth(\oct_{\Z})}$,
which is a double cover of $\sorth(\oct_{\Z})$ by \Cref{cor structure integral points of models}.
Therefore it suffices to find generators of $\mathscr{D}$.

Since $\orth(\oct_{\Z})\simeq \orth(\lietype{E}{8})$ is equal to the Weyl group of $\lietype{E}{8}$, 
we can take the following set of generators for $\sorth(\oct_{\Z})$:
\[\left\{\mathrm{ref}(\alpha)\circ\mathrm{ref}(1)\,\middle\vert\,\alpha\in\oct_{\Z},\mathrm{N}(\alpha)=1\right\},\]
where for a root $\alpha$ in $\oct_{\Z}$ the reflection $\mathrm{ref}(\alpha)$ is defined as 
\[\mathrm{ref}(\alpha)(x):=x-\langle x,\alpha\rangle \alpha.\]
For a root $\alpha\in \oct_{\Z}$,
let $\mathrm{L}_{\alpha}$ (resp. $\mathrm{R}_{\alpha}$) be the left (resp. right) multiplication on $\oct_{\Z}$ by $\alpha$,
and define $\mathrm{B}_{\alpha}:=\mathrm{L}_{\alpha}\circ \mathrm{R}_{\alpha}=\mathrm{R}_{\alpha}\circ \mathrm{L}_{\alpha}$. 
These elements are contained in $\sorth(\oct_{\Z})$.
Notice that for a root $\alpha\in\oct_{\Z}$,
$\mathrm{ref}(\alpha)\circ \mathrm{ref}(1)=\mathrm{B}_{\alpha}.$
\begin{lemma}\label{lemma basic isotopy triple}
    For any root $\alpha\in\oct_{\Z}$,
    the triple $(\mathrm{L}_{\overline{\alpha}},\mathrm{R}_{\overline{\alpha}},\mathrm{B}_{\alpha})$ is an element in $\widetilde{\sorth(\oct_{\Z})}$.
\end{lemma}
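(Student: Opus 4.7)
The plan is to verify the three conditions defining the group $\widetilde{\sorth(\oct_{\Z})}$ for the triple $(\mathrm{L}_{\overline{\alpha}},\mathrm{R}_{\overline{\alpha}},\mathrm{B}_{\alpha})$: namely, that each entry lies in $\sorth(\oct_{\Z})$, and that together they satisfy the compatibility relation $\overline{\mathrm{L}_{\overline{\alpha}}(x)\cdot \mathrm{R}_{\overline{\alpha}}(y)}=\mathrm{B}_{\alpha}(\overline{xy})$ for all $x,y\in\oct_{\Z}$.

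First I would check that all three maps preserve the lattice $\oct_{\Z}$. Since $\alpha,\overline{\alpha}\in\oct_{\Z}$ and $\oct_{\Z}$ is an order (closed under multiplication), each of $x\mapsto \overline{\alpha}x$, $x\mapsto x\overline{\alpha}$ and $x\mapsto \alpha x\alpha$ sends $\oct_{\Z}$ into itself, and the same holds for their inverses. Next, preservation of the norm follows from $\mathrm{N}(\alpha)=\mathrm{N}(\overline{\alpha})=1$ together with the composition law $\mathrm{N}(xy)=\mathrm{N}(x)\mathrm{N}(y)$. Hence each of the three maps lies in $\orth(\oct_{\Z})$. For $\mathrm{B}_{\alpha}$ the equality $\mathrm{B}_{\alpha}=\mathrm{ref}(\alpha)\circ\mathrm{ref}(1)$ noted just before the lemma is a product of two reflections and therefore has determinant $+1$. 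For $\mathrm{L}_{\overline{\alpha}}$ and $\mathrm{R}_{\overline{\alpha}}$, the determinants of $\mathrm{L}_{a}$ and $\mathrm{R}_{a}$ as $a$ varies over the connected unit norm sphere $\{a\in\oct_{\R}\mid \mathrm{N}(a)=1\}\cong S^{7}$ depend continuously on $a$ and equal $+1$ at $a=1$, so they must be $+1$ for every norm-one octonion. This places all three maps in $\sorth(\oct_{\Z})$.

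The remaining step, and the main substantive content of the lemma, is the compatibility identity. Applying $\overline{ab}=\overline{b}\overline{a}$ twice gives
\[
\overline{\mathrm{L}_{\overline{\alpha}}(x)\cdot \mathrm{R}_{\overline{\alpha}}(y)}=\overline{(\overline{\alpha}x)(y\overline{\alpha})}=(\alpha\overline{y})(\overline{x}\alpha),
\]
while
\[
\mathrm{B}_{\alpha}(\overline{xy})=\alpha(\overline{y}\cdot\overline{x})\alpha.
\]
So the required identity reduces to
\[
(\alpha\overline{y})(\overline{x}\alpha)=\alpha(\overline{y}\cdot\overline{x})\alpha,
\]
which is precisely the middle Moufang identity $(uv)(wu)=u(vw)u$ (valid in any alternative algebra, and in particular in $\oct_{\R}$) applied with $u=\alpha$, $v=\overline{y}$, $w=\overline{x}$.

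The main obstacle is conceptual rather than computational: one must be careful because $\oct_{\R}$ is non-associative, so expressions like $\alpha\overline{y}\overline{x}\alpha$ are not unambiguous, and the reduction works only thanks to the middle Moufang identity. Everything else is a straightforward check using $\mathrm{N}(\alpha)=1$, the composition law, and the fact that $\oct_{\Z}$ is an order.
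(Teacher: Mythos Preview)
Your proof is correct and follows essentially the same approach as the paper: the key step in both is the middle Moufang identity, used to verify the compatibility relation. The paper applies Moufang to $(\overline{\alpha}x)(y\overline{\alpha})$ first and conjugates afterward, whereas you conjugate first and then apply Moufang with $u=\alpha$; this is a harmless reordering. Note also that the paper has already asserted $\mathrm{L}_{\alpha},\mathrm{R}_{\alpha},\mathrm{B}_{\alpha}\in\sorth(\oct_{\Z})$ in the sentence preceding the lemma, so its proof only treats the compatibility identity.
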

\begin{proof}
    For any $x,y\in\oct_{\Z}$, $\overline{\mathrm{L}_{\overline{\alpha}}(x)\mathrm{R}_{\overline{\alpha}}(y)}=\overline{(\overline{\alpha} x)(y \overline{\alpha})}$.
    By \emph{Moufang laws} \cite[\S 6.5]{ConwayOct},
    \[(\overline{\alpha} x)(y\overline{\alpha})=(\overline{\alpha}(xy))\overline{\alpha}=\mathrm{B}_{\overline{\alpha}}(xy),\]
    thus $\overline{\mathrm{L}_{\overline{\alpha}}(x)\mathrm{R}_{\overline{\alpha}}(y)}=\overline{\mathrm{B}_{\overline{\alpha}}(xy)}=\mathrm{B}_{\alpha}(\overline{xy})$.
\end{proof}
By this lemma,
we can take 
\[\left\{(\mathrm{L}_{\overline{\alpha}},\mathrm{R}_{\overline{\alpha}},\mathrm{B}_{\alpha})\,\middle\vert\,\alpha\in\oct_{\Z},\mathrm{N}(\alpha)=1\right\}\cup \{(-\mathrm{id},-\mathrm{id},\mathrm{id})\}\]
as generators of $\mathscr{D}$.
Together with a set of generators of $\kappa(\mathrm{S}_{3})$
we have obtained generators of $\mathcal{F}_{4,\mathrm{I}}(\Z)$.

\subsection{Enumeration of conjugacy classes}\label{section enumeration conj classes}
\tp Now with generators of $\mathcal{F}_{I}(\Z)$ and $\mathcal{F}_{4,\mathrm{E}}(\Z)$,
we can start to enumerate their conjugacy classes.
The \texttt{ConjugationClasses} function in \cite{GAP4} can assist us in enumerating the conjugacy classes of subgroups of permutation groups.
Therefore it is enough to realize these two finite groups as permutation groups.

For $\mathcal{F}_{4,\mathrm{I}}(\Z)$, 
we consider its action on the set of vectors $v\in\oct_{\Z}$ with $\mathrm{B}_{\mathrm{Q}}(v,v)\leq 2$.
The function \texttt{qfminim} in \cite{PARI2} can list all these vectors in the basis $\mathcal{B}$.
There are $738$ such vectors and they span the vector space $\mathrm{J}_{\R}$,
so the action of $\mathcal{F}_{4,\mathrm{I}}(\Z)$ on this set is faithful,
which gives us an embedding $\mathcal{F}_{4,\mathrm{I}}(\Z)\hookrightarrow \mathrm{S}_{738}$.
We can thus use this embedding to obtain a set of representatives of conjugacy classes of $\mathcal{F}_{4,\mathrm{I}}(\Z)$ via the help of \cite{GAP4}.

For the other group $\mathcal{F}_{4,\mathrm{E}}(\Z)$ we use a similar strategy.
As mentioned in \Cref{rmk lattice structure of second model}, the quadratic lattice $(\mathrm{J}_{\Z},\langle~,~\rangle_{\mathrm{E}})$ has no roots, 
so we consider the set of $v\in \mathrm{J}_{\Z}$ such that $\langle v,v\rangle_{\mathrm{E}}=3$,
which has cardinality $1640$ and generates $\mathrm{J}_{\R}$.
This gives an embedding $\mathcal{F}_{4,\mathrm{E}}(\Z)\hookrightarrow \mathrm{S}_{1640}$,
then we can use \cite{GAP4}.

Here we present the results, and all the codes are available in \cite{Codes}.
\begin{prop}\label{prop result conj classes}
    There are $113$ conjugacy classes in $\mathcal{F}_{4,\mathrm{I}}(\Z)$, 
    while $\mathcal{F}_{4,\mathrm{E}}(\Z)$ has $49$ conjugacy classes.
\end{prop}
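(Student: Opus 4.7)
The plan is to reduce the problem to a finite permutation group computation and then invoke the standard conjugacy class algorithm available in \cite{GAP4}. Both groups $\mathcal{F}_{4,\mathrm{I}}(\Z)$ and $\mathcal{F}_{4,\mathrm{E}}(\Z)$ act on the integral lattice $\mathrm{J}_{\Z}$ preserving an explicit positive definite quadratic form (namely $\mathrm{Q}$ in the first case, $\langle\,,\rangle_{\mathrm{E}}$ in the second), so for any fixed value $n$ the finite set of vectors $v\in \mathrm{J}_{\Z}$ of a given length is stable under the group action. Choosing $n$ small enough that the set is not too large, but large enough that the resulting set of short vectors spans $\mathrm{J}_{\R}$, the induced action will be faithful, and we obtain an embedding of the group in a symmetric group.

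For $\mathcal{F}_{4,\mathrm{I}}(\Z)$, I would take all $v \in \mathrm{J}_{\Z}$ with $\mathrm{Q}(v) \leq 1$ (equivalently $\mathrm{B}_{\mathrm{Q}}(v,v) \leq 2$); these can be enumerated directly by \texttt{qfminim} in \cite{PARI2} once the Gram matrix of $\mathrm{Q}$ in the basis $\mathcal{B}$ is written down, and one checks that their linear span is all of $\mathrm{J}_{\R}$, yielding a faithful permutation action of $\mathcal{F}_{4,\mathrm{I}}(\Z)$ on this finite set. Generators for this permutation representation come from the explicit generators exhibited in \Cref{section generators of integral points}, namely the triples $(\mathrm{L}_{\overline{\alpha}}, \mathrm{R}_{\overline{\alpha}}, \mathrm{B}_{\alpha})$ for unit octonions $\alpha \in \oct_{\Z}$, together with $(-\mathrm{id},-\mathrm{id},\mathrm{id})$ and the image of $\kappa(\mathrm{S}_{3})$. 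For $\mathcal{F}_{4,\mathrm{E}}(\Z)$ the lattice $(\mathrm{J}_{\Z},\langle\,,\rangle_{\mathrm{E}})$ has no roots (see \Cref{rmk lattice structure of second model}), so I would use the next available shell $\langle v,v\rangle_{\mathrm{E}} = 3$; again this is extracted with \texttt{qfminim} from the Gram matrix in \Cref{Gram2ndModel}, and the generators $\sigma_{1},\sigma_{2}$ of \Cref{Generators2ndModel} provide a permutation representation. Once each group sits inside some $\mathrm{S}_{N}$, a single call to \texttt{ConjugacyClasses} in \cite{GAP4} returns the full list of conjugacy classes, whose cardinalities should be the claimed $113$ and $49$.

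The main obstacle is pragmatic rather than conceptual: the computation has to actually terminate in reasonable time and memory, which depends on $N$ being moderate. The set sizes here are $738$ and $1640$, both comfortably within the range where the backtrack-based algorithms in \cite{GAP4} succeed. A secondary point requiring care is the verification that each chosen short-vector set does in fact span $\mathrm{J}_{\R}$, which guarantees faithfulness of the permutation representation and hence that the conjugacy class counts computed in $\mathrm{S}_{N}$ coincide with those in the original group; this reduces to computing the rank of a $27 \times N$ matrix over $\Q$, which is again trivial for \cite{PARI2}. Finally, as a sanity check one should verify that the sum over conjugacy classes of the class sizes equals the expected orders $2^{15}\cdot 3^{6}\cdot 5^{2}\cdot 7$ and $2^{12}\cdot 3^{5}\cdot 7^{2}\cdot 13$ established in \Cref{cor structure integral points of models} and \Cref{prop upper bound order second model}.
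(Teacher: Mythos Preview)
Your proposal is correct and matches the paper's approach essentially line by line: the paper also uses \texttt{qfminim} to list the $738$ vectors with $\mathrm{B}_{\mathrm{Q}}(v,v)\le 2$ (respectively the $1640$ vectors with $\langle v,v\rangle_{\mathrm{E}}=3$), checks they span $\mathrm{J}_{\R}$ to obtain faithful permutation representations inside $\mathrm{S}_{738}$ and $\mathrm{S}_{1640}$, and then calls the conjugacy class routine in \cite{GAP4} on the images of the explicit generators. Your additional sanity checks (rank of the short-vector matrix, class-size sum against the known group orders) are not in the paper but are harmless extras.
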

Furthermore, \cite{GAP4} gives the size of each conjugacy class $c$,
and selects a representative for $c$ in the form of permutation.
We rewrite these representatives as matrices in the basis $\mathcal{B}$.

\subsection{Kac coordinates}\label{section Kac coordinates}
\tp In the previous subsection, 
for $\Gamma=\mathcal{F}_{4,\mathrm{I}}(\Z)$ or $\mathcal{F}_{4,\mathrm{E}}(\Z)$,
we obtained a list of its conjugacy classes and a representative element $g_{c}\in \Gamma$ for each conjugacy class $c$.

However, the representative $g_{c}$ may not be contained in the fixed maximal torus in \Cref{prop degenerate Weyl character formula}.
Notice that in the computation of the trace of $g_{c}$ for a $\Gamma$-conjugacy class $c$,
what really matters is the $\lietype{F}{4}$-conjugacy class containing $c$.
Furthermore, since $c$ is included in the finite group $\Gamma$, 
the $\lietype{F}{4}$-conjugacy class containing it must be torsion. 

In \cite{Reeder2010TorsionAO}, it is shown that we can choose a representative for a torsion $\lietype{F}{4}$-conjugacy class in a fixed maximal torus using its \emph{Kac coordinates}.
Here we provide a brief review, and more details can be found in Reeder's paper.

Let $G$ be a simply-connected simple compact Lie group,
$T$ a fixed maximal torus,
$X:=\mathrm{X}^{*}(T)$ and $Y:=\mathrm{X}_{*}(T)$ the groups of characters and cocharacters respectively,
and $\Phi$ the root system of $(G,T)$.
Denote the natural pairing $X\times Y\rightarrow\Z$ by $\langle~,~\rangle$.
Let $\Delta=\{\alpha_{1},\ldots,\alpha_{r}\}$ be a set of simple roots of $\Phi$,
and $\{\check{\varpi}_{1},\ldots,\check{\varpi}_{r}\}$ its dual basis in $Y$,
i.e. $\langle\alpha_{i},\check{\varpi}_{j}\rangle=\delta_{ij}$.

We have a surjective \emph{exponential map} $\exp:Y\otimes_{\Z}\R\rightarrow T$ 
determined uniquely by the property
\[\alpha\left(\exp(y)\right)=e^{2\pi i\langle\alpha,y\rangle},\forall\alpha\in X,y\in Y\otimes_{\Z}\R.\]
and $Y$ is the kernel of this exponential map. 
This induces an isomorphism $(Y\otimes_{\Z}\R)/Y\simeq T$.

Let $\widetilde{\alpha}_{0}=\sum\limits_{i=1}^{r}a_{i}\alpha_{i}$ be the highest root 
with respect to the choice of simple roots $\Delta$,
and set $\alpha_{0}=1-\widetilde{\alpha}_{0},a_{0}=1$ and $\check{\varpi}_{0}=0$.
Now we have $\sum\limits_{i=0}^{r}a_{i}\alpha_{i}=1$. 
The \emph{alcove} determined by $\Delta$ is the intersection of half-spaces:
\[C=\left\{x\in Y\otimes_{\Z}\R\,\middle\vert\,\langle \alpha_{i},x\rangle>0, \forall i=0,1,\ldots,r\right\},\]
or
\[\overline{C}=\left\{\sum_{i=0}^{r}x_{i}\check{\varpi}_{i}\,\middle\vert\, \sum_{i=0}^{r}a_{i}x_{i}=1,x_{i}\geq 0,\forall i=0,1,\ldots,r\right\}.\]

Each torsion element $s\in G$ is conjugate to $\exp(x)$ for a unique $x\in\overline{C}\cap (Y\otimes_{\Z}\Q)$
since the group $G$ is simply-connected.
Let $m$ be the order of $s$,
thus \[x=\frac{1}{m}\sum_{i=1}^{r}s_{i}\check{\varpi}_{i}\]
for some non-negative integers $s_{1},\ldots,s_{r}$ satisfying $\gcd\{m,s_{1},\ldots,s_{r}\}=1$.

Since $x\in\overline{C}$, we set $s_{0}:=m-\sum\limits_{i=1}^{r}a_{i}s_{i}\geq 0$.
Now the non-negative integers $s_{0},s_{1},\ldots,s_{r}$ satisfy $\gcd\{s_{0},\ldots,s_{r}\}=1$
and the equation
\[\sum_{i=0}^{r}a_{i}s_{i}=m\text{ with }a_{0}=1.\]
The coordinates $(s_{0},s_{1},\ldots,s_{r})$ are called the \emph{Kac coordinates of $s$},
which are uniquely determined by the $G$-conjugacy class of $s$.

In our case, the compact group $\lietype{F}{4}$ is simply-connected and the highest root $\widetilde{\alpha}_{0}=2\alpha_{1}+3\alpha_{2}+4\alpha_{3}+2\alpha_{4}$.
Here $\alpha_{1},\alpha_{2},\alpha_{3},\alpha_{4}$ are still chosen as in \cite[\S IV.4]{Lie}.
In conclusion, we have:
\begin{prop}\label{prop Kac coordinates determine conj classes}
    Let $T$ be a fixed maximal torus of $\lietype{F}{4}$.
    Any element of order $m$ in $\lietype{F}{4}$
    is conjugate to a unique element $\exp(\frac{\sum_{i=1}^{4}s_{i}\varpi_{i}}{m})$
    for some non-negative integers $s_{1},s_{2},s_{3},s_{4}$ arising from a $5$-tuple $(s_{0},s_{1},s_{2},s_{3},s_{4})$ in
    \begin{align}\label{eqn equation for Kac coordinates}
        \left\{(x_{0},\ldots,x_{4})\in\mathbb{N}^{5}\,\middle\vert\, x_{0}+2x_{1}+3x_{2}+4x_{3}+2x_{4}=m,\gcd\{x_{0},\ldots,x_{4}\}=1\right\}.
    \end{align}
\end{prop}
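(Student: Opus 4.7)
The plan is to specialize the general framework recalled immediately before the proposition (for a simply-connected simple compact Lie group $G$) to the particular case $G=\lietype{F}{4}$. Since $\lietype{F}{4}$ is simply-connected and simple, every statement in that review applies, so the bulk of the work is bookkeeping with the affine Dynkin data of type $\lietype{F}{4}$.

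First I would recall that, by the general theory, each torsion element $s\in\lietype{F}{4}$ is conjugate to $\exp(x)$ for a unique $x\in\overline{C}\cap(Y\otimes_{\Z}\Q)$, where $\overline{C}$ is the closed fundamental alcove. Writing $x$ in the basis $\{\check{\varpi}_{1},\ldots,\check{\varpi}_{4}\}$ as $x=\frac{1}{m}\sum_{i=1}^{4}s_{i}\check{\varpi}_{i}$ with $s_{i}\in\mathbb{N}$ places $s$ inside the torus quotient $(Y\otimes_{\Z}\R)/Y$, and the condition $x\in\overline{C}$ translates, via $\langle\alpha_{0},x\rangle\ge 0$ and the highest root of $\lietype{F}{4}$ computed in \cite[\S IV.4]{Lie}, namely $\widetilde{\alpha}_{0}=2\alpha_{1}+3\alpha_{2}+4\alpha_{3}+2\alpha_{4}$, into the inequality $s_{0}:=m-2s_{1}-3s_{2}-4s_{3}-2s_{4}\ge 0$. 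Setting $(a_{0},a_{1},a_{2},a_{3},a_{4})=(1,2,3,4,2)$ then yields the linear relation $\sum_{i=0}^{4}a_{i}s_{i}=m$ that appears in \eqref{eqn equation for Kac coordinates}.

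Next I would address the order condition. Having written $s=\exp(x)$ with $mx\in Y$, the element $s$ has order dividing $m$, and its exact order is $m/\gcd(s_{0},s_{1},s_{2},s_{3},s_{4})$. Indeed, if $d=\gcd(s_{0},\ldots,s_{4})$, then $(m/d)x=\sum_{i=1}^{4}(s_{i}/d)\check{\varpi}_{i}\in Y$, so $s^{m/d}=1$; conversely, if $s^{k}=1$ for some $k$ then $kx\in Y$, which, since the $\check{\varpi}_{i}$ extend to a $\Z$-basis of $Y$ together with the relation imposed by $\alpha_{0}$, forces $d\mid k\cdot(\text{stuff})$ and ultimately $(m/d)\mid k$. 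Requiring $s$ to have order exactly $m$ is therefore equivalent to $\gcd(s_{0},\ldots,s_{4})=1$, producing precisely the second constraint in \eqref{eqn equation for Kac coordinates}.

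Finally, uniqueness of the $5$-tuple $(s_{0},s_{1},s_{2},s_{3},s_{4})$ follows directly from the uniqueness of $x$ in $\overline{C}$: the coefficients $s_{1},\ldots,s_{4}$ are recovered from $mx$ via the dual basis $\{\check{\varpi}_{i}\}$, and $s_{0}$ is then determined by the affine relation. The main point where one must be careful is the order computation above, since one needs $\{\check{\varpi}_{1},\ldots,\check{\varpi}_{4}\}$ to actually be a $\Z$-basis of $Y=\mathrm{X}_{*}(T)$; this holds precisely because $\lietype{F}{4}$ is simply-connected (so $Y$ is the coroot lattice, equivalently the coweight lattice, since $\lietype{F}{4}$ is self-dual), which is the only place the simple-connectedness assumption is really used. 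Putting these pieces together gives the proposition.
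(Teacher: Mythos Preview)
Your proposal is correct and follows exactly the paper's approach: the paper does not give a standalone proof but simply states the proposition as the specialization of the preceding general Kac-coordinate review to $G=\lietype{F}{4}$, after noting that $\lietype{F}{4}$ is simply-connected and that the highest root is $\widetilde{\alpha}_{0}=2\alpha_{1}+3\alpha_{2}+4\alpha_{3}+2\alpha_{4}$. Your write-up just spells out in more detail what the paper leaves implicit; the only loose spot is the converse in your order computation (the phrase ``$d\mid k\cdot(\text{stuff})$''), which you can replace cleanly by observing that since $\{\check{\varpi}_{i}\}$ is a $\Z$-basis of $Y$, the condition $kx\in Y$ is equivalent to $m\mid ks_{i}$ for each $i$, hence the order is $m/\gcd(m,s_{1},\ldots,s_{4})=m/\gcd(s_{0},\ldots,s_{4})$.
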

By solving the equation in \eqref{eqn equation for Kac coordinates},
we enumerate all the torsion $\lietype{F}{4}$-conjugacy classes of order $m$.

\subsection{Comparison of conjugacy classes}\label{section comparison of conj classes}
\tp Now we can enumerate $\lietype{F}{4}$-conjugacy classes of a given order,
but there are more constraints on the $\lietype{F}{4}$-conjugacy classes containing $\Gamma$-conjugacy classes obtained in \Cref{section enumeration conj classes}.
So we define the following class of $\lietype{F}{4}$-conjugacy classes:
\begin{defi}\label{def rational conjugacy classes}
    Let $c$ be an $\lietype{F}{4}$-conjugacy class,
    and we say that $c$ is a \emph{rational conjugacy class} if it satisfies:
    \begin{itemize}
        \item its trace $\tr(c)|_{\mathfrak{f}_{4}}$ on the adjoint representation $\mathfrak{f}_{4}$ of $\lietype{F}{4}$ is a rational number;
        \item its characteristic polynomial $\mathrm{P}_{c}(X):=\det(X\cdot \mathrm{id}-g|_{\mathrm{J}_{\C}})$ on $\mathrm{J}_{\C}:=\mathrm{J}_{\R}\otimes_{\R}\C$, $g\in\lietype{F}{4}$ being a representative of $c$, has rational coefficients.
    \end{itemize}
\end{defi}
For $\Gamma=\mathcal{F}_{4,\mathrm{I}}(\Z)$ or $\mathcal{F}_{4,\mathrm{E}}(\Z)$,
since $\Gamma$ is a subgroup of $\GL(\mathrm{J}_{\Z})$,
the $\lietype{F}{4}$-conjugacy class containing a $\Gamma$-conjugacy class of $\Gamma$
must be rational in the sense of \Cref{def rational conjugacy classes}.

Our strategy in this subsection is:
\begin{enumerate}[label=(\arabic*)]
    \item find all rational torsion $\lietype{F}{4}$-conjugacy classes, and for each of them choose a representative in the maximal torus $T$ fixed before in \Cref{section Kac coordinates};
    \item determine which $\lietype{F}{4}$-conjugacy class contains a given $\Gamma$-conjugacy class by comparing their traces and characteristic polynomials.
\end{enumerate}

Before explaining the algorithm for step $(1)$, we state the following lemma:
\begin{lemma}\label{lemma bounds on the order of rational conjugacy class}
    If $m$ is the order of an element in $\lietype{F}{4}$
    whose characteristic polynomial on $\mathrm{J}_{\C}$ has rational coefficients,
    then $m=66,70,72,78,84$ or $90$, or $m\leq 60$.
\end{lemma}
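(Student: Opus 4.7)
The plan is to combine the combinatorics of cyclotomic factorizations with the Kac coordinate classification of torsion conjugacy classes in \Cref{prop Kac coordinates determine conj classes}. Since $P_g(X) \in \Q[X]$ has all its roots on the unit circle (eigenvalues of a finite-order element), it factors as
\[
P_g(X) \;=\; \prod_{d \geq 1} \Phi_d(X)^{n_d}
\]
where $\Phi_d$ is the $d$-th cyclotomic polynomial and $n_d \in \mathbb{N}$. Comparing degrees yields $\sum_d n_d\,\phi(d) = 27$, so every $d$ with $n_d \geq 1$ satisfies $\phi(d) \leq 27$, while the order of $g$ is $m = \mathrm{lcm}\{d : n_d \geq 1\}$. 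A direct computation with Euler's formula $\phi(d) = d\prod_{p \mid d}(1 - 1/p)$ shows that $\phi(d) \leq 27$ forces $d \leq 90$, and the only values with $60 < d \leq 90$ in this range are exactly $66, 70, 72, 78, 84, 90$ (with $\phi(d) \in \{20, 24\}$).

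Next, suppose some $d_0 \in \{66, 70, 72, 78, 84, 90\}$ appears with $n_{d_0} \geq 1$. Then $\phi(d_0) \geq 20$, so the remaining $d$'s contribute at most $7$ to the $\phi$-sum; each further $d$ satisfies $\phi(d) \leq 7$, restricting $d$ to $\{1, 2, 3, 4, 5, 6, 7, 8, 9, 10, 12, 14, 18\}$. I would then check, using the Kac coordinate constraint $s_0 + 2s_1 + 3s_2 + 4s_3 + 2s_4 = m$ with $\gcd(s_i) = 1$ together with the explicit weight structure of the $26$-dimensional representation $\mathrm{V}_{\varpi_4}$ (the $24$ short roots of $F_4$ and the zero weight with multiplicity two, plus a third zero weight from the trivial summand of $\mathrm{J}_{\C}$), that every such contributing $d$ must divide $d_0$, giving $m = d_0$.

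The remaining case, where $m > 60$ but every contributing $d$ is $\leq 60$, is the main obstacle. Purely combinatorially nothing prevents this: one may take, for instance, $m = \mathrm{lcm}(7, 9) = 63$ with $\phi(7) + \phi(9) = 12 \leq 27$, and in general $m$ could be the LCM of small $d_i$ built from primes $\leq 23$ (the primes $p$ with $\phi(p) \leq 27$), producing candidates $m$ in the hundreds or thousands. The key point is that once the element is required to live in $\lietype{F}{4}$, the rationality condition on the multiset $\{e^{2\pi i \langle \alpha, x \rangle}\}$ — $\alpha$ running over the $27$ weights of $\mathrm{J}_{\C}$ and $x = \frac{1}{m}\sum_{i=1}^4 s_i \check{\varpi}_i$ — admits no solution for such $m$. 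I would verify this by a finite enumeration of Kac tuples $(s_0, \ldots, s_4)$ for each candidate $m$ in the (bounded) range dictated by the $\phi$-sum constraint $\sum n_d\,\phi(d) = 27$, in the spirit of the explicit computations performed throughout the paper (via \cite{PARI2} and \cite{GAP4}). This finite check yields precisely the list $\{m \leq 60\} \cup \{66, 70, 72, 78, 84, 90\}$ and completes the proof.
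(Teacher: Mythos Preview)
Your approach is considerably heavier than the paper's. The paper's proof is two lines: $(X-1)^{3}\mid P_{g}$ because the zero weight has multiplicity $3$ in $\mathrm{J}_{\C}\simeq \mathrm{V}_{\varpi_{4}}\oplus\C$, and $\Phi_{m}(X)\mid P_{g}$ because (the paper asserts) some eigenvalue of $g$ on $\mathrm{J}_{\C}$ is a primitive $m$-th root of unity. Together these give $\varphi(m)\leq 24$, and one checks that the integers with $\varphi(m)\leq 24$ are exactly those with $m\leq 60$ together with $66,70,72,78,84,90$. No enumeration is needed.

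Your ``remaining case'' (where $m=\operatorname{lcm}\{d:n_{d}>0\}$ exceeds every individual $d$) is precisely what the assertion $\Phi_{m}\mid P_{g}$ rules out. You are right that this is not a general fact about faithful representations---for instance $\operatorname{diag}(i,-i,\omega,\bar\omega)\in\su(4)$ has order $12$, rational characteristic polynomial $(X^{2}+1)(X^{2}+X+1)$ on the standard representation, and no eigenvalue of order $12$---and the paper states it without justification, so you have located a genuine soft spot in the exposition. But the fix you propose (open-ended Kac-coordinate searches over candidate $m$ ``in the hundreds or thousands'', plus further unexecuted checks in your second paragraph) is neither bounded nor carried out, so your argument as written is also incomplete. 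The clean route is to justify the paper's assertion directly for this particular representation, using the rationality hypothesis together with the explicit weight set of $\mathrm{J}_{\C}$ (cf.\ \Cref{lemma ordering on weights of 26 dimensional representation}), rather than to replace it by a large computation.
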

\begin{proof}
    As a representation of $\lietype{F}{4}$,
    $\mathrm{J}_{\C}$ is isomorphic to $\mathrm{V}_{\varpi_{4}}\oplus \C$,
    where $\C$ stands for the trivial representation.
    Since the zero weight appears twice in the weights of $\mathrm{V}_{\varpi_{4}}$,
    the characteristic polynomial is divisible by $(X-1)^{3}$.
    On the other hand, the roots of this polynomial contain a primitive $m$th root of unity,
    thus the polynomial is also divisible by the $m$th cyclotomic polynomial.
    Hence we have $\varphi(m)\leq 24$, where $\varphi$ denotes the Euler function.
    This implies $m\leq 60$,
    or $m=66,70,72,78,84$ or $90$.

\end{proof}
With the help of \cite{PARI2}, 
we enumerate all the Kac coordinates $s=(s_{0},s_{1},s_{2},s_{3},s_{4})$ satisfying the conditions in \eqref{eqn equation for Kac coordinates}
for each integer $m$ in 
\[\{n\leq 60\,|\,\varphi(n)\leq 24\}\cup\{66,70,72,78,84,90\}.\] 
For each such $s$,
we compute the trace on $\mathfrak{f}_{4}$ and the characteristic polynomial on $\mathrm{J}_{\C}$ 
of the corresponding element $t=\exp(\frac{\sum_{i=1}^{4}s_{i}\varpi_{i}}{m})\in T$.
Using this algorithm, 
we get the Kac coordinates of all rational torsion $\lietype{F}{4}$-conjugacy classes. 
\begin{prop}\label{prop enumeration of rational conjugacy classes}
    There are exactly $102$ rational torsion conjugacy classes in $\lietype{F}{4}$,
    whose Kac coordinates are listed in \Cref{Table all rational Kac}.
\end{prop}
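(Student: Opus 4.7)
The plan is a direct enumeration, computer-assisted via \cite{PARI2}, that runs through the finitely many candidate Kac coordinates and retains only those yielding a rational trace on $\mathfrak{f}_4$ and a rational characteristic polynomial on $\mathrm{J}_\C$.

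By \Cref{lemma bounds on the order of rational conjugacy class}, the order $m$ of any rational torsion element of $\lietype{F}{4}$ lies in the explicit finite set
\[ S := \{n \le 60 : \varphi(n) \le 24\} \cup \{66, 70, 72, 78, 84, 90\}. \]
For each $m \in S$, \Cref{prop Kac coordinates determine conj classes} puts the torsion $\lietype{F}{4}$-conjugacy classes of order exactly $m$ in bijection with the $5$-tuples $(s_0,\ldots,s_4) \in \mathbb{N}^5$ satisfying $s_0 + 2s_1 + 3s_2 + 4s_3 + 2s_4 = m$ and $\gcd(s_0,\ldots,s_4) = 1$, which I enumerate directly.

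For each such tuple, let $t \in T$ be the corresponding torus representative and set $\zeta_m := e^{2\pi i/m}$. For any representation $V$ of $\lietype{F}{4}$ with weight multiset $\mathrm{Weights}(V) \subset X$, the eigenvalues of $t$ on $V$ (with multiplicities) are the values $\zeta_m^{\sum_{i=1}^4 s_i \langle \lambda, \check\varpi_i \rangle}$ as $\lambda$ ranges over $\mathrm{Weights}(V)$. Taking $V = \mathfrak{f}_4$ (whose weights are $0$ with multiplicity $4$ together with the $48$ roots of $\mathrm{F}_4$) yields the trace on $\mathfrak{f}_4$; taking $V = \mathrm{V}_{\varpi_4} \oplus \mathbf{1}$ yields the full multiset of eigenvalues on $\mathrm{J}_\C$, and hence its characteristic polynomial. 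Both conditions of \Cref{def rational conjugacy classes} reduce to Galois-stability checks in the cyclotomic field $\Q(\zeta_m)$: a sum of powers of $\zeta_m$ is rational iff it is fixed by $\zeta_m \mapsto \zeta_m^k$ for all $k \in (\Z/m\Z)^\times$, and a polynomial with roots in $\mu_m$ has rational coefficients iff its multiset of roots is stable under this Galois action.

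The main obstacle is not conceptual but computational: one must encode the weight multisets of $\mathfrak{f}_4$ and $\mathrm{V}_{\varpi_4}$ and perform exact cyclotomic arithmetic for every candidate tuple, and the larger orders $m \in \{72,78,84,90\}$ produce the bulk of the tuples that must be sifted. This remains well within the reach of \cite{PARI2}, and the filtered list yields the count of $102$ classes whose Kac coordinates are recorded in \Cref{Table all rational Kac}.
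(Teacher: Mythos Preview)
Your proposal is correct and follows essentially the same approach as the paper: a computer-assisted enumeration over all Kac coordinates for the finitely many orders $m$ allowed by \Cref{lemma bounds on the order of rational conjugacy class}, computing for each the trace on $\mathfrak{f}_4$ and the characteristic polynomial on $\mathrm{J}_\C$ and retaining those that are rational. The only difference is cosmetic: you spell out the rationality check as Galois-stability in $\Q(\zeta_m)$, whereas the paper simply says it computes the trace and characteristic polynomial and checks rationality directly.
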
 
Our result coincides with \cite[Table 9.1]{PadowitzThesis}.
In \Cref{Table all rational Kac}, 
we also list the invariants defined below for all rational torsion $\lietype{F}{4}$-conjugacy class.

For a representative $g\in \lietype{F}{4}$ of a rational torsion conjugacy class $c$,
we can compute its characteristic polynomial on $\mathrm{J}_{\C}$:
\[\mathrm{P}_{g}(X)=\det\left(X\cdot \mathrm{id}-g|_{\mathrm{J}_{\C}}\right)=\sum_{i=0}^{27}(-1)^{i+1}a_{i}(g)X^{i}.\]
Now we assign to $g$ a quadruple
\[\mathrm{i}(g):=\left(a_{26}(g),a_{25}(g),a_{24}(g),\tr(\mathrm{Ad}(g)|_{\mathfrak{f}_{4}})\right),\] 
and set $\mathrm{i}(c):=\mathrm{i}(g)$.
\begin{cor}\label{cor criterion two conjugacy classes}
    Let $g_{1},g_{2}$ be two elements in either $\mathcal{F}_{4,\mathrm{I}}(\Z)$ or $\mathcal{F}_{4,\mathrm{E}}(\Z)$,
    then $g_{1}$ and $g_{2}$ are conjugate in $\lietype{F}{4}$ if and only if $\mathrm{i}(g_{1})=\mathrm{i}(g_{2})$.
\end{cor}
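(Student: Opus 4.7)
The forward implication is immediate: both $a_i(\cdot)$ and $\mathrm{Tr}(\mathrm{Ad}(\cdot)|_{\mathfrak{f}_4})$ are class functions on $\lietype{F}{4}$, so conjugate elements share the same $\mathrm{i}$.

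For the reverse implication, the plan is to reduce to a finite verification using the enumeration already produced in \Cref{prop enumeration of rational conjugacy classes}. Since $g_1, g_2$ lie in either $\mathcal{F}_{4,\mathrm{I}}(\Z)$ or $\mathcal{F}_{4,\mathrm{E}}(\Z)$, both are torsion elements of $\lietype{F}{4}$, and the characteristic polynomial of the action on $\mathrm{J}_\Z$ has integer (hence rational) coefficients; the trace on $\mathfrak{f}_4$ is likewise rational, because $\mathfrak{f}_4$ appears as a $\Q$-subrepresentation of $\mathrm{End}(\mathrm{J}_\Q)$ (the derivations of the Jordan algebra). Consequently the $\lietype{F}{4}$-conjugacy class $c_i$ of $g_i$ is a rational torsion conjugacy class in the sense of \Cref{def rational conjugacy classes}, and hence one of the $102$ classes enumerated in \Cref{Table all rational Kac}.

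Thus the claim reduces to the assertion that the map $c \mapsto \mathrm{i}(c)$ is injective on the subset of those rational torsion conjugacy classes which actually meet $\mathcal{F}_{4,\mathrm{I}}(\Z) \cup \mathcal{F}_{4,\mathrm{E}}(\Z)$. The concrete verification proceeds in three steps. First, for each of the $102$ rational torsion classes I take the representative $t = \exp\!\bigl(\tfrac{1}{m}\sum_i s_i \varpi_i\bigr) \in T$ coming from its Kac coordinates, and compute both the characteristic polynomial on $\mathrm{J}_\C \simeq \mathrm{V}_{\varpi_4} \oplus \C$ and the trace on $\mathfrak{f}_4$ via the Weyl character formula on the $26$- and $52$-dimensional representations (using the weight multiplicities produced by the algorithm of \Cref{section Kac coordinates}); this yields the $\mathrm{i}$-column of \Cref{Table all rational Kac}. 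Second, for each representative matrix $g_c$ of a $\Gamma$-conjugacy class obtained in \Cref{section enumeration conj classes}, I directly compute $a_{26}(g_c), a_{25}(g_c), a_{24}(g_c)$ from its action on the basis $\mathcal{B}$, and evaluate $\mathrm{Tr}(\mathrm{Ad}(g_c)|_{\mathfrak{f}_4})$ (for instance via the relation $2 \cdot \mathrm{Tr}(\mathrm{Ad}(g_c)|_{\mathfrak{f}_4}) + 3 = \mathrm{Tr}(g_c|_{\mathrm{J}_\C})^2 - \mathrm{Tr}(g_c^2|_{\mathrm{J}_\C})$ coming from $\mathrm{Sym}^2 \mathrm{V}_{\varpi_4} = \mathrm{V}_{2\varpi_4} \oplus \mathrm{V}_{\varpi_4} \oplus \mathfrak{f}_4 \oplus \C$, or equivalently via an explicit model of the derivations). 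Third, I match each $\Gamma$-conjugacy class to a unique row of \Cref{Table all rational Kac} and check that the resulting subset of rows has pairwise distinct $\mathrm{i}$-values.

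The only substantive difficulty is the last step: it is a priori possible that two different rational torsion $\lietype{F}{4}$-classes share the same $\mathrm{i}$-invariant, and then one must confirm that at most one of them actually occurs in $\mathcal{F}_{4,\mathrm{I}}(\Z) \cup \mathcal{F}_{4,\mathrm{E}}(\Z)$. In principle a finer invariant (such as the full characteristic polynomial on $\mathrm{J}_\C$, or a trace on a third fundamental representation) could be needed; however, the three coefficients $a_{26}, a_{25}, a_{24}$ together with $\mathrm{Tr}|_{\mathfrak{f}_4}$ determine the multiset of eigenvalues for torsion elements of small enough order to a large extent, and the remaining degeneracies — if any — are to be resolved by direct inspection of \Cref{Table all rational Kac} against the two lists of $113$ and $49$ computed $\Gamma$-conjugacy classes from \Cref{prop result conj classes}. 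This is a straightforward, if tedious, table lookup carried out in the \cite{PARI2} and \cite{GAP4} scripts referenced in \cite{Codes}.
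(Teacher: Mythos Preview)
Your argument is correct but works harder than the paper does. The paper's proof is a single observation: direct inspection of \Cref{Table all rational Kac} shows that the quadruple $\mathrm{i}(c)$ is already injective on the \emph{entire} set of $102$ rational torsion conjugacy classes, not merely on the subset meeting $\mathcal{F}_{4,\mathrm{I}}(\Z)\cup\mathcal{F}_{4,\mathrm{E}}(\Z)$. Once this is seen, the reverse implication is immediate, and the concern in your final paragraph about potential collisions never arises.

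Your second and third steps --- computing $\mathrm{i}(g_c)$ for each $\Gamma$-class representative, matching to rows of the table, and then checking injectivity on the resulting subset --- are not needed to prove this corollary; they belong to the subsequent matching procedure (which \emph{uses} the corollary rather than proving it). By establishing injectivity on all $102$ classes up front, the paper avoids having to first determine which $\lietype{F}{4}$-classes actually occur in $\Gamma$, and in particular avoids ever needing a finer invariant than $\mathrm{i}$. (As \Cref{rmk char poly not enough for conjugacy} notes, the characteristic polynomial on $\mathrm{J}_{\C}$ alone does have collisions among the $102$ classes; the point is that adjoining the trace on $\mathfrak{f}_{4}$ removes them all.)
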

\begin{proof}
    This follows from \Cref{Table all rational Kac}.
    For each rational torsion conjugacy class $c$,
    we list its order $o(c)$ and the associated quadruple $i(c)$.
    We observe that two different classes $c$ have different $i(c)$.
\end{proof}
\begin{rmk}\label{rmk char poly not enough for conjugacy}
    There exist examples of two different rational torsion conjugacy classes in $\lietype{F}{4}$ whose characteristic polynomials on $\jord_{\C}$ are the same.
    For instance, the order $12$ conjugacy classes $c_{1}$ and $c_{2}$ represented by the Kac coordinates $(1,1,1,1,1)$ and $(2,1,0,1,2)$ respectively
    share the same characteristic polynomial on $\jord_{\C}$:
    \[X^{27}-X^{24}-2X^{15}+2X^{12}+X^{3}-1.\]
    However, the trace of $c_{1}$ on $\mathfrak{f}_{4}$ is $0$,
    while that of $c_{2}$ is $3$.
    This shows that the $26$-dimensional irreducible representation of $\lietype{F}{4}$ is not ``excellent'' in the sense of Padowitz.
    It is also observed in Padowitz's table \cite[Table 9.1]{PadowitzThesis}
    that the motives attached to the centralizers of these two conjugacy classes,
    in the sense of Gross, 
    are different.
\end{rmk}
Now we explain our algorithm for step $(2)$.
For each $\Gamma$-conjugacy class $c$ and its representative $g_{c}$ chosen in \Cref{section enumeration conj classes},
we compute the quadruple $\mathrm{i}(g_{c})$ and compare it with \Cref{Table all rational Kac}.
By \Cref{cor criterion two conjugacy classes} we can determine the $\lietype{F}{4}$-conjugacy class containing $c$.
In \Cref{KacTable} we list all the Kac coordinates $s$ whose corresponding rational conjugacy class $c_{s}$ in $\lietype{F}{4}$ 
satisfies that $c_{s}\cap \mathcal{F}_{4,\mathrm{I}}(\Z)$ or $c_{s}\cap \mathcal{F}_{4,\mathrm{E}}(\Z)$ is non-empty,
as well as the cardinalities of intersections $n_{1}(s)=|c_{s}\cap \mathcal{F}_{4,\mathrm{I}}(\Z)|$ and $n_{2}(s)=|c_{s}\cap \mathcal{F}_{4,\mathrm{E}}(\Z)|$.

\subsection{The formula for \texorpdfstring{$\dim \mathrm{V}_{\lambda}^{\Gamma}$}{}}\label{section final formula for dimensions}
\tp Now we can deduce the formula for $d_{i}(\lambda):=\dim \mathrm{V}_{\lambda}^{\Gamma_{i}},i=1,2$, where $\Gamma_{1}:=\mathcal{F}_{4,\mathrm{I}}(\Z)$ and $\Gamma_{2}:=\mathcal{F}_{4,\mathrm{E}}(\Z)$, 
for a given dominant weight $\lambda$:
\begin{align*}
   \dim \mathrm{V}_{\lambda}^{\Gamma_{i}}=\frac{1}{|\Gamma_{i}|}\sum_{c\in\mathrm{Conj}(\Gamma_{i})}\tr|_{\mathrm{V}_{\lambda}}(c)\cdot |c|=\frac{1}{|\Gamma_{i}|}\sum_{c\in\mathrm{Conj}(\lietype{F}{4})}\tr|_{\mathrm{V}_{\lambda}}(c)\cdot |c\cap \Gamma_{i}|.
\end{align*}
For each rational conjugacy class $c$ whose contribution to this formula is nonzero,
we have already given $|c\cap \Gamma_{i}|$ in \Cref{KacTable},
and according to \Cref{prop degenerate Weyl character formula} the trace $\tr|_{\mathrm{V}_{\lambda}}(c^{\prime})$ is an explicit function of $\lambda_{1},\lambda_{2},\lambda_{3},\lambda_{4}$.

This gives us the following theorem, which is the main computational result of this paper:
\begin{thm}\label{thm main computational result}
    For each dominant weight $\lambda$ of the compact Lie group $\mathrm{F}_{4}$,
    we have an explicit formula for
    \[d_{i}(\lambda)=\dim \mathrm{V}_{\lambda}^{\Gamma_{i}},i=1,2.\]
    For dominant weights $\lambda=(\lambda_{1},\lambda_{2},\lambda_{3},\lambda_{4})$
    with $2\lambda_{1}+3\lambda_{2}+2\lambda_{3}+\lambda_{4}\leq 13$,
    we list all the nonzero $d(\lambda):=d_{1}(\lambda)+d_{2}(\lambda)$ in \Cref{nonzeroAMF}.
\end{thm}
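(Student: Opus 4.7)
The plan is to assemble the pieces from \Cref{section ideas and obstructions of the computation}--\Cref{section comparison of conj classes} into a single explicit expression. First I would apply \Cref{lemma dimension of invariant space in terms of conj classes} to write
\[
    d_{i}(\lambda) \;=\; \frac{1}{|\Gamma_{i}|}\sum_{c \in \mathrm{Conj}(\Gamma_{i})} |c|\cdot \tr|_{\vrep{\lambda}}(c).
\]
Since the trace on $\vrep{\lambda}$ depends only on the $\lietype{F}{4}$-conjugacy class of an element, I would regroup this sum by $\lietype{F}{4}$-conjugacy classes. Because $\Gamma_{i} \subset \GL(\jord_{\Z})$ is a finite group whose elements have rational characteristic polynomial on $\jord_{\C}$, every contributing class is a rational torsion $\lietype{F}{4}$-conjugacy class, so by \Cref{cor criterion two conjugacy classes} and \Cref{KacTable} the set of contributing classes is finite and explicitly indexed by Kac coordinates $s = (s_{0},s_{1},s_{2},s_{3},s_{4})$, together with tabulated cardinalities $n_{i}(s) = |c_{s} \cap \Gamma_{i}|$.

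Next, for each such $s$, \Cref{prop Kac coordinates determine conj classes} gives the canonical representative
\[
    t_{s} \;=\; \exp\Bigl(\tfrac{s_{1}\varpi_{1} + s_{2}\varpi_{2} + s_{3}\varpi_{3} + s_{4}\varpi_{4}}{m}\Bigr), \qquad m = s_{0} + 2s_{1} + 3s_{2} + 4s_{3} + 2s_{4},
\]
lying in the fixed maximal torus $T$. The root subsystem $\Phi_{M}$ of the connected centralizer $M = \mathrm{C}_{\lietype{F}{4}}(t_{s})^{\circ}$ is then read directly off the vanishing pattern of the $s_{i}$'s via the affine Dynkin diagram of $\lietype{F}{4}$, which determines $W^{M}$, $\rho_{M}$ and the values $t_{s}^{\alpha}$ appearing in \eqref{eqn degenerate Weyl}. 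Substituting into \Cref{prop degenerate Weyl character formula} yields $\tr|_{\vrep{\lambda}}(t_{s})$ as an explicit, computable function of $(\lambda_{1},\lambda_{2},\lambda_{3},\lambda_{4})$. Combining the two displays produces the final formula
\[
    d_{i}(\lambda) \;=\; \frac{1}{|\Gamma_{i}|}\sum_{s} n_{i}(s)\cdot \tr|_{\vrep{\lambda}}(t_{s}),
\]
establishing the first claim of the theorem.

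The main difficulty is purely computational bookkeeping: \Cref{Table all rational Kac} produces roughly one hundred relevant Kac coordinates, and for each one must identify $\Phi_{M}^{+}$, enumerate the coset representatives $W^{M} \subset W(\lietype{F}{4})$, and simplify the degenerate Weyl sum. This is best delegated to the \texttt{PARI/GP} scripts in \cite{Codes}, keeping $\lambda$ symbolic so that the output is a closed-form expression in $(\lambda_{1},\ldots,\lambda_{4})$ rather than a table of numerical values. For the second part of the theorem, I would then evaluate this closed form at every dominant weight $\lambda$ with $2\lambda_{1} + 3\lambda_{2} + 2\lambda_{3} + \lambda_{4} \leq 13$, retain only those $\lambda$ with $d(\lambda) = d_{1}(\lambda) + d_{2}(\lambda) \neq 0$, and tabulate the results as \Cref{nonzeroAMF}. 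A useful sanity check is to verify $d(0) = 2$ (one invariant for each of the two trivial representations of $\Gamma_{1}, \Gamma_{2}$, corresponding to the existence of the two reductive $\Z$-models from \Cref{prop exactly two reductive models}) and to compare the mass $\sum_{\lambda} d(\lambda)\cdot \dim \vrep{\lambda}$ against the mass formula \eqref{eqn mass formula for F4} in low degrees.
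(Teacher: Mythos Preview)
Your proposal is correct and follows essentially the same approach as the paper: apply \Cref{lemma dimension of invariant space in terms of conj classes}, regroup the sum over $\mathrm{Conj}(\Gamma_{i})$ by $\lietype{F}{4}$-conjugacy classes using the tabulated intersection numbers $n_{i}(s)$ from \Cref{KacTable}, and evaluate each trace at the torus representative $t_{s}$ via the degenerate Weyl character formula \Cref{prop degenerate Weyl character formula}. One small caveat: your suggested sanity check comparing $\sum_{\lambda} d(\lambda)\dim\vrep{\lambda}$ to the mass formula \eqref{eqn mass formula for F4} does not quite make sense as stated, since the mass formula computes $\sum_{i} 1/|\Gamma_{i}|$ rather than a weighted sum of invariant dimensions; the check $d(0)=2$ is fine, however.
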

\begin{rmk}\label{rmk condition equivalent to condition on infinitesimal character}
    Later we will see the condition on $\lambda$ in \Cref{thm main computational result} 
    is equivalent to that the maximal eigenvalue of the infinitesimal character associated to $\mathrm{V}_{\lambda}$ is not larger than $21$.
\end{rmk}
In \cite{Codes}, we also provide a larger table of $[\lambda,d_{1}(\lambda),d_{2}(\lambda),d(\lambda)]$ for weights with $2\lambda_{1}+3\lambda_{2}+2\lambda_{3}+\lambda_{4}\leq 40$.

\section{Subgroups of \texorpdfstring{$\lietype{F}{4}$}{}}\label{section subgroups of F4}
\tp In this section we will classify subgroups of the compact Lie group $\lietype{F}{4}=\aut(\mathrm{J}_{\R},\circ)$ satisfying certain conditions 
and determine their centralizers in $\lietype{F}{4}$.
Our results will be used in \Cref{section Arthur classification for F4},
but this problem also has its own interest.
Our precise aim is to find all the conjugacy classes of closed subgroups $H$ of $\lietype{F}{4}$ such that:
\begin{enumerate}[label=(\arabic*)]
    \item $H$ is connected;
    \item The centralizer of $H$ in $\lietype{F}{4}$ is an elementary finite abelian $2$-groups, 
    i.e. it is a product of finitely many copies of $\Z/2\Z$.
    \item The multiplicity of zero weight in the restriction of the $26$-dimensional irreducible representation $\mathrm{V}_{\varpi_{4}}$ of $\lietype{F}{4}$ to $H$ is $2$.
\end{enumerate}

If we only consider the first condition,
the problem is equivalent to classifying connected semisimple Lie subalgebras of the complexified Lie algebra $\mathfrak{f}_{4}$,
up to the adjoint action of $\grpF(\C)$.
This has been studied by Dynkin in \cite{Dynkin1952} for all simple complex Lie algebras, without giving full details.
So we will give a detailed classification for $\lietype{F}{4}$ in this section,
following Dynkin's original idea and Losev's result \cite[Theorem 7.1]{Losev2005OnIO}.

Briefly, 
our strategy is to enumerate first all the connected simple subgroups of $\lietype{F}{4}$ inside maximal proper compact subgroups,
and to index them by the restrictions of $\mathrm{V}_{\varpi_{4}}$.
Then we compute their centralizers case by case,
and combine these results together to get all the connected subgroups satisfying our conditions.

\subsection{Element-conjugacy implies conjugacy}\label{section local global conjugacy}
\tp To be more precise,
what we want to classify, up to $\lietype{F}{4}$-conjugacy, are embeddings from connected compact Lie groups to $\lietype{F}{4}$ 
satisfying two additional conditions.
In this subsection we will explain why it is enough to consider their element-conjugacy classes,
where the notion of element-conjugacy is defined as follows:
\begin{defi}\label{def element conj global conj}\cite[\S 1]{ConjEleConj}
    Let $G$ and $H$ be two compact Lie groups and $\phi,\phi^{\prime}:H\rightarrow G$ be two Lie group homomorphisms.
We say that $\phi$ and $\phi^{\prime}$ are \emph{conjugate} if there is an element $g\in G$ such that 
\[g\phi(h)g^{-1}=\phi^{\prime}(h),\text{ for all }h\in H.\]
They are said to be \emph{element-conjugate} if for every $h\in H$, there is a $g\in G$ such that
\[g\phi(h)g^{-1}=\phi^{\prime}(h).\]
\end{defi}
The element-conjugacy can be rephrased in the following way:
\begin{lemma}\label{lemma element conj equivalent to equiv after composing linear reps}
    Let $\phi,\phi^{\prime}:H\rightarrow G$ be two homomorphisms between compact Lie groups,
    then they are element-conjugate if and only if for each linear representation $\pi:G\rightarrow \GL(V)$ 
    the compositions $\pi\circ \phi$ and $\pi\circ\phi^{\prime}$ are conjugate in $\GL(V)$.
\end{lemma}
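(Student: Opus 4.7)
The plan is to handle the two implications separately, both reducing to standard facts about characters of compact groups.

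For the forward direction, assume $\phi$ and $\phi'$ are element-conjugate and let $\pi:G\to\GL(V)$ be a linear representation. For each $h\in H$, choose $g\in G$ with $g\phi(h)g^{-1}=\phi'(h)$; applying $\pi$ gives $\pi(g)(\pi\circ\phi)(h)\pi(g)^{-1}=(\pi\circ\phi')(h)$, so the two homomorphisms $\pi\circ\phi$ and $\pi\circ\phi'$ from $H$ to $\GL(V)$ are themselves element-conjugate. In particular, for every $h\in H$ the matrices $\pi(\phi(h))$ and $\pi(\phi'(h))$ have the same trace, so the representations $\pi\circ\phi$ and $\pi\circ\phi'$ of the compact group $H$ have the same character. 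By the standard character theory for compact Lie groups, two finite-dimensional representations with the same character are isomorphic, which means there exists $g\in\GL(V)$ conjugating one to the other as homomorphisms $H\to\GL(V)$. This is exactly the desired conjugacy.

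For the backward direction, assume that for every linear $\pi:G\to\GL(V)$ the compositions $\pi\circ\phi$ and $\pi\circ\phi'$ are conjugate in $\GL(V)$. Fix $h\in H$. Then for every such $\pi$, the elements $\pi(\phi(h))$ and $\pi(\phi'(h))$ are conjugate in $\GL(V)$, so in particular $\chi_\pi(\phi(h))=\chi_\pi(\phi'(h))$, where $\chi_\pi=\operatorname{Tr}\circ\pi$. Since this holds for all (finite-dimensional continuous) representations $\pi$ of the compact Lie group $G$, the irreducible characters of $G$ take the same values on $\phi(h)$ and $\phi'(h)$. As irreducible characters separate conjugacy classes in a compact Lie group (a consequence of Peter-Weyl together with the density of class functions spanned by characters), this forces $\phi(h)$ and $\phi'(h)$ to lie in the same $G$-conjugacy class. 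Varying $h$, this gives element-conjugacy of $\phi$ and $\phi'$.

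No step is genuinely difficult; the only substantive inputs are the two classical facts for compact Lie groups, namely that representations are determined by their characters and that irreducible characters separate conjugacy classes. If there is any obstacle, it is to state cleanly which category of representations is meant (continuous finite-dimensional, which is automatic for compact $G$) so that the Peter-Weyl machinery applies, but this is standard and will be stated in one line.
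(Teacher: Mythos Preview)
Your proof is correct and follows the same approach as the paper: both directions reduce to the Peter--Weyl consequence that irreducible characters of a compact Lie group separate conjugacy classes, together with the fact that characters determine representations. The paper's proof is a one-line citation of this fact, while you have spelled out both implications explicitly.
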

\begin{proof}
   It is a consequence of the Peter-Weyl theorem for compact Lie groups,
   which says that two elements of $G$ are conjugate if and only if they have the same trace on all the irreducible representations of $G$.
\end{proof}
It is obvious that two conjugate homomorphisms are element-conjugate,
but the converse fails in general.
Fortunately, the converse holds when $G=\lietype{F}{4}$,
due to the following result for Lie algebras:
\begin{thm}\label{thm Lie algebra F4 is acceptable}\cite[Proposition 6.2, Theorem 7.1]{Losev2005OnIO}
    Let $\mathfrak{f}_{4}$ be a simple complex Lie algebra of type $\lietype{F}{4}$ and $\lietype{F}{4,\C}$ the complexification of $\lietype{F}{4}$.
    Let $\mathfrak{h}$ be a reductive algebraic Lie algebra,
    i.e. $\mathfrak{h}$ is the Lie algebra of some reductive complex group,
    and $\phi,\phi^{\prime}:\mathfrak{h}\rightarrow \mathfrak{f}_{4}$ two injective Lie algebra homomorphisms.
    If the restrictions of $\phi$ and $\phi^{\prime}$ to a Cartan subalgebra $\mathfrak{s}$ of $\mathfrak{h}$ are conjugate
    in the sense that $\varphi\circ\phi|_\mathfrak{s}=\phi^{\prime}|_{\mathfrak{s}}$ for an inner automorphism $\varphi$ of $\mathfrak{f}_{4}$,
    then $\phi$ and $\phi^{\prime}$ are conjugate. 
\end{thm}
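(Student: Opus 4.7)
The plan is to reduce the theorem to Losev's ``acceptability'' property for $\mathfrak{f}_{4}$ — namely that any two element-conjugate embeddings of a reductive complex Lie algebra into $\mathfrak{f}_{4}$ are globally conjugate. The argument then has two steps: first, verify that the hypothesis yields element-conjugacy in the sense of \Cref{def element conj global conj}, and second, appeal to this deep fact about $\mathfrak{f}_{4}$.

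After replacing $\phi'$ by $\varphi^{-1}\circ\phi'$ one may assume $\phi|_{\mathfrak{s}}=\phi'|_{\mathfrak{s}}=:\psi$. For the first step I would argue that $\phi$ and $\phi'$ are element-conjugate in $\mathfrak{f}_{4}$, following the Lie-algebra analogue of \Cref{lemma element conj equivalent to equiv after composing linear reps}: for every finite-dimensional representation $\pi:\mathfrak{f}_{4}\to\mathfrak{gl}(V)$, the $\mathfrak{h}$-modules $\pi\circ\phi$ and $\pi\circ\phi'$ share the same character, because the character of any finite-dimensional representation of the reductive Lie algebra $\mathfrak{h}$ is determined by its restriction to the Cartan $\mathfrak{s}$, where $\phi$ and $\phi'$ agree. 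Hence for each $x\in\mathfrak{h}$ the elements $\phi(x)$ and $\phi'(x)$ have equal characteristic polynomials in every representation of $\mathfrak{f}_{4}$; combined with Jordan decomposition and the fact that every fundamental representation of $\mathfrak{f}_{4}$ is self-dual, this forces $\phi(x)$ and $\phi'(x)$ to be $\mathfrak{f}_{4}$-conjugate.

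For the second step I would invoke \cite[Theorem 7.1]{Losev2005OnIO}. Its proof proceeds through the Dynkin classification of reductive subalgebras of $\mathfrak{f}_{4}$ (a list to be compiled independently in \Cref{section subgroups of F4}) and checks, case by case, that each embedding into $\mathfrak{f}_{4}$ is rigidified by the constrained decomposition of the $26$- and $52$-dimensional fundamental representations of $\mathfrak{f}_{4}$ on the subalgebra in question; concretely, one shows that within each candidate type there is only one $\mathfrak{f}_{4}$-conjugacy class of embeddings compatible with the observed character data.

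The main obstacle is precisely this second step. Most semisimple Lie algebras fail to be acceptable: in $\mathfrak{sl}_{n}$ with $n\geq 3$, for example, an irreducible representation and its dual are element-conjugate but not conjugate. The distinguishing features of $\mathfrak{f}_{4}$ that Losev exploits are triviality of the outer automorphism group, self-duality of every irreducible representation, and very rigid root and weight combinatorics; together they rule out all potential obstructions to globalizing an element-conjugacy. The same mechanism singles out $\mathfrak{g}_{2}$ and $\mathfrak{e}_{8}$ along with $\mathfrak{f}_{4}$ as the exceptional acceptable simple complex Lie algebras, which is consistent with the remark in \Cref{thm classification compact group admitting integral models} that only these three exceptional types admit compact $\Q$-forms with reductive $\Z$-models.
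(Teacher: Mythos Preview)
The paper does not give its own proof of this theorem: it is stated as a direct citation of Losev \cite[Proposition 6.2, Theorem 7.1]{Losev2005OnIO}, and the paper's only use of it is to deduce \Cref{prop compact Lie group F4 is acceptable} via the argument in \cite{ConjEleConj}. So there is no ``paper's proof'' to compare against; your proposal is an attempt to sketch Losev's argument itself.

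As such, the proposal is circular. Your Step~2 says ``I would invoke \cite[Theorem 7.1]{Losev2005OnIO},'' but that \emph{is} the theorem under discussion. What you describe afterwards (a Dynkin-style case check against the $26$- and $52$-dimensional representations) is a plausible outline, but it is not a proof, and you should be aware that this case analysis is where all the content lies.

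Your Step~1 also has a gap. The character argument shows that $\pi\circ\phi$ and $\pi\circ\phi'$ are isomorphic $\mathfrak{h}$-modules for every $\pi$, hence that $\phi(x)$ and $\phi'(x)$ have the same eigenvalue multisets in every representation. For semisimple $x$ this gives $\mathfrak{f}_{4}$-conjugacy, but for nilpotent $x$ the characteristic polynomial in any representation is $t^{\dim V}$ and carries no information; your sentence ``combined with Jordan decomposition and the fact that every fundamental representation of $\mathfrak{f}_{4}$ is self-dual'' does not bridge this. (For the intended application to compact groups this is harmless, since every element of a compact group is semisimple; but as a Lie-algebra statement it is incomplete.)

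Finally, two remarks on your last paragraph. First, the $\mathfrak{sl}_{n}$ counterexample is wrong as stated: an irreducible $\rho$ and its dual $\rho^{*}$ are element-conjugate only when the eigenvalue multiset of each $\rho(x)$ is invariant under negation, which is essentially the self-dual case, where they \emph{are} conjugate. Second, the link you draw to \Cref{thm classification compact group admitting integral models} is spurious: the existence of anisotropic $\Q$-forms with reductive $\Z$-models has nothing to do with acceptability in Losev's sense.
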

\begin{rmk}
    Actually, in \cite{Losev2005OnIO} Losev uses the following equivalence relation on Lie algebra homomorphisms:
    two Lie algebra homomorphisms $\phi,\phi^{\prime}:\mathfrak{h}\rightarrow\mathfrak{g}$ are equivalent 
    if there exist liftings $H\rightarrow G$ of $\phi,\phi^{\prime}$ to reductive complex groups which are $G$-conjugate in the sense of \Cref{def element conj global conj}.
    By Lie group-Lie algebra correspondence this equivalence relation is the same as $\varphi\circ\phi=\phi^{\prime}$ for an inner automorphism $\varphi$ of $\mathfrak{f}_{4}$.
\end{rmk}
This theorem implies the result we need for $\lietype{F}{4}$:
\begin{prop}\label{prop compact Lie group F4 is acceptable}
    For any connected compact Lie group $H$, two element-conjugate homomorphisms from $H$ to $\lietype{F}{4}$ are conjugate. 
\end{prop}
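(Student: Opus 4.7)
The plan is to deduce the proposition from Losev's result \Cref{thm Lie algebra F4 is acceptable} by verifying its Cartan-subalgebra hypothesis. Suppose $\phi,\phi'\colon H\to \lietype{F}{4}$ are element-conjugate. First, I claim $\ker \phi = \ker \phi'$: if $\phi(h)=1$ then by element-conjugacy $\phi'(h)$ is conjugate to $1$ in $\lietype{F}{4}$, hence equal to $1$. Replacing $H$ by the connected compact Lie group $H/\ker\phi$, I may assume that both $\phi$ and $\phi'$ are injective. Differentiating and then complexifying yields injective maps $d\phi_{\C},d\phi'_{\C}\colon \mathfrak{h}_{\C}\hookrightarrow \mathfrak{f}_{4,\C}$ of complex Lie algebras, where $\mathfrak{h}_{\C}$ is reductive algebraic since $\mathfrak{h}=\mathrm{Lie}(H)$ is compact.

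Fix a maximal torus $T\subset H$, so that $\mathfrak{t}_{\C}\subset \mathfrak{h}_{\C}$ is a Cartan subalgebra. By \Cref{thm Lie algebra F4 is acceptable}, it suffices to show that $d\phi_{\C}|_{\mathfrak{t}_{\C}}$ and $d\phi'_{\C}|_{\mathfrak{t}_{\C}}$ are conjugate under an inner automorphism of $\mathfrak{f}_{4,\C}$, i.e.\ under an element of $\grpF(\C)$. Since both Cartan restrictions take values in the compact form $\mathfrak{f}_{4}$, the standard fact that all compact real forms of $\mathfrak{f}_{4,\C}$ are $\grpF(\C)$-conjugate then allows this conjugating element to be chosen in $\lietype{F}{4}$. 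By exponentiation this reduces the problem to showing that the torus-level homomorphisms $\phi|_T,\phi'|_T\colon T\to \lietype{F}{4}$ are conjugate by an element of $\lietype{F}{4}$. After an initial conjugation I may assume $\phi(T)$ and $\phi'(T)$ both lie in a single maximal torus $S\subset \lietype{F}{4}$, with Weyl group $W=W(\lietype{F}{4},S)$. Element-conjugacy then says that for every $t\in T$ there is some $w(t)\in W$ with $w(t)\cdot \phi(t)=\phi'(t)$. For each $w\in W$, the set
\[
T_{w}=\{\,t\in T : w\cdot \phi(t)=\phi'(t)\,\}
\]
is the kernel of the continuous homomorphism $t\mapsto (w\cdot \phi(t))\,\phi'(t)^{-1}$ from $T$ to $S$, hence a closed subgroup of $T$. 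The finite family $(T_{w})_{w\in W}$ covers $T$, so by the Baire category theorem at least one $T_{w}$ has nonempty interior and, being a closed subgroup of the connected compact group $T$, equals $T$. This produces a single $w\in W$ with $w\cdot\phi|_T=\phi'|_T$, and lifting $w$ to $N_{\lietype{F}{4}}(S)$ gives the desired conjugacy.

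Feeding this back into \Cref{thm Lie algebra F4 is acceptable} yields an inner automorphism of $\mathfrak{f}_{4,\C}$ conjugating $d\phi_{\C}$ to $d\phi'_{\C}$, and therefore, as explained above, an element $g\in \lietype{F}{4}$ with $g\phi(h)g^{-1}=\phi'(h)$ for all $h\in H$ by connectedness of $H$. The main obstacle I anticipate is the passage from pointwise Weyl-conjugacy of the two restrictions $\phi|_T,\phi'|_T$ to a single uniform Weyl element; this is the classical ``element-conjugacy is conjugacy'' statement for tori, and the cover of $T$ by the closed subgroups $T_{w}$ is precisely what makes Losev's Cartan-subalgebra hypothesis checkable here. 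Everything else is a routine reduction, with Losev's theorem supplying the genuine $\lietype{F}{4}$-specific input.
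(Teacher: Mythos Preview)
Your approach matches the one the paper defers to \cite[Proposition~3.5]{ConjEleConj}: reduce to injective maps, check Losev's Cartan hypothesis by showing $\phi|_T$ and $\phi'|_T$ are $\lietype{F}{4}$-conjugate, then invoke \Cref{thm Lie algebra F4 is acceptable}. Your Baire/closed-subgroup argument on the torus is correct and is the standard way to upgrade pointwise Weyl-conjugacy to a single Weyl element.

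The one step that is not adequately justified is the final descent from the $\grpF(\C)$-conjugacy of $d\phi_\C$ and $d\phi'_\C$ furnished by Losev to an $\lietype{F}{4}$-conjugacy of $\phi$ and $\phi'$. Your stated reason, that all compact real forms of $\mathfrak{f}_{4,\C}$ are $\grpF(\C)$-conjugate, does not do this job: that fact moves one real form onto another, but it does not explain why a specific $g\in\grpF(\C)$ intertwining two fixed maps into $\lietype{F}{4}$ can be replaced by an element of $\lietype{F}{4}$. The correct lemma is that two homomorphisms from a compact group into a compact Lie group $K$ which are $K_\C$-conjugate are already $K$-conjugate. One proves it by writing the given $g\in K_\C$ as $kp$ via the global Cartan decomposition $K_\C=K\cdot\exp(i\mathfrak{k})$, observing that $g^{-1}\theta(g)=p^{-2}$ lies in the $\theta$-stable reductive centralizer $Z=\mathrm{C}_{K_\C}(\phi(H))$, and using compatibility of polar decompositions to deduce $p\in Z$, so that $k=gp^{-1}\in K$ still intertwines $\phi$ and $\phi'$. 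This is part of what the reference \cite{ConjEleConj} provides; with it your argument is complete.
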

\begin{proof}
    The argument that deduces this result from \Cref{thm Lie algebra F4 is acceptable} can be found in the proof of \cite[Proposition 3.5]{ConjEleConj}.  
\end{proof}

\subsection{A criterion for element-conjugacy}\label{section one representation is enough}
\tp According to \Cref{lemma element conj equivalent to equiv after composing linear reps} and \Cref{prop compact Lie group F4 is acceptable}, 
to check whether two homomorphism $\phi$ and $\phi^{\prime}$ from a connected compact Lie group $H$ to $\lietype{F}{4}$ are conjugate,
it suffices to verify that for every irreducible representation $\pi$ of $\lietype{F}{4}$, 
$\pi\circ\phi$ and $\pi\circ \phi^{\prime}$ are equivalent as $H$-representations.
Moreover, we have the following useful fact:
\begin{prop}\label{prop just one rep is enough for conjugacy}
    Let $(\pi_{0},\mathrm{J}_{0})$ be the $26$-dimensional irreducible representation of $\mathrm{F}_{4}$.
    Two homomorphisms $\phi,\phi^{\prime}$ from a connected compact subgroup $H$ to $\mathrm{F}_{4}$ are conjugate 
    if and only if two $H$-representations $\pi_{0}\circ \phi$ and $\pi_{0}\circ \phi^{\prime}$ are equivalent.
\end{prop}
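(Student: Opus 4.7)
The ``only if'' direction is immediate: if $\phi'=\mathrm{Ad}(g)\circ\phi$ for some $g\in\lietype{F}{4}$, then $\pi_{0}\circ\phi'=\pi_{0}(g)\cdot(\pi_{0}\circ\phi)\cdot\pi_{0}(g)^{-1}$, so the two compositions are equivalent as $H$-representations. For the converse the strategy is to combine \Cref{prop compact Lie group F4 is acceptable}, \Cref{lemma element conj equivalent to equiv after composing linear reps} and \Cref{thm Lie algebra F4 is acceptable}. By \Cref{prop compact Lie group F4 is acceptable} it suffices to prove that $\phi$ and $\phi'$ are element-conjugate, and \Cref{lemma element conj equivalent to equiv after composing linear reps} reduces this in turn to showing that $\pi\circ\phi\simeq\pi\circ\phi'$ for \emph{every} irreducible representation $\pi$ of $\lietype{F}{4}$, not just for the single representation $\pi_{0}$ supplied by the hypothesis. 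The real task is therefore to propagate the hypothesis from $\pi_{0}$ to all irreducible representations of $\lietype{F}{4}$.

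To do this, first fix a maximal torus $S\subset H$ with Lie algebra $\mathfrak{s}$. Because $H$ is connected, every element of $H$ is $H$-conjugate to an element of $S$, so it is enough to prove that $\phi|_{S}$ and $\phi'|_{S}$ are $\lietype{F}{4}$-conjugate. The images $\phi(S)$ and $\phi'(S)$ are tori of $\lietype{F}{4}$, so after replacing $\phi$ and $\phi'$ by suitable $\lietype{F}{4}$-conjugates we may assume that both land inside one fixed maximal torus $T\subset\lietype{F}{4}$, with Lie algebra $\mathfrak{t}$. Differentiating the hypothesis and using linear independence of characters on $S$, one obtains the equality of multisets
\[
\left\{\mu\circ d\phi|_{\mathfrak{s}}\,:\,\mu\in\mathrm{Weights}(\pi_{0})\right\}=\left\{\mu\circ d\phi'|_{\mathfrak{s}}\,:\,\mu\in\mathrm{Weights}(\pi_{0})\right\}
\]
of linear forms on $\mathfrak{s}$.

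The key step is the rigidity claim that this multiset equality forces $d\phi'|_{\mathfrak{s}}=w\circ d\phi|_{\mathfrak{s}}$ for some $w\in W(\lietype{F}{4})=N_{\lietype{F}{4}}(T)/T$. The structural input here is that the weights of $\pi_{0}=\vrep{\varpi_{4}}$ consist of the $24$ short roots of $\lietype{F}{4}$ together with the zero weight of multiplicity two, and that the short-root subsystem of $\lietype{F}{4}$ inside $\mathfrak{t}^{*}$ is of type $\mathrm{D}_{4}$. Via triality one has $\mathrm{Aut}(\mathrm{D}_{4})\simeq W(\mathrm{D}_{4})\rtimes\mathrm{S}_{3}$ of order $1152=|W(\lietype{F}{4})|$, so $W(\lietype{F}{4})$ is precisely the group of linear automorphisms of $\mathfrak{t}^{*}$ permuting the short roots. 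In the generic case $\dim\mathfrak{s}=4$ with both maps surjective onto $\mathfrak{t}$, the linear automorphism $((d\phi'|_{\mathfrak{s}})^{*})^{-1}\circ(d\phi|_{\mathfrak{s}})^{*}$ of $\mathfrak{t}^{*}$ then permutes the short roots and hence belongs to $W(\lietype{F}{4})$, supplying the desired $w$; the lower-rank cases are to be handled by the analogous argument applied to the image subspace $d\phi(\mathfrak{s})\subset\mathfrak{t}$, again using that the pullback multiset of short roots determines this subspace up to $W(\lietype{F}{4})$. Once $w$ has been produced, a further conjugation by a lift of $w$ inside $N_{\lietype{F}{4}}(T)$ makes $d\phi|_{\mathfrak{s}}$ and $d\phi'|_{\mathfrak{s}}$ coincide, and \Cref{thm Lie algebra F4 is acceptable} applied to $d\phi,d\phi'$ (after passing to $\mathfrak{h}/(\ker d\phi\cap\ker d\phi')$ to obtain embeddings of a reductive algebraic Lie algebra) yields that $d\phi$ and $d\phi'$ are inner-conjugate in $\mathfrak{f}_{4}$, so that $\phi$ and $\phi'$ are conjugate in $\lietype{F}{4}$.

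The main obstacle anticipated is the rigidity step itself. \Cref{rmk char poly not enough for conjugacy} already shows two distinct conjugacy classes of $\lietype{F}{4}$ with identical $\pi_{0}$-characteristic polynomial, so no pointwise character argument at the group level can succeed; the proof must instead exploit the \emph{coherent} multiset of all $24$ short-root pullbacks, viewed as linear forms on $\mathfrak{s}$, together with the triality identification $W(\lietype{F}{4})=\mathrm{Aut}(\mathrm{D}_{4})$. The delicate technical point is handling possibly coincident values in this multiset, where the combinatorial matching of short roots is not a priori unique: one must verify that among the valid matchings a linear one (hence a Weyl group element) can always be selected.
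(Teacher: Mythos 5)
Your overall plan is sound in its first few steps: reducing to element-conjugacy via Proposition~\ref{prop compact Lie group F4 is acceptable} and Lemma~\ref{lemma element conj equivalent to equiv after composing linear reps}, restricting to a maximal torus $S\subset H$, and comparing the multisets $f(\mathcal{W})$ and $f'(\mathcal{W})$ where $f=(d\phi|_{\mathfrak{s}})^{*}$, $f'=(d\phi'|_{\mathfrak{s}})^{*}$ and $\mathcal{W}$ is the multiset of weights of $\pi_{0}$. The triality observation is correct and attractive: the $24$ nonzero weights of $\pi_{0}$ are the short roots, which form a $\mathrm{D}_4$ system, and $\mathrm{Aut}(\mathrm{D}_4)\simeq W(\mathrm{D}_4)\rtimes\mathrm{S}_3$ has order $1152=|W(\lietype{F}{4})|$, so in the full-rank case $\alpha:=(f')^{-1}\circ f$ is a lattice automorphism permuting the short roots and hence lies in $W(\lietype{F}{4})$. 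This is a genuinely different route from the paper's.

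However, the proof has a real gap precisely where you flag the ``delicate technical point'': the lower-rank cases. If $\dim\mathfrak{s}<4$, or if $d\phi|_{\mathfrak{s}}$ fails to be injective, then $f$ and $f'$ are surjections $\mathfrak{t}^{*}\to V$ with $\dim V<4$, the composite $\alpha=(f')^{-1}\circ f$ does not exist, and the multiset equality $f(\mathcal{W})=f'(\mathcal{W})$ does not hand you a linear map of $\mathfrak{t}^{*}$ at all. Your sentence that these cases are ``to be handled by the analogous argument applied to the image subspace $d\phi(\mathfrak{s})\subset\mathfrak{t}$, again using that the pullback multiset of short roots determines this subspace up to $W(\lietype{F}{4})$'' is circular: the assertion that the multiset determines things up to $W(\lietype{F}{4})$ is exactly what has to be proved. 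Concretely, from $f(\mathcal{W})=f'(\mathcal{W})$ one cannot directly read off which short roots lie in $\ker f$ or even identify $\ker f$ up to $W$-conjugacy, because the multiset records only values, not which weight each value came from; when values coincide there is genuine ambiguity in how to match weights across $f$ and $f'$, and one must argue that some matching is effected by a single Weyl group element. That is not automatic and is the technical heart of the statement.

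The paper sidesteps all of this with a uniform argument valid in every rank. Lemma~\ref{lemma linear orderings transfer} shows that even when $f$ has a kernel one can choose an $L$-preorder on $X^{*}(T)$ compatible with $f$ that separates the roots, producing a positive system $\Phi^{+}$; after adjusting $\phi'$ by an element of $N_{\lietype{F}{4}}(T)$ so that $\Phi^{+,\prime}=\Phi^{+}$, the four distinguished weights $\lambda_1\succ\lambda_2\succ\lambda_3\succ\lambda_4$ of Lemma~\ref{lemma ordering on weights of 26 dimensional representation}, which form a $\Z$-basis of $X^{*}(T)$ and dominate all other weights of $\pi_0$, are picked out one at a time as the successive maxima of the multiset $f(\mathcal{W})$. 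This forces $\phi^{*}(\lambda_i)=\phi'^{*}(\lambda_i)$ for all $i$ and hence $\phi^{*}=\phi'^{*}$, with no case analysis on $\dim\mathfrak{s}$. If you want to rescue your approach you would need to replace the hand-wave by a careful treatment of the sub-full-rank kernels, which is more work than the paper's preorder argument.
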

This result is a part of \cite[Theorem 1.3]{Dynkin1952},
but Dynkin only gives a short sketch of the proof,
so in this subsection we will give the proof of \Cref{prop just one rep is enough for conjugacy}. 

We first give a preliminary discussion on orders.
Let $X$ be an abelian group and $\ell:X\rightarrow \R$ a $\Z$-linear map.
This map induces a total preorder $\leq$ on $X$ defined by $x\leq y$ if and only if $\ell(x)\leq \ell(y)$.
A preorder on $X$ of this form will be called an \emph{L-preorder}.
If the map $\ell$ is injective, the $L$-preorder it induces is an order and we call this order an \emph{$L$-order}.  
For instance, any free abelian group of finite rank admits $L$-orders.
\begin{lemma}\label{lemma linear orderings transfer}
    Let $f:X\rightarrow Y$ be a homomorphism between finitely generated free abelian groups $X$ and $Y$,
    with an $L$-order on $Y$,
    and $S$ a finite subset of $X-\{0\}$.
    There exists an $L$-preorder $\leq$ on $X$ such that 
    for any $s\in S$ we have either $s>0$ or $s<0$,
    and if $s>0$ then $f(s)\geq 0$ in $Y$.
\end{lemma}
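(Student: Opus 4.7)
The plan is to take the obvious first candidate $\ell_Y \circ f$ and perturb it by a generic $L$-order on $X$ to break any ties. More precisely, let $\ell_Y : Y \to \R$ be the $\Z$-linear injection realizing the given $L$-order on $Y$. The composition $\ell_Y \circ f : X \to \R$ already respects $f$ in the sense that $\ell_Y(f(s)) > 0$ implies $f(s) > 0$ in $Y$, but it may vanish on some $s \in S$ (whenever $f(s) = 0$, or more generally whenever $s$ lies in $\ker f$), which would violate the dichotomy $s > 0$ or $s < 0$.

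To fix this, pick any $L$-order on $X$, i.e.\ a $\Z$-linear injection $\ell_0 : X \to \R$; such an $\ell_0$ exists since $X$ is a finitely generated free abelian group (choose a basis and assign to it $\Q$-linearly independent real values). Define
\[
\ell_X \;:=\; \ell_Y \circ f + \varepsilon \, \ell_0
\]
for a positive real $\varepsilon$ chosen small enough that, for every $s \in S$ with $\ell_Y(f(s)) \neq 0$, the sign of $\ell_X(s)$ agrees with that of $\ell_Y(f(s))$. Since $S$ is finite, such an $\varepsilon$ exists. Let $\leq$ be the $L$-preorder on $X$ induced by $\ell_X$.

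The verification is then immediate. For $s \in S$: if $\ell_Y(f(s)) \neq 0$, then $\ell_X(s)$ has the same sign, so $s > 0$ or $s < 0$, and in the former case $\ell_Y(f(s)) > 0$ forces $f(s) > 0$ in $Y$. If instead $\ell_Y(f(s)) = 0$, then $\ell_X(s) = \varepsilon\, \ell_0(s)$, which is nonzero since $\ell_0$ is injective and $s \neq 0$; so again $s > 0$ or $s < 0$, and when $s > 0$ we have $f(s) = 0$ in $Y$ (because $\ell_Y$ is injective on $Y$), hence $f(s) \geq 0$. There is no real obstacle here: the only minor subtlety is ensuring the perturbation $\ell_0$ is injective on all of $X$, not merely nonzero on $S$, so that the second case above produces a genuine sign for $\ell_X(s)$, and this is automatic for any $L$-order on the free abelian group $X$.
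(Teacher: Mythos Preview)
Your proof is correct and follows essentially the same approach as the paper: both take $\ell_Y\circ f$ as a first approximation and perturb it by a sufficiently small multiple of an injective $\Z$-linear map $X\hookrightarrow\R$ so that signs are created on $S\cap\ker f$ without disturbing the signs already determined by $\ell_Y\circ f$. The paper makes the choice of $\varepsilon$ explicit as $\tfrac{1}{2}\min_{s\in S\setminus\ker f}\frac{|\ell_Y(f(s))|}{|\ell_0(s)|}$, but this is just a concrete instance of your ``small enough''.
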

\begin{proof}
    We choose $\ell:Y\hookrightarrow \R$ such that the $L$-order on $Y$ is defined by $\ell$.
    Write $S=S_{0}\sqcup S_{1}$, with $S_{0}=S\cap \ker f$.
    If $S_{0}$ is empty, then the $L$-preorder on $X$ defined by $\ell\circ f$ satisfies the conditions.

    If $S_{0}$ is not empty,
    we choose an arbitrary injective $\Z$-linear map $j:X\hookrightarrow \R$
    and set \[\varepsilon:=\frac{1}{2}\min_{s\in S_{1}}\frac{|\ell(f(s))|}{|j(s)|}.\]
    We claim that the $L$-preorder on $X$ defined by $j^{\prime}=\ell\circ f+\varepsilon j$ satisfies the desired conditions.
    Indeed, for $s\in S_{0}$, $j^{\prime}(s)=\varepsilon j(s)$ is nonzero.
    Also for $s\in S_{1}$, by our choice of $\varepsilon$,
    we have $|\varepsilon j(s)|<|\ell(f(s))|$,
    so $j^{\prime}(s)$ is nonzero and of the same sign as $\ell(f(s))$.
\end{proof}
The next lemma concerns the partial order $\preceq$ of the weights of the $26$-dimensional irreducible representation $\pi_{0}$ of $\lietype{F}{4}$.
Recall that for two weights $\lambda$ and $\mu$ of $\lietype{F}{4}$,
fixing a positive root system of $\lietype{F}{4}$,
we write $\lambda\succeq \mu$ if $\lambda-\mu$ is a finite sum of positive roots.
\begin{lemma}\label{lemma ordering on weights of 26 dimensional representation}
    The $26$-dimensional irreducible representation $(\pi_{0},\mathrm{J}_{0})$ of $\lietype{F}{4}$ has four unique weights $\lambda_{1}\succ \lambda_{2}\succ \lambda_{3}\succ\lambda_{4}$
    satisfying that $\lambda\prec \lambda_{4}$ for all other weights $\lambda$.
    Moreover, those $4$ weights $\lambda_{1},\lambda_{2},\lambda_{3},\lambda_{4}$ form a $\Z$-basis of the weight lattice of $\lietype{F}{4}$. 
\end{lemma}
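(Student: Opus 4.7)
The plan is to start from the well-known description of the weights of $\mathrm{V}_{\varpi_{4}}$ and then track the top of the weight diagram step by step. First I would recall that $\mathrm{V}_{\varpi_{4}}$ has as weights the $24$ short roots of $\lietype{F}{4}$, each of multiplicity one, together with the zero weight with multiplicity two; in Bourbaki numbering, $\lambda_{1}:=\varpi_{4}=\alpha_{1}+2\alpha_{2}+3\alpha_{3}+2\alpha_{4}$ is the unique highest weight.

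To produce the chain, I would descend step by step using the elementary $\mathfrak{sl}_{2}$-string argument: if $\mu$ is a weight of $\mathrm{V}_{\varpi_{4}}$ and $\mu+\alpha_{i}$ is not a weight, then $\mu-\alpha_{i}$ is a weight if and only if $\langle \mu,\alpha_{i}^{\vee}\rangle>0$. A short computation with the Cartan matrix of $\lietype{F}{4}$ (with the explicit description of the weights making the ``$\mu+\alpha_{i}$ not a weight'' hypothesis trivial to check) shows that at each of the first three top weights there is exactly one admissible simple root to subtract, namely $\alpha_{4}$, then $\alpha_{3}$, then $\alpha_{2}$. This forces $\lambda_{2}=\lambda_{1}-\alpha_{4}$, $\lambda_{3}=\lambda_{2}-\alpha_{3}$, and $\lambda_{4}=\lambda_{3}-\alpha_{2}=\alpha_{1}+\alpha_{2}+2\alpha_{3}+\alpha_{4}$. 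At $\lambda_{4}$, however, the pairings with $\alpha_{1}^{\vee}$ and $\alpha_{3}^{\vee}$ are both equal to $1$, so two incomparable weights sit directly below $\lambda_{4}$; this is exactly why the chain of ``tops'' stops at $\lambda_{4}$. To see that every remaining weight is $\prec \lambda_{4}$ I would then go through the $12$ positive short roots not in $\{\lambda_{1},\ldots,\lambda_{4}\}$ and check directly that $\lambda_{4}-\mu$ is a non-negative integer combination of simple roots; the zero weight and the negative short roots are automatically $\preceq 0\prec\lambda_{4}$.

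Finally, for the basis assertion, the relations $\alpha_{4}=\lambda_{1}-\lambda_{2}$, $\alpha_{3}=\lambda_{2}-\lambda_{3}$, $\alpha_{2}=\lambda_{3}-\lambda_{4}$, and $\alpha_{1}=\lambda_{4}-\alpha_{2}-2\alpha_{3}-\alpha_{4}$ exhibit each simple root as an explicit $\Z$-linear combination of the $\lambda_{i}$. Consequently $\Z\lambda_{1}+\cdots+\Z\lambda_{4}$ contains, and hence equals, the root lattice $Q$ of $\lietype{F}{4}$. Since $\lietype{F}{4}$ has trivial center, the weight lattice $P$ coincides with $Q$, and four generators of a rank-$4$ lattice automatically form a $\Z$-basis. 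I do not foresee any serious obstacle: the whole argument is completely explicit, and the only mildly tedious piece is the twelve-case verification in the middle paragraph.
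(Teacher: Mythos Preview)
Your proposal is correct and follows essentially the same route as the paper: both proofs identify the nonzero weights of $\mathrm{V}_{\varpi_4}$ as the $24$ short roots, walk down the unique chain $\varpi_4\succ\varpi_4-\alpha_4\succ\varpi_4-\alpha_4-\alpha_3\succ\varpi_4-\alpha_4-\alpha_3-\alpha_2$, and then do a direct case check for the remaining positive weights. The only cosmetic differences are that the paper carries out the computation in the $\varepsilon_i$-coordinates and records the result in a table, whereas you work in the simple-root basis and use the (equivalent) observation $P=Q$ for $\lietype{F}{4}$ in place of the paper's explicit identities $\varepsilon_i,\tfrac{1}{2}\sum\varepsilon_i\in\Z\lambda_1+\cdots+\Z\lambda_4$.
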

\begin{proof}
    Fix a maximal torus $T$ of $\lietype{F}{4}$,
    and let $X=\mathrm{X}^{*}(T)$ be its character lattice and $\Phi^{+}\subset X$ a positive root system with respect to $(\lietype{F}{4},T)$.
    We still use Bourbaki's notations \cite[\S IV.4.9]{Lie} for the root system $\lietype{F}{4}$.
    The simple roots with respect to $\Phi^{+}$ are given by 
    \[\alpha_{1}=\varepsilon_{2}-\varepsilon_{3},\alpha_{2}=\varepsilon_{3}-\varepsilon_{4},\alpha_{3}=\varepsilon_{4},\alpha_{4}=\frac{1}{2}(\varepsilon_{1}-\varepsilon_{2}-\varepsilon_{3}-\varepsilon_{4}),\]
    where $\varepsilon_{1},\varepsilon_{2},\varepsilon_{3},\varepsilon_{4}$ is the basis of $X\otimes_{\Z} \R\simeq \R^{4}$ chosen in \cite{Lie} satisfying 
    \[X=\Z\varepsilon_{1}+\Z\varepsilon_{2}+\Z\varepsilon_{3}+\Z\varepsilon_{4}+\Z\frac{\varepsilon_{1}+\varepsilon_{2}+\varepsilon_{3}+\varepsilon_{4}}{2}.\]

    The highest weight of $\pi_{0}$ is $\varpi_{4}=\alpha_{1}+2\alpha_{2}+3\alpha_{3}+2\alpha_{4}=\varepsilon_{1}$.
    The orbit of $\varpi_{4}$ under the Weyl group consists of 
    $\pm \varepsilon_{i}$ for $i=1,2,3,4$ and $\frac{1}{2}(\pm \varepsilon_{1}\pm \varepsilon_{2}\pm \varepsilon_{3}\pm \varepsilon_{4})$.
    These $24$ weights have multiplicity $1$,
    and the zero weight appears with multiplicity $2$.

    We claim that the weights 
    \begin{align*}
        \lambda_{1}=\varepsilon_{1}&,\lambda_{2}=\frac{1}{2}(\varepsilon_{1}+\varepsilon_{2}+\varepsilon_{3}+\varepsilon_{4}),\\
        \lambda_{3}=\frac{1}{2}(\varepsilon_{1}+\varepsilon_{2}+\varepsilon_{3}-\varepsilon_{4})&,\lambda_{4}=\frac{1}{2}(\varepsilon_{1}+\varepsilon_{2}-\varepsilon_{3}+\varepsilon_{4})
    \end{align*}
    satisfy the desired properties.
    Indeed, this follows from the following table:
    \begin{table}[H]
        \centering
        \begin{tabular}{|l|l|}
        \hline 
        positive weight $\lambda$  & relation with $\lambda_{1},\lambda_{2},\lambda_{3},\lambda_{4}$ \\ \hline
        $\varepsilon_{1}$  & $\lambda_{1}$ \\ \hline
        $\varepsilon_{2}$  & $\lambda_{4}-\alpha_{3}-\alpha_{4}$\\ \hline
        $\varepsilon_{3}$  & $\lambda_{4}-\alpha_{1}-\alpha_{3}-\alpha_{4}$\\ \hline
        $\varepsilon_{4}$  & $\lambda_{4}-\alpha_{1}-\alpha_{2}-\alpha_{3}-\alpha_{4}$\\ \hline
        $(\varepsilon_{1}+\varepsilon_{2}+\varepsilon_{3}+\varepsilon_{4})/2$ & $\lambda_{2}=\lambda_{1}-\alpha_{4}$\\ \hline
        $(\varepsilon_{1}+\varepsilon_{2}+\varepsilon_{3}-\varepsilon_{4})/2$ & $\lambda_{3}=\lambda_{2}-\alpha_{3}$\\ \hline
        $(\varepsilon_{1}+\varepsilon_{2}-\varepsilon_{3}+\varepsilon_{4})/2$ & $\lambda_{4}=\lambda_{3}-\alpha_{2}$\\ \hline
        $(\varepsilon_{1}+\varepsilon_{2}-\varepsilon_{3}-\varepsilon_{4})/2$ & $\lambda_{4}-\alpha_{3}$\\ \hline
        $(\varepsilon_{1}-\varepsilon_{2}+\varepsilon_{3}+\varepsilon_{4})/2$ & $\lambda_{4}-\alpha_{1}$ \\ \hline
        $(\varepsilon_{1}-\varepsilon_{2}+\varepsilon_{3}-\varepsilon_{4})/2$ & $\lambda_{4}-\alpha_{1}-\alpha_{3}$\\ \hline
        $(\varepsilon_{1}-\varepsilon_{2}-\varepsilon_{3}+\varepsilon_{4})/2$ & $\lambda_{4}-\alpha_{1}-\alpha_{2}-\alpha_{3}$\\ \hline
        $(\varepsilon_{1}-\varepsilon_{2}-\varepsilon_{3}-\varepsilon_{4})/2$ & $\lambda_{4}-\alpha_{1}-\alpha_{2}-2\alpha_{3}$\\ \hline
        \end{tabular}
        \caption{Positive weights of the $26$-dimensional irreducible representation $\mathrm{V}_{\varpi_{4}}$ of $\lietype{F}{4}$}\label{table positive weights of 26 dimensional irreducible representation}
    \end{table}
    and the following identities:
    \[\varepsilon_{1}=\lambda_{1},\varepsilon_{2}=-\lambda_{1}+\lambda_{3}+\lambda_{4},\varepsilon_{3}=\lambda_{2}-\lambda_{4},\varepsilon_{4}=\lambda_{2}-\lambda_{3},\frac{\varepsilon_{1}+\varepsilon_{2}+\varepsilon_{3}+\varepsilon_{4}}{2}=\lambda_{2}.\]
\end{proof}
\begin{proof}[Proof of \Cref{prop just one rep is enough for conjugacy}]
    By \Cref{prop compact Lie group F4 is acceptable} it suffices to show that 
    if $\pi_{0}\circ \phi$ and $\pi_{0}\circ \phi^{\prime}$ are equivalent as $H$-representations,
    then $\phi$ and $\phi^{\prime}$ are element-conjugate.
    Since any element of $H$ is included in some maximal torus,
    we may assume that $H$ is a torus.

    We fix a maximal torus $T$ of $\lietype{F}{4}$.
    As all maximal tori are conjugate in $\lietype{F}{4}$,
    up to replacing $\phi$ and $\phi^{\prime}$ by some $\lietype{F}{4}$-conjugate,
    we assume that both $\phi(H)$ and $\phi^{\prime}(H)$ are contained in $T$.
    Let $X=\mathrm{X}^{*}(T)$ and $Y=\mathrm{X}^{*}(H)$,
    then $\phi$ and $\phi^{\prime}$ induce $\Z$-linear maps $\phi^{*},\phi^{\prime,*}:X\rightarrow Y$ respectively.

    Choose an arbitrary $L$-order on $Y$,
    and denote by $\Phi\subset X$ the root system of $(\lietype{F}{4},T)$.
    By \Cref{lemma linear orderings transfer},
    there is an $L$-preorder $\leq$ (resp. $\leq^{\prime}$) on $X$ 
    such that for any $\alpha\in \Phi$ we have either $\alpha>0$ or $\alpha<0$ (resp. either $\alpha>^{\prime}0$ or $\alpha<^{\prime}0$),
    and the $\Z$-linear map $\phi^{*}$ (resp. $\phi^{\prime,*}$) preserves the preorders on $X,Y$.
    We denote the positive root system determined by the $L$-preorder $\leq$ (resp. $\leq^{\prime}$) by $\Phi^{+}$ (resp. $\Phi^{+,\prime}$).

    A general fact about root systems is that the Weyl group of $(\lietype{F}{4},T)$ acts transitively on the set of positive root systems of $(\lietype{F}{4},T)$.
    Up to conjugating $\phi^{\prime}$ by a suitable element in the normalizer $\mathrm{N}_{\lietype{F}{4}}(T)$,
    we may assume that $\Phi^{+,\prime}=\Phi^{+}$.
    Now our aim is to show $\phi=\phi^{\prime}$,
    which is equivalent to $\phi^{*}=\phi^{\prime,*}$.
    
    Let $\mathcal{W}$ be the multiset of $X$ consisting of the weights appearing in $\pi_{0}$.
    Let $\lambda_{1}\succ\lambda_{2}\succ\lambda_{3}\succ\lambda_{4}$ be the $4$ weights of $\pi_{0}$ defined in \Cref{lemma ordering on weights of 26 dimensional representation}
    and all of them have multiplicity $1$ in $\pi_{0}$.
    For the $\Z$-linear map $f=\phi^{*}$ or $\phi^{\prime,*}$,
    the preorder-preserving property of $f$ 
    and \Cref{table positive weights of 26 dimensional irreducible representation}
    imply that 
    $f(\lambda_{1})\geq f(\lambda_{2})\geq f(\lambda_{3})\geq f(\lambda_{4})$ 
    and $f(\lambda_{4})\geq f(\lambda)$ for all other weights $\lambda$ of $\pi_{0}$.
    In other words,
    $f(\lambda_{1})$ is the greatest element of $f(\mathcal{W})$,
    and for $i=2,3,4$,
    $f(\lambda_{i})$ is the greatest element of $f(\mathcal{W})\setminus \{f(\lambda_{1}),\ldots,f(\lambda_{i-1})\}$.
    By the assumption $\pi_{0}\circ \phi=\pi_{0}\circ\phi^{\prime}$,
    the multisets $\phi^{*}(\mathcal{W})$ and $\phi^{\prime,*}(\mathcal{W})$ of $Y$ coincide.
    It follows that we have $\phi^{*}(\lambda_{i})=\phi^{\prime,*}(\lambda_{i})$ for $i=1,2,3,4$,
    and as $\lambda_{1},\lambda_{2},\lambda_{3},\lambda_{4}$ form a basis of $X$ by \Cref{lemma ordering on weights of 26 dimensional representation},
    we deduce $\phi^{*}=\phi^{\prime,*}$.  
\end{proof}
Hence the conjugacy class of a homomorphism from a connected compact Lie group $H$ to $\mathrm{F}_{4}$ is determined by the restriction of the $26$-dimensional irreducible representation to $H$.

\subsection{Maximal proper connected subgroups}\label{section maximal subgroups of F4}
\tp Up to conjugacy,
the compact group $\mathrm{F}_{4}$ has five maximal proper connected subgroups by \cite[Theorem 5.5, Theorem 14.1]{Dynkin1952}.
We will recall these five subgroups in this subsection and show that there are no other maximal proper connected subgroups.

We first introduce the following notations, which will be used a lot of times in this section:
\begin{notation}\label{notation compact Lie groups}
    In this article, 
    we use the following notations of compact Lie groups:
    \begin{itemize}
        \item For $n\geq 2$, denote by $\su(n)$ the compact special unitary group with respect to the standard Hermitian form on $\C^{n}$.
        \item For $n\geq 3$, denote by $\sorth(n)$ the compact special orthogonal group with respect to the standard quadratic form on $\R^{n}$, and by $\spin(n)$ the compact spin group, which is a double cover of $\sorth(n)$.
        \item For $n\geq 1$, denote by $\symp(n)$ the \emph{compact} symplectic group:
        the group of invertible $n\times n$ quaternionic matrices that preserve the standard Hermitian form 
        \[\langle x,y\rangle=\overline{x_{1}}y_{1}+\cdots+\overline{x_{n}}y_{n}\]
        on $\mathbb{H}^{n}$, where $\mathbb{H}$ is Hamilton's quaternions. 
        \item The group $\lietype{G}{2}$ is defined as $\aut(\oct_{\R},\circ)$, the automorphism group of the real octonion division algebra,
        which is simply connected and has trivial center.
    \end{itemize}
\end{notation}
\begin{rmk}\label{rmk compact symplectic group}
    The complexification of the compact symplectic group $\symp(n)$ is the usual complex symplectic group $\symp(2n,\C)=\symp_{2n}(\C)$,
    which is defined as the group of linear transformations of $\C^{2n}$
    preserving the standard symplectic bilinear form.
\end{rmk}
\begin{notation}\label{notation quotient by diagonal}
    We denote by $\mu_{n}$ the group of $n$th roots of unity.
    If $m$ groups $G_{1},\ldots,G_{m}$ all have a unique central subgroup isomorphic to $\mu_{n}$ with an embedding $\iota_{i}:\mu_{n}\hookrightarrow G_{i}$,
    we denote by $\mu_{n}^{\Delta}$ the diagonal subgroup 
    \[\{\left(\iota_{1}(g),\ldots,\iota_{m}(g)\right)\,|\,g\in\mu_{n}\}\subset G_{1}\times\cdots\times G_{m}.\] 
    Note that when $n=2$ the embedding $\iota_{i}$ is unique, but when $n\geq 3$ we have to give $\iota_{1},\ldots,\iota_{m}$ for defining $\mu_{n}^{\Delta}$.
\end{notation}
Following Dynkin's definitions of $R$-subalgebras and $S$-subalgebras in \cite[\S 7]{Dynkin1952},
we give the following definition for subgroups:
\begin{defi}\label{def regular subgroups and other notations}
    Let $G$ be a connected compact Lie group and $H$ a connected closed subgroup. 
    We say that $H$ is a \emph{regular subgroup} if it is normalized by a maximal torus of $G$.
    If there is only one regular subgroup of $G$ containing $H$,
    namely $G$ itself,
    we call $H$ an \emph{$S$-subgroup},
    otherwise we call it an \emph{$R$-subgroup}.
\end{defi}
\begin{exs}\label{ex property of regular subgroups}
    \begin{enumerate}[label= (\arabic*)]
        \item Subgroups with maximal ranks are regular.
        \item A proper regular subgroup is an $R$-subgroup.
        \item The principal $3$-dimensional subgroups are $S$-subgroups by \cite[Theorem 9.1]{Dynkin1952}.
        \item A maximal proper regular subgroup has maximal rank.
    \end{enumerate}
\end{exs}

Let $H$ be a maximal proper regular subgroup of $G$,
i.e. if there is another regular subgroup $H^{\prime}$ of $G$ containing $H$,
then we have $H^{\prime}=G$.
The Borel-de Siebenthal theory tells us 
the Dynkin diagram of the root system of $H$ 
is obtained by deleting an ordinary vertex with prime label
from the extended Dynkin diagram of the root system of $G$.

For our compact group $\lietype{F}{4}$, the extended Dynkin diagram is:
\begin{align*}
    \dynkin[labels={0,1,2,3,4},label macro/.code={\alpha_{\drlap#1}},
    labels*={1,2,3,4,2},label macro*/.code={#1},
    extended]F4,
 \end{align*}
The vertex $\alpha_{1}$ corresponds to $\left(\symp(1)\times\symp(3)\right)/\mu_{2}^{\Delta}$,
$\alpha_{2}$ corresponds to $\left(\su(3)\times\su(3)\right)/\mu_{3}^{\Delta}$ (we will define this $\mu_{3}^{\Delta}$ in \Cref{section A2A2 subgroup}),
and $\alpha_{4}$ corresponds to $\spin(9)$.
The vertex $\alpha_{3}$ corresponds to $\left(\su(2)\times\su(4)\right)/\mu_{2}^{\Delta}$,
which is also regular but not maximal since we have the embedding:
\[\left(\su(2)\times\su(4)\right)/\mu_{2}^{\Delta}\simeq \left(\spin(3)\times\spin(6)\right)/\mu_{2}^{\Delta}\hookrightarrow \spin(9).\]
These three maximal proper regular subgroups are also maximal among proper connected subgroups of $\lietype{F}{4}$,
because any connected subgroup containing one of them has maximal rank and must be regular.

Besides these three regular subgroups,
$\lietype{F}{4}$ also admits other maximal proper connected subgroups that are not regular.
A non-regular maximal connected subgroup $H$ of $\lietype{F}{4}$ must be an $S$-subgroup.
As a subgroup of $\lietype{F}{4}$ containing an $S$-subgroup is also an $S$-subgroup,
it suffices to find all maximal $S$-subgroups of $\lietype{F}{4}$.
\begin{thm}\cite[Theorem 14.1]{Dynkin1952}\label{thm classification of maximal S-subgroups}
    Up to conjugacy, 
    there are two maximal $S$-subgroups in $\lietype{F}{4}$:
    the principal $\psu(2)$ and $\lietype{G}{2}\times \sorth(3)$,
    where $\psu(2):=\su(2)/\{\pm\mathrm{id}\}$ is the adjoint group of $\su(2)$.
\end{thm}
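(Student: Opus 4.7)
The plan is to use \Cref{prop just one rep is enough for conjugacy} as the main tool: a connected subgroup of $\lietype{F}{4}$ is determined up to conjugacy by how $\mathrm{V}_{\varpi_{4}}$ decomposes upon restriction. First I would record two structural constraints on a maximal $S$-subgroup $H$. It must be connected semisimple: any nontrivial central torus of $H$ could be extended to a maximal torus $T$ of $\lietype{F}{4}$, and then $H$ would lie in $\mathrm{C}_{\lietype{F}{4}}(Z(H)^{\circ})$, a proper Levi subgroup (proper because $\lietype{F}{4}$ has trivial center), hence in a proper regular subgroup, contradicting the $S$-subgroup property. And it has rank at most $3$: a rank-$4$ connected semisimple subgroup would contain a maximal torus of $\lietype{F}{4}$ and would thus itself be regular. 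So each simple factor of $H$ has type in $\{\lietype{A}{1},\lietype{A}{2},\lietype{A}{3},\lietype{B}{2},\lietype{B}{3},\lietype{C}{3},\lietype{G}{2}\}$.

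Second, I would exhibit the two claimed subgroups explicitly. The principal $\mathfrak{sl}_{2}$-triple in $\mathfrak{f}_{4}$ integrates to a homomorphism $\su(2)\to\lietype{F}{4}$ whose image is the principal $\psu(2)$; it is an $S$-subgroup by the generalities on principal subgroups recalled in \Cref{ex property of regular subgroups}. For $\lietype{G}{2}\times\sorth(3)\hookrightarrow\lietype{F}{4}$: the group $\sorth(3)$ acts on $\mathrm{J}_{\R}$ by $A\mapsto gAg^{-1}$ (with $g\in\sorth(3)$ viewed as a real scalar matrix in $\mathrm{M}_{3}(\oct_{\R})$) and $\lietype{G}{2}=\aut(\oct_{\R},\circ)$ acts entrywise on octonionic entries; these two actions commute, each preserves the Jordan product and the identity $\mathrm{I}$, and they intersect trivially. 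A direct computation on a maximal torus then yields the decomposition $\mathrm{V}_{\varpi_{4}}|_{\lietype{G}{2}\times\sorth(3)}=\mathbf{1}\boxtimes\mathbf{5}\,\oplus\,\mathbf{7}\boxtimes\mathbf{3}$, with $\mathbf{5}$ the traceless-symmetric real-matrix summand and $\mathbf{3}\boxtimes\mathbf{7}$ coming from the imaginary-octonionic off-diagonal part. One then verifies that this restriction is incompatible with any restriction from $\spin(9)$, $(\symp(1)\times\symp(3))/\mu_{2}^{\Delta}$ or $(\su(3)\times\su(3))/\mu_{3}^{\Delta}$, so that $\lietype{G}{2}\times\sorth(3)$ is indeed an $S$-subgroup.

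Third, to establish that these are the only maximal $S$-subgroups, I would enumerate all candidate embeddings $H\hookrightarrow\lietype{F}{4}$ of semisimple compact groups whose simple factors lie in the allowed type list. By \Cref{prop just one rep is enough for conjugacy} each such embedding is pinned down by the restriction of $\mathrm{V}_{\varpi_{4}}$, so the task reduces to listing all $26$-dimensional self-dual representations whose weight multiset (on a maximal torus of $H$, mapped into one for $\lietype{F}{4}$) matches that of $\mathrm{V}_{\varpi_{4}}$. For each candidate one then decides whether the resulting subgroup is contained in a proper regular subgroup, contained in $\lietype{G}{2}\times\sorth(3)$, or conjugate to the principal $\psu(2)$.

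The main obstacle is exactly this exhaustion. In the paper it is broken up across \Cref{section A1 subgroups classification} (for $\lietype{A}{1}$-subgroups) and \Cref{section classification simple subgroups} (for simple subgroups of higher rank), and a bookkeeping of weight multiplicities is required at each step. For the present theorem it therefore suffices to display the two maximal $S$-subgroups, give the framework above, and defer the explicit case analysis to those sections.
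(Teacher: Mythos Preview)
The paper does not give its own proof of this theorem: it is stated with the citation \cite[Theorem 14.1]{Dynkin1952} and used as a black box. So there is no ``paper's proof'' to compare against, and your proposal is an attempt to supply an argument the paper deliberately omits.

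Your framework (semisimplicity, rank $\le 3$, and determination by the restriction of $\mathrm{V}_{\varpi_{4}}$ via \Cref{prop just one rep is enough for conjugacy}) is sound, and your construction of the two subgroups matches what the paper does in \Cref{section G2A1 subgroup} and \Cref{section principal SU2 inside F4}. The genuine gap is the final step: you defer the exhaustion of candidates to \Cref{section A1 subgroups classification} and \Cref{section classification simple subgroups}, but those sections themselves invoke Dynkin's results as inputs. Specifically, \Cref{section A1 subgroups classification} uses \cite[Theorem 9.3]{Dynkin1952} (every $\lietype{A}{1}$-subgroup is the principal $\psu(2)$ or an $R$-subgroup), and \Cref{lemma possible embeddings} uses \cite[Theorem 14.2]{Dynkin1952} (no simple $S$-subgroup other than the principal $\psu(2)$). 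Both of these are essentially corollaries or companions of the very Theorem 14.1 you are trying to prove. So your deferral is circular: you cannot use those sections as written to establish the classification of maximal $S$-subgroups without assuming Dynkin's results.

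A self-contained proof would require carrying out the full case analysis directly---listing all semisimple types of rank $\le 3$, all self-dual $26$-dimensional representations of each, checking which lift to embeddings in $\lietype{F}{4}$, and for each determining whether the image sits in a proper regular subgroup. This is exactly what Dynkin does in \cite{Dynkin1952}, and it is substantial; the paper reasonably chooses to cite it rather than redo it.
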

Putting the Borel-de Siebenthal theory and \Cref{thm classification of maximal S-subgroups} together,
we have:
\begin{thm}\label{thm classification of maximal subgroups}
    Up to conjugacy,
    there are five maximal proper connected subgroups of $\lietype{F}{4}$.
    They are respectively isomorphic to 
    \[\spin(9),\left(\symp(1)\times\symp(3)\right)/\mu_{2}^{\Delta},\left(\su(3)\times\su(3)\right)/\mu_{3}^{\Delta},\mathrm{G}_{2}\times\sorth(3),\text{(principal) }\psu(2).\]
\end{thm}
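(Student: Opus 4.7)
The plan is to combine the Borel--de Siebenthal classification of maximal-rank regular subgroups with the classification of maximal $S$-subgroups (\Cref{thm classification of maximal S-subgroups}), and show that these together exhaust the possibilities. Concretely, if $H$ is a maximal proper connected subgroup of $\lietype{F}{4}$, then either $H$ is regular or it is an $S$-subgroup; I will treat the two cases separately.

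For the regular case, by \Cref{ex property of regular subgroups}(4) a maximal proper regular subgroup has maximal rank, so by Borel--de Siebenthal it is obtained by deleting a vertex with prime label from the extended Dynkin diagram of $\lietype{F}{4}$ displayed above the theorem. The vertices other than $\alpha_0$ have labels $2,3,4,2$, so the candidates are the three vertices with label $2$ (namely $\alpha_1$, $\alpha_3$, $\alpha_4$) and the vertex $\alpha_2$ with label $3$. This yields the four regular subgroups identified in the text:
\[
\spin(9),\ \ \bigl(\symp(1)\times\symp(3)\bigr)/\mu_2^{\Delta},\ \ \bigl(\su(3)\times\su(3)\bigr)/\mu_3^{\Delta},\ \ \bigl(\su(2)\times\su(4)\bigr)/\mu_2^{\Delta}.
\]
Among these, the fourth (from $\alpha_3$) is \emph{not} maximal among connected subgroups: as noted in the excerpt, via the exceptional isomorphism $\su(2)\simeq \spin(3)$ and $\su(4)\simeq \spin(6)$ it embeds into $\spin(9)$. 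The other three are maximal among all connected subgroups because any connected subgroup of $\lietype{F}{4}$ strictly containing a maximal-rank subgroup must itself have maximal rank, hence contains a maximal torus, hence is regular; by the maximality in the regular poset it must then equal $\lietype{F}{4}$.

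For the non-regular case, any non-regular maximal proper connected subgroup $H$ of $\lietype{F}{4}$ is by definition an $S$-subgroup (it is not contained in a proper regular subgroup, and it is not regular itself). Moreover, any $S$-subgroup containing $H$ is also an $S$-subgroup of $\lietype{F}{4}$ (this follows from the definition, since a regular subgroup containing a subgroup containing $H$ would be a regular subgroup containing $H$). Hence $H$ is a maximal $S$-subgroup of $\lietype{F}{4}$, and \Cref{thm classification of maximal S-subgroups} supplies the remaining two conjugacy classes: the principal $\psu(2)$ and $\lietype{G}{2}\times\sorth(3)$.

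It remains to check that the five subgroups listed are pairwise non-conjugate. This is immediate because their underlying abstract compact Lie groups are pairwise non-isomorphic: their dimensions, ranks, and root systems distinguish them (for instance $\spin(9)$ has rank $4$ and dimension $36$; $\lietype{G}{2}\times\sorth(3)$ has rank $3$ and dimension $17$; the principal $\psu(2)$ has dimension $3$; and the two remaining cases are distinguished by whether the semisimple factors have type $\lietype{C}{1}\times\lietype{C}{3}$ or $\lietype{A}{2}\times\lietype{A}{2}$). The only substantive point to double-check is the non-maximality of the $\alpha_3$-candidate among all connected subgroups, which I expect to be the main (if mild) obstacle since it requires identifying the above embedding into $\spin(9)$ explicitly; this is standard via the isomorphisms of low-rank classical Lie algebras.
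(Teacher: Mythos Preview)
Your approach is essentially the paper's own: split into regular versus $S$-subgroup, apply Borel--de Siebenthal for the former and \Cref{thm classification of maximal S-subgroups} for the latter, and note that the three maximal-rank regular subgroups are maximal among all connected subgroups since any overgroup would still have full rank. One small slip: in the extended diagram $\alpha_3$ has label $4$, not $2$, so it is never a Borel--de Siebenthal candidate (the label is not prime) and your separate exclusion of $(\su(2)\times\su(4))/\mu_2^{\Delta}$ via the embedding into $\spin(9)$, while correct, is redundant---there are only three prime-label vertices $\alpha_1,\alpha_2,\alpha_4$ to begin with.
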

In the rest of this subsection,
we will give the explicit embeddings of these five maximal proper connected subgroups into $\lietype{F}{4}$
and compute their centralizers in $\lietype{F}{4}$.
   
\subsubsection{\texorpdfstring{$\spin(9)$}{}}\label{section B4 subgroup}
\tp There is an involution $\sigma\in\lietype{F}{4}$ on $\mathrm{J}_{\R}$ defined by:
\begin{align*}
    \sigma\,\left[a,b,c\,;x,y,z\right]=\left[a,b,c\,;x,-y,-z\right],\text{ for all }a,b,c\in\R,x,y,z\in\oct_{\R}.
\end{align*}
By \cite[Theorem 2.9.1]{Yokota2009ExceptionalLG},
the centralizer $\mathrm{C}_{\mathrm{F}_{4}}(\sigma)$ of $\sigma$ in $\lietype{F}{4}$ is also the stabilizer of $\mathrm{E}_{1}=\mathrm{diag}(1,0,0)\in \mathrm{J}_{\R}$.
\begin{lemma}\label{lemma spin(9) preserves subspaces}
    The group $\mathrm{C}_{\lietype{F}{4}}(\sigma)$ preserves respectively the subspaces 
    \[\mathrm{J}_{1}:=\left\{\left[0,b,-b\,;x,0,0\right]\,\middle\vert\,b\in \R,x\in \oct_{\R}\right\}\]
    and 
    \[\mathrm{J}_{2}:=\left\{\left[0,0,0\,;0,y,z\right]\,\middle\vert\,y,z\in\oct_{\R}\right\}\]
    of $\mathrm{J}_{\R}$.
\end{lemma}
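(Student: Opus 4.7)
The plan is to use the fact that $\sigma$ is an involution and to characterize $\mathrm{J}_1$ and $\mathrm{J}_2$ as subspaces cut out by invariants of $\mathrm{C}_{\lietype{F}{4}}(\sigma)$. Throughout I will use two facts about $\lietype{F}{4} = \aut(\mathrm{J}_{\R},\circ)$: it fixes the unit $\mathrm{I}$, and it preserves the trace $\mathrm{Tr}$ together with the bilinear form $\mathrm{B}_{\mathrm{Q}}(A,B) = \mathrm{Tr}(A\circ B)$. Invariance of $\mathrm{Tr}$ follows from the Cayley--Hamilton identity $A^{3} - \mathrm{Tr}(A) A^{2} + s_2(A)\, A - \det(A)\,\mathrm{I} = 0$ in $\mathrm{J}_{\R}$: since $\lietype{F}{4}$ preserves $\circ$, fixes $\mathrm{I}$, and preserves $\det$ (by \Cref{rmk alternative description F4}), it preserves the coefficients $\mathrm{Tr}$ and $s_2$; invariance of $\mathrm{B}_{\mathrm{Q}}$ is then immediate.

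First I would handle $\mathrm{J}_2$. Since $\sigma^{2} = \mathrm{id}$, reading off the definition of $\sigma$ shows that $\mathrm{J}_{\R}$ decomposes as the orthogonal sum of the $(+1)$-eigenspace
\[
V_{+} = \bigl\{[a,b,c\,;x,0,0] \bigm| a,b,c \in \R,\, x \in \oct_{\R}\bigr\}
\]
and the $(-1)$-eigenspace $V_{-} = \mathrm{J}_{2}$. Every element of $\mathrm{C}_{\lietype{F}{4}}(\sigma)$ commutes with $\sigma$, so it stabilizes each eigenspace; in particular it preserves $\mathrm{J}_{2}$.

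Next I would treat $\mathrm{J}_1$. The centralizer $\mathrm{C}_{\lietype{F}{4}}(\sigma)$ preserves $V_{+}$ by the previous step and additionally fixes $\mathrm{E}_{1}$ (this is the content of \cite[Theorem 2.9.1]{Yokota2009ExceptionalLG} cited just before the lemma). Consider the two linear forms on $\mathrm{J}_{\R}$ given by $\ell_{1}(A) = \mathrm{Tr}(A) = \mathrm{B}_{\mathrm{Q}}(A,\mathrm{I})$ and $\ell_{2}(A) = \mathrm{B}_{\mathrm{Q}}(A,\mathrm{E}_{1})$. Since $\mathrm{B}_{\mathrm{Q}}$ is $\lietype{F}{4}$-invariant and $\mathrm{I}$ is fixed by all of $\lietype{F}{4}$, the form $\ell_{1}$ is $\lietype{F}{4}$-invariant; since $\mathrm{E}_{1}$ is fixed by $\mathrm{C}_{\lietype{F}{4}}(\sigma)$, the form $\ell_{2}$ is invariant under this subgroup. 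Hence $\mathrm{C}_{\lietype{F}{4}}(\sigma)$ preserves the intersection $V_{+} \cap \ker \ell_{1} \cap \ker \ell_{2}$.

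The final step is a direct verification that this intersection equals $\mathrm{J}_{1}$: for $A = [a,b,c\,;x,0,0] \in V_{+}$, one computes $\ell_{1}(A) = a+b+c$ and $\ell_{2}(A) = a$, so the two conditions $\ell_{1}(A) = \ell_{2}(A) = 0$ force $a = 0$ and $c = -b$, which is exactly the defining condition of $\mathrm{J}_{1}$. This is entirely routine and not an obstacle. The whole argument is short; the only conceptual point to get right is the invariance of $\mathrm{Tr}$ (equivalently, of $\mathrm{B}_{\mathrm{Q}}$) under $\lietype{F}{4}$, which I would justify briefly as above.
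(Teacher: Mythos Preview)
Your proof is correct and follows essentially the same approach as the paper: both characterize $\mathrm{J}_1$ and $\mathrm{J}_2$ as subspaces defined by invariants of $\mathrm{C}_{\lietype{F}{4}}(\sigma)$, relying on the fact that this centralizer fixes $\mathrm{E}_1$ and preserves $\circ$ and $\mathrm{Tr}$. The only cosmetic difference is that the paper describes both spaces via the Jordan multiplication by $\mathrm{E}_1$ (namely $\mathrm{J}_1=\{X:\mathrm{E}_1\circ X=0,\ \mathrm{Tr}(X)=0\}$ and $\mathrm{J}_2=\{X:2\mathrm{E}_1\circ X=X\}$), whereas you use the $\pm 1$-eigenspaces of $\sigma$ together with the linear forms $\mathrm{Tr}$ and $\mathrm{B}_{\mathrm{Q}}(-,\mathrm{E}_1)$; your treatment of $\mathrm{J}_2$ as the $(-1)$-eigenspace of $\sigma$ is in fact slightly more direct since it does not require invoking the stabilizer-of-$\mathrm{E}_1$ description at all.
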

\begin{proof}
    The first subspace $\mathrm{J}_{1}$ is exactly 
    $\{X\in \mathrm{J}_{\R}\,|\, E_{1}\circ X=0,\tr(X)=0\}$
    and the second subspace is 
    $\{X\in \mathrm{J}_{\R}\,|\, 2E_{1}\circ X=X\}$.
    The lemma follows from the fact that $\mathrm{C}_{\lietype{F}{4}}(\sigma)$ is the stabilizer of $\mathrm{E}_{1}$ in $\lietype{F}{4}$.
\end{proof}
This lemma gives the following group homomorphism:
\[\mathrm{C}_{\lietype{F}{4}}(\sigma)\rightarrow \sorth(\mathrm{J}_{1})\simeq \sorth(9),g\mapsto g|_{\mathrm{J}_{1}},\]
which induce an isomorphism $\mathrm{C}_{\lietype{F}{4}}(\sigma)\simeq \spin(9)$ by \cite[Theorem 16.7(ii)]{AdamsExceptionalGrp}.
Since the Borel-de Siebenthal theory shows that the regular connected subgroup of type $\lietype{B}{4}$ is unique up to $\lietype{F}{4}$-conjugacy,
so we shall thus refer to this group $\mathrm{C}_{\lietype{F}{4}}(\sigma)$ as $\spin(9)$ in the sequel,
by a slight abuse of language.

The restriction of the $26$-dimensional irreducible representation $(\pi_{0},\mathrm{J}_{0})$ to $\spin(9)$ is isomorphic to
\begin{align}\label{eqn restriction of st rep to B4}
    \mathbf{1}\oplus \mathrm{V}_{9}\oplus \mathrm{V}_{\spin},
\end{align}
where $\triv$ is the trivial representation, $\vrep{9}$ is the standard $9$-dimensional representation and $\vrep{\spin}$ is the $16$-dimensional spinor module.
These two representations $\mathrm{V}_{9}$ and $\mathrm{V}_{\spin}$ can be realized on $\mathrm{J}_{1}$ and $\mathrm{J}_{2}$ respectively.
\begin{notation}\label{notation direct sum of representations}
    To make the restriction of $\mathrm{J}_{0}$ not too messy when it involves both direct sums and tensor products,
    we will replace $\oplus$ by $+$ when writing down the decomposition.
    For example, we write $\jord_{0}|_{\spin(9)}$ as $\triv+\vrep{9}+\vrep{\spin}$. 
\end{notation}
The restriction of the adjoint representation $\mathfrak{f}_{4}$ of $\lietype{F}{4}$ to $\spin(9)$ is isomorphic to:
\begin{align}\label{eqn restriction of ad rep to B4}
    \wedge^{2}\vrep{9}+\vrep{\spin},   
\end{align}
where $\wedge^{2}\vrep{9}$ is the adjoint representation of $\spin(9)$.

Now we compute the centralizer of $\spin(9)$.
If an element $g$ centralizes $\spin(9)$, 
then it must commute with $\sigma\in\spin(9)$.
Hence $\mathrm{C}_{\mathrm{F}_{4}}(\spin(9))$ is contained in $\mathrm{C}_{\lietype{F}{4}}(\sigma)=\spin(9)$,
thus it is isomorphic to the center of $\spin(9)$, which is isomorphic to $\Z/2\Z$ and generated by $\sigma$. 
\begin{rmk}\label{rmk three symmetric ways to define Spin(9)}
    By symmetry, the stabilizer of $\mathrm{E}_{2}=\mathrm{diag}(0,1,0)$ (resp. $\mathrm{E}_{3}=\mathrm{diag}(0,0,1)$)
    is also the centralizer of the map $[a,b,c\,;x,y,z]\mapsto [a,b,c\,;-x,y,-z]$ 
    (resp. $[a,b,c\,;-x,-y,z]$) in $\lietype{F}{4}$,
    and is isomorphic to $\spin(9)$. 
\end{rmk}

\subsubsection{\texorpdfstring{$\left(\symp(1)\times \symp(3)\right)/\mu_{2}^{\Delta}$}{}}\label{section C3A1 subgroup}
\tp 
The subalgebra of $\oct_{\R}$ generated by $1,\mathrm{e}_{1},\mathrm{e}_{2},\mathrm{e}_{4}$
is isomorphic to the quaternion division algebra $\mathbb{H}$,
and as a real vector space $\oct_{\R}$ can be decomposed as $\mathbb{H}\oplus \mathbb{H}\mathrm{e}_{5}$.
Using this decomposition,
the conjugation on $\oct_{\R}$ becomes
\[x+y\mathrm{e}_{5}\mapsto \overline{x}-y\mathrm{e}_{5},\text{ for all }x,y\in\mathbb{H}.\]
As $\mathrm{J}_{\R}=\mathrm{Herm}_{3}(\oct_{\R})$ is the space of ``Hermitian'' matrices in $\mathrm{M}_{3}(\oct_{\R})$,
we embed the space $\mathrm{Herm}_{3}(\mathbb{H})$ of ``Hermitian'' matrices in $\mathrm{M}_{3}(\mathbb{H})$ into $\mathrm{J}_{\R}$ via 
our identification of $\mathbb{H}$ as a subalgebra of $\oct_{\R}$.
Then we have the following isomorphism of vector spaces:
\begin{align*}
    \mathrm{Herm}_{3}(\mathbb{H})\oplus \mathbb{H}^{3}&\rightarrow\mathrm{J}_{\R},\\
    \left(M,a=(a_{1},a_{2},a_{3})\right)&\mapsto M+[0,0,0\,;a_{1}\mathrm{e}_{5},a_{2}\mathrm{e}_{5},a_{3}\mathrm{e}_{5}]. 
\end{align*}
With this identification, we have an involution $\gamma$ in $\lietype{F}{4}$ defined as 
\[\gamma(M,a)=(M,-a).\]
\begin{prop}\label{prop centralizer of an involution isomorphic to A1C3}\cite[Theorem 2.11.2]{Yokota2009ExceptionalLG}
    Let $\varphi:\symp(1)\times\symp(3)\rightarrow \GL(\mathrm{J}_{\R})$ be the morphism defined as 
    \[\varphi(p,A)\left(M,a\right)=\left(AMA^{-1},paA^{-1}\right),\text{ for }M\in \mathrm{Herm}_{3}(\mathbb{H}),a\in \mathbb{H}^{3}.\]
    Then the kernel of $\varphi$ is the diagonal subgroup $\mu_{2}^{\Delta}$ generated by $\gamma$,
    and the image of $\varphi$ is $\mathrm{C}_{\lietype{F}{4}}(\gamma)$.
    In particular, $\varphi$ induces an isomorphism:
    \[\left(\symp(1)\times\symp(3)\right)/\mu_{2}^{\Delta}\simeq \mathrm{C}_{\lietype{F}{4}}(\gamma).\]
\end{prop}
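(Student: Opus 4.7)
The plan is to establish three claims in turn: (i) the map $\varphi$ actually takes values in $\grpF$; (ii) its kernel is $\mu_{2}^{\Delta}$; (iii) its image is $\mathrm{C}_{\grpF}(\gamma)$. The stated isomorphism then follows from the first isomorphism theorem. A preliminary routine check is that $\varphi$ is a group homomorphism into $\GL(\mathrm{J}_{\R})$, which follows from associativity of quaternion multiplication and the compatibility of left-multiplication by $p$ on the entries of $a$ with right-multiplication by $A^{-1}$.

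For (i), I would invoke the description of $\grpF$ from \Cref{rmk alternative description F4}: it suffices to check that $\varphi(p,A)$ fixes $\mathrm{I}$ and preserves the cubic form $\det$. Fixing $\mathrm{I}$ is clear, since $\mathrm{I}$ corresponds to $(\mathrm{Id}_{3},0)$ and $A\,\mathrm{Id}_{3}\,A^{-1}=\mathrm{Id}_{3}$. To verify preservation of $\det$, one expands \eqref{eqn determinant on Jordan algebra} according to the splitting $\oct_{\R}=\mathbb{H}\oplus \mathbb{H}\mathrm{e}_{5}$ applied to each off-diagonal octonion entry. The restriction of $\det$ to the $\mathrm{Herm}_{3}(\mathbb{H})$-summand is the reduced Dieudonn\'e norm, invariant under conjugation by $\symp(3)$. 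The remaining contributions are quadratic and cubic expressions in the components of $a$ with coefficients coming from $M$; one checks that they transform correctly using $p\overline{p}=1$, the unitarity relation $A\overline{A}^{T}=\mathrm{Id}_{3}$ (quaternionic conjugate transpose), and the trace identity \eqref{eqn trace-commutativity of octonion}.

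For (ii), if $\varphi(p,A)=\mathrm{id}$, setting $a=0$ forces $AMA^{-1}=M$ for every $M\in \mathrm{Herm}_{3}(\mathbb{H})$, so $A$ lies in the centre of $\symp(3)$, whence $A=\pm\mathrm{Id}_{3}$; letting then $M=0$ and varying $a$ gives $pa=aA$ for all $a\in\mathbb{H}^{3}$, which combined with $A=\pm\mathrm{Id}_{3}$ forces $p=A\in\{\pm 1\}$. Thus $\ker\varphi=\{(1,\mathrm{Id}_{3}),(-1,-\mathrm{Id}_{3})\}=\mu_{2}^{\Delta}$. For (iii), the inclusion $\mathrm{Im}(\varphi)\subseteq \mathrm{C}_{\grpF}(\gamma)$ is immediate, since $\varphi(p,A)$ preserves each of the two summands $\mathrm{Herm}_{3}(\mathbb{H})$ and $\mathbb{H}^{3}$ and therefore commutes with the sign flip $\gamma$ on the second summand. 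For the reverse inclusion I would argue by dimensions: by (ii), the image is a connected closed subgroup of dimension $\dim\symp(1)+\dim\symp(3)=3+21=24$ sitting inside $\mathrm{C}_{\grpF}(\gamma)$. Since $\grpF$ is simply connected, the centralizer of the involution $\gamma$ is connected by Borel--de Siebenthal, and the same Borel--de Siebenthal analysis already recalled in \Cref{section maximal subgroups of F4} identifies its root system as $\lietype{C}{3}\times\lietype{A}{1}$, of dimension $21+3=24$. Two connected closed subgroups of the same dimension with one contained in the other must coincide.

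The main obstacle is the preservation of $\det$ by $\varphi(p,A)$ in step (i), specifically the cross terms mixing the $M$- and $a$-components. This requires some careful bookkeeping with the multiplication rule on $\oct_{\R}=\mathbb{H}\oplus \mathbb{H}\mathrm{e}_{5}$ (in particular identities like $(x\mathrm{e}_{5})(y\mathrm{e}_{5})=-\overline{y}x$ and $x(y\mathrm{e}_{5})=(yx)\mathrm{e}_{5}$) and the interplay of the left action of $p\in\symp(1)$ with the right action of $A\in\symp(3)$, but presents no conceptual difficulty once the expansion is organized by the number of $\mathrm{e}_{5}$-components in each summand of the formula for $\det$. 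The group-theoretic parts (ii) and (iii) are short once (i) is in place.
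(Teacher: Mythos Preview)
The paper does not give its own proof of this proposition; it simply cites \cite[Theorem 2.11.2]{Yokota2009ExceptionalLG}. Your self-contained argument is a reasonable substitute and steps (i) and (ii) are fine (for (ii), note that $AMA^{-1}=M$ for all $M\in\mathrm{Herm}_{3}(\mathbb{H})$ first forces $A$ diagonal via the $\mathrm{E}_{i}$, then scalar via real off-diagonal Hermitian matrices, then real scalar via $qE_{ij}+\overline{q}E_{ji}$; this is exactly what you wrote, just worth making explicit).

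There is one genuine gap in (iii). You assert that ``the same Borel--de Siebenthal analysis already recalled in \Cref{section maximal subgroups of F4} identifies its root system as $\lietype{C}{3}\times\lietype{A}{1}$''. But \Cref{section maximal subgroups of F4} only lists the maximal regular subgroups; it does not tell you which conjugacy class of involutions $\gamma$ lies in, and as \Cref{rmk involutions with different centralizer} records, there are two such classes, with centralizers of dimensions $24$ and $36$. So the dimension comparison, as written, is incomplete. The cleanest fix uses what you have already proved: by (i) and (ii), $\mathrm{Im}(\varphi)\simeq(\symp(1)\times\symp(3))/\mu_{2}^{\Delta}$, which by the discussion in \Cref{section maximal subgroups of F4} is a \emph{maximal} proper connected subgroup of $\lietype{F}{4}$. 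Since $\mathrm{C}_{\lietype{F}{4}}(\gamma)$ is connected (as you say) and proper (because $\gamma\neq 1$ and $\mathrm{Z}(\lietype{F}{4})=1$), the inclusion $\mathrm{Im}(\varphi)\subseteq\mathrm{C}_{\lietype{F}{4}}(\gamma)$ forces equality. Alternatively, you could observe that $\lietype{C}{3}$ is not a closed subsystem of $\lietype{B}{4}$ (the long roots $\pm 2e_{i}$ are absent), so $\mathrm{Im}(\varphi)$ cannot sit inside any $\spin(9)$, which rules out the $36$-dimensional possibility.
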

From now on we refer to the regular connected subgroup $\mathrm{C}_{\lietype{F}{4}}(\gamma)$ as $\left(\symp(1)\times\symp(3)\right)/\mu_{2}^{\Delta}$.

The restriction of the irreducible representation $\mathrm{J}_{0}$ of $\lietype{F}{4}$ to this subgroup is isomorphic to
\begin{align}\label{eqn restriction of st rep to A1C3}
    \st\otimes \mathrm{V}_{6}+ \triv\otimes \mathrm{V}_{14}, 
\end{align} 
where $\st$ is the $2$-dimensional standard representation of $\symp(1)\simeq \su(2)$, 
$\vrep{6}$ is the standard $6$-dimensional representation of $\symp(3)$ 
and $\vrep{14}$ is the $14$-dimensional irreducible representation of $\symp(3)$
which satisfies $\wedge^{2}\vrep{3}\simeq \vrep{14}\oplus \triv$.
The first component $\st\otimes\vrep{6}$ is realized on $\mathbb{H}^{3}$
and the second component $\triv\otimes\vrep{14}$ is realized on the trace-zero part of $\mathrm{Herm}_{3}(\mathbb{H})$.

The restriction of the adjoint representation $\mathfrak{f}_{4}$ of $\lietype{F}{4}$ to $\left(\symp(1)\times\symp(3)\right)/\mu_{2}^{\Delta}$ is isomorphic to 
\begin{align}\label{eqn restriction of Ad rep to A1C3}
    \sym^{2}\st\otimes\triv+\st\otimes\vrep{14}^{\prime}+\triv\otimes\sym^{2}\vrep{6},
\end{align}
where $\vrep{14}^{\prime}$ is another $14$-dimensional irreducible representation of $\symp(3)$.

By a similar argument in the case of $\spin(9)$, 
the centralizer of $\left(\symp(1)\times \symp(3)\right)/\mu_{2}^{\Delta}$ in $\mathrm{F}_{4}$ is isomorphic to 
$\mathrm{Z}(\left(\symp(1)\times \symp(3)\right)/\mu_{2}^{\Delta})\simeq \Z/2\Z$.
It is generated by the involution $\gamma$,
which corresponds to $(-1,1)$ in $\mathrm{Z}(\symp(1)\times\symp(3))\simeq \mu_{2}\times\mu_{2}$.
\begin{rmk}\label{rmk involutions with different centralizer}
    It may help to notice that there are exactly two conjugacy classes of involutions in $\lietype{F}{4}$,
    whose centralizers in $\lietype{F}{4}$ are $\spin(9)$ and $\left(\symp(1)\times\symp(3)\right)/\mu_{2}^{\Delta}$ respectively.
\end{rmk}

\subsubsection{\texorpdfstring{$\left(\su(3)\times \su(3)\right)/\mu_{3}^{\Delta}$}{}}\label{section A2A2 subgroup}
\tp Take $\omega=\frac{-1+\sqrt{-3}}{2}$
and identify the center of $\su(3)$ with $\mu_{3}$
by identifying $\omega$ with the scalar matrix $\omega\mathrm{I}_{3}$.
Then the diagonal subgroup $\mu_{3}^{\Delta}\subset \su(3)\times\su(3)$ is generated by $(\omega,\omega)$.

By \cite[Theorem 2.12.2]{Yokota2009ExceptionalLG},
the centralizer in $\lietype{F}{4}$ of an order $3$ element in $\lietype{F}{4}$ is isomorphic to $\left(\su(3)\times\su(3)\right)/\mu_{3}^{\Delta}$.
As before, by an abuse of language we will refer to this subgroup as $\left(\su(3)\times\su(3)\right)/\mu_{3}^{\Delta}$.
Notice that the roots of the first copy of $\su(3)$ are short roots of $\lietype{F}{4}$,
and those of the second copy are long roots of $\lietype{F}{4}$.

Since $\su(3)$ admits an outer automorphism,
this unique (up to conjugacy) $2\lietype{A}{2}$-type subgroup $\left(\su(3)\times \su(3)\right)/\mu_{3}^{\Delta}$ of $\lietype{F}{4}$
has two embeddings into $\lietype{F}{4}$ which are not conjugate.
The restrictions of the irreducible representation $\jord_{0}$ along those embeddings are isomorphic to 
\begin{align}\label{eqn restriction of st rep to A2A2 I}
\mathfrak{sl}_{3}\otimes \triv+ \vrep{3}\otimes \vrep{3}^{\prime}+ \vrep{3}^{\prime}\otimes \vrep{3}
\end{align}
and 
\begin{align}\label{eqn restriction of st rep to A2A2 II}
\mathfrak{sl}_{3}\otimes\triv+ \vrep{3}\otimes \vrep{3}+\vrep{3}^{\prime}\otimes\vrep{3}^{\prime}
\end{align}
respectively.
Here $\vrep{3}$ is the standard $3$-dimensional representation of $\su(3)$,
$\vrep{3}^{\prime}$ is the dual representation of $\vrep{3}$,
and $\mathfrak{sl}_{3}$ is the adjoint representation of $\su(3)$.

The restriction of the adjoint representation $\mathfrak{f}_{4}$ of $\lietype{F}{4}$ to $\left(\su(3)\times\su(3)\right)/\mu_{3}^{\Delta}$ is isomorphic to 
\begin{align}\label{eqn restriction of Ad rep to A2A2 I}
    \mathfrak{sl}_{3}\otimes\triv+\triv\otimes\mathfrak{sl}_{3}+\sym^{2}\vrep{3}\otimes \vrep{3}^{\prime}+\sym^{2}\vrep{3}^{\prime}\otimes\vrep{3}
\end{align}
or 
\begin{align}\label{eqn restriction of Ad rep to A2A2 II}
    \mathfrak{sl}_{3}\otimes\triv+\triv\otimes\mathfrak{sl}_{3}+\sym^{2}\vrep{3}\otimes \vrep{3}+\sym^{2}\vrep{3}^{\prime}\otimes\vrep{3}^{\prime}.
\end{align}

Again, we have an isomorphism
$\mathrm{C}_{\mathrm{F}_{4}}(\left(\su(3)\times \su(3)\right)/\mu_{3}^{\Delta})\simeq \Z/3\Z$.

\subsubsection{\texorpdfstring{$\mathrm{G}_{2}\times \sorth(3)$}{}}\label{section G2A1 subgroup}
\tp We define an injective morphism $\iota:\lietype{G}{2}\times\sorth(3)\hookrightarrow\GL(\jord_{\R})$ by 
\begin{align}\label{eqn embedding of G2+SO(3) into F4}
    \iota(g,O)[a,b,c\,;x,y,z]=O[a,b,c\,;g(x),g(y),g(z)]O^{-1},\text{ for all }a,b,c\in \R,x,y,z\in\oct_{\R},
\end{align}
by viewing $O\in\sorth(3)$ as an element in $\jord_{\R}=\mathrm{Herm}_{3}(\oct_{\R})$ with entries in $\R$.
This morphism is well-defined since real numbers $\R$ is the center of the octonion division algebra $\oct_{\R}$.
For any $g\in\lietype{G}{2}$ and $O\in\sorth(3)$,
the linear automorphism $\iota(g,O)$ preserves the cubic form $\det$ and the polarization $\mathrm{I}$,
thus $\iota$ induces an embedding of $\lietype{G}{2}\times\sorth(3)$ into $\lietype{F}{4}$.
In the sequel we will refer to the image of $\iota$ as $\lietype{G}{2}\times\sorth(3)$.

The restriction of the irreducible representation $\mathrm{J}_{0}$ to $\lietype{G}{2}\times\sorth(3)$ is isomorphic to 
\begin{align}\label{eqn restriction of the st rep to G2A1}
    \vrep{7}\otimes \sym^{2}\st + \triv\otimes \sym^{4}\st,
\end{align} 
where $\vrep{7}$ is the fundamental $7$-dimensional representation of $\mathrm{G}_{2}$ (the trace-zero part of $\oct_{\C}$) 
and $\st$ denotes the standard $2$-dimensional representation of $\su(2)$.
Here we use the exceptional isomorphism $\sorth(3)\simeq \psu(2)=\su(2)/\mu_{2}$ to 
view odd dimensional irreducible representations $\sym^{2n}\st, n\in\mathbb{N}$ of $\su(2)$
as irreducible representations of $\sorth(3)$.
The first component $\vrep{7}\otimes \sym^{2}\st$ is realized on the space 
\[\set{[0,0,0\,;x,y,z]}{x,y,z\in \oct_{\R},\tr(x)=\tr(y)=\tr(z)=0},\]
and the second component $\triv\otimes\sym^{4}\st$ is realized on the space 
\[\set{[a,b,c\,;x,y,z]}{a,b,c,x,y,z\in\R,a+b+c=0}.\]

The restriction of the adjoint representation $\mathfrak{f}_{4}$ of $\lietype{F}{4}$ to $\lietype{G}{2}\times\sorth(3)$ is isomorphic to 
\begin{align}\label{eqn restriction of the Ad rep to G2A1}
    \mathfrak{g}_{2}\otimes \triv+\vrep{7}\otimes\sym^{4}\st+\triv\otimes\sym^{2}\st,
\end{align}
where $\mathfrak{g}_{2}$ is the adjoint representation of $\lietype{G}{2}$.
\begin{prop}\label{prop centralizer of A1+G2 is trivial}
    The centralizer of $\lietype{G}{2}\times\sorth(3)$ in $\lietype{F}{4}$ is trivial.
\end{prop}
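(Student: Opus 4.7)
The plan is to reduce the problem to a computation on the 27-dimensional Jordan algebra $\jord_{\R}$, exploiting the decomposition of this $\lietype{F}{4}$-module into irreducible pieces under the action of $H := \lietype{G}{2}\times\sorth(3)$, and then applying Schur's lemma together with the constraint that elements of $\lietype{F}{4}$ preserve the cubic form $\det$ and fix the identity $\mathrm{I}$.

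Concretely, first I would observe that $\jord_{\R} = \R\cdot \mathrm{I}\oplus \jord_{0}$ as $\lietype{F}{4}$-representations, and combining this with the decomposition of $\jord_{0}|_{H}$ recalled in \eqref{eqn restriction of the st rep to G2A1}, I obtain
\begin{equation*}
    \jord_{\R} \;=\; \R\cdot \mathrm{I}\;\oplus\;\bigl(\vrep{7}\otimes\sym^{2}\st\bigr)\;\oplus\;\bigl(\triv\otimes\sym^{4}\st\bigr).
\end{equation*}
The three summands are pairwise non-isomorphic irreducible $H$-modules (they have different dimensions $1,21,5$ and different $\lietype{G}{2}$-types). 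For any $c\in\mathrm{C}_{\lietype{F}{4}}(H)$, the action of $c$ on $\jord_{\R}$ commutes with $H$, so by Schur's lemma (applied after complexification) $c$ acts by a scalar $\mu_{i}$ on each of the three summands. Since $c$ fixes $\mathrm{I}$, we have $\mu_{0}=1$; and since $c$ preserves the positive definite quadratic form $\mathrm{Q}$ of \eqref{eqn quadratic form on exceptional Jordan algebra} and comes from a real operator, the remaining scalars satisfy $\mu_{1},\mu_{2}\in\{\pm 1\}$.

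The final step is to rule out $\mu_{i}=-1$ using the fact that $c$ preserves the cubic form $\det$ given in \eqref{eqn determinant on Jordan algebra}. For $\mu_{2}$, apply $c$ to elements of the form $\mathrm{I}+[a,b,c\,;0,0,0]$ with $a+b+c=0$ (these lie in $\R\mathrm{I}\oplus(\triv\otimes\sym^{4}\st)$): the identity $\det(\mathrm{I}+\mu_{2}B)=\det(\mathrm{I}+B)$ for all such $B$ expands to $\mu_{2}^{3}\,abc=abc$, forcing $\mu_{2}=1$. For $\mu_{1}$, apply $c$ to $\mathrm{I}+[0,0,0\,;x,y,z]$ with trace-zero octonions $x,y,z$: the identity $\det(\mathrm{I}+\mu_{1}B')=\det(\mathrm{I}+B')$ becomes $\mu_{1}^{3}\tr(xyz)=\tr(xyz)$, and choosing $x,y,z$ with $\tr(xyz)\neq 0$ gives $\mu_{1}^{3}=1$, hence $\mu_{1}=1$. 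Therefore $c$ acts as the identity on $\jord_{\R}$, so $c=\mathrm{e}$.

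The main obstacle is the very last step: one must verify that the extra constraint coming from the cubic form is \emph{non-trivial} on each summand, which requires exhibiting explicit test elements in $\jord_{\R}$ on which the relevant cubic expression is non-zero. For the $\sym^{4}\st$-component this is elementary (take diagonal matrices of trace zero with nonzero product of entries); for the $\vrep{7}\otimes\sym^{2}\st$ component one needs to confirm, using the multiplication table of $\oct_{\R}$, that there exist pure-imaginary octonions $x,y,z$ with $\tr(xyz)\neq 0$, which follows directly from the existence of associative triples such as $(\mathrm{e}_{1},\mathrm{e}_{2},\mathrm{e}_{4})$.
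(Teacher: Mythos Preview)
Your argument is correct. The Schur-lemma reduction to scalars $\mu_0,\mu_1,\mu_2\in\{\pm1\}$ is sound because the three summands of $\jord_\R$ are absolutely irreducible and pairwise non-isomorphic as $H$-modules, and your cubic-form tests do pin down $\mu_1=\mu_2=1$ (the quadratic cross-terms in your expansions cancel automatically since $\mu_i^2=1$).

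The paper, however, proves the proposition by a structurally different route: it first shows the stronger fact that $\mathrm{C}_{\lietype{F}{4}}(\sorth(3))=\lietype{G}{2}$. Concretely, an element $g$ centralizing $\sorth(3)$ commutes with the three diagonal involutions $\mathrm{diag}(\pm1,\pm1,\pm1)$, hence fixes each $\mathrm{E}_i$; quoting \cite[Theorem 16.7(iii), Lemma 15.15]{AdamsExceptionalGrp}, such a $g$ is given by an isotopy triple $(\alpha,\beta,\gamma)\in\sorth(\oct_\R)^3$, and commutation with the rotation $\left(\begin{smallmatrix}1&0&0\\0&0&1\\0&-1&0\end{smallmatrix}\right)$ forces $\alpha=\beta=\gamma\in\lietype{G}{2}$. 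The centralizer of the full $\lietype{G}{2}\times\sorth(3)$ is then the center of $\lietype{G}{2}$, which is trivial. This intermediate statement $\mathrm{C}_{\lietype{F}{4}}(\sorth(3))=\lietype{G}{2}$ is reused later (in \S\ref{section A1 subgroup centralizing G2}) to identify the centralizer of the $\lietype{A}{1}$-subgroup $[5,3^7]$, so the paper's approach earns a dividend that your more direct representation-theoretic argument does not. Conversely, your proof is more self-contained: it avoids the external input from Adams's book and the $\spin(9)$ machinery, relying only on the decomposition \eqref{eqn restriction of the st rep to G2A1} and an elementary computation with $\det$.
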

\begin{proof}
    Let $g$ be an element in $\mathrm{C}_{\lietype{F}{4}}(\lietype{G}{2}\times\sorth(3))$.
    Because the image of $\mathrm{diag}(1,-1,-1)\in\sorth(3)$ in $\lietype{F}{4}$ is the involution $\sigma$ defined in \Cref{section B4 subgroup},
    $g$ lies in $\mathrm{C}_{\lietype{F}{4}}(\sigma)$,
    thus it stabilizes $\lietype{E}{1}$.
    By \Cref{rmk three symmetric ways to define Spin(9)},
    we also have $g$ stabilizes $\mathrm{E}_{2}$ and $\mathrm{E}_{3}$ respectively.
    According to \cite[Theorem 16.7(iii), Lemma 15.15]{AdamsExceptionalGrp},
    $g$ is an element of the form
    \[[a,b,c\,;x,y,z]\mapsto [a,b,c\,;\alpha(x),\beta(y),\gamma(z)],\text{ for all }a,b,c\in \R,x,y,z\in\oct_{\R},\]
    where $\alpha,\beta,\gamma\in\sorth(\oct_{\R})$ satisfy
\begin{align}\label{eqn isotopy triple condition real octonion}
    \overline{\alpha(x)\beta(y)}=\gamma(\overline{xy}) \text{ for all }x,y\in\oct_{\R}.
\end{align}

    The image of $\left(\begin{smallmatrix}
        1&0&0\\
        0&0&1\\
        0&-1&0
    \end{smallmatrix}\right)\in\sorth(3)$ in $\lietype{F}{4}$ is the map 
    \[[a,b,c\,;x,y,z]\mapsto [a,c,b\,;-\overline{x},-\overline{z},\overline{y}].\]
    The fact that it commutes with $g$ implies that 
    $\alpha(\overline{x})=\overline{\alpha(x)}$ and $\beta(\overline{x})=\overline{\gamma(x)}$ for all $x\in \oct_{\R}$.
    By symmetry we get $\alpha=\beta=\gamma$ and \eqref{eqn isotopy triple condition real octonion} shows that 
    \[\alpha(x)\alpha(y)=\overline{\alpha(\overline{xy})}=\alpha(\overline{\overline{xy}})=\alpha(xy),\text{ for all }x,y\in\oct_{\R}.\]
    Hence $\alpha\in\lietype{G}{2}$ and we have proved that $\mathrm{C}_{\lietype{F}{4}}(\sorth(3))=\lietype{G}{2}$,
    thus the centralizer of $\lietype{G}{2}\times\sorth(3)$ in $\lietype{F}{4}$ is the center of $\lietype{G}{2}$,
    which is trivial.
\end{proof}

\subsubsection{The principal \texorpdfstring{$\psu(2)$}{}}\label{section principal SU2 inside F4}
\tp The image of the \emph{principal embedding} from $\su(2)$ into $\mathrm{F}_{4}$, 
in the sense of \cite[Theorem 4.1.6]{NilOrbit}, 
is also a maximal proper connected subgroup of $\mathrm{F}_{4}$. 
The restriction of the irreducible representation $\mathrm{J}_{0}$ to this $\su(2)$ is isomorphic to 
\[\sym^{8}\st+\sym^{16}\st,\]
where $\st$ is the standard $2$-dimensional representation of $\su(2)$. 
This implies that the image is isomorphic to $\psu(2)$,
and we call it the principal $\psu(2)$ of $\lietype{F}{4}$.

By the general property of principal embeddings, 
its centralizer is the center of $\mathrm{F}_{4}$.
It is well-known that the center of $\lietype{F}{4}$ is trivial.

\subsection{Classification of \texorpdfstring{$\mathrm{A}_{1}$}{}-subgroups}\label{section A1 subgroups classification}
\tp In this subsection we will classify \emph{$\mathrm{A}_{1}$-subgroups} of $\mathrm{F}_{4}$,
i.e. subgroups that are isomorphic to $\su(2)$ or $\psu(2)$.
By \cite[Theorem 9.3]{Dynkin1952} every $\lietype{A}{1}$-subgroup $X$ of $\mathrm{F}_{4}$ is either the principal $\psu(2)$ or an $R$-subgroup, 
i.e. $X$ is contained in some proper regular subgroup of $\mathrm{F}_{4}$.
When $X$ is an $R$-subgroup, 
up to conjugacy it is contained in one of the three regular maximal proper connected subgroups of $\mathrm{F}_{4}$ we have found in \Cref{section maximal subgroups of F4}.
All these three regular subgroups arise from classical groups, 
thus their $\mathrm{A}_{1}$-subgroups are well-known. 

By \Cref{prop just one rep is enough for conjugacy},
a conjugacy class of $\lietype{A}{1}$-groups of $\lietype{F}{4}$ is determined uniquely by the restriction of the $26$-dimensional representation $\jord_{0}$ to it.
\begin{notation}
    An isomorphism class of $n$-dimensional representation of $\su(2)$ gives a partition of the integer $n$.
    We will use the notation $[N^{k_{N}},(N-1)^{k_{N-1}},\ldots,2^{k_{2}},1^{k_{1}}]$, where $k_{N}\neq 0$ and $\sum_{i=1}^{N}ik_{i}=n$,
    for a partition of $n$.
    For example, 
    the restriction of $\jord_{0}$ to the principal $\psu(2)$ is isomorphic to $\sym^{8}\st+\sym^{16}\st$,
    thus we index this $\lietype{A}{1}$-subgroup by the partition $[17,9]$ of $\dim \jord_{0}=26$.
\end{notation}

\subsubsection{\texorpdfstring{$\mathrm{A}_{1}$}{}-subgroups of \texorpdfstring{$\spin(9)$}{}}\label{section A1 subgroups of B4 subgroup}
\tp We start from $\lietype{A}{1}$-subgroups of $\sorth(9)$.
According to \cite[Theorem 5.1.2]{NilOrbit},
the conjugacy classes of morphisms $\su(2)\rightarrow \sorth(9)$
are in bijection with partitions of $9$ in which each even number appears even times.
\begin{lemma}\label{lemma A1 subgroups of Spin(9)}
    (1) There are $12$ different conjugacy classes of $\lietype{A}{1}$-subgroups of $\spin(9)$,
    which correspond to the following partitions of $9$:
    \[[9],[7,1^{2}],[5,3,1],[5,2^{2}],[5,1^{4}],[4^{2},1],[3^{3}],[3^{2},1^{3}],[3,2^{2},1^{2}],[3,1^{6}],[2^{4},1],[2^{2},1^{5}].\]
    (2) There are $10$ different conjugacy classes of $\lietype{A}{1}$-subgroups of $\lietype{F}{4}$ that are contained in the subgroup $\spin(9)$ given in \Cref{section B4 subgroup}.
    The restrictions of the $26$-dimensional irreducible representation $\jord_{0}$ of $\lietype{F}{4}$ to these $\lietype{A}{1}$-subgroups correspond to the following partitions of $26$:
    \begin{equation}\begin{aligned}\label{eqn A1 subgroups of B4}
        [11,9,5,1],[7^{3},1^{5}]&,[5^{3},3^{3},1^{2}],[3^{6},1^{8}],\\
        [5^{2},4^{2},3,2^{2},1],[5,4^{4},1^{5}],[4^{2},3^{3},2^{4},1]&,[3^{3},2^{6},1^{5}],[3,2^{8},1^{7}],[2^{6},1^{14}].
    \end{aligned}\end{equation}
\end{lemma}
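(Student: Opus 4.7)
For (1), the plan is to invoke the Jacobson--Morozov classification of nilpotent orbits in $\mathfrak{so}_9$: conjugacy classes of nontrivial homomorphisms $\su(2) \to \sorth(9)$ correspond bijectively to partitions of $9$ in which every even part has even multiplicity (see \cite[Theorem 5.1.2]{NilOrbit}). Because $\su(2)$ is simply connected, each such morphism lifts uniquely to $\spin(9)$, and $\spin(9)$-conjugacy coincides with $\sorth(9)$-conjugacy on these lifts. Enumerating the qualifying partitions of $9$ and discarding $[1^9]$ (whose image is trivial, so not an $\lietype{A}{1}$-subgroup) leaves precisely the twelve partitions listed. Distinct partitions yield non-isomorphic $9$-dimensional $\su(2)$-representations and hence non-conjugate subgroups.

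For (2), the strategy is to apply \Cref{prop just one rep is enough for conjugacy}: two homomorphisms $\phi,\phi' : \su(2) \to \lietype{F}{4}$ are $\lietype{F}{4}$-conjugate if and only if the $\su(2)$-representations $\jord_0 \circ \phi$ and $\jord_0 \circ \phi'$ are isomorphic. Combined with the branching \eqref{eqn restriction of st rep to B4}, one has, for each partition $\lambda$ of $9$ arising from part (1),
\[
\jord_0\vert_{\su(2)} \;\simeq\; \triv \;\oplus\; \vrep{9}\vert_{\su(2)} \;\oplus\; \vrep{\spin}\vert_{\su(2)},
\]
with $\vrep{9}\vert_{\su(2)}$ given immediately by $\lambda$.

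The core computational step is the evaluation of $\vrep{\spin}\vert_{\su(2)}$ for each of the twelve embeddings. Extract from $\lambda$ the unique tuple $m_1 \geq m_2 \geq m_3 \geq m_4 \geq 0$ of integers such that the multiset of eigenvalues of the standard semisimple element of $\mathfrak{sl}_2$ on $\vrep{9}$ equals $\{0, \pm m_1, \pm m_2, \pm m_3, \pm m_4\}$; these $m_i$ are the components of the image of this semisimple element in the standard Cartan of $\mathfrak{so}_9$. Since the weights of $\vrep{\spin}$ are the $16$ half-sums $\tfrac{1}{2}(\pm \varepsilon_1 \pm \varepsilon_2 \pm \varepsilon_3 \pm \varepsilon_4)$, the weights of $\vrep{\spin}\vert_{\su(2)}$ form the multiset $\{\tfrac{1}{2}(\pm m_1 \pm m_2 \pm m_3 \pm m_4)\}$, from which the isotypic decomposition is recovered by repeatedly peeling off the highest weight. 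Adding $\triv$ and $\lambda$ then produces a partition of $26$.

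Running this computation through the twelve partitions yields the ten partitions in \eqref{eqn A1 subgroups of B4}, with exactly two coincidences accounting for the drop from $12$ to $10$: the pair $\{[5,1^4],\,[4^2,1]\}$ both yield $[5,4^4,1^5]$, and the pair $\{[3,1^6],\,[2^4,1]\}$ both yield $[3,2^8,1^7]$. The only non-routine ingredient is the spin-weight computation described above; the remainder is combinatorial bookkeeping.
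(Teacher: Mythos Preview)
Your proposal is correct and follows essentially the same route as the paper: part (1) by the classification of nilpotent orbits in $\mathfrak{so}_9$ \cite[Theorem~5.1.2]{NilOrbit} together with the lifting along $\spin(9)\to\sorth(9)$, and part (2) by combining the branching \eqref{eqn restriction of st rep to B4} with \Cref{prop just one rep is enough for conjugacy}. The paper's proof is a two-line citation; your write-up adds the explicit spin-weight computation and pinpoints the two coincidences $\{[5,1^4],[4^2,1]\}\mapsto[5,4^4,1^5]$ and $\{[3,1^6],[2^4,1]\}\mapsto[3,2^8,1^7]$, which the paper leaves to the reader.
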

\begin{proof}
    By the lifting property of covering maps and the fact that $\su(2)$ is simply connected,
    every $\lietype{A}{1}$-subgroup of $\sorth(9)$ is lifted uniquely to an $\lietype{A}{1}$-subgroup of $\spin(9)$. 
    The assertion (1) follows directly from \cite[Theorem 5.1.2]{NilOrbit},
    and the assertion (2) follows from the equivalence \eqref{eqn restriction of st rep to B4}.
\end{proof}
The $\mathrm{A}_{1}$-subgroups in the first row of \eqref{eqn A1 subgroups of B4} are isomorphic to $\psu(2)$
and the $\lietype{A}{1}$-subgroups in the second row are isomorphic to $\su(2)$.

\subsubsection{\texorpdfstring{$\mathrm{A}_{1}$}{}-subgroups of \texorpdfstring{$\left(\symp(1)\times \symp(3)\right)/\mu_{2}^{\Delta}$}{}}\label{section A1 subgroups of A1C3 subgroup}
\tp We apply the same argument for $\mathrm{A}_{1}$-subgroups of $\left(\symp(1)\times\symp(3)\right)/\mu_{2}^{\Delta}$.
By \cite[Theorem 5.1.3]{NilOrbit},
the set of conjugacy classes of morphisms $\su(2)\rightarrow \symp(3)$
are in bijection with partitions of $6$
in which each odd number appears even times.
\begin{lemma}\label{lemma A1 subgroups of A1C3}
    (1) There are $7$ different conjugacy classes of $\lietype{A}{1}$-subgroups of $\symp(3)$,
    which correspond to the following partitions of $6$:
    \[[6],[4,2],[4,1^{2}],[3^{2}],[2^{3}],[2^{2},1^{2}],[2,1^{4}].\]
    (2) There are $11$ different conjugacy classes of $\lietype{A}{1}$-subgroups of $\lietype{F}{4}$ that are contained in the subgroup $\left(\symp(1)\times\symp(3)\right)/\mu_{2}^{\Delta}$ given in \Cref{section A1 subgroups of A1C3 subgroup}.
    The restrictions of the $26$-dimensional irreducible representation $\jord_{0}$ of $\lietype{F}{4}$ to these $\lietype{A}{1}$-subgroups correspond to the following partitions of $26$:
    \begin{equation}
    \begin{aligned}\label{eqn A1 subgroups of A1C3}
        [9,7,5^{2}],[5^{3},3^{3},1^{2}]&,[5,3^{7}],[3^{6},1^{8}],\\
        [9,6^{2},5],[5^{2},4^{2},3,2^{2},1],[5,4^{4},1^{5}],[5,4^{2},3^{3},2^{2}]&,[3^{3},2^{6},1^{5}],[3,2^{8},1^{7}],[2^{6},1^{14}].
    \end{aligned}
    \end{equation}
\end{lemma}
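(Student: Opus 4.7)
The argument closely parallels Lemma \ref{lemma A1 subgroups of Spin(9)}, but requires extra care because the target $H := \left(\symp(1)\times\symp(3)\right)/\mu_2^{\Delta}$ is not simply connected. Part (1) is immediate: $\symp(3)$ is simply connected, so \cite[Theorem 5.1.3]{NilOrbit} yields a bijection between conjugacy classes of morphisms $\su(2)\to\symp(3)$ and partitions of $6$ in which every odd part occurs with even multiplicity, giving the seven listed.

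For part (2), I would first use that $\su(2)$ is simply connected: every homomorphism $\su(2)\to H$ lifts uniquely through the covering to $\symp(1)\times\symp(3)$, so $H$-conjugacy classes of $\lietype{A}{1}$-subgroups correspond to $\symp(1)\times\symp(3)$-conjugacy classes of pairs $(\phi_1,\phi_2)$. Here $\phi_1:\su(2)\to\symp(1)$ has two options (trivial $[1^2]$ or identity $[2]$) and $\phi_2:\su(2)\to\symp(3)$ runs over the seven partitions of part (1) together with the trivial $[1^6]$, giving $16$ pairs, or $15$ after discarding the completely trivial one. The image is a $\psu(2)$ (rather than a genuine $\su(2)$) exactly when the lift kills $-1\in\su(2)$, i.e.\ when $(\phi_1(-1),\phi_2(-1))\in\mu_2^{\Delta}$; since $-1$ acts as $(-1)^{n-1}$ on the $n$-dimensional summand $[n]$, this occurs iff $\phi_1=[2]$ with $\phi_2$ having only even parts (namely $[6], [4,2], [2^3]$) or $\phi_1=[1^2]$ with $\phi_2$ having only odd parts (the sole non-trivial case $[3^2]$). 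These four pairs should account for the $\psu(2)$-subgroups making up the first row of \eqref{eqn A1 subgroups of A1C3}; the remaining eleven pairs produce genuine $\su(2)$-subgroups.

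To upgrade $H$-conjugacy to $\lietype{F}{4}$-conjugacy I would invoke Proposition \ref{prop just one rep is enough for conjugacy} and compute the restriction of $\mathrm{J}_0$ for each of the $15$ pairs via \eqref{eqn restriction of st rep to A1C3}, using $\vrep{14}|_{\phi_2} = \wedge^2(\vrep{6}|_{\phi_2})-\triv$ together with standard Clebsch--Gordan for $\su(2)$. The main obstacle is this fifteenfold case analysis. The expected outcome is that the four $\psu(2)$-pairs are mutually distinguished, while the eleven $\su(2)$-pairs admit exactly four coincidences: for instance $([2],[4,1^2])$ and $([1^2],[4,2])$ both produce the partition $[5^2,4^2,3,2^2,1]$, and three analogous identifications each pair $([2],\mu)$ with $([1^2],\mu')$ where $\mu'$ is obtained from $\mu$ by replacing two trivial summands $[1]^2$ by a single $[2]$-summand. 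This collapses the eleven $\su(2)$-pairs to the seven classes in the second row of \eqref{eqn A1 subgroups of A1C3}, giving the announced total of $4+7=11$ conjugacy classes with the listed restrictions of $\mathrm{J}_0$.
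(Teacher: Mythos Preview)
Your argument is correct and follows the same approach as the paper's (very terse) proof: lift morphisms $\su(2)\to H$ through the double cover to pairs $(\phi_1,\phi_2)$, then compute the restriction of $\mathrm{J}_0$ via \eqref{eqn restriction of st rep to A1C3} and appeal to Proposition~\ref{prop just one rep is enough for conjugacy}. Your enumeration of the fifteen non-trivial pairs, the identification of the four $\psu(2)$-pairs, and the four coincidences $([2],\mu)\sim([1^2],\mu')$ with $\mu'$ obtained from $\mu$ by trading $[1]^2$ for $[2]$ are all correct and flesh out what the paper leaves to the reader.
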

\begin{proof}
    The assertion (1) follows directly from \cite[Theorem 5.1.3]{NilOrbit}.
    A morphism from $\su(2)$ to $\left(\symp(1)\times\symp(3)\right)/\mu_{2}^{\Delta}$ 
    arises from the product of two morphisms $\su(2)\rightarrow\symp(1)$ and $\su(2)\rightarrow\symp(3)$.
    The assertion (2) follows from the equivalence \eqref{eqn restriction of st rep to A1C3}.
\end{proof}
The $\mathrm{A}_{1}$-subgroups in the first row of \eqref{eqn A1 subgroups of A1C3} are isomorphic to $\psu(2)$ and the $\lietype{A}{1}$-subgroups in the second row are isomorphic to $\su(2)$. 


\subsubsection{\texorpdfstring{$\mathrm{A}_{1}$}{} subgroups of \texorpdfstring{$\left(\su(3)\times\su(3)\right)/\mu_{3}^{\Delta}$}{}}\label{section A1 subgroups of A2A2 subgroup}
\tp The restriction of the standard representation $\vrep{3}$ of $\su(3)$ to an $\mathrm{A}_{1}$-subgroup of $\su(3)$ can only be $[3]$ or $[2,1]$.
By the equivalences \eqref{eqn restriction of st rep to A2A2 I} and \eqref{eqn restriction of Ad rep to A2A2 II},
we have the following result:
\begin{lemma}\label{lemma A1 subgroups of A2A2}
    There are $8$ different conjugacy classes of $\lietype{A}{1}$-subgroups of $\lietype{F}{4}$ that are contained in the subgroup 
    $\left(\su(3)\times\su(3)\right)/\mu_{3}^{\Delta}$ given in \Cref{section A2A2 subgroup}.
    The restrictions of the $26$-dimensional irreducible representation $\jord_{0}$ of $\lietype{F}{4}$ to these $\lietype{A}{1}$-subgroups correspond to the following partitions of $26$:
    \begin{equation}
        \begin{aligned}\label{eqn A1 subgroups of A2A2}
            [5^{3},3^{3},1^{2}]&,[5,3^{7}],[3^{6},1^{8}]\\
            [5,4^{2},3^{3},2^{2}],[4^{2},3^{3},2^{4},1]&,[3^{3},2^{6},1^{5}],[3,2^{8},1^{7}],[2^{6},1^{14}].
        \end{aligned}
    \end{equation}
\end{lemma}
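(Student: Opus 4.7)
The plan is to parameterise all homomorphisms $\phi:\su(2)\to G:=\left(\su(3)\times\su(3)\right)/\mu_{3}^{\Delta}$ with three-dimensional image, and then use \Cref{prop just one rep is enough for conjugacy} to distinguish their $\lietype{F}{4}$-conjugacy classes by means of the restriction of $\jord_{0}$. Since $\su(2)$ is simply connected and the central kernel $\mu_{3}^{\Delta}$ of $\su(3)\times\su(3)\to G$ has order coprime to $2$, any such $\phi$ lifts uniquely up to conjugacy to a pair $(\phi_{1},\phi_{2}):\su(2)\to\su(3)\times\su(3)$. By the classical description of $\lietype{A}{1}$-subgroups of $\su(n)$ (equivalently, of nilpotent orbits in $\mathfrak{sl}_{n}$), each $\phi_{i}$ is determined up to $\su(3)$-conjugacy by the partition of $3$ giving the decomposition of $\vrep{3}\circ\phi_{i}$, so $\phi_{i}$ has exactly three options: the trivial morphism $[1^{3}]$, the $[2,1]$ embedding, or the principal $[3]$ embedding. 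Requiring $\mathrm{Im}(\phi)$ to be three-dimensional excludes only $(\phi_{1},\phi_{2})=(0,0)$, leaving $3\cdot 3-1=8$ candidate pairs.

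Next, for each of the eight pairs I would compute the restriction of $\jord_{0}$ to the image, using the equivalence \eqref{eqn restriction of st rep to A2A2 I}. The necessary inputs are $\mathfrak{sl}_{3}|_{[3]}=[5,3]$, $\mathfrak{sl}_{3}|_{[2,1]}=[3,2^{2},1]$, $\mathfrak{sl}_{3}|_{[1^{3}]}=[1^{8}]$, and the fact that $\vrep{3}|_{\su(2)}\simeq\vrep{3}^{\prime}|_{\su(2)}$ (since every $\su(2)$-representation is self-dual), so that the partition of $\vrep{3}^{\prime}\circ\phi_{i}$ agrees with that of $\vrep{3}\circ\phi_{i}$. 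Plugging these into \eqref{eqn restriction of st rep to A2A2 I} and expanding the tensor products via the Clebsch--Gordan rule $[m]\otimes[n]=[m+n-1]\oplus[m+n-3]\oplus\cdots$, I obtain for each pair a partition of $26$; these eight partitions are precisely the ones listed in \eqref{eqn A1 subgroups of A2A2}.

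Finally, a direct inspection shows the eight resulting partitions are pairwise distinct, so \Cref{prop just one rep is enough for conjugacy} guarantees that the eight images are pairwise non-conjugate in $\lietype{F}{4}$. One subtlety deserves attention: there are two non-conjugate embeddings of $G$ into $\lietype{F}{4}$, with $\jord_{0}$-restrictions given by \eqref{eqn restriction of st rep to A2A2 I} and \eqref{eqn restriction of st rep to A2A2 II}, so a priori each embedding could contribute its own eight conjugacy classes. However, because $\vrep{3}$ and $\vrep{3}^{\prime}$ become isomorphic upon any restriction to $\su(2)$, the summands $\vrep{3}\otimes\vrep{3}^{\prime}+\vrep{3}^{\prime}\otimes\vrep{3}$ and $\vrep{3}\otimes\vrep{3}+\vrep{3}^{\prime}\otimes\vrep{3}^{\prime}$ have identical restrictions to every $\lietype{A}{1}$-subgroup, so both embeddings yield exactly the same eight partitions and hence the same eight $\lietype{F}{4}$-conjugacy classes. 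The main obstacle is not conceptual but computational: keeping track of the eight tensor product expansions without arithmetic error is the most delicate step, though it is essentially mechanical once the preceding structural reductions are in place.
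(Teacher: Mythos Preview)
Your proposal is correct and follows essentially the same approach as the paper's own (very terse) proof, which simply notes that the restriction of $\vrep{3}$ to an $\lietype{A}{1}$-subgroup of $\su(3)$ is $[3]$ or $[2,1]$ and then invokes \eqref{eqn restriction of st rep to A2A2 I} and \eqref{eqn restriction of st rep to A2A2 II}. You supply the details the paper omits: the lifting to $\su(3)\times\su(3)$, the explicit enumeration of the eight pairs $(\phi_{1},\phi_{2})$ including those with a trivial factor, the observation that self-duality of $\su(2)$-representations makes the two embeddings of $G$ into $\lietype{F}{4}$ yield identical partitions, and the final appeal to \Cref{prop just one rep is enough for conjugacy}.
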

The $\mathrm{A}_{1}$-subgroups in the first row of \eqref{eqn A1 subgroups of A2A2} are isomorphic to $\psu(2)$ 
and subgroups in the second row are isomorphic to $\su(2)$. 

\subsubsection{Conclusion}\label{section conclusion of classification of A1 subgroups}
\tp Now we have enumerated (up to conjugacy) all $\mathrm{A}_{1}$-subgroups of $\mathrm{F}_{4}$ and indexed them by the restriction of the $26$-dimensional irreducible representation $\mathrm{J}_{0}$ of $\lietype{F}{4}$.
\begin{prop}\label{prop A1 subgroups of F4}
    (1) There are $7$ conjugacy classes of subgroups of $\lietype{F}{4}$ that are isomorphic to $\psu(2)$,
    corresponding to the following partitions of $26$:
    \[[17,9],[11,9,5,1],[9,7,5^{2}],[7^{3},1^{5}],[5^{3},3^{3},1^{2}],[5,3^{7}],[3^{6},1^{8}].\]
    (2) There are $7$ conjugacy classes of subgroups of $\lietype{F}{4}$ that are isomorphic to $\su(2)$,
    corresponding to the following partitions of $26$:
    \[[9,6^{2},5],[5^{2},4^{2},3,2^{2},1],[5,4^{4},1^{5}],[5,4^{2},3^{3},2^{2}],[4^{2},3^{3},2^{4},1],[3^{3},2^{6},1^{5}],[3,2^{8},1^{7}],[2^{6},1^{14}].\]
\end{prop}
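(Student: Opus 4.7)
The plan is to combine Dynkin's dichotomy on $\lietype{A}{1}$-subgroups with the three regular subgroup classifications already established, then use \Cref{prop just one rep is enough for conjugacy} as the recognition principle for $\lietype{F}{4}$-conjugacy.

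First I would recall that by \cite[Theorem 9.3]{Dynkin1952}, every $\lietype{A}{1}$-subgroup $X$ of $\lietype{F}{4}$ is either the principal $\psu(2)$ (an $S$-subgroup, treated in \Cref{section principal SU2 inside F4} with partition $[17,9]$) or an $R$-subgroup, i.e.\ contained in some proper regular subgroup of $\lietype{F}{4}$. In the second case $X$ sits in a maximal proper regular subgroup, which by \Cref{thm classification of maximal subgroups} is $\lietype{F}{4}$-conjugate to one of $\spin(9)$, $(\symp(1)\times\symp(3))/\mu_2^\Delta$, or $(\su(3)\times\su(3))/\mu_3^\Delta$. Therefore every $\lietype{A}{1}$-subgroup is $\lietype{F}{4}$-conjugate to one appearing in the union of the lists produced by \Cref{lemma A1 subgroups of Spin(9)}, \Cref{lemma A1 subgroups of A1C3}, and \Cref{lemma A1 subgroups of A2A2}, together with the principal $\psu(2)$.

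Next I would apply \Cref{prop just one rep is enough for conjugacy}: two $\lietype{A}{1}$-subgroups of $\lietype{F}{4}$ are $\lietype{F}{4}$-conjugate if and only if the restrictions of $\jord_0$ to them, viewed as representations of $\su(2)$, are equivalent, i.e.\ give the same partition of $26$. This turns the deduplication into a purely combinatorial operation: take the union of the partitions listed in \eqref{eqn A1 subgroups of B4}, \eqref{eqn A1 subgroups of A1C3}, \eqref{eqn A1 subgroups of A2A2}, add the principal partition $[17,9]$, and collapse repeated entries. This is the main computational step, but it is mechanical; the coincidences that arise (for example $[5^{3},3^{3},1^{2}]$ and $[3^{6},1^{8}]$ appear in all three regular subgroups, and $[5,3^{7}]$ appears in both the $\symp(3)$-piece and the $2\lietype{A}{2}$-piece) simply collapse to single $\lietype{F}{4}$-conjugacy classes.

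Finally I would split the resulting set according to whether the image is $\su(2)$ or $\psu(2)$. An $\lietype{A}{1}$-homomorphism factors through $\psu(2)$ precisely when $-1\in\su(2)$ acts trivially on $\jord_0$, which happens if and only if every part of the partition is odd. Scanning the consolidated list, the partitions with all odd parts are exactly
\[[17,9],\ [11,9,5,1],\ [9,7,5^{2}],\ [7^{3},1^{5}],\ [5^{3},3^{3},1^{2}],\ [5,3^{7}],\ [3^{6},1^{8}],\]
giving the $7$ classes of $\psu(2)$-subgroups in (1); the remaining partitions, each containing at least one even part, give the $\su(2)$-classes in (2). The only subtle point, which I would flag, is that \Cref{prop just one rep is enough for conjugacy} addresses conjugacy of \emph{homomorphisms} from a connected compact group, but since the resulting $\lietype{A}{1}$-subgroup of $\lietype{F}{4}$ is recovered as the image, and since the kernel is determined by the parity of the partition, the conjugacy of subgroups is equivalent to the conjugacy of the corresponding homomorphism classes up to the outer automorphism of $\su(2)$ or $\psu(2)$, both of which are inner, so there is no additional identification to perform.
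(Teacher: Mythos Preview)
Your proposal is correct and follows exactly the paper's approach: the paper does not give a separate proof of this proposition, but presents it in \Cref{section conclusion of classification of A1 subgroups} as the direct collation of \Cref{lemma A1 subgroups of Spin(9)}, \Cref{lemma A1 subgroups of A1C3}, \Cref{lemma A1 subgroups of A2A2}, and the principal $\psu(2)$, with \Cref{prop just one rep is enough for conjugacy} used (as stated at the start of \Cref{section A1 subgroups classification}) to identify classes sharing the same partition. Your explicit parity criterion for distinguishing $\su(2)$ from $\psu(2)$ images, and your remark on homomorphisms versus subgroups, are details the paper leaves implicit but are entirely in line with its argument.
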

The theory of Jacobson-Morozov shows that 
the set of conjugacy classes of morphisms $\su(2)\rightarrow \lietype{F}{4}$
is in bijection with  
the set of nilpotent orbits of the semisimple Lie algebra $\mathfrak{f}_{4}$.
The nilpotent orbits of $\mathfrak{f}_{4}$ are labeled in \cite[\S 8.4]{NilOrbit},
and we will use the same labelings for $\lietype{A}{1}$-subgroups of $\lietype{F}{4}$:
\begin{table}[H]
    \centering
    \renewcommand{\arraystretch}{1.5}
    \begin{tabular}{|c|c||c|c||c|c|}
    \hline 
    Label & Restriction of $\jord_{0}$ & Label & Restriction of $\jord_{0}$& Label & Restriction of $\jord_{0}$ \\ \hline
    $\lietype{A}{1}$ & $[2^{6},1^{14}]$ & $\lietype{A}{2}+\widetilde{\lietype{A}{1}}$ & $[4^{2},3^{3},2^{4},1]$ & $\lietype{B}{3}$ & $[7^{3},1^{5}]$\\ \hline
    $\widetilde{\lietype{A}{1}}$ & $[3,2^{8},1^{7}]$ & $\lietype{B}{2}$ & $[5,4^{4},1^{5}]$ & $\lietype{C}{3}$ & $[9,6^{2},5]$ \\ \hline
    $\lietype{A}{1}+\widetilde{\lietype{A}{1}}$ & $[3^{3},2^{6},1^{5}]$ & $\widetilde{\lietype{A}{2}}+\lietype{A}{1}$ & $[5,4^{2},3^{3},2^{2}]$ & $\lietype{F}{4}(a_{2})$ & $[9,7,5^{2}]$\\ \hline 
    $\lietype{A}{2}$ & $[3^{6},1^{8}]$ & $\lietype{C}{3}(a_{1})$ & $[5^{2},4^{2},3,2^{2},1]$ & $\lietype{F}{4}(a_{1})$ & $[11,9,5,1]$\\ \hline 
    $\widetilde{\lietype{A}{2}}$ & $[5,3^{7}]$ & $\lietype{F}{4}(a_{3})$ & $[5^{3},3^{3},1^{2}]$ & $\lietype{F}{4}$ & $[17,9]$ \\ \hline 
    \end{tabular}
    \caption{Labels of $\lietype{A}{1}$-subgroups of $\lietype{F}{4}$}\label{table labels of A1 subgroups}
\end{table}
\begin{notation}\label{notation A1 subgroup}
    With \Cref{table labels of A1 subgroups},
    for a conjugacy class of $\lietype{A}{1}$-subgroups of $\lietype{F}{4}$,
    we have two ways to refer to it.
    For example, for the conjugacy class of principal $\psu(2)$,
    we call it the class $[17,9]$ or the class with label $\lietype{F}{4}$.
\end{notation}

\subsubsection{Centralizers}\label{section centralizers of A1 subgroups}
\tp The next thing we are going to do is to compute the centralizer, 
or the neutral component of the centralizer, 
of each $\mathrm{A}_{1}$-subgroup of $\mathrm{F}_{4}$.
In the following paragraphs, 
we choose a representative $\su(2)\rightarrow \mathrm{F}_{4}$ for each conjugacy class of $\lietype{A}{1}$-subgroups,
whose image is denoted by $X$,
and then determine $\mathrm{C}_{\lietype{F}{4}}(X)$ or $\mathrm{C}_{\lietype{F}{4}}(X)^{\circ}$. 

The following lemma will be used when computing the centralizer of a subgroup in $\lietype{F}{4}$:
\begin{lemma}\label{lemma compute the centralizer}
    Let $G$ be the quotient of a Lie group $G_{0}$ by a finite central subgroup $\Gamma$.
    If $H_{0}$ is a connected subgroup of $G_{0}$, whose image in $G$ is denoted by $H$,
    then the inverse image of $\mathrm{C}_{G}(H)$ in $G_{0}$ is $\mathrm{C}_{G_{0}}( H_{0})$ and $\mathrm{C}_{G}(H)\simeq \mathrm{C}_{G_{0}}( H_{0})/\Gamma$.
\end{lemma}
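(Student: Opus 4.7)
The plan is to analyze the quotient map $\pi: G_{0}\to G=G_{0}/\Gamma$ on the level of centralizers. Let me denote by $C_{0}=\mathrm{C}_{G_{0}}(H_{0})$ and by $C=\mathrm{C}_{G}(H)$, and let $N=\pi^{-1}(C)\subseteq G_{0}$. The claim has two parts: first that $N=C_{0}$, and second that consequently $C\simeq C_{0}/\Gamma$.

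For the inclusion $C_{0}\subseteq N$, this is immediate: if $g_{0}\in G_{0}$ commutes with every element of $H_{0}$, then its image $\pi(g_{0})$ commutes with every element of $\pi(H_{0})=H$, so $\pi(g_{0})\in C$, i.e.\ $g_{0}\in N$. Note also that $\Gamma\subseteq C_{0}$ since $\Gamma$ is central in $G_{0}$.

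The reverse inclusion $N\subseteq C_{0}$ is the key point. Fix $g_{0}\in N$ and consider the map
\[
\varphi: H_{0}\longrightarrow G_{0},\qquad h_{0}\longmapsto g_{0}\,h_{0}\,g_{0}^{-1}\,h_{0}^{-1}.
\]
Since $\pi(g_{0})\in C$ centralizes $H=\pi(H_{0})$, for every $h_{0}\in H_{0}$ the element $\varphi(h_{0})$ maps to $1$ in $G$, hence $\varphi(h_{0})\in\Gamma$. Thus $\varphi$ is a continuous map from $H_{0}$ to the discrete finite set $\Gamma$. Since $H_{0}$ is connected by hypothesis, $\varphi$ is constant; its value at $h_{0}=1$ is $1$, so $\varphi\equiv 1$. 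This exactly says $g_{0}$ centralizes $H_{0}$, i.e.\ $g_{0}\in C_{0}$.

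The second assertion follows formally from the first: since $N=C_{0}$, the restriction of $\pi$ to $C_{0}$ is surjective onto $C$ with kernel $C_{0}\cap\Gamma=\Gamma$ (as $\Gamma\subseteq C_{0}$), yielding $C\simeq C_{0}/\Gamma$. The only nontrivial step is the connectedness argument above, which is standard but worth spelling out because it is exactly where the hypothesis that $H_{0}$ is connected is used (without it, $\varphi$ could take several values in $\Gamma$).
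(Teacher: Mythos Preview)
Your proof is correct and follows essentially the same approach as the paper's own proof: both use the commutator map $\varphi:H_{0}\to\Gamma$, $h_{0}\mapsto g_{0}h_{0}g_{0}^{-1}h_{0}^{-1}$, observe it is continuous into the discrete set $\Gamma$, and conclude by connectedness of $H_{0}$ that $\varphi\equiv 1$. You spell out the easy inclusion $C_{0}\subseteq N$ and the deduction of the isomorphism $C\simeq C_{0}/\Gamma$ more explicitly than the paper does, but the core argument is identical.
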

\begin{proof}
    It suffices to prove that any $ g_{0}\in G_{0}$ whose image $g$ lies in $\mathrm{C}_{G}(H)$ centralizes $ H_{0}$. 
    For any $ h_{0}\in H_{0}$ with image $h$ in $H$, we have $ghg^{-1}h^{-1}=1$ in $G$,
    thus $ g_{0} h_{0} g_{0}^{-1} h_{0}^{-1}\in \Gamma$.
    The continuous map
    $\varphi: H_{0}\rightarrow \Gamma,  h_{0}\mapsto  g_{0} h_{0} g_{0}^{-1} h_{0}^{-1}\text{ for } h_{0}\in H_{0}$
    must be constant,
    because $ H_{0}$ is connected and $\Gamma$ is discrete as a finite group.
    The map $\varphi$ sends $1\in H_{0}$ to $1\in\Gamma$, thus $\varphi\equiv 1$, which implies that $ g_{0}$ centralizes $ H_{0}$ in $G_{0}$.
\end{proof}
In some cases we can not compute the centralizer $\mathrm{C}_{\lietype{F}{4}}(X)$ easily, 
then we use the following lemma to determine its neutral component $\mathrm{C}_{\lietype{F}{4}}(X)^{\circ}$:
\begin{lemma}\label{lemma determine the neutral centralizer}
    Let $H$ be a connected subgroup of a compact Lie group $G$,
    and $d$ the multiplicity of $\triv$ in the restriction of the adjoint representation $\mathfrak{g}$ of $G$ to $H$.
    If there is a $d$-dimensional connected subgroup $C$ of $\mathrm{C}_{G}(H)$,
    then we have $\mathrm{C}_{G}(H)^{\circ}=C$.
    In particular, the centralizer $\mathrm{C}_{G}(H)$ is a finite group when $d=0$. 
\end{lemma}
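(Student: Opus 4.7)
The plan is to identify the Lie algebra of the centralizer with the $H$-invariants of the adjoint representation, and then use a dimension comparison to force $C$ to coincide with the neutral component.

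First I would recall the standard fact that for any closed subgroup $K$ of a Lie group $G$, the Lie algebra of the centralizer $\mathrm{C}_G(K)$ is the fixed subspace $\mathfrak{g}^K$ of $\mathfrak{g}$ under the adjoint action of $K$. One direction is immediate (if $X \in \mathrm{Lie}\,\mathrm{C}_G(K)$, then $\exp(tX)$ commutes with every $k \in K$, so $\mathrm{Ad}(k)X = X$). The reverse uses that if $\mathrm{Ad}(k)X = X$ for all $k$, then $k \exp(tX) k^{-1} = \exp(tX)$, so the one-parameter subgroup generated by $X$ sits inside $\mathrm{C}_G(K)$. Applying this with $K = H$ connected, the condition $\mathrm{Ad}(h)X = X$ for all $h \in H$ is equivalent, by connectedness, to the $\mathfrak{h}$-invariance of $X$ under $\mathrm{ad}$; thus $\mathrm{Lie}\,\mathrm{C}_G(H) = \mathfrak{g}^H$ as an $H$-submodule, where $H$ acts on $\mathfrak{g}$ via $\mathrm{Ad}$.

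Now, by the very definition of $d$ as the multiplicity of $\triv$ in $\mathfrak{g}|_H$, we have $\dim \mathfrak{g}^H = d$, so $\dim \mathrm{C}_G(H) = \dim \mathrm{C}_G(H)^\circ = d$. Since $C \subseteq \mathrm{C}_G(H)$ is connected, it lies inside the identity component $\mathrm{C}_G(H)^\circ$; and since $C$ is itself a closed connected subgroup of dimension $d$ inside the connected Lie group $\mathrm{C}_G(H)^\circ$ of the same dimension $d$, the two must coincide.

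The final assertion is then immediate: when $d = 0$ we have $\mathrm{C}_G(H)^\circ = \{1\}$, so $\mathrm{C}_G(H)$ is a discrete closed subgroup of the compact group $G$, hence finite. There is no real obstacle here; the only point requiring care is the identification $\mathrm{Lie}\,\mathrm{C}_G(H) = \mathfrak{g}^H$, but this is standard Lie theory and uses nothing specific to $\lietype{F}{4}$.
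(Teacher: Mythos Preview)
Your proof is correct and follows essentially the same approach as the paper: identify the Lie algebra of $\mathrm{C}_G(H)$ with the $H$-invariants $\mathfrak{g}^H$ (of dimension $d$), and then use a dimension comparison with $C$ to conclude. The paper phrases this as a sandwich $\mathfrak{c}\subset\mathrm{Lie}(\mathrm{C}_G(H)^\circ)\subset\mathfrak{g}^H$ of equal-dimensional spaces rather than first proving the equality $\mathrm{Lie}\,\mathrm{C}_G(H)=\mathfrak{g}^H$, but this is only a cosmetic difference.
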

\begin{proof}
    As subalgebras of $\mathfrak{g}$,
    the Lie algebra $\mathrm{Lie}(\mathrm{C}_{G}(H)^{\circ})$ of $\mathrm{C}_{G}(H)^{\circ}$ is contained in 
    \[\mathrm{C}_{\mathfrak{g}}(H):=\set{X\in\mathfrak{g}}{\mathrm{Ad}(g)X=X\text{ for all }g\in H_{\C}},\]
    where $H_{\C}$ is the complexification of $H$.
    The dimension of $\mathrm{C}_{\mathfrak{g}}(H)$ equals the multiplicity $d$ of $\triv$ in $\mathfrak{g}|_{H}$.
    
    Let $\mathfrak{c}$ be the complexified Lie algebra of $C$.
    We have the inclusions $\mathfrak{c}\subset \mathrm{Lie}(\mathrm{C}_{G}(H)^{\circ})\subset \mathrm{C}_{\mathfrak{g}}(H)$.
    Since $\dim \mathfrak{c}=d=\dim\mathrm{C}_{\mathfrak{g}}(H)$,
    these three subspaces of $\mathfrak{g}$ are equal.
    It is well known that a connected Lie group is generated by a neighborhood of the identity element,
    thus the connected subgroups $C$ and $\mathrm{C}_{G}(H)^{\circ}$ of $G$ coincide.
\end{proof}

\paragraph{\texorpdfstring{$[17,9]$}{}}
We choose $X$ to be the principal $\psu(2)$ in $\mathrm{F}_{4}$, whose centralizer in $\lietype{F}{4}$ is trivial.

\paragraph{\texorpdfstring{$[11,9,5,1]$}{}}
We choose $X$ to be the principal $\psu(2)$ of the $\spin(9)$ given in \Cref{section B4 subgroup}.
The restriction of the adjoint representation $\mathfrak{f}_{4}$ of $\mathrm{F}_{4}$ to $X$ corresponds to the partition $[15,11^{2},7,5,3]$ of $52$,
which implies that $\mathrm{C}_{\mathrm{F}_{4}}(X)$ is a finite group by \Cref{lemma determine the neutral centralizer}. 

\paragraph{\texorpdfstring{$[9,7,5^{2}]$}{}}
We choose $X$ to be the principal $\psu(2)$ of the $\left(\symp(1)\times\symp(3)\right)/\mu_{2}^{\Delta}$ given in \Cref{section C3A1 subgroup}.
The restriction of the adjoint representation $\mathfrak{f}_{4}$ to $X$ corresponds to the partition $[11^{2},9,7,5,3^{3}]$ of $52$,
thus $\mathrm{C}_{\lietype{F}{4}}(X)$ is a finite group by \Cref{lemma determine the neutral centralizer}.

\paragraph{\texorpdfstring{$[7^{3},1^{5}]$}{}}
We choose $X$ to be the principal $\psu(2)$ of the factor $\lietype{G}{2}$ in the subgroup $\lietype{G}{3}\times\sorth(3)$ given in \Cref{section G2A1 subgroup}.
The other factor $\sorth(3)$ of $\lietype{G}{2}\times \sorth(3)$ centralizes this $\lietype{A}{1}$-subgroup $X$.
The restriction of the adjoint representation $\mathfrak{f}_{4}$ of $\lietype{F}{4}$ to $X$ corresponds to the partition $[11,7^{5},3,1^{3}]$ of $52$,
thus $\mathrm{C}_{\lietype{F}{4}}(X)^{\circ}$ is the $\sorth(3)$ in $\lietype{G}{2}\times\sorth(3)$ by \Cref{lemma determine the neutral centralizer},
which is in the class $[5,3^{7}]$ and labeled by $\widetilde{\lietype{A}{2}}$.

\paragraph{\texorpdfstring{$[5^{3},3^{3},1^{2}]$}{}}\label{section A1 subgroup principal in A2A2}
We choose $X$ to be the principal $\psu(2)$ of the $\left(\su(3)\times\su(3)\right)/\mu_{3}^{\Delta}$ given in \Cref{section A2A2 subgroup}.
The restriction of the adjoint representation $\mathfrak{f}_{4}$ of $\lietype{F}{4}$ to $X$ corresponds to the partition $[7^{2},5^{4},3^{6}]$ of $52$,
thus $\mathrm{C}_{\mathrm{F}_{4}}(X)$ is a finite group by \Cref{lemma determine the neutral centralizer}. 
The center of $\left(\su(3)\times\su(3)\right)/\mu_{3}^{\Delta}$,
which is a cyclic group of order $3$,
is contained in $\mathrm{C}_{\lietype{F}{4}}(X)$.

\paragraph{\texorpdfstring{$[5,3^{7}]$}{}}\label{section A1 subgroup centralizing G2}
We choose $X$ to be the factor $\sorth(3)$ in the subgroup $\lietype{G}{2}\times\sorth(3)$ of $\lietype{F}{4}$ given in \Cref{section G2A1 subgroup}.
In the proof of \Cref{prop centralizer of A1+G2 is trivial},
we have shown that the centralizer $\mathrm{C}_{\lietype{F}{4}}(X)$ is the other factor $\lietype{G}{2}$.

\paragraph{\texorpdfstring{$[3^{6},1^{8}]$}{}}
We choose $X$ to be the principal $\psu(2)$ of the second copy of $\su(3)$ in the subgroup $\left(\su(3)\times\su(3)\right)/\mu_{3}^{\Delta}$ given in \Cref{section A2A2 subgroup}.
The first copy of $\su(3)$ centralizes $X$ and has dimension $8$.
The restriction of the adjoint representation $\mathfrak{f}_{4}$ of $\lietype{F}{4}$ to $X$ corresponds to the partition $[5,3^{13},1^{8}]$ of $52$,
thus $\mathrm{C}_{\mathrm{F}_{4}}(X)^{\circ}$ is the first copy of $\su(3)$ in $\left(\su(3)\times\su(3)\right)/\mu_{3}^{\Delta}$ by \Cref{lemma determine the neutral centralizer},
whose roots are short roots of $\lietype{F}{4}$.

\paragraph{\texorpdfstring{$[9,6^{2},5]$}{}}
We choose $X_{0}$ to be the principal $\su(2)$ of $\symp(3)$,
and $X$ to be the image of $X_{0}$ in the subgroup $\left(\symp(1)\times\symp(3)\right)/\mu_{2}^{\Delta}$ given in \Cref{section C3A1 subgroup}.
The group $\left(\symp(1)\times\symp(3)\right)/\mu_{2}^{\Delta}$ is defined as $\mathrm{C}_{\lietype{F}{4}}(\gamma)$,
where $\gamma$ is an involution in $\lietype{F}{4}$ and is the image of $(1,-\mathrm{I}_{3})\in \symp(1)\times\symp(3)$ in the quotient group.

Since $X$ contains the element $\gamma$,
the centralizer of $X$ in $\lietype{F}{4}$ is contained in $\mathrm{C}_{\lietype{F}{4}}(\gamma)=\left(\symp(1)\times\symp(3)\right)/\mu_{2}^{\Delta}$,
thus $\mathrm{C}_{\lietype{F}{4}}(X)=\mathrm{C}_{\left(\symp(1)\times\symp(3)\right)/\mu_{2}^{\Delta}}(X)$.
By \Cref{lemma compute the centralizer}, we have: 
\[\mathrm{C}_{\left(\symp(1)\times\symp(3)\right)/\mu_{2}^{\Delta}}(X)=\mathrm{C}_{\symp(1)\times\symp(3)}(1\times X_{0})/\mu_{2}^{\Delta}=(\symp(1)\times \mathrm{Z}(\symp(3)))/\mu_{2}^{\Delta}\simeq \symp(1).\]
Hence $\mathrm{C}_{\lietype{F}{4}}(X)$ is an $\lietype{A}{1}$-subgroup in the class $[2^{6},1^{14}]$ and labeled by $\lietype{A}{1}$.

\paragraph{\texorpdfstring{$[5^{2},4^{2},3,2^{2},1]$}{}}
We choose $X_{0}$ to be the image of
\[\su(2)\hookrightarrow \symp(1)\times \symp(2)\hookrightarrow \symp(3),\] 
where the first arrow is the principal morphism of $\symp(1)\times\symp(2)$, 
and the second is defined as $(x,A)\mapsto \left(\begin{smallmatrix}
    x&0\\
    0&A
\end{smallmatrix}\right)$, for any $x\in\symp(1),A\in\symp(2)$.
Let $X$ be the image of $X_{0}$ in $\left(\symp(1)\times\symp(3)\right)/\mu_{2}^{\Delta}=\mathrm{C}_{\lietype{F}{4}}(\gamma)$.

The element $\gamma$ corresponds to $(1,-\mathrm{I}_{3})$ in $\symp(1)\times\symp(3)$,
thus it is contained in $X$,
so $\mathrm{C}_{\lietype{F}{4}}(X)\subset \mathrm{C}_{\lietype{F}{4}}(\gamma)$ and $\mathrm{C}_{\lietype{F}{4}}(X)=\mathrm{C}_{\left(\symp(1)\times\symp(3)\right)/\mu_{2}^{\Delta}}(X)$.
Again by \Cref{lemma compute the centralizer}, we have:
\[\mathrm{C}_{\left(\symp(1)\times\symp(3)\right)/\mu_{2}^{\Delta}}(X)=\mathrm{C}_{\symp(1)\times \symp(3)}\left(1\times X_{0}\right)/\mu_{2}^{\Delta}=\left(\symp(1)\times \langle\gamma_{1}\rangle\times \langle\gamma_{2}\rangle\right)/\mu_{2}^{\Delta},\]
where $\gamma_{1}=\left(\begin{smallmatrix}
    -1&0&0\\
    0&1&0\\
    0&0&1
\end{smallmatrix}\right)$ and $\gamma_{2}=\left(\begin{smallmatrix}
    1&0&0\\
    0&-1&0\\
    0&0&-1
\end{smallmatrix}\right)$ are two order $2$ elements in $\symp(3)$. 
Hence $\mathrm{C}_{\mathrm{F}_{4}}(X)$ is the product of $\symp(1)$ and an order $2$ group,
and this $\lietype{A}{1}$-subgroup $\symp(1)$ is in the class $[2^{6},1^{14}]$ and labeled by $\lietype{A}{1}$.

\paragraph{\texorpdfstring{$[5,4^{4},1^{5}]$}{}}
We choose a morphism:
\[\su(2)\hookrightarrow \spin(5)\hookrightarrow \spin(5)\times \spin(4)\rightarrow \spin(9)\hookrightarrow \mathrm{F}_{4},\] 
where the first arrow is the principal morphism of $\spin(5)$,
and the subgroup $\spin(9)$ of $\lietype{F}{4}$ is defined as $\mathrm{C}_{\lietype{F}{4}}(\sigma)$ in \Cref{section B4 subgroup}.
This morphism is injective since the factor $\spin(5)$ has zero intersection with the kernel of $\spin(5)\times\spin(4)\rightarrow \spin(9)$,
and we denote its image by $X$.

The element $\sigma$ defined in \Cref{section B4 subgroup} is contained in $X$, 
hence the centralizer of $X$ in $\lietype{F}{4}$ is contained in $\spin(9)$,
thus $\mathrm{C}_{\lietype{F}{4}}(X)=\mathrm{C}_{\spin(9)}(X)$.
Denote the natural projection $\spin(9)\rightarrow \sorth(9)$ by $p$.
The centralizer of $p(X)$ in $\sorth(9)$ is $\sorth(4)$,
the image of $\spin(4)$ under $p$.
By \Cref{lemma compute the centralizer}, we have 
\[\mathrm{C}_{\spin(9)}(X)= p^{-1}(\sorth(4))= \spin(4)\simeq \su(2)\times\su(2),\]
and as a result $\mathrm{C}_{\lietype{F}{4}}(X)$ is the product of two $\lietype{A}{1}$-subgroups in the class $[2^{6},1^{14}]$.

\paragraph{\texorpdfstring{$[5,4^{2},3^{3},2^{2}]$}{}}
We choose an embedding:
\[\su(2)\hookrightarrow \symp(1)\times \sorth(3)\hookrightarrow \symp(1)\times\symp(3),\]
where the first arrow is the principal morphism of $\symp(1)\times \sorth(3)$,
and the embedding $\sorth(3)\rightarrow \symp(3)$ is given by viewing an orthogonal $3\times 3$ matrix as an matrix in $\GL(3,\mathbb{H})$ preserving the standard Hermitian form on $\mathbb{H}^{3}$.
Let $X_{0}$ be the image of this embedding,
and $X$ the image of $X$ in the subgroup $\left(\symp(1)\times\symp(3)\right)/\mu_{2}^{\Delta}=\mathrm{C}_{\lietype{F}{4}}(\gamma)$ of $\lietype{F}{4}$ given in \Cref{section C3A1 subgroup}.

The group $X_{0}$ contains $(-1,\mathrm{I}_{3})$,
thus the element $\gamma$ is contained in $X$.
So the centralizer $\mathrm{C}_{\lietype{F}{4}}(X)$ is contained in $\mathrm{C}_{\lietype{F}{4}}(\gamma)$ and 
$\mathrm{C}_{\lietype{F}{4}}(X)=\mathrm{C}_{\left(\symp(1)\times\symp(3)\right)/\mu_{2}^{\Delta}}(X)$.
By \Cref{lemma compute the centralizer}, we have 
\[ \mathrm{C}_{\left(\symp(1)\times\symp(3)\right)/\mu_{2}^{\Delta}}(X)=\left(\mathrm{Z}(\symp(1))\times \mathrm{C}_{\symp(3)}(\sorth(3))\right)/\mu_{2}^{\Delta}\simeq \mathrm{C}_{\symp(3)}\left(\sorth(3)\right).\]
A $3\times 3$ matrix in $\symp(3)$ commutes with all elements in $\sorth(3)$ if and only if it is a scalar matrix,
thus it must be of the form $h\cdot\mathrm{I}_{3}$ for some norm $1$ element $h\in \mathbb{H}$.
Hence $C_{\mathrm{F}_{4}}(X)\simeq \symp(1)$ is an $\lietype{A}{1}$-subgroup in the class $[3^{3},2^{6},1^{5}]$ and labeled by $\lietype{A}{1}+\widetilde{\lietype{A}{1}}$.

\paragraph{\texorpdfstring{$[4^{2},3^{3},2^{4},1]$}{}}\label{section A1 subgroup Spin(3)+Spin(3) into Spin(9)}
We choose a morphism:
\[\spin(3)\hookrightarrow \spin(3)\times \spin(3)\times \spin(3)\rightarrow\spin(9)=\mathrm{C}_{\lietype{F}{4}}(\sigma)\hookrightarrow \mathrm{F}_{4},\] 
where the first arrow is the diagonal embedding.
This is also an embedding and we denote its image in $\lietype{F}{4}$ by $X$.

Again we have $\mathrm{C}_{\lietype{F}{4}}(X)=\mathrm{C}_{\spin(9)}(X)$,
and by \Cref{lemma compute the centralizer}, 
the centralizer of $X$ in $\spin(9)$ 
is the inverse image in $\spin(9)$ of 
the subgroup
\begin{align*}
    \left\{\left(\begin{matrix}
        a_{11}\mathrm{I}_{3}&a_{12}\mathrm{I}_{3}&a_{13}\mathrm{I}_{3}\\
        a_{21}\mathrm{I}_{3}&a_{22}\mathrm{I}_{3}&a_{23}\mathrm{I}_{3}\\
        a_{31}\mathrm{I}_{3}&a_{32}\mathrm{I}_{3}&a_{33}\mathrm{I}_{3}
    \end{matrix}\right)\,\middle\vert\, \left(\begin{matrix}
        a_{11} &a_{12} &a_{13}\\
        a_{21} & a_{22} & a_{23}\\
        a_{31} & a_{32} & a_{33}
    \end{matrix}\right)\in\sorth(3)\right\}
\end{align*}
of $\sorth(9)$.
Hence $\mathrm{C}_{\lietype{F}{4}}(X)\simeq \spin(3)$ is also an $\lietype{A}{1}$-subgroup in the class $[4^{2},3^{3},2^{4},1]$.

\paragraph{\texorpdfstring{$[3^{3},2^{6},1^{5}]$}{}}\label{section A1 subgroup which is the diagonal quaternionic matrix}
We denote by $X_{0}$ the image of $\symp(1)\hookrightarrow \symp(3)$ given by $h\mapsto h\mathrm{I}_{3}$,
and by $X$ the image of $X$ under the embedding of $\symp(3)$ into the group $\left(\symp(1)\times\symp(3)\right)/\mu_{2}^{\Delta}=\mathrm{C}_{\lietype{F}{4}}(\gamma)$ given in \Cref{section C3A1 subgroup}.

The element $\gamma=(1,-\mathrm{I}_{3})$ (modulo $\mu_{2}^{\Delta}$) is contained in $X$, 
so the centralizer $\mathrm{C}_{\lietype{F}{4}}(X)$ equals $\mathrm{C}_{\left(\symp(1)\times\symp(3)\right)/\mu_{2}^{\Delta}}(X)$. 
By \Cref{lemma compute the centralizer}, we have 
\[\mathrm{C}_{\left(\symp(1)\times\symp(3)\right)/\mu_{2}^{\Delta}}(X)=\mathrm{C}_{\symp(1)\times\symp(3)}(1\times X_{0})/\mu_{2}^{\Delta}=\left(\symp(1)\times\mathrm{C}_{\symp(3)}(X_{0})\right)/\mu_{2}^{\Delta}.\]
A $3\times 3$ matrix $A\in\symp(3)$ commutes with $h\mathrm{I}_{3}$ for all norm $1$ quaternions $h$, 
if and only if all entries of $A$ are real.
Hence $\mathrm{C}_{\symp(3)}(X_{0})=\GL(3,\R)\cap \symp(3)=\orth(3)$,
and as a result 
$\mathrm{C}_{\lietype{F}{4}}(X)\simeq \symp(1)\times \sorth(3)$
is the product of two $\lietype{A}{1}$-subgroups in the classes $[2^{6},1^{14}]$ and $[5,3^{7}]$ respectively. 
These two $\lietype{A}{1}$-subgroups are labeled by $\lietype{A}{1}$ and $\widetilde{\lietype{A}{2}}$ respectively.

\paragraph{\texorpdfstring{$[3,2^{8},1^{7}]$}{}}
We choose a morphism:
\[\spin(3)\hookrightarrow \spin(3)\times \spin(6)\rightarrow \spin(9)=\mathrm{C}_{\lietype{F}{4}}(\sigma)\hookrightarrow \mathrm{F}_{4},\]
which is injective,
and denote by $X$ its image in $\lietype{F}{4}$.

The element $\sigma$ is contained in $X$, 
thus $\mathrm{C}_{\mathrm{F}_{4}}(X)=\mathrm{C}_{\spin(9)}(X)$.
Again by \Cref{lemma compute the centralizer}, 
this centralizer is the group $\spin(6)$ in the morphism we choose. 

\paragraph{\texorpdfstring{$[2^{6},1^{14}]$}{}}
We choose $X$ to be the factor $\symp(1)$ in the $\left(\symp(1)\times\symp(3)\right)/\mu_{2}^{\Delta}$ given in \Cref{section C3A1 subgroup}.
Using \Cref{lemma compute the centralizer},
we obtain that the centralizer $\mathrm{C}_{\lietype{F}{4}}(X)$ is the other factor $\symp(3)$.

\subsection{Connected simple subgroups}\label{section classification simple subgroups}
\tp In this subsection, we will classify connected simple subgroups of $\mathrm{F}_{4}$ whose ranks are larger than $1$,
and then determine their centralizers in $\mathrm{F}_{4}$.

Let $H$ be a proper connected simple subgroup of $\mathrm{F}_{4}$ whose rank is larger than $1$.
It is (up to conjugacy) contained in one of 
the following four maximal proper connected subgroups classified in \Cref{section maximal subgroups of F4}:
\[\spin(9),\left(\symp(1)\times\symp(3)\right)/\mu_{2}^{\Delta},\left(\su(3)\times\su(3)\right)/\mu_{3}^{\Delta},\mathrm{G}_{2}\times\sorth(3).\]
Moreover, by \cite[Theorem 14.2]{Dynkin1952} the group $\mathrm{F}_{4}$ has no simple $S$-subgroup except the principal $\psu(2)$,
so we have:
\begin{lemma}\label{lemma possible embeddings}
    Let $H$ be a proper connected simple subgroup of $\lietype{F}{4}$ with $\operatorname{rank}H\geq 2$,
    then up to conjugacy $H$ is contained in one of the following fixed subgroups of $\lietype{F}{4}$:
    \[\spin(9),\symp(3),\left(\su(3)\times\su(3)\right)/\mu_{3}^{\Delta}.\]
\end{lemma}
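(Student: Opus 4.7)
The plan is to start from the classification of the five maximal proper connected subgroups of $\lietype{F}{4}$ given by \Cref{thm classification of maximal subgroups}, namely
\[
\spin(9),\ (\symp(1)\times\symp(3))/\mu_2^{\Delta},\ (\su(3)\times\su(3))/\mu_3^{\Delta},\ \lietype{G}{2}\times\sorth(3),\ \text{principal }\psu(2),
\]
and reduce each of the five containers to one of the three listed in the lemma. The hypothesis $\operatorname{rank} H \geq 2$ immediately rules out the principal $\psu(2)$, and the containers $\spin(9)$ and $(\su(3)\times\su(3))/\mu_3^{\Delta}$ already appear in the conclusion. It therefore remains to show that if $H$ is contained in $(\symp(1)\times\symp(3))/\mu_2^{\Delta}$ (resp.\ in $\lietype{G}{2}\times\sorth(3)$), then $H$ is already contained in $\symp(3)$ (resp.\ in $\spin(9)$).

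For the case $H\subset (\symp(1)\times\symp(3))/\mu_2^{\Delta}$ I would use the projection
\[
p\colon (\symp(1)\times\symp(3))/\mu_2^{\Delta}\longrightarrow \symp(1)/\mu_2\simeq \sorth(3),
\]
whose kernel is canonically identified with $\symp(3)$. Since $p(H)$ is a connected Lie subgroup of the rank-one group $\sorth(3)$ and $H$ is simple of rank $\geq 2$, the restriction $p|_H$ must be trivial (see the final paragraph), so $H\subset \ker p \simeq \symp(3)$. The same argument applied to the projection $\lietype{G}{2}\times\sorth(3)\twoheadrightarrow\sorth(3)$ yields $H\subset \lietype{G}{2}$. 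I would then observe, using the explicit embedding \eqref{eqn embedding of G2+SO(3) into F4}, that the factor $\lietype{G}{2}$ fixes the element $\mathrm{E}_1=[1,0,0\,;0,0,0]$ (it acts only on the off-diagonal octonionic entries, hence trivially on $\mathrm{E}_1$), so $\lietype{G}{2}$ lies in the stabilizer $\mathrm{C}_{\lietype{F}{4}}(\sigma)\simeq\spin(9)$ described in \Cref{section B4 subgroup}, giving $H\subset \spin(9)$ as required.

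The only slightly technical point, and the one I expect to require a line of explicit justification, is the assertion that any continuous homomorphism $\varphi\colon H\to K$ from a connected compact simple Lie group $H$ of rank $\geq 2$ to a compact Lie group $K$ of rank $1$ is trivial. This follows immediately from simplicity: the kernel of $\varphi$ is a closed normal subgroup whose Lie algebra is an ideal of the simple Lie algebra $\mathfrak{h}$, hence either $0$ or $\mathfrak{h}$; if it were $0$ then $\varphi$ would be an isogeny onto a subgroup of $K$, forcing $\operatorname{rank}K\geq \operatorname{rank}H\geq 2$, a contradiction. With this preliminary in hand the lemma reduces, as above, to a case-by-case inspection of the five maximal subgroups.
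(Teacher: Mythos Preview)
Your argument is correct, but the paper takes a different and somewhat slicker route. Rather than starting from all five maximal connected subgroups and reducing two of them by projection arguments, the paper invokes \cite[Theorem~14.2]{Dynkin1952}: the only simple $S$-subgroup of $\lietype{F}{4}$ is the principal $\psu(2)$. Since $\operatorname{rank}H\geq 2$, $H$ is therefore an $R$-subgroup and hence lies in some proper \emph{regular} subgroup; the maximal proper regular subgroups are exactly $\spin(9)$, $(\symp(1)\times\symp(3))/\mu_2^{\Delta}$, and $(\su(3)\times\su(3))/\mu_3^{\Delta}$. This disposes of the container $\lietype{G}{2}\times\sorth(3)$ in one stroke, without your separate observation that $\lietype{G}{2}\subset\spin(9)$. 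The final passage from $(\symp(1)\times\symp(3))/\mu_2^{\Delta}$ to $\symp(3)$ is left implicit in the paper and is exactly your projection argument.

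What each approach buys: the paper's is shorter and conceptually cleaner (R-subgroup $\Rightarrow$ regular container), but it relies on an additional nontrivial theorem of Dynkin beyond the classification of maximal subgroups already used. Your approach is more elementary and self-contained---it needs only \Cref{thm classification of maximal subgroups} and basic Lie theory---at the cost of handling $\lietype{G}{2}\times\sorth(3)$ by hand. Both are perfectly valid proofs of the lemma.
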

The possible Lie types for $H$ are:
\[\mathrm{A}_{2},\mathrm{A}_{3},\mathrm{A}_{4},\mathrm{B}_{2},\mathrm{B}_{3},\mathrm{B}_{4},\mathrm{C}_{3},\mathrm{C}_{4},\mathrm{D}_{4},\mathrm{G}_{2}.\]
\begin{prop}\label{prop exclude two maximal rank Lie types}
    There are no connected subgroups of $\mathrm{F}_{4}$ whose Lie type is $\mathrm{A}_{4}$ or $\mathrm{C}_{4}$.
\end{prop}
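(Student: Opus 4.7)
My plan is to apply \Cref{lemma possible embeddings}. Suppose $H$ is a connected subgroup of $\lietype{F}{4}$ of type $\lietype{A}{4}$ or $\lietype{C}{4}$; then $H$ is proper, simple, and of rank $4$. By that lemma, up to conjugacy $H$ lies in one of $\spin(9)$, $\symp(3)$, or $(\su(3)\times\su(3))/\mu_{3}^{\Delta}$. The case $K=\symp(3)$ is immediate, since $\symp(3)$ has rank $3$ and therefore cannot contain a rank-$4$ subgroup.

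For the two remaining ambient groups $K$, both of rank $4$, the equality $\operatorname{rank} H = \operatorname{rank} K$ forces $H$ to contain a maximal torus of $K$. The root system $\Phi(H)$ with respect to this common maximal torus is then a closed sub-root system of $\Phi(K)$, and my plan is to verify that neither $\lietype{A}{4}$ nor $\lietype{C}{4}$ appears in this way in either case. For $K=(\su(3)\times\su(3))/\mu_{3}^{\Delta}$ one has $\Phi(K)=\lietype{A}{2}+\lietype{A}{2}$, and every sub-root system decomposes as a disjoint union of sub-root systems of the two factors, whose irreducible components can only be of type $\lietype{A}{1}$ or $\lietype{A}{2}$; this rules out both $\lietype{A}{4}$ and $\lietype{C}{4}$ at once.

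The nontrivial case is $K=\spin(9)$, with $\Phi(K)=\lietype{B}{4}$. For $\lietype{C}{4}$, a counting argument suffices: $\lietype{B}{4}$ and $\lietype{C}{4}$ each contain $32$ roots, so a hypothetical inclusion of root systems would force equality of the underlying sets, contradicting the fact that $\lietype{B}{4}$ and $\lietype{C}{4}$ are non-isomorphic. For $\lietype{A}{4}$, which is simply-laced, its $20$ roots must all be long or all short in $\lietype{B}{4}$: the short roots of $\lietype{B}{4}$ form $4\lietype{A}{1}$ with only $8$ roots, which is too few, while the long roots form a $\lietype{D}{4}$; so $\lietype{A}{4}$ would have to embed as a closed sub-root system of $\lietype{D}{4}$. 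A Borel--de Siebenthal analysis of the extended Dynkin diagram of $\lietype{D}{4}$ (in which the only vertex of label $\geq 2$ is the central one, of label $2$) shows that the only proper maximal rank-$4$ closed sub-root system of $\lietype{D}{4}$ is $4\lietype{A}{1}$, so the rank-$4$ closed sub-root systems of $\lietype{D}{4}$ are just $\lietype{D}{4}$ itself and $4\lietype{A}{1}$; neither is $\lietype{A}{4}$.

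The only real obstacle is the sub-root-system bookkeeping in the $\spin(9)$ case, which is routine but requires some care in separating the long-root and short-root subsystems. A conceptually cleaner, alternative route for that case would be representation-theoretic: any embedding $H\hookrightarrow \spin(9)$ composes with $\spin(9)\to \sorth(9)$ to yield a $9$-dimensional orthogonal representation of some quotient of $H$ by a central subgroup, yet the nontrivial self-dual irreducible representations of $\su(5)$ have dimension at least $24$ (the adjoint), and those of $\symp(4)$ have dimension at least $27$ (the fundamental representation $\vrep{\varpi_{2}}$), so no faithful $9$-dimensional orthogonal representation of $\su(5)$ or $\symp(4)$ can exist.
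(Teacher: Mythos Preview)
Your main argument is correct and takes a genuinely different route from the paper's. The paper reduces at once to the ambient group $\spin(9)$ (silently dismissing $\symp(3)$ by rank and $(\su(3)\times\su(3))/\mu_{3}^{\Delta}$ by dimension), then disposes of $\lietype{C}{4}$ by the dimension count $\dim\symp(4)=36=\dim\spin(9)$ (an embedding of equal-dimensional connected groups is an isomorphism, absurd) and of $\lietype{A}{4}$ by observing that an $\lietype{A}{4}$-type group admits no nontrivial self-dual $9$-dimensional representation. Your approach via closed sub-root systems and Borel--de Siebenthal inside $\lietype{B}{4}$ is longer but entirely self-contained, and it has the virtue of treating all three ambient subgroups from \Cref{lemma possible embeddings} explicitly rather than implicitly. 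The paper's argument is quicker; yours is more uniform.

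One slip in your closing ``alternative route'': it is not true that the nontrivial self-dual irreducible representations of $\symp(4)$ have dimension at least $27$. Every irreducible representation of $\symp(4)$ is self-dual, and the standard one has dimension $8$. What you need is that the nontrivial \emph{orthogonal} irreducibles (those carrying an invariant symmetric form) have dimension at least $27$; the $8$-dimensional standard representation is symplectic, so in any orthogonal representation it must occur with even multiplicity, and $9<16$ then forces the representation to be a sum of trivial summands. With this correction your alternative route is essentially the paper's argument for $\lietype{A}{4}$, extended to cover $\lietype{C}{4}$ as well (whereas the paper handles $\lietype{C}{4}$ by the dimension count instead).
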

\begin{proof}
    Suppose that $\lietype{F}{4}$ admits a connected subgroup $H$ with type $\lietype{A}{4}$ or $\lietype{C}{4}$.
    Since $\mathrm{rank}(H)=4$,
    by \Cref{lemma possible embeddings}
    there exists an embedding of $H$ into $\spin(9)$.
    
    The case that $H$ is of type $\lietype{C}{4}$ is impossible,
    because $\dim H=36=\dim \spin(9)$ but $H$ and $\spin(9)$ have different Lie types.
    Hence $H$ has type $\lietype{A}{4}$.
    The morphism $H\hookrightarrow \spin(9)\rightarrow\sorth(9)$ gives $H$ a  self-dual $9$-dimensional representation of $H$,
    which leads to contradiction 
    since the $\lietype{A}{4}$-type group $H$ does not admit such a representation.
\end{proof}
\subsubsection{Cases except \texorpdfstring{$\lietype{A}{2}$}{PDFstring}}\label{section classification of simple subgroups except A2}
\tp In the remaining possible Lie types for connected simple subgroups of $\lietype{F}{4}$,
the type $\lietype{A}{2}$ is more complicated.
So we first look at the other types:
\begin{prop}\label{prop classification of simple subgroups except A2}
        (1) For each type among
        \[\mathrm{A}_{3},\mathrm{B}_{2},\mathrm{B}_{3},\mathrm{B}_{4},\mathrm{C}_{3},\mathrm{D}_{4},\mathrm{G}_{2},\]
        there exists a simply-connected subgroup of $\lietype{F}{4}$ with this type.
        \\
        (2) Let $H$ be a connected compact Lie group such that 
        it admits an embedding into $\lietype{F}{4}$
        and its Lie type is among
        \[\mathrm{A}_{3},\mathrm{B}_{2},\mathrm{B}_{3},\mathrm{B}_{4},\mathrm{C}_{3},\mathrm{D}_{4},\mathrm{G}_{2}.\]
        Then $H$ is simply-connected and the embedding $H\hookrightarrow \lietype{F}{4}$ is unique up to conjugacy.                 
\end{prop}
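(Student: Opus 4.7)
The plan is to combine the explicit descriptions of the three candidate ambient subgroups supplied by Lemma 4.4.2 with the strong rigidity result of Proposition 4.2.2.

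For part (1) I would construct, for each of the seven Lie types, a simply-connected subgroup of $\lietype{F}{4}$ sitting inside one of the maximal connected subgroups treated in Section 4.3. Concretely: for type $\lietype{B}{4}$, take $\spin(9)=\mathrm{C}_{\lietype{F}{4}}(\sigma)$ itself; for $\lietype{D}{4}$, take $\spin(8)\subset\spin(9)$ as the stabilizer of a nonzero vector in the representation $\vrep{9}$; for $\lietype{B}{3}$, take $\spin(7)\subset\spin(8)\subset\spin(9)$; for $\lietype{A}{3}$, take $\su(4)\simeq\spin(6)\subset\spin(9)$; for $\lietype{C}{3}$, take the factor $1\times\symp(3)$ inside $\left(\symp(1)\times\symp(3)\right)/\mu_{2}^{\Delta}$, which injects since $\mu_{2}^{\Delta}$ meets it trivially; for $\lietype{B}{2}$, take $\symp(2)\simeq\spin(5)$ embedded in $\symp(3)$ as $A\mapsto\mathrm{diag}(1,A)$; for $\lietype{G}{2}$, take the $\lietype{G}{2}$ factor of $\lietype{G}{2}\times\sorth(3)$. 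Each of these is simply-connected as an abstract compact Lie group.

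For part (2), fix an embedding $\phi:H\hookrightarrow\lietype{F}{4}$ of a connected compact group of one of the listed types. By Lemma 4.4.2 we may assume $\phi(H)$ is contained in $\spin(9)$, $\symp(3)$, or $\left(\su(3)\times\su(3)\right)/\mu_{3}^{\Delta}$, and dimension/rank considerations eliminate many possibilities at once. For instance $\lietype{B}{4}$ only fits into $\spin(9)$ and must equal it, and $\lietype{C}{3}$ cannot fit into $\spin(9)$ because $\symp(3)$ admits no $9$-dimensional orthogonal representation. In each remaining ambient subgroup, embeddings of $H$ are classified by an elementary piece of classical representation theory: embeddings into $\spin(9)$ correspond to self-dual orthogonal $9$-dimensional representations of $H$, embeddings into $\symp(3)$ to $6$-dimensional symplectic representations, and embeddings into $\left(\su(3)\times\su(3)\right)/\mu_{3}^{\Delta}$ to pairs of $3$-dimensional representations whose central characters agree on $\mu_{3}^{\Delta}$. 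For each type the resulting list of candidates is very short.

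For each candidate I would then use the branching formulas \eqref{eqn restriction of st rep to B4}, \eqref{eqn restriction of st rep to A1C3}, \eqref{eqn restriction of st rep to A2A2 I} and \eqref{eqn restriction of st rep to A2A2 II} to compute $\mathrm{V}_{\varpi_{4}}|_{H}$ explicitly, and compare it with the restriction coming from the embedding constructed in part (1). I expect to find that all candidates produce the same $H$-module, so Proposition 4.2.2 gives $\lietype{F}{4}$-conjugacy with the reference embedding. Moreover, reading off the decomposition of $\mathrm{V}_{\varpi_{4}}|_{H}$ already exhibits a faithful representation of the simply-connected cover (the spin representation for $\lietype{B}{n}$ and $\lietype{D}{4}$, the standard $\vrep{6}$ for $\lietype{C}{3}$, the standard $\vrep{4}$ for $\lietype{A}{3}$), which forces $\phi$ to factor through the simply-connected cover and hence $H$ itself to be simply-connected.

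The main obstacle will be type $\lietype{D}{4}$, where triality produces three a priori distinct embeddings of $\spin(8)$ into $\spin(9)$ depending on which of the three $8$-dimensional representations is realized in the summand $\vrep{9}$ versus $\vrep{\spin}$; and the types $\lietype{A}{3}$ and $\lietype{B}{3}$, which sit in $\spin(9)$ in several apparent ways (via $\spin(6)\subset\spin(9)$ or via a half-spin representation for $\lietype{A}{3}$, and as the stabilizer of a vector or via $\spin(7)\subset\spin(8)$ associated to the spin representation for $\lietype{B}{3}$). In all such cases the $\lietype{F}{4}$-rigidity provided by Proposition 4.2.2, combined with the branching formula \eqref{eqn restriction of st rep to B4}, should collapse the several candidates into a single $\lietype{F}{4}$-conjugacy class.
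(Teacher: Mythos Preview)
Your proposal is correct and follows essentially the same strategy as the paper: construct a reference embedding $\phi_{0}$ for each type, use Lemma~\ref{lemma possible embeddings} to place any embedding inside $\spin(9)$, $\symp(3)$ or $\left(\su(3)\times\su(3)\right)/\mu_{3}^{\Delta}$, enumerate the few possible restrictions of $\jord_{0}$ via the branching formulas, and invoke Proposition~\ref{prop just one rep is enough for conjugacy} to conclude uniqueness and simply-connectedness. Your identification of the triality issue for $\lietype{D}{4}$ and the multiple apparent embeddings for $\lietype{A}{3}$ and $\lietype{B}{3}$ as the cases requiring care is exactly right, and the paper resolves them just as you anticipate.
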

Before proving this proposition case by case,
we explain our strategy.
Fixing a Lie type, we first construct an embedding $\phi_{0}$ from the simply-connected compact Lie group $H_{0}$ of the given type into $\lietype{F}{4}$. 
We claim that to prove \Cref{prop classification of simple subgroups except A2}(2) for this Lie type, 
it suffices to show that for any connected simple compact Lie group $H$ of the same type with $H_{0}$,
i.e. $H$ is isomorphic to the quotient of $H_{0}$ by a finite central subgroup,
and any embedding $\phi:H\rightarrow\lietype{F}{4}$,
the restriction of the $26$-dimensional irreducible representation $\jord_{0}$ along $\phi$ is unique, 
up to equivalence of $H_{0}$-representations. 
Here we view the restriction of $\jord_{0}$ along $\phi:H\rightarrow \lietype{F}{4}$ as a representation of $H_{0}$
by the composition with a central isogeny $H_{0}\rightarrow H$.
\begin{proof}[Proof of the claim]
    For a connected compact Lie group $H$ of the same Lie type as $H_{0}$ and an embedding $\phi:H\hookrightarrow \lietype{F}{4}$,
    we can lift $\phi$ to a morphism $\phi\circ i:H_{0}\rightarrow \lietype{F}{4}$ via a central isogeny $i:H_{0}\rightarrow H$.
    This morphism $\phi\circ i$ is conjugate to $\phi_{0}$ by the uniqueness of $\jord_{0}|_{H_{0}}$ and \Cref{prop just one rep is enough for conjugacy},
    thus $i$ is injective, which implies that $H$ is also simply-connected.
    For any two embeddings $\phi,\phi^{\prime}:H\hookrightarrow \lietype{F}{4}$,
    applying \Cref{prop just one rep is enough for conjugacy} to $\phi\circ i$ and $\phi^{\prime}\circ i$,
    we have $\phi\circ i$ and $\phi^{\prime}\circ i$ are conjugate in $\lietype{F}{4}$,
    thus $\phi$ and $\phi^{\prime}$ are conjugate.
\end{proof}

\paragraph{\texorpdfstring{$\mathrm{B}_{4}$}{}}\label{section simple factor B4}
In this case $H_{0}\simeq\spin(9)$ and we take $\phi_{0}$ to be $H_{0}\simeq \spin(9)\hookrightarrow \lietype{F}{4}$, 
where $\spin(9)\hookrightarrow \lietype{F}{4}$ is constructed in \Cref{section B4 subgroup}.

For any embedding $\phi$ from a $\lietype{B}{4}$-type connected compact Lie group $H$ into $\lietype{F}{4}$,
by \Cref{lemma possible embeddings}
the image $\mathrm{Im}(\phi)$ (up to conjugate) is a subgroup of the $\spin(9)$ in $\lietype{F}{4}$,
thus $\phi$ factors through an embedding $H\rightarrow \spin(9)$.
This embedding must be an isomorphism,
so the restrictions of $\jord_{0}$ along $\phi_{0}$ and $\phi$ are equivalent as $H_{0}$-representations.

\paragraph{\texorpdfstring{$\mathrm{D}_{4}$}{}}\label{section simple factor D4}
In this case $H_{0}\simeq \spin(8)$ 
and we take $\phi_{0}$ to be the composition of the natural embedding $\spin_{8}\hookrightarrow \spin(9)$ with $\spin(9)\hookrightarrow \lietype{F}{4}$.

For any embedding $\phi$ from a $\lietype{D}{4}$-type connected compact Lie group $H$ into $\lietype{F}{4}$,
by \Cref{lemma possible embeddings},
$\phi$ (up to conjugacy) factors through an embedding $H\rightarrow \spin(9)$.
The restriction of the $9$-dimensional irreducible representation $\vrep{9}$ to $H$ 
is isomorphic to either $\triv+\vrep{8}$ or $\triv+\vrep{\spin}^{+}$ or $\triv+\vrep{\spin}^{-}$,
where $\vrep{8}$ is the standard $8$-dimensional representation of $\spin(8)$,
and $\vrep{\spin}^{\pm}$ are two $8$-dimensional spinor representations of $\spin(8)$.
For those three possibilities,
we obtain the same equivalence class of $\jord_{0}|_{H}$,
which is equivalent to $\triv^{\oplus 2}+\vrep{8}+\vrep{\spin}^{+}+\vrep{\spin}^{-}$ as $H_{0}$-representations.
This representation is stable under the outer automorphisms of $H_{0}$,
so the restriction of $\jord_{0}$ along $\phi$
is unique, up to equivalence of $H_{0}$-representations.

\paragraph{\texorpdfstring{$\mathrm{A}_{3}$}{}}\label{section simple factor A3}
In this case $H_{0}\simeq \su(4)$,
and we take $\phi_{0}$ to be the composition of the natural embedding $\su(4)\simeq \spin(6)\hookrightarrow \spin(9)$ with $\spin(9)\hookrightarrow \lietype{F}{4}$.

For any embedding $\phi$ from a $\lietype{A}{3}$-type connected compact Lie group $H$ into $\lietype{F}{4}$,
by \Cref{lemma possible embeddings},
$\phi$ (up to conjugacy) factors through an embedding from $H$ to $\symp(3)$ or $\spin(9)$.

If $\phi$ factors through $\symp(3)$,
then the image of $\phi$ gives a $\lietype{A}{3}$-type subgroup of $\symp(3)$.
This subgroup of $\symp(3)$ must be regular,
but this contradicts with the Borel-de Siebenthal theory.

If $\phi$ factors through $\spin(9)$,
the standard representation $\vrep{9}$ of $\spin(9)$ gives a self-dual $9$-dimensional representation of $H$.
Up to equivalence,
there are two possibilities for the restriction of $\vrep{9}$ to $H$:
\[\triv^{\oplus 3}+\wedge^{2}\vrep{4} \text{ or }\triv+\vrep{4}+\vrep{4}^{\prime},\]
where $\vrep{4}$ is the standard $4$-dimensional representation of $\su(4)$ and $\vrep{4}^{\prime}$ is its dual.
For both cases,
the restriction of the irreducible representation $\jord_{0}$ of $\lietype{F}{4}$ along $\phi$ is isomorphic to 
\[\triv^{\oplus 4}+\vrep{4}^{\oplus 2}+(\vrep{4}^{\prime})^{\oplus 2}+\wedge^{2}\vrep{4}.\]
This representation is stable under the outer automorphism of $H_{0}$,
so the restriction of $\jord_{0}$ along $\phi$ is unique,
up to equivalence of $H_{0}$-representations.

\paragraph{\texorpdfstring{$\mathrm{B}_{3}$}{}}\label{section simple factor B3} 
In this case $H_{0}\simeq \spin(7)$,
and we take $\phi_{0}$ to be the composition of the natural embedding $\spin(7)\hookrightarrow \spin(9)$ with $\spin(9)\hookrightarrow \lietype{F}{4}$.

For any embedding $\phi$ from a $\lietype{B}{3}$-type connected compact Lie group $H$ into $\lietype{F}{4}$,
by \Cref{lemma possible embeddings} and the Borel-de Siebenthal theory,
$\phi$ (up to conjugacy) factors through an embedding from $H$ to $\spin(9)$.
The restriction of the standard representation $\vrep{9}$ of $\spin(9)$ to $H$ must be isomorphic to 
either $\triv^{\oplus 2}+\vrep{7}$ or $\triv+\vrep{\spin}$,
where $\vrep{7}$ is the standard $7$-dimensional representation of $\spin(7)$,
and $\vrep{\spin}$ is the $8$-dimensional spinor representation of $\spin(7)$.
For both cases,
the restriction of the irreducible representation $\jord_{0}$ of $\lietype{F}{4}$ along $\phi$ is isomorphic to 
\[\triv^{\oplus 3}+\vrep{7}+\vrep{\spin}^{\oplus 2}.\]
Hence the restriction of $\jord_{0}$ along $\phi$ is unique,
up to equivalence of $H_{0}$-representations.

\paragraph{\texorpdfstring{$\mathrm{C}_{3}$}{}}\label{section simple factor C3}
In this case $H_{0}\simeq \symp(3)$,
and we take $\phi_{0}$ to be $\symp(3)\hookrightarrow\left(\symp(1)\times\symp(3)\right)/\mu_{2}^{\Delta}\hookrightarrow \lietype{F}{4}$,
where the subgroup $\left(\symp(1)\times\symp(3)\right)/\mu_{2}^{\Delta}$ is given in \Cref{section C3A1 subgroup}.

For any embedding $\phi$ from a $\lietype{C}{3}$-type connected compact Lie group $H$ into $\lietype{F}{4}$,
by \Cref{lemma possible embeddings},
$\phi$ (up to conjugacy) factors through a central-kernel morphism from $H_{0}$ to $\symp(3)$ or $\spin(9)$.

If $\phi$ factors through $\spin(9)$,
then the standard representation $\vrep{9}$ of $\spin(9)$
induces an orthogonal $9$-dimensional representation of $\symp(3)$.
However, each non-trivial irreducible orthogonal representation of $\symp(3)$ has dimension larger than $9$,
which leads to a contradiction.

If $\phi$ factors through $\symp(3)$,
then the embedding $H\rightarrow \symp(3)$ must be an isomorphism.
This implies that the restriction of the irreducible representation $\jord_{0}$ of $\lietype{F}{4}$ along $\phi$ is isomorphic to $\vrep{6}^{\oplus 2}+\vrep{14}$,
where $\vrep{6}$ and $\vrep{14}$ stand for the same representations in \eqref{eqn restriction of st rep to A1C3}.
Hence the restriction of $\jord_{0}$ along $\phi$ is unique,
up to equivalence of $H_{0}$-representations.

\paragraph{\texorpdfstring{$\mathrm{B}_{2}$}{}}\label{section simple factor B2}
In this case $H_{0}\simeq \symp(2)\simeq\spin(5)$,
and we take $\phi_{0}$ to be the composition of the natural embedding 
$\symp(2)\hookrightarrow \symp(3)\hookrightarrow \left(\symp(1)\times\symp(3)\right)/\mu_{2}^{\Delta}$
with the embedding $\left(\symp(1)\times\symp(3)\right)/\mu_{2}^{\Delta}\hookrightarrow \lietype{F}{4}$ given in \Cref{section C3A1 subgroup}.

For any embedding $\phi$ from a $\lietype{B}{2}$-type connected compact Lie group $H$ into $\lietype{F}{4}$,
by \Cref{lemma possible embeddings} and the Borel-de Siebenthal theory,
$\phi$ (up to conjugacy) factors through an embedding from $H$ to $\symp(3)$ or $\spin(9)$.

If $\phi$ factors through $\symp(3)$,
then the restriction of the standard representation $\vrep{6}$ of $\symp(3)$ to $H$
must be isomorphic to $\triv^{\oplus 2}+\vrep{4}$,
where $\vrep{4}$ is the standard $4$-dimensional symplectic representation of $\symp(2)$.
The restriction of the irreducible representation $\jord_{0}$ along $\phi$ is isomorphic to 
$\triv^{\oplus 5}+\vrep{4}^{\oplus 4}+\vrep{5}$,
where $\vrep{5}$ is the standard $5$-dimensional orthogonal representation of $\spin(5)$.

If $\phi$ factors through $\spin(9)$,
then the restriction of the standard representation $\vrep{9}$ to $H$ must be isomorphic to 
$\triv^{\oplus 4}+\vrep{5}$ or $\triv+\vrep{4}^{\oplus 2}$.
For these two possibilities,
the restriction of $\jord_{0}$ along $\phi$ is isomorphic to $\triv^{\oplus 5}+\vrep{4}^{\oplus 4}+\vrep{5}$.
Hence the restriction of $\jord_{0}$ along $\phi$ is unique,
up to equivalence of $H_{0}$-representations.

\paragraph{\texorpdfstring{$\mathrm{G}_{2}$}{}}\label{section simple factor G2}
In this case $H_{0}\simeq \lietype{G}{2}$,
and we take $\phi_{0}$ to be the embedding $\lietype{G}{2}\hookrightarrow\lietype{G}{2}\times\sorth(3)\hookrightarrow \lietype{F}{4}$,
as given in \Cref{section G2A1 subgroup}.

Combining \Cref{lemma possible embeddings} and the fact that all non-trivial representations of $\lietype{G}{2}$ have dimension larger than $6$,
any embedding $\phi$ from a $\lietype{G}{2}$-type connected compact Lie group $H$ into $\lietype{F}{4}$
(up to conjugacy) factors through an embedding from $H$ to $\spin(9)$.
The restriction of the standard representation $\vrep{9}$ of $\spin(9)$ to $H$ must be isomorphic to $\triv^{\oplus 2}+ \vrep{7}$,
where $\vrep{7}$ is the same as in \eqref{eqn restriction of the st rep to G2A1}.
So the restriction of the representation $\jord_{0}$ of $\lietype{F}{4}$ along $\phi$
must be isomorphic to $\triv^{\oplus 5}+\vrep{7}^{\oplus 3}$.
Hence the restriction of $\jord_{0}$ along $\phi$ is unique,
up to equivalence of $H_{0}$-representations.

\subsubsection{The case \texorpdfstring{$\mathrm{A}_{2}$}{}}\label{section simple factor A2}
\tp For the Lie type $\lietype{A}{2}$,
our idea is the same with the proof of \Cref{prop classification of simple subgroups except A2},
but this time we have several conjugacy classes of embeddings from a $\lietype{A}{2}$-type group to $\lietype{F}{4}$.
\begin{prop}\label{prop classification of A2 subgroups}
    (1) There are $3$ conjugacy classes of embeddings from $\su(3)$ to $\lietype{F}{4}$,
    \\
    (2) There is a unique conjugacy class of embeddings from $\psu(3)=\su(3)/\mathrm{Z}(\su(3))$ to $\lietype{F}{4}$.
\end{prop}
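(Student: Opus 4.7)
The approach parallels that of \Cref{prop classification of simple subgroups except A2}: by \Cref{prop just one rep is enough for conjugacy}, the conjugacy class of any embedding of a connected compact Lie group into $\lietype{F}{4}$ is determined by the restriction of the $26$-dimensional irreducible representation $\jord_{0}$. By \Cref{lemma possible embeddings}, any $\lietype{A}{2}$-type subgroup of $\lietype{F}{4}$ is, up to conjugacy, contained in one of $\spin(9)$, $\symp(3)$, or $(\su(3)\times\su(3))/\mu_{3}^{\Delta}$, so I would enumerate the embeddings $\su(3)\hookrightarrow H$ and $\psu(3)\hookrightarrow H$ for each such ambient $H$ and identify the distinct $\jord_{0}$-restrictions.

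The cleanest analysis is through $H=(\su(3)\times\su(3))/\mu_{3}^{\Delta}$: any morphism $\su(3)\to\su(3)\times\su(3)$ is, up to inner automorphism of each factor, of the form $g\mapsto(\phi_{1}(g),\phi_{2}(g))$ with each $\phi_{i}$ trivial, the identity, or the outer involution $g\mapsto g^{-T}$. After checking which pairs descend to injective maps modulo $\mu_{3}^{\Delta}$ and identifying those related by an outer automorphism of the source, I would obtain three inequivalent $\su(3)$-embeddings (via the short-root factor, the long-root factor, and the anti-diagonal $g\mapsto(g,g^{-T})$) and a single $\psu(3)$-embedding via the diagonal. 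Using the decomposition \eqref{eqn restriction of st rep to A2A2 I}, the corresponding restrictions of $\jord_{0}$ would be
\begin{align*}
\mathfrak{sl}_{3}+\vrep{3}^{\oplus 3}+(\vrep{3}^{\prime})^{\oplus 3},\quad \triv^{\oplus 8}+\vrep{3}^{\oplus 3}+(\vrep{3}^{\prime})^{\oplus 3},\quad \mathfrak{sl}_{3}+\sym^{2}\vrep{3}+\sym^{2}\vrep{3}^{\prime}+\vrep{3}+\vrep{3}^{\prime},
\end{align*}
and $\mathfrak{sl}_{3}^{\oplus 3}+\triv^{\oplus 2}$, all pairwise inequivalent as $\su(3)$-representations.

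Then I would verify that no new conjugacy classes come from embeddings through $\spin(9)$ or $\symp(3)$. For $\spin(9)$, the restriction of $\vrep{9}$ to the source must be a self-dual $9$-dimensional representation, leaving only $\triv+\mathfrak{sl}_{3}$ (which factors through $\psu(3)$ via the adjoint) or $\triv^{\oplus 3}+\vrep{3}+\vrep{3}^{\prime}$; in the latter case the embedding factors as $\su(3)\hookrightarrow\su(4)\simeq\spin(6)\hookrightarrow\spin(9)$, and the branching $\vrep{\spin}|_{\spin(6)\times\spin(3)}=(\vrep{4}+\vrep{4}^{\prime})\otimes\st$ recovers the ``long factor'' restriction. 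For $\symp(3)$, the only possibility is $\vrep{6}|_{\su(3)}=\vrep{3}+\vrep{3}^{\prime}$ (and no $\psu(3)$-embedding exists, since $\psu(3)$ admits no $6$-dimensional symplectic representation), and computing $\vrep{14}|_{\su(3)}$ from $\wedge^{2}\vrep{6}|_{\su(3)}$ minus the symplectic invariant recovers the ``short factor'' restriction.

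The main obstacle will be verifying in part (2) that the adjoint embedding $\psu(3)\hookrightarrow\spin(9)$ has the same $\jord_{0}$-restriction as the diagonal embedding, i.e., that $\vrep{\spin}|_{\psu(3)}\simeq\mathfrak{sl}_{3}^{\oplus 2}$. Using $\vrep{\spin}|_{\spin(8)}=\vrep{\spin}^{+}+\vrep{\spin}^{-}$, this reduces to showing that each $8$-dimensional half-spinor of $\spin(8)$ restricts to $\mathfrak{sl}_{3}$ on the adjoint $\psu(3)\subset\sorth(8)$, which is a standard consequence of triality (the order-$3$ outer automorphism of $\spin(8)$ cyclically permutes $\vrep{8},\vrep{\spin}^{+},\vrep{\spin}^{-}$ and preserves this $\psu(3)$ setwise, since the latter is contained in the triality-fixed subgroup $\lietype{G}{2}\subset\spin(8)$). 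Alternatively, one can verify this by a direct character computation on a regular element of a maximal torus of $\psu(3)$, after which \Cref{prop just one rep is enough for conjugacy} yields the unique conjugacy class asserted in (2).
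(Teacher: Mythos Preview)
Your approach mirrors the paper's: enumerate the $\lietype{A}{2}$-embeddings through $(\su(3)\times\su(3))/\mu_3^\Delta$, obtain four distinct $\jord_0$-restrictions, then check that the routes through $\symp(3)$ and $\spin(9)$ contribute nothing new. The paper is terser on this last step, simply asserting the $\jord_0$-restrictions for the two $\spin(9)$ branchings, whereas you work them out explicitly.

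One correction: the triality-via-$\lietype{G}{2}$ argument does not hold as stated. The $\lietype{A}{2}$-subgroup of $\lietype{G}{2}$ is an $\su(3)$ (not a $\psu(3)$), and under $\lietype{G}{2}\subset\spin(7)\subset\spin(8)$ its $\vrep{8}$-restriction is $\triv^{\oplus 2}+\vrep{3}+\vrep{3}^{\prime}$, not $\mathfrak{sl}_3$; hence the adjoint $\psu(3)\subset\spin(8)$ is \emph{not} contained in $\lietype{G}{2}$. Your alternative of a direct character computation is correct and suffices: identifying the six nonzero weights of $\mathfrak{sl}_3$ with $\pm\epsilon_1,\pm\epsilon_2,\pm\epsilon_3$ in the Cartan of $\sorth(8)$ (so that $\epsilon_4=0$ and $\epsilon_2=\epsilon_1+\epsilon_3$), the half-spinor weights $\tfrac12(\pm\epsilon_1\pm\epsilon_2\pm\epsilon_3)$ reproduce exactly the weights of $\mathfrak{sl}_3$, giving $\vrep{\spin}^\pm|_{\psu(3)}\simeq\mathfrak{sl}_3$ as required.
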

\begin{proof}
    By \Cref{lemma possible embeddings},
    any embedding $\phi$ from a connected $\lietype{A}{2}$-type compact Lie group $H$ to $\lietype{F}{4}$
    (up to conjugacy) factors through $\spin(9)$ or $\symp(3)$ or $\left(\su(3)\times\su(3)\right)/\mu_{2}^{\Delta}$.

    We start from the case that $\phi$ factors through $\left(\su(3)\times\su(3)\right)/\mu_{3}^{\Delta}$.
    Fix an embedding $\iota:\left(\su(3)\times\su(3)\right)/\mu_{3}^{\Delta}\hookrightarrow \lietype{F}{4}$ such that 
    the restriction of the irreducible representation $\jord_{0}$ of $\lietype{F}{4}$ along this embedding is isomorphic to \eqref{eqn restriction of st rep to A2A2 II}.
    We denote the outer automorphism of $\su(3)$ by $\theta$.
    It is easy to classify the conjugacy classes of embeddings $\psi:H\hookrightarrow \left(\su(3)\times\su(3)\right)/\mu_{3}^{\Delta}$,
    where $H$ is a connected $\lietype{A}{2}$-type compact Lie group, i.e. $H\simeq \su(3)$ or $\psu(3)$.
    We list the conjugacy classes as follows:
    \begin{table}[H]
        \centering
        \renewcommand{\arraystretch}{1.3}
        \begin{tabular}{|c|c|c|c|}
        \hline 
        Index &$H$ & $\psi$ & The restriction of $\jord_{0}$ along $\phi=\iota\circ\psi$ \\ \hline
        $1$&$\su(3)$ & $g\mapsto (g,1)$ & $(\vrep{3}+\vrep{3}^{\prime})^{\oplus 3}+\mathfrak{sl}_{3}$ \\ \hline 
        $2$&$\su(3)$ & $g\mapsto (1,g)$ & $\triv^{\oplus 8}+(\vrep{3}+\vrep{3}^{\prime})^{\oplus 3}$\\ \hline
        $3$&$\psu(3)$& $g\mapsto (g,g)$ & $\triv^{\oplus 2}+\mathfrak{sl}_{3}^{\oplus 3}$\\ \hline
        $4$&$\su(3)$ & $g\mapsto (g,\theta(g))$ & $\vrep{3}+\vrep{3}^{\prime}+\sym^{2}\vrep{3}+\sym^{2}\vrep{3}^{\prime}+\mathfrak{sl}_{3}$ \\ \hline
    \end{tabular}
    \caption{Embeddings from $\lietype{A}{2}$-type connected compact Lie groups to $(\su(3)\times\su(3))/\mu_{3}^{\Delta}$}\label{table A2 subgroups in A2+A2}
    \end{table}
    The representations of $\su(3)$ appearing in this table have been explained in \Cref{section A2A2 subgroup}.
    If we choose the embedding $\iota$ to be the one corresponding to \eqref{eqn restriction of st rep to A2A2 I},
    then by \Cref{prop just one rep is enough for conjugacy} we get the same conjugacy classes of embeddings.

    If $\phi$ factors through $\symp(3)$,
    the standard representation $\vrep{6}$ of $\symp(3)$ gives a  self-dual $6$-dimensional representation of $H$,
    thus the restriction of $\vrep{6}$ to $H$ must be isomorphic to $\vrep{3}+\vrep{3}^{\prime}$.
    So the restriction of $\jord_{0}$ to $H$
    is isomorphic to $(\vrep{3}+\vrep{3}^{\prime})^{\oplus 3}+\mathfrak{sl}_{3}$.

    If $\phi$ factors through $\spin(9)$,
    the standard representation $\vrep{9}$ of $\spin(9)$ gives a self-dual $9$-dimensional representation of $H$,
    thus the restriction of $\vrep{9}$ to $H$ must be isomorphic to $\triv^{\oplus 3}+\vrep{3}+\vrep{3}^{\prime}$ or $\triv+\mathfrak{sl}_{3}$.
    For the first case, the restriction of $\jord_{0}$ to $H$ is isomorphic to $\triv^{\oplus 8}+(\vrep{3}+\vrep{3}^{\prime})^{\oplus 3}$,
    and for the second case, the restriction of $\jord_{0}$ to $H$ is isomorphic to $\triv^{\oplus 2}+\mathfrak{sl}_{3}^{\oplus 3}$.

    In conclusion, combining \Cref{prop just one rep is enough for conjugacy} with our analysis on the restriction of $\jord_{0}$,
    we get that every embedding from a connected $\lietype{A}{2}$-type compact Lie group to $\lietype{F}{4}$ 
    is conjugate to one of the embeddings $\phi=\iota\circ \psi$ in \Cref{table A2 subgroups in A2+A2}.    
\end{proof}
\subsubsection{Centralizers}\label{section centralizers of simple subgroups}
\tp Similarly with the arguments in \Cref{section centralizers of A1 subgroups},
using \Cref{lemma compute the centralizer} and \Cref{lemma determine the neutral centralizer},
for each conjugacy class of embeddings from a connected simple compact Lie group to $\lietype{F}{4}$,
we can determine its centralizer in $\lietype{F}{4}$:
\begin{itemize}
    \item Type $\lietype{B}{4}$: the centralizer is a cyclic group of order $2$.
    \item Type $\lietype{D}{4}$: the centralizer is isomorphic to $\Z/2\Z\times\Z/2\Z$.
    \item Type $\lietype{A}{3}$: the centralizer is an $\lietype{A}{1}$-subgroup in the class $[3,2^{8},1^{7}]$, which is labeled by $\widetilde{\lietype{A}{1}}$.
    \item Type $\lietype{B}{3}$: the centralizer is the product of a rank $1$ torus with a cyclic group of order $2$.
    \item Type $\lietype{C}{3}$: the centralizer is an $\lietype{A}{1}$-subgroup in the class $[2^{6},1^{14}]$, which is labeled by $\lietype{A}{1}$.
    \item Type $\lietype{B}{2}$: the centralizer is the direct product of two $\lietype{A}{1}$-subgroups in the class $[2^{6},1^{14}]$.
    \item Type $\lietype{G}{2}$: the centralizer is an $\lietype{A}{1}$-subgroup in the class $[5,3^{7}]$, which is labeled by $\widetilde{\lietype{A}{2}}$.
    \item Type $\lietype{A}{2}$: Let $\phi:H\hookrightarrow \lietype{F}{4}$ be a representative of a conjugacy class of embeddings listed in \Cref{table A2 subgroups in A2+A2},
    which is indexed by a number from $1$ to $4$.
    \begin{enumerate}[label=(\arabic*)]
        \item If $\phi$ is indexed by $1$, then its centralizer is conjugate to the $\su(3)$ indexed by $2$.
        \item If $\phi$ is indexed by $2$, then its centralizer is conjugate to the $\su(3)$ indexed by $1$.
        \item If $\phi$ is indexed by $3$, then its centralizer is finite and contains an order $3$ element.
        \item If $\phi$ is indexed by $4$, then its centralizer is a cyclic group of order $3$.
    \end{enumerate}
\end{itemize}

\subsection{Connected subgroups satisfying certain conditions}\label{section connected subgroups satisfying our conditions}
\tp After a long journey of classifying conjugacy classes of connected simple subgroups of $\mathrm{F}_{4}$ 
and computing their centralizers in $\lietype{F}{4}$,
we are finally able to enumerate all the connected subgroups $H$ of $\mathrm{F}_{4}$ satisfying our three conditions listed in the beginning of \Cref{section subgroups of F4}.

We first classify all the connected subgroups $H$ of $\mathrm{F}_{4}$ such that 
$\mathrm{C}_{\mathrm{F}_{4}}(H)$ is an elementary finite abelian $2$-group,
via our classifications in \Cref{section A1 subgroups classification} and \Cref{section classification simple subgroups}.
\begin{notation}
    From now on, for an $\lietype{A}{1}$-subgroup of $\lietype{F}{4}$,
    if its conjugacy class corresponds to the partition $p$ of $26$,
    we will simply denote this $\lietype{A}{1}$-subgroup by $\lietype{A}{1}^{p}$.
    For example, we will denote the principal $\psu(2)$ of $\lietype{F}{4}$ by $\lietype{A}{1}^{[17,9]}$.
    For an $\lietype{A}{2}$-type subgroup of $\lietype{F}{4}$,
    if its conjugacy class is indexed by $n\in\{1,2,3,4\}$ in \Cref{section simple factor A2} \Cref{table A2 subgroups in A2+A2},
    then we denote it simply by $\lietype{A}{2}^{(n)}$.
\end{notation}
Now let $H$ be a connected subgroup of $\lietype{F}{4}$ whose centralizer in $\lietype{F}{4}$ is an elementary finite abelian $2$-group.
Let $\Phi$ be the root system of $H$,
and we can write it as a disjoint union of irreducible root systems:
\[\Phi=\Phi_{1}\sqcup \cdots\sqcup \Phi_{s}.\] 
We denote by $m$ the number of $i\in\{1,2,\ldots,s\}$ such that $\Phi_{i}\simeq \lietype{A}{1}$.
\begin{lemma}\label{lemma subgroup one irreducible root system}
    If $s=1$, i.e. $H$ is simple,
    then $H$ is conjugate to one of the following subgroups of $\lietype{F}{4}$:
    \[\mathrm{F}_{4},\spin(9),\spin(8),\mathrm{A}_{1}^{[17,9]},\mathrm{A}_{1}^{[11,9,5,1]},\mathrm{A}_{1}^{[9,7,5^{2}]}.\]
\end{lemma}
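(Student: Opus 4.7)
The plan is to enumerate the conjugacy classes of connected simple subgroups of $\lietype{F}{4}$ using the classifications from the previous subsections, then check the centralizer condition in each case with the computations of \Cref{section centralizers of A1 subgroups} and \Cref{section centralizers of simple subgroups}.

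First, I treat the rank $\geq 2$ case. By \Cref{prop exclude two maximal rank Lie types} the Lie type cannot be $\lietype{A}{4}$ or $\lietype{C}{4}$, and \Cref{prop classification of simple subgroups except A2} together with \Cref{prop classification of A2 subgroups} enumerates the remaining conjugacy classes. The centralizers recorded in \Cref{section centralizers of simple subgroups} are positive-dimensional for types $\lietype{A}{3}, \lietype{B}{2}, \lietype{B}{3}, \lietype{C}{3}, \lietype{G}{2}$ and for the $\lietype{A}{2}$-subgroups indexed by $1$ or $2$, and contain an element of order three for the $\lietype{A}{2}$-subgroups indexed by $3$ or $4$; so all of these are excluded. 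The two surviving types yield $\spin(9)$ (with centralizer $\Z/2\Z$) and $\spin(8)$ (with centralizer $(\Z/2\Z)^{2}$), and the case $H = \lietype{F}{4}$ is immediate since $\mathrm{Z}(\lietype{F}{4}) = 1$.

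Next, I treat the $\lietype{A}{1}$-case. \Cref{prop A1 subgroups of F4} gives $15$ conjugacy classes, and \Cref{section centralizers of A1 subgroups} computes for each the centralizer, or at least its neutral component. The neutral component is non-trivial (so the centralizer is infinite) for every class except the four principal-type classes $[17,9]$, $[11,9,5,1]$, $[9,7,5^{2}]$, and $[5^{3},3^{3},1^{2}]$. The class $[17,9]$ is the principal $\psu(2)$ with centralizer $\mathrm{Z}(\lietype{F}{4}) = 1$; the class $[5^{3},3^{3},1^{2}]$ is ruled out because, as explained in \Cref{section A1 subgroup principal in A2A2}, its (finite) centralizer contains the order-three center of the enclosing maximal subgroup $(\su(3)\times\su(3))/\mu_{3}^{\Delta}$.

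The main obstacle is pinning down the full finite centralizer for the two remaining $\lietype{A}{1}$-classes $[11,9,5,1]$ and $[9,7,5^{2}]$, labelled $\lietype{F}{4}(a_{1})$ and $\lietype{F}{4}(a_{2})$. By the Jacobson--Morozov correspondence, any element of $\lietype{F}{4}$ centralizing the image of $\su(2) \to \lietype{F}{4}$ lies in the centralizer of the associated Jacobson--Morozov triple, which (since the neutral part is already trivial by \Cref{section centralizers of A1 subgroups}) coincides with the component group $A(\mathcal{O})$ of the corresponding nilpotent orbit in $\mathfrak{f}_{4}$. The tables in \cite[\S 8.4]{NilOrbit} give these component groups as cyclic of order $\leq 2$, which are manifestly elementary abelian $2$-groups, completing the list of six conjugacy classes.
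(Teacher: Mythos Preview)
Your proof is correct and takes the same approach as the paper's: enumerate the simple subgroups with finite centralizer using \Cref{section centralizers of A1 subgroups} and \Cref{section centralizers of simple subgroups}, then discard those whose centralizer contains an element of order~$3$. Your final paragraph goes a step further than the paper by invoking the nilpotent-orbit component groups from \cite[\S 8.4]{NilOrbit} to confirm that $\lietype{A}{1}^{[11,9,5,1]}$ and $\lietype{A}{1}^{[9,7,5^{2}]}$ actually have centralizer an elementary abelian $2$-group; the paper only records that these centralizers are finite. This extra verification is not strictly required for the lemma's implication as stated, but it supplies the converse direction that the paper tacitly uses in the sentence preceding \Cref{lemma subgroups not satisfying the third condition}.
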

\begin{proof}
    By our computations in \Cref{section centralizers of A1 subgroups} and \Cref{section centralizers of simple subgroups},
    we have if the centralizer of $H$ in $\lietype{F}{4}$ is finite,
    then it must be conjugate to one of the following subgroups of $\lietype{F}{4}$:
    \[\mathrm{F}_{4},\spin(9),\spin(8),\mathrm{A}_{2}^{(3)},\mathrm{A}_{2}^{(4)},\mathrm{A}_{1}^{[17,9]},\mathrm{A}_{1}^{[11,9,5,1]},\mathrm{A}_{1}^{[9,7,5^{2}]},\mathrm{A}_{1}^{[5^{3},3^{3},1^{2}]}.\]
    According to \Cref{section A1 subgroup principal in A2A2} and \Cref{section centralizers of simple subgroups},
    if $H$ is in the conjugacy class of $\lietype{A}{2}^{(3)},\lietype{A}{2}^{(4)}$ or $\lietype{A}{1}^{[5^{3},3^{3},1^{2}]}$,
    then the centralizer of $H$ in $\lietype{F}{4}$ contains an element of order $3$.
\end{proof}
\begin{lemma}\label{lemma subgroup more than one irreducible root system no A1 facter}
    If $s>1$ and $m=0$,
    then there is no such $H$ satisfying $\mathrm{C}_{\lietype{F}{4}}(H)$ is an elementary finite abelian $2$-group.
\end{lemma}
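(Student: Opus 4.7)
The strategy is to reduce to a unique candidate for $H$ and then explicitly compute its centralizer. First, if $H$ has a non-trivial central torus $Z$, then $Z \subset \mathrm{C}_{\lietype{F}{4}}(H)$, contradicting that the centralizer is finite; hence $H$ is semisimple with root system $\Phi = \Phi_1 \sqcup \cdots \sqcup \Phi_s$. From \Cref{section classification simple subgroups}, the Lie types of simple subgroups of $\lietype{F}{4}$ other than $\lietype{A}{1}$ all have rank $\geq 2$. Since $m = 0$ forces every $\Phi_i$ to be of one of those types, and $\mathrm{rank}(H) \leq \mathrm{rank}(\lietype{F}{4}) = 4$, the hypothesis $s \geq 2$ forces $s = 2$ with both $\Phi_1, \Phi_2$ of rank $2$, so each is of type $\lietype{A}{2}$, $\lietype{B}{2}$ or $\lietype{G}{2}$.

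Let $H_1, H_2$ be the two commuting simple factors of $H$, so that $H_2$ is a connected rank-$2$ simple subgroup of $\mathrm{C}_{\lietype{F}{4}}(H_1)^\circ$. Consulting the centralizer table of \Cref{section centralizers of simple subgroups}: if $H_1$ has type $\lietype{G}{2}$, then $\mathrm{C}_{\lietype{F}{4}}(H_1)^\circ$ is an $\lietype{A}{1}$-subgroup, hence of rank $1$, which cannot contain the rank-$2$ group $H_2$; if $H_1$ has type $\lietype{B}{2}$, then $\mathrm{C}_{\lietype{F}{4}}(H_1)^\circ$ is a product of two $\lietype{A}{1}$-subgroups, in which every connected simple subgroup has rank $\leq 1$, again contradicting $\mathrm{rank}(H_2) = 2$. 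By symmetry the same rules out $H_2$ of type $\lietype{B}{2}$ or $\lietype{G}{2}$, so both $H_1$ and $H_2$ must be of type $\lietype{A}{2}$.

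Among the four conjugacy classes of $\lietype{A}{2}$-subgroups listed in \Cref{table A2 subgroups in A2+A2}, only $\lietype{A}{2}^{(1)}$ and $\lietype{A}{2}^{(2)}$ have positive-dimensional centralizers (namely each other, by \Cref{section centralizers of simple subgroups}); the centralizers of $\lietype{A}{2}^{(3)}$ and $\lietype{A}{2}^{(4)}$ are finite and thus cannot contain a rank-$2$ subgroup. Hence $\{H_1, H_2\}$ is conjugate to $\{\lietype{A}{2}^{(1)}, \lietype{A}{2}^{(2)}\}$, and $H$ is conjugate to the regular maximal-rank subgroup $\left(\su(3) \times \su(3)\right)/\mu_{3}^{\Delta}$ of \Cref{section A2A2 subgroup}. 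Since this $H$ contains a maximal torus $T$ of $\lietype{F}{4}$, any element of $\mathrm{C}_{\lietype{F}{4}}(H)$ lies in $\mathrm{C}_{\lietype{F}{4}}(T) = T \subset H$, so $\mathrm{C}_{\lietype{F}{4}}(H) = \mathrm{Z}(H) \simeq \mu_{3}$, a cyclic group of order $3$, which is not an elementary abelian $2$-group. This contradicts the hypothesis on $H$ and completes the argument. The main subtlety is ensuring that a rank-$2$ factor $H_2$ cannot fit inside the connected centralizer of $H_1$ in the $\lietype{G}{2}$ and $\lietype{B}{2}$ cases; once the centralizer data from \Cref{section centralizers of A1 subgroups} and \Cref{section centralizers of simple subgroups} are invoked, the remainder is a short case check.
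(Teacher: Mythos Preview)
Your proof is correct and follows essentially the same route as the paper: reduce to $s=2$ with two rank-$2$ simple factors, pin $H$ down as the regular $\left(\su(3)\times\su(3)\right)/\mu_{3}^{\Delta}$, and observe that its centralizer contains the order-$3$ center. The paper's proof is terser --- it jumps directly from ``both factors have rank $2$'' to ``$H$ is a quotient of $\su(3)\times\su(3)$'' and then invokes the $\lietype{A}{2}$-classification --- whereas you explicitly rule out $\lietype{B}{2}$ and $\lietype{G}{2}$ factors via the centralizer data of \Cref{section centralizers of simple subgroups}, which is a welcome clarification of a step the paper leaves implicit.
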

\begin{proof}
    Since $s>1$ and $m=0$,
    the irreducible root systems $\Phi_{1}$ and $\Phi_{2}$ both have rank $2$ and $s=2$.
    Hence $H$ must be isomorphic to the quotient of $\su(3)\times\su(3)$ by a finite central subgroup.
    By our classification in \Cref{section simple factor A2},
    $H$ is conjugate to the subgroup $\left(\su(3)\times\su(3)\right)/\mu_{3}^{\Delta}$ constructed in \Cref{section A2A2 subgroup}.
    However, the centralizer of this subgroup contains its center, which is a cyclic group of order $3$,
    so in this case there is no $H$ whose centralizer in $\lietype{F}{4}$ is an elementary finite abelian $2$-group.
\end{proof}
\begin{lemma}\label{lemma subgroup two irreducible root system}
    If $s=2$ and $m\geq 1$, then $H$ is conjugate to one of the following subgroups of $\lietype{F}{4}$:
    \begin{gather*}
        \left(\lietype{A}{1}^{[2^{6},1^{14}]}\times\symp(3)\right)/\mu_{2}^{\Delta},\left(\lietype{A}{1}^{[3,2^{8},1^{7}]}\times\spin(5)\right)/\mu_{2}^{\Delta},\lietype{A}{1}^{[5,3^{7}]}\times\lietype{G}{2},\\
        \mathrm{A}_{1}^{[7^{3},1^{5}]}\times \mathrm{A}_{1}^{[5,3^{7}]}, \left(\lietype{A}{1}^{[9,6^{2},5]}\times \lietype{A}{1}^{[2^{6},1^{14}]}\right)/\mu_{2}^{\Delta}, \left(\lietype{A}{1}^{[5^{2},4^{2},3,2^{2},1]}\times\lietype{A}{1}^{[2^{6},1^{14}]}\right)/\mu_{2}^{\Delta},\\
        \left(\lietype{A}{1}^{[5,4^{4},1^{5}]}\times\lietype{A}{1}^{[3,2^{8},1^{7}]}\right)/\mu_{2}^{\Delta},\left(\mathrm{A}_{1}^{[5,4^{2},3^{3},2^{2}]}\times \mathrm{A}_{1}^{[3^{3},2^{6},1^{5}]}\right)/\mu_{2}^{\Delta},\left(\mathrm{A}_{1}^{[4^{2},3^{3},2^{4},1]}\times \mathrm{A}_{1}^{[4^{2},3^{3},2^{4},1]}\right)/\mu_{2}^{\Delta}.
    \end{gather*}
\end{lemma}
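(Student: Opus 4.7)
The plan is to carry out a case-by-case enumeration, organized according to whether $m=1$ or $m=2$. Since $m\geq 1$, we may and do assume $\Phi_1=\lietype{A}{1}$, writing $H$ as an almost direct product $H_1\cdot H_2$ with $H_1$ of type $\lietype{A}{1}$ and $H_2$ simple of root system $\Phi_2$. The key observation is that $H_1$ and $H_2$ commute, so $H_1\subseteq\mathrm{C}_{\lietype{F}{4}}(H_2)^{\circ}$ and $H_2\subseteq\mathrm{C}_{\lietype{F}{4}}(H_1)^{\circ}$, and
\[\mathrm{C}_{\lietype{F}{4}}(H)=\mathrm{C}_{\lietype{F}{4}}(H_1)\cap\mathrm{C}_{\lietype{F}{4}}(H_2).\]
All the centralizers needed have been computed in \Cref{section centralizers of A1 subgroups} and \Cref{section centralizers of simple subgroups}, so the analysis reduces to bookkeeping.

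For the case $m=1$, I would run through the list of admissible $\Phi_2$'s (of rank $\geq 2$, i.e.\ $\lietype{A}{2},\lietype{A}{3},\lietype{B}{2},\lietype{B}{3},\lietype{B}{4},\lietype{C}{3},\lietype{D}{4},\lietype{G}{2}$) and eliminate those for which no admissible $H_1$ can be added. Types $\lietype{B}{4}$ and $\lietype{D}{4}$ are excluded because $\mathrm{C}_{\lietype{F}{4}}(H_2)$ is already finite; type $\lietype{B}{3}$ because $\mathrm{C}_{\lietype{F}{4}}(H_2)^{\circ}$ is a rank one torus (containing no $\lietype{A}{1}$-subgroup); the classes $\lietype{A}{2}^{(3)}$ and $\lietype{A}{2}^{(4)}$ because their centralizers contain order $3$ elements; and the classes $\lietype{A}{2}^{(1)},\lietype{A}{2}^{(2)}$ because the centralizer is another $\lietype{A}{2}$ containing $\mu_3$ in its center (so the joint centralizer is not a $2$-group). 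The case $\Phi_2=\lietype{A}{3}$ is subtler: although $\mathrm{C}_{\lietype{F}{4}}(\lietype{A}{3})^{\circ}\simeq \lietype{A}{1}^{[3,2^{8},1^{7}]}$, one computes that $\mathrm{C}_{\lietype{F}{4}}(\lietype{A}{1}^{[3,2^{8},1^{7}]})=\spin(6)$ and that $\lietype{A}{3}$ sits inside $\spin(6)\simeq\su(4)$ as the whole group, so the joint centralizer is $\mathrm{Z}(\su(4))\simeq\Z/4\Z$, not an elementary abelian $2$-group. This leaves exactly the three pairs $(\lietype{C}{3},\lietype{A}{1}^{[2^6,1^{14}]})$, $(\lietype{B}{2},\text{diagonal }\lietype{A}{1}^{[3,2^8,1^7]}\text{ of the two }\lietype{A}{1}\text{'s in }\mathrm{C}_{\lietype{F}{4}}(\spin(5))^{\circ})$, and $(\lietype{G}{2},\lietype{A}{1}^{[5,3^7]})$, matching the first three groups in the list; the $\mu_{2}^{\Delta}$ quotients come from matching the centers of the two factors in $\lietype{F}{4}$.

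For the case $m=2$, I would go through each of the $15$ conjugacy classes of $\lietype{A}{1}$-subgroups of $\lietype{F}{4}$ listed in \Cref{table labels of A1 subgroups}, viewed as candidates for $H_1$, and look at the connected $\lietype{A}{1}$-subgroups $H_2$ of $\mathrm{C}_{\lietype{F}{4}}(H_1)^{\circ}$. Using the explicit description of these centralizers (as products of classical groups or of previously-labelled $\lietype{A}{1}$-subgroups), the $H_2$ candidates are obtained from the $\lietype{A}{1}$-subgroups of the relevant classical groups via \Cref{lemma A1 subgroups of Spin(9)}, \Cref{lemma A1 subgroups of A1C3}, etc. The condition that $\mathrm{C}_{\lietype{F}{4}}(H)=\mathrm{C}_{\mathrm{C}_{\lietype{F}{4}}(H_1)}(H_2)$ be an elementary abelian $2$-group prunes this list drastically: any residual connected subgroup or any odd torsion is forbidden. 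After elimination, we obtain the six remaining candidates in the statement, and the central extension structure $\mu_2^{\Delta}$ follows whenever both factors share a common $\mathrm{Z}/2\mathrm{Z}$ center inside $\lietype{F}{4}$, which can be read off from the explicit constructions.

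The main obstacle is the case $m=2$, which is combinatorially the heaviest: for each of the $15$ classes of $\lietype{A}{1}$'s there may be several $\lietype{A}{1}$-subgroups inside the centralizer, and one must identify their $\lietype{F}{4}$-conjugacy class in \Cref{table labels of A1 subgroups}. To identify a given $\lietype{A}{1}\hookrightarrow \mathrm{C}_{\lietype{F}{4}}(H_1)^{\circ}\hookrightarrow \lietype{F}{4}$, I would use \Cref{prop just one rep is enough for conjugacy}: it suffices to decompose $\jord_{0}$ under the composite homomorphism to determine the partition of $26$, and hence the class. The verification that the joint centralizer is truly of the stated form, and in particular has no connected part, will in several cases use \Cref{lemma determine the neutral centralizer} applied to the restriction of $\mathfrak{f}_{4}$ to $H_1\cdot H_2$.
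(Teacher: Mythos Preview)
Your approach is essentially the same as the paper's: write $H$ as an almost direct product of an $\lietype{A}{1}$-subgroup $X$ and a simple factor $H_{0}$, observe that $\mathrm{C}_{\lietype{F}{4}}(H)$ equals the centralizer of $H_{0}$ inside $\mathrm{C}_{\lietype{F}{4}}(X)$, and then run through the centralizer computations of \Cref{section centralizers of A1 subgroups} and \Cref{section centralizers of simple subgroups} to see which pairs give an elementary abelian $2$-group. The paper's proof is more terse (it simply cites those two sections and records the surviving pairs), while you spell out the elimination explicitly; in particular your handling of the $\lietype{A}{3}$ case via $\mathrm{Z}(\su(4))\simeq\Z/4\Z$ and of $\lietype{B}{2}$ via the diagonal $\spin(3)\hookrightarrow\spin(4)$ are correct details that the paper leaves implicit.
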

\begin{proof}
    Since $s=2$ and $m\geq 1$,
    up to conjugacy $H$ is of the form $(X\times H_{0})/\Gamma$,
    where $X$ is an $\lietype{A}{1}$-subgroup of $\lietype{F}{4}$,
    $H_0$ is a connected simple subgroup of $\lietype{F}{4}$,
    and $\Gamma$ is either trivial or the subgroup $\mu_{2}^{\Delta}$ of $X\times H_{0}$.
    Since the centralizer of $H$ in $\lietype{F}{4}$ is an elementary finite abelian $2$-groups,
    the centralizer of $H_{0}$ in $\mathrm{C}_{\lietype{F}{4}}(X)$ and 
    the centralizer of $X$ in $\mathrm{C}_{\lietype{F}{4}}(X)$ are both elementary finite abelian $2$-groups.

    If $\mathrm{rank}(H_{0})>1$, by \Cref{section centralizers of simple subgroups} we have the following possibilities for the conjugacy class of $H$:
    \[\left(\lietype{A}{1}^{[2^{6},1^{14}]}\times\symp(3)\right)/\mu_{2}^{\Delta},\left(\lietype{A}{1}^{[3,2^{8},1^{7}]}\times\spin(5)\right)/\mu_{2}^{\Delta},\lietype{A}{1}^{[5,3^{7}]}\times\lietype{G}{2}.\]
    If $H_{0}$ is also an $\lietype{A}{1}$-subgroup of $\lietype{F}{4}$,
    by \Cref{section centralizers of A1 subgroups} we have the following possibilities for the conjugacy class of $H$:
    \begin{gather*}
        \mathrm{A}_{1}^{[7^{3},1^{5}]}\times \mathrm{A}_{1}^{[5,3^{7}]}, \left(\lietype{A}{1}^{[9,6^{2},5]}\times \lietype{A}{1}^{[2^{6},1^{14}]}\right)/\mu_{2}^{\Delta}, \left(\lietype{A}{1}^{[5^{2},4^{2},3,2^{2},1]}\times\lietype{A}{1}^{[2^{6},1^{14}]}\right)/\mu_{2}^{\Delta},\\
        \left(\lietype{A}{1}^{[5,4^{4},1^{5}]}\times\lietype{A}{1}^{[3,2^{8},1^{7}]}\right)/\mu_{2}^{\Delta},\left(\mathrm{A}_{1}^{[5,4^{2},3^{3},2^{2}]}\times \mathrm{A}_{1}^{[3^{3},2^{6},1^{5}]}\right)/\mu_{2}^{\Delta},\left(\mathrm{A}_{1}^{[4^{2},3^{3},2^{4},1]}\times \mathrm{A}_{1}^{[4^{2},3^{3},2^{4},1]}\right)/\mu_{2}^{\Delta}.
    \end{gather*}
\end{proof}
\begin{lemma}\label{lemma subgroup more than two irreducible root system}
    If $s>2$, then $H$ is conjugate to one of the following subgroups of $\lietype{F}{4}$:
    \begin{gather*}
         \left(\mathrm{A}_{1}^{[2^{6},1^{14}]}\times \mathrm{A}_{1}^{[2^{6},1^{14}]}\times \symp(2)\right)/\mu_{2}^{\Delta},\\
         \mathrm{A}_{1}^{[5,3^{7}]}\times \left(\mathrm{A}_{1}^{[3^{3},2^{6},1^{5}]}\times \mathrm{A}_{1}^{[2^{6},1^{14}]}\right)/\mu_{2}^{\Delta},\\
         \left(\mathrm{A}_{1}^{[5,4^{4},1^{5}]}\times \mathrm{A}_{1}^{[2^{6},1^{14}]}\times \mathrm{A}_{1}^{[2^{6},1^{14}]}\right)/\mu_{2}^{\Delta},\\
         \left(\mathrm{A}_{1}^{[3,2^{8},1^{7}]}\times \mathrm{A}_{1}^{[3,2^{8},1^{7}]}\times \mathrm{A}_{1}^{[3,2^{8},1^{7}]}\right)/\langle (1,-1,-1),(-1,-1,1)\rangle,\\
         \prod_{i=1}^{4}\lietype{A}{1}^{[2^{6},1^{14}]}/\mu_{2}^{\Delta}:=\left(\mathrm{A}_{1}^{[2^{6},1^{14}]}\times \mathrm{A}_{1}^{[2^{6},1^{14}]}\times \mathrm{A}_{1}^{[2^{6},1^{14}]}\times \mathrm{A}_{1}^{[2^{6},1^{14}]}\right)/\mu_{2}^{\Delta}.
    \end{gather*}
\end{lemma}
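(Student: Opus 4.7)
The plan is to reduce the lemma to the cases $s=1$ and $s=2$ already handled, by peeling off a single $\lietype{A}{1}$-factor. First observe that since every irreducible summand of $\Phi$ has rank at least $1$ and $\operatorname{rank}(H)\leq \operatorname{rank}(\lietype{F}{4})=4$, the hypothesis $s>2$ forces $s\in\{3,4\}$; moreover at most one summand can have rank $\geq 2$, so $m\geq s-1\geq 2$. In particular $H$ always contains at least one $\lietype{A}{1}$-factor.

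Choose such an $\lietype{A}{1}$-factor $X\subseteq H$. Up to a finite central subgroup $\Gamma$, we may write $H=(X\times H_0)/\Gamma$ where $H_0$ is a connected subgroup whose root system has $s-1\geq 2$ irreducible components. Since $X$ and $H_0$ commute, $H_0\subseteq \mathrm{C}_{\lietype{F}{4}}(X)^{\circ}$, and the equality $\mathrm{C}_{\lietype{F}{4}}(H)=\mathrm{C}_{\lietype{F}{4}}(X)\cap \mathrm{C}_{\lietype{F}{4}}(H_0)$ together with the standing hypothesis forces $\mathrm{C}_{\mathrm{C}_{\lietype{F}{4}}(X)^{\circ}}(H_0)$ to be an elementary abelian $2$-group as well. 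This puts us in position to run the analogues of \Cref{lemma subgroup one irreducible root system}, \Cref{lemma subgroup more than one irreducible root system no A1 facter} and \Cref{lemma subgroup two irreducible root system} \emph{inside} $\mathrm{C}_{\lietype{F}{4}}(X)^{\circ}$, which is a classical group (or product of such) of smaller rank whose connected subgroups are well understood.

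Next I enumerate candidates for $X$ using the table of $\lietype{A}{1}$-subgroups and their centralizers from \Cref{section centralizers of A1 subgroups}. The classes with finite centralizer, i.e.\ the labels $\lietype{F}{4}$, $\lietype{F}{4}(a_1)$, $\lietype{F}{4}(a_2)$ and $\lietype{F}{4}(a_3)$, cannot contribute, since there would be no room for a non-trivial $H_0$. For each of the remaining classes the connected centralizer $\mathrm{C}_{\lietype{F}{4}}(X)^{\circ}$ is one of $\symp(3)$, $\spin(6)$, $\lietype{G}{2}$, $\symp(1)$ or $\symp(1)\times\sorth(3)$, and the possible $H_0$'s with at least two $\lietype{A}{1}$-type constituents and centralizer an elementary abelian $2$-group form a short finite list, obtained by combining the Borel--de Siebenthal analysis inside these classical groups with the classification of $\lietype{A}{1}$-subgroups of $\symp(n)$ and $\spin(n)$ used in \Cref{section A1 subgroups classification}.

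The main obstacle will be the bookkeeping in the last step: for each candidate pair $(X,H_0)$ I must identify the $\lietype{F}{4}$-conjugacy class of every simple constituent of $H_0$ (these depend on the ambient embedding into $\lietype{F}{4}$, not only inside $\mathrm{C}_{\lietype{F}{4}}(X)^{\circ}$), by computing the restriction of the $26$-dimensional representation $\mathrm{V}_{\varpi_4}$ and invoking \Cref{prop just one rep is enough for conjugacy}. I must also determine the precise finite central subgroup $\Gamma$ in each case, and consolidate the pairs $(X,H_0)$ that produce $\lietype{F}{4}$-conjugate $H$'s (for instance, different orderings of the three copies of $\lietype{A}{1}^{[3,2^8,1^7]}$ in case (iv) collapse to a single class with $\Gamma$ equal to the Klein four subgroup exhibited). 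Carrying out this bookkeeping case by case yields exactly the five conjugacy classes listed in the statement.
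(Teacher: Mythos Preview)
Your approach is essentially the same as the paper's: the paper's proof is a one-sentence pointer saying this follows from the same argument as \Cref{lemma subgroup two irreducible root system} together with the centralizer computations in \Cref{section centralizers of A1 subgroups} and \Cref{section centralizers of simple subgroups}, and you have simply spelled that argument out in more detail by peeling off an $\lietype{A}{1}$-factor $X$ and classifying the possible $H_0$'s inside $\mathrm{C}_{\lietype{F}{4}}(X)^\circ$.

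One minor bookkeeping slip: your enumeration of the connected centralizers of the non-finite-centralizer $\lietype{A}{1}$-classes is incomplete. You omit $\symp(1)\times\symp(1)$ (the centralizer of $\lietype{A}{1}^{[5,4^4,1^5]}$), $\sorth(3)$ (for $\lietype{A}{1}^{[7^3,1^5]}$), and $\su(3)$ (for $\lietype{A}{1}^{[3^6,1^8]}$). The last two have rank $\leq 2$ and either cannot host an $H_0$ with $\geq 2$ simple factors or produce a centralizer with $3$-torsion, so they are harmless; but the first is precisely what gives the subgroup $\left(\lietype{A}{1}^{[5,4^4,1^5]}\times\lietype{A}{1}^{[2^6,1^{14}]}\times\lietype{A}{1}^{[2^6,1^{14}]}\right)/\mu_2^\Delta$ when you peel off $X=\lietype{A}{1}^{[5,4^4,1^5]}$. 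You do recover this subgroup anyway by instead peeling off one of its $\lietype{A}{1}^{[2^6,1^{14}]}$ factors and working inside $\symp(3)$, so the final list is unaffected --- but the sentence as written is false and should be corrected.
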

\begin{proof}
    This follows from a similar argument as in the proof of \Cref{lemma subgroup two irreducible root system}
    and the results in \Cref{section centralizers of A1 subgroups} and \Cref{section centralizers of simple subgroups}.
\end{proof}
In \Cref{lemma subgroup one irreducible root system}, \Cref{lemma subgroup more than one irreducible root system no A1 facter}, \Cref{lemma subgroup two irreducible root system} and \Cref{lemma subgroup more than two irreducible root system},
we have enumerated all the conjugacy classes of connected subgroups $H$ of $\lietype{F}{4}$ such that the centralizer of $H$ in $\lietype{F}{4}$ is an elementary finite abelian $2$-group.
There are $20$ such conjugacy classes,
but some of them do not satisfy the third condition given in the beginning of \Cref{section subgroups of F4}:
\begin{lemma}\label{lemma subgroups not satisfying the third condition}
    If a subgroup $H$ of $\lietype{F}{4}$ is conjugate to one of the following subgroups:
    \begin{gather*}
    \mathrm{A}_{1}^{[11,9,5,1]}, 
    \mathrm{A}_{1}^{[9,7,5^{2}]},
    \left(\lietype{A}{1}^{[3,2^{8},1^{7}]}\times\spin(5)\right)/\mu_{2}^{\Delta},
    \left(\mathrm{A}_{1}^{[5^{2},4^{2},3,2^{2},1]}\times \mathrm{A}_{1}^{[2^{6},1^{14}]}\right)/\mu_{2}^{\Delta},\\
    \left(\lietype{A}{1}^{[5,4^{4},1^{5}]}\times\lietype{A}{1}^{[3,2^{8},1^{7}]}\right)/\mu_{2}^{\Delta},
    \mathrm{A}_{1}^{[3,2^{8},1^{7}]}\times \mathrm{A}_{1}^{[3,2^{8},1^{7}]}\times \mathrm{A}_{1}^{[3,2^{8},1^{7}]}/\langle (1,-1,-1),(-1,-1,1)\rangle,
    \end{gather*}
    then the zero weight appears $4$ times in the restriction of the $26$-dimensional irreducible representation $\jord_{0}$ of $\lietype{F}{4}$ to $H$.
\end{lemma}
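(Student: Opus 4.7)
The plan is to compute, for each of the six subgroups $H$ listed, the decomposition of $\jord_{0}|_{H}$ into irreducible $H$-representations, and then count the multiplicity of the zero weight in each irreducible summand. For this I will exploit the explicit embeddings and branching rules that have already been established in \Cref{section B4 subgroup}, \Cref{section C3A1 subgroup}, and the paragraphs of \Cref{section centralizers of A1 subgroups} corresponding to the relevant partitions.

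The two principal $\psu(2)$ cases are immediate: by \Cref{table labels of A1 subgroups}, the partition $[11,9,5,1]$ (resp.\ $[9,7,5^{2}]$) means $\jord_{0}|_{H}\simeq \sym^{10}\st+\sym^{8}\st+\sym^{4}\st+\triv$ (resp.\ $\sym^{8}\st+\sym^{6}\st+2\sym^{4}\st$). Each of the four (resp.\ four) summands has a one-dimensional zero-weight space, giving total multiplicity $4$.

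For the case $(\lietype{A}{1}^{[3,2^{8},1^{7}]}\times\spin(5))/\mu_{2}^{\Delta}$, I use the embedding $\spin(3)\times\spin(6)\hookrightarrow \spin(9)$ of the paragraph $[3,2^{8},1^{7}]$ in \Cref{section centralizers of A1 subgroups} together with the branching $\vrep{9}|_{\spin(3)\times\spin(6)}=\sym^{2}\st\boxtimes\triv+\triv\boxtimes\vrep{6}$ and $\vrep{\spin}|_{\spin(3)\times\spin(6)}=2(\st\boxtimes \vrep{\spin(6)})$, and then further restrict via $\spin(5)\hookrightarrow \spin(6)$ using $\vrep{6}|_{\spin(5)}=\triv+\vrep{5}$. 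This yields $\jord_{0}|_{H}=2(\triv\boxtimes\triv)+\sym^{2}\st\boxtimes\triv+\triv\boxtimes\vrep{5}+2(\st\boxtimes\vrep{\spin(5)})$; counting zero weights gives $2+1+1+0=4$.

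For $(\lietype{A}{1}^{[5^{2},4^{2},3,2^{2},1]}\times \lietype{A}{1}^{[2^{6},1^{14}]})/\mu_{2}^{\Delta}$, I use the embedding into $(\symp(1)\times\symp(3))/\mu_{2}^{\Delta}$ of \Cref{section C3A1 subgroup}, combined with the principal embedding $\su(2)\hookrightarrow\symp(1)\times\symp(2)\hookrightarrow\symp(3)$ from the paragraph $[5^{2},4^{2},3,2^{2},1]$. The key input is the restriction of $\vrep{14}$ to this principal $\su(2)$, which I compute via the identity $\wedge^{2}\vrep{6}\simeq \vrep{14}\oplus\triv$ and the elementary decompositions of $\wedge^{2}\sym^{3}\st$, $\sym^{3}\st\otimes\st$ and $\wedge^{2}\st$. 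Plugging into \eqref{eqn restriction of st rep to A1C3} then produces the joint decomposition and a zero-weight count of $4$.

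For $(\lietype{A}{1}^{[5,4^{4},1^{5}]}\times\lietype{A}{1}^{[3,2^{8},1^{7}]})/\mu_{2}^{\Delta}$, I first observe using \Cref{prop just one rep is enough for conjugacy} that, inside the centralizer $\spin(4)\simeq\su(2)\times\su(2)$ of $\lietype{A}{1}^{[5,4^{4},1^{5}]}$ (paragraph $[5,4^{4},1^{5}]$), the diagonal $\su(2)$ realizes an $\lietype{A}{1}^{[3,2^{8},1^{7}]}$: this is verified by restricting the decomposition under $\spin(5)\times\spin(4)\hookrightarrow\spin(9)$ (using the principal morphism of $\spin(5)$) to the diagonal. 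The resulting joint decomposition is $2(\triv\boxtimes\triv)+\sym^{4}\st\boxtimes\triv+\triv\boxtimes\sym^{2}\st+2(\sym^{3}\st\boxtimes\st)$, with zero-weight multiplicity $2+1+1+0=4$.

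For the triple subgroup in $\spin(9)$, I use three orthogonal copies $\spin(3)\times\spin(3)\times\spin(3)\hookrightarrow\spin(9)$ (compatible with the pairwise-product quotient specified in \Cref{lemma subgroup more than two irreducible root system}), with $\vrep{9}$ restricting to $\sum_{i=1}^{3}\triv^{\boxtimes (i-1)}\boxtimes\sym^{2}\st\boxtimes\triv^{\boxtimes(3-i)}$ and, by dimension count, $\vrep{\spin}$ restricting to $2(\st\boxtimes\st\boxtimes\st)$. Adding the trivial summand from \eqref{eqn restriction of st rep to B4} and counting contributes three zero weights from the three $\sym^{2}\st$ factors plus one from $\triv^{\boxtimes 3}$ (the $\st^{\boxtimes 3}$ summands have no zero weight), giving $4$.

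The main obstacle is the case of $(\lietype{A}{1}^{[5^{2},4^{2},3,2^{2},1]}\times\lietype{A}{1}^{[2^{6},1^{14}]})/\mu_{2}^{\Delta}$, where the branching of $\vrep{14}$ requires isolating the adjoint-type summand inside $\wedge^{2}\vrep{6}$. All other cases follow from the branching rules already established in \Cref{section B4 subgroup}--\Cref{section centralizers of A1 subgroups} by a direct restriction computation.
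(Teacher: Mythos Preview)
Your proposal is correct and follows the same approach as the paper: for each of the six subgroups you compute the decomposition of $\jord_{0}|_{H}$ and count zero weights. The paper's proof simply records the final decompositions (identical to yours, once notation is matched) and observes that each has zero-weight multiplicity $4$, whereas you supply the intermediate branching computations (e.g.\ deriving $\vrep{14}|_{X_{0}}$ from $\wedge^{2}\vrep{6}$, or identifying the diagonal of $\spin(4)$ as an $\lietype{A}{1}^{[3,2^{8},1^{7}]}$) that the paper leaves implicit.
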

\begin{proof}
    The restrictions of the representation $\mathrm{J}_{0}$ of $\lietype{F}{4}$ to the two $\lietype{A}{1}$-subgroups in the list above
    can be read from their corresponding partitions.
    In both cases, the multiplicity of the zero weight in $\mathrm{J}_{0}|_{H}$ is $4$.

    If $H$ is conjugate to $\left(\lietype{A}{1}^{[3,2^{8},1^{7}]}\times\spin(5)\right)/\mu_{2}^{\Delta}$,
    then the restriction $\jord_{0}|_{H}$ is isomorphic to 
    \[\left(\triv^{\oplus 2}+\sym^{2}\st \right)\otimes \triv+\st^{\oplus 2}\otimes \vrep{4}+\triv\otimes \vrep{5},\]
    in which the zero weight appears $4$ times.

    If $H$ is conjugate to $\left(\mathrm{A}_{1}^{[5^{2},4^{2},3,2^{2},1]}\times \mathrm{A}_{1}^{[2^{6},1^{14}]}\right)/\mu_{2}^{\Delta}$,
    then the restriction $\mathrm{J}_{0}|_{H}$ is isomorphic to 
    \[\left((\sym^{4}\st)^{\oplus 2}+\sym^{2}\st+\triv\right)\otimes\triv+\left(\sym^{3}\st+\st\right)\otimes\st,\]
    in which the zero weight appears $4$ times.

    If $H$ is conjugate to $\left(\lietype{A}{1}^{[5,4^{4},1^{5}]}\times\lietype{A}{1}^{[3,2^{8},1^{7}]}\right)/\mu_{2}^{\Delta}$,
    then the restriction $\jord_{0}|_{H}$ is isomorphic to 
    \[\triv\otimes\left(\triv^{\oplus 2}+\sym^{2}\st \right)+\left(\sym^{3}\st\otimes\st\right)^{\oplus 2}+\sym^{4}\st\otimes\triv,\]
    in which the zero weight appears $4$ times.

    If $H$ is conjugate to $\mathrm{A}_{1}^{[3,2^{8},1^{7}]}\times \mathrm{A}_{1}^{[3,2^{8},1^{7}]}\times \mathrm{A}_{1}^{[3,2^{8},1^{7}]}/\langle (1,-1,-1),(-1,-1,1)\rangle$,
    then the restriction $\mathrm{J}_{0}|_{H}$ is isomorphic to 
    \[\triv+\left(\st\otimes\st\otimes\st\right)^{\oplus 2}+ \sym^{2}\st\otimes\triv\otimes\triv+ \triv\otimes\sym^{2}\st\otimes\triv+\triv\otimes\triv\otimes\sym^{2}\st,\]
    in which the zero weight appears $4$ times.
\end{proof}
In conclusion, 
we have proved the following theorem:
\begin{thm}\label{thm classification result of subgroups satisfying three conditions}
    There are $13$ conjugacy classes of proper connected subgroups $H$ of $\lietype{F}{4}$ satisfying the following conditions:
    \begin{enumerate}[label=(\arabic*)]
        \item The centralizer of $H$ in $\lietype{F}{4}$ is an elementary finite abelian $2$-group.
        \item The zero weight appears twice in the restriction of the $26$-dimensional irreducible representation $\mathrm{J}_{0}$ of $\lietype{F}{4}$ to $H$.
    \end{enumerate}
    These $13$ subgroups are:
    \begin{gather*}
        \lietype{A}{1}^{[17,9]},
        \spin(9),
        \spin(8),
        \lietype{A}{1}^{[5,3^{7}]}\times \lietype{G}{2},
        \lietype{A}{1}^{[7^{3},1^{5}]}\times\lietype{A}{1}^{[5,3^{7}]},
        \left(\lietype{A}{1}^{[2^{6},1^{14}]}\times\symp(3)\right)/\mu_{2}^{\Delta},\\
        \left(\lietype{A}{1}^{[2^{6},1^{14}]}\times \lietype{A}{1}^{[2^{6},1^{14}]}\times\symp(2)\right)/\mu_{2}^{\Delta},
        \left(\lietype{A}{1}^{9,6^{2},5}\times \lietype{A}{1}^{[2^{6},1^{14}]}\right)/\mu_{2}^{\Delta},
        \left(\lietype{A}{1}^{[5,4^{2},3^{3},2^{2}]}\times \lietype{A}{1}^{[3^{3},2^{6},1^{5}]}\right)/\mu_{2}^{\Delta},\\
        \left(\lietype{A}{1}^{[4^{2},3^{3},2^{4},1]}\times \lietype{A}{1}^{[4^{2},3^{3},2^{4},1]}\right)/\mu_{2}^{\Delta},
        \lietype{A}{1}^{[5,3^{7}]}\times\left(\lietype{A}{1}^{[3^{3},2^{6},1^{5}]}\times \lietype{A}{1}^{[2^{6},1^{14}]}\right)/\mu_{2}^{\Delta},\\
        \left(\lietype{A}{1}^{[5,4^{4},1^{5}]}\times \lietype{A}{1}^{[2^{6},1^{14}]}\times \lietype{A}{1}^{[2^{6},1^{14}]}\right)/\mu_{2}^{\Delta},
        \prod_{i=1}^{4}\lietype{A}{1}^{[2^{6},1^{14}]}/\mu_{2}^{\Delta}.
    \end{gather*}
\end{thm}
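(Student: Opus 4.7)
The plan is to first enumerate the conjugacy classes of connected subgroups $H\subset\lietype{F}{4}$ whose centralizer in $\lietype{F}{4}$ is an elementary abelian $2$-group, and then to restrict to the proper ones whose restriction $\jord_{0}|_{H}$ has zero weight of multiplicity exactly $2$. I would split the first enumeration according to the shape of the root system $\Phi=\Phi_{1}\sqcup\cdots\sqcup\Phi_{s}$ of $H$, along the lines of \Cref{lemma subgroup one irreducible root system}--\Cref{lemma subgroup more than two irreducible root system}: the four cases $s=1$; $s\geq 2$ with no $\lietype{A}{1}$-factor; $s=2$ with at least one $\lietype{A}{1}$-factor; and $s\geq 3$.

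For $s=1$ the candidates are exhausted by the classifications of $\lietype{A}{1}$-subgroups (\Cref{prop A1 subgroups of F4}), of simple subgroups of rank $\geq 2$ and non-$\lietype{A}{2}$ type (\Cref{prop classification of simple subgroups except A2}) and of $\lietype{A}{2}$-subgroups (\Cref{prop classification of A2 subgroups}); from the centralizer computations in \Cref{section centralizers of A1 subgroups} and \Cref{section centralizers of simple subgroups} I would retain only the classes whose centralizer is finite and contains no element of odd order, which removes the $\lietype{A}{2}$-subgroups indexed by $(3)$ and $(4)$ and the class $\lietype{A}{1}^{[5^{3},3^{3},1^{2}]}$. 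The case $s\geq 2$ with no $\lietype{A}{1}$-factor forces $H\simeq(\su(3)\times\su(3))/\mu_{3}^{\Delta}$, whose central $\Z/3\Z$ excludes it. For $s=2$ with an $\lietype{A}{1}$-factor $X$ I would write $H=(X\times H_{0})/\Gamma$ and impose both $H_{0}\subset\mathrm{C}_{\lietype{F}{4}}(X)$ and $X\subset\mathrm{C}_{\lietype{F}{4}}(H_{0})$; cross-referencing the centralizer tables leaves the nine pairs in \Cref{lemma subgroup two irreducible root system}. The case $s\geq 3$ is handled by the same iterative argument, peeling off one $\lietype{A}{1}$-factor and working inside its centralizer.

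The first step produces $20$ candidates, among which $\lietype{F}{4}$ itself is removed by the properness requirement. For each of the remaining $19$ I would compute $\jord_{0}|_{H}$ explicitly, first restricting to a containing maximal subgroup via the branching rules \eqref{eqn restriction of st rep to B4}, \eqref{eqn restriction of st rep to A1C3}, \eqref{eqn restriction of st rep to A2A2 I}, \eqref{eqn restriction of st rep to A2A2 II} and \eqref{eqn restriction of the st rep to G2A1}, and then further decomposing each isotypic summand using the explicit embeddings catalogued in \Cref{section A1 subgroups classification} and \Cref{section classification simple subgroups}. The zero-weight multiplicity turns out in each case to be either $2$ or $4$; the six cases where it equals $4$ are precisely those isolated in \Cref{lemma subgroups not satisfying the third condition}, and discarding them yields the $13$ classes claimed.

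The main obstacle I anticipate is confirming the exhaustiveness of the initial enumeration and the pairwise non-conjugacy of the $13$ survivors. Both are handled uniformly by \Cref{prop just one rep is enough for conjugacy}, which reduces conjugacy of connected subgroups of $\lietype{F}{4}$ to equivalence of the associated $26$-dimensional representations; the decompositions $\jord_{0}|_{H}$ computed along the way distinguish the thirteen classes from one another, and the centralizer tables of \Cref{section centralizers of A1 subgroups} and \Cref{section centralizers of simple subgroups} serve as an independent check against any class being overlooked or duplicated.
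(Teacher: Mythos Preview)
Your proposal is correct and follows essentially the same route as the paper: the same four-way case split on the root-system shape via \Cref{lemma subgroup one irreducible root system}--\Cref{lemma subgroup more than two irreducible root system}, the same centralizer tables to reach the $20$ candidates, and then \Cref{lemma subgroups not satisfying the third condition} to discard the six with zero-weight multiplicity $4$. Your explicit remark that pairwise non-conjugacy of the thirteen survivors follows from \Cref{prop just one rep is enough for conjugacy} via their distinct $\jord_{0}$-decompositions is a useful addendum the paper leaves implicit.
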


For the $13$ conjugacy classes of subgroups $H$ in \Cref{thm classification result of subgroups satisfying three conditions},
in the rest of this subsection 
we are going to list some information will be used in \Cref{section Arthur classification for F4}:
\begin{itemize}
    \item the centralizer $\mathrm{C}_{\lietype{F}{4}}(H)$ of $H$ in $\lietype{F}{4}$,
    \item the restriction of the $26$-dimensional irreducible representation $\mathrm{J}_{0}$ to $H$, 
    \item and the restriction of the adjoint representation $\mathfrak{f}_{4}$ of $\lietype{F}{4}$ to $H$.
\end{itemize}

\subsubsection{\texorpdfstring{$\mathrm{A}_{1}^{[17,9]}$}{}}\label{section info principal PSU(2)}
\tp This is the principal $\psu(2)$ of $\mathrm{F}_{4}$, 
whose centralizer in $\lietype{F}{4}$ is trivial.
The restriction of $\mathrm{J}_{0}$ to $H$ corresponds to the partition $[17,9]$ of $26$,
and the restriction of $\mathfrak{f}_{4}$ to $H$ corresponds to the partition $[23,15,11,3]$ of $52$. 

\subsubsection{\texorpdfstring{$\spin(9)$}{}}\label{section info Spin(9)}
\tp The centralizer of $H$ in $\lietype{F}{4}$ is the center of $H$,
which is isomorphic to $\Z/2\Z$.

The restriction of $\mathrm{J}_{0}$ to $H$ is isomorphic to 
\[\triv+\vrep{9}+\vrep{\spin},\]
and the restriction of $\mathfrak{f}_{4}$ to $H$ is isomorphic to 
\[\wedge^{2}\vrep{9}+\vrep{\spin},\]
where $\vrep{9}$ is the standard representation of $\spin(9)$ 
and $\vrep{\spin}$ is the $16$-dimensional spinor representation.

\subsubsection{\texorpdfstring{$\left(\mathrm{A}_{1}^{[2^{6},1^{14}]}\times\symp(3)\right)/\mu_{2}^{\Delta}$}{}}\label{section info A1+Sp(3)}
\tp The centralizer of $H$ in $\lietype{F}{4}$ is the center of $H$,
which is isomorphic to $\Z/2\Z$.

The restriction of $\mathrm{J}_{0}$ to $H$ is isomorphic to
\[\st\otimes \vrep{6}+\triv\otimes \vrep{14},\] 
and the restriction of $\mathfrak{f}_{4}$ to $H$ is isomorphic to 
\[\sym^{2}\st\otimes\triv + \st\otimes \vrep{14}^{\prime}+ \triv\otimes\sym^{2}\vrep{6},\] 
where $\vrep{6}$ is the standard $6$-dimensional representation of $\symp(3)$,
$\vrep{14}$ is the $14$-dimensional irreducible representation of $\symp(3)$ that is a sub-representation of $\wedge^{2}\vrep{6}$,
and $\vrep{14}^{\prime}$ is another $14$-dimensional irreducible representation of $\symp(3)$ that is not equivalent to $\vrep{14}$.
From now on, 
we will denote $\vrep{14}$ by $\wedge^{*}\vrep{6}$, 
and similarly for the $5$-dimensional irreducible representation of $\symp(2)$.
 
\subsubsection{\texorpdfstring{$\mathrm{A}_{1}^{[5,3^{7}]}\times \mathrm{G}_{2}$}{}}\label{section info A1+G2}
\tp The centralizer of $H$ in $\lietype{F}{4}$ is trivial.

The restriction of $\mathrm{J}_{0}$ to $H$ is isomorphic to 
\[\sym^{2}\st\otimes \vrep{7}+ \sym^{4}\st\otimes\triv,\]
and the restriction of $\mathfrak{f}_{4}$ to this subgroup is isomorphic to 
\[\triv\otimes\mathfrak{g}_{2}+ \sym^{2}\st\otimes\triv+ \sym^{4}\st \otimes \vrep{7},\]
where $\vrep{7}$ is the $7$-dimensional irreducible representation of $\lietype{G}{2}$,
and $\mathfrak{g}_{2}$ is the adjoint representation of $\mathrm{G}_{2}$.

\subsubsection{\texorpdfstring{$\spin(8)$}{}}\label{section info Spin(8)}
\tp The centralizer of $H$ in $\lietype{F}{4}$ is the center of $H$,
which is isomorphic to $\mathrm{Z}(\spin(8))\simeq \Z_{2}\times\Z_{2}$.

The restriction of $\mathrm{J}_{0}$ to $H$ is isomorphic to 
\[\triv^{\oplus 2}+\vrep{8}+\vrep{\spin}^{+}+\vrep{\spin}^{-},\]
and the restriction of $\mathfrak{f}_{4}$ to $H$ is isomorphic to 
\[\wedge^{2}\vrep{8}+\vrep{8}+\vrep{\spin}^{+}+\vrep{\spin}^{-},\]
where $\vrep{8}$ is the $8$-dimensional vector representation of $\spin(8)$,
i.e. the composition of $\spin(8)\rightarrow\sorth(8)$ with the standard $8$-dimensional representation of $\sorth(8)$,
and $\vrep{\spin}^{\pm}$ are two $8$-dimensional spinor representations.

\subsubsection{\texorpdfstring{$\left(\mathrm{A}_{1}^{[2^{6},1^{14}]}\times \mathrm{A}_{1}^{[2^{6},1^{14}]}\times \symp(2)\right)/\mu_{2}^{\Delta}$}{}}\label{section info A1+A1+Sp(2)}
\tp The centralizer of $H$ in $\lietype{F}{4}$ is the center of $H$, 
which is isomorphic to 
$\Z/2\Z\times\Z/2\Z$.

The restriction of $\mathrm{J}_{0}$ to $H$ is isomorphic to 
\[\triv+
\st\otimes\st\otimes \triv+
\st\otimes\triv\otimes \vrep{4}+
\triv\otimes\st\otimes \vrep{4}+
\triv\otimes\triv\otimes \wedge^{*}\vrep{4},
\]
and the restriction of $\mathfrak{f}_{4}$ to $H$ is isomorphic to 
\begin{gather*}
    \left(\sym^{2}\st\otimes\triv+\triv\otimes\sym^{2}\st\right)\otimes \triv 
    +\left(\st\otimes\triv+\triv\otimes\st\right)\otimes\vrep{4}\\
    +\st\otimes\st\otimes\wedge^{*}\vrep{4}
    +\triv\otimes\triv\otimes\sym^{2}\vrep{4},
\end{gather*}
where $\vrep{4}$ is the standard representation of $\symp(2)$ and $\wedge^{*}\vrep{4}$ is the $5$-dimensional irreducible representation of $\symp(2)$.

\subsubsection{\texorpdfstring{$\mathrm{A}_{1}^{[7^{3},1^{5}]}\times \mathrm{A}_{1}^{[5,3^{7}]}$}{}}\label{section info principal PSU(2) in G2 + SO(3)}
\tp The centralizer of $H$ in $\lietype{F}{4}$ is trivial.

The restriction of $\mathrm{J}_{0}$ to $H$ is isomorphic to 
\[\sym^{6}\st\otimes\sym^{2}\st+
\triv\otimes\sym^{4}\st,\]
and the restriction of $\mathfrak{f}_{4}$ to $H$ is isomorphic to 
\[\left(\sym^{10}\st+\sym^{2}\st\right)\otimes\triv+
\triv\otimes\sym^{2}\st+
\sym^{6}\st\otimes\sym^{4}\st.\]

\subsubsection{\texorpdfstring{$\mathrm{A}_{1}^{[5,3^{7}]}\times \left(\mathrm{A}_{1}^{[3^{3},2^{6},1^{5}]}\times \mathrm{A}_{1}^{[2^{6},1^{14}]}\right)/\mu_{2}^{\Delta}$}{}}\label{section info a bizzare  A1+A1+A1 via A1+Sp(3)}
\tp The centralizer of $H$ in $\lietype{F}{4}$ is the center of $H$, which is a cyclic group of order $2$.

The restriction of $\mathrm{J}_{0}$ to $H$ is isomorphic to 
\[\sym^{4}\st\otimes\triv\otimes\triv+
\sym^{2}\st\otimes \left(\st\otimes \st+\sym^{2}\st\otimes\triv\right),\]
and the restriction of $\mathfrak{f}_{4}$ to $H$ is isomorphic to 
\begin{gather*}
    \sym^{4}\st\otimes \left(\st\otimes \st+\sym^{2}\st\otimes\triv\right)+ 
    \sym^{2}\st\otimes\triv\otimes\triv\\
    +\triv\otimes\left(\sym^{2}\st\otimes\triv+\triv\otimes\sym^{2}\st+\sym^{3}\st\otimes\st\right).
\end{gather*}

\subsubsection{\texorpdfstring{$\left(\mathrm{A}_{1}^{[5,4^{4},1^{5}]}\times \mathrm{A}_{1}^{[2^{6},1^{14}]}\times \mathrm{A}_{1}^{[2^{6},1^{14}]}\right)/\mu_{2}^{\Delta}$}{}}\label{section info principal A1 in Spin(5) with Spin(4)}
\tp The centralizer of $H$ in $\lietype{F}{4}$ is the center of $H$,
which is isomorphic to $\Z/2\Z\times\Z/2\Z$. 

The restriction of $\mathrm{J}_{0}$ to $H$ is isomorphic to 
\[\triv+\triv\otimes\st\otimes\st+
\sym^{3}\st\otimes\left(\st\otimes\triv+\triv\otimes\st\right)+
\sym^{4}\st\otimes\triv\otimes\triv 
,\]
and the restriction of $\mathfrak{f}_{4}$ to $H$ is isomorphic to
\begin{gather*}
    \triv\otimes\left(\sym^{2}\st\otimes\triv+\triv\otimes\sym^{2}\st\right)+
    \sym^{2}\st\otimes\triv\otimes\triv+
    \sym^{3}\st\otimes\left(\st\otimes\triv+\triv\otimes\st\right)\\
    +\sym^{4}\st\otimes\st\otimes\st+ 
    \sym^{6}\st\otimes\triv\otimes\triv.
\end{gather*}

\subsubsection{\texorpdfstring{$\left(\mathrm{A}_{1}^{[9,6^{2},5]}\times \mathrm{A}_{1}^{[2^{6},1^{14}]}\right)/\mu_{2}^{\Delta}$}{}}\label{section info principal A1 in Sp(3) with Sp(1)}
\tp The centralizer of $H$ in $\lietype{F}{4}$ is the center of $H$,
which is a cyclic group of order $2$.

The restriction of $\mathrm{J}_{0}$ to $H$ is isomorphic to 
\[\sym^{5}\st\otimes\st
+\left(\sym^{8}\st+\sym^{4}\st\right)\otimes\triv,\]
and the restriction of $\mathfrak{f}_{4}$ to $H$ is isomorphic to 
\begin{align*}
    \triv\otimes\sym^{2}\st+
    \left(\sym^{9}\st+\sym^{3}\st\right)\otimes\st+
    \left(\sym^{10}\st+\sym^{6}\st+\sym^{2}\st\right)\otimes\triv.
\end{align*}

\subsubsection{\texorpdfstring{$\left(\mathrm{A}_{1}^{[5,4^{2},3^{3},2^{2}]}\times \mathrm{A}_{1}^{[3^{3},2^{6},1^{5}]}\right)/\mu_{2}^{\Delta}$}{}}\label{section info principal A1 of Sp(1)+SO(3) with diagonal Sp(1)}
\tp The centralizer of $H$ in $\mathrm{F}_{4}$ is the center of $H$,
which is a cyclic group of order $2$.

The restriction of $\mathrm{J}_{0}$ to $H$ is isomorphic to 
\[\sym^{4}\st\otimes\triv+
\left(\sym^{3}\st+\st\right)\otimes\st+
\sym^{2}\st\otimes \sym^{2}\st,\]
and the restriction of $\mathfrak{f}_{4}$ to $H$ is isomorphic to 
\begin{align*}
\st\otimes\sym^{3}\st+ 
\left(\sym^{4}\st+\triv\right)\otimes\sym^{2}\st+
\left(\sym^{5}\st+\sym^{3}\st\right)\otimes\st+
\left(\sym^{2}\st\right)^{\oplus 2}\otimes\triv.
\end{align*}

\subsubsection{\texorpdfstring{$\left(\mathrm{A}_{1}^{[4^{2},3^{3},2^{4},1]}\times \mathrm{A}_{1}^{[4^{2},3^{3},2^{4},1]}\right)/\mu_{2}^{\Delta}$}{}}\label{section info Spin(3)+Spin(3)}
\tp The centralizer of $H$ in $\lietype{F}{4}$ is the center of $H$,
which is a cyclic group of order $2$.

The restriction of $\mathrm{J}_{0}$ to $H$ is isomorphic to 
\[\triv+ 
\sym^{3}\st\otimes\st+
\sym^{2}\st\otimes\sym^{2}\st+
\st\otimes\sym^{3}\st,\]
and the restriction of $\mathfrak{f}_{4}$ to $H$ is isomorphic to 
\begin{align*}
    \left(\sym^{4}\st+\triv\right)\otimes\sym^{2}\st+ 
    \sym^{2}\st\otimes\left(\sym^{4}\st+\triv\right)+
    \sym^{3}\st\otimes\st+ 
    \st\otimes\sym^{3}\st.
\end{align*}

\subsubsection{\texorpdfstring{$\prod_{i=1}^{4}\mathrm{A}_{1}^{[2^{6},1^{14}]}/\mu_{2}^{\Delta}$}{}}\label{section info four copies of A1}
\tp The centralizer of $H$ in $\lietype{F}{4}$ is the center of $H$,
which is isomorphic to $\Z/2\Z\times\Z/2\Z\times\Z/2\Z$.

The restriction of $\mathrm{J}_{0}$ to $H$ is isomorphic to 
\[\triv^{\oplus 2}+\sum_{\sym}\st\otimes\st\otimes \triv\otimes \triv,\]
where the second term stands for the direct sum of tensor products of standard representations at every two copies of $\lietype{A}{1}^{[2^{6},1^{14}]}$ in $H$.
The restriction of $\mathfrak{f}_{4}$ to $H$ is isomorphic to 
\[\sum_{\sym}\sym^{2}\st\otimes\triv\otimes\triv\otimes\triv+
\sum_{\sym}\st\otimes\st\otimes\triv\otimes\triv+
\st\otimes\st\otimes\st\otimes\st.\]

\section{Arthur's conjectures on automorphic representations}\label{section automorphic representations and Arthur's conjectures}
\tp In this section,
we are going to review the theory of automorphic representations
and Arthur's conjectures on discrete automorphic representations.
For our purposes,
it is enough to restrict to the special case of level $1$ algebraic automorphic forms of 
a reductive group $G$ over $\Q$ admitting a reductive $\Z$-model,
as in \cites{ChenevierRenard}{ChenevierLannes}.
We mainly follow these two references.

\subsection{A brief review of automorphic representations}\label{section review of automorphic representations}
\tp 
In this subsection we give a quick review on automorphic representations, 
following \cite[\S 4.3]{ChenevierLannes}.
Let $G$ be a connected reductive group over $\Q$ with a reductive $\Z$-model $(\G,\mathrm{id})$,
and $A_{G}$ be the maximal $\Q$-split torus of the center $\mathrm{Z}(G)$ of $G$.
Denote by $G(\A)^{1}$ the quotient of $G(\A)$ by the neutral component of $A_{G}(\R)$,
and consider the adelic quotient 
\[[G]:=G(\Q)\backslash G(\A)^{1}=G(\Q)A_{G}(\R)^{\circ}\backslash G(\A).\]
We have a left $G(\Q)$-invariant right Haar measure $\mu$ on $G(\A)$ by \cite[\S II.9]{WeilGroupeTopo},
and the volume of $[G]$ is finite with respect to this measure.
The topological group $G(\A)$ acts on the space $\mathcal{L}(G):=\mathrm{L}^{2}([G])$ of square-integrable functions on $[G]$ by right translations.
Equipped with the \emph{Petersson inner product} defined as 
\[\langle f,f^{\prime}\rangle:=\int \overline{f}f^{\prime}d\mu,\]
the space $\mathcal{L}(G)$ becomes a unitary representation of $G(\A)$.
We denote the closure of the sum of all closed and topologically irreducible subrepresentations of $\mathcal{L}(G)$ by $\mathcal{L}_{\disc}(G)$.

Denote by $\Pi(G)$ the set of equivalence classes of irreducible unitary complex representations $\pi$ of $G(\A)$
such that $\pi=\pi_{\infty}\otimes\pi_{f}$,
where $\pi_{\infty}$ is an irreducible unitary representation of $G(\R)$,
and $\pi_{f}$ is a smooth irreducible representation of $G(\A_{f})$ satisfying $\pi_{f}^{\G(\widehat{\Z})}\neq 0$.
We have the following decomposition:
\begin{align}\label{eqn decomposition of L2 spectrum}
\mathcal{L}_{\disc}(G)^{\G(\widehat{\Z})}=\overline{\bigoplus_{\pi\in \Pi(G)}}\mathrm{m}(\pi)\,\pi^{\G(\widehat{\Z})}=\overline{\bigoplus_{\pi\in\Pi(G)}}\mathrm{m}(\pi)\,\pi_{\infty}\otimes\pi_{f}^{\G(\widehat{\Z})},
\end{align}
where the integers $\mathrm{m}(\pi)\geq 0$ are finite due to a fundamental result of Harish-Chandra \cite[\S I.2, Theorem 1]{chandra1968automorphic}.
We call the integer $\mathrm{m}(\pi)$ the \emph{multiplicity} of $\pi$ in $\mathcal{L}_{\disc}(G)$.

Now we give the definition of level one discrete automorphic representations,
and refer to \cite[\S 4]{borel1979automorphic} for the general definition of automorphic representations.
\begin{defi}\label{def automorphic representation L2 sense}
    A \emph{level one discrete automorphic representation} is a representation $\pi$ of $G(\A)$ in $\Pi(G)$
    such that its multiplicity $\mathrm{m}(\pi)$ in \eqref{eqn decomposition of L2 spectrum} is nonzero.
    We denote the subset of $\Pi(G)$ consisting of level one discrete automorphic representations by $\Pi_{\disc}(G)$.
\end{defi}
\begin{notation}\label{notation level one assumption}
    Since in this paper we only deal with level one automorphic representations,
    so we will always omit ``level one'' from now on.
\end{notation}
\begin{defi}\label{def cuspidal automorphic representation}
    A square-integrable Borel function $f:[G]\rightarrow \C$ is a \emph{cusp form} if for the unipotent radical $U$ of each proper parabolic subgroup of $G$,
    we have 
    \[\int_{U(\Q)\backslash U(\A)}f(ug)du=0\]
    for almost all $g\in G(\A)$.
    We denote the subspace of $\mathcal{L}(G)$ consisting of the classes of cusp forms by $\mathcal{L}_{\cusp}(G)$.
    A discrete automorphic representation is \emph{cuspidal} if it is a subrepresentation of $\mathcal{L}_{\cusp}(G)$,
    and we denote by $\Pi_{\cusp}(G)$ the subset of $\Pi(G)$ consisting of cuspidal representations.
\end{defi}
\begin{rmk}\label{rmk relations between automorphic spectrums}
    A result of Gelfand, Graev and Piatetski-Shapiro \cite{GGPS66} asserts that 
    \[\mathcal{L}_{\cusp}(G)\subset\mathcal{L}_{\disc}(G)\text{ and }\Pi_{\cusp}(G)\subset\Pi_{\disc}(G).\]
    When $G(\R)$ is compact, every automorphic representation of $G$ is discrete by the Peter-Weyl theorem.
\end{rmk}
Denote by $\mathrm{H}(G)=\bigotimes_{p}\mathrm{H}_{p}(G)$ the \emph{spherical Hecke algebra} of the pair $\left(G(\A_{f}),\G(\widehat{\Z})\right)$.
For any representation $\pi=\pi_{\infty}\otimes\pi_{f}\in\Pi(G)$, 
the space $\pi_{f}^{\G(\widehat{\Z})}$ is an irreducible representation of the spherical Hecke algebra $\mathrm{H}(G)$.
Since $\mathrm{H}(G)$ is commutative \cite[Proposition 2.10]{Satake},
the dimension of $\pi_{f}^{\G(\widehat{\Z})}$ is $1$.
Hence the $\G(\widehat{\Z})$-invariant space of the $\pi$-isotypic subspace $\mathcal{L}_{\disc}(G)_{\pi}$ of $\mathcal{L}_{\disc}(G)$,
as a $G(\R)$-representation,
is the direct sum of $\mathrm{m}(\pi)$ copies of $\pi_{\infty}$.
This implies the following result:
\begin{lemma}\label{lemma dimension of isotypic spaces}
    Let $V$ be an irreducible unitary representation of the Lie group $G(\R)$,
    and $\mathcal{A}_{V}(G)$ the space of $G(\R)$-equivariant linear maps from $V$ to $\mathcal{L}_{\disc}(G)^{\G(\widehat{\Z})}$.
    Then we have the following equality:
    \begin{align}\label{eqn dimension of isotypic spaces}
        \dim \mathcal{A}_{V}(G)=\sum_{\pi\in\Pi(G),\,\pi_{\infty}\simeq V}\mathrm{m}(\pi).
    \end{align}
\end{lemma}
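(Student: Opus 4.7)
The plan is to derive the identity directly from the decomposition \eqref{eqn decomposition of L2 spectrum} combined with Schur's lemma for irreducible unitary representations of $G(\R)$, once we have properly handled the topological subtleties of the Hilbert direct sum. The statement amounts to expressing $\mathcal{A}_V(G)$ as a direct sum over $\pi\in\Pi(G)$ of the spaces of continuous $G(\R)$-equivariant maps $V\rightarrow\mathcal{L}_{\disc}(G)_{\pi}^{\G(\widehat{\Z})}$, and then evaluating each such space using Schur.

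First, I would make the reduction explicit. By the commentary already in the paper, for each $\pi\in\Pi(G)$ the space $\pi_f^{\G(\widehat{\Z})}$ is one-dimensional, so the $\pi$-isotypic component of $\mathcal{L}_{\disc}(G)^{\G(\widehat{\Z})}$ is isomorphic to $\mathrm{m}(\pi)\,\pi_\infty$ as a $G(\R)$-representation. Hence the decomposition \eqref{eqn decomposition of L2 spectrum} becomes, as unitary $G(\R)$-representations,
\[
\mathcal{L}_{\disc}(G)^{\G(\widehat{\Z})} \;\simeq\; \widehat{\bigoplus_{\pi\in\Pi(G)}}\; \mathrm{m}(\pi)\,\pi_\infty,
\]
where $\widehat{\bigoplus}$ denotes the Hilbert direct sum. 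The space $\mathcal{A}_V(G)$ is by assumption the space of (continuous) $G(\R)$-equivariant maps from $V$ into this Hilbert sum.

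Next, I would argue that any $f\in\mathcal{A}_V(G)$ decomposes as a (finite) sum of maps into the individual summands. For each $\pi$, let $p_\pi$ denote the orthogonal projection onto the $\pi$-isotypic component; these projectors are $G(\A)$-equivariant and in particular $G(\R)$-equivariant and continuous, so $p_\pi\circ f\in\Hom_{G(\R)}^{\mathrm{cont}}(V,\mathrm{m}(\pi)\,\pi_\infty)$. By Schur's lemma for irreducible unitary representations, this $\Hom$-space is zero unless $\pi_\infty\simeq V$, in which case it has dimension $\mathrm{m}(\pi)$. Since $V$ is irreducible and the sum $\sum_\pi p_\pi f(v)$ converges to $f(v)$ in $\mathcal{L}_{\disc}(G)^{\G(\widehat{\Z})}$ for each $v\in V$, the map $f$ is recovered from its components $p_\pi\circ f$.

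The only mildly subtle point, and the step where I would be most careful, is to verify that $f$ lands in the \emph{algebraic} direct sum $\bigoplus_{\pi:\,\pi_\infty\simeq V}\mathrm{m}(\pi)\,\pi_\infty$, so that $f = \sum_\pi p_\pi f$ is a finite sum termwise. This is a standard consequence of Harish-Chandra's finiteness result recalled above: for a fixed infinitesimal character and $K$-type, only finitely many $\pi\in\Pi(G)$ can contribute with nonzero multiplicity. Once this is established, assembling the $p_\pi\circ f$'s gives the isomorphism
\[
\mathcal{A}_V(G) \;\simeq\; \bigoplus_{\pi\in\Pi(G),\;\pi_\infty\simeq V} \Hom_{G(\R)}^{\mathrm{cont}}(V,\mathrm{m}(\pi)\,\pi_\infty),
\]
and taking dimensions yields the claimed formula.
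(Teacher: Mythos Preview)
The paper does not prove this lemma, presenting it as an immediate consequence of the decomposition \eqref{eqn decomposition of L2 spectrum} together with $\dim\pi_f^{\G(\widehat{\Z})}=1$; your argument via isotypic projections and Schur is precisely the natural way to unpack that sentence and matches the paper's implicit reasoning.

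One small correction on the step you yourself flagged: the Harish-Chandra result recalled in the paper is that each individual multiplicity $\mathrm{m}(\pi)$ is finite, not that only finitely many $\pi$ with $\pi_\infty\simeq V$ occur. What you actually need for the ``algebraic direct sum'' claim is the $K$-admissibility of $\mathcal{L}_{\disc}(G)^{\G(\widehat{\Z})}$ as a unitary $G(\R)$-representation (pick a $K$-type $\tau$ of $V$ and bound the number of copies of $V$ by the multiplicity of $\tau$). This is also standard and also due to Harish-Chandra, but it is a different statement. In the paper's intended application $G(\R)$ is compact, so the whole issue disappears.
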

\begin{rmk}\label{rmk view the space equivariant maps as multiplicity space}
    The space $\mathcal{A}_{V}(G)=\Hom_{G(\R)}(V,\mathcal{L}_{\disc}(G)^{\G(\widehat{\Z})})$ can be viewed as the multiplicity space of $V$ in \eqref{eqn decomposition of L2 spectrum}.
\end{rmk}
\subsubsection{Automorphic representations for \texorpdfstring{$\grpF$}{PDFstring}}\label{section automorphic representations of anisotropic groups}
\tp When the reductive group $G$ has compact real points,
due to \cite{AlgModForm} we can describe the multiplicity space $\mathcal{A}_{V}(G)$ of $V$ in $\mathcal{L}_{\disc}(G)^{\G(\widehat{\Z})}$ in a more computable manner,
which is explained in \cite[\S 4.4.1]{ChenevierLannes}.
Applying \cite[Lemma 4.4.2]{ChenevierLannes} to $\grpF$ and using the fact that every irreducible representation of $\lietype{F}{4}$ is self-dual,
we get:
\begin{prop}\label{prop isomorphism between isotypic spaces and algebraic modular forms}
    Let $(\rho,V)$ be an irreducible representation of $\lietype{F}{4}=\grpF(\R)$.
    The vector space $\mathcal{A}_{V}(\grpF)$ is canonically isomorphic to the following space:
    \[\mathrm{M}_{V}(\grpF):=\set{f:\grpF(\A_{f})/\mathcal{F}_{4,\mathrm{I}}(\widehat{\Z})\rightarrow V}{f(\gamma g)=\rho(\gamma)f(g)\text{ for all }\gamma\in \grpF(\Q),g\in\grpF(\A_{f})}.\]
\end{prop}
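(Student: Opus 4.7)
The plan is to construct the claimed isomorphism explicitly via a version of Frobenius reciprocity adapted to the adelic setting, leveraging the compactness of $\grpF(\R)$ throughout. First I would reduce the setup: since $\grpF$ is semisimple, $A_{\grpF}$ is trivial and $[\grpF]=\grpF(\Q)\backslash \grpF(\A)$. The compactness of $\grpF(\R)$ together with Borel's finiteness theorem for the double cosets $\grpF(\Q)\backslash \grpF(\A_{f})/\G(\widehat{\Z})$ recalled in \Cref{section reductive integral models of Q groups} implies that the quotient $[\grpF]/\G(\widehat{\Z})$ is in fact \emph{finite}. Consequently $\mathcal{L}_{\disc}(\grpF)^{\G(\widehat{\Z})}=\mathcal{L}(\grpF)^{\G(\widehat{\Z})}$ is finite-dimensional, every irreducible unitary representation occurring in $\mathrm{L}^{2}([\grpF])$ is automatically discrete, and all analytic subtleties disappear.

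Next I would construct a linear map $\mathcal{A}_{V}(\grpF)\to \mathrm{M}_{V}(\grpF)$. Fix once and for all a non-degenerate $\grpF(\R)$-invariant bilinear form $B:V\times V\to \C$, which exists because every irreducible representation of $\grpF(\R)=\lietype{F}{4}$ is self-dual, and use $B$ to identify $V$ with $V^{*}$. Given $\Phi\in \mathcal{A}_{V}(\grpF)$, define $f_{\Phi}:\grpF(\A_{f})\to V^{*}\simeq V$ by
\[
    f_{\Phi}(g_{f})(v):=\Phi(v)(1,g_{f}),\qquad v\in V,\ g_{f}\in \grpF(\A_{f}),
\]
where $(1,g_{f})\in \grpF(\R)\times \grpF(\A_{f})=\grpF(\A)$. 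Right $\G(\widehat{\Z})$-invariance of $f_{\Phi}$ is immediate from that of $\Phi(v)$. For $\gamma\in \grpF(\Q)$, writing $(1,\gamma g_{f})=\gamma\cdot(\gamma^{-1},g_{f})$ in $\grpF(\A)$ and combining left $\grpF(\Q)$-invariance of $\Phi(v)$ with the $\grpF(\R)$-equivariance of $\Phi$ yields
\[
    f_{\Phi}(\gamma g_{f})(v)=\Phi(v)(\gamma^{-1},g_{f})=\Phi(\rho(\gamma^{-1})v)(1,g_{f})=f_{\Phi}(g_{f})(\rho(\gamma)^{-1}v),
\]
which after identification via $B$ becomes the required transformation rule $f_{\Phi}(\gamma g_{f})=\rho(\gamma)f_{\Phi}(g_{f})$.

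For the inverse, given $f\in \mathrm{M}_{V}(\grpF)$ I would define $\Phi_{f}:V\to \mathcal{L}(\grpF)^{\G(\widehat{\Z})}$ by
\[
    \Phi_{f}(v)(g_{\infty},g_{f}):=B\bigl(f(g_{f}),\,\rho(g_{\infty}^{-1})v\bigr).
\]
Using the $\grpF(\Q)$-equivariance of $f$ (applied to the diagonal embedding $\gamma\mapsto(\gamma,\gamma)$) together with the $\grpF(\R)$-invariance of $B$, one checks that $\Phi_{f}(v)$ descends to a well-defined function on $[\grpF]$, is right $\G(\widehat{\Z})$-invariant, and that $v\mapsto \Phi_{f}(v)$ is $\grpF(\R)$-equivariant; square-integrability is automatic since $[\grpF]/\G(\widehat{\Z})$ is finite. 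A direct unwinding of the definitions then shows that $\Phi\mapsto f_{\Phi}$ and $f\mapsto \Phi_{f}$ are mutually inverse.

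The single genuinely delicate point in this plan, and what I would verify most carefully, is the bookkeeping around the identification $V\simeq V^{*}$ together with the various conventions for right translation on $\mathrm{L}^{2}([\grpF])$, transpose, and the diagonal action of $\grpF(\Q)\subset \grpF(\A)$: these must cohere so that the final transformation rule emerges with the correct orientation and the resulting isomorphism is canonical (depending on $B$ only up to the expected scalar ambiguity). Beyond this bookkeeping, no deep input is needed — the assertion is essentially Frobenius reciprocity for the compact quotient $\grpF(\R)\backslash [\grpF]/\G(\widehat{\Z})$, phrased adelically.
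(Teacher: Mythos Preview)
Your construction is correct and coincides with what the paper does: it simply invokes \cite[Lemma 4.4.2]{ChenevierLannes} together with the self-duality of irreducible representations of $\lietype{F}{4}$, and you have unfolded that citation explicitly via the standard Frobenius reciprocity argument. One small slip to fix: the quotient $[\grpF]/\G(\widehat{\Z})$ is not \emph{finite} but a finite disjoint union of spaces $\Gamma_{i}\backslash \grpF(\R)$ with $\Gamma_{i}\subset \grpF(\R)$ finite, hence merely \emph{compact}; in particular $\mathcal{L}(\grpF)^{\G(\widehat{\Z})}$ is infinite-dimensional. This does not affect your argument, since square-integrability of $\Phi_{f}(v)$ follows already from continuity on the compact space $[\grpF]$.
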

We choose a set of representatives $\{1,g_{\mathrm{E}}\}$ of $\grpF(\Q)\backslash\grpF(\A_{f})/\mathcal{F}_{4,\mathrm{I}}(\widehat{\Z})$ corresponding to the two reductive $\Z$-models 
$(\mathcal{F}_{4,\mathrm{I}},\mathrm{id})$ and $(\mathcal{F}_{4,\mathrm{E}},\iota)$ of $\grpF$ in \Cref{prop exactly two reductive models}.
By \cite[Equation (4.4.1)]{ChenevierLannes} the evaluation map $f\mapsto \left(f(1),f(g_{\mathrm{E}})\right)$ induces a bijection:
\[\mathrm{M}_{V}(\grpF)\simeq V^{\mathcal{F}_{4,\mathrm{I}}(\Z)}\oplus V^{\mathcal{F}_{4,\mathrm{E}}(\Z)}.\] 

Combining the results in this section with \Cref{thm main computational result},
we have the following computational result:
\begin{cor}\label{cor dimension formula for the multiplicity space}
    For any dominant weight $\lambda$ of $\lietype{F}{4}$,
    we have an explicit formula for $\dim \mathcal{A}_{\mathrm{V}_{\lambda}}(\grpF)$,
    where $\mathrm{V}_{\lambda}$ is the irreducible representation of $\lietype{F}{4}=\grpF(\R)$ with highest weight $\lambda$.
    For $\lambda=(\lambda_{1},\lambda_{2},\lambda_{3},\lambda_{4})$ with $2\lambda_{1}+3\lambda_{2}+2\lambda_{3}+\lambda_{4}\leq 13$,
    the dimension $\dim \mathcal{A}_{\mathrm{V}_{\lambda}}(\grpF)$ equals the $d(\lambda)$ in \Cref{nonzeroAMF}.
\end{cor}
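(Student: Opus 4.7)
The proof is essentially an assembly of results already established in the paper, so the plan is to string together three ingredients cleanly. First, I would invoke \Cref{prop isomorphism between isotypic spaces and algebraic modular forms} to replace the automorphic multiplicity space $\mathcal{A}_{\mathrm{V}_{\lambda}}(\grpF)$ by the more concrete space $\mathrm{M}_{\mathrm{V}_{\lambda}}(\grpF)$ of $\mathrm{V}_{\lambda}$-valued functions on $\grpF(\Q)\backslash\grpF(\A_{f})/\mathcal{F}_{4,\mathrm{I}}(\widehat{\Z})$ transforming under $\grpF(\Q)$ by the representation $\rho$.

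Next, I would use \Cref{prop exactly two reductive models}, which tells us that the double coset $\grpF(\Q)\backslash\grpF(\A_{f})/\mathcal{F}_{4,\mathrm{I}}(\widehat{\Z})$ has exactly two classes, represented by $1$ and some element $g_{\mathrm{E}}$ whose stabilizer in $\grpF(\Q)$ is (after conjugation) $\mathcal{F}_{4,\mathrm{E}}(\Z)$, while the stabilizer of $1$ is $\mathcal{F}_{4,\mathrm{I}}(\Z)$. The evaluation map $f\mapsto (f(1),f(g_{\mathrm{E}}))$ then furnishes the isomorphism
\[
\mathrm{M}_{\mathrm{V}_{\lambda}}(\grpF)\;\simeq\;\mathrm{V}_{\lambda}^{\mathcal{F}_{4,\mathrm{I}}(\Z)}\;\oplus\;\mathrm{V}_{\lambda}^{\mathcal{F}_{4,\mathrm{E}}(\Z)},
\]
so that $\dim\mathcal{A}_{\mathrm{V}_{\lambda}}(\grpF)=d_{1}(\lambda)+d_{2}(\lambda)=d(\lambda)$ by the definition of $d_i(\lambda)$ in \Cref{thm main computational result}.

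Finally, the explicit computability of $d(\lambda)$ and its tabulation for small weights come directly from \Cref{thm main computational result}: the invariants $d_{i}(\lambda)$ are expressed, via \Cref{lemma dimension of invariant space in terms of conj classes}, as weighted sums over the rational torsion $\grpF(\C)$-conjugacy classes tabulated in \Cref{KacTable}, with the character value on each class computed by the degenerate Weyl character formula \Cref{prop degenerate Weyl character formula}. For the range $2\lambda_{1}+3\lambda_{2}+2\lambda_{3}+\lambda_{4}\leq 13$, the resulting nonzero values are precisely what is listed in \Cref{nonzeroAMF}.

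There is no real obstacle here beyond bookkeeping: the only subtlety is to verify that the conjugation chosen to identify the stabilizer of $g_{\mathrm{E}}$ with $\mathcal{F}_{4,\mathrm{E}}(\Z)$ is compatible with the $\grpF(\R)$-action defining $\mathrm{V}_{\lambda}^{\mathcal{F}_{4,\mathrm{E}}(\Z)}$, but since both integral models have compact real points contained in a common $\grpF(\R)$, this identification is independent (up to inner automorphism) of the representative, and the dimensions of invariant subspaces are invariants of the conjugacy class of the finite subgroup. Hence no additional work is needed beyond citing \Cref{prop isomorphism between isotypic spaces and algebraic modular forms}, \Cref{prop exactly two reductive models}, and \Cref{thm main computational result}.
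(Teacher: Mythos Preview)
Your proposal is correct and follows exactly the paper's approach: the corollary is stated without a separate proof, as it follows immediately from \Cref{prop isomorphism between isotypic spaces and algebraic modular forms}, the displayed isomorphism $\mathrm{M}_{V}(\grpF)\simeq V^{\mathcal{F}_{4,\mathrm{I}}(\Z)}\oplus V^{\mathcal{F}_{4,\mathrm{E}}(\Z)}$ (obtained from \Cref{prop exactly two reductive models} and the evaluation map), and \Cref{thm main computational result}. Your additional remarks on the computational ingredients and the compatibility of the conjugation are accurate but more detailed than what the paper records.
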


\subsection{Local parametrization of \texorpdfstring{$\Pi(G)$}{PDFstring}}\label{section local parametrization of automorphic rep}
\tp Let $G$ be a connected reductive group over $\Q$ with a fixed reductive $\Z$-model $(\G,\mathrm{id})$.
Let $\widehat{G}$ be its complex Langlands dual group,
i.e. the root datum of $\widehat{G}$ is the dual root datum of $G$.
A representation $\pi\in\Pi(G)$ can be decomposed as $\pi=\pi_{\infty}\otimes\left(\bigotimes_{p}\pi_{p}\right)$, 
where $\pi_{p}$ is a \emph{spherical} irreducible smooth representation of $G(\Q_{p})$ for each $p$,
i.e. $\pi_{p}^{\G(\Z_{p})}\neq 0$,
and $\pi_{\infty}$ is an irreducible unitary representation of the Lie group $G(\R)$.

In this subsection, we will recall the parametrizations for spherical irreducible smooth representations of $G(\Q_{p})$
and for irreducible unitary representations of $G(\R)$.
Our main reference is \cite[\S 6.2, \S 6.3]{ChenevierLannes}.

\subsubsection{Satake parameter}\label{section Satake parameter}
\tp For each prime number $p$, 
a spherical irreducible smooth representation $\pi$ of $G(\Q_{p})$
is determined by the action of the spherical Hecke algebra $\mathrm{H}_{p}(G)$ for the pair $\left(G(\Q_{p}),\G(\Z_{p})\right)$
on the subspace of invariants $\pi^{\G(\Z_{p})}$.
Since $\dim \pi^{\G(\Z_{p})}=1$,
the equivalence class of $\pi$ is determined uniquely by the ring homomorphism $\mathrm{H}_{p}(G)\rightarrow\C$
given by the $\mathrm{H}_{p}(G)$-action on $\pi^{\G(\Z_{p})}$.

By \cite[Scholium 6.2.2]{ChenevierLannes},
the \emph{Satake isomorphism} gives a canonical bijection between the set of ring homomorphisms $\mathrm{H}_{p}(G)\rightarrow \C$ 
and the set $\widehat{G}(\C)_{\mathrm{ss}}$ of semisimple conjugacy classes in $\widehat{G}(\C)$.
This induces a bijection $\pi\mapsto \mathrm{c}_{p}(\pi)$ between 
the set of equivalence classes of spherical irreducible smooth representations of $G(\Q_{p})$
and the set $\widehat{G}(\C)_{\mathrm{ss}}$.
The conjugacy class $\mathrm{c}_{p}(\pi)$ is called the \emph{Satake parameter} of $\pi_{p}$.

\subsubsection{Infinitesimal character}\label{section infinitesimal character}
\tp Let $\mathfrak{g}$ be the Lie algebra of $G(\C)$,
and $\widehat{\mathfrak{g}}$ the Lie algebra of $\widehat{G}(\C)$.
We fix a Cartan subalgebra $\mathfrak{t}$ of $\mathfrak{g}$ and a Borel subalgebra $\mathfrak{b}\subset\mathfrak{g}$ containing $\mathfrak{t}$,
and denote the Weyl group of $\mathfrak{g}$ with respect to $\mathfrak{t}$ by $W$. 

As explained in \cite[\S 6.3.4]{ChenevierLannes},
we can associate a character $\mathrm{Z}(\mathrm{U}(\mathfrak{g}))\rightarrow \C$ to an irreducible unitary representation $(\pi,V)$ of $G(\R)$,
where $\mathrm{Z}(\mathrm{U}(\mathfrak{g}))$ is the center of the universal enveloping algebra of $\mathfrak{g}$.
By \cite[Scholium 6.3.2 and Equation (6.3.1)]{ChenevierLannes},
the \emph{Harish-Chandra isomorphism} induces the following canonical bijections:
\begin{align}\label{eqn Harish-Chandra isomorphism}
    \mathrm{Hom}_{\C\text{-}\mathrm{alg}}(\mathrm{Z}(\mathrm{U}(\mathfrak{g})),\C)\simeq \widehat{\mathfrak{g}}_{\mathrm{ss}}\simeq \left(X^{*}(\mathfrak{t})\otimes_{\Z}\C\right)/W,
\end{align}
where $\widehat{\mathfrak{g}}_{\mathrm{ss}}$ is the set of semisimple conjugacy classes in $\widehat{\mathfrak{g}}$. 
Hence we associate to $(\pi,V)$ a semisimple conjugacy class $\mathrm{c}_{\infty}(\pi)\in\widehat{\mathfrak{g}}_{\mathrm{ss}}$,
called the \emph{infinitesimal character} of $\pi$. 

As proved by Harish-Chandra \cite[Corollary 10.37]{Knapp86},
up to isomorphism there are only a finite number of irreducible unitary representations of $G(\R)$ 
with a given infinitesimal character.
When $G(\R)$ is compact, the situation is much simpler due to the following result:
\begin{prop}\label{prop for compact group inf char is bijection}\cite[\S 7.4.6]{Enveloping}
    Let $G(\R)$ be a compact group,
    and $\rho\in X^{*}(\mathfrak{t})\otimes\C$ the half-sum of positive roots with respect to $(\mathfrak{g},\mathfrak{b},\mathfrak{t})$.
    For a dominant weight $\lambda$ of $G(\R)$,
    the infinitesimal character of the highest weight representation $\mathrm{V}_{\lambda}$ of $G(\R)$
    is $\lambda+\rho$,
    viewed as an element in $\widehat{\mathfrak{g}}_{\mathrm{ss}}$ via \eqref{eqn Harish-Chandra isomorphism}.
    In particular, the infinitesimal character $\lambda+\rho$ determines $\mathrm{V}_{\lambda}$ uniquely.
\end{prop}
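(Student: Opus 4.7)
The plan is to reduce the statement to a standard computation of the action of $\mathrm{Z}(\mathrm{U}(\mathfrak{g}))$ on a highest weight vector, and then to match the result with the Harish-Chandra isomorphism. Since $G(\R)$ is compact and connected, every irreducible unitary representation extends uniquely to an irreducible holomorphic representation of the complexification $G(\C)$, so $\mathrm{V}_{\lambda}$ is a genuine irreducible highest weight $\mathfrak{g}$-module; pick a highest weight vector $v_{\lambda}$, annihilated by the nilpotent radical $\mathfrak{n}^{+}$ of $\mathfrak{b}$.

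First I would invoke the PBW decomposition
\[
\mathrm{U}(\mathfrak{g}) \;=\; \mathrm{U}(\mathfrak{t}) \;\oplus\; \bigl(\mathfrak{n}^{-}\mathrm{U}(\mathfrak{g}) + \mathrm{U}(\mathfrak{g})\mathfrak{n}^{+}\bigr),
\]
and let $p:\mathrm{U}(\mathfrak{g})\to \mathrm{U}(\mathfrak{t})=\mathrm{S}(\mathfrak{t})$ denote the associated projection. For any $z\in \mathrm{Z}(\mathrm{U}(\mathfrak{g}))$, the element $z-p(z)$ kills $v_{\lambda}$: the $\mathrm{U}(\mathfrak{g})\mathfrak{n}^{+}$ part annihilates $v_{\lambda}$ directly, and a weight-grading argument (using that $z$ commutes with $\mathfrak{t}$, so its projection on each non-zero weight space must vanish) eliminates the $\mathfrak{n}^{-}\mathrm{U}(\mathfrak{g})$ part. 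Hence $z\cdot v_{\lambda}=p(z)(\lambda)\,v_{\lambda}$, and by irreducibility $z$ acts on all of $\mathrm{V}_{\lambda}$ by the scalar $p(z)(\lambda)$, which is by definition the infinitesimal character of $\mathrm{V}_{\lambda}$ evaluated at $z$.

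Next I would identify this scalar with an evaluation of the Harish-Chandra image $\gamma(z)\in \mathrm{S}(\mathfrak{t})^{W}$. The Harish-Chandra isomorphism factors as the projection $p$ followed by the $\rho$-shift $\tau_{\rho}:\mathrm{S}(\mathfrak{t})\to \mathrm{S}(\mathfrak{t})$, $f\mapsto(\mu\mapsto f(\mu-\rho))$; the whole point of the shift is to make the resulting map land in the Weyl-invariants. It follows that $p(z)(\lambda)=\gamma(z)(\lambda+\rho)$, so under the identification \eqref{eqn Harish-Chandra isomorphism} the infinitesimal character of $\mathrm{V}_{\lambda}$ corresponds to the $W$-orbit of $\lambda+\rho$, as claimed.

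For the final uniqueness statement I would use that $\rho$ is strictly dominant, so for any dominant weight $\lambda$ the element $\lambda+\rho$ lies in the interior of the dominant Weyl chamber; since each $W$-orbit in $\mathrm{X}^{*}(\mathfrak{t})\otimes\C$ meets the closed dominant chamber in at most one point, the assignment $\lambda\mapsto W\cdot(\lambda+\rho)$ is injective on dominant weights, and combined with the bijection between dominant weights and irreducible representations of the compact group $G(\R)$ this yields the claim. The only point requiring care is matching the sign convention of \eqref{eqn Harish-Chandra isomorphism} with that of the $\rho$-shifted Harish-Chandra map used in the reference \cite{Enveloping}; no real obstacle is expected beyond this bookkeeping.
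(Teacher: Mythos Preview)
Your argument is correct and is essentially the standard proof of this classical fact. Note, however, that the paper does not give its own proof of this proposition at all: it is stated with a reference to \cite[\S 7.4.6]{Enveloping} and used as a black box. So there is nothing to compare against beyond observing that your sketch reproduces the usual Harish-Chandra computation found in that reference.
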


\subsubsection{Langlands parametrization}\label{section Langlands parametrization}
\tp Now we recall Langlands parametrization of $\Pi(G)$,
following \cite[\S 6.4.2]{ChenevierLannes}.
\begin{defi}\label{def set of conjugacy classes}
    Let $H$ be a connected reductive $\C$-group with complex Lie algebra $\mathfrak{h}$.
    We denote by $H(\C)_{\mathrm{ss}}$ (resp. $\mathfrak{h}_{\mathrm{ss}}$)
    the set of $H(\C)$-conjugacy classes of semisimple elements of $H(\C)$ (resp. $\mathfrak{h}$).
    We denote by $\mathcal{X}(H)$ the set of families $(\mathrm{c}_{\infty},\mathrm{c}_{2},\mathrm{c}_{3},\mathrm{c}_{5},\ldots)$,
    where $\mathrm{c}_{\infty}\in\mathfrak{h}_{\mathrm{ss}}$ and $\mathrm{c}_{p}\in H(\C)_{\mathrm{ss}}$ for all primes $p$.
\end{defi}

By results in \Cref{section Satake parameter} and \Cref{section infinitesimal character},
we associate to a representation $\pi=\pi_{\infty}\otimes\left(\bigotimes_{p}\pi_{p}\right)\in\Pi(G)$
a conjugacy class $\mathrm{c}_{p}(\pi):=\mathrm{c}_{p}(\pi_{p})$ in $\widehat{G}(\C)_{\mathrm{ss}}$ for each $p$,
and a conjugacy class $\mathrm{c}_{\infty}(\pi):=\mathrm{c}_{\infty}(\pi_{\infty})$ in $\widehat{\mathfrak{g}}_{\mathrm{ss}}$.
Hence we have a canonical map $\Pi(G)\rightarrow \mathcal{X}(\widehat{G})$
defined as
\[\pi=\pi_{\infty}\otimes\left(\bigotimes_{p}\pi_{p}\right)\mapsto \mathrm{c}(\pi)=(\mathrm{c}_{\infty}(\pi),\mathrm{c}_{2}(\pi),\mathrm{c}_{3}(\pi),\cdots)\in\mathcal{X}(\widehat{G}).\] 
The family of conjugacy classes $\mathrm{c}(\pi)$ determines $\pi_{f}$ and the infinitesimal character of $\pi_{\infty}$,
and the map $\mathrm{c}$ has finite fibers.
When $G(\R)$ is compact,
the fiber of $\mathrm{c}$ is either empty or a singleton.
\begin{defi}\label{def Langlands parameter of a pair}
    Let $G$ be a semisimple $\Q$-group admitting a reductive $\Z$-model,
    and $r:\widehat{G}\rightarrow \SL_{n}$ an algebraic representation of its dual group,
    which induces a map $\mathcal{X}(\widehat{G})\rightarrow\mathcal{X}(\SL_{n})$.
    For any $\pi\in\Pi(\G)$,
    we define the following family of conjugacy classes: 
    \[\psi(\pi,r):=r\left(\mathrm{c}(\pi)\right)\in\mathcal{X}(\SL_{n}),\]
    and refer to it as the \emph{Langlands parameter of the pair $(\pi,r)$}.
\end{defi}

\subsection{Global parametrization and the Langlands group}\label{section global parametrization and Langlands group}
\tp For the global parametrization of level one discrete automorphic representations,
now we need to use a \emph{conjectural} group $\mathcal{L}_{\Z}$, 
the so-called \emph{Langlands group of $\Z$}, 
to formulate the global Arthur-Langlands conjecture.
In Arthur's work \cite{ArthurUnipRep}, 
he uses another group $\mathcal{L}_{\Q}$.
However, since we only consider level one discrete automorphic representations in this paper, 
it is more convenient to use the group $\mathcal{L}_{\Z}$ that we are going to recall,
following \cites[Appendix B]{ChenevierRenard}[Preface]{ChenevierLannes}.

We assume that $\mathcal{L}_{\Z}$ is a compact Hausdorff topological group equipped with
\begin{itemize}
    \item A conjugacy class $\frob_{p}$ in $\mathcal{L}_{\Z}$, for each prime $p$,
    \item A conjugacy class of continuous homomorphisms $\mathrm{h}:\mathrm{W}_{\R}\rightarrow \mathcal{L}_{\Z}$,
    called the \emph{Hodge morphism}.
    Here $\mathrm{W}_{\R}$ is the \emph{Weil group of $\R$}, 
    which is a non-split extension of $\gal(\C/\R)=\{1,j\}$ by $\mathrm{W}_{\C}=\C^{\times}$,
    for the natural action of $\gal(\C/\R)$ on $\C^{\times}$.
    It is generated by its open subgroup $\C^{\times}$ together with an element $j$,
    with relations $j^{2}=-1$ and $jzj^{-1}=\overline{z}$ for every $z\in\C^{\times}$.
\end{itemize}
This group $\mathcal{L}_{\Z}$ satisfies three axioms that we will introduce one by one. 
\begin{axiom}\label{axiom 1} (\emph{Cebotarev property})
    The union of conjugacy classes $\frob_{p}$ is dense in $\mathcal{L}_{\Z}$.
\end{axiom}
\begin{rmk}\label{rmk axiom using Sato-Tate}
    In \cite[Appendix B]{ChenevierRenard},
    the axiom they use is the \emph{general Sato-Tate conjecture}:
    the conjugacy classes $\mathrm{Frob}_{p}$ are equidistributed in the compact group $\mathcal{L}_{\Z}$
    equipped with its Haar measure of mass $1$.
    This is a universal form of the Sato-Tate conjecture for automorphic representations
    and it implies the Cebotarev property,
    but \Cref{axiom 1} is enough for us in this article.
\end{rmk}
This axiom tells us for two homomorphisms $\psi,\psi^{\prime}$ from $\mathcal{L}_{\Z}$ to some topological group $H$,
if $\psi(\frob_{p})$ and $\psi^{\prime}(\frob_{p})$ are conjugate in $H$ for each prime $p$,
then $\psi$ and $\psi^{\prime}$ are element-conjugate.
An important type of homomorphisms involving $\mathcal{L}_{\Z}$ is:
\begin{defi}\label{def global discrete Arthur parameter}
    Let $G$ be a reductive $\Q$-group admitting a reductive $\Z$-model.
    A \emph{discrete global Arthur parameter (of level one)} of $G$ is 
    a $\widehat{G}(\C)$-conjugacy class of continuous group homomorphisms
    \[\psi:\mathcal{L}_{\Z}\times\mathrm{SL}_{2}(\C)\rightarrow \widehat{G}(\C)\]
    such that $\psi|_{\SL_{2}(\C)}$ is algebraic and the centralizer $\mathrm{C}_{\psi}$ of $\mathrm{Im}(\psi)$ in $\widehat{G}(\C)$ is finite modulo the center of $\widehat{G}(\C)$.
    We call $\mathrm{C}_{\psi}$ the \emph{(global) component group} of $\psi$,
    and denote the set of discrete global Arthur parameters of $G$ by $\Psi_{\mathrm{disc}}(G)$.
\end{defi}
\begin{rmk}\label{rmk discreteness vs irreducibility}
    The condition on $\mathrm{C}_{\psi}$ in \Cref{def global discrete Arthur parameter}
    implies that a discrete global Arthur parameter for $G=\GL_{n}$ 
    is an equivalence class of $n$-dimensional irreducible representations of $\mathcal{L}_{\Z}\times\SL_{2}(\C)$.
\end{rmk}
In parallel with Langlands parametrization in \Cref{section Langlands parametrization}, 
we can also associate to any $\psi\in\Psi_{\mathrm{disc}}(G)$ a collection of conjugacy classes 
$\mathrm{c}(\psi)=\left(\mathrm{c}_{\infty}(\psi),\mathrm{c}_{2}(\psi),\mathrm{c}_{3}(\psi),\cdots\right)\in\mathcal{X}(\widehat{G})$.
For each prime $p$, the conjugacy class $\mathrm{c}_{p}(\psi)$ is defined by:
\[\ \mathrm{c}_{p}(\psi):=\psi(\frob_{p},e_{p}),\,e_{p}=\left(\begin{matrix}
    p^{-1/2}&0\\
    0&p^{1/2}
\end{matrix}\right)\in\SL_{2}(\C).
\]
The infinitesimal character $\mathrm{c}_{\infty}(\psi)$ of $\psi$ is defined 
to be the infinitesimal character of the archimedean Arthur parameter $\psi\circ(\mathrm{h}\times\mathrm{id}):\mathrm{W}_{\R}\times\SL_{2}(\C)\rightarrow \widehat{G}(\C)$,
which is explained in \cite[\S A.2]{ChenevierRenard}.

The following axiom connects the collection of conjugacy classes 
attached to a discrete automorphic representation and
that attached to a discrete global Arthur parameter.
\begin{axiom}\label{axiom 2}(Arthur-Langlands conjecture for $\GL_{n}$)
    For every integer $n\geq 1$,
    there is a unique bijection 
    \[\Pi_{\disc}(\GL_{n})\overset{\sim}{\rightarrow}\Psi_{\disc}(\GL_{n}),\,\pi\mapsto \psi_{\pi}\]
    such that $c(\pi)=c(\psi_{\pi})$ for all discrete automorphic representations $\pi$ of $\GL_{n}$. 
    Moreover, the discrete global Arthur parameter $\psi_{\pi}$ is trivial on $\SL_{2}(\C)$ if and only if we have $\pi\in\Pi_{\cusp}(\GL_{n})$.
\end{axiom}

\begin{rmk}\label{rmk generalized Ramanujan conjecture}
    This axiom and the compactness of $\mathcal{L}_{\Z}$ imply the so-called \emph{generalized Ramanujan conjecture}:
    for any $\pi\in\Pi_{\cusp}(\GL_{n})$ and any prime $p$, 
    the eigenvalues of $\mathrm{c}_{p}(\pi)$ all have absolute value $1$.
\end{rmk}
For general reductive groups,
we have the following third axiom:
\begin{axiom}\label{axiom 3}
    Let $G$ be a reductive group admitting a reductive $\Z$-model $(\G,\mathrm{id})$,
    then there exists a decomposition 
    \begin{align}
        \mathcal{L}_{\disc}(G)^{\G(\widehat{\Z})}=\overset{\perp}{\bigoplus}_{\psi\in\Psi_{\disc}(G)}\mathcal{A}_{\psi}(G),
    \end{align}
    stable under the actions of $G(\R)$ and $\mathrm{H}(G)$,
    and satisfying the following property:
    for $\pi\in\Pi(G)$,
    if $\pi^{\G(\widehat{\Z})}$ appears in $\mathcal{A}_{\psi}(G)$,
    then we have $\mathrm{c}(\pi)=\mathrm{c}(\psi)$.
\end{axiom}
This axiom tells us for any level one discrete automorphic representation $\pi\in\Pi_{\disc}(G)$,
there exists a discrete global Arthur parameter $\psi$ of $G$ such that $\mathrm{c}(\psi)=\mathrm{c}(\pi)$.
In general, this discrete global Arthur parameter is not unique
since two element-conjugate embeddings into $\widehat{G}(\C)$ may not be conjugate.
Conversely,
given a discrete global Arthur parameter $\psi$ of $G$,
there are finitely many (possibly zero) adelic representations $\pi\in \Pi(G)$ satisfying $\mathrm{c}(\pi)=\mathrm{c}(\psi)$,
and we denote the subset of $\Pi(G)$ consisting of such representations by $\Pi_{\psi}(G)$.

In other words,
discrete global Arthur parameters are the objects parametrizing discrete automorphic representations,
but a natural problem that we need to deal with is that 
which representations in $\Pi_{\psi}(G)$ for a given $\psi$
appear in the discrete spectrum $\mathcal{L}(G)_{\disc}$.
We will see the (conjectural) answer in \Cref{section Arthur multiplicity formula}.

Another property about $\mathcal{L}_{\Z}$ that we will use is that it is connected:
\begin{prop}\cite[Proposition 9.3.4]{ChenevierLannes}\label{prop Langlands group is connected}
    Suppose that $\mathcal{L}_{\Z}$ is a compact topological group satisfying the axioms above,
    then it is connected.
\end{prop}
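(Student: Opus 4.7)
The plan is to establish connectedness by contradiction. Suppose $\mathcal{L}_{\Z}$ is not connected; since it is compact, the component group $\pi_{0}(\mathcal{L}_{\Z})$ is a nontrivial profinite group, and by Peter--Weyl it admits a nontrivial continuous irreducible unitary representation of some dimension $n$. Pulling back to $\mathcal{L}_{\Z}$ yields a nontrivial irreducible continuous homomorphism $\rho\colon \mathcal{L}_{\Z}\to\GL_{n}(\C)$ with finite image. I would view $\rho$, extended trivially on $\SL_{2}(\C)$, as a discrete global Arthur parameter for $\GL_{n}$: by Schur's lemma the centralizer of $\mathrm{Im}(\rho)$ in $\GL_{n}(\C)$ equals $\C^{\times}=\mathrm{Z}(\GL_{n}(\C))$, so the discreteness condition of \Cref{def global discrete Arthur parameter} is automatic.

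By \Cref{axiom 2} this parameter corresponds to a unique cuspidal $\pi\in \Pi_{\cusp}(\GL_{n})$ of level one with $\mathrm{c}_{p}(\pi)=\rho(\frob_{p})$ for every prime $p$ and with archimedean Langlands parameter $\rho\circ \mathrm{h}\colon \mathrm{W}_{\R}\to\GL_{n}(\C)$. The crucial observation is that $\rho\circ \mathrm{h}$ has finite image; since $\C^{\times}\subset \mathrm{W}_{\R}$ is connected, the restriction to $\C^{\times}$ is trivial, and $\rho\circ \mathrm{h}$ factors through $\mathrm{W}_{\R}/\C^{\times}\simeq \gal(\C/\R)\simeq \Z/2\Z$. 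Consequently $\mathrm{c}_{\infty}(\pi)$ is trivial (all exponents vanish), $\pi_{\infty}$ is a subquotient of a principal series induced from characters of $\GL_{1}(\R)^{n}$ of order at most $2$, and the Satake parameters $\rho(\frob_{p})$ all lie in the finite set of conjugacy classes of $\mathrm{Im}(\rho)$, hence have uniformly bounded finite order.

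For $n=1$ the contradiction is immediate and clean: an everywhere unramified Hecke character of $\Q$ with trivial archimedean infinitesimal character is trivial, because $\A^{\times}/(\Q^{\times}\widehat{\Z}^{\times}\R_{>0})$ is trivial. Thus $\rho(\frob_{p})=1$ for all $p$, and by \Cref{axiom 1} (Cebotarev density) $\rho$ itself is trivial, contradicting the choice. This disposes of any disconnected $\mathcal{L}_{\Z}$ for which $\pi_{0}(\mathcal{L}_{\Z})$ admits a nontrivial abelian finite quotient.

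The hard part, and the main obstacle, is the case $n\geq 2$, which is forced precisely when every nontrivial finite quotient of $\pi_{0}(\mathcal{L}_{\Z})$ is perfect. Morally, the existence of the hypothetical $\pi$ would correspond, under Langlands reciprocity, to a nontrivial continuous $n$-dimensional complex representation of $\gal(\overline{\Q}/\Q)$ unramified at every finite place, which is forbidden by Minkowski's theorem that $\Q$ has no nontrivial everywhere unramified extension. Since no such reciprocity is part of our axioms, ruling out $\pi$ directly is delicate: the natural options are either to strengthen the input axioms (e.g.\ to include functoriality transferring the problem to the $\GL_{1}$ case already handled), or to exploit the analytic properties of the Rankin--Selberg $L$-function $L(s,\pi\otimes\widetilde{\pi})$, whose Euler factors are explicitly computable from the finite-order Satake parameters and whose expected pole at $s=1$ should conflict with the trivial infinitesimal character of $\pi_{\infty}$. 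Carrying out one of these reductions within the framework of \Cref{axiom 1}--\Cref{axiom 3} is the technical heart of the argument.
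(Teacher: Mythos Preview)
The paper does not supply its own proof here; it merely records the statement and cites \cite[Proposition~9.3.4]{ChenevierLannes}. Your outline is the standard one and matches the argument there: produce a nontrivial irreducible $\rho:\mathcal{L}_{\Z}\to\GL_n(\C)$ with finite image, apply \Cref{axiom 2} to obtain a level-one cuspidal $\pi\in\Pi_{\cusp}(\GL_n)$ with $\mathrm{c}_\infty(\pi)=0$, and reach a contradiction. Your $n=1$ case is correct and complete.

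The gap you yourself flag at $n\ge 2$ is genuine, and your proposal does not close it. In Chenevier--Lannes it is closed not by strengthening the axioms or by invoking Minkowski through some reciprocity, but by an unconditional analytic statement about level-one cuspidal representations of $\GL_n$: one applies Weil's explicit formula to the completed Rankin--Selberg $L$-function $\Lambda(s,\pi\times\widetilde{\pi})$. The Dirichlet coefficients are nonnegative, and because $\mathrm{c}_\infty(\pi)=0$ the archimedean factor is built only from copies of $\Gamma_{\R}(s)$ and $\Gamma_{\R}(s+1)$; the resulting positivity inequality of Mestre--Odlyzko type is violated as soon as $n\ge 2$. This step uses nothing about $\mathcal{L}_{\Z}$ beyond what you have already extracted, and it is exactly the ``technical heart'' you left unexecuted. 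Without it the argument is genuinely incomplete, since a priori $\pi_0(\mathcal{L}_{\Z})$ could be a nontrivial perfect profinite group with no $1$-dimensional characters to exploit.
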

\subsubsection{Sato-Tate group}\label{section Sato Tate groups of Arthur parameters}
\tp For a discrete global Arthur parameter $\psi\in\Psi_{\disc}(G)$,
we pick a representative $\mathcal{L}_{\Z}\times\SL_{2}(\C)\rightarrow\widehat{G}(\C)$ 
and consider its restriction to a maximal compact subgroup:
\[\psi_{\mathrm{c}}:\mathcal{L}_{\Z}\times\su(2)\rightarrow \widehat{G}(\C).\]
The image of this morphism is contained in some maximal compact subgroup of $\widehat{G}(\C)$.
Fix a maximal connected compact subgroup $K$ of $\widehat{G}(\C)$,
and without loss of generality we assume that $\psi_{\mathrm{c}}$ is a morphism from $\mathcal{L}_{\Z}\times\su(2)\rightarrow K$.
\begin{defi}\label{def image group of Arthur parameter}
    For any $\psi\in\Psi_{\disc}(G)$,
    we define $\mathrm{H}(\psi)$ to be the $K$-conjugacy class of the image of its associated morphism $\mathcal{L}_{\Z}\times\su(2)\rightarrow K$.
    For any $\pi\in\Pi_{\disc}(G)$,
    if there exists a unique global Arthur parameter $\psi_{\pi}\in\Psi_{\disc}(G)$ such that $\mathrm{c}(\pi)=\mathrm{c}(\psi_{\pi})$, 
    we define $\mathrm{H}(\pi)$ to be $\mathrm{H}(\psi_{\pi})$.
\end{defi}
\begin{rmk}\label{rmk image well defined}
    Since maximal connected compact subgroups of $\SL_{2}(\C)$ are unique up to conjugacy,
    the $\widehat{G}(\C)$-conjugacy class of the image of $\mathcal{L}_{\Z}\times\su(2)\rightarrow K$ is well-defined.
    Combining with \cite[Lemma 2.4]{ConjEleConj},
    the $K$-conjugacy class $\mathrm{H}(\psi)$ is well-defined.
\end{rmk}
\begin{rmk}\label{rmk Sato-Tate in the introduction}
    The conjugacy class $\mathrm{H}(\psi)$, or $\mathrm{H}(\pi)$, of subgroups of $K$
    is called the \emph{``Sato-Tate group''} in the introduction \Cref{section introduction},
    although it coincides with the usual Sato-Tate group (see \cite[Proposition-Definition B.1]{ChenevierRenard}) if and only if the restriction of $\psi$ to $\SL_{2}(\C)$ is trivial.
\end{rmk}
A cuspidal automorphic representation $\pi$ of $\pgl_{n}$
can be viewed as an element of $\Pi_{\cusp}(\GL_{n})$ with trivial central character,
and the global Arthur parameter $\psi_{\pi}$ associated to $\pi$ via \Cref{axiom 2} takes value in $\SL_{n}(\C)=\widehat{\pgl_{n}}(\C)$.
In this case,
the global Arthur parameter $\psi_{\pi}$ is trivial on $\SL_{2}(\C)$,
and the conjugacy class $\mathrm{H}(\pi)$ of subgroups of $\su(n)$ 
coincides with the usual Sato-Tate group of $\pi$.

\subsection{Cuspidal representations of \texorpdfstring{$\GL_{n}$}{}}\label{section cuspidal representations of GL(n)}
\tp 
Arthur's classification of automorphic representations involves self-dual cuspidal representations of $\GL_{n},n\geq 1$.
Moreover, these representations of $\GL_{n}$ are trivial on the center of $\GL_{n}$ when they have level one,
thus we can replace $\GL_{n}$ by $\pgl_{n}$.
In this subsection we will say more about this class of automorphic representations.
\begin{defi}\label{def selfdual cuspidal representation}
    A representation $\pi\in \Pi_{\cusp}(\pgl_{n})$ is \emph{self-dual} if it is isomorphic to its dual representation $\pi^{\vee}$,
    and we denote the subset of $\Pi_{\mathrm{cusp}}(\mathrm{PGL}_{n})$ consisting of self-dual representations by $\Pi_{\cusp}^{\perp}(\pgl_{n})$.
\end{defi}
\begin{rmk}\label{rmk equivalent condition of self-duality}
    By the multiplicity one theorem of Jacquet-Shalika,
    this self-dual condition is equivalent to that 
    $\mathrm{c}_{p}(\pi)=\mathrm{c}_{p}(\pi)^{-1}$ for each prime $p$ and $\mathrm{c}_{\infty}(\pi)=-\mathrm{c}_{\infty}(\pi)$.
\end{rmk}
For a representation $\pi\in\Pi_{\cusp}(\pgl_{n})$,
its infinitesimal character $\mathrm{c}_{\infty}(\pi)$ is a conjugacy class in $\mathfrak{sl}_{n}$.
Denote by $\mathrm{Weights}(\pi)$ the multiset of eigenvalues of $\mathrm{c}_{\infty}(\pi)$.
\begin{defi}\label{def algebraic and regular representations}
    A cuspidal automorphic representation $\pi$ of $\pgl_{n}$ is 
    \begin{itemize}
        \item \emph{algebraic}
        \footnote{The term \emph{algebraic} is in the sense of Borel \cite[\S 18.2]{BorelAutomorphicLFunction}.}
         if $\weights(\pi)\subset\frac{1}{2}\Z$ and for any $w,w^{\prime}\in\mathrm{Weights}(\pi)$ we have $w-w^{\prime}\in\Z$.
        \item \emph{regular} if $|\mathrm{Weights}(\pi)|=n$.
    \end{itemize} 
    We denote  by $\Pi_{\alg}^{\perp}(\pgl_{n})$ the subset of $\Pi_{\cusp}^{\perp}(\pgl_{n})$ consisting of algebraic representations,
and by $\Pi_{\alg,\reg}^{\perp}(\pgl_{n})$ the subset consisting of algebraic regular representations.
\end{defi}
For an algebraic self-dual cuspidal representation $\pi$ of $\pgl_{n}$, 
let $k_{1}\geq k_{2}\geq\cdots\geq k_{n}$
be the weights of $\pi$ (counted with multiplicity).
Since $\pi$ is self-dual, we have $k_{i}=-k_{n+1-i}$ for $i=1,2,\ldots,n$.
Following \cite[\S 1.5]{ChenevierRenard}, 
we call the integers
\[w_{i}=2k_{i},\,i=1,2,\ldots,[n/2]\]
the \emph{Hodge weights} of $\pi$ and
call the maximal Hodge weight $\mathrm{w}(\pi):=w_{1}$ the \emph{motivic weight} of $\pi$.

\subsubsection{Arthur's orthogonal-symplectic alternative}\label{section orth-sympl alternative}
\tp We can divide the set self-dual cuspidal representations of $\pgl_{n}$ into two parts,
by Arthur's \emph{symplectic-orthogonal alternative}.
Our reference is \cite[\S 8.3.1]{ChenevierLannes}.

The classical groups over $\Z$ that are Chevalley groups are therefore 
$\symp_{2g}$ for $g\geq 1$, 
$\sorth_{r,r}$ for $r\geq 2$,
and $\sorth_{r+1,r}$ for $r\geq 1$.
For one of these groups $G$, 
we denote the standard representation of $\widehat{G}(\C)$ by $\st:\widehat{G}(\C)\hookrightarrow \SL_{\mathrm{n}(G)}(\C)$.
For instance, $\mathrm{n}(\mathrm{Sp}_{2g})=2g+1$, 
$\mathrm{n}(\sorth_{r,r})=2r$ and $\mathrm{n}(\sorth_{r+1,r})=2r$. 
This map $\st$ also induces a natural map from $\mathcal{X}(\widehat{G})$ to $\mathcal{X}(\SL_{\mathrm{n}(G)})$.
We have the following theorem by Arthur:
\begin{thm}\cite[Theorem 1.4.1]{arthur2013endoscopic}\label{thm symplectic-orthogonal alternative of selfdual representations}
    For any $n\geq 1$ and a self-dual cuspidal representation $\pi$ of $\pgl_{n}$, 
    there exists a classical Chevalley group $G^{\pi}$,
    unique up to isomorphism,
    with the following properties:
    \begin{enumerate}[label=(\roman*)]
        \item We have $\mathrm{n}(G^{\pi})=n$.
        \item There exists a representation $\pi^{\prime}\in\Pi_{\mathrm{disc}}(G^{\pi})$ such that $\psi(\pi^{\prime},\st)=\mathrm{c}(\pi)$.
    \end{enumerate}
\end{thm}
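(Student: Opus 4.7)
The plan is to deduce the theorem from \Cref{axiom 2}, enhanced by the multiplicity formula for classical groups. First, \Cref{axiom 2} attaches to $\pi\in\Pi_{\cusp}^{\perp}(\pgl_{n})$ a discrete global Arthur parameter $\psi_{\pi}:\mathcal{L}_{\Z}\times\SL_{2}(\C)\to\SL_{n}(\C)$; by the last clause of that axiom, cuspidality forces $\psi_{\pi}$ to be trivial on the $\SL_{2}(\C)$-factor, so it is simply an irreducible $n$-dimensional representation of $\mathcal{L}_{\Z}$. The self-duality of $\pi$ implies $c_{p}(\pi)=c_{p}(\pi)^{-1}$ for every prime $p$, hence by the Cebotarev property (\Cref{axiom 1}) and semisimplicity, $\psi_{\pi}\simeq\psi_{\pi}^{\vee}$ as $\mathcal{L}_{\Z}$-representations. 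By Schur's lemma the line of invariant bilinear forms is one-dimensional, generated by either a symmetric or an alternating form; write $\epsilon(\pi)\in\{+,-\}$ for the resulting type.

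Second, I would split into cases according to $n$ and $\epsilon(\pi)$ in order to pick out $G^{\pi}$. If $\epsilon(\pi)=-$, then necessarily $n=2g$ is even and $\psi_{\pi}(\mathcal{L}_{\Z})\subset\symp_{2g}(\C)=\widehat{\sorth_{g+1,g}}(\C)$, so I set $G^{\pi}=\sorth_{g+1,g}$. If $\epsilon(\pi)=+$, then the image sits in $\orth_{n}(\C)$; since $\mathcal{L}_{\Z}$ is connected by \Cref{prop Langlands group is connected}, it actually lies in $\sorth_{n}(\C)$. I then take $G^{\pi}=\symp_{n-1}$ when $n$ is odd (so $n-1$ is even and $\widehat{\symp_{n-1}}(\C)=\sorth_{n}(\C)$) and $G^{\pi}=\sorth_{n/2,n/2}$ when $n$ is even. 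In all three cases $\mathrm{n}(G^{\pi})=n$ by construction of the three Chevalley families, and the factorization $\psi_{\pi}:\mathcal{L}_{\Z}\times\SL_{2}(\C)\to\widehat{G^{\pi}}(\C)$ is a candidate element of $\Psi_{\disc}(G^{\pi})$ (discreteness being automatic from the irreducibility of $\psi_{\pi}$ modulo the center of $\widehat{G^{\pi}}(\C)$).

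The uniqueness of $G^{\pi}$ is then immediate: the pair $(n,\epsilon(\pi))$ is intrinsic to $\pi$, and no two of $\{\symp_{2g},\sorth_{r,r},\sorth_{r+1,r}\}$ share both $\mathrm{n}(G)$ and the isomorphism type of their dual group, so the above case analysis forces $G^{\pi}$. For existence of $\pi'$, \Cref{axiom 3} applied to $G^{\pi}$ reduces us to showing that $\mathcal{A}_{\psi}(G^{\pi})\neq 0$ for the parameter $\psi$ just constructed; any $\pi'\in\Pi_{\psi}(G^{\pi})$ then satisfies $\psi(\pi',\mathrm{st})=c(\pi)$ by the compatibility of Satake parameters with the embedding $\widehat{G^{\pi}}\hookrightarrow\SL_{n}$.

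The main obstacle is precisely this last non-vanishing assertion: neither \Cref{axiom 3} (which only asserts the existence of a decomposition) nor the conjectural framework around $\mathcal{L}_{\Z}$ alone guarantees that each self-dual parameter of $\pgl_{n}$ descends to an automorphic parameter of the relevant classical group. This is the content of Arthur's multiplicity formula for quasi-split classical groups and, in the unconditional formulation of the statement as in the excerpt (which does not rely on $\mathcal{L}_{\Z}$), rests on the stabilization of the twisted trace formula, Ng\^o's proof of the fundamental lemma, M\oe glin--Waldspurger's work on stable expansions, and the construction of local Arthur packets. The argument above is merely the conceptual skeleton; the genuine proof is the contents of \cite[Chapters 1--8]{arthur2013endoscopic}.
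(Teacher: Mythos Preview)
The paper does not supply its own proof of this theorem; it is quoted as an external, unconditional result of Arthur (the citation \cite[Theorem 1.4.1]{arthur2013endoscopic} is the entire justification offered). So there is no ``paper's proof'' to compare against.

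That said, your proposal has a structural mismatch worth flagging. You argue via \Cref{axiom 1}, \Cref{axiom 2}, and \Cref{axiom 3}, all of which are \emph{conjectural} hypotheses about the Langlands group $\mathcal{L}_{\Z}$. But in the paper's logical architecture, Arthur's theorem is an \emph{unconditional} input, proved independently of $\mathcal{L}_{\Z}$; the paper itself stresses that Arthur's reformulation in \cite{arthur2013endoscopic} ``avoid[s] the appearance of the hypothetical Langlands group $\mathcal{L}_{\Z}$.'' So even if the non-vanishing of $\mathcal{A}_{\psi}(G^{\pi})$ could be filled in within your framework, you would only have established a starred (conditional) version of the statement, not the theorem as cited. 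In fact the dependence runs partly the other way: results like this one are part of the evidence that the axioms about $\mathcal{L}_{\Z}$ are reasonable.

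Your final paragraph shows you already see this: the genuine argument is the stabilized twisted trace formula and the contents of \cite{arthur2013endoscopic}, and what you have written is the correct heuristic picture of why the theorem fits into the $\mathcal{L}_{\Z}$-formalism once one accepts both. As a conceptual sketch it is fine; as a proof of the unconditional statement it is circular.
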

\begin{defi}\label{defi symplectic orthogonal representation}
    A representation $\pi\in\Pi_{\cusp}^{\perp}(\pgl_{n})$
is called \emph{orthogonal} if $\widehat{G^{\pi}}(\C)\simeq\sorth_{n}(\C)$
and \emph{symplectic} otherwise.
We denote the subset of $\Pi_{\cusp}^{\perp}(\pgl_{n})$ consisting of orthogonal representations by $\Pi_{\cusp}^{\mathrm{o}}(\pgl_{n})$,
and the subset consisting of symplectic representations by $\Pi_{\cusp}^{\mathrm{s}}(\mathrm{PGL}_{n})$.
\end{defi}
For $*=\alg$ or $\alg,\reg$,
we define $\Pi_{*}^{\mathrm{o}}(\pgl_{n})=\Pi_{\cusp}^{\mathrm{o}}(\pgl_{n})\cap\Pi_{*}^{\perp}(\pgl_{n})$
and $\Pi_{*}^{\mathrm{s}}(\pgl_{n})=\Pi_{\cusp}^{\mathrm{s}}(\pgl_{n})\cap\Pi_{*}^{\perp}(\pgl_{n})$.
We define the subset $\Pi_{\alg}^{\symp_{2n}}(\pgl_{2n})\subset \Pi_{\alg,\reg}^{\mathrm{s}}(\pgl_{2n})$ as:
\[\set{\pi\in\Pi_{\alg,\reg}^{\mathrm{s}}(\pgl_{2n})}{\mathrm{Im}(\psi_{\pi})\simeq\symp(n)},\]
and similarly define 
\[\Pi_{\alg}^{\sorth_{n}}(\pgl_{n})=\set{\pi\in\Pi_{\alg,\reg}^{\mathrm{o}}(\pgl_{n})}{\mathrm{Im}(\psi_{\pi})\simeq\sorth(n)}.\]
\begin{ex}\label{ex algebraic representation of GL(2)}
    A representation $\pi\in\Pi_{\cusp}(\pgl_{2})$ is necessarily self-dual and symplectic,
    thus $\Pi_{\cusp}(\pgl_{2})=\Pi_{\cusp}^{\perp}(\pgl_{2})=\Pi_{\cusp}^{\mathrm{s}}(\pgl_{2})$.
    Moreover, for each positive integer $w$ 
    we have a bijection between the set of level one normalized Hecke eigenforms of weight $w+1$
    and the set of $\pi\in\Pi_{\alg}^{\perp}(\pgl_{2})$ with Hodge weight $w$.
    In particular, level one algebraic cuspidal representations with Hodge weight $w$ exist only when $w\geq 11$.
\end{ex}

\subsubsection{Global \texorpdfstring{$\varepsilon$}{}-factor}\label{section root number of cusp reps}
\tp An important factor related to a cuspidal representation $\pi$ is its \emph{global $\varepsilon$-factor} $\varepsilon(\pi)$.
We briefly give its definition as follows:
for two level one cuspidal representations $\pi\in\Pi_{\cusp}(\pgl_{n})$ and $\pi^{\prime}\in\Pi_{\cusp}(\pgl_{n^{\prime}})$,
Jacquet, Shalika and Piatetski-Shapiro 
define a factor $\varepsilon(\pi\times\pi^{\prime})$
when studying the meromorphic continuation and functional equation of the Rankin-Selberg $L$-function $\mathrm{L}(s,\pi\times\pi^{\prime})$ \cite[\S 9]{cogdelllectures}.
\begin{defi}\label{def global epsilon factor}
    The \emph{global $\varepsilon$-factor} of $\pi\in\Pi_{\cusp}(\pgl_{n})$ is defined as $\varepsilon(\pi):=\varepsilon(\pi\times\triv)$.
\end{defi}
For orthogonal algebraic representations, we have the following result by Arthur:
\begin{thm}\cite[Theorem 1.5.3]{arthur2013endoscopic}\label{thm Arthur epsilon factor of orthogonal rep is trivial}
    If $\pi\in\Pi_{\alg}^{\mathrm{o}}(\pgl_{n})$, then $\varepsilon(\pi)=1$.
\end{thm}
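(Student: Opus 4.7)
The plan is to combine Arthur's endoscopic descent with a local computation at the archimedean place. First, because $\pi$ has level one, it is unramified at every finite prime, and with the standard additive character the local $\varepsilon$-factors satisfy $\varepsilon_{p}(1/2,\pi_{p}) = 1$ for each prime $p$. The global $\varepsilon$-factor therefore collapses to its archimedean contribution:
\[
\varepsilon(\pi) = \varepsilon_{\infty}(1/2,\pi_{\infty}),
\]
and the problem reduces entirely to a calculation at infinity.

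Next, by \Cref{thm symplectic-orthogonal alternative of selfdual representations} there is a discrete automorphic representation $\pi'$ on a Chevalley group $G^{\pi}$ with $\widehat{G^{\pi}}(\C) \simeq \sorth_{n}(\C)$ whose standard lift is $\pi$. In particular, the archimedean Langlands parameter $\phi_{\infty}$ of $\pi$ factors through $\sorth_{n}(\C)$, so the underlying representation of $\mathrm{W}_{\R}$ carries a nondegenerate invariant symmetric bilinear form. By algebraicity, $\phi_{\infty}$ splits as a direct sum of two-dimensional summands of the form $\mathrm{Ind}_{\mathrm{W}_{\C}}^{\mathrm{W}_{\R}}\chi_{w_{i}}$ (one per positive Hodge weight $w_{i}$) together with a collection of one-dimensional characters of $\mathrm{W}_{\R}$ absorbing any weight-zero multiplicity. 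The orthogonality constraint forces each two-dimensional summand to be of orthogonal type and forces the weight-zero characters to pair into orthogonal blocks. Plugging these structural constraints into the Langlands--Deligne recipe for local archimedean $\varepsilon$-factors and tracking the resulting powers of $i$, the contributions from each summand conspire to give $\varepsilon_{\infty}(1/2,\phi_{\infty}) = +1$.

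The main obstacle is clearly \Cref{thm symplectic-orthogonal alternative of selfdual representations} itself: the descent of an orthogonal self-dual cuspidal representation of $\pgl_{n}$ to a discrete automorphic representation of a split orthogonal group. This input rests on Arthur's stabilization of the twisted trace formula and is by far the deepest ingredient; without it, one would have no handle on the structure of $\phi_{\infty}$ beyond self-duality. Granting the descent, the archimedean computation is a careful but routine sign-tracking exercise, and the algebraicity hypothesis is precisely what is needed to ensure that the weights $w_{i}$ have the integer parity compatible with orthogonality, so that no surviving factor of $i$ obstructs the vanishing of the sign.
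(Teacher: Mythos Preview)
The paper does not prove this statement; it simply cites Arthur's result. Your attempt to supply an independent proof in the level-one algebraic case has a genuine gap at the final step. The reduction to the archimedean place via \Cref{prop relation between epsilon factors} is fine, and one can even grant that the full archimedean parameter $\phi_\infty$ factors through $\sorth_n(\C)$ (though note that \Cref{thm symplectic-orthogonal alternative of selfdual representations} as stated only matches infinitesimal characters at $\infty$, not full L-parameters, so this already needs more of Arthur's local theory than you invoke). The fatal problem is your claim that an orthogonal parameter of $\mathrm{W}_\R$ with trivial determinant automatically has $\varepsilon_\infty=+1$: this is false. For instance $\phi=\mathbf{I}_4\oplus\epsilon_{\C/\R}$ has each summand orthogonal and $\det\phi=\epsilon_{\C/\R}\cdot\epsilon_{\C/\R}=\triv$, hence factors through $\sorth_3(\C)$, yet $\varepsilon(\phi)=i^{5}\cdot i=-1$. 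Similarly $\mathbf{I}_4\oplus\mathbf{I}_8$ lands in $\sorth_4(\C)$ with $\varepsilon=i^{5}\cdot i^{9}=-1$. So the ``conspiracy'' you assert does not hold for arbitrary orthogonal parameters of $\mathrm{W}_\R$.

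What Arthur's theorem really says, once combined with \Cref{prop relation between epsilon factors}, is that such archimedean parameters \emph{never arise} as $\mathrm{L}(\pi_\infty)$ for a level-one orthogonal cuspidal $\pi$. That is a global constraint on which local parameters are automorphically realised, and it is precisely the content of the theorem; it cannot be extracted from a local sign computation. Arthur's proof goes through the stabilization of the twisted trace formula and a comparison with the trace formula on the endoscopic group $G^{\pi}$, and there is no shortcut of the kind you propose.
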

In \cite[\S 8.2.21]{ChenevierLannes},
a method to compute $\varepsilon(\pi)$ for $\pi\in\Pi_{\alg}^{\mathrm{s}}(\pgl_{n})$ is explained.
To recall that method,
we review first the archimedean Local Langlands correspondence \cite{Langlandsclassification}.
We can associate with each irreducible unitary representation $U$ of $\GL_{n}(\R)$
a unique (up to conjugacy) semisimple representation $\mathrm{L}(U):\mathrm{W}_{\R}\rightarrow\GL_{n}(\C)$.
By Clozel's purity lemma \cite[Lemma 4.9]{PurityClozel},
for a representation $\pi\in\Pi_{\alg}^{\perp}(\pgl_{n})$,
the associated representation $\mathrm{L}(\pi_{\infty})$ is a direct sum of the following types of irreducible representations:
\begin{itemize}
    \item the trivial representation $\triv$,
    \item the sign character $\epsilon_{\C/\R}=\eta/|\eta|$,
    \item and the $2$-dimensional induced representation $\mathbf{I}_{w}:=\mathrm{Ind}_{\mathrm{W}_{\C}}^{\mathrm{W}_{\R}}\left(z\mapsto z^{w/2}\overline{z}^{-w/2}\right)$ for some positive integer $w$,
        where $z\mapsto z^{w/2}\overline{z}^{-w/2}$ stands for the character $z\mapsto \left(z/\overline{z}\right)^{w}$ by an abuse of notation.
\end{itemize}
There is a unique way to associate a fourth root of unity $\varepsilon(\rho)$ with each $\rho:\mathrm{W}_{\R}\rightarrow\GL_{n}(\C)$ of the above forms 
such that $\varepsilon(\rho\oplus\rho^{\prime})=\varepsilon(\rho)\varepsilon(\rho^{\prime})$
and
\[\varepsilon(\triv)=1,\,\varepsilon(\epsilon_{\C/\R})=i,\,\varepsilon(\mathbf{I}_{w})=i^{w+1}\text{ for any integer }w>0.\]
There is a connection between this factor $\varepsilon\left(\mathrm{L}(\pi_{\infty})\right)$ and the global $\varepsilon$-factor of $\pi$:
\begin{prop}\cite[Proposition 8.2.22]{ChenevierLannes}\label{prop relation between epsilon factors}
    For $\pi\in\Pi_{\alg}^{\perp}(\pgl_{n})$,
    we have
    \[\varepsilon(\pi)=\varepsilon(\mathrm{L}(\pi_{\infty})).\]
\end{prop}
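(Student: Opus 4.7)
The proposition expresses the global $\varepsilon$-factor of a self-dual algebraic cuspidal representation in terms of a purely archimedean invariant, and the plan is to reduce the global factor to its archimedean local component by exploiting the level one hypothesis.

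My first step is to invoke the Euler product decomposition of the global $\varepsilon$-factor that arises from the functional equation of the Rankin-Selberg $L$-function $\mathrm{L}(s,\pi\times\triv)=\mathrm{L}(s,\pi)$. Once an additive character $\psi=\bigotimes_v \psi_v$ of $\mathbf{A}/\Q$ is chosen, the theory of Jacquet, Godement, and Jacquet-Shalika-Piatetski-Shapiro factors the global $\varepsilon$-factor as $\varepsilon(\pi)=\prod_v \varepsilon_v(\pi_v,\psi_v)$, and a standard argument shows that the product is independent of $\psi$ because any two such characters differ by a global scalar, whose total contribution cancels in the product.

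Next I would treat the finite places. Choose $\psi_p$ to be the standard unramified additive character of $\Q_p$, i.e. the one with conductor $\Z_p$. Because $\pi$ has level one, each component $\pi_p$ is spherical, and the local $\varepsilon$-factor of an unramified representation of $\pgl_n(\Q_p)$ with respect to an unramified additive character is equal to $1$; this is a classical computation using that the $\gamma$-factor of the spherical vector is trivial. Consequently every finite local factor drops out and $\varepsilon(\pi)=\varepsilon_\infty(\pi_\infty,\psi_\infty)$ for the standard archimedean character $\psi_\infty(x)=e^{-2\pi i x}$.

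It remains to identify the archimedean local $\varepsilon$-factor with $\varepsilon(\mathrm{L}(\pi_\infty))$. Here I would invoke the archimedean local Langlands correspondence of Langlands \cite{Langlandsclassification}, which attaches to $\pi_\infty$ the semisimple representation $\mathrm{L}(\pi_\infty):\mathrm{W}_\R\to\GL_n(\C)$, together with the compatibility of this correspondence with $\varepsilon$-factors: the local analytic $\varepsilon$-factor $\varepsilon_\infty(\pi_\infty,\psi_\infty)$ coincides with the Deligne-Langlands $\varepsilon$-factor $\varepsilon(\mathrm{L}(\pi_\infty),\psi_\infty)$ of the associated Weil-group representation. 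Unwinding Deligne's definitions for $\psi_\infty(x)=e^{-2\pi i x}$ and for the three types of irreducible building blocks $\triv$, $\epsilon_{\C/\R}$, $\mathbf{I}_w$, yields exactly the values $1,\,i,\,i^{w+1}$ postulated before the statement, so the two notations agree.

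The main obstacle is the bookkeeping of normalizations: the multiple conventions for the $\varepsilon$-factor (Langlands versus Deligne versus Jacquet) differ by explicit powers of $i$ and $p$, and the claimed identity only holds because these discrepancies cancel once we impose the self-duality of $\pi$ (which ensures $\varepsilon(\pi)$ takes values in $\{\pm 1,\pm i\}$) and the algebraicity hypothesis (which guarantees the archimedean parameter has the simple block structure described above). Making these cancellations transparent, and verifying that the fourth-root-of-unity normalization axiomatized before the proposition is precisely the one produced by the Langlands-Deligne recipe, is the only genuinely delicate point; everything else is a formal consequence of the functional equation and the local-global principle for $\varepsilon$-factors.
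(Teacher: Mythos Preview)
The paper does not give its own proof of this proposition: it is stated with a citation to \cite[Proposition 8.2.22]{ChenevierLannes} and then used immediately. So there is nothing to compare against in the paper itself.

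Your sketch is the standard argument and is correct in outline: the global $\varepsilon$-factor is a product of local ones, the level-one hypothesis makes every $\pi_p$ spherical so the $p$-adic local $\varepsilon$-factors (with the standard unramified additive character) are all $1$, and the archimedean local Langlands correspondence identifies $\varepsilon_\infty(\pi_\infty,\psi_\infty)$ with the Deligne--Langlands $\varepsilon$-factor of $\mathrm{L}(\pi_\infty)$, which by construction agrees with the axiomatized values $1,\ i,\ i^{w+1}$ on the building blocks $\mathbf{1},\ \epsilon_{\C/\R},\ \mathbf{I}_w$. This is precisely the argument one finds in \cite{ChenevierLannes}. Your caveat about normalizations is apt: the only real work is checking that the specific fourth-root-of-unity convention adopted just before the statement matches Tate's or Deligne's local constants for the chosen $\psi_\infty$, but this is a routine (if tedious) verification.
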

As a consequence,
we can calculate the global $\varepsilon$-factor of $\pi$ provided we know the representation $\mathrm{L}(\pi_{\infty})$ of $\mathrm{W}_{\R}$ corresponding to $\pi_{\infty}$.
Actually, one has the following result:
\begin{prop}\cite[Proposition 8.2.13]{ChenevierLannes}\label{prop Langlands parameter of symplectic representation}
    Let $\pi\in\Pi_{\alg}^{\mathrm{s}}(\pgl_{n})$ and $w_{1}\geq w_{2}\geq\cdots\geq w_{n/2}$ its Hodge weights,
    then \[\mathrm{L}(\pi_{\infty})\simeq \mathbf{I}_{w_{1}}\oplus \mathbf{I}_{w_{2}}\oplus\cdots\oplus \mathbf{I}_{w_{n/2}}.\]
\end{prop}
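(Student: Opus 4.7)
The strategy is to apply Clozel's purity lemma, recalled just above the statement, together with Arthur's symplectic-orthogonal alternative to constrain $\mathrm{L}(\pi_\infty)$. Purity yields a decomposition
\[
\mathrm{L}(\pi_\infty) \simeq \triv^{\oplus a} \oplus \epsilon_{\C/\R}^{\oplus b} \oplus \bigoplus_{w\ge 1} \mathbf{I}_w^{\oplus c_w}
\]
for nonnegative integers $a,b,c_w$ with $a+b+2\sum_w c_w = n$, and matching infinitesimal characters on both sides identifies $\weights(\pi)$ with the multiset $\{0\}^{a+b}\cup\bigsqcup_{w\ge 1}\{w/2,-w/2\}^{c_w}$. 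The proposition thus reduces to showing $a=b=0$ and $c_w=0$ for $w$ even; once this is done, matching the surviving multiplicities with the Hodge weights concludes.

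To pin the constituents down I would use the symplectic hypothesis on $\pi$: by \Cref{thm symplectic-orthogonal alternative of selfdual representations} the Arthur parameter $\psi_\pi$ factors through $\symp_n(\C)\hookrightarrow\SL_n(\C)$, so $\mathrm{L}(\pi_\infty)=\psi_\pi\circ \mathrm{h}$ preserves a nondegenerate skew-symmetric bilinear form on $\C^n$. A short Frobenius--Schur computation---using that $j\in\mathrm{W}_\R$ acts on the cross pairing of the standard basis of $\mathbf{I}_w$ by the scalar $(-1)^w$---shows that the irreducible symplectic representations of $\mathrm{W}_\R$ are exactly the $\mathbf{I}_w$ with $w$ odd, while $\triv$, $\epsilon_{\C/\R}$, and $\mathbf{I}_w$ with $w$ even are all orthogonal. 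Combining this with the algebraicity of $\pi$---which forces the weights of $\pi$ to share a common fractional part $\delta\in\{0,1/2\}$, so that either only orthogonal constituents (case $\delta=0$) or only symplectic ones (case $\delta=1/2$) can appear---dichotomizes the problem, and case $\delta=1/2$ immediately yields the claim.

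The main obstacle is therefore to exclude the case $\delta=0$, since symplecticity alone does \emph{not} suffice: for any self-dual orthogonal representation $V$ of $\mathrm{W}_\R$, the sum $V\oplus V$ carries a nondegenerate invariant skew form. To rule $\delta=0$ out I would combine the sign identity $\varepsilon(\pi)=\varepsilon(\mathrm{L}(\pi_\infty))$ of \Cref{prop relation between epsilon factors} with the explicit archimedean $\varepsilon$-factors $\varepsilon(\mathbf{I}_w)=i^{w+1}$, $\varepsilon(\triv)=1$, $\varepsilon(\epsilon_{\C/\R})=i$, and with the sign constraint on $\varepsilon(\pi)$ provided by Arthur's endoscopic classification of symplectic parameters on $\pgl_n$. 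Alternatively, and likely more cleanly, one can argue via the endoscopic transfer: write $\psi_\pi=\mathrm{st}\circ\psi_{\pi'}$ for some $\pi'\in\Pi_{\disc}(G^\pi)$ with $\widehat{G^\pi}\simeq\symp_n(\C)$, note that $\pi'$ is again algebraic so that its archimedean parameter $\phi_{\pi'_\infty}:\mathrm{W}_\R\to\symp_n(\C)$ is bounded, and invoke the classification of such parameters on the real odd orthogonal group $G^\pi(\R)$ to conclude that $\phi_{\pi'_\infty}$ is a direct sum of irreducible symplectic $\mathrm{W}_\R$-representations. Consequently $\mathrm{L}(\pi_\infty)=\mathrm{st}\circ \phi_{\pi'_\infty}$ is a direct sum of $\mathbf{I}_{w_i}$ with $w_i$ odd positive integers, and matching weight multisets with the first step identifies these $w_i$ with the Hodge weights $w_1\ge\cdots\ge w_{n/2}$, yielding the announced isomorphism.
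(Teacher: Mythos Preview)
The paper does not give its own proof of this proposition; it is quoted directly from \cite[Proposition~8.2.13]{ChenevierLannes} as an input used immediately afterward to compute global $\varepsilon$-factors. So there is no in-paper argument to compare against, and the question is whether your sketch stands on its own.

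Your reduction via Clozel's purity to the dichotomy $\delta\in\{0,1/2\}$ is correct, and the case $\delta=1/2$ is cleanly handled. The real content is ruling out $\delta=0$, and neither of your two routes closes the gap. For the $\varepsilon$-factor route: there is no ``sign constraint on $\varepsilon(\pi)$'' for symplectic $\pi$ beyond $\varepsilon(\pi)\in\{\pm 1\}$, and both signs genuinely occur; computing $\varepsilon(\mathrm{L}(\pi_\infty))$ in a putative $\delta=0$ configuration only yields a parity condition on $b+\sum_{w}c_w$, not a contradiction. For the endoscopic route: the step ``invoke the classification of such parameters on $G^\pi(\R)$ to conclude that $\phi_{\pi'_\infty}$ is a direct sum of irreducible symplectic $\mathrm{W}_\R$-representations'' is precisely the obstacle you flagged two sentences earlier, not a resolution of it. For the split group $\sorth_{m+1,m}(\R)$ (with $n=2m$), every bounded $\phi:\mathrm{W}_\R\to\symp_{2m}(\C)$ is a relevant tempered $L$-parameter with non-empty packet, so the bare fact that $\phi_{\pi'_\infty}$ arises from \emph{some} representation of $G^\pi(\R)$ imposes no constraint whatsoever on how $\st\circ\phi_{\pi'_\infty}$ decomposes---$V\oplus V$ with $V$ orthogonal is still perfectly admissible. (Incidentally, ``algebraic $\Rightarrow$ bounded'' is not a general implication either, though here boundedness does follow a posteriori from purity plus the faithfulness of $\st$.) You need a genuinely finer input: either the specific shape of the archimedean packet member singled out by Arthur's global classification in \cite{arthur2013endoscopic}, or the direct local argument in \cite{ChenevierLannes} tying the symplectic/orthogonal alternative at $\infty$ to the parity of the Hodge weights. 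As written, the $\delta=0$ case remains open in your argument.
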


\subsection{Arthur-Langlands conjecture}\label{section Arthur Langlands conjecture}
\tp Assuming the existence of the Langlands group $\mathcal{L}_{\Z}$ described in \Cref{section global parametrization and Langlands group}.
\Cref{axiom 3} says that for any reductive group $G$ admitting a reductive $\Z$-model
and any discrete automorphic representation $\pi$ of $G$,
there exists a discrete global Arthur parameter $\psi:\mathcal{L}_{\Z}\times \SL_{2}(\C)\rightarrow \widehat{G}(\C)$
such that $\mathrm{c}(\pi)=\mathrm{c}(\psi)$.
\begin{rmk}\label{rmk uniqueness of Arthur parameter}
When the group $\widehat{G}(\C)$ satisfies the ``element-conjugacy implies conjugacy'' property as in \Cref{prop compact Lie group F4 is acceptable},
the discrete global Arthur parameter $\psi$ satisfying $\mathrm{c}(\psi)=\mathrm{c}(\pi)$,
as a conjugacy class of homomorphisms $\mathcal{L}_{\Z}\times\SL_{2}(\C)\rightarrow\widehat{G}(\C)$,
is unique. 
\end{rmk}
Let $G$ be semisimple,
and fix an irreducible algebraic representation $r:\widehat{G}\rightarrow \SL_{n,\C}$.
Following \cite[\S 6.4.4]{ChenevierLannes},
we are going to see what the Langlands parameter $\psi(\pi,r)$ defined in \Cref{def Langlands parameter of a pair} looks like for a discrete automorphic representation $\pi$ of $G$.

Composing $r$ with a discrete global Arthur parameter $\psi:\mathcal{L}_{\Z}\times\SL_{2}(\C)\rightarrow \widehat{G}(\C)$ corresponding to $\pi$,
we get an $n$-dimensional representation $r\circ\psi$ of $\mathcal{L}_{\Z}\times\SL_{2}(\C)$.
This representation can be decomposed as 
\[\bigoplus_{i=1}^{k}r_{i}\otimes\sym^{d_{i}-1}\st\]
for some irreducible representations $r_{i}:\mathcal{L}_{\Z}\rightarrow\SL_{n_{i}}$ and certain integers $d_{i}\geq 1$, where $\st$ denotes the standard $2$-dimensional representation of $\SL_{2}(\C)$.

By Arthur-Langlands conjecture for general linear groups, 
i.e. \Cref{axiom 2} in \Cref{section global parametrization and Langlands group}, 
every irreducible representation $r_{i}:\mathcal{L}_{\Z}\rightarrow \GL_{n_{i}}(\C)$ corresponds to a unique cuspidal representation $\pi_{i}$ of $\pgl_{n_{i}}$.
For $v=p$ or $\infty$, we have an identity between conjugacy classes:
\[r(\mathrm{c}_{v}(\pi))=\bigoplus_{i}^{k}\mathrm{c}_{v}(\pi_{i})\otimes\sym^{d_{i}-1}(e_{v}).\]
To formulate a global identity,
we introduce the following notations:
\begin{itemize}
    \item Define $e\in\mathcal{X}(\SL_{2})$ to be $(e_{\infty},e_{2},e_{3},\cdots)$ and denote $\sym^{d-1}(e)\in\mathcal{X}(\SL_{d})$ by $[d]$.
    \item Denote by $(c,c^{\prime})\mapsto c\oplus c^{\prime}$ the map $\mathcal{X}(\SL_{a})\times\mathcal{X}(\SL_{b})\rightarrow \mathcal{X}(\SL_{a+b})$ induced by the direct sum,
    and by $(c,c^{\prime})\mapsto c\otimes c^{\prime}$ the map $\mathcal{X}(\SL_{a})\times\mathcal{X}(\SL_{b})\rightarrow\mathcal{X}(\SL_{ab})$ induced by the tensor product.
    We write $c\otimes[d]$ as $c[d]$ for short.
    \item For $\pi\in\Pi_{\cusp}(\pgl_{m})$,
    the element $\mathrm{c}(\pi)\in\mathcal{X}(\SL_{m})$ will simply be denoted by $\pi$.
\end{itemize}
With these notations,
we can combine the identities for $r(\mathrm{c}_{v}(\pi))$ together into one:
\[\psi(\pi,r)=r(\mathrm{c}(\pi))=\bigoplus_{i=1}^{k}\pi_{i}[d_{i}],\,\pi_{i}\in\Pi_{\cusp}(\pgl_{n_{i}}).\]
Now we state Arthur-Langlands conjecture for semisimple groups: 
\begin{conj}\label{conj general Arthur-Langlands conjecture}
    (\emph{Arthur-Langlands conjecture})
    Let $G$ be a semisimple $\Q$-group admitting a reductive $\Z$-model.
    For any $\pi\in\Pi_{\disc}(G)$ and every algebraic representation $r:\widehat{G}\rightarrow \SL_{n,\C}$, 
    there exists a collection of triples $(n_{i},\pi_{i},d_{i})_{i=1,\ldots,k}$ with $d_{i},n_{i}\geq 1$ integers satisfying $n=\sum_{i}n_{i}d_{i}$ and
    $\pi_{i}\in\Pi_{\mathrm{cusp}}(\mathrm{PGL}_{n_{i}})$ such that    
     \begin{align*}
        \psi(\pi,r)=\pi_{1}[d_{1}]\oplus\cdots \oplus\pi_{k}[d_{k}].
     \end{align*}
\end{conj}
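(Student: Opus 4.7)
The plan is to reduce \Cref{conj general Arthur-Langlands conjecture} to the axioms already introduced, namely \Cref{axiom 2} (Arthur-Langlands for $\GL_n$) and \Cref{axiom 3} (existence of the discrete spectral decomposition indexed by $\Psi_{\disc}(G)$). First, I would apply \Cref{axiom 3} to $\pi \in \Pi_{\disc}(G)$: this produces a discrete global Arthur parameter $\psi \in \Psi_{\disc}(G)$, i.e.\ a conjugacy class of continuous homomorphisms $\psi : \mathcal{L}_{\Z} \times \SL_2(\C) \to \widehat{G}(\C)$, such that $\mathrm{c}(\pi) = \mathrm{c}(\psi)$. Composing with the fixed algebraic representation $r : \widehat{G} \to \SL_{n,\C}$ gives a continuous homomorphism $r \circ \psi : \mathcal{L}_{\Z} \times \SL_2(\C) \to \SL_n(\C)$.

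The second step is to decompose $r \circ \psi$ as a representation. Since $\mathcal{L}_{\Z}$ is compact (Hausdorff) and $\SL_2(\C)$ is a connected reductive complex group whose restriction $\psi|_{\SL_2(\C)}$ is algebraic by \Cref{def global discrete Arthur parameter}, the $n$-dimensional representation $r \circ \psi$ is completely reducible, and each irreducible subrepresentation is an exterior tensor product of an irreducible representation of $\mathcal{L}_{\Z}$ with an irreducible algebraic representation of $\SL_2(\C)$. Writing the irreducible algebraic representations of $\SL_2(\C)$ as $\sym^{d-1}\st$ for $d \geq 1$, I can therefore write
\[
r \circ \psi \;\simeq\; \bigoplus_{i=1}^{k} r_i \otimes \sym^{d_i-1}\st,
\]
with each $r_i : \mathcal{L}_{\Z} \to \GL_{n_i}(\C)$ an irreducible continuous representation and $\sum_i n_i d_i = n$. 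Moreover each $r_i$ has image in $\SL_{n_i}(\C)$ because $r \circ \psi$ is valued in $\SL_n(\C)$ and $\mathcal{L}_{\Z}$ is compact (so the determinant character has image in the compact subgroup $\mathrm{U}(1)$, forcing triviality by irreducibility together with the global $\SL_n$-constraint on the whole tensor sum).

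Next I invoke \Cref{axiom 2}: each irreducible $r_i$ is itself a discrete parameter for $\GL_{n_i}$ trivial on $\SL_2(\C)$ (see \Cref{rmk discreteness vs irreducibility}), hence it corresponds bijectively to a unique $\pi_i \in \Pi_{\cusp}(\GL_{n_i})$ with $\mathrm{c}(\pi_i) = \mathrm{c}(r_i)$. Since $r_i$ lands in $\SL_{n_i}(\C)$, the central character of $\pi_i$ is trivial and we may view $\pi_i \in \Pi_{\cusp}(\pgl_{n_i})$. Finally, using the recipe $\mathrm{c}_p(\psi) = \psi(\mathrm{Frob}_p, e_p)$ and the definition $[d] = \sym^{d-1}(e)$ given just before the conjecture, the decomposition of $r \circ \psi$ translates place by place into the identity
\[
\psi(\pi, r) \;=\; r(\mathrm{c}(\pi)) \;=\; r(\mathrm{c}(\psi)) \;=\; \pi_1[d_1] \oplus \cdots \oplus \pi_k[d_k]
\]
in $\mathcal{X}(\SL_n)$, for the finite places; the equality at the archimedean place follows in the same way from the compatibility of the Harish-Chandra/Langlands correspondence with direct sums and tensor products with $\sym^{d-1}\st$.

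The genuinely hard part of the statement is of course hidden in the axioms: producing $\psi$ from $\pi$ (\Cref{axiom 3}) requires the hypothetical Langlands group $\mathcal{L}_{\Z}$ and Arthur's spectral decomposition for $G$, while the passage from each $r_i$ to a cuspidal $\pi_i$ (\Cref{axiom 2}) is the Arthur-Langlands reciprocity for $\GL_{n_i}$. Conditional on these two inputs, the proof above is essentially formal; the only point that deserves a careful check is the well-definedness of the triples $(n_i, \pi_i, d_i)$ when $\widehat{G}(\C)$ fails to be ``acceptable'' in the sense of \Cref{section local global conjugacy} (cf.\ \Cref{rmk uniqueness of Arthur parameter}), but this ambiguity affects only the conjugacy class of $\psi$ and not the decomposition of $r \circ \psi$ on which the statement depends.
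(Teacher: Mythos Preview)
Your derivation is correct and matches the paper's approach exactly: the paper presents this same argument (apply \Cref{axiom 3} to obtain $\psi$ with $\mathrm{c}(\pi)=\mathrm{c}(\psi)$, decompose $r\circ\psi\simeq\bigoplus_i r_i\otimes\sym^{d_i-1}\st$, then invoke \Cref{axiom 2} to identify each $r_i$ with a cuspidal $\pi_i$) in the paragraphs immediately preceding the statement of the conjecture. Note that the statement is labeled a \emph{Conjecture} precisely because both axioms involve the hypothetical group $\mathcal{L}_{\Z}$ and are themselves conjectural, so what you have written---like the paper---is a conditional derivation rather than an unconditional proof.
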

This conjecture was proved by Arthur in \cite{arthur2013endoscopic} when $G$ is a split classical group
and $r$ is the standard representation of $\widehat{G}$.
Moreover, the collection of triples $(n_{i},\pi_{i},d_{i})$ in the conjecture is necessarily unique up to permutation by a result of Jacquet and Shalika \cite{UniqueALblock}:
\begin{prop}\label{prop uniqueness of blocks in Arthur-Langlands conjecture}\cite[Proposition 6.4.5]{ChenevierLannes}
    Let $k,l\geq 1$ be integers.
    For $1\leq i\leq k$ (resp. $1\leq j\leq l$), consider integers $n_{i},d_{i}\geq 1$ (resp. $n_{j}^{\prime},d_{j}^{\prime}\geq 1$) and a representation $\pi_{i}$ (resp. $\pi_{j}^{\prime}$) in $\Pi_{\cusp}(\pgl_{n_{i}})$ (resp. $\Pi_{\cusp}(\pgl_{n_{j}^{\prime}})$).
    Suppose that we have $n:=\sum_{i}n_{i}d_{i}=\sum_{j}n_{j}^{\prime}d_{j}^{\prime}$ and 
    \[\pi_{1}[d_{1}]\oplus\cdots\oplus\pi_{k}[d_{k}]=\pi_{1}^{\prime}[d_{1}^{\prime}]\oplus\cdots\oplus\pi_{l}^{\prime}[d_{l}^{\prime}].\]
    Then $k=l$ and there exists a permutation $\sigma\in \mathrm{S}_{k}$ such that for every $1\leq i\leq k$ we have $(n_{i}^{\prime},\pi_{i}^{\prime},d_{i}^{\prime})=(n_{\sigma(i)},\pi_{\sigma(i)},d_{\sigma(i)})$.
\end{prop}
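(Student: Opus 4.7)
The plan is to follow the classical argument of Jacquet--Shalika, whose key analytic input is the characterization of poles of Rankin--Selberg $L$-functions: for $\pi\in\Pi_{\cusp}(\pgl_m)$ and $\pi'\in\Pi_{\cusp}(\pgl_{m'})$, the completed $L$-function $L(s,\pi\times\pi'^{\vee})$ is holomorphic on $\Re(s)\geq 1$ unless $m=m'$ and $\pi\simeq\pi'$, in which case it has a simple pole at $s=1$ (and nowhere else on the line $\Re(s)=1$). This is combined with strong multiplicity one, which identifies a cuspidal representation from its collection of Satake parameters at almost all places.

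First I would fix a pair $(\pi,d)$ with $\pi\in\Pi_{\cusp}(\pgl_m)$ and form the Rankin--Selberg $L$-function of $\psi := \pi_1[d_1]\oplus\cdots\oplus\pi_k[d_k]$ against $\pi[d]$. Because the Satake parameters of a summand $\pi_i[d_i]$ at each unramified place are obtained from those of $\pi_i$ by multiplying by $p^{(d_i-1)/2-j}$ for $j=0,\dots,d_i-1$, the Euler product factors as
\[
L(s,\psi\times (\pi[d])^{\vee}) \;=\; \prod_{i=1}^{k}\ \prod_{j=0}^{d_i-1}\prod_{j'=0}^{d-1} L\!\left(s+\tfrac{d_i-1}{2}-j-\tfrac{d-1}{2}+j',\ \pi_i\times\pi^{\vee}\right).
\]
By the Jacquet--Shalika dichotomy, the factor at index $(i,j,j')$ contributes a pole at $s=1$ only when $\pi_i\simeq\pi$ and the shift vanishes, i.e.\ $j-j'=\tfrac{d_i-d}{2}$. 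For a fixed $d_i$, the number of such pairs $(j,j')$ equals $\min(d_i,d)$, so the order of the pole at $s=1$ of $L(s,\psi\times(\pi[d])^{\vee})$ equals
\[
\mathrm{ord}_{s=1} L(s,\psi\times(\pi[d])^{\vee}) \;=\; \sum_{i\,:\,\pi_i\simeq \pi} \min(d_i,d).
\]

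Let $N_{\pi}(d)$ denote the number of indices $i$ with $\pi_i\simeq\pi$ and $d_i=d$. The identity above gives $\sum_{d'\geq 1}\min(d',d)\,N_{\pi}(d')$ for each $d\geq 1$; inverting this triangular system (by taking second differences $f(d-1)-2f(d)+f(d+1)$ with $f(d)$ the pole order) yields $N_{\pi}(d)$ for every $d\geq 1$ and every cuspidal $\pi$. Running the same computation on $\psi':=\bigoplus_{j}\pi_j'[d_j']$, which agrees with $\psi$ as a virtual object in $\mathcal{X}(\SL_n)$, I obtain the same multiplicities $N_{\pi}(d)$. Since the pole orders depend only on the family $\{\mathrm{c}_p(\psi)\}_p$ of Satake parameters and these are identical for $\psi$ and $\psi'$, the multisets $\{(\pi_i,d_i)\}_{i=1}^{k}$ and $\{(\pi_j',d_j')\}_{j=1}^{l}$ coincide, which forces $k=l$ and the existence of a permutation $\sigma\in\mathrm{S}_k$ with $(n_i',\pi_i',d_i')=(n_{\sigma(i)},\pi_{\sigma(i)},d_{\sigma(i)})$.

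The main obstacle is the analytic input: one must know that $L(s,\pi\times\pi'^{\vee})$ is holomorphic on $\Re(s)\geq 1$ away from the possible pole at $s=1$, and that this pole really does occur whenever $\pi\simeq\pi'$. Both facts are established by Jacquet, Piatetski-Shapiro and Shalika in their foundational work on Rankin--Selberg integrals, and I would simply quote them; the rest of the argument is the elementary combinatorial inversion sketched above.
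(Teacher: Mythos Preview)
The paper does not supply its own proof of this proposition; it is stated with a citation to \cite[Proposition~6.4.5]{ChenevierLannes} and attributed to Jacquet--Shalika. Your approach via poles of Rankin--Selberg $L$-functions is precisely the classical argument underlying that reference, so strategically you are doing the right thing.

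Two points in the execution need repair. First, the shift $\tfrac{d_i-1}{2}-j-\tfrac{d-1}{2}+j'$ is a half-integer when $d_i\not\equiv d\pmod 2$, so the number of pairs $(j,j')$ with shift $0$ is $\min(d_i,d)$ only under that parity constraint and is $0$ otherwise; your second-difference inversion must then be taken in steps of $2$, separately for $d$ odd and $d$ even. Second, and more seriously, for shifts $t<0$ the factors $L(s+t,\pi_i\times\pi^\vee)$ are being evaluated at points with $\Re(s+t)<1$, where neither holomorphy nor nonvanishing is available unconditionally (and the completed $L$-function in fact has a pole at $s=0$ when $\pi_i\simeq\pi$), so you cannot read off the pole order at $s=1$ as cleanly as claimed. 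The standard way around this is to test only against $\pi$ (i.e.\ $d=1$) and to locate the \emph{rightmost} pole of $L(s,\psi\times\pi^\vee)$: it sits at $s=1+\tfrac{D-1}{2}$ with $D=\max\{d_i:\pi_i\simeq\pi\}$, has order $\#\{i:\pi_i\simeq\pi,\ d_i=D\}$, and one then peels off the summands with $d_i=D$ and inducts. With this modification your argument goes through.
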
    
We call the triple $(k,(n_{i},d_{i})_{1\leq i\leq k})$,
up to permutations of the $(n_{i},d_{i})$,
the \emph{endoscopic type} of $\psi(\pi,r)$.
The parameter is called \emph{stable} if $k=1$ 
and \emph{endoscopic} otherwise.
It is called \emph{tempered} if $d_{i}=1$ for all $i$ 
and \emph{non-tempered} otherwise. 

In \Cref{conj general Arthur-Langlands conjecture},
cuspidal representations of $\pgl_{n},n\geq 1$ are building blocks of Langlands parameters $\psi(\pi,r)$.
Furthermore, the following result shows that under some conditions,
for example when $G(\R)$ is compact,
we only need algebraic cuspidal representations:
\begin{prop}\label{prop algebraic representations as building blocks}\cite[Proposition 8.2.8]{ChenevierLannes}
    Let $G$ be a semisimple $\Q$-group admitting a reductive $\Z$-model, $\pi\in\Pi_{\disc}(G)$ 
    and $r:\widehat{G}\rightarrow \SL_{n,\C}$ an $n$-dimensional algebraic representation of $\widehat{G}$.
    Suppose that 
    \begin{enumerate}[label= (\roman*)]
        \item $\mathrm{c}_{\infty}(\pi)\in\widehat{\mathfrak{g}}_{\mathrm{ss}}$ is the infinitesimal character of a finite-dimensional irreducible complex representation of $G_{\C}$,
        \item and $\psi(\pi,r)=\oplus_{i=1}^{k}\pi_{i}[d_{i}]$ with $\pi_{i}\in\Pi_{\cusp}(\pgl_{n_{i}}),i=1,\ldots,k$.
    \end{enumerate} 
    Then $\pi_{i}$ is algebraic for $i=1,\ldots,k$.
    Moreover, the class of $\mathrm{w}(\pi_{i})+d_{i}-1$ in $\Z/2\Z$ depends only on $r$ and not on the integer $i$ or even on $\pi$.
\end{prop}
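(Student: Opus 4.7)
The plan is to reduce the statement to an analysis of the archimedean infinitesimal character of $\pi$. First, by assumption (i), one can write $\mathrm{c}_\infty(\pi) = \lambda + \rho_G \in X^*(T)\otimes\C$, where $T$ is a maximal torus of $G_\C$, $\rho_G$ is the half-sum of positive roots for a chosen Borel, and $\lambda$ is an integral dominant weight of $G_\C$; this is the natural extension of \Cref{prop for compact group inf char is bijection} to the complexification $G_\C$. Composing with $r$, the eigenvalues of $r(\mathrm{c}_\infty(\pi))$---that is, the weights of $\psi(\pi,r)$---are exactly the pairings $\langle \mu, \lambda + \rho_G \rangle$ as $\mu$ runs over the weights of $r$ (with multiplicity), recalling that these weights lie in $X^*(\widehat{T}) = X_*(T)$.

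The key step is to observe that all these pairings lie in a single coset $\nu + \Z \subset \frac{1}{2}\Z$, where $\nu$ depends only on $r$ (and not on $\pi$). Indeed, $\langle \mu, \lambda \rangle \in \Z$ by the duality between $X_*(T)$ and $X^*(T)$, and $\langle \mu, 2\rho_G \rangle \in \Z$ since $2\rho_G$ is a sum of roots, so $\langle \mu, \rho_G \rangle \in \frac{1}{2}\Z$. Moreover, any two weights of $r$ differ by an element of the coroot lattice of $G$, and the reflection identity $s_\alpha(\rho_G) = \rho_G - \alpha$ gives $\langle \check{\alpha}, \rho_G \rangle = 1 \in \Z$ for every positive coroot $\check{\alpha}$. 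Consequently all weights of $\psi(\pi,r)$ lie in one coset modulo $\Z$.

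I would then translate this back to each summand. The weights of $\pi_i[d_i]$ are precisely the shifted values $w + s$ for $w \in \mathrm{Weights}(\pi_i)$ and $s \in \{-(d_i-1)/2, \ldots, (d_i-1)/2\}$. Since the shifts $s$ all lie in the coset of $\frac{d_i-1}{2}$ modulo $\Z$ (all integers if $d_i$ is odd, all strict half-integers if $d_i$ is even) while the shifted values lie in $\nu + \Z$, every weight of $\pi_i$ lies in a single coset $\nu_i + \Z \subset \frac{1}{2}\Z$, with $\nu_i = \nu$ if $d_i$ is odd and $\nu_i = \nu + \frac{1}{2}$ if $d_i$ is even. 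This is exactly the algebraicity of $\pi_i$.

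For the moreover part, the motivic weight satisfies $\mathrm{w}(\pi_i) \equiv 2\nu_i \pmod{2}$, since it is twice a representative of $\nu_i + \Z$. A short case analysis then gives $\mathrm{w}(\pi_i) + d_i - 1 \equiv 2\nu \pmod{2}$ in both parity cases: when $d_i$ is odd, $d_i - 1$ is even and $2\nu_i = 2\nu$; when $d_i$ is even, $2\nu_i + d_i - 1 \equiv 2\nu + 1 + d_i - 1 = 2\nu + d_i \equiv 2\nu \pmod{2}$. The resulting class depends only on $r$, as claimed. The main (and essentially only nontrivial) step is establishing the single-coset property of the weights of $\psi(\pi,r)$, which rests on the identity $\langle \check{\alpha}, \rho_G \rangle \in \Z$ for all coroots; the remaining work is careful bookkeeping of the half-integer shifts contributed by the $\SL_2(\C)$-factor.
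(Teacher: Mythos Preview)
The paper does not supply its own proof of this proposition; it is quoted verbatim from \cite[Proposition 8.2.8]{ChenevierLannes}. So there is no in-paper argument to compare against, and your proof is the natural one: push the hypothesis on $\mathrm{c}_\infty(\pi)$ through $r$ and read off the half-integrality and coset structure of the resulting eigenvalues.

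Two small points. First, the identity $\langle \check{\alpha}, \rho_G \rangle = 1$ holds only for \emph{simple} coroots, not all positive ones; but this is harmless, since the coroot lattice is spanned over $\Z$ by the simple coroots, so integrality of $\langle \mu - \mu', \rho_G \rangle$ follows. Second, your single-coset claim (``any two weights of $r$ differ by an element of the coroot lattice of $G$'') uses that $r$ is irreducible. The statement in the paper does not impose this, and in fact the ``Moreover'' clause can fail for reducible $r$ (e.g.\ $G = \pgl_2$, $r = \mathrm{std} \oplus \mathbf{1}$ gives summands with opposite parities). Since the only application in the paper is to the irreducible $26$-dimensional $r_0$, your argument covers what is needed; you may want to add the word ``irreducible'' to the hypotheses when writing this up.
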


\subsection{Arthur's multiplicity formula}\label{section Arthur multiplicity formula}
\tp  
Arthur gives a \emph{conjectural} formula for the multiplicity of 
an adelic representation $\pi\in\Pi(G)$ in the discrete spectrum $\mathcal{L}_{\disc}(G)$.
In this section, we will state this
for a simply-connected anisotropic $\Q$-group $G$ admitting a reductive $\Z$-model,
following \cite[\S 8]{ArthurUnipRep}.

For a representation $\pi\in\Pi(G)$,
there are finitely many discrete global Arthur parameters $\psi$ of $G$ such that $\mathrm{c}(\pi)=\mathrm{c}(\psi)$.
According to \cite{ArthurUnipRep},
the multiplicity $\mathrm{m}(\pi)$ of $\pi$ in $\mathcal{L}_{\disc}(G)$
should be the sum of $m_{\psi}$ over the set of all such $\psi$,
where $m_{\psi}$ is some integer that we are going to introduce.
We note that these $\psi$ all belong to the following subset of $\Psi_{\disc}(G)$:
\begin{defi}\label{def Adams Johnson parameters}
    We define $\Psi_{\mathrm{AJ}}(G)$ to be the subset of $\Psi_{\disc}(G)$
    consisting of $\psi\in\Psi_{\disc}(G)$ satisfying that
    $\mathrm{c}_{\infty}(\psi)$ is the infinitesimal character of a finite dimensional irreducible representation of $G_{\C}$.
\end{defi}
\begin{rmk}\label{rmk relation with Adams Johnson parameter}
    The subscript $\mathrm{AJ}$ stands for \emph{Adams-Johnson}.
    This means the archimedean Arthur parameter $\mathrm{W}_{\R}\times\SL_{2}(\C)\rightarrow \widehat{G}(\C)$ for $\psi\in\Psi_{\disc}(G)$ 
    is an \emph{Adams-Johnson parameter} in the sense of \cite[\S 8.4.14]{ChenevierLannes}
    if and only if $\psi\in\Psi_{\mathrm{AJ}}(G)$.
    The condition that $\mathrm{c}_{\infty}(\psi)$ is the infinitesimal character of a finite-dimensional irreducible representation
    is the condition (AJ1) in \cite[\S 8.4.14]{ChenevierLannes},
    and the second condition (AJ2) for Adams-Johnson parameters 
    is automatically satisfied in our case by \Cites[\S 4.2.2]{TaibiDimension}[Proposition 6]{CohRepReal}.
\end{rmk}
Now we let $\psi\in\Psi_{\mathrm{AJ}}(G)$.
In \Cref{def global discrete Arthur parameter},
the global component group $\mathrm{C}_{\psi}$ of $\psi$ is defined to be the centralizer of $\mathrm{Im}(\psi)$ in $\widehat{G}(\C)$.
When $G$ is semisimple,
this group is finite since the center of $\widehat{G}$ is finite.
Moreover, as explained in \cite[\S 8.4.14]{ChenevierLannes}, 
$\mathrm{C}_{\psi}$ is  an elementary finite abelian $2$-group, 
i.e. a product of finitely many copies of $\Z/2\Z$.
For any $\psi\in\Psi_{\mathrm{AJ}}(G)$,
Arthur's formula for $m_{\psi}$ involves two quadratic characters of $\mathrm{C}_{\psi}$.

\subsubsection{The character \texorpdfstring{$\rho_{\psi}^{\vee}$}{PDFstring}}\label{section rho character in general}
\tp The first character of $\mathrm{C}_{\psi}$ is defined as follows.

By \Cref{prop for compact group inf char is bijection},
the conjugacy class $\mathrm{c}_{\infty}(\psi)$ for $\psi\in\Psi_{\mathrm{AJ}}(G)$ is regular,
viewed as a cocharacter of a maximal torus $\widehat{T}$ of $\widehat{G}$ chosen as in \cite[\S 8.4.14]{ChenevierLannes}.
Hence there is a unique Borel subgroup $\widehat{B}\supset \widehat{T}$ of $\widehat{G}$
with respect to whom the infinitesimal character $\mathrm{c}_{\infty}(\psi)$ is strictly dominant. 
Let $\rho^{\vee}_{\psi}$ be the half-sum of positive roots with respect to $(\widehat{G},\widehat{B},\widehat{T})$.
Since $G$ is simply-connected, 
$\rho^{\vee}_{\psi}\in \frac{1}{2}X^{*}(\widehat{T})$ is a character of $\widehat{T}$.
Its restriction to the component group $\mathrm{C}_{\psi}$ is the first character we need,
and we denote $\rho^{\vee}|_{\mathrm{C}_{\psi}}$ by $\rho^{\vee}_{\psi}$ for short.

\subsubsection{Arthur's character \texorpdfstring{$\varepsilon_{\psi}$}{PDFstring}}\label{section Arthur epsilon character}
\tp A discrete global Arthur parameter $\psi\in\Psi_{\mathrm{AJ}}(G)$
induces a morphism 
\[\mathrm{C}_{\psi}\times\mathcal{L}_{\Z}\times\SL_{2}(\C)\rightarrow \widehat{G}(\C).\]
Restricting the adjoint representation $\widehat{\mathfrak{g}}$ of $\widehat{G}(\C)$
along this morphism,
it can be decomposed into a direct sum
\begin{align}\label{eqn decomposition of adjoint representation}
    \widehat{\mathfrak{g}}|_{\mathrm{C}_{\psi}\times\mathcal{L}_{\Z}\times\SL_{2}(\C)}=\bigoplus_{i=1}^{l}\chi_{i}\otimes \pi_{i}[d_{i}],
\end{align}
where $\chi_{i}$ is a quadratic character of $\mathrm{C}_{\psi}$,
and $\pi_{i}$ is an $n_{i}$-dimensional irreducible representation of $\mathcal{L}_{\psi}$ which is identified as an element in $\Pi_{\cusp}^{\perp}(\pgl_{n_{i}})$.
Moreover, since $\psi$ belongs to $\Psi_{\mathrm{AJ}}(G)$,
according to \Cref{prop algebraic representations as building blocks}
these cuspidal representations $\pi_{i}$ are algebraic.
\begin{defi}\label{def Arthur character epsilon psi}\cite[Equation 8.4]{ArthurUnipRep}
    Let $\psi\in\Psi_{\mathrm{AJ}}(G)$,
    and $I$ be the subset of $\{1,\ldots,l\}$
    consisting of $i$ satisfying that in \eqref{eqn decomposition of adjoint representation} the cuspidal representation $\pi_{i}$ is self-dual and $\varepsilon(\pi_{i})=-1$.
    Arthur's character $\varepsilon_{\psi}:\mathrm{C}_{\psi}\rightarrow \mu_{2}$
    is defined by 
    \[\varepsilon_{\psi}(s):=\prod_{i\in I}\chi_{i}(s),\text{ for every }s\in\mathrm{C}_{\psi}.\]
\end{defi}
The following result shows that it is sufficient to calculate the global epsilon factors $\varepsilon(\pi_{i})$
for $i$ in a subset of $\{1,\ldots,l\}$:
\begin{prop}\label{prop epsilon character of Arthur}
    Let $\psi\in\Psi_{\mathrm{AJ}}(G)$.
    For any $s\in\mathrm{C}_{\psi}$, 
    let $I_{s}$ be the subset of $\{1,\ldots,l\}$ consisting of $i$
    satisfying that in \eqref{eqn decomposition of adjoint representation} the representation $\pi_{i}$ is self-dual, $d_{i}$ is even, and $\chi_{i}(s)=-1$.
    Then we have:
    \[\varepsilon_{\psi}(s)=\prod_{i\in I_{s}}\varepsilon(\pi_{i}).\]
\end{prop}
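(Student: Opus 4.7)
The plan is to reduce both sides of the claimed identity to a single product over a common index set, by showing that other indices contribute trivially. Throughout, view $\chi_i(s)$ and $\varepsilon(\pi_i)$ as elements of $\{\pm 1\}$, and recall that each $\chi_i$ is self-dual (being a quadratic character of an abelian $2$-group).

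The key structural input is that the adjoint representation $\widehat{\mathfrak{g}}$ of the reductive group $\widehat{G}(\C)$ carries a non-degenerate $\widehat{G}(\C)$-invariant symmetric bilinear form, so the decomposition \eqref{eqn decomposition of adjoint representation} is that of an orthogonal representation of $\mathrm{C}_\psi \times \mathcal{L}_\Z \times \SL_2(\C)$. I then run through the possibilities for each summand $\chi_i \otimes \pi_i[d_i]$. \emph{If $\pi_i$ is not self-dual}, then the summand pairs with a dual summand having the same $\chi$ and $d$ but with $\pi^\vee$ in place of $\pi$, and neither index lies in $I$ or in $I_s$ (both require $\pi_i$ self-dual), so these are irrelevant. \emph{If $\pi_i$ is self-dual and orthogonal}, then $\varepsilon(\pi_i) = 1$ by \Cref{thm Arthur epsilon factor of orthogonal rep is trivial}, so such $i$ are excluded from $I$ and contribute $\varepsilon(\pi_i)=1$ to the product over $I_s$. \emph{If $\pi_i$ is self-dual and symplectic}, the parity rules O $\otimes$ O = O, O $\otimes$ S = S, S $\otimes$ S = O show that $\chi_i \otimes \pi_i[d_i]$ is orthogonal exactly when $d_i$ is even, and symplectic when $d_i$ is odd; since an irreducible symplectic summand of an orthogonal representation must appear with even multiplicity, the indices with $d_i$ odd contribute $\chi_i(s)^{\mathrm{even}} = 1$ to $\varepsilon_\psi(s)$ and are excluded from $I_s$ by its parity condition.

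After these eliminations, both products are indexed by the common set $J := \{i : \pi_i \text{ self-dual symplectic}, \ d_i \text{ even}\}$, namely $\mathrm{LHS} = \prod_{i \in J,\ \varepsilon(\pi_i) = -1} \chi_i(s)$ and $\mathrm{RHS} = \prod_{i \in J,\ \chi_i(s) = -1} \varepsilon(\pi_i)$. Setting $a_i = \chi_i(s)$ and $b_i = \varepsilon(\pi_i)$ in $\{\pm 1\}$, both products equal $(-1)^{|\{i \in J \,:\, a_i = b_i = -1\}|}$, which concludes the proof. The only technical point requiring justification is the even-multiplicity claim for irreducible symplectic summands, which I expect to handle in one line via a Schur-type argument: the space of invariant bilinear forms on an irreducible symplectic $V$ is one-dimensional and spanned by a skew form, so invariant symmetric forms on $V^{\oplus m}$ correspond to antisymmetric pairings on the multiplicity space $\C^m$, and non-degeneracy of the restricted form forces $m$ to be even.
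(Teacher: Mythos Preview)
Your proof is correct and follows essentially the same route as the paper: use orthogonality of the adjoint representation to discard indices with $d_i$ odd, then perform the combinatorial swap $\prod_{\varepsilon(\pi_i)=-1}\chi_i(s)=\prod_{\chi_i(s)=-1}\varepsilon(\pi_i)$ over the remaining indices. Your treatment of the reduction step is in fact slightly more careful than the paper's, which simply asserts that a self-dual $\pi_i$ with $d_i$ odd must be orthogonal, whereas you correctly allow for symplectic $\pi_i$ with odd $d_i$ and dispose of them via the even-multiplicity argument.
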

\begin{proof}
    When $d_{i}$ is odd, the $d_{i}$-dimensional irreducible representation of $\SL_{2}(\C)$ is orthogonal. 
    Since the adjoint representation is an orthogonal representation,
    the self-dual representation $\pi_{i}$ of $\mathcal{L}_{\Z}$ must be also orthogonal,
    which implies $\varepsilon(\pi_{i})=1$ by \Cref{thm Arthur epsilon factor of orthogonal rep is trivial}.
    Hence the subset $I$ in \Cref{def Arthur character epsilon psi} is a subset of $\set{i}{d_{i}\text{ is even}}$,
    and for any $s\in \mathrm{C}_{\psi}$ we have
    \[\varepsilon_{\psi}(s)=\prod_{2|d_{i},\,\pi_{i}=\pi_{i}^{\vee},\,\varepsilon(\pi_{i})=-1}\chi_{i}(s)=\prod_{2|d_{i},\,\pi_{i}=\pi_{i}^{\vee},\,\chi_{i}(s)=-1}\varepsilon(\pi_{i})=\prod_{i\in I_{s}}\varepsilon(\pi_{i}).\]
\end{proof}

\subsubsection{The multiplicity formula}\label{section multiplicity formula}
\tp With two characters $\rho^{\vee}_{\psi}$ and $\varepsilon_{\psi}$ in hand, 
we can state Arthur's following conjecture:
\begin{conj}\label{conj Arthur multiplicity formula}
    (\emph{Arthur's multiplicity formula}) 
    Let $G$ be a simply-connected anisotropic $\Q$-group with a reductive $\Z$-model,
    and $\pi$ a level one adelic representation in $\Pi(G)$. 
    We have the following formula for the multiplicity $\mathrm{m}(\pi)$ of $\pi$ in the discrete spectrum $\mathcal{L}_{\disc}(G)$:
    \begin{align}\label{eqn Arthur multiplicity formula}
        \mathrm{m}(\pi)=\sum_{\psi\in\Psi_{\disc}(G),\,c(\psi)=c(\pi)}m_{\psi},\text{ where }m_{\psi}=\left\{\begin{array}{@{}cl}
            1, & \text{if }\rho^{\vee}_{\psi}=\varepsilon_{\psi},\\
            0, & \text{otherwise.}
        \end{array}\right.
    \end{align}
\end{conj}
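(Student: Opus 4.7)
The plan is to derive the formula as a specialization of Arthur's general multiplicity machinery to the case at hand: $G$ simply-connected anisotropic with a reductive $\Z$-model, and $\pi \in \Pi(G)$ of level one. Starting from \Cref{axiom 3}, we have the orthogonal decomposition $\mathcal{L}_{\disc}(G)^{\G(\widehat{\Z})} = \bigoplus_{\psi \in \Psi_{\disc}(G)} \mathcal{A}_{\psi}(G)$, so it suffices to prove that for each $\psi$ with $\mathrm{c}(\psi) = \mathrm{c}(\pi)$, the contribution of $\pi$ to $\mathcal{A}_\psi(G)$ equals $m_\psi$ as defined. Since every $\psi$ matching a $\pi \in \Pi(G)$ must satisfy the Adams-Johnson condition (because $\mathrm{c}_\infty(\pi)$ is the infinitesimal character of the finite-dimensional irreducible $G(\R)$-representation $\pi_\infty$), we may restrict attention to $\psi \in \Psi_{\mathrm{AJ}}(G)$.

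The first step is to introduce, at each place $v$, a local Arthur packet $\Pi_{\psi_v}$ together with a parametrization of its elements by characters of a local component group $\mathrm{S}_{\psi_v}$, and to construct a natural map $\mathrm{C}_\psi \to \prod_v \mathrm{S}_{\psi_v}$. For $v = p$ finite, the level-one condition $\pi_p^{\G(\Z_p)} \neq 0$ should force $\pi_p$ to be the unique spherical element of $\Pi_{\psi_p}$; one expects, following the unramified Langlands correspondence, that the associated character on $\mathrm{S}_{\psi_p}$ is trivial. For $v = \infty$ and $G(\R)$ compact, one would invoke the Adams-Johnson description of the archimedean packet $\Pi_{\psi_\infty}$ as a set of finite-dimensional irreducible representations of $G_\C$, and explicitly compute the character $\langle \cdot, \pi_\infty \rangle$ attached to each $\pi_\infty$. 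The goal of this computation is to identify this character, pulled back to $\mathrm{C}_\psi$, with $\rho^\vee_\psi$ defined in \Cref{section rho character in general}, using the fact that $\widehat{B}$ is chosen so that the infinitesimal character is strictly dominant.

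The second step is the global assembly: the multiplicity of $\pi$ in $\mathcal{A}_\psi(G)$ should equal $1$ or $0$ according to whether the product of the local characters (viewed as a character of $\mathrm{C}_\psi$ via the diagonal map) agrees with Arthur's sign character $\varepsilon_\psi$ from \Cref{def Arthur character epsilon psi}. Combining the first step (which identifies the total local character with $\rho^\vee_\psi$) with this global sign condition yields precisely $m_\psi = 1 \iff \rho^\vee_\psi = \varepsilon_\psi$. Summing over all $\psi$ with $\mathrm{c}(\psi) = \mathrm{c}(\pi)$ then gives $\mathrm{m}(\pi)$.

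The main obstacle is that this is Arthur's deep conjecture \cite{ArthurUnipRep}, and the required ingredients are established unconditionally only for quasi-split classical groups \cite{arthur2013endoscopic}. For $\grpF$, the local Arthur packets at non-archimedean places are not yet constructed, the archimedean Adams-Johnson description would need to be verified in the exceptional setting, and the identification of local characters with $\rho^\vee_\psi$ would require a careful analysis of the Adams-Johnson parameters along the lines of \cite[\S8.4]{ChenevierLannes}. This is why the paper treats this statement as a conjecture and marks all dependent results with a star; a genuine proof is beyond the scope of current techniques and motivates the conditional nature of \Cref{introthm multiplicity formula for F4} and \Cref{introthm formula for cuspidal representations with Sato-Tate group F4}.
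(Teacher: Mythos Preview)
Your analysis is correct in its conclusion: the paper states this as a \emph{conjecture}, not a theorem, and provides no proof. There is nothing to compare your proposal against because the paper simply formulates \Cref{conj Arthur multiplicity formula} as an axiomatic input following \cite{ArthurUnipRep} and \cite[\S 8]{ChenevierLannes}, then marks all downstream results depending on it with a star. Your sketch of how a proof would proceed via local Arthur packets and the Adams--Johnson description is a reasonable summary of the expected architecture, and your final paragraph accurately captures why no such proof exists for $\grpF$; this is exactly the status the paper assigns to the statement.
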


\section{Classification of global Arthur parameters for \texorpdfstring{$\grpF$}{PDFstring}}\label{section Arthur classification for F4}
\tp In this section,
we are going to apply Arthur's conjectures recalled in \Cref{section Arthur Langlands conjecture} and \Cref{section Arthur multiplicity formula} 
to the simply-connected anisotropic $\Q$-group $\grpF$ defined in \Cref{def algebraic group F4}.
The dual group $\widehat{\grpF}$ is isomorphic to the extension $\mathbf{F}_{4,\C}$ of $\grpF$ to $\C$.
In other words, 
the complex Lie group $\widehat{\grpF}(\C)$ is isomorphic to the complexification $\lietype{F}{4,\C}$ of the real compact Lie group $\lietype{F}{4}$.

\subsection{Arthur parameters of \texorpdfstring{$\grpF$}{PDFstring}}\label{section Arthur parameters of F4}
\tp The real points $\lietype{F}{4}=\grpF(\R)$ is compact,
so an adelic representation $\pi\in\Pi(\grpF)$ is determined uniquely by $\mathrm{c}(\pi)$.
On the other hand,
by \Cref{prop compact Lie group F4 is acceptable} and \Cref{axiom 1},
a discrete global Arthur parameter $\psi$ of $\grpF$
is also determined uniquely by $\mathrm{c}(\psi)\in\mathcal{X}(\widehat{\grpF})$.
Moreover, we have the following criterion, which is a direct corollary of \Cref{prop just one rep is enough for conjugacy}:
\begin{prop}
    Let $\psi_{1}$ and $\psi_{2}$ be two discrete global Arthur parameters of $\grpF$,
    and $\mathrm{r}_{0}:\widehat{\grpF}\rightarrow\SL_{26,\C}$ the $26$-dimensional irreducible representation of $\grpF(\C)$.
    Then $\psi_{1}=\psi_{2}$ if and only if $\mathrm{r}_{0}(\mathrm{c}(\psi_{1}))=\mathrm{r}_{0}(\mathrm{c}(\psi_{2}))$.
\end{prop}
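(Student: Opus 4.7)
The only-if direction is immediate from the definition of $\mathrm{c}(\psi)$, so I focus on the converse. My plan has three steps: upgrade the equality of Satake and infinitesimal data in $\mathcal{X}(\SL_{26})$ to an equivalence of $26$-dimensional representations of $\mathcal{L}_{\Z}\times\SL_{2}(\C)$; apply \Cref{prop just one rep is enough for conjugacy} after restricting to a maximal connected compact subgroup; then promote the resulting conjugacy back to $\SL_{2}(\C)$ by Zariski density of $\su(2)$.

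For the first step, because the image of $\mathcal{L}_{\Z}$ is compact and the restriction to $\SL_{2}(\C)$ is algebraic, the $26$-dimensional representation $\mathrm{r}_{0}\circ\psi_{i}$ of $\mathcal{L}_{\Z}\times\SL_{2}(\C)$ is semisimple and decomposes as $\bigoplus_{j=1}^{k_{i}} r_{ij}\otimes[d_{ij}]$, where $r_{ij}$ is an irreducible unitary representation of $\mathcal{L}_{\Z}$ of some dimension $n_{ij}$ and $[d_{ij}]$ is the $d_{ij}$-dimensional algebraic irreducible of $\SL_{2}(\C)$. Each $r_{ij}$ is a discrete global Arthur parameter for $\GL_{n_{ij}}$ trivial on $\SL_{2}(\C)$, so \Cref{axiom 2} attaches to it a cuspidal representation $\pi_{ij}\in\Pi_{\cusp}(\pgl_{n_{ij}})$ whose family $\mathrm{c}(\pi_{ij})$ encodes the Satake and infinitesimal datum of $r_{ij}$. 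Unwinding the formula $\mathrm{c}_{p}(\psi_{i})=\psi_{i}(\mathrm{Frob}_{p},e_{p})$ yields the identity
\[\mathrm{r}_{0}(\mathrm{c}(\psi_{i})) \;=\; \bigoplus_{j=1}^{k_{i}}\pi_{ij}[d_{ij}] \;\in\; \mathcal{X}(\SL_{26}).\]
The hypothesis forces these right-hand sides to coincide, and \Cref{prop uniqueness of blocks in Arthur-Langlands conjecture} (Jacquet--Shalika) gives a bijection between the multisets $\{(\pi_{1j},d_{1j})\}_{j}$ and $\{(\pi_{2j},d_{2j})\}_{j}$. Invoking \Cref{axiom 2} in reverse, the multisets $\{(r_{1j},d_{1j})\}_{j}$ and $\{(r_{2j},d_{2j})\}_{j}$ also agree, so $\mathrm{r}_{0}\circ\psi_{1}\simeq\mathrm{r}_{0}\circ\psi_{2}$ as representations of $\mathcal{L}_{\Z}\times\SL_{2}(\C)$.

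For the second step, the source $\mathcal{L}_{\Z}\times\su(2)$ is compact and connected by \Cref{prop Langlands group is connected}, so after conjugating both $\psi_{i}$ in $\widehat{\grpF}(\C)$ I may assume the restrictions $\psi_{i,c}:\mathcal{L}_{\Z}\times\su(2)\to\widehat{\grpF}(\C)$ land in a common maximal connected compact subgroup, which I identify with $\lietype{F}{4}=\grpF(\R)$. The equivalence from step one restricts to $\mathrm{r}_{0}\circ\psi_{1,c}\simeq\mathrm{r}_{0}\circ\psi_{2,c}$ as representations of $\mathcal{L}_{\Z}\times\su(2)$, and \Cref{prop just one rep is enough for conjugacy} then produces $g\in\lietype{F}{4}$ with $g\,\psi_{1,c}(x,s)\,g^{-1}=\psi_{2,c}(x,s)$ for all $(x,s)\in\mathcal{L}_{\Z}\times\su(2)$. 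For the third step, for each fixed $x\in\mathcal{L}_{\Z}$ the maps $s\mapsto g\,\psi_{1}(x,s)\,g^{-1}$ and $s\mapsto\psi_{2}(x,s)$ are algebraic morphisms $\SL_{2}(\C)\to\widehat{\grpF}(\C)$ agreeing on the Zariski-dense subset $\su(2)$, hence agreeing on all of $\SL_{2}(\C)$; this proves $\psi_{1}=\psi_{2}$ as $\widehat{\grpF}(\C)$-conjugacy classes.

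The substance of the argument lies in the first step. The hypothesis as stated concerns only the conjugacy classes of the specific elements $\psi_{i}(\mathrm{Frob}_{p},e_{p})$ together with the infinitesimal character, rather than the full representation $\mathrm{r}_{0}\circ\psi_{i}$, and the gap is bridged exactly by the combination of \Cref{axiom 2} and \Cref{prop uniqueness of blocks in Arthur-Langlands conjecture}; by contrast, steps two and three are essentially formal once step one is in place. A minor verification I will need to carry out is that in the level-one setting each $r_{ij}$ has trivial determinant, so that the associated $\pi_{ij}$ has trivial central character and genuinely lies in $\Pi_{\cusp}(\pgl_{n_{ij}})$ so that \Cref{prop uniqueness of blocks in Arthur-Langlands conjecture} applies verbatim.
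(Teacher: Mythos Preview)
Your proof is correct and the essential step—applying \Cref{prop just one rep is enough for conjugacy} to the connected compact source $\mathcal{L}_{\Z}\times\su(2)$, then extending by Zariski density—is exactly what the paper has in mind when it calls this a ``direct corollary'' of that proposition. Where you go further than the paper is in your step one: the paper does not spell out how to pass from the equality $\mathrm{r}_{0}(\mathrm{c}(\psi_{1}))=\mathrm{r}_{0}(\mathrm{c}(\psi_{2}))$ in $\mathcal{X}(\SL_{26})$ to an equivalence $\mathrm{r}_{0}\circ\psi_{1}\simeq\mathrm{r}_{0}\circ\psi_{2}$ of representations. You are right that this is the substantive point (indeed \Cref{rmk char poly not enough for conjugacy} shows that place-by-place the $26$-dimensional characteristic polynomial does \emph{not} determine the $\widehat{\grpF}(\C)$-conjugacy class), and your route via \Cref{axiom 2} and \Cref{prop uniqueness of blocks in Arthur-Langlands conjecture} is a clean way to bridge it. An alternative, slightly more elementary in that it avoids Jacquet--Shalika, is to use the generalized Ramanujan property (compactness of $\mathcal{L}_{\Z}$): sorting the eigenvalues of $\mathrm{r}_{0}(\mathrm{c}_{p}(\psi_{i}))$ by their $p$-adic absolute value separates the contributions of the various $[d_{ij}]$, after which Cebotarev recovers each $\bigoplus_{d_{ij}=d}r_{ij}$ up to isomorphism. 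Either argument works; your minor verification about trivial central characters is harmless in this framework, since $\mathcal{L}_{\Z}$ has no nontrivial one-dimensional representations (apply \Cref{axiom 2} with $n=1$).
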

By this result,
we will identify a discrete global Arthur parameter $\psi\in\Psi_{\disc}(\grpF)$
with the corresponding family of conjugacy classes $\mathrm{r}_{0}(\mathrm{c}(\psi))\in \mathcal{X}(\SL_{26})$.

For a level one discrete automorphic representation $\pi\in\Pi_{\disc}(\grpF)$,
the discrete global Arthur parameter $\psi\in\Psi_{\mathrm{AJ}}(\grpF)$ such that $\mathrm{c}(\psi)=\mathrm{c}(\pi)$ predicted by \Cref{axiom 1} is unique.
We denote this parameter by $\psi_{\pi}$,
which is identified with $\psi(\pi,\mathrm{r}_{0})\in\mathcal{X}(\SL_{26})$.
Conversely,
for $\psi\in\Psi_{\mathrm{AJ}}(\grpF)$,
we denote the unique representation $\pi\in\Pi(\pi)$ such that $\mathrm{c}(\pi)=\mathrm{c}(\psi)$ by $\pi_{\psi}$.

The following lemma gives us some constraint on the infinitesimal character $\mathrm{c}_{\infty}(\psi)$ of $\psi\in\Psi_{\mathrm{AJ}}(\grpF)$:
\begin{lemma}\label{lemma eigenvalues of infinitesimal character of F4 representation}
    Let $\mathrm{c}_{\infty}\in(\mathfrak{f}_{4})_{\mathrm{ss}}$ be the infinitesimal character of an irreducible representation of the compact group $\lietype{F}{4}$,
    then there exists four non-negative integers $a,b,c,d$ such that the eigenvalues (counted with multiplicity) of $\mathrm{r}_{0}(\mathrm{c}_{\infty})\in(\mathfrak{sl}_{26})_{\mathrm{ss}}$ are:
    \begin{gather*}
        0,0,\pm(a+1),\pm(b+1),\pm(a+b+2),\pm(b+c+2),\pm(a+b+c+3),\pm(b+c+d+3),\\
        \pm(a+b+c+d+4),\pm(a+2b+c+4),\pm(a+2b+c+d+5),\pm(a+2b+2c+d+6),\\
        \pm(a+3b+2c+d+7),\pm(2a+3b+2c+d+8).
    \end{gather*}
\end{lemma}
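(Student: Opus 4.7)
The plan is to enumerate the $26$ eigenvalues of $\mathrm{r}_0(\mathrm{c}_\infty)$ by pairing the weights of $\mathrm{r}_0$, viewed as a representation of $\widehat{\grpF}$, against $\mathrm{c}_\infty$.

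First I would apply \Cref{prop for compact group inf char is bijection}: for $\lambda = a\varpi_1 + b\varpi_2 + c\varpi_3 + d\varpi_4$ with $a,b,c,d \in \mathbb{N}$ the highest weight of the irreducible representation in question, the infinitesimal character is $\mathrm{c}_\infty = \lambda + \rho$ with $\rho = \varpi_1+\varpi_2+\varpi_3+\varpi_4$. Using Bourbaki's $\varepsilon$-basis of the Cartan, so that $\varpi_1=\varepsilon_1+\varepsilon_2$, $\varpi_2=2\varepsilon_1+\varepsilon_2+\varepsilon_3$, $\varpi_3=\tfrac{1}{2}(3\varepsilon_1+\varepsilon_2+\varepsilon_3+\varepsilon_4)$ and $\varpi_4=\varepsilon_1$, this yields an explicit element of the complexified Cartan whose $\varepsilon_i$-coefficients are affine in $a,b,c,d$.

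Next I would identify the weights of $\mathrm{r}_0$ on the dual side. Since $\lietype{F}{4}$ is self-dual and the Langlands duality swaps long and short simple roots, the $26$-dimensional irreducible representation of $\widehat{\grpF}$ corresponds to $V_{\varpi_1^\vee}$, where $\varpi_1^\vee = \varepsilon_1+\varepsilon_2$ is the fundamental coweight of $\lietype{F}{4}$ associated to the long simple root $\alpha_1$. This highest weight is a short root of $\widehat{\grpF}$ (equivalently a long root of $\lietype{F}{4}$, since long roots of $\lietype{F}{4}$ are self-dual under $\alpha \mapsto \alpha^\vee$). Hence the non-zero weights of $\mathrm{r}_0$ form the Weyl orbit of $\varpi_1^\vee$: the $24$ long roots of $\lietype{F}{4}$, namely $\pm\varepsilon_i \pm \varepsilon_j$ for $1\le i<j\le 4$; the zero weight has multiplicity $2$.

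Finally, evaluating each of these $26$ weights on $\mathrm{c}_\infty$ by the standard inner product (with the $\varepsilon_i$ orthonormal) gives $0$ from the two zero weights, and the twelve pairings $(\pm(\varepsilon_i \pm \varepsilon_j),\, \mathrm{c}_\infty)$ for $1\le i<j\le 4$ form a routine arithmetic computation that reproduces exactly the twelve pairs of eigenvalues $\pm(a+1), \pm(b+1), \ldots, \pm(2a+3b+2c+d+8)$ listed in the statement. The main obstacle is the identification step: although $\lietype{F}{4}$ is isomorphic to its own Langlands dual, the Bourbaki-respecting isomorphism swaps long and short simple roots, so the highest weight of the $26$-dimensional representation of $\widehat{\grpF}$ is $\varpi_1^\vee$ rather than $\varpi_4$, and its non-zero weights are the long roots of $\lietype{F}{4}$ in the shared coordinate system rather than the short roots that carry the weights of $V_{\varpi_4}$ on the original side. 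Once this identification is correctly made, the final pairing is mechanical.
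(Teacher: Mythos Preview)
Your proposal is correct and follows essentially the same approach as the paper: both identify $\mathrm{c}_\infty=\lambda+\rho$ via \Cref{prop for compact group inf char is bijection} and obtain the eigenvalues as pairings $\langle\lambda+\rho,\alpha^\vee\rangle$ with the $26$ weights of $\mathrm{r}_0$ on the dual side. The paper simply declares this an ``easy calculation'', whereas you spell out the duality identification (the nonzero weights of $\mathrm{r}_0$ on $\widehat{\grpF}$ being the long roots $\pm\varepsilon_i\pm\varepsilon_j$ of $\grpF$, which coincide with their own coroots) and carry out the pairing in the $\varepsilon$-basis; this added detail is sound and the two arguments are otherwise identical.
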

\begin{proof}
    If we write the highest weight $\lambda$ of this irreducible representation of $\lietype{F}{4}$ as $a\varpi_{1}+b\varpi_{2}+c\varpi_{3}+d\varpi_{4}$,
    then by \Cref{prop for compact group inf char is bijection}
    the infinitesimal character $\mathrm{c}_{\infty}$ is $\lambda+\rho= (a+1)\varpi_{1}+(b+1)\varpi_{2}+(c+1)\varpi_{3}+(d+1)\varpi_{4}$.
    The eigenvalues of $\mathrm{r}_{0}(\mathrm{c}_{\infty})$ are of the form $\langle \lambda+\rho,\alpha^{\vee}\rangle$,
    where $\alpha^{\vee}$ runs over the $26$ weights of $\widehat{\grpF}(\C)$ appearing in the representation $\mathrm{r}_{0}$.
    By an easy calculation,
    we get the eigenvalues in the lemma.
\end{proof}
As recalled in \Cref{section Sato Tate groups of Arthur parameters},
we associate to $\psi\in\Psi_{\mathrm{AJ}}(\grpF)$ 
a morphism $\psi_{\mathrm{c}}:\mathcal{L}_{\Z}\times\su(2)\rightarrow\lietype{F}{4}$ between compact Lie groups.
This homomorphism inherits the following properties from $\psi$:
\begin{itemize}
    \item the image $\mathrm{Im}(\psi_{c})$ is connected due to \Cref{prop Langlands group is connected},
    \item the centralizer of $\mathrm{Im}(\psi_{\mathrm{c}})$ in $\grpF$ coincides with 
    the global component group $\mathrm{C}_{\psi}$ of $\psi$,
    which is an elementary finite abelian $2$-group by \cite[\S 8.4.14]{ChenevierLannes},
    \item and the zero weight appears exactly twice in the restriction of the $26$-dimensional irreducible representation $\mathrm{J}_{0}$ of $\lietype{F}{4}$ along $\psi_{\mathrm{c}}$ by \Cref{lemma eigenvalues of infinitesimal character of F4 representation}.
\end{itemize}
Hence $\mathrm{Im}(\psi_{\mathrm{c}})$ is a subgroup of $\lietype{F}{4}$ satisfying the three conditions in the beginning of \Cref{section subgroups of F4},
thus the class $\mathrm{H}(\psi)$ defined in \Cref{def image group of Arthur parameter} is the conjugacy class of one of the subgroups of $\lietype{F}{4}$ listed in 
\Cref{thm classification result of subgroups satisfying three conditions}.

According to \Cref{conj general Arthur-Langlands conjecture},
the discrete global Arthur parameter $\psi_{\pi}=\psi(\pi,\mathrm{r}_{0})$ corresponding to a discrete automorphic representation $\pi\in\Pi_{\disc}(\grpF)$ should be of the form:
\[\pi_{1}[d_{1}]\oplus\cdots\oplus\pi_{k}[d_{k}],\]
where $\pi_{i}\in\Pi_{\cusp}(\pgl_{n_{i}})$ and $\sum_{i=1}^{k}n_{i}d_{i}=26$.
By \Cref{prop algebraic representations as building blocks},
every $\pi_{i}$ is algebraic,
and it is also self-dual by the following lemma:
\begin{splemma}\label{lemma building block is selfdual for F4}
    Let $\pi\in\Pi_{\disc}(\grpF)$
    and $\psi_{\pi}=\pi_{1}[d_{1}]\oplus\cdots\oplus\pi_{k}[d_{k}]$
    be its corresponding discrete global Arthur parameter,
    then for each $i=1,\ldots,k$, 
    the representation $\pi_{i}\in\Pi_{\cusp}(\pgl_{n_{i}})$ is self-dual.
\end{splemma}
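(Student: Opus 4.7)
The plan is to combine the classification of possible Sato-Tate groups provided by \Cref{thm classification result of subgroups satisfying three conditions} with the elementary observation that every complex irreducible representation of each such subgroup is self-dual. First I would verify that the image $H := \mathrm{Im}(\psi_{\pi,\mathrm{c}}) \subset \lietype{F}{4}$ (after conjugating so that this compact image lies in a fixed real compact form of $\widehat{\grpF}(\C)$) satisfies the three conditions of \Cref{section subgroups of F4}: connectedness is inherited from $\mathcal{L}_{\Z} \times \su(2)$ via \Cref{prop Langlands group is connected}; the centralizer of $H$ in $\lietype{F}{4}$ coincides with $\mathrm{C}_{\psi_{\pi}}$, which by the Adams-Johnson condition is an elementary finite abelian $2$-group (see \cite[\S 8.4.14]{ChenevierLannes}); and \Cref{lemma eigenvalues of infinitesimal character of F4 representation} ensures that the zero weight appears exactly twice in $\mathrm{r}_{0}|_{H}$. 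Hence $H$ is either $\lietype{F}{4}$ itself or one of the $13$ conjugacy classes listed in \Cref{thm classification result of subgroups satisfying three conditions}.

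Next, since $\psi_{\pi,\mathrm{c}}$ surjects onto $H$, inflation gives a bijection between complex irreducible representations of $H$ and those irreducibles of $\mathcal{L}_{\Z} \times \su(2)$ that factor through $\psi_{\pi,\mathrm{c}}$. By Peter-Weyl applied to the compact group $\mathcal{L}_{\Z} \times \su(2)$, every such irreducible has the form $\pi \otimes [d]$, where $\pi$ is an irreducible continuous representation of $\mathcal{L}_{\Z}$ (identified via \Cref{axiom 2} with an element of $\Pi_{\cusp}(\pgl_{n})$) and $[d]$ is the irreducible $d$-dimensional representation of $\su(2) \subset \SL_{2}(\C)$. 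Under this bijection the decomposition $\mathrm{r}_{0} \circ \psi_{\pi} = \bigoplus_{i=1}^{k} \pi_{i}[d_{i}]$ corresponds precisely to the decomposition of $\mathrm{r}_{0}|_{H}$ into $H$-irreducibles, so it suffices to show that every $H$-irreducible summand of $\mathrm{r}_{0}|_{H}$ is self-dual.

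The crucial structural observation is that in all $14$ possible cases the group $H$ is centrally isogenous to a direct product of compact groups drawn from the short list $\su(2)$, $\symp(2)$, $\symp(3)$, $\spin(8)$, $\spin(9)$, $\lietype{G}{2}$, $\lietype{F}{4}$. For each of these factor groups the longest element of the Weyl group acts as $-1$ on the character lattice of a maximal torus, so every complex irreducible representation is self-dual; this property is preserved under direct products and under central isogenous quotients, hence every irreducible representation of $H$ is self-dual. Pulling back along $\psi_{\pi,\mathrm{c}}$ we conclude that each summand $\pi_{i} \otimes [d_{i}]$ is a self-dual representation of $\mathcal{L}_{\Z} \times \SL_{2}(\C)$, and since $[d_{i}]$ is self-dual this forces $\pi_{i}$ to be self-dual as well. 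The main obstacle in this plan is conceptual rather than technical: the real work has been absorbed into the long classification carried out in \Cref{section subgroups of F4}, after which the self-duality assertion follows from the uniform structural observation above.
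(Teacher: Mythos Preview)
Your proposal is correct and takes essentially the same approach as the paper: both rely on the classification in \Cref{section connected subgroups satisfying our conditions} to identify the image $H$ and then conclude that the relevant irreducible summands are self-dual. The paper's proof is terser and leaves the self-duality of the $H$-irreducibles implicit (it can be read off the explicit decompositions of $\jord_{0}|_{H}$ listed in \Cref{section info principal PSU(2)}--\Cref{section info four copies of A1}), whereas you supply the uniform conceptual reason via $w_{0}=-1$ on the weight lattice for each simple factor of $H$; this is a pleasant addition but not a genuinely different route.
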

\begin{proof}
    By our classification result in \Cref{section connected subgroups satisfying our conditions},
    identifying $\pi_{i}\in\Pi_{\cusp}(\pgl_{n_{i}})$ as an irreducible representation of $\mathcal{L}_{\Z}$,
    it must be of the form
    $\mathcal{L}_{\Z}\overset{}{\twoheadrightarrow}H\overset{r}{\rightarrow}\SL_{n_{i}}(\C)$,
    where $H$ is a connected compact subgroup of $\lietype{F}{4}$
    and $r$ is a self-dual irreducible representation of $H$,
    thus $\pi_{i}$ itself is self-dual.
\end{proof}
So a discrete global Arthur parameter $\psi\in\Psi_{\mathrm{AJ}}(\grpF)$ corresponding to some $\pi\in\Pi_{\disc}(\grpF)$ must be of the form
\begin{align}\label{eqn Arthur Langlands decomposition of parameter}
    \psi=\pi_{1}[d_{1}]\oplus \cdots\oplus \pi_{k}[d_{k}],\text{ where }\pi_{i}\in\Pi_{\alg}^{\perp}(\pgl_{n_{i}}),\,\sum_{i=1}^{k}n_{i}d_{i}=26.
\end{align}
The endoscopic types $(k,(n_{i},d_{i})_{1\leq i\leq k})$ can be classified by our results in \Cref{section connected subgroups satisfying our conditions}.
\begin{ex}\label{ex endoscopic type from image of parameter}
    If the class $\mathrm{H}(\psi)$ associated to $\psi\in\Psi_{\mathrm{AJ}}(\grpF)$ is the conjugacy class of 
    \[H=\left(\lietype{A}{1}^{[9,6^{2},5]}\times\lietype{A}{1}^{[2^{6},1^{14}]}\right)/\mu_{2}^{\Delta},\]
    by \Cref{section info principal A1 in Sp(3) with Sp(1)} the restriction of the $26$-dimensional irreducible representation $(r_{0},\mathrm{J}_{0})$ along $\psi$ is isomorphic to 
    \[\sym^{5}\st\otimes\st+
    \sym^{8}\st\otimes\triv+
    \sym^{4}\st\otimes\triv.\]
Depending on how $\mathcal{L}_{\Z}$ and $\su(2)$ are mapped to this subgroup $H\subset\lietype{F}{4}$,
we have the following three possible endoscopic types for $\psi$:
\begin{itemize}
    \item $\left(3,(2,6),(1,5),(1,9)\right)$, $\psi=\pi[6]\oplus[5]\oplus[9],\pi\in\Pi_{\alg}^{\perp}(\pgl_{2})$;
    \item $\left(3,(9,1),(5,1),(6,2)\right)$, $\psi=\sym^{8}\pi\oplus\sym^{4}\pi\oplus\sym^{5}\pi[2],\pi\in\Pi_{\alg}^{\perp}(\pgl_{2})$;
    \item $\left(3,(9,1),(5,1),(12,1)\right)$, $\psi=\sym^{8}\pi_{1}\oplus\sym^{4}\pi_{2}\oplus(\sym^{5}\pi_{1}\otimes\pi_{2}),\pi_{1},\pi_{2}\in\Pi_{\alg}^{\perp}(\pgl_{2})$.
\end{itemize}
\end{ex}

\subsection{The multiplicity formula for \texorpdfstring{$\grpF$}{}}\label{section multiplicity formula for F4}
\tp 
For a discrete global Arthur parameter $\psi\in\Psi_{\mathrm{AJ}}(\grpF)$, 
Arthur's multiplicity formula \Cref{conj Arthur multiplicity formula} predicts that the multiplicity $\mathrm{m}(\pi_{\psi})$ of $\pi_{\psi}$ in $\mathcal{L}_{\disc}(\grpF)$
equals to $m_{\psi}$,
the formula for which is given in \eqref{eqn Arthur multiplicity formula}.
To calculate $m_{\psi}$,
it suffices to know two characters of $\mathrm{C}_{\psi}$: Arthur's character $\varepsilon_{\psi}$,
and $\rho^{\vee}_{\psi}$.
We have given the formula of $\varepsilon_{\psi}$ in \Cref{prop epsilon character of Arthur},
and in this subsection we will give a recipe for the character $\rho^{\vee}_{\psi}$
for our $\Q$-group $\grpF$.

We fix a maximal ideal $\widehat{T}$ of $\widehat{\grpF}$ and a Borel subgroup $\widehat{B}\supset \widehat{T}$ as in \Cref{section rho character in general}
such that the infinitesimal character $\mathrm{c}_{\infty}(\psi)$, 
as a cocharacter of $\widehat{T}$
is strictly dominant with respect to $(\widehat{\grpF},\widehat{B},\widehat{T})$.
We denote the four simple roots of the root system with respect to $(\widehat{\grpF},\widehat{B},\widehat{T})$ by $\alpha_{i}^{\vee},i=1,2,3,4$
\footnote{Here we still follow Bourbaki's notation, but since we are considering the root system of the dual group $\widehat{G}$, the simple root $\alpha_{i}^{\vee},1\leq i\leq 4$ corresponds to $\alpha_{5-i}$ in Bourbaki.}.

By \Cref{lemma eigenvalues of infinitesimal character of F4 representation},
we can order the eigenvalues (counted with multiplicity) of $\mathrm{c}_{\infty}(\psi)$
as $\mu_{1}>\mu_{2}>\mu_{3}>\mu_{4}>\mu_{5}\geq \cdots> \mu_{26}$.
The partial order relation of the positive weights of $\mathrm{r}_{0}$ in \Cref{table positive weights of 26 dimensional irreducible representation}
implies that 
\[\mu_{1}=\langle \mathrm{c}_{\infty}(\psi),2\alpha_{1}^{\vee}+3\alpha_{2}^{\vee}+2\alpha_{3}^{\vee}+\alpha_{4}^{\vee}\rangle,\,\mu_{4}=\langle \mathrm{c}_{\infty}(\psi),\alpha_{1}^{\vee}+2\alpha_{2}^{\vee}+\alpha_{3}^{\vee}+\alpha_{4}^{\vee}\rangle.\]
Notice that 
\[(2\alpha_{1}^{\vee}+3\alpha_{2}^{\vee}+2\alpha_{3}^{\vee}+\alpha_{4}^{\vee})+(\alpha_{1}^{\vee}+2\alpha_{2}^{\vee}+\alpha_{3}^{\vee}+\alpha_{4}^{\vee})\equiv \alpha_{1}^{\vee}+\alpha_{2}^{\vee}+\alpha_{3}^{\vee}\equiv \rho^{\vee}_{\psi}\modulo 2X^{*}(\widehat{T}),\]
thus the character $\rho^{\vee}_{\psi}$ of $\mathrm{C}_{\psi}\subset \widehat{T}[2]$
is the product of
$(2\alpha_{1}^{\vee}+3\alpha_{2}^{\vee}+2\alpha_{3}^{\vee}+\alpha_{4}^{\vee})|_{\mathrm{C}_{\psi}}$ 
and $(\alpha_{1}^{\vee}+2\alpha_{2}^{\vee}+\alpha_{3}^{\vee}+\alpha_{4}^{\vee})|_{\mathrm{C}_{\psi}}$.
Hence it suffices to determine these two characters.

If $\psi=\pi_{1}[d_{1}]\oplus \cdots \oplus\pi_{k}[d_{k}]$ as in \eqref{eqn Arthur Langlands decomposition of parameter},
the eigenvalues of $\mathrm{r}_{0}(\mathrm{c}_{\infty}(\psi))\in(\mathfrak{sl}_{26})_{\mathrm{ss}}$ are of the form $w+\frac{j}{2}$,
where $w$ is a weight of $\pi_{i}$ 
and $j\in\{  d_{i}-1,d_{i}-3,\ldots,-d_{i}+3,-d_{i}+1\}$.
For each $i=1,\ldots,k$, we define a multiset 
\[\mathcal{W}_{i}:=\set{w+\frac{j}{2}}{w\in\weights(\pi_{i})\text{ and }j=d_{i}-1,d_{i}-3,\ldots,-(d_{i}-3),-(d_{i}-1)}.\]
\begin{prop}\label{prop calculation of rho character for F4}
    There exists a unique index $i$ (resp. $j$) in $\{1,\ldots,k\}$
    such that $\mu_{1}\in\mathcal{W}_{i}$ (resp. $\mu_{4}\in\mathcal{W}_{j}$).
    If we denote respectively by $\epsilon_{i}$ and $\epsilon_{j}$ 
    the characters of $\mathrm{C}_{\psi}$ induced by the $\mathrm{C}_{\psi}$-actions on $\pi_{i}[d_{i}]$ and $\pi_{j}[d_{j}]$,
    then $\rho^{\vee}_{\psi}=\epsilon_{i}\cdot\epsilon_{j}$.
\end{prop}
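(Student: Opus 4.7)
The plan is to exploit the multiplicity-one property of $\mu_1$ and $\mu_4$ in the eigenvalue multiset of $\mathrm{r}_{0}(\mathrm{c}_{\infty}(\psi))$. By \Cref{lemma eigenvalues of infinitesimal character of F4 representation} together with the partial-order argument recalled in the paragraph preceding the statement, one has $\mu_1 > \mu_2 > \mu_3 > \mu_4 > \mu_5$ strictly, and in particular $\mu_1$ and $\mu_4$ each appear with multiplicity exactly one in the multiset of eigenvalues. Since this multiset equals the disjoint union $\bigsqcup_{i=1}^k \mathcal{W}_i$, the index $i$ with $\mu_1 \in \mathcal{W}_i$ (and likewise $j$ with $\mu_4 \in \mathcal{W}_j$) is forced to be unique, establishing the first assertion. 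Moreover, the weight $\lambda_1 := 2\alpha_1^\vee + 3\alpha_2^\vee + 2\alpha_3^\vee + \alpha_4^\vee$ is the unique $\widehat{T}$-weight of $\mathrm{r}_{0}$ pairing with $\mathrm{c}_{\infty}(\psi)$ to produce $\mu_1$, and it occurs with multiplicity one in $\mathrm{r}_{0}$, so the $\mathrm{c}_{\infty}(\psi)$-eigenspace for the eigenvalue $\mu_1$ coincides with the one-dimensional $\widehat{T}$-weight space $V_{\lambda_1}$; the same holds for $\lambda_4 := \alpha_1^\vee + 2\alpha_2^\vee + \alpha_3^\vee + \alpha_4^\vee$ and $\mu_4$.

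Next, I would compute the action of a generic element $s \in C_\psi \subset \widehat{T}$ on $V_{\lambda_1}$ in two a priori different ways. As a torus element, $s$ acts on $V_{\lambda_1}$ by the scalar $\lambda_1(s)$. On the other hand, $C_\psi$ is by definition the centralizer of $\mathrm{Im}(\psi)$ in $\widehat{\grpF}(\C)$, so its action on $\C^{26}$ through $\mathrm{r}_{0}$ commutes with the $\mathcal{L}_\Z \times \SL_2(\C)$-action through $\mathrm{r}_{0}\circ\psi$. Since each summand $\pi_i \boxtimes [d_i]$ is an irreducible representation of $\mathcal{L}_\Z \times \SL_2(\C)$, Schur's lemma forces $s$ to act on it by a scalar, which is by definition $\epsilon_i(s)$. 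Because $V_{\lambda_1} \subseteq \pi_i \boxtimes [d_i]$ by the multiplicity-one argument of the first paragraph, comparing the two descriptions gives $\lambda_1|_{C_\psi} = \epsilon_i$ as characters of $C_\psi$, and the identical argument with $\mu_4$ in place of $\mu_1$ yields $\lambda_4|_{C_\psi} = \epsilon_j$.

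Combining these identities with the congruence $\rho^\vee \equiv \lambda_1 + \lambda_4 \pmod{2X^*(\widehat{T})}$ already established in the paragraph preceding the statement, and using that $C_\psi$ is annihilated by $2$ so that restrictions of characters to $C_\psi$ depend only on their classes modulo $2X^*(\widehat{T})$, we conclude $\rho^\vee_\psi = \epsilon_i \cdot \epsilon_j$. The main conceptual point, and what I expect to be the only real bookkeeping hurdle, is verifying that the three a priori distinct decompositions of $\C^{26}$ — into $\widehat{T}$-weight spaces, into $\mathrm{r}_{0}(\mathrm{c}_{\infty}(\psi))$-eigenspaces, and into $\mathcal{L}_\Z \times \SL_2(\C)$-isotypic components — really do coincide along the one-dimensional lines $V_{\mu_1}$ and $V_{\mu_4}$. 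The multiplicity-one property forces this coincidence, and it is precisely what allows the two separate identifications of the $C_\psi$-action on each line to be compared and shown to agree.
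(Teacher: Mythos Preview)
Your proposal is correct and follows essentially the same approach as the paper's proof: both arguments use the multiplicity-one property of $\mu_{1}$ and $\mu_{4}$ to get uniqueness, and then identify the value of the torus weight $\lambda_{1}=2\alpha_{1}^{\vee}+3\alpha_{2}^{\vee}+2\alpha_{3}^{\vee}+\alpha_{4}^{\vee}$ (resp.\ $\lambda_{4}$) on $s\in\mathrm{C}_{\psi}$ with the Schur scalar $\epsilon_{i}(s)$ (resp.\ $\epsilon_{j}(s)$) by which $s$ acts on the irreducible summand containing the corresponding one-dimensional weight line. The paper states this identification tersely, whereas you spell out the Schur-lemma step and the compatibility of the three decompositions of $\C^{26}$ more carefully; no new ideas are needed beyond what the paper uses.
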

\begin{proof}
    The uniqueness of $i$ and $j$ follows from the fact that 
    $\mu_{1}$ and $\mu_{4}$ are different from other eigenvalues of $\mathrm{r}_{0}(\mathrm{c}_{\infty}(\psi))$.

    For any $s\in \mathrm{C}_{\psi}$,
    we have 
    \[\rho^{\vee}_{\psi}(s)=(2\alpha_{1}^{\vee}+3\alpha_{2}^{\vee}+2\alpha_{3}^{\vee}+\alpha_{4}^{\vee})(s)\cdot(\alpha_{1}^{\vee}+2\alpha_{2}^{\vee}+\alpha_{3}^{\vee}+\alpha_{4}^{\vee})(s).\] 
    Since $\mu_{1}\in\mathcal{W}_{i}$,
    the value $(2\alpha_{1}^{\vee}+3\alpha_{2}^{\vee}+2\alpha_{3}^{\vee}+\alpha_{4}^{\vee})(s)$
    is the scalar given by the action of $s$ on the irreducible summand $\pi_{i}[d_{i}]$,
    which equals $\epsilon_{i}(s)$ by definition.
    Similarly, we have $(\alpha_{1}^{\vee}+2\alpha_{2}^{\vee}+\alpha_{3}^{\vee}+\alpha_{4}^{\vee})(s)=\epsilon_{j}(s)$    
    and the identity $\rho^{\vee}_{\psi}=\epsilon_{i}\cdot\epsilon_{j}$.
\end{proof}

\subsection{Classification of Arthur parameters}\label{section classification of Arthur parameters of F4}
\tp Now we can do (\emph{conjectural}) classification of global Arthur parameters for $\grpF$:
\begin{spthm}\label{thm arthur parameter with nonzero multiplicity}
    Admitting the existence of the Langlands group $\mathcal{L}_{\Z}$ defined in \Cref{section global parametrization and Langlands group} and Arthur's multiplicity formula \Cref{conj Arthur multiplicity formula},
    a (level one) discrete global Arthur parameter $\psi\in\Psi_{\mathrm{AJ}}(\grpF)$
    satisfies $\mathrm{m}(\pi_{\psi})=1$
    if and only if 
    it belongs to the parameters described in the following propositions (from \Cref{prop multiplicity principal A1} to \Cref{prop multiplicity F4 tempered}).  
\end{spthm}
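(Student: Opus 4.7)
The strategy is to enumerate parameters according to their Sato-Tate group $\mathrm{H}(\psi)$. By \Cref{lemma eigenvalues of infinitesimal character of F4 representation} together with the discussion following it, the image of the associated compact homomorphism $\psi_{\mathrm{c}} \colon \mathcal{L}_{\Z} \times \su(2) \to \lietype{F}{4}$ is a connected subgroup whose centralizer is an elementary abelian $2$-group and in which the zero weight of $\mathrm{J}_0$ appears with multiplicity exactly two. Hence $\mathrm{H}(\psi)$ is either $\grpF(\R)$ itself or one of the $13$ conjugacy classes from \Cref{thm classification result of subgroups satisfying three conditions}, giving $14$ cases. For each such $H$, I would use the explicit decompositions of $\mathrm{J}_0\vert_H$ tabulated in \Cref{section info principal PSU(2)}--\Cref{section info four copies of A1} to read off the compatible endoscopic decompositions $\psi = \pi_1[d_1] \oplus \cdots \oplus \pi_k[d_k]$: each irreducible summand $\rho \otimes \sym^{d-1}\st$ of $\mathrm{J}_0\vert_{H}$, with $\rho$ an irreducible representation of a non-$\su(2)$ factor of $H$ and $\sym^{d-1}\st$ arising from either the $\su(2)$ Arthur factor or some $\su(2)$-factor of $H$, determines a summand $\pi[d]$ of $\psi$ with $\pi \in \Pi^{\perp}_{\alg}(\pgl_{\dim \rho})$ by \Cref{lemma building block is selfdual for F4} and \Cref{prop algebraic representations as building blocks}. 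The sample computation in \Cref{ex endoscopic type from image of parameter} illustrates that a single $H$ may give rise to several endoscopic types depending on how $\mathcal{L}_{\Z}$ and the Arthur $\SL_2$ are routed into the simple factors of $H$.

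For every candidate $\psi$ produced this way, I would then apply Arthur's multiplicity formula \Cref{conj Arthur multiplicity formula}: $\mathrm{m}(\pi_\psi) = 1$ precisely when $\rho^{\vee}_{\psi} = \varepsilon_{\psi}$ on $\mathrm{C}_{\psi}$. The character $\rho^{\vee}_{\psi}$ is read off via \Cref{prop calculation of rho character for F4} by locating which summands $\pi_i[d_i]$ contain the largest and fourth-largest eigenvalues of $\mathrm{r}_0(\mathrm{c}_\infty(\psi))$. The character $\varepsilon_{\psi}$ is read off via \Cref{prop epsilon character of Arthur} from the branching of the adjoint representation $\mathfrak{f}_4\vert_{H}$ (also recorded in \Cref{section info principal PSU(2)}--\Cref{section info four copies of A1}) once one knows the global $\varepsilon$-factor of each self-dual algebraic cuspidal factor; symplectic $\varepsilon$-factors are computed from \Cref{prop relation between epsilon factors} and \Cref{prop Langlands parameter of symplectic representation}, while orthogonal factors contribute trivially by \Cref{thm Arthur epsilon factor of orthogonal rep is trivial}. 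Translating the equality $\rho^{\vee}_{\psi} = \varepsilon_{\psi}$ into explicit parity conditions on the Hodge weights of the $\pi_i$ yields, case-by-case, the lists of parameters recorded in \Cref{prop multiplicity principal A1}--\Cref{prop multiplicity F4 tempered}.

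The main obstacle is the combinatorial bookkeeping: for each of the $14$ Sato-Tate groups one must classify every way to distribute the two factors of $\mathcal{L}_{\Z} \times \SL_2(\C)$ among the simple factors of $H$, and then identify $\mathrm{C}_{\psi} \simeq (\Z/2\Z)^r$ concretely enough to compare the two quadratic characters. This is particularly delicate for the subgroups of the form $(\su(2)^{a} \times K)/\mu_2^{\Delta}$ with $a \geq 2$ from \Cref{lemma subgroup more than two irreducible root system}, where the diagonal quotient couples the signs coming from different factors. A secondary, but conceptually important, point is the case $\mathrm{H}(\psi) = \spin(8)$ predicted to contribute no parameters with $\mathrm{m}(\pi_\psi) = 1$: here one must verify that $\rho^{\vee}_{\psi} \ne \varepsilon_{\psi}$ for \emph{every} compatible endoscopic decomposition, thereby justifying the exclusion of $\spin(8)$ in part (a) of \Cref{introthm multiplicity formula for F4}. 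Finally, the stable tempered case $\mathrm{H}(\psi) = \grpF(\R)$ is easy -- the parameter is a single $\pi \in \Pi^{\perp}_{\alg}(\pgl_{26})$ with Sato-Tate group $\grpF(\R)$, so $\mathrm{C}_{\psi} = 1$ and the multiplicity is automatically one -- but it is exactly this case that, combined with the enumeration of the remaining cases, will yield the explicit formula in \Cref{introthm formula for cuspidal representations with Sato-Tate group F4}.
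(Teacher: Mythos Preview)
Your proposal is correct and follows essentially the same approach as the paper: a case-by-case analysis over the $14$ possible Sato-Tate groups from \Cref{thm classification result of subgroups satisfying three conditions}, using the branching data of $\mathrm{J}_0$ and $\mathfrak{f}_4$ recorded in \Cref{section info principal PSU(2)}--\Cref{section info four copies of A1} together with \Cref{prop epsilon character of Arthur} and \Cref{prop calculation of rho character for F4} to compare $\varepsilon_{\psi}$ and $\rho^{\vee}_{\psi}$. The paper carries this out explicitly in \Cref{prop multiplicity principal A1}--\Cref{prop multiplicity F4 tempered}, and your anticipation of the $\spin(8)$ vanishing and the triviality of the stable tempered case matches the paper's treatment in \Cref{prop multiplicity Spin(8)} and \Cref{prop multiplicity F4 tempered}.
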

In this subsection,
we will prove \Cref{thm arthur parameter with nonzero multiplicity} case by case,
depending on the conjugacy class $\mathrm{H}(\psi)$ associated to the discrete global Arthur parameter $\psi$.
For each subgroup $H$ of $\lietype{F}{4}=\grpF(\R)$ listed in \Cref{section connected subgroups satisfying our conditions},
we classify all the endoscopic types of $\psi\in\Psi_{\mathrm{AJ}}(\grpF)$ such that $\mathrm{H}(\psi)$ is the conjugacy class of $H$ like what we have done in \Cref{ex endoscopic type from image of parameter},
then apply Arthur's multiplicity formula \Cref{conj Arthur multiplicity formula}, \Cref{prop epsilon character of Arthur} and \Cref{prop calculation of rho character for F4} to $\psi$
and get those with $\mathrm{m}(\pi_{\psi})=1$.
\begin{notation}\label{notation abuse of notations of subgroups}
    From now on, when $\mathrm{H}(\psi)$ is the $\lietype{F}{4}$-conjugacy class of $H$,
    we say $\mathrm{H}(\psi)=H$ by an abuse of notation. 
\end{notation}
\begin{rmk}\label{rmk skip this section at first}
    Since the proof of \Cref{thm arthur parameter with nonzero multiplicity} is long,
    readers can read first the proof of \Cref{prop smallest motivic weight 36 tempered stable representation} in \Cref{section classify representations in the multiplicity space}
    to see how Arthur's conjectures are used.
\end{rmk}

\subsubsection{\texorpdfstring{$H=\mathrm{A}_{1}^{[17,9]}$}{}}\label{section Arthur principal A1}
\tp The restriction of the $26$-dimensional irreducible representation $\jord_{0}$ to $H$ is isomorphic to 
\[\sym^{16}\st+\sym^{8}\st.\]

For $\psi\in\Psi_{\mathrm{AJ}}(\grpF)$ satisfying $\mathrm{H}(\psi)=H$ and $\mathrm{m}(\pi_{\psi})=1$,
there are two possible endoscopic types:
\begin{enumerate}[label=(\roman*)]
    \item $\left(2,(1,17),(1,9)\right)$, which corresponds to the parameter $[17]\oplus[9]$ of the trivial representation of $\grpF(\A)$.
    \item $\left(2,(17,1),(9,1)\right)$.
        The discrete global Arthur parameters $\psi$ with this type are constructed as follows:
        for a representation $\pi\in\Pi_{\alg}^{\perp}(\pgl_{2})$ and a positive integer $k$,
        we denote by $\sym^{k}\pi$ the representation in $\Pi_{\alg,\reg}^{\perp}(\pgl_{k+1})$ corresponding to the irreducible representation given by  
        \[\mathcal{L}_{\Z}\overset{\psi_{\pi}}{\rightarrow}\SL_{2}(\C)\rightarrow \SL(\sym^{k}\st)\simeq \SL_{k+1}(\C).\]
        A global Arthur parameter of this type is of the form: 
        \[\sym^{16}\pi\oplus\sym^{8}\pi,\,\pi\in\Pi_{\alg}^{\perp}(\pgl_{2}).\]
\end{enumerate}

\begin{spprop}\label{prop multiplicity principal A1}
   For a discrete global Arthur parameter $\psi\in\Psi_{\mathrm{AJ}}(\grpF)$ satisfying $\mathrm{H}(\psi)=H$,
    the multiplicity $\mathrm{m}(\pi_{\psi})=1$ if and only if $\psi$ is one of the following parameters: 
   \begin{itemize}
    \item $[17]\oplus[9]$, which corresponds to the trivial representation of $\grpF(\A)$.
    \item $\sym^{16}\pi\oplus\sym^{8}\pi,\,\pi\in\Pi_{\alg}^{\perp}(\pgl_{2})$.
    \end{itemize}
\end{spprop}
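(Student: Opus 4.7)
The plan is to verify the proposition by first confirming that the two endoscopic types listed are exhaustive, and then computing the component group $\mathrm{C}_\psi$ and applying Arthur's multiplicity formula \Cref{conj Arthur multiplicity formula} in each case.

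To exhaust the endoscopic types, write $\alpha = \psi_c|_{\mathcal{L}_{\Z} \times 1}$ and $\beta = \psi_c|_{1 \times \su(2)}$, and let $A, B$ be their respective images in $H \simeq \psu(2)$. Since $\psi_c$ is continuous and $\mathcal{L}_{\Z}$ is compact and connected by \Cref{prop Langlands group is connected}, $A$ is a connected closed subgroup of $H$, hence either trivial, a maximal torus, or all of $H$; the same alternative holds for $B$. The two subgroups commute and satisfy $A\cdot B = H$. If $A$ were a maximal torus $T$, then $B\subseteq \mathrm{C}_H(T) = T$, forcing $A\cdot B\subseteq T\subsetneq H$, a contradiction. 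Hence one of $A,B$ equals $H$ and the other is trivial, which corresponds exactly to the two endoscopic types already listed. Composing $\psi$ with $\mathrm{r}_0$ and using the branching $\mathrm{J}_0|_H \simeq \sym^{16}\st + \sym^{8}\st$ yields $\psi = [17]\oplus[9]$ when $\alpha$ is trivial, and $\psi = \sym^{16}\pi\oplus\sym^{8}\pi$ when $\beta$ is trivial, with $\pi\in\Pi_{\alg}^{\perp}(\pgl_2)$ corresponding via Arthur-Langlands to the two-dimensional representation of $\mathcal{L}_{\Z}$ lifting $\alpha$ through the principal $\SL_2(\C)\to\widehat{\grpF}(\C)$.

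In both cases, the image $\mathrm{Im}(\psi)$ inside $\widehat{\grpF}(\C)=\mathbf{F}_{4,\C}$ is the principal $\SL_2(\C)$-subgroup, which factors through $\pgl_2(\C)$ because its maximal compact subgroup is the principal $\psu(2)=H$. By the reasoning already used in \Cref{section principal SU2 inside F4}, the centralizer of this principal subgroup in $\widehat{\grpF}(\C)$ equals the centre of $\widehat{\grpF}(\C)$, which is trivial. Therefore $\mathrm{C}_\psi = 1$, the characters $\rho^\vee_\psi$ and $\varepsilon_\psi$ are automatically equal, and \eqref{eqn Arthur multiplicity formula} gives $m_\psi = 1$. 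Existence of each parameter is then immediate: $[17]\oplus[9]$ is realized by the trivial automorphic representation of $\grpF(\A)$, and for every $\pi\in\Pi_{\alg}^{\perp}(\pgl_2)$ the parameter $\sym^{16}\pi\oplus\sym^{8}\pi$ is realized by composing $\psi_\pi$ with the principal embedding $\SL_2(\C)\hookrightarrow\widehat{\grpF}(\C)$. The only step that requires genuine input beyond bookkeeping is the rank-one exhaustiveness argument in the second paragraph; once $\mathrm{C}_\psi$ is known to be trivial, the multiplicity formula evaluation is automatic.
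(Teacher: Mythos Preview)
Your proof is correct and aligns with the paper's one-line argument: since $H$ is the principal $\psu(2)$, its centralizer in $\lietype{F}{4}$ is trivial (as recorded in \Cref{section principal SU2 inside F4}), so $\mathrm{C}_\psi=1$ and Arthur's multiplicity formula gives $m_\psi=1$ automatically. Your additional exhaustiveness argument for the two endoscopic types---analyzing how the commuting images $A,B\subseteq\psu(2)$ must satisfy $\{A,B\}=\{1,H\}$---is a nice self-contained justification of what the paper handles in the preamble to the proposition, but is not part of the paper's proof proper.
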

\begin{proof}
    This is because $\mathrm{C}_{\psi}$ is trivial.
\end{proof}

\subsubsection{\texorpdfstring{$H=\left(\mathrm{A}_{1}^{[9,6^{2},5]}\times \mathrm{A}_{1}^{[2^{6},1^{14}]}\right)/\mu_{2}^{\Delta}$}{}}\label{section Arthur principal A1 in Sp(3) with Sp(1)}
\tp By \Cref{section info principal A1 in Sp(3) with Sp(1)} the restriction of the $26$-dimensional irreducible representation $\jord_{0}$ of $\lietype{F}{4}$ to $H$ is isomorphic to 
\[\sym^{5}\st\otimes\st+(\sym^{8}\st+\sym^{4}\st)\otimes\triv,\]
and the centralizer of $H$ in $\lietype{F}{4}$ is $\mathrm{Z}(H)\simeq \Z/2\Z$.

For $\psi\in\Psi_{\mathrm{AJ}}(\grpF)$ satisfying $\mathrm{H}(\psi)=H$ and $\mathrm{m}(\pi_{\psi})=1$,
there are three possible endoscopic types:
\begin{enumerate}[label=(\roman*)]
    \item $(3,(2,6),(1,5),(1,9))$.
    A global Arthur parameter of this type is of the form:
    \[\pi[6]\oplus [5]\oplus [9],\,\pi\in\Pi_{\alg}^{\perp}(\pgl_{2}).\]
    \item $(3,(9,1),(5,1),(6,2))$.
    A global Arthur parameter of this type is of the form:
    \[\sym^{8}\pi\oplus\sym^{4}\pi\oplus \sym^{5}\pi[2],\,\pi\in\Pi_{\alg}^{\perp}(\pgl_{2}).\]
    \item $(3,(12,1),(9,1),(5,1))$.
    For two representations $\pi_{1},\pi_{2}\in\Pi_{\alg}^{\perp}(\pgl_{2})$,
    we can construct the following $12$-dimensional irreducible representation of $\mathcal{L}_{\Z}$:
    \[\mathcal{L}_{\Z}\overset{(\psi_{\pi_{1}},\psi_{\pi_{2}})}{\longrightarrow} \SL_{2}(\C)\times\SL_{2}(\C)\overset{\sym^{5}\otimes \mathrm{id}}{\longrightarrow}\SL_{12}(\C),\]
    which induces a cuspidal representation of $\pgl_{12}$, denoted by $\sym^{5}\pi_{1}\otimes \pi_{2}$.
    A global Arthur parameter of this type is of the form:
    \[\sym^{8}\pi_{1}\oplus\sym^{4}\pi_{1}\oplus\left(\sym^{5}\pi_{1}\otimes\pi_{2}\right),\,\pi_{1},\pi_{2}\in\Pi_{\alg}^{\perp}(\pgl_{2}).\]
\end{enumerate}
\begin{rmk}\label{rmk condition of parameter on archimedean place}
    In fact, for a $(3,(12,1),(9,1),(5,1))$-type parameter 
    \[\psi=\sym^{8}\pi_{1}\oplus\sym^{4}\pi_{1}\oplus\left(\sym^{5}\pi_{1}\otimes\pi_{2}\right),\,\pi_{1},\pi_{2}\in\Pi_{\alg}^{\perp}(\pgl_{2}),\]
    there are some conditions on the motivic weights $\mathrm{w}(\pi_{1}),\mathrm{w}(\pi_{2})$ to make $\psi$ a parameter in $\Psi_{\mathrm{AJ}}(\grpF)$.
    We will add these conditions 
    for global Arthur parameters $\psi$ with $m_{\psi}=1$ when necessary. 
    For example, when $\mathrm{w}(\pi_{2})>9\mathrm{w}(\pi_{1})$
    the condition for $\psi\in\Psi(\grpF)$ is that $\mathrm{w}(\pi_{2})\geq 9\mathrm{w}(\pi_{1})+2$,
    which is satisfied automatically since $\mathrm{w}(\pi_{2})$ and $9\mathrm{w}(\pi_{1})$ are two distinct odd numbers.
\end{rmk}

For this subgroup $H$ of $\lietype{F}{4}$,
the restriction of the adjoint representation $\mathfrak{f}_{4}$ of $\lietype{F}{4}$ to $H$ is isomorphic to 
\begin{align*}
    \triv\otimes\sym^{2}\st+
    \left(\sym^{9}\st+\sym^{3}\st\right)\otimes\st+
    \left(\sym^{10}\st+\sym^{6}\st+\sym^{2}\st\right)\otimes\triv.
\end{align*}

\begin{spprop}\label{prop multiplicity principal A1 in Sp(3) with Sp(1)}
    For a discrete global Arthur parameter $\psi\in\Psi_{\mathrm{AJ}}(\grpF)$ satisfying $\mathrm{H}(\psi)=H$,
    the multiplicity $\mathrm{m}(\pi_{\psi})=1$ if and only if $\psi$ is one of the following parameters: 
    \begin{itemize}
    \item $\pi[6]\oplus [5]\oplus [9]$, where $\pi\in\Pi_{\alg}^{\perp}(\pgl_{2})$.
    \item $\sym^{8}\pi\oplus\sym^{4}\pi\oplus \sym^{5}\pi[2]$, where $\pi\in\Pi_{\alg}^{\perp}(\pgl_{2})$ satisfies $\mathrm{w}(\pi)\equiv 3\modulo 4$.
    \item $\sym^{8}\pi_{1}\oplus\sym^{4}\pi_{1}\oplus\left(\sym^{5}\pi_{1}\otimes\pi_{2}\right)$, where $\pi_{1},\pi_{2}\in\Pi_{\alg}^{\perp}(\pgl_{2})$ have motivic weights $w_{1},w_{2}$ respectively such that $w_{2}>9w_{1}$ or $5w_{1}<w_{2}<7w_{1}$.
    \end{itemize}
\end{spprop}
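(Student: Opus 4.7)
The plan is to apply Arthur's multiplicity formula \Cref{conj Arthur multiplicity formula} to each of the three endoscopic types. In every case the component group satisfies $\mathrm{C}_{\psi} \simeq \mathrm{Z}(H) \simeq \Z/2\Z$, generated by the image $s$ of $(-1, 1) \in \su(2) \times \su(2)$ in $H$; this element acts on any $\su(2) \times \su(2)$-summand $\sym^{a}\st \otimes \sym^{b}\st$ by the scalar $(-1)^{a}$. Using this observation together with the restrictions of $\mathrm{J}_{0}$ and $\mathfrak{f}_{4}$ to $H$ recorded in \Cref{section info principal A1 in Sp(3) with Sp(1)}, I can read off both the $\mathrm{C}_{\psi}$-action on each block $\pi_{i}[d_{i}]$ of $\psi$ (needed for $\rho_{\psi}^{\vee}$) and the characters $\chi_{i}$ appearing in the adjoint decomposition (needed for $\varepsilon_{\psi}$).

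For $\rho_{\psi}^{\vee}(s)$, \Cref{prop calculation of rho character for F4} reduces the task to identifying the blocks of $\psi$ containing the first and fourth eigenvalues $\mu_{1}, \mu_{4}$ of $\mathrm{c}_{\infty}(\psi)$. In case (i), the weights of $\pi[6]$ dominate those of $[5]$ and $[9]$ whenever $\mathrm{w}(\pi) \geq 11$, so $\mu_{1}$ and $\mu_{4}$ both lie in $\pi[6]$ and $\rho_{\psi}^{\vee} = 1$. In case (ii), one checks that $\mu_{1} = 4\mathrm{w}(\pi)$ lies in $\sym^{8}\pi$ while $\mu_{4} = (5\mathrm{w}(\pi) - 1)/2$ lies in $\sym^{5}\pi[2]$, so $\rho_{\psi}^{\vee}(s) = 1 \cdot (-1) = -1$. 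Case (iii) requires a case analysis in $w_{1} = \mathrm{w}(\pi_{1})$ and $w_{2} = \mathrm{w}(\pi_{2})$: the blocks containing $\mu_{1}$ and $\mu_{4}$ both coincide with $\sym^{5}\pi_{1} \otimes \pi_{2}$ exactly when $w_{2} > 9w_{1}$ (all four top eigenvalues come from the shifted weights $(w_{2} \pm w_{1})/2, (w_{2} \pm 3w_{1})/2, (w_{2} + 5w_{1})/2$) or when $5w_{1} < w_{2} < 7w_{1}$ (two top eigenvalues from this block, then $4w_{1}$, then $(w_{2} + w_{1})/2$), so $\rho_{\psi}^{\vee}(s) = 1$ in these ranges. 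In all other ranges for which $\psi \in \Psi_{\mathrm{AJ}}(\grpF)$ one of $\mu_{1}, \mu_{4}$ falls in $\sym^{8}\pi_{1}$, giving $\rho_{\psi}^{\vee}(s) = -1$, while the remaining intermediate ranges are excluded because the strict regularity $\mu_{4} > \mu_{5}$ of \Cref{lemma eigenvalues of infinitesimal character of F4 representation} fails.

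For $\varepsilon_{\psi}(s)$ I decompose $\mathfrak{f}_{4}|_{\mathrm{C}_{\psi} \times \mathcal{L}_{\Z} \times \SL_{2}(\C)}$ using the adjoint formula of \Cref{section info principal A1 in Sp(3) with Sp(1)} and apply \Cref{prop epsilon character of Arthur}. In cases (i) and (iii), every summand with $\chi_{i}(s) = -1$ has odd $d_{i}$, so the index set $I_{s}$ is empty and $\varepsilon_{\psi}(s) = 1$ automatically. In case (ii), the two contributing summands are $(\sym^{9}\pi)[2]$ and $(\sym^{3}\pi)[2]$, so $\varepsilon_{\psi}(s) = \varepsilon(\sym^{9}\pi)\,\varepsilon(\sym^{3}\pi)$. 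Using \Cref{prop relation between epsilon factors}, \Cref{prop Langlands parameter of symplectic representation} and the formula $\varepsilon(\mathbf{I}_{w}) = i^{w+1}$, I compute $\varepsilon(\sym^{3}\pi) = -1$ and $\varepsilon(\sym^{9}\pi) = i^{\mathrm{w}(\pi) + 1}$, which gives $\varepsilon_{\psi}(s) = -1$ if and only if $\mathrm{w}(\pi) \equiv 3 \pmod{4}$.

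Combining the computations with Arthur's criterion $m_{\psi} = 1 \Leftrightarrow \rho_{\psi}^{\vee} = \varepsilon_{\psi}$ yields the three items of the proposition. The hardest part is the weight analysis in case (iii): one must handle several sub-ranges for $w_{2}/w_{1}$, and for each one verify simultaneously that $\rho_{\psi}^{\vee}(s) \neq 1$ in the ranges to be excluded and that the infinitesimal character remains regular in the sense of \Cref{lemma eigenvalues of infinitesimal character of F4 representation}, failing which $\psi$ is not a legitimate element of $\Psi_{\mathrm{AJ}}(\grpF)$.
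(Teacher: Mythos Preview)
Your overall strategy matches the paper's exactly, and cases (ii) and (iii) are handled correctly. However, your computation of $\varepsilon_{\psi}(s)$ in case (i) contains a genuine error.

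You claim that in case (i) ``every summand with $\chi_{i}(s)=-1$ has odd $d_{i}$, so the index set $I_{s}$ is empty.'' This is false. In case (i) Arthur's $\SL_{2}(\C)$ lands in the \emph{first} $\lietype{A}{1}$-factor of $H$, so the adjoint summands $\sym^{9}\st\otimes\st$ and $\sym^{3}\st\otimes\st$ become $\pi[10]$ and $\pi[4]$ respectively. Both have $\chi_{i}(s)=(-1)^{9}=(-1)^{3}=-1$ by your own sign rule, and both have \emph{even} $d_{i}$. Hence $I_{s}=\{\pi[10],\pi[4]\}$ is nonempty, and \Cref{prop epsilon character of Arthur} gives
\[
\varepsilon_{\psi}(s)=\varepsilon(\pi)\cdot\varepsilon(\pi)=\varepsilon(\mathbf{I}_{w})^{2}=1.
\]
The conclusion $\varepsilon_{\psi}(s)=1$ is therefore correct, but for the reason that the two $\varepsilon(\pi)$ factors cancel, not because $I_{s}$ is empty. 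This is exactly how the paper argues. The distinction matters: had the two contributing blocks carried different cuspidal representations, your reasoning would have led to a wrong answer.
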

\begin{proof}
    We denote the generator of $\mathrm{C}_{\psi}=\mathrm{Z}(H)$ by $\gamma$.
    
    \textbf{Case (i)}: $\psi=\pi[6]\oplus [5]\oplus [9]$, where $\pi\in\Pi_{\alg}^{\perp}(\pgl_{2})$ has motivic weight $w$. 
    In this case the restriction of $\mathfrak{f}_{4}$ along $\psi$ is isomorphic to 
    \[\sym^{2}\pi\oplus \pi[10]\oplus \pi[4]\oplus [11]\oplus [7]\oplus [3].\]
    By \Cref{prop epsilon character of Arthur}, we have:
    \[\varepsilon_{\psi}(\gamma)=\varepsilon(\pi)\cdot \varepsilon(\pi)=\varepsilon(\mathbf{I}_{w})^{2}=1.\]
    On the other side, since $w\geq 11$ we have $\mu_{1}=\frac{w+5}{2}$ and $\mu_{4}=\frac{w-1}{2}$.
    Both of them come from the irreducible summand $\pi[6]$ in $\psi$, 
    so $\rho^{\vee}_{\psi}$ must be the trivial character by \Cref{prop calculation of rho character for F4}.
    By Arthur's multiplicity formula, 
    $\mathrm{m}(\pi_{\psi})=1$ for any $\pi\in\Pi_{\alg}^{\perp}(\mathrm{PGL}_{2})$.

    \textbf{Case (ii)}: $\psi=\sym^{8}\pi\oplus\sym^{4}\pi\oplus \sym^{5}\pi[2]$, 
    where $\pi\in \Pi_{\alg}^{\perp}(\pgl_{2})$ has motivic weight $w$. 
    In this case the restriction of $\mathfrak{f}_{4}$ along $\psi$ is isomorphic to 
    \[\sym^{10}\pi\oplus\sym^{9}\pi[2]\oplus\sym^{6}\pi\oplus\sym^{3}\pi[2]\oplus\sym^{2}\pi\oplus[3].\]
    By \Cref{prop epsilon character of Arthur}, we have:
    \begin{align*}
    \varepsilon_{\psi}(\gamma)&=\varepsilon(\sym^{3}\pi)\cdot\varepsilon(\sym^{9}\pi)\\
    &=\varepsilon(\mathbf{I}_{3w}+\mathbf{I}_{w})\cdot\varepsilon(\mathbf{I}_{9w}+\mathbf{I}_{7w}+\mathbf{I}_{5w}+\mathbf{I}_{3w}+\mathbf{I}_{w})\\
    &=(-1)^{(w+1)/2+(3w+1)/2}\cdot (-1)^{(w+1)/2+(3w+1)/2+(5w+1)/2+(7w+1)/2+(9w+1)/2}\\
    &=(-1)^{(w+3)/2}.
    \end{align*}
    On the other side, 
    $\mu_{1}=4w$ comes from $\sym^{8}\pi$ 
    and $\mu_{4}=\frac{5w-1}{2}$ comes from $\sym^{5}\pi[2]$.
    So $\rho^{\vee}_{\psi}(\gamma)=-1$ by \Cref{prop calculation of rho character for F4}.
    By Arthur's multiplicity formula, 
    $\mathrm{m}(\pi_{\psi})=1$ if and only if $w\equiv 3\modulo 4$.

    \textbf{Case (iii)}: $\psi=\sym^{8}\pi_{1}\oplus\sym^{4}\pi_{1}\oplus\left(\sym^{5}\pi_{1}\otimes\pi_{2}\right)$,
    where $\pi_{1},\pi_{2}\in\Pi_{\alg}^{\perp}(\mathrm{PGL_{2}})$ have motivic weight $w_{1},w_{2}$ respectively.
    Since this parameter is tempered,
    the character $\varepsilon_{\psi}$ is always trivial.
    We only need to find what condition $w_{1},w_{2}$ should satisfy to make $\rho^{\vee}_{\psi}(\gamma)=1$.
    In this case, $\gamma$ acts on $\sym^{8}\pi_{1}$ and $\sym^{4}\pi_{1}$ by $1$ and on $\sym^{5}\pi_{1}\otimes\pi_{2}$ by $-1$.
    We can see that $\mu_{1}=4w_{1}$ or $\frac{5w_{1}+w_{2}}{2}$, depending on the values of $w_{1},w_{2}$.
    \begin{enumerate}[label= (\arabic*)]
        \item If $\mu_{1}=4w_{1}$, which is equivalent to $w_{2}<3w_{1}$. 
              Now $\rho_{\psi}^{\vee}(\gamma)=1$ if and only if $\mu_{4}=3w_{1}$
              since the other positive weights $w_{1},2w_{1}$ in $\sym^{4}\pi_{1}\oplus\sym^{8}\pi_{1}$ both have multiplicity $2$.
              However, $3w_{1}$ is larger than all the Hodge weights of $\psi$ 
              except $4w_{1}$ and $\frac{5w_{1}+w_{2}}{2}$, which shows that it can only be $\mu_{2}$ or $\mu_{3}$. 
              So in this case $\rho_{\psi}^{\vee}(\gamma)=-1$.
        \item If $\mu_{1}=\frac{5w_{1}+w_{2}}{2}$, which is equivalent to $w_{2}>3w_{1}$. 
              Now $\rho^{\vee}_{\psi}(\gamma)=1$ if and only if $\mu_{4}=\frac{w_{1}+w_{2}}{2}$ or $\frac{-w_{1}+w_{2}}{2}$.
              \begin{enumerate}
                  \item $\mu_{4}=\frac{w_{1}+w_{2}}{2}$ is equivalent to $4w_{1}>\frac{w_{1}+w_{2}}{2}>3w_{1}$, thus $5w_{1}<w_{2}<7w_{1}$.
                  \item $\mu_{4}=\frac{-w_{1}+w_{2}}{2}$ is equivalent to $\frac{-w_{1}+w_{2}}{2}>4w_{1}$, thus $w_{2}>9w_{1}$.
              \end{enumerate} 
    \end{enumerate} 
    By Arthur's multiplicity formula $\mathrm{m}(\pi_{\psi})=1$ if and only if $w_{2}>9w_{1}$ or $5w_{1}<w_{2}<7w_{1}$.
\end{proof}

\subsubsection{\texorpdfstring{$H=\left(\mathrm{A}_{1}^{[5,4^{2},3^{3},2^{2}]}\times \mathrm{A}_{1}^{[3^{3},2^{6},1^{5}]}\right)/\mu_{2}^{\Delta}$}{}}
\label{section Arthur principal A1 of Sp(1)+SO(3) with diagonal Sp(1)}
\tp By \Cref{section info principal A1 of Sp(1)+SO(3) with diagonal Sp(1)} the restriction of the $26$-dimensional irreducible representation $\jord_{0}$ of $\lietype{F}{4}$ to $H$ is isomorphic to 
\[\sym^{4}\st\otimes\triv+
\left(\sym^{3}\st+\st\right)\otimes\st+
\sym^{2}\st\otimes \sym^{2}\st,\]
and the centralizer of $H$ in $\lietype{F}{4}$ is $\mathrm{Z}(H)\simeq \Z/2\Z$.

For $\psi\in\Psi_{\mathrm{AJ}}(\grpF)$ satisfying $\mathrm{H}(\psi)=H$ and $\mathrm{m}(\pi_{\psi})=1$,
there are three possible endoscopic types:
\begin{enumerate}[label=(\roman*)]
    \item $(4,(3,3),(2,4),(2,2),(1,5))$.
    A global Arthur parameter of this type is of the form:
        \[\sym^{2}\pi[3]\oplus \pi[4]\oplus\pi[2]\oplus[5],\,\pi\in\Pi_{\alg}^{\perp}(\pgl_{2}).\]
    \item $(4,(5,1),(4,2),(3,3),(2,2))$.
    A global Arthur parameter of this type is of the form:
        \[\sym^{4}\pi\oplus \sym^{3}\pi[2]\oplus \sym^{2}\pi[3]\oplus\pi[2],\,\pi\in\Pi_{\alg}^{\perp}(\pgl_{2}).\]
    \item $(4,(9,1),(8,1),(5,1),(4,1))$.
    A global Arthur parameter of this type is of the form:
        \[\sym^{4}\pi_{1}\oplus(\sym^{3}\pi_{1}\otimes\pi_{2})\oplus(\sym^{2}\pi_{1}\otimes\sym^{2}\pi_{2})\oplus(\pi_{1}\otimes\pi_{2}),\,\pi_{1},\pi_{2}\in\Pi_{\alg}^{\perp}(\pgl_{2}),\]
        where the representations $\sym^{k}\pi_{1}\otimes\sym^{l}\pi_{2}$ are defined similarly as the representation $\sym^{5}\pi_{1}\otimes\pi_{2}$ appearing in $[(12,1),(9,1),(5,1)]$-type parameters
        introduced in \Cref{section Arthur principal A1 in Sp(3) with Sp(1)}.
\end{enumerate}

For this subgroup $H$ of $\lietype{F}{4}$,
the restriction of the adjoint representation $\mathfrak{f}_{4}$ of $\lietype{F}{4}$ to $H$ is isomorphic to 
\begin{align*}
\st\otimes\sym^{3}\st+ 
\left(\sym^{4}\st+\triv\right)\otimes\sym^{2}\st+
\left(\sym^{5}\st+\sym^{3}\st\right)\otimes\st+
\left(\sym^{2}\st\right)^{\oplus 2}\otimes\triv.
\end{align*}
\begin{spprop}\label{prop multiplicity principal A1 of Sp(1)+SO(3) with diagonal Sp(1)}
    For a discrete global Arthur parameter $\psi\in\Psi_{\mathrm{AJ}}(\grpF)$ satisfying $\mathrm{H}(\psi)=H$,
    the multiplicity $\mathrm{m}(\pi_{\psi})=1$ if and only if $\psi$ is one of the following parameters: 
   \begin{itemize}
    \item $\sym^{2}\pi[3]\oplus \pi[4]\oplus\pi[2]\oplus[5]$, where $\pi\in\Pi_{\alg}^{\perp}(\pgl_{2})$.
    \item $\sym^{4}\pi\oplus \sym^{3}\pi[2]\oplus \sym^{2}\pi[3]\oplus\pi[2]$, where $\pi\in\Pi_{\alg}^{\perp}(\pgl_{2})$.
    \item $\sym^{4}\pi_{1}\oplus(\sym^{3}\pi_{1}\otimes\pi_{2})\oplus(\sym^{2}\pi_{1}\otimes\sym^{2}\pi_{2})\oplus(\pi_{1}\otimes\pi_{2})$, 
    where $\pi_{1},\pi_{2}\in\Pi_{\alg}^{\perp}(\pgl_{2})$ have motivic weights $w_{1},w_{2}$ respectively 
    such that 
    \[w_{1}>3w_{2}\text{ or }w_{1}<w_{2}<3w_{1}\text{ or }3w_{1}<w_{2}<5w_{1}.\]
    \end{itemize} 
\end{spprop}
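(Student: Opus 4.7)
The proof will follow the same template as \Cref{prop multiplicity principal A1 in Sp(3) with Sp(1)}. The centralizer $\mathrm{C}_\psi = \mathrm{Z}(H)$ is cyclic of order $2$; denote its generator by $\gamma$. For each of the three endoscopic types (i), (ii), (iii), I would compute $\varepsilon_\psi(\gamma)$ via \Cref{prop epsilon character of Arthur} applied to the decomposition of $\mathfrak{f}_4\big|_\psi$ (obtained by substituting each $\pi_i[d_i]$ into the formula for $\mathfrak{f}_4|_H$ recorded in \Cref{section info principal A1 of Sp(1)+SO(3) with diagonal Sp(1)}), and then compute $\rho_\psi^\vee(\gamma)$ via \Cref{prop calculation of rho character for F4} by locating which summands of $\psi$ contain the eigenvalues $\mu_1$ and $\mu_4$ of $\mathrm{c}_\infty(\psi)$. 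Arthur's multiplicity formula \Cref{conj Arthur multiplicity formula} then gives $m_\psi = 1$ exactly when $\varepsilon_\psi(\gamma) = \rho_\psi^\vee(\gamma)$.

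In the non-tempered cases (i) and (ii), the $\varepsilon_\psi(\gamma)$-character only receives contributions from those self-dual summands $\pi_i$ in the decomposition of $\mathfrak{f}_4\big|_\psi$ appearing with even $d_i$ on which $\gamma$ acts by $-1$; using \Cref{prop relation between epsilon factors} and \Cref{prop Langlands parameter of symplectic representation}, each contribution is explicit in terms of the motivic weight $w$ of $\pi$. I expect the sign factors to combine so that $\varepsilon_\psi(\gamma) = 1$ in case (i) (which will match $\rho_\psi^\vee(\gamma) = 1$ since $\mu_1,\mu_4$ should both come from the summand $\pi[4]$) and $\varepsilon_\psi(\gamma) = 1$ in case (ii) with the right choice of indices (so $m_\psi = 1$ will hold without extra hypotheses on $w$). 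Case (ii) is slightly more delicate because both non-trivial Adams-Johnson components enter $\varepsilon_\psi$, so I must check that the total sign computed matches the sign $\rho_\psi^\vee(\gamma)$ prescribed by locating $\mu_1 = 2w$ (in $\sym^4\pi$) and $\mu_4 = \frac{3w+1}{2}$ (in $\sym^3\pi[2]$); miraculously these should balance for every odd $w \geq 11$.

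In case (iii), the parameter is tempered, hence $\varepsilon_\psi(\gamma) = 1$ automatically, and the entire content of the multiplicity condition becomes $\rho_\psi^\vee(\gamma) = 1$. The character of $\gamma$ on each of the four summands $\sym^4\pi_1$, $\sym^3\pi_1\otimes\pi_2$, $\sym^2\pi_1\otimes\sym^2\pi_2$, $\pi_1\otimes\pi_2$ is determined by tracking the action of $\mathrm{Z}(H)$ via the tensor structure; I expect $\gamma$ to act by $+1$ on $\sym^4\pi_1$ and $\sym^2\pi_1\otimes\sym^2\pi_2$, and by $-1$ on the other two. Then $\rho_\psi^\vee(\gamma) = 1$ if and only if $\mu_1$ and $\mu_4$ lie in summands on which $\gamma$ acts by the same sign.

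The main obstacle will be the case analysis in (iii): one must enumerate the possible orderings of the Hodge weights of the four summands as $w_2$ varies relative to $w_1$, and in each sub-case identify $\mu_1$ and $\mu_4$. The positive weights of the summands are $\{2w_1, w_1\}$, $\{\tfrac{3w_1+w_2}{2}, \tfrac{3w_1-w_2}{2}, \tfrac{w_1+w_2}{2}, \tfrac{w_1-w_2}{2}\}$ (in absolute value), $\{w_1+w_2, w_1, w_2, 0\}$ (with the right signs/multiplicities), and $\{\tfrac{w_1+w_2}{2}, \tfrac{w_1-w_2}{2}\}$. Sorting these and tracking the source summand for each of $\mu_1, \mu_4$ as a function of $(w_1,w_2)$ leads to the three regions $w_1 > 3w_2$, $w_1 < w_2 < 3w_1$, and $3w_1 < w_2 < 5w_1$ stated in the proposition; this is analogous to, but a bit more elaborate than, the trichotomy in the proof of \Cref{prop multiplicity principal A1 in Sp(3) with Sp(1)} case (iii).
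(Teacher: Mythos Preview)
Your approach is correct and matches the paper's method exactly: compute $\varepsilon_\psi(\gamma)$ from the decomposition of $\mathfrak{f}_4|_\psi$ via \Cref{prop epsilon character of Arthur}, compute $\rho_\psi^\vee(\gamma)$ by locating $\mu_1,\mu_4$ via \Cref{prop calculation of rho character for F4}, and compare. However, several of your specific expectations about the intermediate values are wrong and would need correction when you actually carry out the computation.

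In case (i), $\varepsilon_\psi(\gamma)=-1$, not $+1$: the relevant even-$d$ summands of $\mathfrak{f}_4|_\psi$ on which $\gamma$ acts by $-1$ are $\sym^3\pi[2]$, $\pi[6]$, and $\pi[4]$, and $\varepsilon(\sym^3\pi)\varepsilon(\pi)^2=(-1)^{2w+1}=-1$. Also $\mu_1=w+1$ comes from $\sym^2\pi[3]$ (where $\gamma$ acts by $+1$) and $\mu_4=\tfrac{w+3}{2}$ comes from $\pi[4]$ (where $\gamma$ acts by $-1$), so $\rho_\psi^\vee(\gamma)=-1$; the two signs still match, giving $m_\psi=1$.

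In case (ii), your identification $\mu_4=\tfrac{3w+1}{2}$ is wrong: that is $\mu_2$. The correct ordering is $2w>\tfrac{3w+1}{2}>\tfrac{3w-1}{2}>w+1$, so $\mu_4=w+1$ comes from $\sym^2\pi[3]$, on which $\gamma$ acts by $+1$. Thus $\rho_\psi^\vee(\gamma)=+1$, matching $\varepsilon_\psi(\gamma)=+1$.

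In case (iii), your list of weights for $\sym^2\pi_1\otimes\sym^2\pi_2$ omits $|w_1-w_2|$, which does enter the ordering and matters in the sub-case $w_1>w_2$. With that correction, the case analysis proceeds as the paper does.
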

\begin{proof}
    We denote the generator of $\mathrm{C}_{\psi}=\mathrm{Z}(H)$ by $\gamma$.
    
    \textbf{Case (i)}: $\psi=\sym^{2}\pi[3]\oplus \pi[4]\oplus\pi[2]\oplus[5]$,
    where $\pi\in \Pi^{\perp}_{\alg}(\mathrm{PGL}_{2})$ has motivic weight $w$.
    In this case the restriction of $\mathfrak{f}_{4}$ along $\psi$ is isomorphic to 
    \[\sym^{3}\pi[2]\oplus\sym^{2}\pi[5]\oplus\sym^{2}\pi\oplus\pi[6]\oplus\pi[4]\oplus[3]\oplus [3].\]
    By \Cref{prop epsilon character of Arthur}, we have:
    \begin{align*}
        \varepsilon_{\psi}(\gamma)&=\varepsilon(\sym^{3}\pi)\cdot\varepsilon(\pi)\cdot\varepsilon(\pi)
        =\varepsilon(\mathbf{I}_{3w}+\mathbf{I}_{w})\cdot \varepsilon(\mathbf{I}_{w})^{2}
        =(-1)^{2w+1}=-1.
    \end{align*}
    On the other side, $\mu_{1}=w+1$ comes from $\sym^{2}\pi[3]$ and $\mu_{4}=\frac{w+3}{2}$ comes from $\pi[4]$.
    Since $\gamma$ acts on $\sym^{2}\pi[3]$ by $1$ and on $\pi[4]$ by $-1$, 
    we have $\rho_{\psi}^{\vee}(\gamma)=1$ by \Cref{prop calculation of rho character for F4}.
    By Arthur's multiplicity formula, 
    $\mathrm{m}(\pi_{\psi})=1$ for any $\pi\in\Pi_{\alg}^{\perp}(\pgl_{2})$.
    
    \textbf{Case (ii)}: $\psi=\sym^{4}\pi\oplus \sym^{3}\pi[2]\oplus \sym^{2}\pi[3]\oplus\pi[2]$,
    where $\pi\in \Pi_{\alg}^{\perp}(\mathrm{PGL}_{2})$ has motivic weight $w$.
    In this case
    the restriction of $\mathfrak{f}_{4}$ along $\psi$ is isomorphic to 
    \[\sym^{5}\pi[2]\oplus\sym^{4}\pi[3]\oplus\sym^{3}\pi[2]\oplus(\sym^{2}\pi)^{\oplus 2}\oplus\pi[4]\oplus[3].\]
    By \Cref{prop epsilon character of Arthur}, we have:
    \begin{align*}
        \varepsilon_{\psi}(\gamma)=\varepsilon(\pi)\cdot\varepsilon(\sym^{3}\pi)\cdot\varepsilon(\sym^{5}\pi)
        =\varepsilon(\mathbf{I}_{w})\varepsilon(\mathbf{I}_{3w}+\mathbf{I}_{w})\varepsilon(\mathbf{I}_{5w}+\mathbf{I}_{3w}+\mathbf{I}_{w})
        =(-1)^{3w+1}=1.
    \end{align*}
    On the other side, $\mu_{1}=2w$ comes from $\sym^{4}\pi$ and $\mu_{4}=w+1$ comes from $\sym^{2}\pi[3]$.
    Since $\gamma$ acts on $\sym^{4}\pi$ and $\sym^{2}\pi[3]$ both by $1$, 
    we have $\rho_{\psi}^{\vee}(\gamma)=1$ by \Cref{prop calculation of rho character for F4}.
    Arthur's multiplicity formula shows that $\mathrm{m}(\pi_{\psi})=1$ for any $\pi\in\Pi_{\alg}^{\perp}(\pgl_{2})$.

    \textbf{Case (iii)}: 
    $\psi=\sym^{4}\pi_{1}\oplus(\sym^{3}\pi_{1}\otimes\pi_{2})\oplus(\sym^{2}\pi_{1}\otimes\sym^{2}\pi_{2})\oplus(\pi_{1}\otimes\pi_{2})$,
    where $\pi_{1},\pi_{2}\in\Pi_{\alg}^{\perp}(\mathrm{PGL_{2}})$ have motivic weights $w_{1},w_{2}$ respectively.
    The motivic weights satisfy $w_{2}\neq w_{1},w_{2}\neq 3w_{1}$,
    otherwise the zero weight appears more than twice and $\psi$ fails to be in $\Psi_{\mathrm{AJ}}(\grpF)$.
    In this case $\varepsilon_{\psi}$ is trivial.
    The element $\gamma$ acts on $\sym^{4}\pi_{1}$ and $\sym^{2}\pi_{1}\otimes\sym^{2}\pi_{2}$ by $1$, 
    and on $\sym^{3}\pi_{1}\otimes\pi_{2},\pi_{1}\otimes\pi_{2}$ by $-1$.
    The largest weight $\mu_{1}$ is $2w_{1}$ or $w_{1}+w_{2}$.
    \begin{enumerate}[label= (\arabic*)]
        \item If $w_{1}>w_{2}$, then $\mu_{1}=2w_{1}$. 
            Now $\mu_{4}$ equals to $\frac{3w_{1}-w_{2}}{2}$ or $w_{1}+w_{2}$.
            The character $\rho_{\psi}^{\vee}$ is trivial if and only if $\mu_{4}=w_{1}+w_{2}$, 
            which is equivalent to $w_{1}>3w_{2}$.
        \item If $w_{1}<w_{2}$, then $\mu_{1}=w_{1}+w_{2}$.
            \begin{enumerate}
                \item If $w_{2}>3w_{1}$, then 
                \[w_{1}+w_{2}>w_{2}>\max(-w_{1}+w_{2},\frac{3w_{1}+w_{2}}{2})>\min(-w_{1}+w_{2},\frac{3w_{1}+w_{2}}{2})\]
                and they are larger than other weights,
                thus $\mu_{4}=-w_{1}+w_{2}$ or $\frac{3w_{1}+w_{2}}{2}$. 
                So $\rho_{\psi}^{\vee}(\gamma)=1$ if and only if $\mu_{4}=-w_{1}+w_{2}$, 
                thus if and only if $\frac{3w_{1}+w_{2}}{2}>w_{2}-w_{1}$, 
                which is equivalent to that $3w_{1}<w_{2}<5w_{1}$.
                \item If $w_{2}<3w_{1}$, then 
                \[w_{1}+w_{2}>\frac{3w_{1}+w_{2}}{2}>\max(2w_{1},w_{2})>\min(2w_{1},w_{2})\]
                and they are larger than other weights. 
                So we always have $\rho_{\psi}^{\vee}(\gamma)=1$.
            \end{enumerate}
    \end{enumerate} 
    By Arthur's multiplicity formula, $\mathrm{m}(\pi_{\psi})=1$ if and only if $w_{1}>3w_{2}$ or $w_{1}<w_{2}<5w_{1}$ and $w_{2}\neq 3w_{1}$.
\end{proof}

\subsubsection{\texorpdfstring{$H=\left(\mathrm{A}_{1}^{[4^{2},3^{3},2^{4},1]}\times \mathrm{A}_{1}^{[4^{2},3^{3},2^{4},1]}\right)/\mu_{2}^{\Delta}$}{}}
\label{section Arthur Spin(3)+Spin(3)}
\tp By \Cref{section info Spin(3)+Spin(3)},
the restriction of the $26$-dimensional irreducible representation $\jord_{0}$ of $\lietype{F}{4}$ to $H$ is isomorphic to 
\[\triv+ 
\sym^{3}\st\otimes\st+
\sym^{2}\st\otimes\sym^{2}\st+
\st\otimes\sym^{3}\st,\]
and the centralizer of $H$ in $\lietype{F}{4}$ is $\mathrm{Z}(H)\simeq \Z/2\Z$.

For $\psi\in\Psi_{\mathrm{AJ}}(\grpF)$ satisfying $\mathrm{H}(\psi)=H$ and $\mathrm{m}(\pi_{\psi})=1$,
there are two possible endoscopic types:
\begin{enumerate}[label=(\roman*)]
    \item $(4,(4,2),(3,3),(2,4),(1,1))$.
    A global Arthur parameter of this type is of the form:
        \[\sym^{3}\pi[2]\oplus\sym^{2}\pi[3]\oplus\pi[4]\oplus[1],\,\pi\in\Pi_{\alg}^{\perp}(\pgl_{2}).\]
    \item $(4,(9,1),(8,1),(8,1),(1,1))$.
        A global Arthur parameter of this type is of the form:
        \[(\sym^{3}\pi_{1}\otimes\pi_{2})\oplus(\sym^{2}\pi_{1}\otimes\sym^{2}\pi_{2})\oplus(\pi_{1}\otimes\sym^{3}\pi_{2})\oplus[1],\,\pi_{1},\pi_{2}\in\Pi_{\alg}^{\perp}(\pgl_{2}).\]
\end{enumerate}

For this subgroup $H$ of $\lietype{F}{4}$,
the restriction of the adjoint representation $\mathfrak{f}_{4}$ of $\lietype{F}{4}$ to $H$ is isomorphic to 
\begin{align*}
    \left(\sym^{4}\st+\triv\right)\otimes\sym^{2}\st+ 
    \sym^{2}\st\otimes\left(\sym^{4}\st+\triv\right)+
    \sym^{3}\st\otimes\st+ 
    \st\otimes\sym^{3}\st.
\end{align*}
\begin{spprop}\label{prop multiplicity Spin(3)+Spin(3)}
    A discrete global Arthur parameter $\psi\in\Psi_{\mathrm{AJ}}(\grpF)$ satisfying $\mathrm{H}(\psi)=H$ and $\mathrm{m}(\pi_{\psi})=1$ 
    must be of one of the following parameters: 
   \begin{itemize}
    \item $\sym^{3}\pi[2]\oplus\sym^{2}\pi[3]\oplus\pi[4]\oplus[1]$, where $\pi\in\Pi_{\alg}^{\perp}(\pgl_{2})$ satisfies $\mathrm{w}(\pi)\equiv 3\modulo 4$.
    \item $(\sym^{3}\pi_{1}\otimes\pi_{2})\oplus(\sym^{2}\pi_{1}\otimes\sym^{2}\pi_{2})\oplus(\pi_{1}\otimes\sym^{3}\pi_{2})\oplus[1]$, where $\pi_{1},\pi_{2}$ have motivic weights $w_{1},w_{2}$ respectively 
    such that $w_{2}<w_{1}<3w_{2}$.
    \end{itemize}
\end{spprop}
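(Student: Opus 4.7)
The plan is to mirror the strategy of \Cref{prop multiplicity principal A1 in Sp(3) with Sp(1)}--\Cref{prop multiplicity principal A1 of Sp(1)+SO(3) with diagonal Sp(1)}: first enumerate all endoscopic types of $\psi\in\Psi_{\mathrm{AJ}}(\grpF)$ with $\mathrm{H}(\psi)=H$, then for each one compute Arthur's character $\varepsilon_\psi$ on $\mathrm{C}_\psi=\mathrm{Z}(H)\simeq\Z/2\Z$ via \Cref{prop epsilon character of Arthur} and the character $\rho^\vee_\psi$ via \Cref{prop calculation of rho character for F4}, and finally apply Arthur's multiplicity formula \eqref{eqn Arthur multiplicity formula}.

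For the enumeration, note that any morphism $\mathcal{L}_\Z\times\SL_2(\C)\to\su(2)$ onto one of the two $\lietype{A}{1}$-factors of $H$ must either be trivial on $\SL_2(\C)$, with the $\mathcal{L}_\Z$-part given by some $\pi\in\Pi_{\alg}^\perp(\pgl_2)$, or trivial on $\mathcal{L}_\Z$ (since $\mathcal{L}_\Z$ is connected by \Cref{prop Langlands group is connected} and the centralizer of a principal $\SL_2$ in $\su(2)$ is just its center). Requiring the total image to equal $H$, and using the $\lietype{F}{4}$-involution exchanging the two $\lietype{A}{1}$-factors of $H$ to identify conjugate parameters, leaves exactly the two endoscopic types described in (i) and (ii).

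In Case (i), setting $w=\mathrm{w}(\pi)$, the restriction of $\mathfrak{f}_4$ along $\psi$ is isomorphic to $\sym^4\pi[3]\oplus [3]\oplus\sym^2\pi[5]\oplus\sym^2\pi\oplus\sym^3\pi[2]\oplus\pi[4]$, and \Cref{prop epsilon character of Arthur} picks out $\sym^3\pi[2]$ and $\pi[4]$ as the only contributing summands. Using $\mathrm{L}((\sym^3\pi)_\infty)\simeq \mathbf{I}_{3w}\oplus\mathbf{I}_w$ from \Cref{prop Langlands parameter of symplectic representation} together with $\varepsilon(\mathbf{I}_k)=i^{k+1}$, one obtains $\varepsilon_\psi(\gamma)=(-1)^{(w+3)/2}$. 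On the weight side, a short inspection of the positive-weight multiset of $\psi$ gives $\mu_1=(3w+1)/2$ (from $\sym^3\pi[2]$) and $\mu_4=w$ (from $\sym^2\pi[3]$), whence $\rho^\vee_\psi(\gamma)=(-1)\cdot(+1)=-1$. Arthur's formula then yields $\mathrm{m}(\pi_\psi)=1$ iff $(w+3)/2$ is odd, i.e.\ $w\equiv 3\pmod 4$.

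In Case (ii) the parameter is tempered, so $\varepsilon_\psi$ is trivial and the task reduces to deciding when $\rho^\vee_\psi(\gamma)=1$. Here $\gamma$ acts by $-1$ on $\sym^3\pi_1\otimes\pi_2$ and $\pi_1\otimes\sym^3\pi_2$ and by $+1$ on $\sym^2\pi_1\otimes\sym^2\pi_2$ and $[1]$. After using the symmetry $\pi_1\leftrightarrow\pi_2$ to assume $w_1>w_2$, one always has $\mu_1=(3w_1+w_2)/2$ from $\sym^3\pi_1\otimes\pi_2$, while a careful ordering of the next few largest weights shows that $\mu_4$ equals $(3w_1-w_2)/2$ when $w_2<w_1<2w_2$, equals $(w_1+3w_2)/2$ when $2w_2<w_1<3w_2$, and equals $w_1$ when $w_1>3w_2$. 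In the first two ranges $\mu_4$ lies on a $\gamma=-1$ summand so $\rho^\vee_\psi(\gamma)=1$, whereas in the third it lies on a $\gamma=+1$ summand and $\rho^\vee_\psi(\gamma)=-1$. Combined, the multiplicity is $1$ precisely for $w_2<w_1<3w_2$. The main technical obstacle is this weight-ordering analysis in Case (ii); in particular one must check that the boundary values $w_1\in\{w_2,2w_2,3w_2\}$ either force the zero weight of the infinitesimal character to have multiplicity greater than $2$ (so that $\psi\notin\Psi_{\mathrm{AJ}}(\grpF)$) or otherwise violate the strict ordering $\mu_4>\mu_5$ required by \Cref{prop calculation of rho character for F4}.
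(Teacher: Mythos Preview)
Your proof is correct and follows essentially the same approach as the paper: enumerate the two endoscopic types, compute $\varepsilon_\psi$ via \Cref{prop epsilon character of Arthur} (finding $\varepsilon_\psi(\gamma)=(-1)^{(w+3)/2}$ in Case~(i), which agrees with the paper's $(-1)^{(3w+1)/2}$ since $w$ is odd), locate $\mu_1$ and $\mu_4$ to determine $\rho^\vee_\psi$ via \Cref{prop calculation of rho character for F4}, and conclude by Arthur's formula. Your Case~(ii) analysis, splitting into the ranges $w_2<w_1<2w_2$, $2w_2<w_1<3w_2$, and $w_1>3w_2$, matches the paper's exactly; as the paper notes, the boundary $w_1=2w_2$ is impossible by parity, while $w_1=w_2$ and $w_1=3w_2$ indeed force the zero weight to appear more than twice.
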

\begin{proof}
    We denote the generator of $\mathrm{C}_{\psi}=\mathrm{Z}(H)$ by $\sigma$.

    \textbf{Case (i)}: $\psi=\sym^{3}\pi[2]\oplus\sym^{2}\pi[3]\oplus\pi[4]\oplus[1]$, 
    where $\pi\in \Pi_{\alg}^{\perp}(\mathrm{PGL}_{2})$ has motivic weight $w$.
    In this case
    the restriction of $\mathfrak{f}_{4}$ along $\psi$ is isomorphic to
    \[\sym^{4}\pi[3]\oplus\sym^{3}\pi[2]\oplus\sym^{2}\pi[5]\oplus\sym^{2}\pi\oplus\pi[4]\oplus[3].\]
    By \Cref{prop epsilon character of Arthur}, we have:
    \begin{align*}
        \varepsilon_{\psi}(\sigma)=\varepsilon(\sym^{3}\pi)\cdot\varepsilon(\pi)=\varepsilon(\mathbf{I}_{3w}+\mathbf{I}_{w})\cdot\varepsilon(\mathbf{I}_{w})=(-1)^{(3w+1)/2}.
    \end{align*}
    On the other side, $\mu_{1}=\frac{3w+1}{2}$ comes from $\sym^{3}\pi[2]$ and $\mu_{4}=w$ comes from $\sym^{2}\pi[3]$.
    Since $\sigma$ acts on $\sym^{3}\pi[2]$ by $-1$ and on $\sym^{2}\pi[3]$ by $1$,
    we have $\rho^{\vee}_{\psi}(\sigma)=-1$ by \Cref{prop calculation of rho character for F4}.
    By Arthur's multiplicity formula, $\mathrm{m}(\pi_{\psi})=1$ if and only if $w\equiv 3\modulo 4$.

    \textbf{Case (ii)}: $\psi=(\sym^{3}\pi_{1}\otimes\pi_{2})\oplus(\sym^{2}\pi_{1}\otimes\sym^{2}\pi_{2})\oplus(\pi_{1}\otimes\sym^{3}\pi_{2})\oplus[1]$,
    where $\pi_{1},\pi_{2}\in\Pi_{\alg}^{\perp}(\mathrm{PGL_{2}})$ have motivic weights $w_{1}>w_{2}$ respectively.
    In this case, $\varepsilon_{\psi}$ is trivial.
    On the other side, $\mu_{1}=\frac{3w_{1}+w_{2}}{2}$ and 
    $\mu_{4}=w_{1}$ or $\frac{w_{1}+3w_{2}}{2}$ or $\frac{3w_{1}-w_{2}}{2}$.
    By \Cref{prop calculation of rho character for F4},
    $\rho_{\psi}^{\vee}$ is trivial if and only if $\mu_{4}=\frac{w_{1}+3w_{2}}{2}$ or $\frac{3w_{1}-w_{2}}{2}$.
    \begin{enumerate}[label= (\arabic*)]
        \item $\mu_{4}=\frac{w_{1}+3w_{2}}{2}$ if and only if $\frac{3w_{1}-w_{2}}{2}>\frac{w_{1}+3w_{2}}{2}>w_{1}$, which is equivalent to $2w_{2}<w_{1}<3w_{2}$.
        \item $\mu_{4}=\frac{3w_{1}-w_{2}}{2}$ if and only if $\frac{w_{1}+3w_{2}}{2}>\frac{3w_{1}-w_{2}}{2}$, which is equivalent to $w_{1}<2w_{2}$.
    \end{enumerate}
    By Arthur's multiplicity formula, $\mathrm{m}(\pi_{\psi})=1$ if and only if $w_{2}<w_{1}<3w_{2}$ and $w_{1}\neq 2w_{2}$.
    Notice that $w_{1}\neq 2w_{2}$ holds automatically since $w_{1}$ is odd.
\end{proof}

\subsubsection{\texorpdfstring{$H=\mathrm{A}_{1}^{[7^{3},1^{5}]}\times \mathrm{A}_{1}^{[5,3^{7}]}$}{}}
\label{section Arthur principal PSU(2) in G2 + SO(3)}
\tp By \Cref{section info principal PSU(2) in G2 + SO(3)},
the restriction of the $26$-dimensional irreducible representation $\jord_{0}$ of $\lietype{F}{4}$ to $H$ is isomorphic to 
\[\sym^{6}\st\otimes\sym^{2}\st+
\triv\otimes\sym^{4}\st,\]
and the centralizer of $H$ in $\lietype{F}{4}$ is trivial.

For $\psi\in\Psi_{\mathrm{AJ}}(\grpF)$ satisfying $\mathrm{H}(\psi)=H$ and $\mathrm{m}(\pi_{\psi})=1$,
there are three possible endoscopic types:
\begin{enumerate}[label=(\roman*)]
    \item $(2,(7,3),(1,5))$.
    A global Arthur parameter of this type is of the form:
        \[\sym^{6}\pi[3]\oplus[5],\,\pi\in\Pi_{\alg}^{\perp}(\pgl_{2}).\]
    \item $(2,(5,1),(3,7))$.
    A global Arthur parameter of this type is of the form:
        \[\sym^{4}\pi\oplus \sym^{2}\pi[7],\,\pi\in\Pi_{\alg}^{\perp}(\pgl_{2}).\]
    \item $(2,(21,1),(5,1))$.
    A global Arthur parameter of this type is of the form:
        \[\left(\sym^{6}\pi_{1}\otimes\sym^{2}\pi_{2}\right)\oplus \sym^{4}\pi_{2},\,\pi_{1},\pi_{2}\in\Pi_{\alg}^{\perp}(\pgl_{2}).\]
\end{enumerate}
\begin{spprop}\label{prop multiplicity principal PSU(2) in G2 + SO(3)}
    A discrete global Arthur parameter $\psi\in\Psi_{\mathrm{AJ}}(\grpF)$ satisfying $\mathrm{H}(\psi)=H$ and $\mathrm{m}(\pi_{\psi})=1$ 
    must be of one of the following parameters: 
   \begin{itemize}
    \item $\sym^{6}\pi[3]\oplus[5]$, where $\pi\in\Pi_{\alg}^{\perp}(\pgl_{2})$.
    \item $\sym^{4}\pi\oplus \sym^{2}\pi[7]$, where $\pi\in\Pi_{\alg}^{\perp}(\pgl_{2})$. 
    \item $\left(\sym^{6}\pi_{1}\otimes\sym^{2}\pi_{2}\right)\oplus \sym^{4}\pi_{2}$,
    where $\pi_{1},\pi_{2}\in\Pi_{\alg}^{\perp}(\mathrm{PGL_{2}})$ have motivic weights $w_{1},w_{2}$ respectively such that $w_{2}\neq w_{1}$ and $w_{2}\neq 3w_{1}$.
\end{itemize}
\end{spprop}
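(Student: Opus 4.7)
The plan rests on the crucial observation, recorded in \Cref{section info principal PSU(2) in G2 + SO(3)}, that $\mathrm{C}_{\lietype{F}{4}}(H)$ is trivial. I will first exploit this: for any $\psi \in \Psi_{\mathrm{AJ}}(\grpF)$ with $\mathrm{H}(\psi) = H$, the image of $\psi_{\mathrm{c}}$ has $H$ as its closure, so the global component group $\mathrm{C}_{\psi}$ equals this centralizer and is thus trivial. Consequently both characters $\varepsilon_{\psi}$ and $\rho^{\vee}_{\psi}$ on $\mathrm{C}_{\psi}$ are automatically trivial and hence agree, and Arthur's multiplicity formula (\Cref{conj Arthur multiplicity formula}) yields $m_{\psi} = 1$ for every such $\psi$. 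The proposition therefore reduces to the purely combinatorial question: which $\psi \in \Psi_{\mathrm{AJ}}(\grpF)$ satisfy $\mathrm{H}(\psi) = H$?

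To enumerate these I will start from the decomposition $\jord_{0}|_{H} \simeq \sym^{6}\st \otimes \sym^{2}\st \oplus \triv \otimes \sym^{4}\st$ recalled in \Cref{section info principal PSU(2) in G2 + SO(3)}. Writing $H \simeq \psu(2) \times \psu(2)$ and considering all ways $\mathcal{L}_{\Z}$ and $\su(2)$ can map nontrivially into the two $\lietype{A}{1}$-factors of $H$, one obtains exactly three possibilities: (a) $\mathcal{L}_{\Z}$ maps into the first factor while $\su(2)$ maps into the second; (b) $\mathcal{L}_{\Z}$ maps into the second while $\su(2)$ maps into the first; (c) $\mathcal{L}_{\Z}$ maps nontrivially into both factors via two (possibly distinct) cuspidal representations, and $\su(2)$ is trivial. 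Invoking Arthur-Langlands for $\pgl_{2}$ (\Cref{axiom 2}) to identify $2$-dimensional irreducible representations of $\mathcal{L}_{\Z}$ with elements of $\Pi_{\alg}^{\perp}(\pgl_{2})$, I then obtain respectively the three parameters $\sym^{6}\pi[3] \oplus [5]$, $\sym^{4}\pi \oplus \sym^{2}\pi[7]$, and $(\sym^{6}\pi_{1} \otimes \sym^{2}\pi_{2}) \oplus \sym^{4}\pi_{2}$ listed in the proposition.

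Finally I must determine which choices of cuspidal data yield a parameter that actually lies in $\Psi_{\mathrm{AJ}}(\grpF)$; by \Cref{lemma eigenvalues of infinitesimal character of F4 representation} this is equivalent to requiring zero to appear with multiplicity exactly $2$ in the multiset of eigenvalues of $\mathrm{r}_{0}(\mathrm{c}_{\infty}(\psi))$. In cases (i) and (ii) a direct computation shows this multiplicity always equals $2$, imposing no extra condition. In case (iii), setting $w_{i} = \mathrm{w}(\pi_{i})$ (both odd), the summand $\sym^{4}\pi_{2}$ contributes exactly one zero, while $\sym^{6}\pi_{1} \otimes \sym^{2}\pi_{2}$ contributes one zero from the pair $(0,0)$ plus two additional zeros arising from pairs $(\mp jw_{1},\pm w_{2})$ with $jw_{1} = w_{2}$, $j \in \{1,2,3\}$, precisely when $w_{2} \in \{w_{1}, 2w_{1}, 3w_{1}\}$. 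Since $w_{2}$ is odd the coincidence $w_{2} = 2w_{1}$ cannot occur, leaving exactly the condition $w_{2} \neq w_{1}$ and $w_{2} \neq 3w_{1}$. The only remaining bookkeeping obstacle is verifying that once the zero multiplicity equals $2$ the remaining $24$ eigenvalues indeed fit the pattern $\mathrm{HT}(a+1,b+1,c+1,d+1)$ of \Cref{lemma eigenvalues of infinitesimal character of F4 representation}, which is routine given the explicit form of the weights.
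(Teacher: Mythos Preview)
Your proposal is correct and follows essentially the same approach as the paper. The paper's own proof is two sentences: it invokes the triviality of $\mathrm{C}_{\psi}$ (from \Cref{section info principal PSU(2) in G2 + SO(3)}) to conclude $m_{\psi}=1$ automatically, and then asserts that the conditions $w_{2}\neq w_{1}$, $w_{2}\neq 3w_{1}$ in the tempered case are precisely those ensuring $\psi\in\Psi_{\mathrm{AJ}}(\grpF)$. You unpack this with more detail, in particular the zero-weight count for case (iii), which is a faithful expansion of what the paper leaves implicit.
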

\begin{proof}
    This follows from the fact that $\mathrm{C}_{\psi}$ is trivial.
    The conditions $w_{2}\neq w_{1}$ and $w_{2}\neq 3w_{1}$ in the third case 
    are equivalent to that $\psi=\left(\sym^{6}\pi_{1}\otimes\sym^{2}\pi_{2}\right)\oplus\sym^{4}\pi_{2}\in\Psi_{\mathrm{AJ}}(\grpF)$.
\end{proof}

\subsubsection{\texorpdfstring{$H=\mathrm{A}_{1}^{[5,3^{7}]}\times \left(\mathrm{A}_{1}^{[3^{3},2^{6},1^{5}]}\times \mathrm{A}_{1}^{[2^{6},1^{14}]}\right)/\mu_{2}^{\Delta}$}{}}
\label{section Arthur a bizzare  A1+A1+A1 via A1+Sp(3)}
\tp By \Cref{section info a bizzare A1+A1+A1 via A1+Sp(3)},
the restriction of the $26$-dimensional irreducible representation $\jord_{0}$ of $\lietype{F}{4}$ to $H$ is isomorphic to 
\[\sym^{4}\st\otimes\triv\otimes\triv+
\sym^{2}\st\otimes \left(\st\otimes \st+\sym^{2}\st\otimes\triv\right),\]
and the centralizer of $H$ in $\lietype{F}{4}$ is $\mathrm{Z}(H)\simeq \Z/2\Z$.

For $\psi\in\Psi_{\mathrm{AJ}}(\grpF)$ satisfying $\mathrm{H}(\psi)=H$ and $\mathrm{m}(\pi_{\psi})=1$,
there are four possible endoscopic types:
\begin{enumerate}[label=(\roman*)]
    \item $(3,(6,2),(5,1),(3,3))$.
    A global Arthur parameter of this type is of the form:
        \[\sym^{4}\pi_{1}\oplus(\sym^{2}\pi_{1}\otimes\pi_{2}[2])\oplus\sym^{2}\pi_{1}[3],\,\pi_{1},\pi_{2}\in\Pi_{\alg}^{\perp}(\pgl_{2}).\]
    \item $(3,(9,1),(6,2),(5,1))$.
    A global Arthur parameter of this type is of the form:
        \[\sym^{4}\pi_{1}\oplus(\sym^{2}\pi_{1}\otimes\pi_{2}[2])\oplus(\sym^{2}\pi_{1}\otimes\sym^{2}\pi_{2}),\,\pi_{1},\pi_{2}\in\Pi_{\alg}^{\perp}(\pgl_{2}).\]
    \item $(3,(4,3),(3,3),(1,5))$.
    A global Arthur parameter of this type is of the form:
        \[\sym^{2}\pi_{1}[3]\oplus (\pi_{1}\otimes\pi_{2}[3])\oplus[5],\,\pi_{1},\pi_{2}\in\Pi_{\alg}^{\perp}(\pgl_{2}).\]
    \item $(3,(12,1),(9,1),(5,1))$.
    A global Arthur parameter of this type is of the form:
        \[\sym^{4}\pi_{1}\oplus(\sym^{2}\pi_{1}\otimes\pi_{2}\otimes\pi_{3})\oplus(\sym^{2}\pi_{1}\otimes\sym^{2}\pi_{3}),\,\pi_{1},\pi_{2},\pi_{3}\in\Pi_{\alg}^{\perp}(\pgl_{2}).\]
\end{enumerate}

For this subgroup $H$ of $\lietype{F}{4}$,
the restriction of the adjoint representation $\mathfrak{f}_{4}$ of $\lietype{F}{4}$ to $H$ is isomorphic to 
\begin{gather*}
    \sym^{4}\st\otimes \left(\st\otimes \st+\sym^{2}\st\otimes\triv\right)+ 
    \sym^{2}\st\otimes\triv\otimes\triv\\
    +\triv\otimes\left(\sym^{2}\st\otimes\triv+\triv\otimes\sym^{2}\st+\sym^{3}\st\otimes\st\right).
\end{gather*}
\begin{spprop}\label{prop multiplicity a bizzare  A1+A1+A1 via A1+Sp(3)}
    For a discrete global Arthur parameter $\psi\in\Psi_{\mathrm{AJ}}(\grpF)$ satisfying $\mathrm{H}(\psi)=H$,
    the multiplicity $\mathrm{m}(\pi_{\psi})=1$ if and only if $\psi$ is one of the following parameters: 
   \begin{itemize}
    \item $\sym^{4}\pi_{1}\oplus(\sym^{2}\pi_{1}\otimes\pi_{2}[2])\oplus\sym^{2}\pi_{1}[3]$, 
    where $\pi_{1},\pi_{2}\in\Pi_{\alg}^{\perp}(\pgl_{2})$ have motivic weights $w_{1},w_{2}$ respectively such that $w_{2}<2w_{1}-1$ or $w_{2}>4w_{1}+1$.
    \item $\sym^{4}\pi_{1}\oplus(\sym^{2}\pi_{1}\otimes\pi_{2}[2])\oplus(\sym^{2}\pi_{1}\otimes\sym^{2}\pi_{2})$, 
    where $\pi_{1},\pi_{2}\in\Pi_{\alg}^{\perp}(\pgl_{2})$ have motivic weights $w_{1},w_{2}$ respectively and satisfy one of the following conditions:
    \begin{itemize}
        \item $2w_{1}+1<w_{2}<4w_{1}-1,\ w_{2}\equiv 1\modulo 4$;
        \item $w_{2}<2w_{1}-1$ or $w_{2}>4w_{1}+1$, and $w_{2}\equiv 3\modulo 4,\ w_{1}\neq w_{2}$.
    \end{itemize}
    \item $\sym^{2}\pi_{1}[3]\oplus (\pi_{1}\otimes\pi_{2}[3])\oplus[5]$, 
    where $\pi_{1},\pi_{2}\in\Pi_{\alg}^{\perp}(\pgl_{2})$ have motivic weights $w_{1},w_{2}$ respectively such that $w_{2}>3w_{1}$.
    \item $\sym^{4}\pi_{1}\oplus(\sym^{2}\pi_{1}\otimes\pi_{2}\otimes\pi_{3})\oplus(\sym^{2}\pi_{1}\otimes\sym^{2}\pi_{3})$, 
    where $\pi_{1},\pi_{2},\pi_{3}\in\Pi_{\alg}^{\perp}(\pgl_{2})$ have motivic weights $w_{1},w_{2},w_{3}$ respectively such that 
    one of the following conditions holds:
    \begin{itemize}
        \item $w_{2}>\max(3w_{3},4w_{1}+w_{3})$;
        \item $2w_{1}+w_{3}<w_{2}<4w_{1}-w_{3}$;
        \item $3w_{3}<w_{2}<2w_{1}-w_{3}$;
        \item $2w_{1}+w_{3}<w_{2}<\min(4w_{1}+w_{3},3w_{3})$;
        \item $|4w_{1}-w_{3}|<w_{2}<w_{3}-2w_{1}$;
        \item $|2w_{1}-w_{3}|<w_{2}<\min(4w_{1}-w_{3},3w_{3})$ and $w_{3}\neq w_{1}$, $w_{3}\neq w_{2}$.
    \end{itemize} 
\end{itemize}
\end{spprop}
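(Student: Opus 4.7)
The plan is to treat each of the four endoscopic types (i)--(iv) separately, following the same recipe used in the proofs of \Cref{prop multiplicity principal A1 in Sp(3) with Sp(1)}, \Cref{prop multiplicity principal A1 of Sp(1)+SO(3) with diagonal Sp(1)} and \Cref{prop multiplicity Spin(3)+Spin(3)}. In every case the component group $\mathrm{C}_{\psi}$ equals $\mathrm{Z}(H)\simeq \Z/2\Z$; let $\sigma$ denote its generator. For a parameter $\psi$ of each given endoscopic type, I will first decompose $\widehat{\mathfrak{f}_{4}}\circ \psi$ as a sum $\bigoplus \chi_{i}\otimes\pi_{i}[d_{i}]$ by restricting the expression of $\mathfrak{f}_{4}|_{H}$ recalled in \Cref{section info a bizzare A1+A1+A1 via A1+Sp(3)} along the chosen map $\mathcal{L}_{\Z}\times\SL_{2}(\C)\to H(\C)$. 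From this decomposition, \Cref{prop epsilon character of Arthur} gives $\varepsilon_{\psi}(\sigma)$ as a product of $\varepsilon(\pi_{i})$ over those $i$ with $d_{i}$ even, $\pi_{i}$ self-dual, and $\chi_{i}(\sigma)=-1$; the factors are computed from \Cref{prop relation between epsilon factors} together with \Cref{prop Langlands parameter of symplectic representation}, exactly as in the cited earlier proofs.

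Next I will determine $\rho^{\vee}_{\psi}(\sigma)$ via \Cref{prop calculation of rho character for F4}. For this I order the $26$ eigenvalues of $\mathrm{r}_{0}(\mathrm{c}_{\infty}(\psi))$ as $\mu_{1}>\mu_{2}>\mu_{3}>\mu_{4}>\mu_{5}\ge \dots$, locate the unique summands $\pi_{i}[d_{i}]$ and $\pi_{j}[d_{j}]$ containing $\mu_{1}$ and $\mu_{4}$ respectively, and obtain $\rho^{\vee}_{\psi}(\sigma)=\epsilon_{i}(\sigma)\epsilon_{j}(\sigma)$, where $\epsilon_{i},\epsilon_{j}$ are the scalars by which $\sigma$ acts on these two summands (read off directly from the restriction of $\jord_{0}|_{H}$ in \Cref{section info a bizzare A1+A1+A1 via A1+Sp(3)}). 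Finally I combine the two characters via Arthur's formula \eqref{eqn Arthur multiplicity formula}: $m_{\psi}=1$ exactly when $\varepsilon_{\psi}(\sigma)=\rho^{\vee}_{\psi}(\sigma)$.

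The main obstacle, as in case (iii) of \Cref{prop multiplicity principal A1 of Sp(1)+SO(3) with diagonal Sp(1)} and case (iv) of \Cref{prop multiplicity principal A1 in Sp(3) with Sp(1)}, is the bookkeeping needed for endoscopic type (iv). Indeed, $\psi=\sym^{4}\pi_{1}\oplus (\sym^{2}\pi_{1}\otimes\pi_{2}\otimes\pi_{3})\oplus(\sym^{2}\pi_{1}\otimes \sym^{2}\pi_{3})$ depends on three independent motivic weights $w_{1},w_{2},w_{3}$ with $w_{1}>w_{3}$ (say), and the identification of the summand containing $\mu_{4}$ splits into many subcases according to the mutual size relations among the candidates
\[ 2w_{1},\ w_{1}+w_{2}+w_{3},\ \tfrac{w_{2}+3w_{3}}{2},\ \tfrac{w_{2}+|2w_{1}-w_{3}|}{2},\ \tfrac{|w_{2}-2w_{1}|+w_{3}}{2},\ 2w_{1}+w_{3},\ \text{etc.}\]
Since $\sigma$ acts by $+1$ on $\sym^{4}\pi_{1}$ and $\sym^{2}\pi_{1}\otimes\sym^{2}\pi_{3}$ but by $-1$ on the middle summand, the triviality of $\rho^{\vee}_{\psi}$ translates into a disjunction of inequalities among $w_{1},w_{2},w_{3}$, yielding the six ranges listed in the statement. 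The non-degeneracy conditions $w_{3}\neq w_{1}$ and $w_{3}\neq w_{2}$ arise from requiring $\psi\in\Psi_{\mathrm{AJ}}(\grpF)$ (equivalently, that the zero weight of $\mathrm{r}_{0}\circ\psi$ appears only with multiplicity $2$, cf. \Cref{lemma eigenvalues of infinitesimal character of F4 representation}), as do the constraints $w_{2}\equiv 1,3\pmod 4$ and $w_{1}\ne w_{2}$ occurring in case (ii).

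To organise the calculation, I will draw up one table per case listing the irreducible summands of $\widehat{\mathfrak{f}_{4}}\circ\psi$ (obtained mechanically from the formula in \Cref{section info a bizzare A1+A1+A1 via A1+Sp(3)} together with the standard identities $\sym^{a}\st\otimes\sym^{b}\st \simeq \bigoplus_{k=0}^{\min(a,b)}\sym^{a+b-2k}\st$), mark for each summand the sign of $\chi_{i}(\sigma)$ and the parity of $d_{i}$, and then apply \Cref{prop epsilon character of Arthur}. The expected result in each case is a clean product of signs $(-1)^{(kw_{i}+1)/2}$ whose vanishing relative to $\rho^{\vee}_{\psi}(\sigma)$ produces the stated congruence conditions. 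Once every case is treated, the union of the parameter families enumerated above, exhibiting $m_{\psi}=1$, will match the list in the proposition.
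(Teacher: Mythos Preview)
Your overall strategy is exactly the paper's: treat each endoscopic type separately, read off $\varepsilon_{\psi}$ from the decomposition of $\mathfrak{f}_{4}\circ\psi$ via \Cref{prop epsilon character of Arthur}, locate the summands containing $\mu_{1}$ and $\mu_{4}$ to obtain $\rho^{\vee}_{\psi}$ via \Cref{prop calculation of rho character for F4}, and compare. The paper even uses the same normalisation trick for case (iv) that you hint at, working with the ratios $r_{1}=w_{1}/w_{3}$ and $r_{2}=w_{2}/w_{3}$ to organise the subcases.

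There are, however, two concrete slips in your sketch that would derail the computation if carried through. First, the assumption ``$w_{1}>w_{3}$ (say)'' is illegitimate: the three $\lietype{A}{1}$-factors of $H$ lie in three \emph{distinct} conjugacy classes $[5,3^{7}]$, $[3^{3},2^{6},1^{5}]$, $[2^{6},1^{14}]$, so there is no symmetry interchanging $\pi_{1}$ and $\pi_{3}$. Indeed several of the six regions in the statement (e.g.\ $|4w_{1}-w_{3}|<w_{2}<w_{3}-2w_{1}$) force $w_{3}>w_{1}$. Second, your displayed list of candidate eigenvalues is wrong: with $\pi_{1}\to$ first factor, $\pi_{3}\to$ second factor, $\pi_{2}\to$ third factor (as forced by matching $\sym^{2}\pi_{1}\otimes\sym^{2}\pi_{3}$ to $\sym^{2}\st\otimes\sym^{2}\st\otimes\triv$), the top weights of the three summands are $2w_{1}$, $w_{1}+\tfrac{w_{2}+w_{3}}{2}$, and $w_{1}+w_{3}$, not $w_{1}+w_{2}+w_{3}$ or $2w_{1}+w_{3}$. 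Once you correct these, the case analysis proceeds exactly as in the paper.
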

\begin{proof}
    We denote the generator of $\mathrm{C}_{\psi}$ by $\gamma=(1,-1,1)\in\mathrm{Z}(H)$.

    \textbf{Case (i)}: $\psi=\sym^{4}\pi_{1}\oplus\sym^{2}\pi_{1}\otimes\pi_{2}[2]\oplus\sym^{2}\pi_{1}[3]$, 
    where $\pi_{1},\pi_{2}\in\Pi_{\alg}^{\perp}(\mathrm{PGL}_{2})$ have motivic weights $w_{1},w_{2}$ respectively.
    In this case
    the restriction of $\mathfrak{f}_{4}$ along $\psi$ is isomorphic to 
    \begin{gather*}
        \left(\sym^{4}\pi_{1}\otimes\pi_{2}[2]\right)\oplus\sym^{4}\pi_{1}[3]\oplus\sym^{2}\pi_{1}\oplus\sym^{2}\pi_{2}\oplus\pi_{2}[4]\oplus[3].
    \end{gather*}
    By \Cref{prop epsilon character of Arthur} we have $\varepsilon_{\psi}(\gamma)=\varepsilon(\sym^{4}\pi_{1}\otimes\pi_{2})\cdot \varepsilon(\pi_{2})$.
    Notice that 
    \[\varepsilon(\mathbf{I}_{w}\otimes\mathbf{I}_{w^{\prime}})=\varepsilon(\mathbf{I}_{w+w^{\prime}}+\mathbf{I}_{|w-w^{\prime}|})=i^{w+w^{\prime}+|w-w^{\prime}|+2}=(-1)^{\max(w,w^{\prime})+1},\]
    thus 
    \begin{align*}
        \varepsilon_{\psi}(\gamma)=\varepsilon\left((\mathbf{I}_{4w_{1}}+\mathbf{I}_{3w_{1}}+\mathbf{I}_{2w_{1}}+\mathbf{I}_{w_{1}})\otimes\mathbf{I}_{w_{2}}\right)=(-1)^{\max(4w_{1},w_{2})+\max(2w_{1},w_{2})}.
    \end{align*}
    Hence $\varepsilon_{\psi}(\gamma)=1$ if and only if $w_{2}<2w_{1}$ or $w_{2}>4w_{1}$.
    On the other side, $\mu_{1}=2w_{1}$ or $w_{1}+\frac{w_{2}+1}{2}$.
    The generator $\gamma$ of $\mathrm{C}_{\psi}$ acts on $\sym^{4}\pi_{1}$ and $\sym^{2}\pi_{1}[3]$ by $1$ and on $\sym^{2}\pi_{1}\otimes\pi_{2}[2]$ by $-1$.
    We also notice that $\psi\in\Psi_{\mathrm{AJ}}(\grpF)$ implies that $w_{2}\notin\{2w_{1}\pm 1,4w_{1}\pm1\}$.
    \begin{enumerate}[label= (\arabic*)]
        \item If $w_{2}<2w_{1}-1$, then $\mu_{1}=2w_{1}$. 
        Now we have $2w_{1}>w_{1}+\frac{w_{2}+1}{2}>w_{1}+\frac{w_{2}-1}{2}>w_{1}+1$ and they are larger than other Hodge weights, 
        thus $\mu_{4}=w_{1}+1$. Hence $\rho_{\psi}^{\vee}(\gamma)=1$.
        \item If $w_{2}>2w_{1}+1$, then $\mu_{1}=w_{1}+\frac{w_{2}+1}{2}$.
                Now \[w_{1}+\frac{w_{2}+1}{2}>w_{1}+\frac{w_{2}-1}{2}>\max(2w_{1},\frac{w_{2}+1}{2})>\min(2w_{1},\frac{w_{2}-1}{2})\geq w_{1}+1\]
                and they are larger than other weights. 
                So $\mu_{4}=2w_{1}$ or $\frac{w_{2}+1}{2},\frac{w_{2}-1}{2}$.
                However, if $\mu_{4}=2w_{1}$, then we must have $\frac{w_{2}-1}{2}<2w_{1}<\frac{w_{2}+1}{2}$, which is absurd because there is no integer between $\frac{w_{2}-1}{2}$ and $\frac{w_{2}+1}{2}$.
                Hence $\mu_{4}=\frac{w_{2}\pm 1}{2}$ and $\rho_{\psi}^{\vee}(\gamma)=1$.
    \end{enumerate}
    In conclusion, $\rho_{\psi}^{\vee}(\gamma)=1$ for any $\pi_{1},\pi_{2}$. 
    By Arthur's multiplicity formula, 
    $\mathrm{m}(\pi_{\psi})=1$ if and only if $w_{2}<2w_{1}-1$ or $w_{2}>4w_{1}+1$.

    \textbf{Case (ii)}: $\psi=\sym^{4}\pi_{1}\oplus(\sym^{2}\pi_{1}\otimes\pi_{2}[2])\oplus(\sym^{2}\pi_{1}\otimes\sym^{2}\pi_{2})$, 
    where $\pi_{1},\pi_{2}\in\Pi_{\alg}^{\perp}(\pgl_{2})$ have motivic weights $w_{1},w_{2}$ respectively.
    In this case
    the restriction of $\mathfrak{f}_{4}$ along $\psi$ is isomorphic to 
    \begin{gather*}
        \left(\sym^{4}\pi_{1}\otimes\sym^{2}\pi_{2}\right)\oplus\left(\sym^{4}\pi_{1}\otimes\pi_{2}[2]\right)\oplus\sym^{3}\pi_{2}[2]\oplus\sym^{2}\pi_{1}\oplus\sym^{2}\pi_{2}\oplus[3].
    \end{gather*}
    By \Cref{prop epsilon character of Arthur} we have:
    \begin{align*}
        \varepsilon_{\psi}(\gamma)&=\varepsilon(\sym^{4}\pi_{1}\otimes\pi_{2})\cdot\varepsilon(\sym^{3}\pi_{2})
        =(-1)^{\max(4w_{1},w_{2})+\max(2w_{1},w_{2})+(w_{2}-1)/2}.
    \end{align*}
    On the other side, $\gamma$ acts on $\sym^{4}\pi_{1}, \sym^{2}\pi_{1}\otimes\sym^{2}\pi_{2}$ by $1$ and on $\sym^{2}\pi_{1}\otimes\pi_{2}[2]$ by $-1$.
    \begin{enumerate}[label= (\arabic*)]
        \item If $w_{1}>w_{2}$,then $\mu_{1}=2w_{1}$. Now $\mu_{4}$ must be $w_{1}+\frac{w_{2}-1}{2}$ and we have $\rho_{\psi}^{\vee}(\gamma)=-1$.
        \item If $w_{1}<w_{2}$, then $\mu_{1}=w_{1}+w_{2}$.
                Now $\rho_{\psi}^{\vee}(\gamma)=1$ if and only if $\mu_{4}$ comes from $\sym^{4}\pi_{1}$ or $\sym^{2}\pi_{1}\otimes\sym^{2}\pi_{2}$.
                We can easily verify that none of the weights of these two irreducible summands is possible to be $\mu_{4}$.
            \end{enumerate}
    In conclusion, $\rho_{\psi}^{\vee}(\gamma)=-1$.
    By Arthur's multiplicity formula, for $\psi\in\Psi_{\mathrm{AJ}}(\grpF)$ the multiplicity $\mathrm{m}(\pi_{\psi})=1$ if and only if one of the following conditions holds:
    \begin{itemize}
        \item $2w_{1}+1<w_{2}<4w_{1}-1,\,w_{2}\equiv 1\modulo 4$;
        \item $w_{2}<2w_{1}-1$ or $w_{2}>4w_{1}+1$, and $w_{2}\equiv 3\modulo 4,\,w_{1}\neq w_{2}$.
    \end{itemize}

    \textbf{Case (iii)}: $\psi=\sym^{2}\pi_{1}[3]\oplus(\pi_{1}\otimes\pi_{2}[3])\oplus[5]$, 
    where $\pi_{1},\pi_{2}\in\Pi_{\alg}^{\perp}(\pgl_{2})$ have motivic weights $w_{1},w_{2}$ respectively.
    In this case, 
    the representations of $\SL_{2}(\C)$ in the restriction of $\mathfrak{f}_{4}$ along $\psi$ are all odd dimensional,
    thus $\varepsilon_{\psi}(\gamma)=1$ by \Cref{prop epsilon character of Arthur}.
    On the other side,
    $\gamma$ acts on $\sym^{2}\pi_{1}[3]$ by $1$ and on $\pi_{1}\otimes\pi_{2}[3]$ by $-1$.
    We have $\mu_{1}=w_{1}+1$ or $\frac{w_{1}+w_{2}}{2}+1$.
    \begin{enumerate}[label= (\arabic*)]
        \item If $w_{1}>w_{2}$, then $\mu_{1}=w_{1}+1$. 
        The condition that $\psi\in\Psi_{\mathrm{AJ}}(\grpF)$ implies that $w_{1}>w_{2}+4$,
        thus $w_{1}+1>w_{1}>w_{1}-1>\frac{w_{1}+w_{2}}{2}+1$, 
        which are larger than other weights. 
        So $\mu_{4}=\frac{w_{1}+w_{2}}{2}+1$ and $\rho_{\psi}^{\vee}(\gamma)=-1$.
        \item If $w_{1}<w_{2}$, then $\mu_{1}=\frac{w_{1}+w_{2}}{2}+1$. 
        Similarly, we have $w_{1}<w_{2}-4$. 
        Now $\mu_{4}$ must be $w_{1}+1$ or $\frac{w_{2}-w_{1}}{2}+1$, 
        so $\rho_{\psi}^{\vee}(\gamma)=1$ if and only if $\mu_{4}=\frac{w_{2}-w_{1}}{2}+1$.
        This is equivalent to $w_{2}>3w_{1}$.
    \end{enumerate}
    By Arthur's multiplicity formula, $\mathrm{m}(\pi_{\psi})=1$ if and only if $w_{2}>3w_{1}$.

    \textbf{Case (iv)}: $\psi=\sym^{4}\pi_{1}\oplus(\sym^{2}\pi_{1}\otimes\pi_{2}\otimes\pi_{3}) \oplus(\sym^{2}\pi_{1}\otimes\sym^{2}\pi_{3})$,
    where $\pi_{1},\pi_{2},\pi_{3}\in\Pi_{\alg}^{\perp}(\pgl_{2})$ have motivic weights $w_{1},w_{2},w_{3}$ respectively.
    In this case, $\varepsilon_{\psi}(\gamma)=1$ since the parameter is tempered.
    On the other side, $\gamma$ acts on $\sym^{4}\pi_{1}$ and $\sym^{2}\pi_{1}\otimes\sym^{2}\pi_{3}$ by $1$ and on $\sym^{2}\pi_{1}\otimes\pi_{2}\otimes\pi_{3}$ by $-1$.
    We denote the ratios $w_{1}/w_{3},w_{2}/w_{3}$ by $r_{1},r_{2}$ respectively,
    and denote the multiset of elements $\mu/w_{3}$,
    $\mu$ running over the eigenvalues of $\mathrm{c}_{\infty}(\psi)$,
    by $\widetilde{\mathcal{W}}$.
    We still order the elements of $\widetilde{\mathcal{W}}$ by $\mu_{1}>\mu_{2}>\cdots>\mu_{26}$.
    The largest one $\mu_{1}$ must be $r_{1}+1$ or $2r_{1}$ or $r_{1}+\frac{r_{2}+1}{2}$.
    \begin{enumerate}[label= (\arabic*)]
        \item If $r_{1}<1,r_{2}<1$, then $\mu_{1}=r_{1}+1$. Now $\mu_{2}=2r_{1}$ or $1$ or $r_{1}+\frac{r_{2}+1}{2}$.
        \begin{enumerate}
            \item If $r_{1}>1/2$ and $r_{2}<2r_{1}-1$, then $\mu_{2}=2r_{1}$. Now $r_{1}+1>2r_{1}>r_{1}+\frac{r_{2}+1}{2}>r_{1}+\frac{1-r_{2}}{2}$, which are larger than other $22$ elements, thus $\mu_{4}=r_{1}+\frac{1-r_{2}}{2}$ and $\rho_{\psi}^{\vee}(\gamma)=-1$.
            \item If $r_{1}<1/2$ and $r_{2}<1-2r_{1}$, then $\mu_{2}=1$. Now $\rho_{\psi}^{\vee}(\gamma)=1$ if and only if $\mu_{4}=1-r_{1}$, which is equivalent to $|4r_{1}-1|<r_{2}$.
            \item If $r_{2}>|2r_{1}-1|$, then $\mu_{2}=r_{1}+\frac{r_{2}+1}{2}$. Now $\rho_{\psi}^{\vee}(\gamma)=1$ if and only if $\mu_{4}=2r_{1}$ or $1$, which is equivalent to $r_{2}<4r_{1}-1$.
        \end{enumerate}
        \item If $r_{1}>1,r_{2}<2r_{1}-1$, then $\mu_{1}=2r_{1}$. Now $\rho_{\psi}^{\vee}(\gamma)=1$ if and only if $\mu_{4}=r_{1}+1$, which is equivalent to $r_{2}>3$.
        \item If $r_{2}>1,r_{2}>2r_{1}-1$, then $\mu_{1}=r_{1}+\frac{r_{2}+1}{2}$. Now $\mu_{2}$ belongs to the (multi)set $\{r_{1}+1,2r_{1},r_{1}+\frac{r_{2}-1}{2},\frac{r_{2}+1}{2}\}$.
            \begin{enumerate}
                \item If $r_{1}<1$ and $r_{2}<2r_{1}+1$, then $\mu_{2}=r_{1}+1$. Now $\rho_{\psi}^{\vee}(\gamma)=1$ if and only if $\mu_{4}=\frac{r_{2}+1}{2}$, which is equivalent to $r_{2}<4r_{1}-1$.
                \item If $r_{1}>1$ and $r_{2}<2r_{1}+1$, then $\mu_{2}=2r_{1}$. Now $\mu_{4}=\min(r_{1}+1,r_{1}+\frac{r_{2}-1}{2})$, thus $\rho^{\vee}_{\psi}(\gamma)=1$ if and only if $r_{2}<3$.
                \item If $r_{1}>1$ and $r_{2}>2r_{1}+1$, then $\mu_{2}=r_{1}+\frac{r_{2}-1}{2}$. Now $\rho_{\psi}^{\vee}(\gamma)=1$ if and only if $\mu_{4}=\frac{r_{2}\pm1}{2}$, which is equivalent to $r_{2}<4r_{1}-1$ or $r_{2}>4r_{1}+1$. 
                \item If $r_{1}<1$ and $r_{2}>2r_{1}+1$, then $\mu_{2}=\frac{r_{2}+1}{2}$. Now $\rho_{\psi}^{\vee}(\gamma)=1$ if and only if $\mu_{4}=r_{1}+\frac{r_{2}-1}{2}$ or $\frac{r_{2}+1}{2}-r_{1}$, which is equivalent to that $r_{2}<\min(3,4r_{1}+1)$ or $r_{2}>\max(3,4r_{1}+1)$.
            \end{enumerate}
    \end{enumerate}
    In conclusion, by Arthur's multiplicity formula,
    $\mathrm{m}(\pi_{\psi})=1$ if and only if $w_{1},w_{2},w_{3}$ satisfy one of the conditions listed in the proposition.
\end{proof}

\subsubsection{\texorpdfstring{$H=\left(\mathrm{A}_{1}^{[5,4^{4},1^{5}]}\times \mathrm{A}_{1}^{[2^{6},1^{14}]}\times \mathrm{A}_{1}^{[2^{6},1^{14}]}\right)/\mu_{2}^{\Delta}$}{}}
\label{section Arthur principal A1 in Spin(5) with Spin(4)}
\tp By \Cref{section info principal A1 in Spin(5) with Spin(4)},
the restriction of the $26$-dimensional irreducible representation $\jord_{0}$ of $\lietype{F}{4}$ to $H$ is isomorphic to 
\[\triv+\triv\otimes\st\otimes\st+
\sym^{3}\st\otimes\left(\st\otimes\triv+\triv\otimes\st\right)+
\sym^{4}\st\otimes\triv\otimes\triv 
,\]
and the centralizer of $H$ in $\lietype{F}{4}$ is $\mathrm{Z}(H)\simeq \Z/2\Z\times\Z/2\Z$.

For $\psi\in\Psi_{\mathrm{AJ}}(\grpF)$ satisfying $\mathrm{H}(\psi)=H$ and $\mathrm{m}(\pi_{\psi})=1$,
there are three possible endoscopic types:
\begin{enumerate}[label=(\roman*)]
    \item $(5,(8,1),(5,1),(4,2),(2,2),(1,1))$.
    A global Arthur parameter of this type is of the form:
        \[\sym^{4}\pi_{1}\oplus(\sym^{3}\pi_{1}\otimes\pi_{2})\oplus\sym^{3}\pi_{1}[2]\oplus \pi_{2}[2]\oplus[1],\,\pi_{1},\pi_{2}\in\Pi_{\alg}^{\perp}(\pgl_{2}).\]
    \item $(5,(4,1),(2,4),(2,4),(1,5),(1,1))$.
    A global Arthur parameter of this type is of the form:
        \[(\pi_{1}\otimes\pi_{2})\oplus\pi_{1}[4]\oplus\pi_{2}[4]\oplus[5]\oplus[1],\,\pi_{1},\pi_{2}\in\Pi_{\alg}^{\perp}(\pgl_{2}).\]
    \item $(5,(8,1),(8,1),(5,1),(4,1),(1,1))$.
    A global Arthur parameter of this type is of the form:
        \[\sym^{4}\pi_{1}\oplus(\sym^{3}\pi_{1}\otimes\pi_{2})\oplus(\sym^{3}\pi_{1}\otimes\pi_{3})\oplus(\pi_{2}\otimes\pi_{3})\oplus[1],\,\pi_{1},\pi_{2},\pi_{3}\in\Pi_{\alg}^{\perp}(\pgl_{2}).\]
\end{enumerate}

For this subgroup $H$ of $\lietype{F}{4}$,
the restriction of the adjoint representation $\mathfrak{f}_{4}$ of $\lietype{F}{4}$ to $H$ is isomorphic to 
\begin{gather*}
    \triv\otimes\left(\sym^{2}\st\otimes\triv+\triv\otimes\sym^{2}\st\right)+
    \sym^{2}\st\otimes\triv\otimes\triv+
    \sym^{3}\st\otimes\left(\st\otimes\triv+\triv\otimes\st\right)\\
    +\sym^{4}\st\otimes\st\otimes\st+ 
    \sym^{6}\st\otimes\triv\otimes\triv 
\end{gather*}
\begin{spprop}\label{prop multiplicity principal A1 in Spin(5) with Spin(4)}
    For a discrete global Arthur parameter $\psi\in\Psi_{\mathrm{AJ}}(\grpF)$ satisfying $\mathrm{H}(\psi)=H$,
    the multiplicity $\mathrm{m}(\pi_{\psi})=1$ if and only if $\psi$ is one of the following parameters: 
   \begin{itemize}
    \item $\sym^{4}\pi_{1}\oplus(\sym^{3}\pi_{1}\otimes\pi_{2})\oplus\sym^{3}\pi_{1}[2]\oplus \pi_{2}[2]\oplus[1]$, 
    where $\pi_{1},\pi_{2}\in\Pi_{\alg}^{\perp}(\pgl_{2})$ have motivic weights $w_{1},w_{2}$ respectively and satisfy one of the following conditions
    \begin{itemize}
        \item $w_{2}<w_{1}$ or $w_{2}>4w_{1}+1$, and $w_{2}\equiv 3\modulo4$;
        \item $3w_{1}<w_{2}<4w_{1}-1$ and $w_{2}\equiv 1\modulo4$.
    \end{itemize}
    \item $(\pi_{1}\otimes\pi_{2})\oplus\pi_{1}[4]\oplus\pi_{2}[4]\oplus[5]\oplus[1]$, 
    where $\pi_{1},\pi_{2}\in\Pi_{\alg}^{\perp}(\pgl_{2})$ have motivic weights $w_{1}>w_{2}$ respectively and $w_{1}\equiv 3\modulo 4,\ w_{2}\equiv 1\modulo 4,\ w_{2}<w_{1}-4$.
    \item $\sym^{4}\pi_{1}\oplus(\sym^{3}\pi_{1}\otimes\pi_{2})\oplus(\sym^{3}\pi_{1}\otimes\pi_{3})\oplus(\pi_{2}\otimes\pi_{3})\oplus[1]$, 
    where $\pi_{1},\pi_{2},\pi_{3}\in\Pi_{\alg}^{\perp}(\pgl_{2})$ have motivic weights $w_{1}$ and $w_{2}>w_{3}$ respectively 
    satisfying one of the following conditions:
    \begin{itemize}
       \item $w_{1}>w_{3}$ and $2w_{1}-w_{3}<w_{2}<2w_{1}+w_{3}$;
       \item $w_{3}<3w_{1}<w_{2}<2w_{1}+w_{3}$;
       \item $w_{1}<w_{3}<3w_{1},\,w_{2}>4w_{1}+w_{3}$. 
    \end{itemize}                  
\end{itemize}
\end{spprop}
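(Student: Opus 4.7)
The plan is to follow the same strategy used in the previous propositions (\Cref{prop multiplicity principal A1} through \Cref{prop multiplicity a bizzare  A1+A1+A1 via A1+Sp(3)}): for each of the three endoscopic types enumerated above, pick explicit generators $\gamma_1,\gamma_2$ of the global component group $\mathrm{C}_{\psi}\simeq \mathrm{Z}(H)\simeq (\Z/2\Z)^{2}$, compute both Arthur's character $\varepsilon_{\psi}$ (via \Cref{prop epsilon character of Arthur}) and the character $\rho^{\vee}_{\psi}$ (via \Cref{prop calculation of rho character for F4}) on each generator, and then apply Arthur's multiplicity formula \eqref{eqn Arthur multiplicity formula} to single out the quadruples of motivic weights giving $m_{\psi}=1$. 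One notable feature here, absent from most preceding cases, is that $\mathrm{C}_{\psi}$ has order $4$, so we must verify the equality $\rho^{\vee}_{\psi}=\varepsilon_{\psi}$ on two independent involutions; this doubles the bookkeeping but does not change the nature of the argument.

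The concrete steps, for each endoscopic type, are as follows. First, using the restriction of the adjoint representation $\mathfrak{f}_{4}$ to $H$ recorded above, decompose $\mathfrak{f}_{4}\circ\psi$ into irreducible summands $\chi_{i}\otimes \pi_{i}[d_{i}]$ according to \eqref{eqn decomposition of adjoint representation}. Using \Cref{prop epsilon character of Arthur}, the value $\varepsilon_{\psi}(\gamma)$ for each generator $\gamma$ reduces to a product of global $\varepsilon$-factors of self-dual summands on which $\gamma$ acts by $-1$ and whose Arthur $\mathrm{SL}_{2}$-dimension is even; these are then evaluated through \Cref{prop Langlands parameter of symplectic representation} and \Cref{prop relation between epsilon factors}, yielding a $(-1)$-power depending on the parities of the motivic weights $w_{1},w_{2},w_{3}$ and on the max-of-two-weights formula $\varepsilon(\mathbf{I}_{w}\otimes \mathbf{I}_{w'})=(-1)^{\max(w,w')+1}$ (already used in the proof of \Cref{prop multiplicity a bizzare  A1+A1+A1 via A1+Sp(3)}). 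In parallel, one identifies the two top eigenvalues $\mu_{1}$ and $\mu_{4}$ of the infinitesimal character $\mathrm{r}_{0}(\mathrm{c}_{\infty}(\psi))$: by \Cref{lemma eigenvalues of infinitesimal character of F4 representation} only a few expressions in $w_{1},w_{2},w_{3}$ can be $\mu_{1}$, and the list of candidates for $\mu_{4}$ splits the parameter space of weights into a small number of chambers. On each chamber \Cref{prop calculation of rho character for F4} gives $\rho^{\vee}_{\psi}(\gamma)$ as the product of the scalars by which $\gamma$ acts on the two summands of $\mathrm{r}_{0}\circ\psi$ containing $\mu_{1}$ and $\mu_{4}$.

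Carrying this out case by case: for type (i), $\psi=\sym^{4}\pi_{1}\oplus(\sym^{3}\pi_{1}\otimes\pi_{2})\oplus\sym^{3}\pi_{1}[2]\oplus\pi_{2}[2]\oplus[1]$ has only two non-tempered summands on which the generators of $\mathrm{C}_{\psi}$ can act non-trivially, so $\varepsilon_{\psi}$ depends on the parity of $(w_{2}-1)/2$ and on $\max(3w_{1},w_{2})$; the analysis of $\mu_{4}$ breaks into the regions $w_{2}<w_{1}$, $w_{1}<w_{2}<3w_{1}$, $3w_{1}<w_{2}<4w_{1}$ and $w_{2}>4w_{1}$, whose combination with the parity conditions on $w_{2}\ \mathrm{mod}\ 4$ yields exactly the two listed conditions. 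For type (ii), $\psi=(\pi_{1}\otimes\pi_{2})\oplus\pi_{1}[4]\oplus\pi_{2}[4]\oplus[5]\oplus[1]$, one gets $\varepsilon_{\psi}(\gamma_{i})=\varepsilon(\pi_{i})=(-1)^{(w_{i}+1)/2}$ on the two obvious generators, while $\mu_{1}=\tfrac{w_{1}+w_{2}}{2}$ and $\mu_{4}$ is forced to lie in $\pi_{1}[4]$ (since $\pi_{1}\otimes\pi_{2}$ contributes only two positive weights and $[5]$ contributes $2$); combining produces the stated parity conditions $w_{1}\equiv 3,\ w_{2}\equiv 1\ \mathrm{mod}\ 4$ together with the inequality $w_{2}<w_{1}-4$ needed for $\psi\in\Psi_{\mathrm{AJ}}(\grpF)$. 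Type (iii) is tempered, so $\varepsilon_{\psi}$ is trivial; the nontrivial input is the chamber decomposition for $(w_{1},w_{2},w_{3})$ describing which summand contains $\mu_{4}$, and one checks directly that $\rho^{\vee}_{\psi}$ is trivial precisely under the three inequality regimes listed.

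The main obstacle will be the third case. Because three independent weights $w_{1},w_{2},w_{3}$ enter, the chamber decomposition of $\R_{>0}^{3}$ according to the value of $\mu_{4}$ is genuinely intricate: one must compare numerous linear forms such as $2w_{1},\,w_{2}+w_{3},\,w_{2}-w_{3},\,\tfrac{3w_{1}+w_{2}}{2},\,\tfrac{3w_{1}-w_{2}}{2},\,\tfrac{3w_{1}+w_{3}}{2}$ and $\tfrac{3w_{1}-w_{3}}{2}$, determine on which summand of $\mathrm{r}_{0}\circ\psi$ the value $\mu_{4}$ is realized, and keep track of the action of the $\mu_{2}^{\Delta}$-identifications on each summand so as to compute $\rho^{\vee}_{\psi}(\gamma_{1})$ and $\rho^{\vee}_{\psi}(\gamma_{2})$ correctly. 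A convenient normalization is to rescale by $w_{3}$ as in the proof of \Cref{prop multiplicity a bizzare  A1+A1+A1 via A1+Sp(3)}, reducing to a planar case parametrised by $(w_{1}/w_{3},w_{2}/w_{3})$; a careful case-by-case examination then yields the three inequality regimes in the statement and confirms that outside them one of $\gamma_{1},\gamma_{2}$ has $\rho^{\vee}_{\psi}(\gamma)=-1$, forcing $m_{\psi}=0$.
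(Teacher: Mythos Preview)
Your plan is correct and follows the paper's proof essentially step for step: the paper chooses generators $\sigma=(-1,1,1)$ and $\sigma_{1}=(1,1,-1)$ of $\mathrm{C}_{\psi}=\mathrm{Z}(H)$, computes $\varepsilon_{\psi}$ and $\rho^{\vee}_{\psi}$ on each in all three cases exactly as you outline, and in Case~(iii) rescales by $w_{3}$ to reduce to a planar chamber analysis in $(r_{1},r_{2})=(w_{1}/w_{3},w_{2}/w_{3})$. One small slip to correct when you carry this out: in Case~(i) the summands of $\mathfrak{f}_{4}\circ\psi$ with even Arthur $\SL_{2}$-dimension are $\sym^{3}\pi_{1}[2]$ and $(\sym^{4}\pi_{1}\otimes\pi_{2})[2]$, so $\varepsilon_{\psi}$ depends on $\max(4w_{1},w_{2})+\max(2w_{1},w_{2})$ rather than on $\max(3w_{1},w_{2})$; the thresholds $w_{1}$ and $3w_{1}$ you list belong to the $\rho^{\vee}_{\psi}$ analysis, while $2w_{1}$ and $4w_{1}$ govern $\varepsilon_{\psi}$ (the $2w_{1}$ threshold is harmless since it lies in the region $w_{1}<w_{2}<3w_{1}$ where $\rho^{\vee}_{\psi}\in\{1,\chi_{2}\}$ already forces $m_{\psi}=0$).
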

\begin{proof}
    We take a set of generators $\{\sigma=(-1,1,1),\sigma_{1}=(1,1,-1)\}$ of $\mathrm{C}_{\psi}=\mathrm{Z}(H)\simeq \Z/2\Z\times\Z/2\Z$.
    Let $\chi_{1},\chi_{2}$ be two generators of the character group of $\mathrm{C}_{\psi}$ such that $\chi_{1}(\sigma)=\chi_{2}(\sigma_{1})=-1,\chi_{1}(\sigma_{1})=\chi_{2}(\sigma)=1$.
    
    \textbf{Case (i)}: $\psi=\sym^{4}\pi_{1}\oplus(\sym^{3}\pi_{1}\otimes\pi_{2})\oplus\sym^{3}\pi_{1}[2]\oplus \pi_{2}[2]\oplus[1]$, 
    where $\pi_{1},\pi_{2}\in\Pi_{\alg}^{\perp}(\pgl_{2})$ have motivic weights $w_{1},w_{2}$ respectively.
    In this case,
    the restriction of $\mathfrak{f}_{4}$ along $\psi$ is isomorphic to:
    \begin{gather*}
        \sym^{6}\pi_{1}\oplus\left(\sym^{4}\pi_{1}\otimes\pi_{2}[2]\right)\oplus \left(\sym^{3}\pi_{1}\otimes\pi_{2}\right)\oplus\sym^{3}\pi_{1}[2]\oplus\sym^{2}\pi_{1}\oplus\sym^{2}\pi_{2}\oplus [3].
    \end{gather*}
    By \Cref{prop epsilon character of Arthur} we have:
    \begin{gather*}
        \varepsilon_{\psi}(\sigma)=\varepsilon(\sym^{3}\pi_{1})=\varepsilon(\mathbf{I}_{3w_{1}}+\mathbf{I}_{w_{1}})=(-1)^{(3w_{1}+1)/2+(w_{1}+1)/2}=-1,\\
        \varepsilon_{\psi}(\sigma_{1})=\varepsilon(\sym^{4}\pi_{1}\otimes\pi_{2})\cdot\varepsilon(\sym^{3}\pi_{1})
        =(-1)^{\max(4w_{1},w_{2})+\max(2w_{1},w_{2})+(w_{2}-1)/2}.
    \end{gather*}
    So $\varepsilon_{\psi}=\chi_{1}$ or $\chi_{1}\chi_{2}$.
    On the other side,
    the largest weight $\mu_{1}$ is $2w_{1}$ or $\frac{3w_{1}+w_{2}}{2}$.
    \begin{enumerate}[label= (\arabic*)]
        \item If $w_{1}>w_{2}$, then $\mu_{1}=2w_{1}$. 
        Now $2w_{1}>\frac{3w_{1}+w_{2}}{2}>\frac{3w_{1}+1}{2}>\frac{3w_{1}-1}{2}$ and they are larger than other weights, 
        thus $\mu_{4}=\frac{3w_{1}-1}{2}$ and $\rho^{\vee}_{\psi}=\chi_{1}\chi_{2}$.
        \item If $w_{1}<w_{2}$, then $\mu_{1}=\frac{3w_{1}+w_{2}}{2}$. Now $\mu_{2}=2w_{1}$ or $\frac{w_{1}+w_{2}}{2}$.
        \begin{enumerate}
            \item If $w_{2}<3w_{1}$, then $\mu_{2}=2w_{1}$. Now $\mu_{4}=\frac{w_{1}+w_{2}}{2}$ or $\frac{3w_{1}\pm 1}{2}$, thus $\rho^{\vee}_{\psi}=1$ or $\chi_{2}$.
            \item If $w_{2}>3w_{1}$, then $\mu_{2}=\frac{w_{1}+w_{2}}{2}$. Now $\mu_{4}=2w_{1}$ or $\frac{w_{2}\pm 1}{2}$, thus $\rho^{\vee}=\chi_{1}$ or $\chi_{1}\chi_{2}$. 
            Notice that $\mu_{4}=2w_{1}$ if and only if $2w_{1}$ lies between $\frac{w_{2}+1}{2}$ and $\frac{w_{2}-1}{2}$, which can not happen. So $\rho^{\vee}_{\psi}=\chi_{1}\chi_{2}$ for any $w_{2}>3w_{1}$ and $w_{2}\neq 4w_{1}\pm 1$.
        \end{enumerate}
    \end{enumerate}
    Hence by Arthur's multiplicity formula, $\mathrm{m}(\pi_{\psi})=1$ if and only if one of the following conditions holds:
    \begin{itemize}
        \item $w_{2}<w_{1}$ or $w_{2}>4w_{1}+1$, and $w_{2}\equiv 3\modulo4$;
        \item $3w_{1}<w_{2}<4w_{1}-1$, and $w_{2}\equiv 1\modulo4$.
    \end{itemize}

    \textbf{Case (ii)}: $\psi=(\pi_{1}\otimes\pi_{2})\oplus\pi_{1}[4]\oplus\pi_{2}[4]\oplus[5]\oplus[1]$, 
    where $\pi_{1},\pi_{2}\in\Pi_{\alg}^{\perp}(\pgl_{2})$ have motivic weights $w_{1}>w_{2}$ respectively.
    In this case,
    the restriction of $\mathfrak{f}_{4}$ along $\psi$ is isomorphic to 
    \[\sym^{2}\pi_{1}\oplus\sym^{2}\pi_{2}\oplus \left(\pi_{1}\otimes\pi_{2}[5]\right)\oplus \pi_{1}[4]\oplus\pi_{2}[4]\oplus [7]\oplus [3].\]
    By \Cref{prop epsilon character of Arthur} we have:
    \begin{gather*}
        \varepsilon_{\psi}(\sigma)=\varepsilon(\pi_{1})\cdot\varepsilon(\pi_{2})=\varepsilon(\mathbf{I}_{w_{1}})\cdot\varepsilon(\mathbf{I}_{w_{2}})=(-1)^{(w_{1}+w_{2})/2+1}\\
        \varepsilon_{\psi}(\sigma_{1})=\varepsilon(\pi_{2})=\varepsilon(\mathbf{I}_{w_{2}})=(-1)^{(w_{2}+1)/2}.
    \end{gather*}
    On the other side,
    the condition $\psi\in\Psi_{\mathrm{AJ}}(\grpF)$ implies that $w_{2}<w_{1}-4$.
    Since 
    \[\frac{w_{1}+w_{2}}{2}>\frac{w_{1}+3}{2}>\frac{w_{1}+1}{2}>\frac{w_{1}-1}{2}\] 
    and they are larger than other weights,
    we have $\mu_{1}=\frac{w_{1}+w_{2}}{2}$ and $\mu_{4}=\frac{w_{1}-1}{2}$. 
    The global component group $\mathrm{C}_{\psi}$ acts on $\pi_{1}\otimes\pi_{2}$ and $\pi_{1}[4]$ by $\chi_{2}$ and $\chi_{1}$ respectively, 
    thus by \Cref{prop calculation of rho character for F4} the character $\rho_{\psi}^{\vee}=\chi_{1}\chi_{2}$.
    By Arthur's multiplicity formula, $\mathrm{m}(\pi_{\psi})=1$ if and only if $w_{1}\equiv 3\modulo4, w_{2}\equiv 1\modulo4$ and $w_{2}<w_{1}-4$.

    \textbf{Case (iii)}: $\psi=\sym^{4}\pi_{1}\oplus(\sym^{3}\pi_{1}\otimes\pi_{2})\oplus(\sym^{3}\pi_{1}\otimes\pi_{3})\oplus(\pi_{2}\otimes\pi_{3})\oplus[1]$, 
    where $\pi_{1},\pi_{2},\pi_{3}\in\Pi_{\alg}^{\perp}(\pgl_{2})$ have motivic weights $w_{1},w_{2},w_{3}$ respectively and we assume that $w_{2}>w_{3}$.
    In this case $\varepsilon_{\psi}$ is trivial since $\psi$ is tempered.
    On the other side, $\mathrm{C}_{\psi}$ acts on the four summands
    $\sym^{4}\pi_{1},\sym^{3}\pi_{1}\otimes\pi_{2},\sym^{3}\pi_{1}\otimes\pi_{3}$ and $\pi_{2}\otimes\pi_{3}$ by $1,\chi_{1},\chi_{1}\chi_{2}$ and $\chi_{2}$ respectively.
    Denote the ratios $w_{1}/w_{3},w_{2}/w_{3}$ by $r_{1},r_{2}$ respectively 
    and the corresponding multiset by $\widetilde{\mathcal{W}}$ as in the proof of \Cref{prop multiplicity a bizzare A1+A1+A1 via A1+Sp(3)}.
    We still order the elements of $\widetilde{\mathcal{W}}$ by $\mu_{1}>\mu_{2}>\cdots>\mu_{26}$,
    then by \Cref{prop calculation of rho character for F4} the character $\rho_{\psi}^{\vee}=1$ if and only if $\mu_{1}$ and $\mu_{4}$ come from the same irreducible summand of $\psi$.
    The largest element $\mu_{1}$ is $2r_{1}$ or $\frac{3r_{1}+r_{2}}{2}$ or $\frac{r_{2}+1}{2}$.
    \begin{enumerate}[label= (\arabic*)]
        \item If $r_{2}<r_{1}$, then $\mu_{1}=2r_{1}$. 
        Now $2r_{1}>\frac{3r_{1}+r_{2}}{2}>\frac{3r_{1}+1}{2}>\frac{3r_{1}-r_{2}}{2}>r_{1}$,
        thus $\rho^{\vee}_{\psi}$ is not trivial.
        \item If $r_{2}>r_{1}$ and $r_{1}>1/3$, then $\mu_{1}=\frac{3r_{1}+r_{2}}{2}$. 
        \begin{enumerate}
            \item If $r_{1}>1$, then $\rho^{\vee}_{\psi}=1$ if and only if $\mu_{4}=\frac{r_{1}+r_{2}}{2}$,
            which is equivalent to $2r_{1}-1<r_{2}<2r_{1}+1$.
            \item If $r_{1}<1$, then $\rho^{\vee}_{\psi}=1$ if and only if $\mu_{4}=\frac{r_{2}\pm r_{1}}{2}$. 
            \begin{enumerate}[label= (\Roman*)]
                \item $\mu_{4}=\frac{r_{2}+r_{1}}{2}$ if and only if $2r_{1}<\frac{r_{2}+r_{1}}{2}<\frac{3r_{1}+1}{2}\Leftrightarrow 3r_{1}<r_{2}<2r_{1}+1$.
                \item $\mu_{4}=\frac{r_{2}-r_{1}}{2}$ if and only if $\frac{r_{2}-r_{1}}{2}>\frac{3r_{1}+1}{2}\Leftrightarrow r_{2}>4r_{1}+1$.
            \end{enumerate}
        \end{enumerate}
        \item If $r_{1}<1/3$, then $\mu_{1}=\frac{r_{2}+1}{2}$. 
        Now $\frac{r_{2}+1}{2},\frac{r_{2}\pm r_{1}}{2},\frac{3r_{1}+r_{2}}{2}$ are larger than $\frac{r_{2}-1}{2}$, so $\frac{r_{2}-1}{2}$ can not be $\mu_{4}$ and thus $\rho^{\vee}_{\psi}\neq 1$.
    \end{enumerate}
    In conclusion, 
    by Arthur's multiplicity formula, 
    $\mathrm{m}(\pi_{\psi})=1$ if and only if $w_{1},w_{2},w_{3}$ satisfy one of the three conditions in \Cref{prop multiplicity principal A1 in Spin(5) with Spin(4)}.
\end{proof}

\subsubsection{\texorpdfstring{$H=\prod_{i=1}^{4}\mathrm{A}_{1}^{[2^{6},1^{14}]}/\mu_{2}^{\Delta}$}{}}
\label{section Arthur four copies of A1}
\tp By \Cref{section info four copies of A1},
the restriction of the $26$-dimensional irreducible representation $\jord_{0}$ of $\lietype{F}{4}$ to $H$ is isomorphic to 
\[\triv^{\oplus 2}+\sum_{\sym}\st\otimes\st\otimes \triv\otimes \triv,\]
and the centralizer of $H$ in $\lietype{F}{4}$ is $\mathrm{Z}(H)\simeq \Z/2\Z\times\Z/2\Z\times\Z/2\Z$.

For $\psi\in\Psi_{\mathrm{AJ}}(\grpF)$ satisfying $\mathrm{H}(\psi)=H$ and $\mathrm{m}(\pi_{\psi})=1$,
there are two possible endoscopic types:
\begin{enumerate}[label=(\roman*)]
    \item $(8,(4,1),(4,1),(4,1),(2,2),(2,2),(2,2),(1,1),(1,1))$.
    A global Arthur parameter of this type is of the form:
        \[\left(\bigoplus_{1\leq i<j\leq 3}\pi_{i}\otimes\pi_{j}\right)\oplus\left(\bigoplus_{1\leq i\leq 3}\pi_{i}[2]\right)\oplus[1]\oplus[1],\,\pi_{1},\pi_{2},\pi_{3}\in\Pi_{\alg}^{\perp}(\pgl_{2}).\]
    \item $(8,(4,1),(4,1),(4,1),(4,1),(4,1),(4,1),(1,1),(1,1))$.
    A global Arthur parameter of this type is of the form:
        \[\left(\bigoplus_{1\leq i<j\leq 4}\pi_{i}\otimes\pi_{j}\right)\oplus[1]\oplus[1],\,\pi_{1},\pi_{2},\pi_{3},\pi_{4}\in\Pi_{\alg}^{\perp}(\pgl_{2}).\]
\end{enumerate}

For this subgroup $H$ of $\lietype{F}{4}$,
the restriction of the adjoint representation $\mathfrak{f}_{4}$ of $\lietype{F}{4}$ to $H$ is isomorphic to 
\[\sum_{\sym}\sym^{2}\st\otimes\triv\otimes\triv\otimes\triv+
\sum_{\sym}\st\otimes\st\otimes\triv\otimes\triv+
\st\otimes\st\otimes\st\otimes\st.\]
\begin{spprop}\label{prop multiplicity four copies of A1}
    For a discrete global Arthur parameter $\psi\in\Psi_{\mathrm{AJ}}(\grpF)$ satisfying $\mathrm{H}(\psi)=H$,
    the multiplicity $\mathrm{m}(\pi_{\psi})=1$ if and only if $\psi$ has the form: 
    \[\psi=\left(\bigoplus_{1\leq i<j\leq 3}\pi_{i}\otimes\pi_{j}\right)\oplus\left(\bigoplus_{1\leq i\leq 3}\pi_{i}[2]\right)\oplus[1]\oplus[1],\] 
                where $\pi_{1},\pi_{2},\pi_{3}\in\Pi_{\alg}^{\perp}(\pgl_{2})$ have motivic weights $w_{1}>w_{2}>w_{3}$ respectively such that one of the following conditions holds:
                    \begin{itemize}
                        \item $w_{1}>w_{2}+w_{3}+1$, and $w_{1}\equiv w_{3}\equiv 3\modulo 4,\ w_{2}\equiv 1\modulo 4$;
                        \item $w_{1}<w_{2}+w_{3}-1$, and $w_{1}\equiv w_{3}\equiv 1\modulo 4,\ w_{2}\equiv 3\modulo 4$.
                    \end{itemize}
\end{spprop}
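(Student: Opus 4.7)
The strategy is exactly the same as for \Cref{prop multiplicity principal A1 in Spin(5) with Spin(4)}: for each of the two endoscopic types (i) and (ii) listed above, I will compute Arthur's character $\varepsilon_\psi$ via \Cref{prop epsilon character of Arthur} and the character $\rho^\vee_\psi$ via \Cref{prop calculation of rho character for F4}, then apply \Cref{conj Arthur multiplicity formula}. Fix the isomorphism $\mathrm{C}_\psi = \mathrm{Z}(H) \simeq (\Z/2\Z)^3$ given by generators $\sigma_k = (-1, \ldots, -1, \ldots)$ (sign flip in the $k$-th factor, $k=1,2,3$, with the fourth factor determined by the diagonal quotient), and let $\chi_1, \chi_2, \chi_3$ be the dual basis of characters. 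The $\mathrm{C}_\psi$-action on the summand $\pi_i \otimes \pi_j$ is by $\chi_i \chi_j$ (with the convention $\chi_4 := \chi_1\chi_2\chi_3$), on $\pi_i[2]$ by $\chi_i$, and on the two copies of $[1]$ trivially.

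For \textbf{Case (ii)}, the parameter is tempered, so $\varepsilon_\psi \equiv 1$ by \Cref{prop epsilon character of Arthur}. It remains to show $\rho^\vee_\psi$ is never trivial. Order the motivic weights as $w_1 > w_2 > w_3 > w_4$ (equalities being excluded by the regularity built into $\psi \in \Psi_{\mathrm{AJ}}(\grpF)$). Then $\mu_1 = \tfrac12(w_1+w_2)$, coming from $\pi_1\otimes\pi_2$. The next candidates for $\mu_2,\mu_3,\mu_4$ among the eigenvalues of $\mathrm{c}_\infty(\psi)$ are $\tfrac12(w_1 \pm w_3)$, $\tfrac12(w_1\pm w_4)$, $\tfrac12(w_2\pm w_3)$, etc. A short case analysis on the position of $\tfrac12(w_1-w_2)$ relative to these shows that $\mu_4$ must come from a tensor product $\pi_i\otimes\pi_j$ with $\{i,j\} \neq \{1,2\}$, and the resulting $\rho^\vee_\psi = \chi_1\chi_2 \cdot \chi_i\chi_j$ is always a nontrivial character. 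Hence Case (ii) contributes no parameter with $m_\psi = 1$.

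For \textbf{Case (i)}, assume WLOG $w_1 > w_2 > w_3$. Using \Cref{prop epsilon character of Arthur} and the adjoint decomposition in \Cref{section info four copies of A1}, I will compute $\varepsilon_\psi(\sigma_k)$ as a product of $\varepsilon$-factors for the even-dimensional self-dual summands of $\mathfrak{f}_4|_\psi$ on which $\sigma_k$ acts by $-1$. These summands involve $\pi_i[2]$ and $\pi_i \otimes \pi_j$ paired with various $\sym^k\st$, so via the standard calculation $\varepsilon(\mathbf{I}_w) = (-1)^{(w+1)/2}$ and $\varepsilon(\mathbf{I}_w \otimes \mathbf{I}_{w'}) = (-1)^{\max(w,w')+1}$, one obtains $\varepsilon_\psi(\sigma_k)$ as an explicit sign depending on the residues of $w_1, w_2, w_3$ modulo $4$. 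Separately, the largest weight $\mu_1 = \tfrac12(w_1+w_2)$ comes from $\pi_1 \otimes \pi_2$, while $\mu_4$ depends sharply on whether $w_1 \gtrless w_2 + w_3$: in the regime $w_1 > w_2 + w_3$ one finds $\mu_4$ comes from $\pi_1[2]$ (giving $\rho^\vee_\psi = \chi_2$), and in the regime $w_1 < w_2 + w_3$ one finds $\mu_4$ comes from $\pi_2 \otimes \pi_3$ (giving $\rho^\vee_\psi = \chi_1\chi_3$). Setting $\varepsilon_\psi = \rho^\vee_\psi$ in each regime and unwinding the sign conditions will yield precisely the two congruence conditions in the statement.

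The main obstacle will be the case analysis for $\mu_4$ in Case (ii): there are many competing expressions $\tfrac12(w_i \pm w_j)$ that could be equal to $\mu_4$, and one must verify uniformly across all orderings of $w_1, \ldots, w_4$ that $\mu_4$ never comes from the same tensor product $\pi_i\otimes \pi_j$ as $\mu_1$. Once this combinatorial lemma is handled, the rest reduces to bookkeeping with signs and residues, entirely parallel to the other cases of \Cref{thm arthur parameter with nonzero multiplicity}.
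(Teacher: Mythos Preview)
Your overall strategy (compute $\varepsilon_\psi$ and $\rho^\vee_\psi$, then compare) is correct and matches the paper, and your treatment of Case~(ii) is essentially right. But Case~(i) contains a genuine error in the determination of $\mu_4$.

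You claim that when $w_1 < w_2 + w_3$ the weight $\mu_4$ comes from $\pi_2\otimes\pi_3$, giving $\rho^\vee_\psi = \chi_1\chi_3$. This is false. After $\mu_1 = \tfrac{w_1+w_2}{2}$ and $\mu_2 = \tfrac{w_1+w_3}{2}$, the next three candidates are $\tfrac{w_1+1}{2}$, $\tfrac{w_1-1}{2}$, $\tfrac{w_2+w_3}{2}$ (every other eigenvalue is strictly smaller than $\tfrac{w_1-1}{2}$, since $w_3 > 1$ and $w_2 \le w_1 - 2$). So $\mu_4$ is the \emph{middle} of these three. But $\tfrac{w_1+1}{2}$ and $\tfrac{w_1-1}{2}$ are consecutive integers, so the integer $\tfrac{w_2+w_3}{2}$ can never lie strictly between them: it is either the largest or the smallest of the three. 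Hence $\mu_4 \in \{\tfrac{w_1+1}{2},\tfrac{w_1-1}{2}\}$ in \emph{both} regimes, and $\rho^\vee_\psi = \chi_2$ always. With your claimed $\rho^\vee_\psi = \chi_1\chi_3$ in the regime $w_1 < w_2+w_3$, unwinding would give $w_1 \equiv w_3 \equiv 3$, $w_2 \equiv 1 \pmod 4$ there, which contradicts the statement.

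The dichotomy in the proposition therefore does \emph{not} come from $\rho^\vee_\psi$; it comes entirely from $\varepsilon_\psi$. The key summand in $\mathfrak f_4|_\psi$ that you glossed over is $\st\otimes\st\otimes\st\otimes\st$, which in Case~(i) becomes $\pi_1\otimes\pi_2\otimes\pi_3\,[2]$. Its $\varepsilon$-factor is $(-1)^{\max(w_1,\,w_2+w_3)+1}$, and this is where the case split $w_1 \gtrless w_2+w_3$ actually enters. Concretely one finds
\[
\varepsilon_\psi(\sigma_k) = \varepsilon(\pi_k)\cdot\varepsilon(\pi_1\otimes\pi_2\otimes\pi_3) = (-1)^{\max(w_1,\,w_2+w_3)+(w_k-1)/2},
\]
and equating this with $\chi_2$ yields exactly the two sets of congruences in the statement.
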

\begin{proof}
    We take a set of generators $\{\gamma=(-1,1,1,1),\gamma_{1}=(1,-1,1,1),\gamma_{2}=(1,1,-1,1)\}$ of $\mathrm{C}_{\psi}=\mathrm{Z}(H)\simeq \Z/2\Z\times\Z/2\Z\times\Z/2\Z$.
    
    \textbf{Case (i)}: $\psi=(\bigoplus_{1\leq i<j\leq 3}\pi_{i}\otimes\pi_{j})\oplus(\bigoplus_{1\leq i\leq 3}\pi_{i}[2])\oplus[1]\oplus[1]$, 
    where $\pi_{1},\pi_{2},\pi_{3}\in\Pi_{\alg}^{\perp}(\pgl_{2})$ have motivic weights $w_{1}>w_{2}>w_{3}$ respectively.
    In this case,
    the restriction of $\mathfrak{f}_{4}$ along $\psi$ is isomorphic to 
    \[\left(\pi_{1}\otimes\pi_{2}\otimes\pi_{3}[2]\right)\oplus\left(\bigoplus_{1\leq i<j\leq 3}\pi_{i}\otimes\pi_{j}\right)\oplus\left(\bigoplus_{1\leq i\leq 3}\sym^{2}\pi_{i}\right)\oplus\left(\bigoplus_{1\leq i\leq 3}\pi_{i}[2]\right)\oplus [3].\]
    By \Cref{prop epsilon character of Arthur} we have:
    \begin{gather*}
        \varepsilon_{\psi}(\gamma)=\varepsilon(\pi_{1})\cdot\varepsilon(\pi_{1}\otimes\pi_{2}\otimes\pi_{3})=(-1)^{\max(w_{1},w_{2}+w_{3})+(w_{1}-1)/2},\\
        \varepsilon_{\psi}(\gamma_{1})=\varepsilon(\pi_{2})\cdot\varepsilon(\pi_{1}\otimes\pi_{2}\otimes\pi_{3})=(-1)^{\max(w_{1},w_{2}+w_{3})+(w_{2}-1)/2},\\
        \varepsilon_{\psi}(\gamma_{2})=\varepsilon(\pi_{3})\cdot\varepsilon(\pi_{1}\otimes\pi_{2}\otimes\pi_{3})=(-1)^{\max(w_{1},w_{2}+w_{3})+(w_{3}-1)/2}.
    \end{gather*}
    On the other side, the largest element $\mu_{1}$ must be $\frac{w_{1}+w_{2}}{2}$ 
    and $\mu_{4}$ is the middle one of $\{\frac{w_{1}+1}{2},\frac{w_{1}-1}{2},\frac{w_{2}+w_{3}}{2}\}$.
    Since there is no integer between $\frac{w_{1}+1}{2}$ and $\frac{w_{1}-1}{2}$,
    we have $\mu_{4}\neq\frac{w_{2}+w_{3}}{2}$.
    So $\rho_{\psi}^{\vee}$ is the product of two characters of $\mathrm{C}_{\psi}$ coming from $\pi_{1}\otimes\pi_{2}$ and $\pi_{1}[2]$ respectively,
    thus $\rho_{\psi}^{\vee}(\gamma)=\rho_{\psi}^{\vee}(\gamma_{2})=1$ and $\rho_{\psi}^{\vee}(\gamma_{1})=-1$.

    By Arthur's multiplicity formula, $\mathrm{m}(\pi_{\psi})=1$ if and only if one of the following conditions holds:
    \begin{itemize}
        \item $w_{1}>w_{2}+w_{3}+1$, and $w_{1}\equiv w_{3}\equiv 3\modulo 4, w_{2}\equiv 1\modulo 4$;
        \item $w_{1}<w_{2}+w_{3}-1$, and $w_{1}\equiv w_{3}\equiv 1\modulo 4, w_{2}\equiv 3\modulo 4$.
    \end{itemize}

    \textbf{Case (ii)}: $\psi=(\bigoplus_{1\leq i<j\leq 4}\pi_{i}\otimes\pi_{j})\oplus[1]\oplus[1]$,
    where $\pi_{1},\pi_{2},\pi_{3},\pi_{4}\in\Pi_{\alg}^{\perp}(\pgl_{2})$ have motivic weights $w_{1}>w_{2}>w_{3}>w_{4}$ respectively.
    In this case, $\varepsilon_{\psi}$ is trivial.
    On the other side, $\mu_{1}$ must be $\frac{w_{1}+w_{2}}{2}$.
    Notice that $\mathrm{C}_{\psi}$ acts on $6$ components $\pi_{i}\otimes\pi_{j}$ via $6$ different characters, so $\rho_{\psi}^{\vee}$ is trivial if and only if $\mu_{4}=\frac{w_{1}-w_{2}}{2}$.
    However, 
    \[\frac{w_{1}-w_{2}}{2}<\frac{w_{1}-w_{3}}{2}<\frac{w_{1}-w_{4}}{2}<\frac{w_{1}+w_{4}}{2}<\frac{w_{1}+w_{3}}{2}<\frac{w_{1}+w_{2}}{2},\]
    thus $\rho_{\psi}^{\vee}\neq 1$ and $\mathrm{m}(\pi_{\psi})=0$.
\end{proof}

\subsubsection{\texorpdfstring{$H=\mathrm{A}_{1}^{[5,3^{7}]}\times \mathrm{G}_{2}$}{}}
\label{section Arthur A1+G2}
\tp In this case,
we need to consider cuspidal representations $\pi\in\Pi_{\alg,\reg}^{\mathrm{o}}(\pgl_{7})$
such that the image of the corresponding irreducible representation $\mathcal{L}_{\Z}\rightarrow \SL_{7}(\C)$
is a compact Lie group of type $\lietype{G}{2}$.
This kind of representations correspond to 
discrete automorphic representations of the unique semisimple anisotropic $\Z$-group of type $\lietype{G}{2}$
with stable tempered type,
which have been studied in \cite[\S 8]{ChenevierRenard},
conditional to the existence of $\mathcal{L}_{\Z}$ and Arthur's multiplicity formula.
We denote by $\Pi_{\alg}^{\lietype{G}{2}}(\pgl_{7})\subset\Pi_{\alg,\reg}^{\mathrm{o}}(\pgl_{7})$ the subset of these representations.
The Hodge weights of a representation $\pi\in\Pi_{\alg}^{\lietype{G}{2}}(\pgl_{7})$
have the form $w+v>w>v$,
where $w,v$ are even integers.

By \Cref{section info A1+G2},
the restriction of the $26$-dimensional irreducible representation $\jord_{0}$ of $\lietype{F}{4}$ to $H$ is isomorphic to 
\[\sym^{2}\st\otimes \vrep{7}+ \sym^{4}\st\otimes\triv,\]
where $\vrep{7}$ is the $7$-dimensional irreducible representation of $\lietype{G}{2}$,
and the centralizer of $H$ in $\lietype{F}{4}$ is trivial.

For $\psi\in\Psi_{\mathrm{AJ}}(\grpF)$ satisfying $\mathrm{H}(\psi)=H$ and $\mathrm{m}(\pi_{\psi})=1$,
there are two possible endoscopic types:
\begin{enumerate}[label=(\roman*)]
    \item $(2,(7,3),(1,5))$.
    A global Arthur parameter of this type is of the form:
        \[\pi[3]\oplus [5],\,\pi\in\Pi_{\alg}^{\lietype{G}{2}}(\pgl_{7}).\]
    \item $(2,(21,1),(5,1))$.
    A global Arthur parameter of this type is of the form:
        \[(\pi\otimes\sym^{2}\tau)\oplus \sym^{4}\tau,\,\pi\in\Pi_{\alg}^{\lietype{G}{2}}(\pgl_{7}),\tau\in\Pi_{\alg}^{\perp}(\pgl_{2}).\]
\end{enumerate}
\begin{spprop}\label{prop multiplicity A1+G2}
    For a discrete global Arthur parameter $\psi\in\Psi_{\mathrm{AJ}}(\grpF)$ satisfying $\mathrm{H}(\psi)=H$,
    the multiplicity $\mathrm{m}(\pi_{\psi})=1$ if and only if $\psi$ is one of the following parameters: 
    \begin{itemize}
        \item $\pi[3]\oplus [5]$, where $\pi\in\Pi_{\alg}^{\lietype{G}{2}}(\pgl_{7})$ has Hodge weights $w+v>w>v$ such that $v>4$;
        \item $(\pi\otimes\sym^{2}\tau)\oplus \sym^{4}\tau$, where $\pi\in\Pi_{\alg}^{\lietype{G}{2}}(\pgl_{7})$ has Hodge weights $w+v>w>v$ and $\tau\in\Pi_{\alg}^{\perp}(\pgl_{2})$ satisfies $\mathrm{w}(\tau)\notin\{\frac{w+v}{2},\frac{w}{2},\frac{v}{2}\}$.
    \end{itemize}
\end{spprop}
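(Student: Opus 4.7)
My starting point is that \Cref{section info A1+G2} shows the centralizer of $H = \mathrm{A}_{1}^{[5,3^{7}]} \times \lietype{G}{2}$ in $\lietype{F}{4}$ is trivial. Hence $\mathrm{C}_{\psi} = 1$ for every $\psi \in \Psi_{\mathrm{AJ}}(\grpF)$ with $\mathrm{H}(\psi) = H$, and both characters $\rho^{\vee}_{\psi}$ and $\varepsilon_{\psi}$ appearing in \Cref{conj Arthur multiplicity formula} are automatically the trivial character. Arthur's multiplicity formula thus forces $m_{\psi} = 1$ unconditionally, and the proposition reduces to a purely combinatorial question: for each of the two endoscopic families (i) and (ii) listed in \Cref{section Arthur A1+G2}, decide exactly when the candidate parameter actually lies in $\Psi_{\mathrm{AJ}}(\grpF)$, i.e.\ when the 26-element eigenvalue multiset of $\mathrm{r}_{0}(\mathrm{c}_{\infty}(\psi))$ matches, for some $a,b,c,d \in \mathbb{N}$, the template of \Cref{lemma eigenvalues of infinitesimal character of F4 representation}.

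For case (i) I will write out the 26 eigenvalues of $\psi = \pi[3] \oplus [5]$ explicitly: each of the seven weights $0, \pm v/2, \pm w/2, \pm(w+v)/2$ of $\mathrm{c}_{\infty}(\pi)$ shifted by $-1, 0, 1$, together with $-2, -1, 0, 1, 2$ from $[5]$. Comparing positive halves, the three smallest values $\{1, 1, 2\}$ immediately force $a = b = 0$; the remaining three arithmetic triples $\{k-1, k, k+1\}$ for $k \in \{v/2, w/2, (w+v)/2\}$ must then match the template triples $\{c+2, c+3, c+4\}$, $\{c+d+3, c+d+4, c+d+5\}$ and $\{2c+d+6, 2c+d+7, 2c+d+8\}$. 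Solving yields the unique values $c = v/2 - 3$ and $d = (w-v)/2 - 1$, and the admissibility requirement $c, d \in \mathbb{N}$ translates precisely into $v > 4$ (the inequality $w > v$ is automatic from the regularity of $\pi \in \Pi_{\alg}^{\lietype{G}{2}}(\pgl_{7})$).

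For case (ii), setting $W := \mathrm{w}(\tau)$, the eigenvalues of $\pi \otimes \sym^{2}\tau$ are the weights of $\pi$ shifted by $-W, 0, W$, and $\sym^{4}\tau$ contributes $\pm 2W, \pm W, 0$. A direct count shows that zero occurs with multiplicity exactly $2$ in the full list if and only if none of $v/2, w/2, (w+v)/2$ equals $W$, which is exactly the condition stated in the proposition. Under this non-coincidence hypothesis, the 12 positive eigenvalues decompose naturally into the extra contribution $\{W, 2W\}$ from $\sym^{4}\tau$, a second copy of $W$ coming from the zero weight of $\pi$, and three ``blocks'' of the form $\{k, k + W, |k - W|\}$ for $k \in \{v/2, w/2, (w+v)/2\}$; these are then matched against \Cref{lemma eigenvalues of infinitesimal character of F4 representation} to read off an admissible quadruple $(a,b,c,d) \in \mathbb{N}^{4}$.

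The principal technical point, and the main place where genuine care is required, is the sufficiency half of case (ii): one must verify that the single non-coincidence hypothesis $W \notin \{v/2, w/2, (w+v)/2\}$ always suffices to produce a valid match, irrespective of the ordering of $W$ relative to the three quantities $v/2, w/2, (w+v)/2$. I plan to dispose of this by a short case analysis on the position of $W$ within that ordered set; in each sub-case, the explicit multiset of positive eigenvalues is written down and identified, term by term, with the Lemma template, the resulting $(a,b,c,d)$ being automatically non-negative. Once this case analysis is complete, the proposition follows.
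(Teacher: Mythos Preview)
Your proposal is correct and follows essentially the same approach as the paper. The paper's entire proof is the single sentence ``This follows from the condition $\psi\in\Psi_{\mathrm{AJ}}(\grpF)$ and the fact that $\mathrm{C}_{\psi}$ is trivial,'' so your identification of the trivial component group as the key point is exactly right; the rest is verifying that the stated conditions on $v$ and on $\mathrm{w}(\tau)$ are precisely the conditions for $\psi\in\Psi_{\mathrm{AJ}}(\grpF)$, which the paper leaves implicit and which you spell out in more detail (correctly for case (i), and with a reasonable outline for case (ii)).
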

\begin{proof}
    This follows from the condition $\psi\in\Psi_{\mathrm{AJ}}(\grpF)$ and the fact that $\mathrm{C}_{\psi}$ is trivial.
\end{proof}

\subsubsection{\texorpdfstring{$H=\left(\mathrm{A}_{1}^{[2^{6},1^{14}]}\times \mathrm{A}_{1}^{[2^{6},1^{14}]}\times \symp(2)\right)/\mu_{2}^{\Delta}$}{}}
\label{section Arthur A1+A1+Sp(2)}
\tp By \Cref{section info A1+A1+Sp(2)},
the restriction of the $26$-dimensional irreducible representation $\jord_{0}$ of $\lietype{F}{4}$ to $H$ is isomorphic to 
\[\triv+
\st\otimes\st\otimes \triv+
\st\otimes\triv\otimes \vrep{4}+
\triv\otimes\st\otimes \vrep{4}+
\triv\otimes\triv\otimes \wedge^{*}\vrep{4},
\]
where $\vrep{4}$ is the standard representation of $\symp(2)$ and $\wedge^{*}\vrep{4}$ is the $5$-dimensional irreducible representation of $\symp(2)$.
The centralizer of $H$ in $\lietype{F}{4}$ is $\mathrm{Z}(H)\simeq \Z/2\Z\times\Z/2\Z$.

For any $\pi\in\Pi_{\alg}^{\symp_{4}}(\pgl_{4})$,
we denote by $\wedge^{*}\pi$ the representation in $\Pi_{\alg,\reg}^{\mathrm{o}}(\pgl_{5})$ corresponding to the following irreducible representation of $\mathcal{L}_{\Z}$:
\[\mathcal{L}_{\Z}\overset{\psi_{\pi}}{\longrightarrow}\symp(2)\overset{\wedge^{*}}{\longrightarrow}\SL_{5}(\C).\]

For $\psi\in\Psi_{\mathrm{AJ}}(\grpF)$ satisfying $\mathrm{H}(\psi)=H$ and $\mathrm{m}(\pi_{\psi})=1$,
there are two possible endoscopic types:
\begin{enumerate}[label=(\roman*)]
    \item $(5,(8,1),(5,1),(4,2),(2,2),(1,1))$.
    A global Arthur parameter of this type is of the form:
        \[\wedge^{*}\pi\oplus\left(\pi\otimes\tau\right)\oplus\pi[2]\oplus\tau[2]\oplus[1],\,\pi\in\Pi_{\alg}^{\symp_{4}}(\pgl_{4}),\tau\in\Pi_{\alg}^{\perp}(\pgl_{2}).\]
    \item $(5,(8,1),(8,1),(5,1),(4,1),(1,1))$.
    A global Arthur parameter of this type is of the form:
        \[\wedge^{*}\pi\oplus\left(\pi\otimes\tau_{1}\right)\oplus\left(\pi\otimes\tau_{2}\right)\oplus\left(\tau_{1}\otimes\tau_{2}\right)\oplus[1],\,\pi\in\Pi_{\alg}^{\symp_{4}}(\pgl_{4}),\tau_{1},\tau_{2}\in\Pi_{\alg}^{\perp}(\pgl_{2}).\]
\end{enumerate}

For this subgroup $H$ of $\lietype{F}{4}$,
the restriction of the adjoint representation $\mathfrak{f}_{4}$ of $\lietype{F}{4}$ to $H$ is isomorphic to 
\begin{gather*}
    \left(\sym^{2}\st\otimes\triv+\triv\otimes\sym^{2}\st\right)\otimes \triv 
    +\left(\st\otimes\triv+\triv\otimes\st\right)\otimes\vrep{4}\\
    +\st\otimes\st\otimes\wedge^{*}\vrep{4}
    +\triv\otimes\triv\otimes\sym^{2}\vrep{4}.
\end{gather*}
\begin{spprop}\label{prop multiplicity A1+A1+Sp(2)}
    For a discrete global Arthur parameter $\psi\in\Psi_{\mathrm{AJ}}(\grpF)$ satisfying $\mathrm{H}(\psi)=H$,
    the multiplicity $\mathrm{m}(\pi_{\psi})=1$ if and only if $\psi$ is one of the following parameters: 
   \begin{itemize}
    \item $\wedge^{*}\pi\oplus\left(\pi\otimes\tau\right)\oplus\pi[2]\oplus\tau[2]\oplus[1]$, where $\pi\in\Pi_{\alg}^{\symp_{4}}(\pgl_{4})$ has Hodge weights $w_{1}>w_{2}>1$
    and $\tau\in\Pi_{\alg}^{\perp}(\pgl_{2})$ has motivic weight $v$ satisfying one of the following conditions:
    \begin{itemize}
        \item $w_{1}<v<w_{1}+w_{2}-1,w_{1}+w_{2}\equiv 0\modulo 4,v\equiv 1\modulo 4$;
        \item $w_{1}-w_{2}+1<v<w_{2},w_{1}+w_{2}\equiv 0\modulo 4,v\equiv 1\modulo 4$;
        \item $w_{2}<v<w_{1}-w_{2}-1,w_{1}+w_{2}\equiv 2\modulo 4,v\equiv 1\modulo 4$;
        \item $v>w_{1}+w_{2}+1,w_{1}+w_{2}\equiv 0\modulo 4,v\equiv 3\modulo 4$;
        \item $v<\min(w_{1}-w_{2}-1,w_{2}),w_{1}+w_{2}\equiv 0\modulo 4,v\equiv 3\modulo 4$;
        \item $\max(w_{1}-w_{2}+1,w_{2})<v<w_{1},w_{1}+w_{2}\equiv 2\modulo 4,v\equiv 3\modulo 4$.
    \end{itemize}
    \item $\wedge^{*}\pi\oplus\left(\pi\otimes\tau_{1}\right)\oplus\left(\pi\otimes\tau_{2}\right)\oplus\left(\tau_{1}\otimes\tau_{2}\right)\oplus[1]$, 
    where $\pi\in\Pi_{\alg}^{\symp_{4}}(\pgl_{4})$ has Hodge weights $w_{1}>w_{2}$ 
    and $\tau_{1},\tau_{2}\in\Pi_{\alg}^{\perp}(\pgl_{2})$ have motivic weights $v_{1}>v_{2}$ respectively satisfying one of the following conditions:
    \begin{itemize}
        \item $v_{2}<w_{2}<v_{1}$ and $w_{1}-w_{2}-v_{2}<v_{1}<w_{1}-w_{2}+v_{2}$;
        \item $w_{2}<v_{2}<w_{1}$ and $v_{1}>w_{1}+w_{2}+v_{2}$;
        \item $v_{2}<w_{1}<v_{1}<w_{1}-w_{2}+v_{2}$.
    \end{itemize}
\end{itemize}
\end{spprop}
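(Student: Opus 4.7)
The plan is to follow the same template used for \Cref{prop multiplicity principal A1 in Spin(5) with Spin(4)} and the other multiplicity propositions in this section, namely: (1) write down generators of the component group $\mathrm{C}_{\psi}=\mathrm{Z}(H)\simeq (\Z/2\Z)^{2}$ and the associated basis $\chi_{1},\chi_{2}$ of the character group; (2) for each of the two endoscopic types listed in \Cref{section Arthur A1+A1+Sp(2)}, decompose $\mathfrak{f}_{4}|_{H}\circ\psi$ explicitly using the formula for $\mathfrak{f}_{4}|_{H}$ recalled in \Cref{section info A1+A1+Sp(2)}; (3) compute $\varepsilon_{\psi}$ via \Cref{prop epsilon character of Arthur}, using only the even-$d_{i}$ summands together with the identity $\varepsilon(\mathbf{I}_{w}\otimes\mathbf{I}_{w'})=(-1)^{\max(w,w')+1}$ appearing in the proof of \Cref{prop multiplicity a bizzare A1+A1+A1 via A1+Sp(3)}; (4) compute $\rho_{\psi}^{\vee}$ via \Cref{prop calculation of rho character for F4}, which reduces to identifying the unique summands of $\psi$ containing the largest weight $\mu_{1}$ and the fourth-largest weight $\mu_{4}$; and (5) compare $\varepsilon_{\psi}$ with $\rho_{\psi}^{\vee}$ by Arthur's multiplicity formula \Cref{conj Arthur multiplicity formula}.

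For the first endoscopic type $\wedge^{*}\pi\oplus(\pi\otimes\tau)\oplus\pi[2]\oplus\tau[2]\oplus[1]$ with $\pi\in\Pi_{\alg}^{\symp_{4}}(\pgl_{4})$ of Hodge weights $w_{1}>w_{2}$ and $\tau\in\Pi_{\alg}^{\perp}(\pgl_{2})$ of weight $v$, the component group acts as $\chi_{1}$ on $\pi[2]$, as $\chi_{2}$ on $\tau[2]$, as $\chi_{1}\chi_{2}$ on $\pi\otimes\tau$, and trivially on $\wedge^{*}\pi$ and $[1]$. One computes that $\mathrm{ad}\,\psi$ contains the even-$d$ blocks $\pi\otimes\tau[2]$ (carrying $\chi_{1}\chi_{2}$) together with $\pi[2]$ (carrying $\chi_{1}$) and $\tau[2]$ (carrying $\chi_{2}$), from which $\varepsilon_{\psi}(\sigma)$ is a product of $\varepsilon(\pi\otimes\tau)$, $\varepsilon(\pi)$, $\varepsilon(\tau)$ on the appropriate generators. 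The parity computations, using \Cref{prop Langlands parameter of symplectic representation}, will yield concrete conditions modulo $4$ on $w_{1}+w_{2}$ and on $v$. The candidates for $\mu_{1}$ are $\tfrac{w_{1}+v}{2}$ and $\tfrac{w_{1}+1}{2}$, and those for $\mu_{4}$ include $\tfrac{w_{1}\pm v}{2},\tfrac{w_{1}+w_{2}}{2},\tfrac{w_{1}\pm 1}{2},\tfrac{v+w_{2}}{2}$; these give rise to a finite list of regions in the $(w_{1},w_{2},v)$-parameter space, which will combine with the parity constraints to produce the six conditions stated.

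For the second type $\wedge^{*}\pi\oplus(\pi\otimes\tau_{1})\oplus(\pi\otimes\tau_{2})\oplus(\tau_{1}\otimes\tau_{2})\oplus[1]$, the parameter is tempered, so $\varepsilon_{\psi}$ is trivial and it suffices to determine when $\rho_{\psi}^{\vee}$ is trivial. The component group acts by four distinct characters on the four rank-one blocks $\wedge^{*}\pi,\pi\otimes\tau_{1},\pi\otimes\tau_{2},\tau_{1}\otimes\tau_{2}$, so $\rho_{\psi}^{\vee}=1$ if and only if $\mu_{1}$ and $\mu_{4}$ lie in the same block. Since $\mu_{1}=\tfrac{w_{1}+v_{1}}{2}$ whenever $v_{1}>w_{2}$ and $\mu_{1}=\tfrac{w_{1}+w_{2}}{2}$ otherwise, while the candidates for $\mu_{4}$ are among $\tfrac{w_{1}\pm w_{2}}{2},\tfrac{w_{1}\pm v_{i}}{2},\tfrac{v_{1}+v_{2}}{2},\tfrac{v_{1}-v_{2}}{2}$, a careful but elementary comparison splits the parameter space into the three regions listed.

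The hard part is not any single computation but the bookkeeping of cases: the two rank-one blocks $\pi\otimes\tau_{i}$, together with the rank-two blocks in the first type, create many subcases in the ordering of the weights of $\mathrm{c}_{\infty}(\psi)$, and one must verify that candidates excluded by integrality (e.g.\ $\mu_{4}=\tfrac{w_{1}+w_{2}}{2}$ between two consecutive half-integers) are indeed impossible, exactly as in the proof of \Cref{prop multiplicity principal A1 in Spin(5) with Spin(4)}. Organizing the case split so that the resulting conditions match the clean form stated in the proposition will be the main chore, but no new ingredient beyond \Cref{prop epsilon character of Arthur}, \Cref{prop calculation of rho character for F4}, and \Cref{conj Arthur multiplicity formula} is needed.
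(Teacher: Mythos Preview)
Your overall template matches the paper's approach, but there are two concrete errors in Case~(i) that would derail the computation.

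First, your list of even-$d$ summands of $\mathfrak{f}_{4}|_{\psi}$ is wrong. From \Cref{section info A1+A1+Sp(2)}, the relevant term of $\mathfrak{f}_{4}|_{H}$ is $\st\otimes\st\otimes\wedge^{*}\vrep{4}$, not $\st\otimes\st\otimes\vrep{4}$ or $\st\otimes\st\otimes\triv$; there is no $\tau[2]$ in the adjoint at all. The even-$d$ blocks are therefore $\pi[2]$ and $(\wedge^{*}\pi\otimes\tau)[2]$, not the three blocks $(\pi\otimes\tau)[2]$, $\pi[2]$, $\tau[2]$ you claim. This matters: with your blocks you would get $\varepsilon_{\psi}(\sigma_{1})=\varepsilon(\tau)=(-1)^{(v+1)/2}$, whereas the correct value is
\[
\varepsilon_{\psi}(\sigma_{1})=\varepsilon(\wedge^{*}\pi\otimes\tau)=(-1)^{\max(w_{1}+w_{2},v)+\max(w_{1}-w_{2},v)+(v+1)/2},
\]
and these differ by a sign precisely when $w_{1}-w_{2}<v<w_{1}+w_{2}$. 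The six alternating $(w_{1}+w_{2}\bmod 4,\,v\bmod 4)$ conditions in the statement come from exactly this extra factor, so your setup cannot produce them.

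Second, your candidates for $\mu_{1}$ are off: since $w_{2}>1$, the weight $\tfrac{w_{1}+w_{2}}{2}$ of $\wedge^{*}\pi$ always dominates $\tfrac{w_{1}+1}{2}$, so the competition for $\mu_{1}$ is between $\tfrac{w_{1}+w_{2}}{2}$ and $\tfrac{w_{1}+v}{2}$, and $\tfrac{w_{1}+1}{2}$ never occurs. Likewise in Case~(ii) you omit the region $v_{2}>w_{1}$, where $\mu_{1}=\tfrac{v_{1}+v_{2}}{2}$ (the paper checks this separately and finds it never yields multiplicity one).
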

\begin{proof}
    We take a set of generators $\{\sigma=(1,1,-1),\sigma_{1}=(-1,1,1)\}$ of $\mathrm{C}_{\psi}=\mathrm{Z}(H)\simeq \Z/2\Z\times\Z/2\Z$.
    Let $\chi_{1},\chi_{2}$ be two generators of the character group of $\mathrm{C}_{\psi}$ such that $\chi_{1}(\sigma)=\chi_{2}(\sigma_{1})=-1$ and $\chi_{1}(\sigma_{1})=\chi_{2}(\sigma)=1$.
    
    \textbf{Case (i)}: $\psi=\wedge^{*}\pi\oplus(\pi\otimes\tau)\oplus\pi[2]\oplus\tau[2]\oplus[1]$,
    where $\pi\in\Pi_{\alg}^{\symp_{4}}(\mathrm{PGL}_{4})$ has Hodge weights $w_{1}>w_{2}>1$
    and $\tau\in\Pi_{\alg}^{\perp}(\pgl_{2})$ has motivic weight $v$.
    Here we assume that Arthur's $\SL_{2}(\C)$ is sent to the first $\lietype{A}{1}$-factor of $H_{\C}$.
    In this case,
    the restriction of $\mathfrak{f}_{4}$ along $\psi$ is isomorphic to 
    \[\sym^{2}\pi\oplus\left(\wedge^{*}\pi\otimes\tau[2]\right)\oplus\left(\pi\otimes\tau\right)\oplus\pi[2]\oplus\sym^{2}\tau\oplus[3].\]
    By \Cref{prop epsilon character of Arthur} we have:
    \begin{gather*}
        \varepsilon_{\psi}(\sigma)=\varepsilon(\pi)=\varepsilon(\mathbf{I}_{w_{1}}+\mathbf{I}_{w_{2}})=(-1)^{(w_{1}+w_{2})/2+1},\\
        \varepsilon_{\psi}(\sigma_{1})=\varepsilon(\wedge^{*}\pi\otimes\tau)=(-1)^{\max(w_{1}+w_{2},v)+\max(w_{1}-w_{2},v)+(v+1)/2}.
    \end{gather*}
    On the other side, the group $\mathrm{C}_{\psi}$ acts on $\wedge^{*}\pi,\pi\otimes\tau,\pi[2],\tau[2]$ by $1,\chi_{1}\chi_{2},\chi_{1},\chi_{2}$ respectively.
    The largest element $\mu_{1}$ must be $\frac{w_{1}+w_{2}}{2}$ or $\frac{w_{1}+v}{2}$.
    \begin{enumerate}[label= (\arabic*)]
        \item If $w_{2}>v$, then $\mu_{1}=\frac{w_{1}+w_{2}}{2}$. Now $\mu_{4}=\frac{w_{1}\pm 1}{2}$ and $\rho_{\psi}^{\vee}=\chi_{1}$. 
        \item If $w_{2}<v$, then $\mu_{1}=\frac{w_{1}+v}{2}$. Now $\mu_{2}$ is $\frac{w_{1}+w_{2}}{2}$ or $\frac{w_{2}+v}{2}$.
        \begin{enumerate}
            \item If $w_{1}>v$, then $\mu_{2}=\frac{w_{1}+w_{2}}{2}$. Now $\mu_{4}=\frac{w_{1}\pm 1}{2}$ and $\rho^{\vee}_{\psi}=\chi_{2}$.
            \item If $w_{1}<v$, then $\mu_{2}=\frac{w_{2}+v}{2}$. Now $\mu_{4}=\frac{v\pm 1}{2}$ and $\rho^{\vee}_{\psi}=\chi_{1}$.
        \end{enumerate}
    \end{enumerate}
    By Arthur's multiplicity formula, $\mathrm{m}(\pi_{\psi})=1$ if and only if $\pi$ and $\tau$ satisfy one of the conditions listed in the proposition.

    \textbf{Case (ii)}: $\psi=\wedge^{*}\pi\oplus(\pi\otimes\tau_{1})\oplus(\pi\otimes\tau_{2})\oplus(\tau_{1}\otimes\tau_{2})\oplus[1]$,
    where $\pi\in\Pi_{\alg}^{\symp_{4}}(\mathrm{PGL}_{4})$ has Hodge weights $w_{1}>w_{2}$ 
    and $\tau_{1},\tau_{2}\in\Pi_{\alg}^{\perp}(\pgl_{2})$ have motivic weights $v_{1}>v_{2}$ respectively.
    In this case $\varepsilon_{\psi}$ is a trivial character.
    On the other side, since $\mathrm{C}_{\psi}$ acts on four non-trivial irreducible summands of $\psi$ by four different characters,
    $\rho_{\psi}^{\vee}=1$ if and only if $\mu_{1}$ and $\mu_{4}$ come from the same irreducible summand.
    Now $\mu_{1}$ must be $\frac{w_{1}+w_{2}}{2}$ or $\frac{w_{1}+v_{1}}{2}$ or $\frac{v_{1}+v_{2}}{2}$.
    \begin{enumerate}[label= (\arabic*)]
        \item If $w_{2}>v_{1}$, then $\mu_{1}=\frac{w_{1}+w_{2}}{2}$ and $\mu_{4}$ can not be $\frac{w_{1}-w_{2}}{2}$, thus $\rho^{\vee}_{\psi}$ is not trivial.
        \item If $v_{1}>w_{2}$ and $w_{1}>v_{2}$, then $\mu_{1}=\frac{w_{1}+v_{1}}{2}$. Now $\rho_{\psi}^{\vee}$ is trivial if and only if $\mu_{4}=\frac{w_{2}+v_{1}}{2}$ or $\frac{v_{1}-w_{2}}{2}$.
        \begin{enumerate}
            \item $\mu_{4}=\frac{v_{1}-w_{2}}{2}$ is equivalent to that $v_{1}-w_{2}>\max(v_{1}-v_{2},w_{1}+w_{2},w_{1}+v_{2})$. This holds if and only if $v_{2}>w_{2}$ and $v_{1}>w_{1}+w_{2}+v_{2}$.
            \item $\mu_{4}=\frac{w_{2}+v_{1}}{2}$ is equivalent to that $w_{2}+v_{1}>\max(w_{1}-w_{2},w_{1}-v_{2})$ and $w_{2}+v_{1}$ is smaller than exactly two of $\{w_{1}+w_{2},v_{1}+v_{2},w_{1}+v_{2}\}$. 
                  This holds in two cases: $w_{1}<v_{1}<w_{1}-w_{2}+v_{2}$ or 
                  \[w_{2}>v_{2},w_{1}>v_{1},w_{1}-w_{2}-v_{2}<v_{1}<w_{1}-w_{2}+v_{2}.\] 
        \end{enumerate}
        \item If $v_{2}>w_{1}$, $\mu_{1}=\frac{v_{1}+v_{2}}{2}$. We have
        \[\frac{v_{1}-v_{2}}{2}<\frac{v_{1}-w_{1}}{2}<\frac{v_{1}-w_{2}}{2}<\frac{v_{1}+w_{2}}{2}<\frac{v_{1}+w_{1}}{2}<\frac{v_{1}+v_{2}}{2},\]
        thus $\mu_{4}$ can not be $\frac{v_{1}-v_{2}}{2}$ and $\rho_{\psi}^{\vee}$ is not trivial.
    \end{enumerate}
    In conclusion, by Arthur's multiplicity formula $\mathrm{m}(\pi_{\psi})=1$ if and only if one of the following conditions holds:
    \begin{itemize}
        \item $v_{2}<w_{2}<v_{1}$ and $w_{1}-w_{2}-v_{2}<v_{1}<w_{1}-w_{2}+v_{2}$;
        \item $w_{2}<v_{2}<w_{1}$ and $v_{1}>w_{1}+w_{2}+v_{2}$;
        \item $v_{2}<w_{1}<v_{1}<w_{1}-w_{2}+v_{2}$.
    \end{itemize}
\end{proof}

\subsubsection{\texorpdfstring{$H=\left(\mathrm{A}_{1}^{[2^{6},1^{14}]}\times\symp(3)\right)/\mu_{2}^{\Delta}$}{}}
\label{section Arthur A1+Sp(3)}
\tp By \Cref{section info A1+Sp(3)},
the restriction of the $26$-dimensional irreducible representation $\jord_{0}$ of $\lietype{F}{4}$ to $H$ is isomorphic to 
\[\st\otimes \vrep{6}+\triv\otimes \vrep{14},\] 
where $\vrep{6}$ is the standard $6$-dimensional representation of $\symp(3)$,
$\vrep{14}=\wedge^{*}\vrep{6}$ is the $14$-dimensional irreducible representation of $\symp(3)$ that is a sub-representation of $\wedge^{2}\vrep{6}$.
The centralizer of $H$ in $\lietype{F}{4}$ is $\mathrm{Z}(H)\simeq \Z/2\Z$.

For any $\pi\in\Pi_{\alg}^{\symp_{6}}(\pgl_{6})$,
we denote by $\wedge^{*}\pi$ the representation in $\Pi_{\alg,\reg}^{\mathrm{o}}(\pgl_{14})$ corresponding to the following irreducible representation of $\mathcal{L}_{\Z}$:
\[\mathcal{L}_{\Z}\overset{\psi_{\pi}}{\longrightarrow}\symp(3)\overset{\wedge^{*}}{\longrightarrow}\SL_{14}(\C).\]

For $\psi\in\Psi_{\mathrm{AJ}}(\grpF)$ satisfying $\mathrm{H}(\psi)=H$ and $\mathrm{m}(\pi_{\psi})=1$,
there are two possible endoscopic types:
\begin{enumerate}[label=(\roman*)]
    \item $(2,(14,1),(6,2))$.
    A global Arthur parameter of this type is of the form:
        \[\wedge^{*}\pi\oplus\pi[2],\,\pi\in\Pi_{\alg}^{\symp_{6}}(\pgl_{6}).\]
    \item $(2,(14,1),(12,1))$.
    A global Arthur parameter of this type is of the form:
        \[\wedge^{*}\pi\oplus(\pi\otimes\tau),\,\pi\in\Pi_{\alg}^{\symp_{6}}(\pgl_{6}),\tau\in\Pi_{\alg}^{\perp}(\pgl_{2}).\]
\end{enumerate}

For this subgroup $H$ of $\lietype{F}{4}$,
the restriction of the adjoint representation $\mathfrak{f}_{4}$ of $\lietype{F}{4}$ to $H$ is isomorphic to 
\[\sym^{2}\st\otimes\triv + \st\otimes \vrep{14}^{\prime}+ \triv\otimes\sym^{2}\vrep{6},\] 
where $\vrep{14}^{\prime}$ is another $14$-dimensional irreducible representation of $\symp(3)$ that is not equivalent to $\vrep{14}=\wedge^{*}\vrep{6}$.

\begin{spprop}\label{prop multiplicity A1+Sp(3)}
    For a discrete global Arthur parameter $\psi\in\Psi_{\mathrm{AJ}}(\grpF)$ satisfying $\mathrm{H}(\psi)=H$,
    the multiplicity $\mathrm{m}(\pi_{\psi})=1$ if and only if $\psi$ is one of the following parameters: 
   \begin{itemize}
    \item $\wedge^{*}\pi\oplus\pi[2]$, where $\pi\in\Pi_{\alg}^{\symp_{6}}(\mathrm{PGL}_{6})$ has Hodge weights $w_{1}>w_{2}>w_{3}>1$ 
    and one of the following conditions holds:
    \begin{itemize}
        \item $w_{1}>w_{2}+w_{3}+1$ and $w_{1}+w_{2}+w_{3}\equiv 3\modulo 4$;
        \item $w_{1}<w_{2}+w_{3}-1$ and $w_{1}+w_{2}+w_{3}\equiv 1\modulo 4$.
    \end{itemize}
    \item $\wedge^{*}\pi\oplus(\pi\otimes\tau)$, where $\pi\in\Pi_{\alg}^{\symp_{6}}(\mathrm{PGL}_{6})$ has Hodge weights $w_{1}>w_{2}>w_{3}$  
    and $\tau\in\Pi_{\alg}^{\perp}(\pgl_{2})$ has motivic weight $v$ satisfying one of the following conditions:
    \begin{itemize}
        \item $|w_{1}-w_{2}-w_{3}|<v<w_{3}$;
        \item $w_{1}-w_{2}+w_{3}<v<w_{2}$;
        \item $w_{3}<v<\min(w_{2},w_{1}-w_{2}-w_{3})$;
        \item $\max(w_{2},w_{1}-w_{2}-w_{3})<v<w_{1}-w_{2}+w_{3}$;
        \item $w_{1}<v<w_{1}+w_{2}-w_{3}$;
        \item $v>w_{1}+w_{2}+w_{3}$.
    \end{itemize}
\end{itemize}
\end{spprop}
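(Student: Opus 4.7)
The plan is to follow the same template used in the preceding propositions (for instance \Cref{prop multiplicity A1+A1+Sp(2)} and \Cref{prop multiplicity A1+A1+A1 via A1+Sp(3)}): identify the global component group $\mathrm{C}_{\psi}$, compute Arthur's character $\varepsilon_{\psi}$ by \Cref{prop epsilon character of Arthur} from the decomposition of $\mathfrak{f}_{4}\circ\psi$, compute the character $\rho^{\vee}_{\psi}$ by \Cref{prop calculation of rho character for F4} by locating the two eigenvalues $\mu_{1}$ and $\mu_{4}$ of $\mathrm{c}_{\infty}(\psi)$, and finally enumerate the $(\pi,\tau)$ for which $\varepsilon_{\psi}=\rho^{\vee}_{\psi}$. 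In our situation, $\mathrm{C}_{\psi}=\mathrm{Z}(H)\simeq\Z/2\Z$ is generated by the image $\gamma$ of $(-1,\mathrm{I}_{3})$, and the decomposition of $\mathfrak{f}_{4}|_{H}$ given in \Cref{section info A1+Sp(3)} provides all the data needed.

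For Case (i), $\psi=\wedge^{*}\pi\oplus\pi[2]$ with $\pi\in\Pi_{\alg}^{\symp_{6}}(\pgl_{6})$ of Hodge weights $w_{1}>w_{2}>w_{3}$. Composing $\psi$ with $\mathfrak{f}_{4}|_{H}\simeq \sym^{2}\st\otimes\triv+\st\otimes\vrep{14}^{\prime}+\triv\otimes\sym^{2}\vrep{6}$ produces a decomposition of the form (copy of $[3]$)+($\pi\otimes\wedge^{*}\pi^{\prime}$-type piece)+($\sym^{2}\pi$-type piece), where I denote by $\wedge^{*}\pi^{\prime}$ the $14$-dimensional representation associated with $\vrep{14}^{\prime}$ and where only the piece with even $d_{i}$ that is acted on by $-1$ by $\gamma$ contributes to $\varepsilon_{\psi}(\gamma)$. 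Using $\varepsilon(\mathbf{I}_{w})=(-1)^{(w+1)/2}$ and $\varepsilon(\mathbf{I}_{w}\otimes \mathbf{I}_{w^{\prime}})=(-1)^{\max(w,w^{\prime})+1}$, a direct computation gives $\varepsilon_{\psi}(\gamma)=(-1)^{(w_{1}+w_{2}+w_{3}+1)/2}$. On the other side the largest eigenvalue $\mu_{1}$ of $\mathrm{c}_{\infty}(\psi)$ is $(w_{1}+w_{2})/2$, coming from $\wedge^{*}\pi$, and $\mu_{4}$ is either $(w_{2}+w_{3})/2$ (always coming from $\wedge^{*}\pi$, contributing $\rho^{\vee}_{\psi}(\gamma)=1$) or $(w_{1}\pm 1)/2$ (coming from $\pi[2]$, contributing $\rho^{\vee}_{\psi}(\gamma)=-1$). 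Comparing these values under the dichotomy $w_{1}\gtrless w_{2}+w_{3}$ will yield the two subcases in the statement.

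For Case (ii), $\psi=\wedge^{*}\pi\oplus(\pi\otimes\tau)$ is tempered, so $\varepsilon_{\psi}\equiv 1$ automatically. The element $\gamma$ acts by $+1$ on $\wedge^{*}\pi$ and by $-1$ on $\pi\otimes\tau$, hence \Cref{prop calculation of rho character for F4} gives $\rho^{\vee}_{\psi}(\gamma)=1$ exactly when $\mu_{1}$ and $\mu_{4}$ lie in the same summand. The largest eigenvalue is $\mu_{1}=\max\bigl((w_{1}+w_{2})/2,(w_{1}+v)/2\bigr)$. The task thus reduces to a case analysis: list the weights of $\wedge^{*}\pi$ (namely $(\pm w_{i}\pm w_{j})/2$ for $i<j$ and $\{0,0\}$) and of $\pi\otimes\tau$ (namely $(\pm w_{i}\pm v)/2$), order them depending on the relative positions of $v$ among the intervals determined by $w_{1},w_{2},w_{3},w_{1}\pm w_{2}\pm w_{3}$, and in each regime identify $\mu_{1},\mu_{4}$ and whether they come from the same summand.

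The only nontrivial step is the combinatorial bookkeeping in Case (ii): there are several regimes for $v$ relative to $\{w_{3},w_{2},w_{1}-w_{2}-w_{3},w_{1}-w_{2}+w_{3},w_{1},w_{1}+w_{2}-w_{3},w_{1}+w_{2}+w_{3}\}$, and one has to verify in each that either $\mu_{4}$ is forced to come from the same summand as $\mu_{1}$ or it is forced to come from the other. The six intervals appearing in the statement are precisely those where $\mu_{1}$ and $\mu_{4}$ can be checked to lie in the same summand. I expect no conceptual difficulty beyond this enumeration; the output of \Cref{prop multiplicity A1+A1+A1 via A1+Sp(3)}\,(iv) and \Cref{prop multiplicity A1+A1+Sp(2)}\,(ii) serve as models for exactly this kind of analysis, and the arithmetic conditions modulo $4$ in Case (i) are forced purely by the parity of $\varepsilon_{\psi}(\gamma)$ computed above.
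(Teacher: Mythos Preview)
Your plan and your treatment of Case~(ii) match the paper's approach, but Case~(i) contains two concrete errors.

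First, your formula $\varepsilon_{\psi}(\gamma)=(-1)^{(w_{1}+w_{2}+w_{3}+1)/2}$ is incorrect. The only summand of $\mathfrak{f}_{4}\circ\psi$ with even $d_i$ on which $\gamma$ acts by $-1$ is $\pi'[2]$, where $\pi'$ is the cuspidal representation attached to $\vrep{14}^{\prime}$; this is not a tensor product, so your second $\varepsilon$-identity does not apply here. Since $\wedge^{3}\vrep{6}\simeq\vrep{14}^{\prime}\oplus\vrep{6}$, the Hodge weights of $\pi'$ are $w_1,w_2,w_3,w_1{+}w_2{+}w_3,w_1{+}w_2{-}w_3,w_1{-}w_2{+}w_3$ and $|w_1{-}w_2{-}w_3|$. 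The absolute value in the last entry makes $\varepsilon(\pi')$ depend on the sign of $w_1-w_2-w_3$; the correct answer is
\[
\varepsilon_{\psi}(\gamma)=(-1)^{(w_{1}+w_{2}+w_{3}+1)/2+\max(w_{1},\,w_{2}+w_{3})}.
\]

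Second, your claim that $\mu_4$ can be $(w_2+w_3)/2$ is wrong. One has $\mu_1=(w_1+w_2)/2$ and $\mu_2=(w_1+w_3)/2$ (using $w_3>1$), so $\mu_3,\mu_4$ lie among $(w_1+1)/2,(w_1-1)/2,(w_2+w_3)/2$. The first two are consecutive integers and the regularity condition $\psi\in\Psi_{\mathrm{AJ}}(\grpF)$ forces $(w_2+w_3)/2$ to be distinct from both, hence strictly above or strictly below both. In either case $\mu_4=(w_1\pm1)/2$ comes from $\pi[2]$, so $\rho^{\vee}_{\psi}(\gamma)=-1$ unconditionally. The dichotomy $w_1\gtrless w_2+w_3$ therefore enters only through $\varepsilon_{\psi}$, not through $\rho^{\vee}_{\psi}$. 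With both corrections the stated congruence conditions follow.
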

\begin{proof}
    We denote the generator $(-1,1)\in \mathrm{Z}(H)=\mathrm{C}_{\psi}$ by $\gamma$.

    \textbf{Case (i)}: $\psi=\wedge^{*}\pi\oplus\pi[2]$, where $\pi\in\Pi_{\alg}^{\symp_{6}}(\mathrm{PGL}_{6})$ has Hodge weights $w_{1}>w_{2}>w_{3}>1$.
    In this case, 
    the restriction of $\mathfrak{f}_{4}$ along $\psi$ is isomorphic to 
    \[\sym^{2}\pi\oplus \pi^{\prime}[2]\oplus[3],\]
    where $\pi^{\prime}\in\Pi_{\alg}^{\perp}(\pgl_{14})$ corresponds to 
    \[\mathcal{L}_{\Z}\overset{\psi_{\pi}}{\longrightarrow}\symp(3)\overset{\vrep{14}^{\prime}}{\longrightarrow}\SL_{14}(\C).\]
    Notice that $\wedge^{3}\vrep{6}\simeq \vrep{14}^{\prime}\oplus \vrep{6}$,
    thus the Hodge weights of $\pi^{\prime}$ are 
    \[\pm w_{1},\pm w_{2},\pm w_{3},\pm w_{1}\pm w_{2}\pm w_{3}.\]
    By \Cref{prop epsilon character of Arthur} we have:
    \begin{align*}
        \varepsilon_{\psi}(\gamma)&=\varepsilon\left(\mathbf{I}_{w_{1}}+\mathbf{I}_{w_{2}}+\mathbf{I}_{w_{3}}+\mathbf{I}_{w_{1}+w_{2}+w_{3}}+\mathbf{I}_{w_{1}+w_{2}-w_{3}}+\mathbf{I}_{w_{1}-w_{2}+w_{3}}+\mathbf{I}_{|w_{1}-w_{2}-w_{3}|}\right)\\
        &=(-1)^{(w_{1}+w_{2}+w_{3}+1)/2+\max(w_{1},w_{2}+w_{3})}.
    \end{align*}
    On the other side, $\gamma$ acts on $\wedge^{*}\pi$ by $1$ and on $\pi[2]$ by $-1$.
    The largest element $\mu_{1}$ must be $\frac{w_{1}+w_{2}}{2}$. 
    Now $\mu_{4}=\frac{w_{1}\pm 1}{2}$, thus $\rho_{\psi}^{\vee}(\gamma)=-1$.
    By Arthur's multiplicity formula, $\mathrm{m}(\pi_{\psi})=1$ if and only if one of the following conditions holds:
    \begin{itemize}
        \item $w_{1}>w_{2}+w_{3}+1$ and $w_{1}+w_{2}+w_{3}\equiv 3\modulo 4$;
        \item $w_{1}<w_{2}+w_{3}-1$ and $w_{1}+w_{2}+w_{3}\equiv 1\modulo 4$.
    \end{itemize}

    \textbf{Case (ii)}: $\psi=\wedge^{*}\pi\oplus(\pi\otimes\tau)$, 
    where $\pi\in\Pi_{\alg}^{\symp_{6}}(\mathrm{PGL}_{6})$ has Hodge weights $w_{1}>w_{2}>w_{3}$ and $\tau\in\Pi_{\alg}^{\perp}(\pgl_{2})$ has motivic weight $v$.
    In this case $\varepsilon_{\psi}$ is trivial.
    On the other side, the largest $\mu_{1}$ must be $\frac{w_{1}+w_{2}}{2}$ or $\frac{w_{1}+v}{2}$.
    \begin{enumerate}[label= (\arabic*)]
        \item If $v<w_{2}$, then $\mu_{1}=\frac{w_{1}+w_{2}}{2}$.
        \begin{enumerate}
            \item If $v<w_{3}$, then $\mu_{4}$ is the middle one in $\{\frac{w_{2}+w_{3}}{2},\frac{w_{1}+v}{2},\frac{w_{1}-v}{2}\}$. 
            Hence $\rho^{\vee}_{\psi}=1$ if and only if $\mu_{4}=\frac{w_{2}+w_{3}}{2}$, which is equivalent to $v>|w_{1}-w_{2}-w_{3}|$.
            \item If $v>w_{3}$, then $\mu_{4}$ is the middle one in $\{\frac{w_{2}+v}{2},\frac{w_{1}+w_{3}}{2},\frac{w_{1}-w_{3}}{2}\}$. 
            Hence $\rho^{\vee}_{\psi}=1$ if and only if $\mu_{4}=\frac{w_{1}\pm w_{3}}{2}$, which is equivalent to $v>w_{1}-w_{2}+w_{3}$ or $v<w_{1}-w_{2}-w_{3}$.
        \end{enumerate}
        \item If $v>w_{2}$, then $\mu_{1}=\frac{w_{1}+v}{2}$.
        \begin{enumerate}
            \item If $v<w_{1}$, then $\mu_{4}$ is the middle one in $\{\frac{w_{2}+v}{2},\frac{w_{1}+w_{3}}{2},\frac{w_{1}-w_{3}}{2}\}$. 
            Hence $\rho^{\vee}_{\psi}=1$ if and only if $\mu_{4}=\frac{w_{2}+v}{2}$, which is equivalent to $w_{1}-w_{2}-w_{3}<v<w_{1}-w_{2}+w_{3}$.
            \item If $v>w_{1}$, then $\mu_{4}$ is the middle one in $\{\frac{w_{1}+w_{2}}{2},\frac{v+w_{3}}{2},\frac{v-w_{3}}{2}\}$. 
            Hence $\rho^{\vee}_{\psi}=1$ if and only if $\mu_{4}=\frac{v\pm w_{3}}{2}$, which is equivalent to $v>w_{1}+w_{2}+w_{3}$ or $v<w_{1}+w_{2}-w_{3}$.
        \end{enumerate}
    \end{enumerate}
    In conclusion, $\mathrm{m}(\pi_{\psi})=1$ if and only if one of the conditions on $\pi,\tau$ listed in the proposition is satisfied.
\end{proof}

\subsubsection{\texorpdfstring{$H=\spin(8)$}{}}
\label{section Arthur Spin(8)}
\tp By \Cref{section info Spin(8)},
the restriction of the $26$-dimensional irreducible representation $\jord_{0}$ of $\lietype{F}{4}$ to $H$ is isomorphic to 
\[\triv^{\oplus 2}+\vrep{8}+\vrep{\spin}^{+}+\vrep{\spin}^{-},\]
where $\vrep{8}$ is the $8$-dimensional vector representation of $\spin(8)$,
i.e. the composition of $\spin(8)\rightarrow\sorth(8)$ with the standard $8$-dimensional representation of $\sorth(8)$,
and $\vrep{\spin}^{\pm}$ are two $8$-dimensional spinor representations.
The centralizer of $H$ in $\lietype{F}{4}$ is $\mathrm{Z}(H)\simeq \Z/2\Z\times\Z/2\Z$.

For $\psi\in\Psi_{\mathrm{AJ}}(\grpF)$ satisfying $\mathrm{H}(\psi)=H$ and $\mathrm{m}(\pi_{\psi})=1$,
there is only one possible endoscopic type: $(5,(8,1),(8,1),(8,1),(1,1),(1,1))$.
A global Arthur parameter of this type is of the form:
\[\psi=\pi\oplus \spin^{+}\pi\oplus\spin^{-}\pi\oplus[1]\oplus[1],\,\pi\in\Pi_{\alg}^{\sorth_{8}}(\pgl_{8}),\]
where we lift $\psi_{\pi}:\mathcal{L}_{\Z}\twoheadrightarrow\sorth(8)\rightarrow \sorth_{8}(\C)$ to $\widetilde{\psi_{\pi}}:\mathcal{L}_{\Z}\rightarrow \spin_{8}(\C)$,
and $\spin^{*}\pi,*=\pm$ is the representation corresponding to 
\[\mathcal{L}_{\Z}\overset{\widetilde{\psi_{\pi}}}{\longrightarrow}\spin_{8}(\C)\overset{V_{\spin}^{*}}{\longrightarrow}\SL_{8}(\C).\]
\begin{spprop}\label{prop multiplicity Spin(8)}
    For any discrete global Arthur parameter $\psi\in\Psi_{\mathrm{AJ}}(\grpF)$ satisfying $\mathrm{H}(\psi)=H$,
    we have $\mathrm{m}(\pi_{\psi})=0$.
\end{spprop}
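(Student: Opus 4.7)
The plan is to apply Arthur's multiplicity formula (\Cref{conj Arthur multiplicity formula}) to the unique endoscopic type $\psi = \pi \oplus \spin^{+}\pi \oplus \spin^{-}\pi \oplus [1] \oplus [1]$ for $\mathrm{H}(\psi) = \spin(8)$, and show that $m_\psi = 0$ for every $\pi \in \Pi_{\alg}^{\sorth_{8}}(\pgl_{8})$. Since $\psi$ is tempered (all $d_i = 1$), \Cref{prop epsilon character of Arthur} immediately gives $\varepsilon_\psi = 1$, so it suffices to show $\rho^\vee_\psi \neq 1$. The component group $\mathrm{C}_\psi$ coincides with $\mathrm{Z}(\spin(8)) \simeq (\Z/2)^{2}$, and this center acts on the three non-trivial summands $\pi$, $\spin^{+}\pi$, $\spin^{-}\pi$ via the three distinct non-trivial characters of $(\Z/2)^{2}$, reflecting the triality-symmetric action of $\mathrm{Z}(\spin(8))$ on $\vrep{8}$, $\vrep{\spin}^{+}$, $\vrep{\spin}^{-}$ recorded in \Cref{section info Spin(8)}.

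The key observation is a group-theoretic one: in $(\Z/2)^{2}$, the product of any two distinct non-trivial characters is the third non-trivial character, and in particular is never trivial. By \Cref{prop calculation of rho character for F4}, $\rho^\vee_\psi$ equals the product $\epsilon_i \cdot \epsilon_j$ of the characters acting on the summands of $\psi$ containing $\mu_1$ and $\mu_4$. Thus $\rho^\vee_\psi = 1$ if and only if $\mu_1$ and $\mu_4$ come from the same summand among $\{\pi, \spin^{+}\pi, \spin^{-}\pi\}$, so the rest of the argument consists in showing this never happens.

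To do so I will let $k_1 > k_2 > k_3 > k_4 > 0$ denote half the Hodge weights of $\pi$, in which notation the $24$ non-zero eigenvalues of $\mathrm{c}_\infty(\psi)$ consist of $\pm k_i$ (from $\pi$) together with $\tfrac{1}{2}(\pm k_1 \pm k_2 \pm k_3 \pm k_4)$, taken with an even (resp.\ odd) number of minus signs to contribute to $\spin^{+}\pi$ (resp.\ $\spin^{-}\pi$). A direct computation then distinguishes three sub-cases: Case A ($k_1 > k_2+k_3+k_4$) yields $\mu_1 = k_1$ from $\pi$ and $\mu_4 = \tfrac{k_1+k_2-k_3+k_4}{2}$ from $\spin^{-}\pi$; Case B1 ($k_2+k_3-k_4 < k_1 < k_2+k_3+k_4$) yields $\mu_1 = \tfrac{k_1+k_2+k_3+k_4}{2}$ from $\spin^{+}\pi$ and $\mu_4 = \tfrac{k_1+k_2-k_3+k_4}{2}$ from $\spin^{-}\pi$; and Case B2 ($k_1 < k_2+k_3-k_4$) yields $\mu_1 = \tfrac{k_1+k_2+k_3+k_4}{2}$ from $\spin^{+}\pi$ and $\mu_4 = k_2$ from $\pi$. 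These are the only relevant sub-cases, since the other potential thresholds of the form $k_1 \lessgtr \pm k_2 \pm k_3 \pm k_4$ are forced by $k_1 > k_2 > k_3 > k_4 > 0$, and boundary equalities are excluded by the strict inequalities $\mu_1 > \mu_2 > \mu_3 > \mu_4$ required by \Cref{lemma eigenvalues of infinitesimal character of F4 representation}. In all three cases $\mu_1$ and $\mu_4$ lie in different summands, giving $\rho^\vee_\psi \neq 1 = \varepsilon_\psi$ and hence $m_\psi = 0$. The main obstacle is a slightly tedious but elementary set of inequalities verifying in each sub-case that the claimed fourth-largest eigenvalue really is the fourth-largest; each such comparison reduces immediately to the sub-case hypothesis and the ordering of the $k_i$'s.
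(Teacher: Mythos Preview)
Your proposal is correct and follows essentially the same approach as the paper: both reduce to showing that $\mu_{1}$ and $\mu_{4}$ lie in different summands of $\psi$ via a case analysis on inequalities among the Hodge weights. The only cosmetic difference is that the paper splits into two cases ($w_{1}\gtrless w_{2}+w_{3}+w_{4}$) and argues by exclusion that $\mu_{4}$ cannot lie in the summand containing $\mu_{1}$, whereas you split the second case further and pin down $\mu_{4}$ explicitly; both arguments hinge on the same elementary inequalities.
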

\begin{proof}
    Let 
    $\psi=\pi\oplus \spin^{+}\pi\oplus\spin^{-}\pi\oplus[1]\oplus[1]$,
    where $\pi\in\Pi_{\alg}^{\sorth_{8}}(\mathrm{PGL}_{8})$ has Hodge weights $2w_{1}>2w_{2}>2w_{3}>2w_{4}$.
    The global component group $\mathrm{C}_{\psi}\simeq\Z/2\Z\times\Z/2\Z$ and it acts on $\pi,\spin^{+}\pi,\spin^{-}\pi$ by three different characters.

    Since $\varepsilon_{\psi}$ is trivial,
    by Arthur's trace formula $\mathrm{m}(\pi_{\psi})=1$ if and only if $\rho^{\vee}_{\psi}=1$,
    which is equivalent to that $\mu_{1}$ and $\mu_{4}$ come from the same irreducible summand of $\psi$ by \Cref{prop calculation of rho character for F4}.

    In this case, the largest element $\mu_{1}$ must be $w_{1}$ or $\frac{w_{1}+w_{2}+w_{3}+w_{4}}{2}$.
    \begin{enumerate}[label= (\arabic*)]
        \item If $w_{1}>w_{2}+w_{3}+w_{4}$, then $\mu_{1}=w_{1}$. Now we have 
             \[w_{2}<\frac{w_{1}+w_{2}-w_{3}+w_{4}}{2}<\frac{w_{1}+w_{3}+w_{3}-w_{4}}{2}<\frac{w_{1}+w_{2}+w_{3}+w_{4}}{2}<\mu_{1},\]
             thus $\mu_{4}$ does not come from $\pi$. 
             Hence $\rho_{\psi}^{\vee}$ is not trivial.
        \item If $w_{1}<w_{2}+w_{3}+w_{4}$, then $\mu_{1}=\frac{w_{1}+w_{2}+w_{3}+w_{4}}{2}$. Now we have
             \[\frac{w_{1}-w_{2}+w_{3}-w_{4}}{2}<\frac{w_{1}+w_{2}-w_{3}-w_{4}}{2}<\min\left(w_{2},\frac{w_{1}+w_{2}\pm(w_{3}-w_{4})}{2}\right)<\mu_{1}\]
             and \[\frac{|w_{1}-w_{2}-w_{3}+w_{4}|}{2}\leq \max\left(w_{4},\frac{-w_{1}+w_{2}+w_{3}+w_{4}}{2}\right)\]
             is also smaller than at least $4$ weights, hence \[\mu_{4}\notin\Big\{\frac{w_{1}-w_{2}+w_{3}-w_{4}}{2},\frac{w_{1}+w_{2}-w_{3}-w_{4}}{2},\frac{|w_{1}-w_{2}-w_{3}+w_{4}|}{2}\Big\}.\] 
             So $\mu_{4}$ does not come from $\spin^{+}\pi$ and $\rho_{\psi}^{\vee}$ is not trivial. 
    \end{enumerate}
    In conclusion, by Arthur's multiplicity formula the multiplicity $\mathrm{m}(\pi_{\psi})$ is always $0$.
\end{proof}

\subsubsection{\texorpdfstring{$H=\spin(9)$}{}}
\label{section Arthur Spin(9)}
\tp By \Cref{section info Spin(9)},
the restriction of the $26$-dimensional irreducible representation $\jord_{0}$ of $\lietype{F}{4}$ to $H$ is isomorphic to 
\[\triv+\vrep{9}+\vrep{\spin},\]
where $\vrep{9}$ is the standard representation of $\spin(9)$,
$\vrep{\spin}$ is the $16$-dimensional spinor representations.
The centralizer of $H$ in $\lietype{F}{4}$ is $\mathrm{Z}(H)\simeq \Z/2\Z$.

For $\psi\in\Psi_{\mathrm{AJ}}(\grpF)$ satisfying $\mathrm{H}(\psi)=H$ and $\mathrm{m}(\pi_{\psi})=1$,
there is only one possible endoscopic type: $(3,(16,1),(9,1),(1,1))$.
A global Arthur parameter of this type is of the form:
\[\psi=\pi\oplus \spin\pi\oplus[1],\,\pi\in\Pi_{\alg}^{\sorth_{9}}(\pgl_{9}),\]
where we lift $\psi_{\pi}:\mathcal{L}_{\Z}\twoheadrightarrow\sorth(9)\rightarrow \sorth_{9}(\C)$ to $\widetilde{\psi_{\pi}}:\mathcal{L}_{\Z}\rightarrow \spin_{9}(\C)$,
and $\spin\pi$ is the representation corresponding to 
\[\mathcal{L}_{\Z}\overset{\widetilde{\psi_{\pi}}}{\longrightarrow}\spin_{9}(\C)\overset{V_{\spin}}{\longrightarrow}\SL_{16}(\C).\]

\begin{spprop}\label{prop multiplicity Spin(9)}
    A discrete global Arthur parameter $\psi\in\Psi_{\mathrm{AJ}}(\grpF)$ satisfies $\mathrm{H}(\psi)=H$ and $\mathrm{m}(\pi_{\psi})=1$ 
   if and only if 
   $\psi=\pi\oplus\spin\pi\oplus[1]$, 
   where $\pi\in\Pi_{\alg}^{\sorth_{9}}(\pgl_{9})$ has Hodge weights $w_{1}>w_{2}>w_{3}>w_{4}$ satisfying $w_{2}+w_{3}-w_{4}<w_{1}<w_{2}+w_{3}+w_{4}$.
\end{spprop}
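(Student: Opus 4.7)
The plan follows the template of the preceding propositions (\Cref{prop multiplicity principal A1} through \Cref{prop multiplicity Spin(8)}). The only possible endoscopic type for $\psi$ with $\mathrm{H}(\psi) = \spin(9)$ is $(3,(16,1),(9,1),(1,1))$, so $\psi$ is necessarily of the form $\pi \oplus \spin\pi \oplus [1]$ with $\pi \in \Pi_{\alg}^{\sorth_9}(\pgl_9)$, and my task reduces to determining for which such $\pi$ one has $\mathrm{m}(\pi_\psi) = 1$. By Arthur's multiplicity formula (\Cref{conj Arthur multiplicity formula}), this amounts to comparing Arthur's character $\varepsilon_\psi$ and the character $\rho_\psi^\vee$ of $\mathrm{C}_\psi = \mathrm{Z}(\spin(9)) \simeq \Z/2\Z$, which I shall write as generated by $\sigma$.

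First, since $\psi$ is tempered (every $d_i$ equals $1$), the index set $I_\sigma$ appearing in \Cref{prop epsilon character of Arthur} is empty, so $\varepsilon_\psi$ is the trivial character. Thus the multiplicity is $1$ precisely when $\rho_\psi^\vee(\sigma) = 1$. To compute this character via \Cref{prop calculation of rho character for F4}, I identify the irreducible summands of $\psi$ containing the largest eigenvalue $\mu_1$ and the fourth-largest $\mu_4$ of $\mathrm{c}_\infty(\psi)$. The generator $\sigma$ acts trivially on $V_9$ and by $-1$ on $V_{\spin}$, so trivially on $\pi$ and $[1]$ but by $-1$ on $\spin\pi$. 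Hence $\rho_\psi^\vee(\sigma) = 1$ if and only if $\mu_1$ and $\mu_4$ lie on the same side of this split (either both in $\pi\oplus[1]$, or both in $\spin\pi$). Writing $s_i = w_i/2$, the eigenvalues of $\mathrm{c}_\infty(\psi)$ consist of the $9$ values $\{\pm s_i,\,0\}$ from $\pi$, an additional $0$ from $[1]$, and the $16$ values $(\pm s_1 \pm s_2 \pm s_3 \pm s_4)/2$ from $\spin\pi$.

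The remaining step is a case analysis on the relative size of $s_1$ versus the quantities $s_2 + s_3 \pm s_4$. When $s_1 > s_2 + s_3 + s_4$, the top four eigenvalues turn out to be $s_1,\ (s_1{+}s_2{+}s_3{+}s_4)/2,\ (s_1{+}s_2{+}s_3{-}s_4)/2,\ (s_1{+}s_2{-}s_3{+}s_4)/2$, so $\mu_1$ comes from $\pi$ while $\mu_4$ comes from $\spin\pi$, giving $\rho_\psi^\vee(\sigma) = -1$. Symmetrically, when $s_1 < s_2 + s_3 - s_4$, one checks that $\mu_1 = (s_1{+}s_2{+}s_3{+}s_4)/2$ and $\mu_2 = (s_1{+}s_2{+}s_3{-}s_4)/2$ both come from $\spin\pi$ but $\mu_3 = s_1$ and $\mu_4 \in \{s_2,\ldots\}$ come from $\pi$, again yielding $\rho_\psi^\vee(\sigma) = -1$. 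In the intermediate range $s_2 + s_3 - s_4 < s_1 < s_2 + s_3 + s_4$, the top four values are $(s_1{+}s_2{+}s_3{+}s_4)/2,\ s_1,\ (s_1{+}s_2{+}s_3{-}s_4)/2,\ (s_1{+}s_2{-}s_3{+}s_4)/2$, and both $\mu_1$ and $\mu_4$ come from $\spin\pi$, so $\rho_\psi^\vee(\sigma) = 1$. Unfolding $w_i = 2s_i$ then gives the condition $w_2 + w_3 - w_4 < w_1 < w_2 + w_3 + w_4$.

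The main (if routine) difficulty will be the bookkeeping of the ordering of eigenvalues: I must verify in the intermediate regime that no other candidate, most plausibly $s_2$ or $(s_1 - s_2 + s_3 + s_4)/2$, displaces $(s_1 + s_2 - s_3 + s_4)/2$ from the fourth position, and similarly confirm the orderings in the extremal cases. This is done by comparing pairs directly and using the strict inequalities $w_1 > w_2 > w_3 > w_4$; concrete sanity checks with small numerical examples (e.g.\ $(s_1,s_2,s_3,s_4) = (10,3,2,1)$ for Case~1, $(6,4,3,2)$ for the intermediate case, and $(5,4,3,1)$ for Case~2b) confirm the pattern and make the argument transparent.
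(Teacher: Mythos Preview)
Your proposal is correct and follows essentially the same approach as the paper: identify the unique endoscopic type, note that $\varepsilon_\psi$ is trivial since $\psi$ is tempered, and determine $\rho_\psi^\vee$ by locating $\mu_1$ and $\mu_4$ among the eigenvalues via \Cref{prop calculation of rho character for F4}. The only cosmetic difference is that the paper collapses your three cases into two, treating your intermediate case and your Case~2b together (when $w_1<w_2+w_3+w_4$) and distinguishing them by whether $\mu_4$ equals $w_2/2$ or $(w_1+w_2-w_3+w_4)/4$.
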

\begin{proof}
    Let $\psi=\pi\oplus \spin\pi\oplus[1]$,
    where $\pi\in\Pi_{\alg}^{\sorth_{9}}(\mathrm{PGL}_{9})$ has Hodge weights $w_{1}>w_{2}>w_{3}>w_{4}$.
    The global component group $\mathrm{C}_{\psi}$ is a cyclic $2$-group,
    and it acts on $\pi$ trivially and on $\spin\pi$ by its non-trivial character. 

    Since the parameter is tempered, $\varepsilon_{\psi}$ is trivial.
    By Arthur's multiplicity formula, 
    $\mathrm{m}(\pi_{\psi})=1$ if and only if $\rho_{\psi}^{\vee}=1$,
    which is equivalent to that $\mu_{1}$ and $\mu_{4}$ come from the same irreducible summand of $\psi$ by \Cref{prop calculation of rho character for F4}.
    In this case, the largest element $\mu_{1}=\frac{w_{1}}{2}$ or $\frac{w_{1}+w_{2}+w_{3}+w_{4}}{4}$.
    \begin{enumerate}[label= (\arabic*)]
        \item If $w_{1}>w_{2}+w_{3}+w_{4}$, then $\mu_{1}=\frac{w_{1}}{2}$. 
        By our discussion in the proof of \Cref{prop multiplicity Spin(8)}, $\mu_{4}$ does not come from $\pi$, thus $\rho_{\psi}^{\vee}$ is not trivial.
        \item If $w_{1}<w_{2}+w_{3}+w_{4}$, then $\mu_{1}=\frac{w_{1}+w_{2}+w_{3}+w_{4}}{4}$. 
        Now $\mu_{4}=\max\left(\frac{w_{2}}{2},\frac{w_{1}+w_{2}-w_{3}+w_{4}}{4}\right)$. 
        Hence $\rho_{\psi}^{\vee}$ is trivial if and only if $w_{1}+w_{4}>w_{2}+w_{3}$.
    \end{enumerate}
    In conclusion, $\mathrm{m}(\pi_{\psi})=1$ if and only if $w_{2}+w_{3}-w_{4}<w_{1}<w_{2}+w_{3}+w_{4}$.
\end{proof}

\subsubsection{\texorpdfstring{$H=\lietype{F}{4}$}{PDFstring}}
\label{section Arthur F4 tempered}
\tp For stable tempered parameters,
the component group is trivial and as a direct consequence we have:
\begin{spprop}\label{prop multiplicity F4 tempered}
    For any discrete global Arthur parameter $\psi\in\Psi_{\mathrm{AJ}}(\grpF)$ satisfying $\mathrm{H}(\psi)=\lietype{F}{4}$,
    we have $\mathrm{m}(\pi_{\psi})=1$.
\end{spprop}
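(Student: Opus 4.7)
The plan is short: the proposition reduces immediately to the triviality of the global component group $\mathrm{C}_{\psi}$.

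First I would observe that if $\mathrm{H}(\psi) = \lietype{F}{4}$, then by definition (\Cref{def image group of Arthur parameter}) the image of the associated compact form $\psi_{\mathrm{c}} : \mathcal{L}_{\Z} \times \mathrm{SU}(2) \to \lietype{F}{4}$ is all of $\lietype{F}{4}$. Since $\mathcal{L}_{\Z}$ is connected (\Cref{prop Langlands group is connected}) and $\mathrm{SU}(2)$ is connected, the image of $\psi : \mathcal{L}_{\Z} \times \SL_{2}(\C) \to \widehat{\grpF}(\C)$ is Zariski dense in $\widehat{\grpF}(\C)$. Hence the centralizer $\mathrm{C}_{\psi}$ of $\mathrm{Im}(\psi)$ equals the center of $\widehat{\grpF}(\C) \simeq \lietype{F}{4,\C}$, which is trivial since $\grpF$ is at once simply-connected and adjoint.

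Next, both characters $\rho^{\vee}_{\psi}$ and $\varepsilon_{\psi}$ are characters of $\mathrm{C}_{\psi} = 1$, so they are trivially equal. By the formula \eqref{eqn Arthur multiplicity formula} in \Cref{conj Arthur multiplicity formula}, this gives $m_{\psi} = 1$.

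Finally, to pass from $m_{\psi} = 1$ to $\mathrm{m}(\pi_{\psi}) = 1$, I would note that in the sum defining $\mathrm{m}(\pi_{\psi})$ in Arthur's formula there is only one term: by \Cref{prop compact Lie group F4 is acceptable} (element-conjugacy implies conjugacy in $\lietype{F}{4}$) combined with the Cebotarev property \Cref{axiom 1}, a discrete global Arthur parameter of $\grpF$ is determined by its collection of conjugacy classes $\mathrm{c}(\psi)$, so $\psi_{\pi_\psi}$ is unique (cf.\ \Cref{rmk uniqueness of Arthur parameter}). Summing over the singleton set $\{\psi\}$ yields the claim. There is no real obstacle: once the centralizer is identified as the (trivial) center of $\widehat{\grpF}(\C)$, the result is immediate from the multiplicity formula.
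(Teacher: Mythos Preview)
Your proof is correct and follows exactly the same approach as the paper, which simply states (without further elaboration) that for stable tempered parameters the component group is trivial, from which the proposition follows directly. You have supplied a bit more detail than the paper does—in particular the explicit reason the centralizer is trivial and the passage from $m_{\psi}=1$ to $\mathrm{m}(\pi_{\psi})=1$—but the underlying idea is identical.
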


\subsection{Classification of representations contributing to \texorpdfstring{$\mathcal{A}_{\mathrm{V}_{\lambda}}(\grpF)$}{PDFstring}}
\label{section classify representations in the multiplicity space}
\tp Recall that in \Cref{section review of automorphic representations},
for each irreducible representation $\mathrm{V}_{\lambda}$ with highest weight $\lambda$ of $\lietype{F}{4}=\grpF(\R)$,
we have defined its multiplicity space in $\mathcal{L}_{\disc}(\grpF)$:
\[\mathcal{A}_{\mathrm{V}_{\lambda}}(\grpF)=\Hom_{\grpF(\R)}(\mathrm{V}_{\lambda},\mathcal{L}_{\disc}(\grpF)^{\mathcal{F}_{4,\mathrm{I}}(\widehat{\Z})}),\]
which parametrizes level one discrete automorphic representation $\pi$ of $\grpF$ such that $\pi_{\infty}\simeq \mathrm{V}_{\lambda}$.
We have a dimension formula \Cref{cor dimension formula for the multiplicity space} for this space.
Now with results in \Cref{section classification of Arthur parameters of F4},
we can study the discrete global Arthur parameters $\psi\in\Psi_{\mathrm{AJ}}(\grpF)$ 
whose corresponding representation $\pi_{\psi}\in\Pi(\grpF)$ has multiplicity $1$ in $\mathcal{L}_{\disc}(\grpF)$ and contributes to $\mathcal{A}_{\vrep{\lambda}}(\grpF)$. 

According to \Cref{lemma dimension of isotypic spaces},
we have:
\begin{align*}
    \dim\mathcal{A}_{\vrep{\lambda}}(\grpF)=\sum_{\pi\in \Pi(\grpF),\,\pi_{\infty}\simeq \vrep{\lambda}}\mathrm{m}(\pi).
\end{align*}
Using discrete global Arthur parameters,
we rewrite this formula as 
\begin{align*}
    \dim\mathcal{A}_{\vrep{\lambda}}(\grpF)=\sum_{\psi\in\Psi_{\mathrm{AJ}}(\grpF),\,\mathrm{c}_{\infty}(\psi)=\mathrm{c}_{\infty}(\vrep{\lambda})}\mathrm{m}(\pi_{\psi})=\sum_{\psi\in\Psi_{\mathrm{AJ}}(\grpF),\,\mathrm{c}_{\infty}(\psi)=\lambda+\rho}\mathrm{m}(\pi_{\psi}),
\end{align*}
where $\rho$ is the half sum of positive roots of $\grpF$.

If the endoscopic type of $\psi\in\Psi_{\mathrm{AJ}}(\grpF)$ is not stable,
i.e. $\mathrm{H}(\psi)$ is the conjugacy class of a proper subgroup of $\lietype{F}{4}=\grpF(\R)$,
then it must have one of the types listed in \Cref{section classification of Arthur parameters of F4}.
For each subgroup $H$ of $\lietype{F}{4}$ listed in \Cref{thm classification result of subgroups satisfying three conditions},
we can determine the discrete global Arthur parameters $\psi\in\Psi_{\mathrm{AJ}}(\grpF)$ satisfying $\mathrm{H}(\psi)=H$ and $\mathrm{m}(\pi_{\psi})=1$.
The difference 
\begin{align}\label{eqn dimension of tempered stable parameters}
\dim \mathcal{A}_{\mathrm{V}_{\lambda}}(\grpF)-\#\left\{\psi\in\Psi_{\mathrm{AJ}}(\grpF)\,|\,\mathrm{H}(\psi)\neq \lietype{F}{4}, \mathrm{c}_{\infty}(\psi)=\rho+\lambda,\mathrm{m}(\pi_{\psi})=1\right\}
\end{align}
is the number of discrete automorphic representations $\pi$ of $\grpF$ with archimedean component $\pi_{\infty}\simeq \mathrm{V}_{\lambda}$
whose global Arthur parameter is tempered and stable.
In other words:
\begin{spprop}\label{prop conjectural number of F4 image representations}
    Let $\lambda$ be a dominant weight of $\lietype{F}{4}$,
    we define the number 
    \[\lietype{F}{4}(\lambda):=\#\set{\pi\in\Pi_{\cusp}(\pgl_{26})}{\mathrm{c}_{\infty}(\pi)=\mathrm{r}_{0}(\lambda+\rho)\in\mathfrak{sl}_{26,\mathrm{ss}},\mathrm{H}(\pi)\simeq \lietype{F}{4}},\]
    where $\mathrm{r}_{0}:\mathfrak{f}_{4}\rightarrow \mathfrak{sl}_{26}$ is the $26$-dimensional irreducible representation of $\mathfrak{f}_{4}$,
    and define $\mathrm{w}(\lambda)$ to be twice the maximal eigenvalue of $\lambda+\rho$.  
    Then we have a formula for the number $\lietype{F}{4}(\lambda)$,
    and we list nonzero $\mathrm{F}_{4}(\lambda)$ for all the dominant weights $\lambda$ such that $\mathrm{w}(\lambda)\leq 44$ in \Cref{TableMult}. 
\end{spprop}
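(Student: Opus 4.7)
The plan is to derive the formula for $\lietype{F}{4}(\lambda)$ by subtraction from the computable quantity $\dim \mathcal{A}_{\mathrm{V}_{\lambda}}(\grpF)$ given in \Cref{cor dimension formula for the multiplicity space}. By \Cref{lemma dimension of isotypic spaces} and the fact that any $\pi\in\Pi(\grpF)$ corresponds to a unique $\psi\in\Psi_{\mathrm{AJ}}(\grpF)$ (by \Cref{prop compact Lie group F4 is acceptable} and the Cebotarev property), this dimension equals
\[\sum_{\psi\in\Psi_{\mathrm{AJ}}(\grpF),\,\mathrm{c}_{\infty}(\psi)=\lambda+\rho}\mathrm{m}(\pi_{\psi}).\]
Conditional on Arthur's multiplicity formula, the representations $\pi$ of $\grpF$ with $\mathrm{H}(\psi_{\pi})=\lietype{F}{4}$ are precisely the stable tempered ones, and they are in bijection with the cuspidal $\pi'\in\Pi_{\cusp}(\pgl_{26})$ having Sato-Tate group $\lietype{F}{4}(\R)$ and infinitesimal character $\mathrm{r}_{0}(\lambda+\rho)$, via $\pi\mapsto$ the representation associated with $r_{0}\circ\psi_{\pi}$ by \Cref{axiom 2}. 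Hence
\[\lietype{F}{4}(\lambda)=\dim \mathcal{A}_{\mathrm{V}_{\lambda}}(\grpF)-\#\set{\psi\in\Psi_{\mathrm{AJ}}(\grpF)}{\mathrm{H}(\psi)\neq \lietype{F}{4},\,\mathrm{c}_{\infty}(\psi)=\lambda+\rho,\,\mathrm{m}(\pi_{\psi})=1},\]
which is the basic identity from which the formula will follow.

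Next, I will enumerate the non-stable contribution using the case-by-case classification in \Cref{prop multiplicity principal A1} through \Cref{prop multiplicity Spin(9)}. For each of the $12$ conjugacy classes of proper subgroups $H\subsetneq \lietype{F}{4}$ appearing in \Cref{thm arthur parameter with nonzero multiplicity}, each endoscopic type lists $\psi$ as a specific formal combination of cuspidal representations $\pi_{i}$ drawn from a building-block set (namely $\Pi_{\alg}^{\perp}(\pgl_{2}),\,\Pi_{\alg}^{\symp_{4}}(\pgl_{4}),\,\Pi_{\alg}^{\symp_{6}}(\pgl_{6}),\,\Pi_{\alg}^{\sorth_{8}}(\pgl_{8}),\,\Pi_{\alg}^{\sorth_{9}}(\pgl_{9})$ and $\Pi_{\alg}^{\lietype{G}{2}}(\pgl_{7})$), together with explicit congruence and inequality constraints on the motivic weights. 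The constraint $\mathrm{c}_{\infty}(\psi)=\lambda+\rho$, combined with \Cref{lemma eigenvalues of infinitesimal character of F4 representation}, pins down which multisets of Hodge weights are admissible for the $\pi_{i}$'s. Counting then reduces to counting elements of the building-block sets having prescribed Hodge weights; these counts are known (or conjecturally known) from \cite{ChenevierRenard}, \cite{TaibiDimension} and \cite{Chenevier2020DiscreteSM}: for $\Pi_{\alg}^{\perp}(\pgl_{2})$ it reduces to dimensions of spaces of level one classical cuspidal Hecke eigenforms, and similarly for the higher-rank classical cases via Ta\"ibi's tables. Assembling the contributions from all $12$ subgroups, weighted by the multiplicity formulas I've derived in each proposition, yields the closed-form expression for $\lietype{F}{4}(\lambda)$.

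For the tabulation with $\mathrm{w}(\lambda)\leq 44$, I will first translate the bound: writing $\lambda=(\lambda_{1},\lambda_{2},\lambda_{3},\lambda_{4})$, the largest eigenvalue of $\mathrm{r}_{0}(\lambda+\rho)$ is $2\lambda_{1}+3\lambda_{2}+2\lambda_{3}+\lambda_{4}+8$, so $\mathrm{w}(\lambda)\leq 44$ becomes $2\lambda_{1}+3\lambda_{2}+2\lambda_{3}+\lambda_{4}\leq 36$. For each such $\lambda$, I apply the extended version of \Cref{nonzeroAMF} (computed in the machine-assisted database of \cite{Codes}) to obtain $\dim\mathcal{A}_{\mathrm{V}_{\lambda}}(\grpF)$, then enumerate admissible non-stable parameters $\psi$ in each case of the classification. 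The arithmetic is bounded and purely combinatorial once one has the building-block counts up to the relevant motivic weight. Comparing expected dimensions with known counts of building blocks in low weight will simultaneously provide a consistency check against the tables of \cite{ChenevierRenard}, \cite{TaibiDimension}, \cite{Chenevier2020DiscreteSM}.

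The main obstacle will be the sheer bookkeeping: correctly combining the multiplicity conditions from the twelve propositions (each with several subcases and constraints on symmetries of weights) with the tables for building-block counts, while avoiding double counting arising from symmetries such as swapping the $\pi_{i}$ of equal motivic weight in the endoscopic types where such swaps act trivially on $\mathrm{c}(\psi)$. A secondary delicate point is keeping track of boundary cases where two Hodge weights of distinct irreducible summands coincide, since these violate the condition $\psi\in\Psi_{\mathrm{AJ}}(\grpF)$ and must be excluded. A careful organisation, case by case, supported by computer enumeration from \cite{Codes}, should handle both difficulties; the final tabulation in \Cref{TableMult} is then obtained by direct computation, and the existence of $\lietype{F}{4}(1,2,0,2)=1$ in motivic weight $36$ will be singled out as \Cref{prop smallest motivic weight 36 tempered stable representation}.
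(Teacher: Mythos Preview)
Your approach is exactly the paper's: derive $\lietype{F}{4}(\lambda)$ from the subtraction identity \eqref{eqn dimension of tempered stable parameters}, enumerate the non-stable side case by case via \Cref{prop multiplicity principal A1}--\Cref{prop multiplicity Spin(9)}, and reduce to counts of building-block cuspidal representations. Two small slips to fix: first, $\Pi_{\alg}^{\sorth_{8}}(\pgl_{8})$ is not needed in your building-block list, since by \Cref{prop multiplicity Spin(8)} every parameter with $\mathrm{H}(\psi)=\spin(8)$ has multiplicity zero and so contributes nothing to the subtraction; second, $\mathrm{w}(\lambda)$ is \emph{twice} the maximal eigenvalue, so $\mathrm{w}(\lambda)\leq 44$ translates to $2\lambda_{1}+3\lambda_{2}+2\lambda_{3}+\lambda_{4}\leq 14$, not $\leq 36$.
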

\begin{proof}
    The formula for $\lietype{F}{4}(\lambda)$ follows from \eqref{eqn dimension of tempered stable parameters}
    and our classifications in \Cref{section classification of Arthur parameters of F4}.
    This formula involves the numbers of elements in one of the following sets with certain Hodge weights:
    \[\Pi_{\alg}^{\perp}(\pgl_{2}),\Pi_{\alg}^{\symp_{4}}(\pgl_{4}),\Pi_{\alg}^{\symp_{6}}(\pgl_{6}),\Pi_{\alg}^{\lietype{G}{2}}(\pgl_{7}),\Pi_{\alg}^{\sorth_{9}}(\pgl_{9}).\]
    For $\Pi_{\alg}^{\perp}(\pgl_{2})$,
    this number is related to the dimension of cusp forms for $\SL_{2}(\Z)$,
    as explained in \Cref{ex algebraic representation of GL(2)}.
    For other four sets,
    we can find some tables in \cite{TableAutorepG2} and \cite{TableAutorep}. 
    A \cite{PARI2} program to compute $\mathrm{F}_{4}(\lambda)$ for dominant weights $\lambda$ satisfying $\mathrm{w}(\lambda)\leq 60$
    is provided in \cite{Codes}.
\end{proof}
\begin{rmk}\label{rmk simpler version of formula}
    The formula for $\lietype{F}{4}(\lambda)$ has too many terms,
    thus it is not reasonable to write it down here.
    However, under some hypothesis on $\lambda$,
    many terms vanish and this formula becomes much simpler.
    For example,
    if
    \begin{itemize}
        \item $\lambda_{i}>0$ for $i=1,2,3,4$,
        \item $\lambda_{1}>\lambda_{2}+\lambda_{3}+\lambda_{4}+3$,
        \item and  $\lambda_{3},\lambda_{4}$ are both odd,
    \end{itemize} 
    then we have the following formula: 
    \begin{align*}\label{eqn simpler formula}
        \lietype{F}{4}(\lambda)=\dim\mathcal{A}_{\vrep{\lambda}}(\grpF)-
        \mathrm{O}^{*}(\lambda_{1}^{\prime},\lambda_{2}^{\prime},\lambda_{3}^{\prime},\lambda_{4}^{\prime}),
    \end{align*}   
    where $\mathrm{O}^{*}(w_{1},w_{2},w_{3},w_{4})$ is the number of equivalence classes of level one cuspidal orthogonal representations of $\pgl_{9}$
    with Hodge weights $w_{1}>w_{2}>w_{3}>w_{4}>0$,
    and 
    \begin{gather*}
        \lambda_{1}^{\prime}=2\lambda_{1}+6\lambda_{2}+4\lambda_{3}+2\lambda_{4}+14,
        \,\lambda_{2}^{\prime}=2\lambda_{1}+2\lambda_{2}+2\lambda_{3}+2\lambda_{4}+8,\\
        \lambda_{3}^{\prime}=2\lambda_{1}+2\lambda_{2}+2\lambda_{3}+6,
        \,\lambda_{4}^{\prime}=2\lambda_{1}+2\lambda_{2}+4.
    \end{gather*}
\end{rmk}
In \Cref{TableMult},
we find that the smallest $\mathrm{w}(\lambda)$ for $\lambda$ such that $\lietype{F}{4}(\lambda)\neq 0$
is $36$ and the corresponding dominant weight is $\lambda=\varpi_{1}+2\varpi_{2}+2\varpi_{4}$.
We are now going to prove this fact 
without using \Cref{thm arthur parameter with nonzero multiplicity},
in order to give readers who skip the proof of \Cref{thm arthur parameter with nonzero multiplicity} an example of how we apply Arthur's conjectures.
\begin{spprop}\label{prop smallest motivic weight 36 tempered stable representation}
    There is a level one cuspidal automorphic representation $\pi$ of $\pgl_{26}$ with motivic weight $36$,
    such that the Sato-Tate group $\mathrm{H}(\pi)$ of $\pi$ is isomorphic to the compact Lie group $\lietype{F}{4}$.     
\end{spprop}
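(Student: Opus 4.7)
The plan is to take $\lambda = \varpi_{1} + 2\varpi_{2} + 2\varpi_{4}$, for which $\mathrm{w}(\lambda) = 36$ via \Cref{lemma eigenvalues of infinitesimal character of F4 representation} applied with $(a,b,c,d) = (2,3,1,3)$, and show that $\lietype{F}{4}(\lambda) \geq 1$. By \eqref{eqn dimension of tempered stable parameters} and \Cref{prop conjectural number of F4 image representations}, this reduces to checking the inequality
\[
\dim \mathcal{A}_{\vrep{\lambda}}(\grpF) \;>\; N_{\lambda},
\]
where $N_{\lambda}$ counts the non-stable parameters $\psi \in \Psi_{\mathrm{AJ}}(\grpF)$ satisfying $\mathrm{c}_{\infty}(\psi) = \lambda + \rho$ and $m_{\psi} = 1$. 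The left-hand side is read directly from \Cref{nonzeroAMF} (applicable since $2\lambda_{1} + 3\lambda_{2} + 2\lambda_{3} + \lambda_{4} = 10 \leq 13$), using \Cref{cor dimension formula for the multiplicity space}; this reduces the problem to computing $N_{\lambda}$.

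To evaluate $N_{\lambda}$, the plan is to run through the twelve relevant subgroup classes from \Cref{thm classification result of subgroups satisfying three conditions} (the class $\spin(8)$ contributes nothing by \Cref{prop multiplicity Spin(8)}), and for each one apply the corresponding classification proposition (\Cref{prop multiplicity principal A1}--\Cref{prop multiplicity Spin(9)}) to enumerate all parameters whose block decomposition accounts for the multiset of eigenvalues $\mathrm{HT}(2,3,1,3)$. The key simplification comes from the maximal eigenvalue being $18$: every cuspidal block $\pi_{i}$ appearing in $\psi$ must have all weights of absolute value $\leq 18$. Consequently the only building blocks from $\Pi_{\alg}^{\perp}(\pgl_{2})$ that can appear are the three representations $\Delta_{11}, \Delta_{15}, \Delta_{17}$ attached to the unique normalized cusp forms of weights $12, 16, 18$ (by \Cref{ex algebraic representation of GL(2)}); the counts of blocks from $\Pi_{\alg}^{\symp_{4}}(\pgl_{4})$, $\Pi_{\alg}^{\symp_{6}}(\pgl_{6})$, $\Pi_{\alg}^{\lietype{G}{2}}(\pgl_{7})$, $\Pi_{\alg}^{\sorth_{9}}(\pgl_{9})$ in the relevant weight range can be read from the tables in \cite{TableAutorep} and \cite{TableAutorepG2}, and are typically zero or very small at such low motivic weights.

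Having $N_{\lambda}$ in hand, the conclusion is a direct numerical comparison, and the difference will yield (conjecturally) exactly one stable tempered parameter $\psi = \pi$ with $\pi \in \Pi_{\cusp}(\pgl_{26})$, $\mathrm{H}(\pi) \simeq \lietype{F}{4}$, and $\mathrm{w}(\pi) = 36$. The principal obstacle is the bookkeeping: although each individual compatibility check is mechanical given the explicit criteria in \Cref{section classification of Arthur parameters of F4}, the cases with three or four blocks (notably \Cref{prop multiplicity a bizzare  A1+A1+A1 via A1+Sp(3)}, \Cref{prop multiplicity principal A1 in Spin(5) with Spin(4)}, \Cref{prop multiplicity four copies of A1}, \Cref{prop multiplicity A1+A1+Sp(2)}) require carefully matching several small-weight cuspidal representations against the fixed weight multiset while respecting the parity/size conditions imposed by the vanishing of Arthur's character and $\rho^{\vee}_{\psi}$; executing the enumeration by hand is error-prone, so delegating to the \cite{PARI2} code in \cite{Codes} is advisable.
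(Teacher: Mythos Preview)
Your approach is correct, but it takes a different route from the paper's in a deliberate way. The paper explicitly announces, just before the proof, that \Cref{prop smallest motivic weight 36 tempered stable representation} will be established \emph{without} invoking \Cref{thm arthur parameter with nonzero multiplicity} (equivalently, without the classification Propositions \ref{prop multiplicity principal A1}--\ref{prop multiplicity F4 tempered}), so that readers see Arthur's multiplicity formula applied by hand. Your proposal instead relies precisely on those propositions, which amounts to specialising the general machinery behind \Cref{prop conjectural number of F4 image representations} to $\lambda=(1,2,0,2)$ and reading off $\lietype{F}{4}(\lambda)=1$.

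The paper's direct argument uses that $\dim\mathcal{A}_{\vrep{\lambda}}(\grpF)=1$, so there is a unique $\psi_{0}\in\Psi_{\lambda}(\grpF)$. Writing out the eigenvalue multiset of $\mathrm{c}_{\infty}(\psi_{0})$ and observing that $1$ is \emph{not} among them rules out any summand $\pi[d]$ with $n$ odd and $d\geq 3$; together with the motivic-weight bound $36$ this leaves only nine candidate shapes. Each is then eliminated by (i) matching pairs of consecutive integers in the eigenvalue list to pin down candidate Hodge weights, (ii) checking those against the known tables for $\Pi_{\alg}^{\symp_{4}}(\pgl_{4})$, $\Pi_{\alg}^{\symp_{6}}(\pgl_{6})$, $\Pi_{\alg}^{\sorth_{8}}(\pgl_{8})$, $\Pi_{\alg}^{\sorth_{9}}(\pgl_{9})$, and (iii) in exactly one surviving case computing $\varepsilon_{\psi}(\sigma)$ and $\rho^{\vee}_{\psi}(\sigma)$ by hand via \Cref{prop epsilon character of Arthur} and \Cref{prop calculation of rho character for F4}. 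Your route is shorter once the classification propositions are granted (and is essentially what the code in \cite{Codes} automates); the paper's route is self-contained and serves as the worked example advertised in \Cref{rmk skip this section at first}.

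One small slip: when you say \Cref{lemma eigenvalues of infinitesimal character of F4 representation} is ``applied with $(a,b,c,d)=(2,3,1,3)$'' you are mixing two conventions. In that lemma the parameters are the highest-weight coordinates of $\lambda$, so $(a,b,c,d)=(1,2,0,2)$; the tuple $(2,3,1,3)$ is the argument of $\mathrm{HT}(\,\cdot\,)$ in the introduction's shifted notation.
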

\begin{proof}
    We fix $\lambda=\varpi_{1}+2\varpi_{2}+2\varpi_{4}$.
    In \Cref{nonzeroAMF},
    we find that $\dim\mathcal{A}_{\vrep{\lambda}}(\grpF)=1$.
    We denote the unique automorphic representation contributing to $\mathcal{A}_{\mathrm{V}_{\lambda}}(\grpF)$ by $\pi_{0}$ 
    and its corresponding discrete global Arthur parameter by $\psi_{0}$.
    The eigenvalues of $\mathrm{c}_{\infty}(\pi_{0})=\lambda+\rho$ are: 
    \[-18,-16,-13,-12,-9,-9,-7,-6,-5,-4,-3,-2,0,0,2,3,4,5,6,7,9,9,12,13,16,18.\]
    Now it suffices to show that $\mathrm{H}(\psi_{0})=\lietype{F}{4}$.

    We can exclude some possibilities of $\mathrm{H}(\psi_{0})$ and endoscopic types by an argument of motivic weights.
    For example, if $\mathrm{H}(\psi_{0})=\mathrm{A}_{1}^{[17,9]}$ and $\psi_{0}=\sym^{16}\pi\oplus\sym^{8}\pi$ for some $\pi\in\Pi_{\alg}^{\perp}(\pgl_{2})$,
    then $\mathrm{w}(\pi_{0})=16\mathrm{w}(\pi)\geq 16\times11=176$,
    which contradicts with $\mathrm{w}(\pi_{0})=36$.
    We also notice that $1$ is not an eigenvalue of $\mathrm{c}_{\infty}(\pi_{0})$, 
    thus $\psi_{0}$ does not have irreducible summands of the form 
    \[\pi[d],\text{ where }\pi\in\Pi_{\alg}^{\perp}(\pgl_{n}),n\equiv 1\modulo2 \text{ and } d\geq 3.\] 
    Now we list all possible types for $\psi_{0}$:
    \begin{enumerate}[label= (\arabic*)]
        \item $\psi_{0}$ is a stable and tempered parameter;
        \item $\psi_{0}=(\bigoplus_{1\leq i<j\leq 3}\pi_{i}\otimes\pi_{j})\oplus(\bigoplus_{1\leq i\leq 3}\pi_{i}[2])\oplus[1]\oplus[1], \pi_{1},\pi_{2},\pi_{3}\in\Pi_{\alg}^{\perp}(\pgl_{2})$;
        \item $\psi_{0}=(\bigoplus_{1\leq i<j\leq 4}\pi_{i}\otimes\pi_{j})\oplus[1]\oplus[1], \pi_{1},\pi_{2},\pi_{3},\pi_{4}\in\Pi_{\alg}^{\perp}(\pgl_{2})$;
        \item $\psi_{0}=\wedge^{*}\pi\oplus(\pi\otimes\tau)\oplus\pi[2]\oplus\tau[2]\oplus[1],\pi\in\Pi_{\alg}^{\symp_{4}}(\pgl_{4}),\tau\in\Pi_{\alg}^{\perp}(\pgl_{2})$;
        \item $\psi_{0}=\wedge^{*}\pi\oplus(\pi\otimes\tau_{1})\oplus(\pi\otimes\tau_{2})\oplus(\tau_{1}\otimes\tau_{2})\oplus[1],\pi\in\Pi_{\alg}^{\symp_{4}}(\pgl_{4}),\tau_{1},\tau_{2}\in\Pi_{\alg}^{\perp}(\pgl_{2})$;
        \item $\psi_{0}=\wedge^{*}\pi\oplus\pi[2],\pi\in\Pi_{\alg}^{\symp_{6}}(\pgl_{6})$;
        \item $\psi_{0}=\wedge^{*}\pi\oplus(\pi\otimes\tau),\pi\in\Pi_{\alg}^{\symp_{6}}(\pgl_{6}),\tau\in\Pi_{\alg}^{\perp}(\pgl_{2})$;
        \item $\psi_{0}=\pi\oplus\spin^{+}\pi\oplus\spin^{-}\pi\oplus[1]\oplus[1],\pi\in\Pi_{\alg}^{\sorth_{8}}(\pgl_{8})$;
        \item $\psi_{0}=\pi\oplus\spin\pi\oplus[1],\pi\in\Pi_{\alg}^{\sorth_{9}}(\pgl_{9})$.
    \end{enumerate}
    The definitions of some notations like $\wedge^{*},\spin^{\pm}$ can be found in \Cref{section classification of Arthur parameters of F4}.
    Now we are going to show that $\psi_{0}$ can not be of the types listed above except $(1)$. 

    \textbf{Type $(2)$}: The Hodge weights of the irreducible summand $\pi_{i}[2],i=1,2,3$ are $\mathrm{w}(\pi_{i})\pm 1$,
    thus there are two consecutive integers $\frac{\mathrm{w}(\pi_{i})\pm 1}{2}$ in the eigenvalues of $\mathrm{c}_{\infty}(\pi_{0})$.
    The possible $\mathrm{w}(\pi_{i})$'s are $5,7,9,11,13,25$.
    However, $\Pi_{\alg}^{\perp}(\pgl_{2})$ contains no representations with motivic weights $5,7,9,13$, 
    thus we are unable to find three different $\mathrm{w}(\pi_{i})$.
    If $\pi_{i}\simeq\pi_{j}$ for some $i,j$,
    then $\pi_{i}\otimes\pi_{j}$ has two zero weights,
    which is a contradiction!

    \textbf{Type $(3)$}: By the same argument for type $(2)$, $\psi_{0}$ can not be of this type.

    \textbf{Type $(4)$}: Denote the Hodge weights of $\pi\in\Pi_{\alg}^{\symp_{4}}(\pgl_{4})$ by $w_{1}>w_{2}$. 
    By a similar argument for type $(2)$, we can see that $w_{1},w_{2}\in\{5,7,9,11,13,25\}$.
    Via the help of \cite[Table 5]{ChenevierRenard},  we have $w_{1}=25$ and $w_{2}\in\{5,7,9\}$, thus $\mathrm{w}(\tau)$ must be $11$.
    Since $(w_{1}+w_{2})/2$ has to be an eigenvalue of $\mathrm{c}_{\infty}(\pi_{\infty})$, 
    the smaller Hodge weight $w_{2}$ can only be $7$.

    Now we use Arthur's multiplicity formula.
    In this case 
    \[\mathrm{H}(\psi_{0})=\left(\mathrm{A}_{1}^{[2^{6},1^{14}]}\times \mathrm{A}_{1}^{[2^{6},1^{14}]}\times \symp(2)\right)/\mu_{2}^{\Delta},\] 
    and by \Cref{section info A1+A1+Sp(2)}
    the global component group $\mathrm{C}_{\psi_{0}}\simeq\Z/2\Z\times\Z/2\Z$. 
    We take a set of generators $\{\sigma=(1,1,-1),\sigma_{1}=(-1,1,1)\}$ of $\mathrm{C}_{\psi_{0}}$.
    The restriction of the adjoint representation $\mathfrak{f}_{4}$ of $\lietype{F}{4}$ along $\psi_{0}$ is isomorphic to 
    \[\sym^{2}\pi\oplus\left(\wedge^{*}\pi\otimes\tau[2]\right)\oplus\left(\pi\otimes\tau\right)\oplus\pi[2]\oplus\sym^{2}\tau\oplus[3].\]
    By \Cref{prop epsilon character of Arthur} we have:
    \begin{gather*}
        \varepsilon_{\psi_{0}}(\sigma)=\varepsilon(\pi)=\varepsilon(\mathbf{I}_{7})\cdot\varepsilon(\mathbf{I}_{25})=-1.
    \end{gather*}
    On the other side $\mu_{1}=36$ comes from $\pi\otimes\tau$ 
    and $\mu_{4}=24$ comes from $\pi[2]$.
    The element $\sigma$ acts on $\pi\otimes \tau$ and $\pi[2]$ both by $-1$, 
    thus $\rho_{\psi_{0}}^{\vee}(\sigma)=1$ by \Cref{prop calculation of rho character for F4}.
    By Arthur's multiplicity formula,
    the corresponding representation has multiplicity $0$ in $\mathcal{L}_{\disc}(\grpF)$.

    \textbf{Type $(5)$}: Denote the Hodge weights of $\pi\in\Pi_{\alg}^{\symp_{4}}(\pgl_{4})$ by $w_{1}>w_{2}$,
    and assume that $\mathrm{w}(\tau_{1})>\mathrm{w}(\tau_{2})$.
    Since $36\geq w_{1}+\mathrm{w}(\tau_{1})\geq w_{1}+15$, 
    we have $w_{1}\leq 21$, 
    thus $(w_{1},w_{2})=(19,7)$ or $(21,5),(21,9),(21,13)$ by \cite[Table 5]{ChenevierRenard}.
    We also need $(w_{1}\pm w_{2})/2$ to be eigenvalues of $\mathrm{c}_{\infty}(\pi_{0})$, 
    so $(w_{1},w_{2})=(19,7)$.
    However, the equalities $36=w_{1}+\mathrm{w}(\tau_{1})$ and $32=w_{1}+\mathrm{w}(\tau_{2})$ imply that 
    $\mathrm{w}(\tau_{1})=17,\mathrm{w}(\tau_{2})=13$, 
    which contradicts with the non-existence of representations in $\Pi_{\alg}^{\perp}(\pgl_{2})$ with Hodge weight $13$.
    
    \textbf{Type $(6)$}: Denote the Hodge weights of $\pi\in\Pi_{\alg}^{\symp_{6}}(\pgl_{6})$ by $w_{1}>w_{2}>w_{3}$.
    We have three pairs of consecutive integers $\frac{w_{i}\pm 1}{2}$ in the eigenvalues of $\mathrm{c}_{\infty}(\pi_{0})$, 
    thus for $i=1,2,3$ we have $w_{i}\in\{5,7,9,11,13,25\}$.
    By \cite[Table 6]{ChenevierRenard}, $(w_{1},w_{2},w_{3})$ must be $(25,13,7)$. 
    However, $\wedge^{*}\pi$ has $38$ as its weight, 
    which is a contradiction.

    \textbf{Type $(7)$}: Denote the Hodge weights of $\pi\in\Pi_{\alg}^{\symp_{6}}(\pgl_{6})$ by $w_{1}>w_{2}>w_{3}$.
    Since $36\geq w_{1}+\mathrm{w}(\tau)\geq w_{1}+11$, we have $23\leq w_{1}\leq 25$.
    Combining $36\geq w_{1}+w_{2}$
    with \cite[Table 6]{ChenevierRenard},
    we get $(w_{1},w_{2},w_{3})=(23,13,5)$.
    However, $\mathrm{w}(\tau)=32-w_{1}=9<11$, which is a contradiction.

    \textbf{Type $(8)$}: Denote the Hodge weights of $\pi\in\Pi_{\alg}^{\sorth_{8}}(\pgl_{8})$ by $w_{1}>w_{2}>w_{3}>w_{4}$. 
    The multiset \[\{\pm w_{1}/2,\pm w_{2}/2,\pm w_{3}/2,\pm w_{4}/2,\frac{\pm w_{1}\pm w_{2}\pm w_{3}\pm w_{4}}{4},0,0\}\]
    coincides with the multiset of eigenvalues of $\mathrm{c}_{\infty}(\pi_{0})$.
    The solutions to this system of equations are \[(w_{1},w_{2},w_{3},w_{4})=(26,24,18,4),(32,18,12,10),(36,14,8,6).\]
    By the method of Chenevier-Ta\"ibi in \cite{Chenevier2020DiscreteSM}, 
    there are no representations in $\Pi_{\alg}^{\sorth_{8}}(\pgl_{8})$ with these Hodge weights.

    \textbf{Type $(9)$}: By the same argument for type $(9)$, we get the Hodge weights of $\pi\in\Pi_{\alg}^{\sorth_{9}}(\pgl_{9})$:
     \[(w_{1},w_{2},w_{3},w_{4})=(26,24,18,4),(32,18,12,10),(36,14,8,6).\]
    Again by the method in \cite{Chenevier2020DiscreteSM}, 
    there are no representations in $\Pi_{\alg}^{\sorth_{9}}(\pgl_{9})$ with these Hodge weights.

    In conclusion, the discrete global Arthur parameter $\psi_{0}$ is a stable and tempered parameter, 
    i.e. $\mathrm{H}(\psi_{0})=\lietype{F}{4}$.
    Composing this $\psi_{0}$ with the $26$-dimensional irreducible representation $r_{0}:\widehat{\grpF}(\C)\rightarrow \SL_{26}(\C)$,
    we get an irreducible $26$-dimensional representation of $\mathcal{L}_{\Z}$,
    and its corresponding cuspidal representation of $\pgl_{26}$ is the desired one.
\end{proof}
For each dominant weight $\lambda$ of $\grpF$,
we define $\Psi_{\lambda}(\grpF)$ to be the set
\[\set{\psi\in\Psi_{\mathrm{AJ}}(\grpF)}{\pi_{\psi}\in\Pi_{\disc}(\grpF)\text{ and }(\pi_{\psi})_{\infty}\simeq \vrep{\lambda}}.\]
In \Cref{table packet 34} and \Cref{table packet 36}, 
we list the elements of $\Psi_{\lambda}(\grpF)$ for weights $\lambda$ such that $\mathrm{w}(\lambda)\leq 36$ and $\Psi_{\lambda}(\grpF)\neq\emptyset$, 
where we use the following notations: 
\begin{notation}\label{notation representations in the table}
    For a representation $\pi$ in $\Pi_{\alg}^{\symp_{2n}}(\pgl_{2n}), n=1,2,3$ with Hodge weights $w_{1}>w_{2}>\cdots>w_{n}$, 
    we denote it by $\Delta_{w_{1},\ldots,w_{n}}$.
    If there are $k\geq 1$ equivalence classes of cuspidal representations with these Hodge weights, 
    we give them a superscript $\Delta_{w_{1},\ldots,w_{n}}^{(k)}$,
    meaning that in this case we have $k$ different choices of cuspidal representations. 
    Similarly, for $k$ different representations $\pi$ in $\Pi_{\alg}^{\sorth_{9}}(\pgl_{9})$ or $\Pi_{\alg}^{\lietype{G}{2}}(\pgl_{7})$
    with Hodge weights $w_{1}> \cdots > w_{n}$,
    where $n=3$ or $4$, 
    we denote them by $\Delta_{w_{1},\ldots,w_{n},0}^{(k)}$ and omit the superscript when $k=1$,
    i.e. the cuspidal representation with these Hodge weights is unique up to equivalence.      
\end{notation}

\subsection{Some related problems}
\label{section explain the table of lower motivic weights}
\tp In this subsection we explain some representation-theoretic problems motivated by our conjectural classification of discrete global Arthur parameters for $\grpF$.
\subsubsection{Theta correspondence between \texorpdfstring{$\pgl_{2}\text{ and }\grpF$}{PDFstring}}\label{section parameters corresponding to A1F4 theta}
\tp Inside an exceptional group $\mathbf{E}_{7,3}$ of Lie type $\lietype{E}{7}$ and $\Q$-rank $3$,
which is split over every finite prime $p$,
there is a reductive dual pair $\pgl_{2}\times\grpF$,
so we have an exceptional theta correspondence between representations of $\pgl_{2}$ and $\grpF$.

For a level one cuspidal automorphic representation $\pi\in\Pi_{\alg}^{\perp}(\pgl_{2})$,
by Savin's work on this exceptional theta correspondence \cite{Savin1994},
if the theta lift $\Theta(\pi)$ of $\pi$ to $\grpF$ is nonzero,
then its corresponding discrete global Arthur parameter is $\psi=\pi[6]\oplus[9]\oplus[5]$.
By \Cref{prop multiplicity principal A1 in Sp(3) with Sp(1)}, 
we see that $\mathrm{m}(\pi_{\psi})$ is always $1$,
admitting Arthur's conjectures.
This predicts that the global theta lift $\Theta(\pi)$ is nonzero for any $\pi\in\Pi_{\alg}^{\perp}(\pgl_{2})$,
and we will prove this result in another paper in progress.
\begin{rmk}\label{rmk table for theta A1F4}
    For $\pi\in\Pi_{\alg}^{\perp}(\pgl_{2})$,
    the archimedean theta lift of $\pi_{\infty}$ is isomorphic to the irreducible representation $\vrep{n\varpi_{4}}$ of $\lietype{F}{4}$ for some $n$.
    For readers interested in this exceptional theta correspondence,
    we list in \Cref{TableA1F4} the dimensions of $\vrep{n\varpi_{4}}^{\mathcal{F}_{4,\mathrm{I}}(\Z)}$ and $\vrep{n\varpi_{4}}^{\mathcal{F}_{4,\mathrm{E}}(\Z)}$
    for $n\leq 40$.
\end{rmk}
\subsubsection{Theta correspondence between \texorpdfstring{$\mathbf{G}_{2}\text{ and }\grpF$}{PDFstring}}
\tp Inside an exceptional group $\mathbf{E}_{8,4}$ of Lie type $\lietype{E}{8}$ and $\Q$-rank $4$,
there is a reductive dual pair $\mathbf{G}_{2}\times\grpF$,
where $\mathbf{G}_{2}$ is the generic fiber of the split Chevalley group of Lie type $\lietype{G}{2}$.

In \cite{dalal2023counting}, 
Dalal classifies level one quaternionic discrete automorphic representations of $\mathbf{G}_{2}$. 
The exceptional theta correspondence from $\mathbf{G}_{2}$ to $\grpF$ is functorial,
so for a level one quaternionic discrete automorphic representation of $\mathbf{G}_{2}$,
if its global theta lift to $\grpF$ is nonzero,
then we can describe the corresponding discrete global Arthur parameters in $\Psi_{\mathrm{AJ}}(\grpF)$.
The discrete global Arthur parameters of $\grpF$ involving in this correspondence are:
\begin{itemize}
    \item $\sym^{2}\pi[3]\oplus\pi[4]\oplus\pi[2]\oplus[5],\,\pi\in\Pi_{\alg}^{\perp}(\pgl_{2})$,
    \item $\sym^{2}\pi_{1}[3]\oplus (\pi_{1}\otimes\pi_{2}[3])\oplus[5]$, where $\pi_{1},\pi_{2}\in\Pi_{\alg}^{\perp}(\pgl_{2})$ satisfy $\mathrm{w}(\pi_{2})=3\mathrm{w}(\pi_{1})+2$,
    \item and $\pi[3]\oplus[5]$, where $\pi\in\Pi_{\alg}^{\lietype{G}{2}}(\pgl_{7})$.
\end{itemize}
According to \Cref{prop multiplicity principal A1 of Sp(1)+SO(3) with diagonal Sp(1)}, 
\Cref{prop multiplicity a bizzare A1+A1+A1 via A1+Sp(3)}
and \Cref{prop multiplicity A1+G2},
for every $\psi$ among these discrete global Arthur parameters,
we have $\mathrm{m}(\pi_{\psi})=1$.
This predicts that the global theta lift of any level one quaternionic discrete automorphic representation of $\mathbf{G}_{2}$ to $\grpF$ is nonzero,
which is proved by Pollack in \cite[\S 8]{pollack2023exceptional}.
\begin{rmk}\label{rmk table for theta G2F4}
    For any quaternionic discrete series $\pi$ of $\mathbf{G}_{2}(\R)$,
    the archimedean theta lift of $\pi$ 
    is isomorphic to the irreducible representation $\vrep{n\varpi_{3}}$ of $\lietype{F}{4}$ for some $n$.
    For readers interested in this exceptional theta correspondence,
    we list in \Cref{TableG2F4} the dimensions of $\vrep{n\varpi_{3}}^{\mathcal{F}_{4,\mathrm{I}}(\Z)}$ and $\vrep{n\varpi_{3}}^{\mathcal{F}_{4,\mathrm{E}}(\Z)}$
    for $n\leq 30$.
\end{rmk}

\pagebreak
\appendix
\section{Figures and tables}\label{TableAppendix}
\begin{figure}[ht]
    \centering
    \[\left(\scalemath{0.75}{\begin{matrix}
        4&2&2&-3&-1&-1&-1&-3&-3&-3&-2&-2&-2&-2&-2&-4&-4&-4&-4&-2&-2&-2&-2&-4&-4&-4&-4\\
        2&4&2&-2&-2&-2&-2&-4&-4&-4&-4&-3&-1&-1&-1&-3&-3&-3&-2&-2&-2&-2&-2&-4&-4&-4&-4\\
        2&2&4&-2&-2&-2&-2&-4&-4&-4&-4&-2&-2&-2&-2&-4&-4&-4&-4&-3&-1&-1&-1&-3&-3&-3&-2\\
        -3&-2&-2&5&1&1&1&4&4&4&2&2&2&2&2&4&4&4&4&2&2&2&2&4&4&4&4\\
        -1&-2&-2&1&5&1&1&4&4&4&4&2&0&2&2&2&2&2&3&2&0&0&0&2&2&2&0\\
        -1&-2&-2&1&1&5&1&4&2&2&4&2&0&0&2&2&2&2&1&2&2&0&0&2&3&3&2\\
        -1&-2&-2&1&1&1&5&2&4&2&4&2&0&0&0&2&2&2&1&2&2&2&0&3&2&3&2\\
        -3&-4&-4&4&4&4&2&8&6&6&6&4&2&2&3&5&5&6&5&4&2&2&2&5&6&5&4\\
        -3&-4&-4&4&4&2&4&6&8&6&6&4&2&3&2&6&5&5&5&4&2&2&2&5&5&6&4\\
        -3&-4&-4&4&4&2&2&6&6&8&6&4&2&3&3&5&6&5&5&4&2&2&2&6&5&5&4\\
        -2&-4&-4&2&4&4&4&6&6&6&8&4&0&2&2&4&4&4&3&4&3&1&1&5&5&5&3\\
        -2&-3&-2&2&2&2&2&4&4&4&4&5&1&1&1&4&4&4&2&2&2&2&2&4&4&4&4\\
        -2&-1&-2&2&0&0&0&2&2&2&0&1&5&1&1&4&4&4&4&2&0&2&2&2&2&2&3\\
        -2&-1&-2&2&2&0&0&2&3&3&2&1&1&5&1&4&2&2&4&2&0&0&2&2&2&2&1\\
        -2&-1&-2&2&2&2&0&3&2&3&2&1&1&1&5&2&4&2&4&2&0&0&0&2&2&2&1\\
        -4&-3&-4&4&2&2&2&5&6&5&4&4&4&4&2&8&6&6&6&4&2&2&3&5&5&6&5\\
        -4&-3&-4&4&2&2&2&5&5&6&4&4&4&2&4&6&8&6&6&4&2&3&2&6&5&5&5\\
        -4&-3&-4&4&2&2&2&6&5&5&4&4&4&2&2&6&6&8&6&4&2&3&3&5&6&5&5\\
        -4&-2&-4&4&3&1&1&5&5&5&3&2&4&4&4&6&6&6&8&4&0&2&2&4&4&4&3\\
        -2&-2&-3&2&2&2&2&4&4&4&4&2&2&2&2&4&4&4&4&5&1&1&1&4&4&4&2\\
        -2&-2&-1&2&0&2&2&2&2&2&3&2&0&0&0&2&2&2&0&1&5&1&1&4&4&4&4\\
        -2&-2&-1&2&0&0&2&2&2&2&1&2&2&0&0&2&3&3&2&1&1&5&1&4&2&2&4\\
        -2&-2&-1&2&0&0&0&2&2&2&1&2&2&2&0&3&2&3&2&1&1&1&5&2&4&2&4\\
        -4&-4&-3&4&2&2&3&5&5&6&5&4&2&2&2&5&6&5&4&4&4&4&2&8&6&6&6\\
        -4&-4&-3&4&2&3&2&6&5&5&5&4&2&2&2&5&5&6&4&4&4&2&4&6&8&6&6\\
        -4&-4&-3&4&2&3&3&5&6&5&5&4&2&2&2&6&5&5&4&4&4&2&2&6&6&8&6\\
        -4&-4&-2&4&0&2&2&4&4&4&3&4&3&1&1&5&5&5&3&2&4&4&4&6&6&6&8
    \end{matrix}}\right)\]
    \caption{The gram matrix of $(\jord_{\Z},\langle\,,\,\rangle_{\mathrm{E}})$ in the basis $\mathcal{B}$ given in \eqref{eqn basis of second Jordan algebra}}
    \label{Gram2ndModel}
\end{figure}
\pagebreak

\begin{figure}[H]
    \centering
    \[\sigma_{1}=\left(\scalemath{0.7}{\begin{matrix}
            1&2&1&0&-1&0&-2&-1&-2&-1&-2&-1&0&-1&0&-1&-1&-1&-1&-1&-1&-1&-1&-2&-2&-1&-1\\
            1&1&1&-1&0&-1&-1&-1&-1&-1&-1&0&0&-1&-1&-1&-1&0&-1&-1&-1&-1&0&-2&-1&-1&-1\\
            1&2&1&-1&0&-2&-1&-2&-1&-1&-2&-1&0&0&0&-1&-1&-1&0&0&-2&-1&-1&-2&-2&-1&-2\\
            -1&-2&-1&1&0&2&2&2&1&1&2&1&0&0&1&1&2&1&1&1&2&2&0&3&2&1&2\\
            -1&-1&-1&1&-1&1&1&1&0&1&1&0&1&0&1&1&2&1&1&1&2&1&1&2&2&1&2\\
            0&0&-1&0&-1&1&1&0&0&0&1&-1&0&0&0&0&0&0&0&1&1&-1&0&0&1&1&0\\
            0&0&-1&1&0&0&1&1&1&1&1&0&1&1&0&1&1&1&1&1&0&0&1&0&1&0&0\\
            1&1&1&-1&1&-2&-1&-1&0&0&-1&0&0&0&-1&-1&-1&-1&-1&-1&-2&-1&0&-2&-2&-2&-2\\
            0&1&1&-1&0&-1&-1&-2&-1&-1&-1&-1&-1&0&-1&-1&-2&-1&-1&-1&0&-1&-1&-1&-1&0&-1\\
            1&1&0&0&-1&-1&0&-1&0&-1&-1&-1&1&0&-1&0&-1&0&0&0&-1&-1&0&-2&-1&0&-1\\
            0&0&1&-1&1&0&-1&0&-1&-1&-1&1&-1&-1&1&-1&0&0&0&-1&-1&1&-1&0&-1&-1&0\\
            -1&-3&-1&1&0&2&3&2&3&1&3&2&0&1&-1&2&1&1&0&1&2&2&2&3&3&2&3\\
            -1&-2&-1&1&2&2&2&3&3&1&3&2&0&1&0&2&1&2&1&1&2&1&1&2&3&2&2\\
            -1&-1&0&2&2&0&0&2&2&2&1&1&0&1&1&1&1&1&1&0&1&0&1&1&2&1&1\\
            0&0&1&0&1&0&-1&0&0&-1&-1&0&0&0&0&0&-1&0&0&-1&0&0&0&-1&0&0&0\\
            1&3&1&-1&-2&-2&-2&-3&-3&-3&-4&-2&1&-1&0&-1&-1&-1&0&-1&-2&-1&-1&-3&-3&-2&-2\\
            0&1&0&0&0&-1&-1&0&-1&1&-1&0&0&-1&1&-1&1&0&0&0&-1&0&-1&0&-1&-1&-1\\
            0&1&1&0&1&-1&-1&-1&-1&0&-1&-1&-1&0&1&-1&-1&-1&0&-1&0&-1&-1&-1&-1&0&-1\\
            1&0&-1&-1&-2&1&1&-1&0&0&1&0&0&0&-1&0&0&-1&-1&1&-1&0&0&0&-1&0&0\\
            -1&-2&0&1&-1&0&2&0&1&1&1&0&0&0&0&0&1&0&0&1&2&2&1&3&2&1&2\\
            0&-1&0&0&-1&0&1&-1&1&0&0&0&0&0&-1&0&0&-1&-1&0&1&1&0&1&0&1&1\\
            1&0&1&-1&-1&0&-1&-1&-1&-2&-2&0&0&-2&-1&-1&-1&-1&-2&-1&0&0&-1&-1&-1&0&0\\
            1&0&0&-2&-1&1&0&-1&-1&-1&0&0&-1&-1&-1&-1&-1&-1&-2&0&0&-1&-1&-1&-1&0&-1\\
            0&1&0&-1&1&1&-1&0&-1&0&1&0&-1&1&0&0&-1&0&0&0&0&-2&0&-1&0&0&-1\\
            0&1&0&1&0&0&-1&1&0&-1&-1&0&1&0&0&1&0&1&1&0&-1&0&0&-1&0&0&0\\
            1&0&0&-1&1&1&-1&1&0&0&0&1&0&-1&0&0&0&0&-1&0&-1&-1&-1&-1&-1&0&-1\\
            -1&0&0&2&0&-2&0&0&0&2&0&-1&1&1&1&0&1&1&2&0&0&1&2&1&1&-1&1   
        \end{matrix}}\right)\]
        \[\sigma_{2}=\left(\scalemath{0.7}{\begin{matrix}
            2&2&1&-2&0&-1&-1&-2&-2&-1&-1&-1&-1&-1&-1&-2&-2&-2&-2&-1&-1&-2&-2&-2&-3&-2&-3\\
            2&2&2&-1&-1&0&-2&-2&-3&-2&-2&-1&-2&-2&0&-3&-2&-3&-3&-2&0&-2&-2&-2&-2&-2&-2\\
            1&1&1&0&0&0&-1&0&-1&0&-1&0&0&-1&0&-1&0&-1&-1&0&-1&0&-1&0&-1&-1&-1\\
            -1&-1&-1&0&0&-1&2&0&2&-1&0&-1&1&1&0&1&0&1&2&0&-1&1&1&-1&0&0&0\\
            -1&0&-1&0&1&0&2&0&2&0&1&0&0&1&1&1&1&1&2&1&0&0&-1&0&0&1&-1\\
            0&-1&0&0&2&1&1&1&2&1&2&1&-1&0&0&0&0&0&0&1&1&0&-1&1&1&2&0\\
            0&0&1&0&0&0&1&-1&0&0&0&0&-1&-1&0&-1&0&-1&-1&0&1&1&-2&1&-1&1&0\\
            1&1&1&-1&-2&1&-1&-1&-3&-1&-1&0&0&-2&0&-1&0&-1&-2&0&0&0&-1&0&-1&-1&0\\
            0&0&0&1&1&1&-1&2&0&1&1&1&0&0&0&0&0&1&0&0&1&0&1&1&2&0&1\\
            0&0&1&1&0&1&-1&0&0&1&0&1&-1&0&0&0&0&-1&-1&0&1&0&-1&1&0&2&1\\
            1&1&0&-1&-1&-2&-2&-1&-2&-2&-3&-2&1&0&0&-1&-1&-1&0&-2&-3&-1&1&-3&-2&-4&-2\\
            -1&-2&0&1&-1&-1&1&0&1&0&0&0&1&1&-1&1&0&1&1&0&1&2&3&1&2&1&3\\
            0&0&1&0&0&-2&-1&-1&-1&-1&-2&-1&0&0&0&-1&-1&-1&0&-2&-1&1&1&-1&-1&-2&0\\
            0&1&1&-1&0&-1&0&-1&-2&-1&-1&-1&-1&-1&0&-2&-1&-1&-1&-1&0&1&-1&0&-1&-2&-1\\
            -1&1&0&1&0&-1&-1&0&-1&0&-1&-1&0&1&1&0&0&0&1&-1&0&0&1&0&0&-1&0\\
            0&1&-1&1&0&0&0&0&1&1&0&0&0&1&1&1&1&0&1&1&-1&-1&-1&0&-1&1&-1\\
            1&1&0&-2&0&1&0&0&-1&-1&0&0&0&-2&0&-1&0&0&-1&1&-1&-1&-2&-1&-1&-1&-2\\
            0&1&0&0&2&0&0&1&0&1&1&0&-1&0&1&-1&0&0&0&0&0&-1&-1&0&0&-1&-2\\
            1&-2&0&0&-1&1&0&0&1&1&1&1&1&0&-2&1&0&0&-1&1&1&0&1&1&1&2&2\\
            -3&-2&-2&2&-1&1&3&2&3&1&2&2&2&2&0&4&3&4&3&2&2&3&3&3&4&3&4\\
            -2&-1&-1&1&-1&1&0&1&1&1&1&1&2&2&0&3&2&2&2&2&1&1&3&2&3&2&3\\
            -1&1&0&0&-1&-1&-2&-1&-1&-1&-2&-1&1&1&0&1&0&0&1&0&-1&-1&1&-1&0&0&0\\
            0&0&0&-1&0&1&-1&0&0&-1&0&0&1&0&0&1&0&0&0&1&0&-2&1&-1&1&1&0\\
            2&0&1&-1&2&0&0&0&0&0&1&0&-2&-1&0&-2&-2&-2&-2&-1&0&-2&-1&-1&-1&-1&-2\\
            1&1&1&0&0&-2&-1&-2&-1&0&-2&-1&-1&-1&0&-2&-1&-2&-1&-2&-1&0&-2&-1&-3&-1&-1\\
            2&2&2&-3&-1&-1&-3&-3&-4&-3&-3&-2&-1&-2&-1&-3&-3&-3&-3&-2&-2&-3&-1&-4&-3&-3&-3\\
            0&0&-1&1&0&1&3&1&1&1&2&1&-1&0&0&0&1&1&0&0&2&2&-2&2&0&0&0
        \end{matrix}}
        \right)\]
        \caption{Generators $\sigma_{1}$ and $\sigma_{2}$ of $\mathcal{F}_{4,\mathrm{E}}(\Z)$ as $27\times 27$ matrices in the basis $\mathcal{B}$ of $\jord_{\Z}$}
        \label{Generators2ndModel}
\end{figure}
\pagebreak
\begin{table}[H]
    \renewcommand{\arraystretch}{1.37}
    \scalebox{0.88}{
    \centering
    \begin{tabular}{|c|c|c||c|c|c||c|c|c|}
        \hline
        $s$ & $\mathrm{o}(c_{s})$ & $\mathrm{i}(c_{s})$ &$s$ & $\mathrm{o}(c_{s})$ & $\mathrm{i}(c_{s})$ &$s$ & $\mathrm{o}(c_{s})$ & $\mathrm{i}(c_{s})$ \\ \hline
        (1,0,0,0,0)&  1&   (27,351,2925,52) &             (2,1,1,0,1)&  9  &(3,3,0,1)        &            (4,4,2,0,1)&  20& (4,3,-8,0)            \\ \hline
        (0,0,0,0,1)&  2&   (-5,-1,45,20)    &             (0,1,0,1,2)&  10 &(-2,1,0,6)       &            (7,0,1,1,3)&  20& (4,9,16,4)            \\ \hline
        (0,1,0,0,0)&  2&   (3,-9,-35,-4)    &             (0,2,0,1,1)&  10 &(0,-1,0,0)       &            (2,1,3,1,2)&  21& (0,0,2,0)            \\ \hline
        (0,0,1,0,0)&  3&   (0,0,9,-2)       &             (4,2,0,0,1)&  10 &(10,49,160,10)   &            (4,2,1,2,1)&  21& (2,1,-1,0)             \\ \hline
        (1,0,0,0,1)&  3&   (0,0,9,7)        &             (0,0,0,1,4)&  12 &(-4,0,21,15)     &            (0,4,0,1,6)&  24& (-2,0,3,7)          \\ \hline
        (1,1,0,0,0)&  3&   (9,36,90,7)      &             (0,1,0,2,1)&  12 &(-1,2,-2,1)      &            (0,6,0,1,4)&  24& (0,-2,-1,1)           \\ \hline
        (0,0,0,1,0)&  4&   (-1,3,-3,0)      &             (0,2,0,1,2)&  12 &(-1,0,0,3)       &            (1,2,3,2,1)&  24& (0,0,3,-1)          \\ \hline
        (0,1,0,0,1)&  4&   (-1,-1,1,4)      &             (0,4,0,1,0)&  12 &(2,-6,-15,-3)    &            (2,4,2,1,2)&  24& (1,-2,-2,-1)             \\ \hline
        (1,0,1,0,0)&  4&   (3,3,1,0)        &             (1,0,3,0,1)&  12 &(0,0,5,-1)       &            (3,1,3,1,3)&  24& (0,0,1,1)        \\ \hline
        (2,0,0,0,1)&  4&   (7,27,77,8)      &             (1,1,1,1,1)&  12 &(0,0,1,0)        &            (3,5,1,1,2)&  24& (2,-2,-7,-1)         \\ \hline
        (2,1,0,0,0)&  4&   (15,111,545,20)  &             (1,3,1,0,1)&  12 &(2,-4,-11,-2)    &            (4,0,2,1,5)&  24& (-1,0,2,5)                \\ \hline 
        (1,1,0,0,1)&  5&   (2,1,0,2)        &             (1,4,1,0,0)&  12 &(3,-6,-26,-3)    &            (4,2,2,1,3)&  24& (1,0,0,1)           \\ \hline
        (0,0,0,1,1)&  6&   (-2,2,-3,5)      &             (2,0,0,1,3)&  12 &(-2,0,5,8)       &            (4,2,4,1,0)&  24& (2,0,-1,-1)          \\ \hline
        (0,1,0,0,2)&  6&   (-3,0,10,11)     &             (2,0,2,1,0)&  12 &(1,0,2,-1)       &            (6,2,0,3,1)&  24& (3,4,2,1)           \\ \hline
        (0,1,0,1,0)&  6&   (0,0,1,-1)       &             (2,1,0,1,2)&  12 &(0,0,1,3)        &            (6,2,4,0,1)&  24& (4,6,3,1)        \\ \hline
        (0,2,0,0,1)&  6&   (1,-4,-6,-1)     &             (2,2,0,1,1)&  12 &(2,0,-3,0)       &            (7,2,1,1,3)&  24& (4,8,11,3)           \\ \hline
        (1,0,1,0,1)&  6&   (0,0,1,2)        &             (2,4,0,0,1)&  12 &(4,0,-19,-1)     &            (2,4,2,1,4)&  28& (0,-1,0,1)          \\ \hline
        (1,1,1,0,0)&  6&   (3,0,-8,-1)      &             (3,0,1,1,1)&  12 &(2,2,1,1)        &            (3,4,1,3,1)&  28& (1,-1,-1,-1)         \\ \hline
        (2,0,0,1,0)&  6&   (4,8,9,2)        &             (3,3,1,0,0)&  12 &(6,12,5,2)       &            (2,4,6,0,1)&  30& (1,-2,1,-2)        \\ \hline
        (2,1,0,0,1)&  6&   (6,18,37,5)      &             (4,0,2,0,1)&  12 &(5,12,18,3)      &            (3,6,1,1,4)&  30& (1,-2,-3,0)           \\ \hline
        (3,0,1,0,0)&  6&   (12,72,289,14)   &             (4,1,0,0,3)&  12 &(3,6,14,5)       &            (6,1,0,5,1)&  30& (1,1,0,0)             \\ \hline
        (4,0,0,0,1)&  6&   (16,128,681,23)  &             (5,0,1,1,0)&  12 &(8,32,85,7)      &            (6,4,2,2,1)&  30& (3,2,-3,0)               \\ \hline
        (4,1,0,0,0)&  6&   (21,216,1450,35) &             (6,1,0,0,2)&  12 &(11,62,238,13)   &            (8,0,2,1,6)&  30& (1,1,4,4)                 \\ \hline  
        (1,0,0,1,1)&  7&   (-1,1,-1,3)      &             (2,1,1,1,1)&  13 &(1,0,0,0)        &           (12,1,0,3,2)&  30& (7,25,60,6)         \\ \hline
        (2,1,1,0,0)&  7&   (6,15,20,3)      &             (2,2,2,0,1)&  14 &(2,-1,-4,-1)     &            (1,4,3,4,1)&  36& (0,0,2,-1)            \\ \hline
        (0,0,0,1,2)&  8&   (-3,1,5,10)      &             (4,1,0,1,2)&  14 &(3,5,7,3)        &            (2,8,2,1,4)&  36& (1,-3,-4,-1)         \\ \hline
        (0,1,0,1,1)&  8&   (-1,1,-1,2)      &             (1,0,2,1,2)&  15 &(-1,1,0,2)       &            (4,6,2,1,7)&  40& (0,-1,0,2)          \\ \hline
        (0,2,0,1,0)&  8&   (1,-3,-3,-2)     &             (4,2,1,1,0)&  15 &(5,10,9,2)       &            (8,2,6,1,3)&  40& (2,1,0,0)           \\ \hline
        (1,1,1,0,1)&  8&   (1,-1,-1,0)      &             (1,1,3,1,1)&  18 &(0,0,4,-1)       &            (1,6,5,1,5)&  42& (0,-1,1,0)          \\ \hline
        (1,2,1,0,0)&  8&   (3,-3,-17,-2)    &             (2,2,2,1,1)&  18 &(1,-1,0,-1)      &           (10,2,4,1,6)&  42& (2,2,2,2)             \\ \hline
        (2,0,0,1,1)&  8&   (1,1,1,2)        &             (4,1,0,1,4)&  18 &(0,0,4,5)        &           (1,12,7,2,3)&  60& (1,-3,-2,-2)       \\ \hline
        (2,2,0,0,1)&  8&   (5,9,5,2)        &             (6,2,2,0,1)&  18 &(7,23,48,5)      &           (6,4,6,1,12)&  60& (-1,0,1,4)         \\ \hline
        (3,1,1,0,0)&  8&   (9,39,111,8)     &             (2,4,2,1,0)&  20 &(2,-3,-8,-2)     &          (10,2,10,1,6)&  60& (1,0,1,0)             \\ \hline
        (1,1,0,1,1)&  9&   (0,0,0,1)        &             (3,0,1,3,1)&  20 &(0,1,0,0)        &          (11,12,1,3,5)&  60& (3,1,-6,0)                \\ \hline 
    \end{tabular}}
    \caption{Kac coordinates, Orders and invariants $\mathrm{i}$ (defined in \Cref{section comparison of conj classes}) of the rational torsion conjugacy classes of $\lietype{F}{4}$}
    \label{Table all rational Kac}
\end{table}
\pagebreak

\begin{table}[H]
    \centering
    \renewcommand{\arraystretch}{1.08}
    \begin{tabular}{|c|c|c||c|c|c|}
        \hline
    $s$    & $n_{1}(s)$ &$n_{2}(s)$ & $s$    & $n_{1}(s)$ &$n_{2}(s)$  \\ \hline
    (1,0,0,0,0) & 1 & 1 &(1,1,1,1,1) & 435456000 & 105670656   \\ \hline
    (0,0,0,0,1) & 723 & 819 &(1,3,1,0,1) & 101606400 & 0 \\ \hline
    (0,1,0,0,0) & 459900 & 68796 &(2,0,0,1,3) & 1612800 & 0 \\ \hline
    (0,0,1,0,0) & 6540800 & 2283008  &(2,0,2,1,0) & 24192000 & 13208832   \\ \hline
    (1,0,0,0,1) & 121920 & 139776 &(2,1,0,1,2) & 43545600 & 0 \\ \hline
    (1,1,0,0,0) & 268800 & 34944&(2,2,0,1,1) & 14515200 & 17611776 \\ \hline
    (0,0,0,1,0) & 249480 & 137592 &(2,4,0,0,1) & 4112640 & 0   \\ \hline
    (0,1,0,0,1) & 2835000 & 0 &(3,0,1,1,1) & 7257600 & 0 \\ \hline
    (1,0,1,0,0) & 14968800 & 3302208&(3,3,1,0,0) & 4838400 & 0 \\ \hline
    (2,0,0,0,1) & 23400 & 58968 &(4,0,2,0,1) & 14515200 & 4402944   \\ \hline
    (2,1,0,0,0) & 37800 & 0 &(5,0,1,1,0) & 3628800 & 0 \\ \hline
    (1,1,0,0,1) & 1741824 & 0&(2,1,1,1,1) & 0 & 48771072 \\ \hline
    (0,0,0,1,1) & 497280 & 0 &(2,2,2,0,1) & 223948800 & 11321856   \\ \hline
    (0,1,0,1,0) & 44150400 & 8805888 &(4,2,1,1,0) & 34836480 & 0 \\ \hline
    (0,2,0,0,1) & 10483200 & 2201472   &(1,1,3,1,1) & 232243200 & 0 \\ \hline
    (1,0,1,0,1) & 74995200 & 17611776 &(2,2,2,1,1) & 154828800 & 105670656   \\ \hline
    (1,1,1,0,0) & 67737600 & 8805888 &(6,2,2,0,1) & 19353600 & 0 \\ \hline
    (2,0,0,1,0) & 1881600 & 2935296   &(2,4,2,1,0) & 87091200 & 0 \\ \hline
    (2,1,0,0,1) & 604800 & 0 &(4,4,2,0,1) & 52254720 & 0   \\ \hline
    (3,0,1,0,0) & 806400 & 0 &(2,1,3,1,2) & 199065600 & 30191616 \\ \hline
    (4,0,0,0,1) & 6720 & 0   &(4,2,1,2,1) & 0 & 60383232 \\ \hline
    (1,0,0,1,1) & 0 & 4313088 &(0,4,0,1,6) & 7257600 & 0   \\ \hline
    (2,1,1,0,0) & 24883200 & 539136 &(0,6,0,1,4) & 21772800 & 0 \\ \hline
    (0,0,0,1,2) & 272160 & 0   &(1,2,3,2,1) & 174182400 & 0 \\ \hline
    (0,1,0,1,1) & 10886400 & 0 &(2,4,2,1,2) & 174182400 & 52835328   \\ \hline
    (0,2,0,1,0) & 22680000 & 6604416 &(3,1,3,1,3) & 261273600 & 0 \\ \hline
    (1,1,1,0,1) & 342921600 & 0   &(3,5,1,1,2) & 87091200 & 0 \\ \hline
    (1,2,1,0,0) & 32659200 & 0 &(4,2,2,1,3) & 58060800 & 52835328   \\ \hline
    (2,0,0,1,1) & 5443200 & 6604416 &(4,2,4,1,0) & 65318400 & 0 \\ \hline
    (2,2,0,0,1) & 5715360 & 0   &(6,2,4,0,1) & 50803200 & 0 \\ \hline
    (3,1,1,0,0) & 5443200 & 0 &(2,4,2,1,4) & 149299200 & 22643712   \\ \hline
    (1,1,0,1,1) & 77414400 & 0 &(2,4,6,0,1) & 34836480 & 0 \\ \hline
    (2,1,1,0,1) & 19353600 & 35223552   &(6,4,2,2,1) & 139345920 & 0 \\ \hline
    (0,2,0,1,1) & 38320128 & 0 &(2,8,2,1,4) & 116121600 & 0   \\ \hline
    (4,2,0,0,1) & 1741824 & 0 &(4,6,2,1,7) & 104509440 & 0 \\ \hline
    (0,2,0,1,2) & 29030400 & 8805888   &(8,2,6,1,3) & 104509440 & 0 \\ \hline
    (0,4,0,1,0) & 10886400 & 0 &(6,4,6,1,12) & 69672960 & 0   \\ \hline
    (1,0,3,0,1) & 47174400 & 0           &     &    &        \\ \hline
    \end{tabular}
    \caption{Kac coordinates of the conjugacy classes of $\lietype{F}{4}$ whose intersections with $\mathcal{F}_{4,\mathrm{I}}(\Z)$ and $\mathcal{F}_{4,\mathrm{E}}(\Z)$ are not both empty}
    \label{KacTable}
\end{table}
\pagebreak

\begin{table}[H]
    \centering
    \renewcommand{\arraystretch}{1.03}
    \begin{tabular}{|c|c||c|c||c|c||c|c||c|c|}
    \hline
    $\lambda$ & $d(\lambda)$ & $\lambda$ & $d(\lambda)$ &$\lambda$ & $d(\lambda)$ &$\lambda$ & $d(\lambda)$ &$\lambda$ & $d(\lambda)$\\ \hline
    (0,0,0,2)&1&(0,0,1,9)&7&(0,1,1,7)&7 & (0,0,0,13) & 8 & (2,0,4,1) & 13\\ \hline
    (0,0,0,3)&1&(0,0,2,7)&6&(0,1,2,5)&9 & (0,0,1,11) & 15 & (2,1,0,6) & 16\\ \hline
    (0,0,0,4)&1&(0,0,3,5)&6&(0,1,3,3)&14 & (0,0,2,9) & 20 & (2,1,1,4) & 17\\ \hline
    (0,0,2,0)&1&(0,0,4,3)&4&(0,1,4,1)&4 & (0,0,3,7) & 27 & (2,1,2,2) & 25\\ \hline
    (0,0,0,5)&1&(0,0,5,1)&1&(0,2,0,6)&11 & (0,0,4,5) & 34 & (2,1,3,0) & 8\\ \hline
    (0,0,1,3)&1&(0,1,0,8)&2&(0,2,1,4)&9 & (0,0,5,3) & 30 & (2,2,0,3) & 4\\ \hline
    (0,0,0,6)&3&(0,1,1,6)&3&(0,2,2,2)&15 & (0,0,6,1) & 14 & (2,2,1,1) & 9\\ \hline
    (0,0,2,2)&1&(0,1,2,4)&4&(0,2,3,0)&2 & (0,1,0,10) & 11 & (2,3,0,0) & 6\\ \hline
    (0,0,0,7)&1&(0,1,3,2)&3&(0,3,0,3)&3 & (0,1,1,8) & 23 & (3,0,0,7) & 1\\ \hline
    (0,0,1,5)&1&(0,1,4,0)&1&(0,3,1,1)&3 & (0,1,2,6) & 39 & (3,0,1,5) & 9\\ \hline
    (0,0,2,3)&1&(0,2,0,5)&1&(0,4,0,0)&6 & (0,1,3,4) & 44 & (3,0,2,3) & 7\\ \hline
    (0,0,0,8)&4&(0,2,1,3)&3&(1,0,0,10)&3 & (0,1,4,2) & 37 & (3,0,3,1) & 8\\ \hline
    (0,0,1,6)&1&(0,2,2,1)&1&(1,0,1,8)&7 & (0,1,5,0) & 13 & (3,1,0,4) & 12\\ \hline
    (0,0,2,4)&1&(0,3,0,2)&2&(1,0,2,6)&10 & (0,2,0,7) & 11 & (3,1,1,2) & 7\\ \hline
    (0,0,4,0)&2&(1,0,0,9)&1&(1,0,3,4)&11 & (0,2,1,5) & 32 & (3,1,2,0) & 8\\ \hline
    (0,0,0,9)&4&(1,0,1,7)&3&(1,0,4,2)&8 & (0,2,2,3) & 36 & (4,0,0,5) & 2\\ \hline
    (0,0,1,7)&2&(1,0,2,5)&2&(1,0,5,0)&4 & (0,2,3,1) & 26 & (4,0,1,3) & 3\\ \hline
    (0,0,2,5)&1&(1,0,3,3)&3&(1,1,0,7)&2 & (0,3,0,4) & 21 & (4,0,2,1) & 2\\ \hline
    (0,0,3,3)&2&(1,0,4,1)&1&(1,1,1,5)&9 & (0,3,1,2) & 21 & (4,1,0,2) & 4\\ \hline
    (0,1,3,0)&1&(1,1,0,6)&3&(1,1,2,3)&8 & (0,3,2,0) & 14 & (4,1,1,0) & 1\\ \hline
    (0,3,0,0)&1&(1,1,1,4)&2&(1,1,3,1)&9 & (0,4,0,1) & 5 & (5,0,1,1) & 1\\ \hline
    (1,1,0,4)&1&(1,1,2,2)&4&(1,2,0,4)&8 & (1,0,0,11) &3 & (5,1,0,0) & 3\\ \hline
    (3,1,0,0)&1&(1,2,1,1)&2&(1,2,1,2)&5 & (1,0,1,9) & 13 & & \\ \hline
    (0,0,0,10)&5&(1,3,0,0)&1&(1,2,2,0)&5 & (1,0,2,7) & 20 & & \\ \hline
    (0,0,1,8)&4&(2,0,0,7)&1&(1,3,0,1)&1 & (1,0,3,5) & 32 & & \\ \hline
    (0,0,2,6)&6&(2,0,1,5)&2&(2,0,0,8)&5 & (1,0,4,3) & 26 & & \\ \hline
    (0,0,4,2)&3&(2,0,2,3)&1&(2,0,1,6)&4 & (1,0,5,1) & 21 & & \\ \hline
    (0,0,5,0)&1&(2,0,3,1)&1&(2,0,2,4)&10 & (1,1,0,8) & 18 & & \\ \hline
    (0,1,1,5)&1&(2,1,0,4)&2&(2,0,3,2)&4 & (1,1,1,6) & 27 & &\\ \hline
    (0,1,3,1)&1&(2,1,1,2)&1&(2,0,4,0)&5 & (1,1,2,4) & 46 & &\\ \hline
    (0,2,0,4)&1&(2,1,2,0)&1&(2,1,1,3)&5 & (1,1,3,2) & 31 & &\\ \hline
    (0,2,2,0)&1&(3,0,1,3)&1&(2,1,2,1)&2 & (1,1,4,0) & 20 & &\\ \hline
    (1,0,0,8)&1&(3,1,0,2)&1&(2,2,0,2)&8 & (1,2,0,5) & 10 & &\\ \hline
    (1,0,1,6)&1&(0,0,0,12)&13&(3,0,0,6)&4 & (1,2,1,3) & 28 & &\\ \hline
    (1,0,2,4)&1&(0,0,1,10)&6&(3,0,1,4)&3 & (1,2,2,1) & 16 & &\\ \hline
    (1,0,3,2)&1&(0,0,2,8)&15&(3,0,2,2)&3 & (1,3,0,2) & 18 & &\\ \hline
    (1,2,0,2)&1&(0,0,3,6)&15&(3,0,3,0)&2 & (1,3,1,0) & 2 & &\\ \hline
    (2,0,0,6)&2&(0,0,4,4)&15&(3,2,0,0)&2 & (2,0,0,9) & 4 & &\\ \hline
    (2,0,2,2)&1&(0,0,5,2)&4&(4,0,0,4)&3 & (2,0,1,7) & 12 & &\\ \hline
    (2,2,0,0)&1&(0,0,6,0)&11&(4,0,2,0)&2 & (2,0,2,5) & 16 & &\\ \hline
    (0,0,0,11)&5&(0,1,0,9)&2&(6,0,0,0)&3 & (2,0,3,3) & 21 & &\\ \hline
    \end{tabular}
    \caption{The nonzero $d(\lambda)$ for $\lambda=(\lambda_{1},\lambda_{2},\lambda_{3},\lambda_{4})$ such that $2\lambda_{1}+3\lambda_{2}+2\lambda_{3}+\lambda_{4}\leq 13$}
    \label{nonzeroAMF}
\end{table}
\pagebreak
\begin{table}[H]
    \centering
    \renewcommand{\arraystretch}{1.2}
    \begin{tabular}{|c|c|c||c|c|c||c|c|c||c|c|c|}
    \hline
    $n$ & $d_{1}(n)$ & $d_{2}(n)$ & $n$ & $d_{1}(n)$ & $d_{2}(n)$ &  $n$ & $d_{1}(n)$ & $d_{2}(n)$ &  $n$ & $d_{1}(n)$ & $d_{2}(n)$\\  \hline
    1 & 0 & 0  &            11 & 4 & 1  & 21 & 83 & 209 & 31& 4112& 24425\\ \hline     
    2 & 1 & 0  &            12 & 8 & 5  & 22 & 130 & 413 & 32& 6294& 38234\\ \hline     
    3 & 1 & 0  &            13 & 6 & 2  & 23 & 169 & 590 & 33& 8904& 54760 \\ \hline     
    4 & 1 & 0  &            14 & 12 & 8 & 24 & 280 & 1138 & 34& 13284& 82989\\ \hline     
    5 & 1 & 0  &            15 & 13 & 8  & 25 & 368 & 1629& 35& 18664& 117447\\ \hline     
    6 & 2 & 1  &            16 & 20 & 18  & 26 & 601 & 2915 & 36& 27332& 173760\\ \hline     
    7 & 1 & 0  &            17 & 22 & 22  & 27 & 835 & 4253 & 37& 38024& 242971\\ \hline     
    8 & 3 & 1  &            18 & 37 & 58  & 28 & 1323 & 7161 & 38& 54627& 351485\\ \hline     
    9 & 3 & 1  &            19 & 39 & 63  & 29 & 1868 & 10455& 39& 75354& 486013\\ \hline     
    10 & 4 & 1  &           20 & 67 & 150  & 30 & 2919 & 16962& 40& 106332& 689219\\ \hline      
    \end{tabular}
    \caption{Dimensions $d_{1}(n)=\dim \vrep{n\varpi_{4}}^{\mathcal{F}_{4,\mathrm{I}}(\Z)}$ and $d_{2}(n)=\dim \vrep{n\varpi_{4}}^{\mathcal{F}_{4,\mathrm{E}}(\Z)}$ for $n\leq 40$}
    \label{TableA1F4}
\end{table}

\begin{table}[H]
    \centering
    \renewcommand{\arraystretch}{1.2}
    \scalebox{1}{
    \begin{tabular}{|c|c|c||c|c|c|}
    \hline
    $n$ & $d_{1}(n)$ & $d_{2}(n)$ & $n$ & $d_{1}(n)$ & $d_{2}(n)$ 
    \\  \hline
    1& 0& 0&            16& 699558& 4607562             \\ \hline
    2& 1& 0&            17& 1899450& 12528178         \\ \hline
    3& 0& 0&            18& 4951537& 32636950        \\ \hline
    4& 1& 1&            19& 12298529& 81088431       \\ \hline
    5& 0& 1&            20& 29444006& 194120684    \\ \hline
    6& 4& 7&            21& 67821302& 447181025       \\ \hline
    7& 2& 14&           22& 151304284& 997568542       \\ \hline
    8& 32& 136&         23& 326873722& 2155210696      \\ \hline
    9& 84& 583&         24& 686811782& 4528418428      \\ \hline
    10& 497& 2936&      25& 1404333622& 9259307898      \\ \hline
    11& 1765& 11764&    26& 2802604042& 18478677233   \\ \hline
    12& 7111& 46299&    27& 5463354204& 36021961176   \\ \hline
    13& 24173& 159701&  28& 10425639768& 68740584631  \\ \hline
    14& 80166& 526081&  29& 19491910968& 128517811865 \\ \hline
    15& 241776& 1594526&30& 35762551274& 235797459916   \\ \hline
\end{tabular}}
    \caption{Dimensions $d_{1}(n)=\dim \vrep{n\varpi_{3}}^{\mathcal{F}_{4,\mathrm{I}}(\Z)}$ and $d_{2}(n)=\dim \vrep{n\varpi_{3}}^{\mathcal{F}_{4,\mathrm{E}}(\Z)}$ for $n\leq 30$}
    \label{TableG2F4}
\end{table}
\pagebreak
\begin{table}[H]
    \centering
    \renewcommand{\arraystretch}{1.425}
    \begin{tabular}{|c|c|c|c|}
        \hline
        $\mathrm{w}(\lambda)$ & $\lambda$ & $\dim\mathcal{A}_{\vrep{\lambda}}(\grpF)$ & $\Psi_{\lambda}(\grpF)$ \\ \hline
        \multirow{2}*{16} & \multirow{2}*{(0,0,0,0)} & \multirow{2}*{2}& $[9]\oplus[17]$\\ \cline{4-4}
        ~& ~ & ~ & $\Delta_{11}[6]\oplus[5]\oplus[9]$ \\ \hline
        20 &(0,0,0,2) & 1& $\Delta_{15}[6]\oplus[5]\oplus[9]$ \\ \hline
        22 &(0,0,0,3) & 1& $\Delta_{17}[6]\oplus[5]\oplus[9]$ \\ \hline
        \multirow{2}*{24} &(0,0,0,4) & 1& $\Delta_{19}[6]\oplus[5]\oplus[9]$ \\ \cline{2-4}
        ~&(0,0,2,0) & 1& $\sym^{2}\Delta_{11}[3]\oplus \Delta_{11}[4]\oplus \Delta_{11}[2]\oplus [5]$\\ \hline
        \multirow{2}*{26} &(0,0,0,5) & 1& $\Delta_{21}[6]\oplus[5]\oplus[9]$ \\ \cline{2-4}
        ~ &(0,0,1,3) & 1& $\Delta_{24,16,8,0}[3]\oplus [5]$\\ \hline
        \multirow{3}*{28}&\multirow{2}*{(0,0,0,6)} & \multirow{2}*{3} & $\Delta_{23}^{(2)}[6]\oplus[5]\oplus[9]$ \\ \cline{4-4}
        ~&~ & ~ & $\Delta_{26,20,6,0}[3]\oplus[5]$ \\ \cline{2-4}
        ~&(0,0,2,2) & 1& $\Delta_{26,16,10,0}[3]\oplus[5]$\\ \hline
        \multirow{3}*{30}&(0,0,0,7) & 1& $\Delta_{25}[6]\oplus[5]\oplus[9]$ \\ \cline{2-4}
        ~&(0,0,1,5) & 1& $\Delta_{28,20,8,0}[3]\oplus[5]$ \\ \cline{2-4}
        ~&(0,0,2,3) & 1& $\Delta_{28,18,10,0}[3]\oplus[5]$  \\ \hline
        \multirow{6}*{32}&\multirow{2}*{(0,0,0,8)} & \multirow{2}*{4} & $\Delta_{27}^{(2)}[6]\oplus[5]\oplus[9]$  \\ \cline{4-4}
        ~&~ & ~ & $\Delta^{(2)}_{30,24,6,0}[3]\oplus[5]$ \\ \cline{2-4}
        ~&(0,0,1,6) & 1& $\Delta_{30,22,8,0}[3]\oplus[5]$  \\ \cline{2-4}
        ~&(0,0,2,4) & 1& $\Delta_{30,20,10,0}[3]\oplus[5]$  \\ \cline{2-4}
        ~&\multirow{2}*{(0,0,4,0)} & \multirow{2}*{2} & $\sym^{2}\Delta_{15}[3]\oplus\Delta_{15}[4]\oplus\Delta_{15}[2]\oplus[5]$ \\ \cline{4-4}
        ~&~ & ~ & $\Delta_{30,16,14,0}[3]\oplus[5]$ \\ \hline
        \multirow{9}*{34}&\multirow{2}*{(0,0,0,9)} & \multirow{2}*{4} & $\Delta_{29}^{(2)}[6]\oplus[5]\oplus[9]$ \\ \cline{4-4}
        ~&~ & ~ & $\Delta^{(2)}_{32,26,6,0}[3]\oplus[5]$ \\ \cline{2-4}
        ~&(0,0,1,7) & 2& $\Delta^{(2)}_{32,24,8,0}[3]\oplus[5]$  \\ \cline{2-4}
        ~&(0,0,2,5) & 1& $\Delta_{32,22,10,0}[3]\oplus[5]$  \\ \cline{2-4}
        ~&(0,0,3,3) & 2& $\Delta^{(2)}_{32,20,12,0}[3]\oplus[5]$  \\ \cline{2-4}
        ~&(0,1,3,0) & 1& $\Delta_{32,16,14,6,0}\oplus\spin\Delta_{32,16,14,6,0}\oplus[1]$  \\ \cline{2-4}
        ~&(0,3,0,0) & 1& $\sym^{3}\Delta_{11}[2]\oplus\sym^{2}\Delta_{11}[3]\oplus\Delta_{11}[4]\oplus[1]$  \\ \cline{2-4}
        ~&(1,1,0,4) & 1& $\Delta_{30,20,10,8,0}\oplus\spin\Delta_{30,20,10,8,0}\oplus[1]$  \\ \cline{2-4}
        ~&(3,1,0,0) & 1& $\wedge^{*}\Delta_{19,7}\oplus\left(\Delta_{19,7}\otimes\Delta_{15}\right)\oplus\Delta_{19,7}[2]\oplus\Delta_{15}[2]\oplus[1]$  \\ \hline
    \end{tabular}
    \caption{Elements of nonempty $\Psi_{\lambda}(\grpF)$ for the weights $\lambda$ such that $\mathrm{w}(\lambda)\leq 34$}
    \label{table packet 34}
\end{table}
\pagebreak
\begin{table}[H]
    \centering
    \renewcommand{\arraystretch}{1.8}
    \begin{tabular}{|c|c|c|} \hline
        $\lambda$ & $\dim\mathcal{A}_{\vrep{\lambda}}(\grpF)$ & $\Psi_{\lambda}(\grpF)$ \\ \hline
        \multirow{2}*{(0,0,0,10)} & \multirow{2}*{5}& $\Delta_{31}^{(2)}[6]\oplus[5]\oplus[9]$ \\ \cline{3-3}
        ~ & ~ & $\Delta^{(3)}_{34,28,6,0}[3]\oplus[5]$ \\ \hline
        \multirow{2}*{(0,0,1,8)} & \multirow{2}*{4}&  $\wedge^{*}\Delta_{21,13}\oplus\left(\Delta_{21,13}\otimes\Delta_{15}\right)\oplus\left(\Delta_{21,13}\otimes\Delta_{11}\right)\oplus\left(\Delta_{15}\otimes\Delta_{11}\right)\oplus[1]$ \\ \cline{3-3}
        ~ & ~ & $\Delta^{(3)}_{34,26,8,0}[3]\oplus[5]$ \\ \hline
        \multirow{2}*{(0,0,2,6)} & \multirow{2}*{6}&  $\Delta^{(5)}_{34,24,10,0}[3]\oplus[5]$ \\ \cline{3-3}
        ~ & ~ & $\Delta_{34,24,10,4,0}\oplus\spin\Delta_{34,24,10,4,0}\oplus[1]$ \\ \hline
        \multirow{2}*{(0,0,4,2)} & \multirow{2}*{3}&  $\Delta^{(2)}_{34,20,14,0}[3]\oplus[5]$ \\ \cline{3-3}
        ~ & ~ & $\Delta_{34,20,14,4,0}\oplus\spin\Delta_{32,20,14,4,0}\oplus[1]$ \\ \hline
        (0,0,5,0) & 1&  $\sym^{2}\Delta_{17}[3]\oplus\Delta_{17}[4]\oplus\Delta_{17}[2]\oplus[5]$ \\ \hline
        (0,1,1,5) & 1&  $\Delta_{34,22,10,6,0}\oplus\spin\Delta_{34,22,10,6,0}\oplus[1]$ \\ \hline
        (0,1,3,1) & 1&  $\Delta_{34,18,14,6,0}\oplus\spin\Delta_{34,18,14,6,0}\oplus[1]$ \\ \hline
        (0,2,0,4) & 1&  $\Delta_{34,20,10,8,0}\oplus\spin\Delta_{32,16,14,6,0}\oplus[1]$ \\ \hline
        (0,2,2,0) & 1&  $\wedge^{*}\Delta_{21,13}\oplus\left(\Delta_{21,13}\otimes\Delta_{15}\right)\oplus\Delta_{21,13}[2]\oplus\Delta_{15}[2]\oplus[1]$ \\ \hline
        (1,0,0,8) & 1&  $\Delta_{32,26,8,6,0}\oplus\spin\Delta_{32,26,8,6,0}\oplus[1]$ \\ \hline
        (1,0,1,6) & 1&  $\Delta_{32,24,10,6,0}\oplus\spin\Delta_{32,24,10,6,0}\oplus[1]$ \\ \hline
        (1,0,2,4) & 1&  $\Delta_{32,22,12,6,0}\oplus\spin\Delta_{32,22,12,6,0}\oplus[1]$ \\ \hline
        (1,0,3,2) & 1&  $\Delta_{32,20,14,6,0}\oplus\spin\Delta_{32,20,14,6,0}\oplus[1]$ \\ \hline
        (1,2,0,2) & 1&  $\psi_{0}$ \\ \hline
        (2,0,0,6) & 2&  $\Delta^{(2)}_{30,24,10,8,0}\oplus\spin\Delta_{30,24,10,8,0}\oplus[1]$ \\ \hline
        (2,0,2,2) & 1&  $\Delta_{30,20,14,8,0}\oplus\spin\Delta_{30,20,14,8,0}\oplus[1]$ \\ \hline
        (2,2,0,0) & 1&  $\wedge^{*}\Delta_{21,9}\oplus\left(\Delta_{21,9}\otimes\Delta_{15}\right)\oplus\Delta_{21,9}[2]\oplus\Delta_{15}[2]\oplus[1]$ \\ \hline
    \end{tabular}
    \caption{Elements of nonempty $\Psi_{\lambda}(\grpF)$ for the weights $\lambda$ such that $\mathrm{w}(\lambda)=36$}
    \label{table packet 36}
\end{table}

\pagebreak
\begin{table}[H]
    \centering
    \renewcommand{\arraystretch}{1.2}
    \begin{tabular}{|c|c||c|c||c|c||c|c||c|c|}
    \hline 
    $\lambda$& $\mathrm{F}_{4}(\lambda)$ &$\lambda$& $\mathrm{F}_{4}(\lambda)$ &$\lambda$& $\mathrm{F}_{4}(\lambda)$ &$\lambda$& $\mathrm{F}_{4}(\lambda)$ & $\lambda$ & $\mathrm{F}_{4}(\lambda)$ \\ \hline
    (1,2,0,2)&1&(1,2,2,0)&5&(1,1,3,2)&22&(0,1,3,5)&70&(2,0,2,6)&28 \\ \hline
    (0,1,2,4)&2&(2,0,2,4)&2&(1,1,4,0)&11&(0,1,4,3)&68&(2,0,3,4)&32 \\ \hline
    (0,1,4,0)&1&(2,0,3,2)&2&(1,2,0,5)&7&(0,1,5,1)&49&(2,0,4,2)&35 \\ \hline
    (0,2,1,3)&2&(2,1,1,3)&3&(1,2,1,3)&22&(0,2,0,8)&31&(2,0,5,0)&12 \\ \hline
    (0,3,0,2)&2&(2,1,2,1)&2&(1,2,2,1)&13&(0,2,1,6)&61&(2,1,0,7)&10 \\ \hline
    (1,0,3,3)&1&(2,2,0,2)&4&(1,3,0,2)&12&(0,2,2,4)&92&(2,1,1,5)&42 \\ \hline
    (1,1,1,4)&1&(3,0,0,6)&1&(1,3,1,0)&2&(0,2,3,2)&74&(2,1,2,3)&46 \\ \hline
    (1,1,2,2)&2&(3,0,2,2)&2&(2,0,1,7)&2&(0,2,4,0)&35&(2,1,3,1)&41 \\ \hline
    (1,2,1,1)&2&(3,2,0,0)&1&(2,0,2,5)&3&(0,3,0,5)&26&(2,2,0,4)&39 \\ \hline
    (2,1,0,4)&2&(0,0,3,7)&3&(2,0,3,3)&9&(0,3,1,3)&61&(2,2,1,2)&34 \\ \hline
    (2,1,2,0)&1&(0,0,4,5)&6&(2,0,4,1)&5&(0,3,2,1)&40&(2,2,2,0)&24 \\ \hline
    (0,0,3,6)&1&(0,0,5,3)&8&(2,1,0,6)&11&(0,4,0,2)&28&(2,3,0,1)&2 \\ \hline
    (0,0,4,4)&1&(0,0,6,1)&4&(2,1,1,4)&9&(0,4,1,0)&8&(3,0,0,8)&5 \\ \hline
    (0,0,5,2)&1&(0,1,0,10)&2&(2,1,2,2)&21&(1,0,0,12)&1&(3,0,1,6)&6 \\ \hline
    (0,0,6,0)&1&(0,1,1,8)&6&(2,1,3,0)&2&(1,0,1,10)&4&(3,0,2,4)&21 \\ \hline
    (0,1,1,7)&1&(0,1,2,6)&19&(2,2,0,3)&1&(1,0,2,8)&23&(3,0,3,2)&13 \\ \hline
    (0,1,2,5)&3&(0,1,3,4)&18&(2,2,1,1)&8&(1,0,3,6)&36&(3,0,4,0)&14 \\ \hline
    (0,1,3,3)&6&(0,1,4,2)&25&(2,3,0,0)&4&(1,0,4,4)&50&(3,1,0,5)&2 \\ \hline
    (0,1,4,1)&2&(0,1,5,0)&4&(3,0,1,5)&2&(1,0,5,2)&34&(3,1,1,3)&21 \\ \hline
    (0,2,0,6)&4&(0,2,0,7)&2&(3,0,2,3)&2&(1,0,6,0)&24&(3,1,2,1)&13 \\ \hline
    (0,2,1,4)&4&(0,2,1,5)&20&(3,0,3,1)&3&(1,1,0,9)&6&(3,2,0,2)&20 \\ \hline
    (0,2,2,2)&8&(0,2,2,3)&21&(3,1,0,4)&4&(1,1,1,7)&50&(3,2,1,0)&2 \\ \hline
    (0,2,3,0)&2&(0,2,3,1)&19&(3,1,1,2)&5&(1,1,2,5)&69&(4,0,0,6)&2 \\ \hline
    (0,3,0,3)&3&(0,3,0,4)&19&(3,1,2,0)&3&(1,1,3,3)&86&(4,0,1,4)&3 \\ \hline
    (0,3,1,1)&2&(0,3,1,2)&10&(4,1,0,2)&3&(1,1,4,1)&57&(4,0,2,2)&7 \\ \hline
    (0,4,0,0)&1&(0,3,2,0)&13&(0,0,2,10)&4&(1,2,0,6)&56&(4,0,3,0)&1 \\ \hline
    (1,0,2,6)&2&(0,4,0,1)&2&(0,0,3,8)&13&(1,2,1,4)&72&(4,1,1,1)&6 \\ \hline
    (1,0,3,4)&2&(1,0,2,7)&4&(0,0,4,6)&27&(1,2,2,2)&93&(4,2,0,0)&1 \\ \hline
    (1,0,4,2)&4&(1,0,3,5)&11&(0,0,5,4)&26&(1,2,3,0)&17&(5,0,0,4)&2 \\ \hline
    (1,1,1,5)&4&(1,0,4,3)&9&(0,0,6,2)&24&(1,3,0,3)&18&(5,0,2,0)&2 \\ \hline
    (1,1,2,3)&4&(1,0,5,1)&11&(0,0,7,0)&8&(1,3,1,1)&34&(7,0,0,0)&1 \\ \hline
    (1,1,3,1)&6&(1,1,0,8)&7&(0,1,0,11)&1&(1,4,0,0)&9& & \\ \hline
    (1,2,0,4)&7&(1,1,1,6)&15&(0,1,1,9)&21&(2,0,0,10)&3& & \\ \hline
    (1,2,1,2)&3&(1,1,2,4)&27&(0,1,2,7)&44&(2,0,1,8)&9& &  \\ \hline
    \end{tabular}
    \caption{The nonzero $\mathrm{F}_{4}(\lambda)$ for the weights $\lambda$ such that $\mathrm{w}(\lambda)\leq 44$ }
    \label{TableMult}
\end{table}
\pagebreak

\printbibliography

@book{ATLAS,
  shorthand = {ATLAS},
  author    = { Conway, {J.H.} and Curtis, {R.T.} and Norton, {S.P.} and Parker, {R.A.} and Wilson, {R.A.}},
  title     = { Atlas of finite groups : maximal subgroups and ordinary characters for simple groups},
  publisher = { Oxford University Press Oxford, New York },
  year      = { 1985 },
}

@book{chandra1968automorphic,
  title     = {Automorphic Forms on Semisimple Lie Groups},
  author    = {Chandra, Harish},
  series    = {Lecture Notes in Mathematics},
  year      = {1968},
  publisher = {Springer},
}

@book{ChenevierLannes,
  title     = {Automorphic Forms and Even Unimodular Lattices},
  author    = {Chenevier, Ga\"etan  and Lannes, Jean},
  publisher = {Springer Verlag},
  series    = {Ergebnisse der Mathematik und ihrer Grenzgebiete},
  volume    = {69},
  year      = {2019},
}

@book{ChenevierRenard,
  title     = {Level one algebraic cusp forms of classical groups of small rank},
  author    = {Chenevier, Ga\"etan and Renard, David},
  publisher = {American Mathematical Society},
  series    = {Memoirs of the American Mathematical Society},
  volume    = {237},
  year      = {2015},
}

@article{ConjEleConj,
  author  = {Fang, Yingjue and Han, Gang and Sun, Binyong},
  journal = {Pacific Journal of Mathematics},
  title   = {Conjugacy and element-conjugacy of homomorphisms of compact Lie groups},
  volume  = {283},
  year    = {2016}
}

@book{ConwayOct,
  author = {Conway, John H. and Smith, Derek A.},
  title  = {On Quaternions and Octonions},
  year   = {2003},
  publisher={Taylor and Francis}
}

@article{Dynkin1952,
  author  = {Dynkin, E.B.},
  journal = {American Mathematical Society Translations},
  title   = {Semisimple subalgebras of semisimple Lie algebras},
  volume  = {6},
  year    = {1957}
}

@article{G96,
  author  = {Gross, Benedict H.},
  title   = {Groups over $\Z$},
  journal = {Inventiones mathematicae},
  year    = {1996}
}

@manual{GAP4,
  shorthand    = {GAP},
  key          = {GAP},
  organization = {The GAP~Group},
  title        = {{GAP -- Groups, Algorithms, and Programming,
                  Version 4.11.1}},
  year         = {2021},
  url          = {https://www.gap-system.org}
}

@book{Lie,
  author    = {Bourbaki, N.},
  title     = {Groupes et algèbres de Lie, Chapitres 4 à 6},
  year      = {2007},
  publisher = {Springer Berlin, Heidelberg}
}

@book{NilOrbit,
  author    = {Collingwood, David H. and McGoven, William M.},
  publisher = {Van Nostrand Reinhold, New York},
  title     = {Nilpotent Orbits in Seimimple Lie Algebras},
  year      = {1992}
}

@article{NonRed,
  author  = {Conrad, Brian},
  year    = {2012},
  title   = {Non-split reductive groups over Z},
  volume  = {46},
  journal = {Autour des schémas en groupes. Vol. II, Panor. Synthèses}
}

@book{OctSpr,
  author    = {Springer, T.A.},
  title     = {Octonions, Jordan Algebras and Exceptional Groups},
  year      = {2000},
  publisher = {Springer Berlin, Heidelberg}
}

@manual{PARI2,
  shorthand    = {PARI/GP},
  organization = {{The PARI~Group}},
  title        = {{PARI/GP version \texttt{2.13.2}}},
  year         = {2021},
  address      = {Univ. Bordeaux},
  note         = {available from \url{http://pari.math.u-bordeaux.fr/}}
}

@article{Reeder2010TorsionAO,
  title  = {Torsion automorphisms of simple Lie algebras},
  author = {Reeder, Mark},
  year   = {2010},
  journal = {L'Enseignement Mathématique},
  volume ={56},
}

@book{WeilGroupeTopo,
  author   = {Weil, Andr\'e},
  edition  = {2ème édition},
  keywords = {Topology},
  series   = {Publications de l'Institut de mathématique de l'Université de Strasbourg},
  title    = {L'intégration dans les groupes topologiques et ses applications},
  year     = {1953}
}

@book{arthur2013endoscopic,
  title={The Endoscopic classification of representations orthogonal and symplectic groups},
  author={Arthur, James},
  volume={61},
  year={2013},
  publisher={American Mathematical Society},
}

@article{NgoAFL,
     author = {Ng\^o, Bao Ch\^au},
     title = {Le lemme fondamental pour les alg\`ebres de {Lie}},
     journal = {Publications Math\'ematiques de l'IH\'ES},
     publisher = {Springer-Verlag},
     volume = {111},
     year = {2010},
}

@article{Chenevier2020DiscreteSM,
  title={Discrete series multiplicities for classical groups over $\mathbf{Z}$ and level 1 algebraic cusp forms},
  author={Chenevier, Ga\"etan and Ta{\"i}bi, Olivier},
  journal={Publications math{\'e}matiques de l'IH{\'E}S},
  year={2020},
  volume={131},
}

@article{AlgModForm,
  title={Algebraic modular forms},
  author={Gross, Benedict H.},
  journal={Israel Journal of Mathematics},
  year={1999},
  volume={113},
}

@article{Leech,
    author = {Elkies, Noam D. and Gross, Benedict H.},
    title = {The exceptional cone and the Leech lattice},
    journal = {International Mathematics Research Notices},
    volume = {1996},
    year = {1996},
}

@inbook{Satake, 
  place={Cambridge}, 
  series={London Mathematical Society Lecture Note Series}, 
  title={On the Satake isomorphism}, 
  booktitle={Galois Representations in Arithmetic Algebraic Geometry}, 
  publisher={Cambridge University Press}, 
  author={Gross, Benedict H.}, 
  editor={Scholl, A. J. and Taylor, R. L.Editors}, 
  year={1998}, 
  collection={London Mathematical Society Lecture Note Series}
}

@Book{Enveloping,
author = { Dixmier, Jacques},
title = { Enveloping algebras },
publisher = { North-Holland Pub. Co. },
year = { 1977 },
}

@book{Knapp86,
    AUTHOR = {Knapp, Anthony W.},
     TITLE = {Representation theory of semisimple groups},
    SERIES = {Princeton Mathematical Series},
    VOLUME = {36},
 PUBLISHER = {Princeton University Press},
   ADDRESS = {Princeton, NJ},
      YEAR = {1986},
}

@Misc{GGPS66,
 Author = {Gelfand, I. M. and Graev, M. I. and Pyatetskii-Shapiro, I. I.},
 Title = {Representation theory and automorphic function. {Translated} from the {Russian} by {K}. {A}. {Hirsch}},
 Year = {1969},
}

@article{UniqueALblock,
 author = {Jacquet, Herv\'e and Shalika, Joseph A.},
 journal = {American Journal of Mathematics},
 publisher = {Johns Hopkins University Press},
 title = {On Euler Products and the Classification of Automorphic Forms II},
 volume = {103},
 year = {1981}
}

@incollection{ArthurUnipRep,
     author = {Arthur, James},
     title = {Unipotent automorphic representations : conjectures},
     booktitle = {Orbites unipotentes et repr\'esentations - II. Groupes $p$-adiques et r\'eels},
     series = {Ast\'erisque},
     publisher = {Soci\'et\'e math\'ematique de France},
     number = {171-172},
     year = {1989},
}

@article{Langlandsclassification,
  author          = {Langlands, Robert P.},
  journal         = {Math. Surveys Monogr.},
  title           = {The classification of representations of real reductive groups},
  volume          = {31},
  year            = {1973},
}

@incollection{PurityClozel,
 Author = {Clozel, Laurent},
 Title = {Motives and automorphic forms: application of the functoriality principle},
 Year = {1990},
 booktitle={Automorphic Forms, Shimura Varieties and L-Functions},
 number={1},
 publisher={Academic Press},
}

@article{Losev2005OnIO,
  title={On invariants of a set of elements of a semisimple Lie algebra},
  author={Losev, Ivan V.},
  journal={Journal of Lie Theory},
  year={2010},
  volume = {20},
}

@online{TableAutorep,
  author = {Chenevier,Ga{\"e}tan and Ta{\"i}bi,Olivier},
  title = {Discrete series multiplicities for classical groups over $\Z$ and level 1 algebraic cusp forms},
  url = {http://gaetan.chenevier.perso.math.cnrs.fr/levelone/},
}

@online{TableAutorepG2,
  author= {Chenevier, Ga{\"e}tan  and Renard, David},
  title={The home page of level one algebraic cusp forms of classical groups},
  url={http://gaetan.chenevier.perso.math.cnrs.fr/levelone.html},
}

@online{Codes,
  author={Shan, Yi},
  title={Codes and Tables for the computation of level one automorphic forms of $\mathrm{F}_{4}$},
  url={https://www.math.ens.psl.eu/~yshan/Publications/F4TableCode/F4Table.html},
}

@misc{BorcherdsLattice,
  url = {https://arxiv.org/abs/math/9911195},
  author = {Borcherds, Richard E.},
  title = {The Leech lattice and other lattices},
  publisher = {arXiv},
  year = {1999},
}

@article{dalal2023counting,
      title={Counting Discrete, Level-$1$, Quaternionic Automorphic Representations on $G_2$}, 
      author={Dalal, Rahul},
      journal={Journal of the Institute of Mathematics of Jussieu},
      year={2023},
      pages={1-31},
}

@article{Savin1994,
author = {Savin, Gordan},
journal = {Inventiones mathematicae},
number = {1},
title = {Dual pair $G_{\mathscr{J}}\times\pgl_{2}$, $G_{\mathscr{J}}$ is the automorphism group of the Jordan algebra $\mathscr{J}$.},
volume = {118},
year = {1994},
}

@book{SGA3,
    shorthand = {SGA3},
    author = {Demazure, Michel and Grothendieck, Alexander},
    title = {Sch\'emas en groupes},
    series = {Lecture Notes in Math},
    number = {151,152,153},
    publisher  = {Springer-Verlag, New York},
    year = {1970}
}

@article{Borel63,
  author = {Borel, Armand},
  title = {Some finiteness properties of adele groups over number fields},
  journal = {Publications Math\'ematiques de l'IH\'ES},
  pages = {5-30},
  publisher = {Institut des Hautes \'Etudes Scientifiques},
  volume = {16},
  year = {1963},
}

@misc{pollack2023exceptional,
      title={Exceptional theta functions and arithmeticity of modular forms on $G_2$}, 
      author={Pollack, Aaron},
      year={2023},
      eprint={2211.05280},
      archivePrefix={arXiv},
      primaryClass={math.NT}
}

@misc{Yokota2009ExceptionalLG,
      title={Exceptional Lie groups}, 
      author={Ichiro Yokota},
      year={2009},
      eprint={0902.0431},
      archivePrefix={arXiv},
      primaryClass={math.DG}
}

@book{AdamsExceptionalGrp,
  author         = {Adams, J.F.},
  publisher      = {Chicago, London : the University of Chicago Press},
  title          = {Lectures on exceptional Lie groups},
  year           = {1996}
}

@book{AlgGrpandNumberTheory,
author = {Vladimir Platonov and Andrei Rapinchuk},
series = {Pure and Applied Mathematics},
publisher = {Academic Press},
volume = {139},
year = {1994},
title = {Algebraic Groups and Number Theory},
}

@incollection {GrossZmodelAMS,
    author = {Gross, Benedict H.},
     title = {On simply-connected groups over {${\bf Z}$}, with {$G({\bf R})$} compact},
 booktitle = {Integral quadratic forms and lattices ({S}eoul, 1998)},
    series = {Contemp. Math.},
    volume = {249},
     pages = {113-118},
 publisher = {Amer. Math. Soc., Providence, RI},
      year = {1999},
}

@incollection{cogdelllectures,
  title={Lectures on $\mathrm{L}$-functions, converse theorems, and functoriality for $\GL_{n}$},
  booktitle={Lectures on automorphic $\mathrm{L}$-functions},
  author={Cogdell, James W.},
  series={Fields Institute Monographs},
  publisher={American Mathematical Soc.},
  year={2004}
}

@inproceedings{borel1979automorphic,
  title={Automorphic forms and automorphic representations},
  author={Borel, Armand and Jacquet, Herv{\'e}},
  booktitle={Automorphic forms, representations and L-functions (Proc. Sympos. Pure Math., Oregon State Univ., Corvallis, Ore., 1977), Part},
  volume={1},
  pages={189-207},
  year={1979}
}

@article{TaibiDimension,
  author          = {Ta{\"i}bi, Olivier},
  journal         = {Annales Scientifiques de l'ENS},
  number          = {2},
  title           = {Dimensions of spaces of level one automorphic forms for split ical groups using the trace formula},
  volume          = {50},
  year            = {2017}
}

@article{CohRepReal,
author = {Nair, Arvind N. and Prasad, Dipendra},
title = {Cohomological representations for real reductive groups},
journal = {Journal of the London Mathematical Society},
volume = {104},
number = {4},
pages = {1515-1571},
year = {2021}
}

@inproceedings{BorelAutomorphicLFunction,
 title={Automorphic $L$-functions},
  author={Borel, Armand},
  booktitle={Automorphic forms, representations and $L$-functions (Proc. Sympos. Pure Math., Oregon State Univ., Corvallis, Ore., 1977), Part},
  volume={2},
  pages={27-61},
  year={1979},
}

@misc{moeglin2014stabilisation,
      title={Stabilisation de la formule des traces tordue X: stabilisation spectrale}, 
      author={Colette Moeglin and Jean-Loup Waldspurger},
      year={2014},
      eprint={1412.2981},
      archivePrefix={arXiv},
      primaryClass={math.RT},
}

@incollection {SerreMotive,
    AUTHOR = {Serre, Jean-Pierre},
     TITLE = {Propri\'{e}t\'{e}s conjecturales des groupes de {G}alois
              motiviques et des repr\'{e}sentations {$l$}-adiques},
 BOOKTITLE = {Motives ({S}eattle, {WA}, 1991)},
    SERIES = {Proc. Sympos. Pure Math.},
    VOLUME = {55, Part 1},
     PAGES = {377-400},
 PUBLISHER = {Amer. Math. Soc., Providence, RI},
      YEAR = {1994},
}

@article{YunMotive,
  author          = {Yun, Zhiwei},
  journal         = {Inventiones mathematicae},
  title           = {Motives with exceptional Galois groups and the inverse Galois problem},
  volume          = {196},
  year            = {2014},
  pages           = {267-337},
}

@article{GrossSavin,
  author          = {Gross, Benedict H. and Savin, Gordan},
  journal         = {Compositio Mathematica},
  title           = { Motives with Galois Group of type $\mathbb{G}_{2}$: an Exceptional Theta-Correspondence},
  volume          = {114},
  year            = {1998},
  pages           = {153-217},
}

@article{PatrikisDeformation,
  author          = {Patrikis, Stefan},
  journal         = {Inventiones mathematicae},
  title           = {Deformations of Galois representations and exceptional monodromy},
  volume          = {205},
  year            = {2016},
  pages           = {269-336},
}

@article{Dettweiler_Reiter_2010, 
title={Rigid local systems and motives of type G2. With an appendix by Michale Dettweiler and Nicholas M. Katz}, 
volume={146}, 
number={4}, 
journal={Compositio Mathematica}, 
author={Dettweiler, Michael and Reiter, Stefan}, 
year={2010}, 
pages={929-963}
}

@article{BOXER_CALEGARI_EMERTON_LEVIN_MADAPUSI_PERA_PATRIKIS_2019, 
title={Compatible systems of Galois representations associated to the exceptional group $E_{6}$},
volume={7},  
journal={Forum of Mathematics, Sigma}, 
author={Boxer, George and Calegari, Frank and Emerton, Matthew and Levin, Brandon and Madapusi Pera, Keerthi and Patrikis, Stefan}, 
year={2019}, 
pages={e4}
}

@article{TaylorICM,
     author = {Taylor, Richard},
     title = {Galois representations},
     journal = {Annales de la Facult\'e des sciences de Toulouse : Math\'ematiques},
     pages = {73--119},
     publisher = {Universit\'e Paul Sabatier, Institut de math\'ematiques},
     address = {Toulouse},
     volume = {Ser. 6, 13},
     number = {1},
     year = {2004},
}

@article{GWTExceptionalSiegelWeil,
    AUTHOR = {Gan, Wee Teck},
     TITLE = {A {S}iegel-{W}eil formula for exceptional groups},
   JOURNAL = {J. Reine Angew. Math.},
  FJOURNAL = {Journal f\"{u}r die Reine und Angewandte Mathematik. [Crelle's Journal]},
    VOLUME = {528},
      YEAR = {2000},
     PAGES = {149-181},
}

@article{SavinMagaardExceptionalTheta,
    AUTHOR = {Magaard, K. and Savin, G.},
     TITLE = {Exceptional {$\Theta$}-correspondences. {I}},
   JOURNAL = {Compositio Math.},
  FJOURNAL = {Compositio Mathematica},
    VOLUME = {107},
      YEAR = {1997},
    NUMBER = {1},
     PAGES = {89-123},
}

@misc{karasiewicz2023dual,
      title={The Dual Pair $\mathrm{Aut}(C)\times F_{4}$ ($p$-adic case)}, 
      author={Edmund Karasiewicz and Gordan Savin},
      year={2023},
      eprint={2312.02853},
      archivePrefix={arXiv},
      primaryClass={math.RT}
}

@incollection {BuzzardGeeConj,
    AUTHOR = {Buzzard, Kevin and Gee, Toby},
     TITLE = {The conjectural connections between automorphic
              representations and {G}alois representations},
 BOOKTITLE = {Automorphic forms and {G}alois representations. {V}ol. 1},
    SERIES = {London Math. Soc. Lecture Note Ser.},
    VOLUME = {414},
     PAGES = {135-187},
 PUBLISHER = {Cambridge Univ. Press, Cambridge},
      YEAR = {2014},
}

@phdthesis{lachausseeThesis,
  TITLE = {{Autour de l'{\'e}num{\'e}ration des repr{\'e}sentations automorphes cuspidales alg{\'e}briques de $\GL_{n}$ sur $\Q$ en conducteur $>$ 1}},
  AUTHOR = {Lachauss{\'e}e, Guillaume},
  SCHOOL = {{Universit{\'e} Paris-Saclay}},
  YEAR = {2020},
  MONTH = Nov,
  TYPE = {phd thesis}, 
}

@phdthesis{PadowitzThesis,
  title={Traces of Hecke Operators},
  author={Padowitz, Seth},
  school={Harvard University},
  year={1998},
  month=apr,
  type={phd thesis},
}

@article{GrossPollack,
title = {On the Euler characteristic of the discrete spectrum},
journal = {Journal of Number Theory},
volume = {110},
number = {1},
pages = {136-163},
year = {2005},
author = {Benedict H. Gross and David Pollack},
}
\end{document}